% !Mode:: "TeX:UTF-8"
\documentclass[11pt,reqno]{amsart}
\usepackage{amsmath,amssymb,latexsym,esint,cite,mathrsfs}
\usepackage{verbatim,wasysym}
\usepackage[left=1.8cm,right=1.8cm,top=2.4cm,bottom=2.4cm]{geometry}
\usepackage{tikz,enumitem,graphicx, subfig, microtype, color}
\usepackage{epic,eepic}

\usepackage[colorlinks=true,urlcolor=blue, citecolor=red,linkcolor=blue,
linktocpage,pdfpagelabels, bookmarksnumbered,bookmarksopen]{hyperref}
\usepackage[hyperpageref]{backref}
\usepackage[english]{babel}
\usepackage{appendix}

\numberwithin{equation}{section}

\newtheorem{thm}{Theorem}[section]
\newtheorem{lem}[thm]{Lemma}
\newtheorem{cor}[thm]{Corollary}
\newtheorem{Prop}[thm]{Proposition}
\newtheorem{Def}[thm]{Definition}
\newtheorem{Rem}[thm]{Remark}

\newtheorem{step}[thm]{Step}

\renewcommand{\i}{{\rm i}}

\begin{document}
	\title[Stability for nonlocal Sobolev inequality]
{Stability estimates for critical points of a nonlocal Sobolev-type inequality}

\author[M. Yang]{Minbo Yang}
\address{\noindent Minbo Yang  \newline
School of Mathematical Sciences, Zhejiang Normal University,
Jinhua 321004, Zhejiang, People's Republic of China.}\email{mbyang@zjnu.edu.cn}

\author[S. Zhao]{Shunneng Zhao$^\dag$}
\address{\noindent Shunneng Zhao  \newline
School of Mathematical Sciences, Zhejiang Normal University,
Jinhua 321004, Zhejiang, People's Republic of China.
Dipartimento di Matematica, Universit\`{a} degli Studi di Bari Aldo Moro, Via Orabona 4, 70125 Bari, Italy.
}\email{snzhao@zjnu.edu.cn}

\thanks{2020 {\em{Mathematics Subject Classification.}} Primary 35A23, 26D10;  Secondly 35B35, 35J20.}

\thanks{{\em{Key words and phrases.}} The nonlocal Soblev inequality;  Quantitative stability; Ljapunov-Schmidt reduction.}

\thanks{Minbo Yang was partially supported by the National Key Research and Development Program of China (No. 2022YFA1005700), National Natural Science Foundation of China (12471114) and Natural Science Foundation of Zhejiang Province (LZ22A010001).}
\thanks{$^\dag$Shunneng Zhao was partially supported by National Natural Science Foundation of China (12401146, 12261107), Natural Science Foundation of Zhejiang Province (LMS25A010007) and PNRR MUR project PE0000023 NQSTI - National Quantum Science and Technology Institute (CUP H93C22000670006).}

\allowdisplaybreaks
%\maketitle

\begin{abstract}
{\small
In this paper, we study the stability of the following nonlocal Soblev-type inequality
	\begin{equation*}
		C_{HLS}\big(\int_{\mathbb{R}^n}\big(|x|^{-\mu} \ast u^{p}\big)u^{p} dx\big)^{\frac{1}{p}}\leq\int_{\mathbb{R}^n}|\nabla u|^2 dx , \quad \forall~u\in D^{1,2}(\mathbb{R}^n),
	\end{equation*}
which is induced by the classical Sobolev inequality and the Hardy-Littlewood-Sobolev inequality, where $p=\frac{2n-\mu}{n-2}$, $n\geq3$ and $\mu\in(0,n)$, is energy-critical exponent and $C_{HLS}$ is the best constant depending on $n$ and $\mu$.
Up to translation and scaling, the best constant of the nonlocal Soblev inequality can be achieved by  a unique family of positive and radially symmetric extremal function $W(x)$ that satisfies, up to a suitable scaling, the classical critical Hartree equation
 \begin{equation*}
    \Delta u+(|x|^{-\mu}\ast u^{p})u^{p-1}=0 \quad \mbox{in}\quad \mathbb{R}^n.
    \end{equation*}
Recently, Piccione, Yang and Zhao in \cite{p-y-z24} established a nonlocal
version of Struwe's profile decomposition and they only proved the nonlocal version of the quantitative stability for the one bubble case without dimension restriction and the multiple bubbles case $\kappa\geq2$ if dimension $3\leq n<6-\mu$ and $\mu\in(0,n)$ with $\mu\in(0,4]$ in Ciraolo-Figalli-Maggi \cite{CFM18} and Figalli-Glaudo \cite{FG20}. We establish the quantitative stability estimates for critical point of the nonlocal Soblev inequality for $n\geq6-\mu$ and $\mu\in(0,4)$, which is an extension of the recent works by Deng-Sun-Wei in \cite{DSW21} for the classical
Sobolev inequality.
}

\end{abstract}

\vspace{3mm}

\maketitle
% use this command to compile title, author, date
\section{Introduction and main results}

\subsection{A nonlocal Sobolev-type inequality}
It is well known that the Sobolev inequlaity \cite{S1963}
\begin{equation}\label{bsic-1}
\big(\int_{\mathbb{R}^n}|u(x)|^{\frac{nq}{n-q}}dx\big)^{\frac{n-q}{n}}\leq S_{n,q}\int_{\mathbb{R}^n}|\nabla u(x)|^qdx,\quad\text{for }\quad1<q<n,\quad  \forall~u\in W^{1,q}(\mathbb{R}^n)
\end{equation}
plays an important role in PDE and functional analysis. Aubin \cite{T-Aubin} and Talenti \cite{Ta76} derived the sharp constants of HLS inequality by classifying the extremal of classical Sobolev inequality, namely that,
   for $q=2$ in inequality \eqref{bsic-1},
the classical Sobolev inequality states, for any $n\geq 3$, there exists a dimensional constant $S=S(n)>0$ such that
\begin{equation}\label{bsic}
\big(\int_{\mathbb{R}^n}|u|^{2^*}dx\big)^{\frac{2}{2^*}}\leq S(n)\int_{\mathbb{R}^n}|\nabla u|^2dx,\quad  \forall~u\in D^{1,2}(\mathbb{R}^n),
\end{equation}
where $2^*=\frac{2n}{n-2}$ and $D^{1,2}(\mathbb{R}^n)$ denotes the closure of $C^\infty_c(\mathbb{R}^n)$ with respect to the norm $\|u\|_{D^{1,2}(\mathbb{R}^n)}=\|\nabla u\|_{L^2}$. It is well known that the Euler-Lagrange equation associated to (\ref{bsic}) is given by
\begin{equation}\label{bec}
\Delta u+|u|^{2^*-2}u=0\quad \mbox{in}\ \ \mathbb{R}^n.
\end{equation}
The best constant $S$ in the Sobolev inequality is achieved by the Aubin-Talanti bubbles \cite{Ta76} $u=U[\xi,\lambda](x)$ defined by
\[U[\xi,\lambda](x)=(n(n-2))^{\frac{n-2}{4}}\big(\frac{\lambda}{1+\lambda^2|x-\xi|^2}\big)^{\frac{n-2}{2}},\hspace{4mm}\lambda\in\mathbb{R}^{+},\hspace{4mm}z\in\mathbb{R}^n.\]
In fact, Caffarelli et al. \cite{CGS89} and Gidas et al. \cite{GNN79} proved that all the positive solutions to equation (\ref{bec}) are the Aubin-Talanti bubbles. In other words, the smooth  manifold of extremal function in \eqref{bsic}
$$\mathcal{M}:=\{cU[\xi,\lambda]: c\in\mathbb{R}, \xi\in\mathbb{R}^n, \lambda>0\}$$
is all nonnegative solution to equation \eqref{bec}.
Moreover, in the general diagonal case $t=r=\frac{2n}{2n-\mu}$, Lieb in \cite{Lieb83} classified the extremal function of HLS inequality with sharp constant by rearrangement and symmetrisation, and obtained the best constant
    \begin{equation}\label{defhlsbc}
    C_{n,\mu}=\pi^{\mu/2}\frac{\Gamma((n-\mu)/2)}{\Gamma(n-\mu/2)}\left(\frac{\Gamma(n)}{\Gamma(n/2)}\right)^{1-\frac{\mu}{n}},
    \end{equation}
   and the equality holds if and only if
    \begin{equation*}
   u(x)=cv(x)=a\big(\frac{1}{1+\lambda^2|x-x_0|^2}\big)^{\frac{2n-\mu}{2}}
    \end{equation*}
    for some $a\in \mathbb{C}$, $\lambda\in \mathbb{R}\backslash\{0\}$ and $x_0\in \mathbb{R}^n$.

To introduce the nonlocal version of Sobolev inequlaity, we may recall that the classical Hardy-Littlewood-Sobolev (HLS for short) inequality, firstly introduced in \cite{H-L-1928,S1963}, states that
\begin{equation}\label{hlsi}
	\int_{\mathbb{R}^n}\int_{\mathbb{R}^n}u(x)|x-y|^{-\mu} v(y)dxdy\leq C_{n,r,t,\mu}\|u\|_{L^r(\mathbb{R}^n)}\|v\|_{L^t(\mathbb{R}^n)}.
\end{equation}
with $\mu\in(0,n)$, $1<r,t<\infty$ and $\frac{1}{r}+\frac{1}{t}+\frac{\mu}{n}=2$.  Lieb and Loss in \cite{Lieb-Loss} applied the layer cake representation formula to give an explicit upper bound for the sharp constant $C_{n,r,t,\mu}$ of HLS inequality.
Invoking the dual form of \eqref{hlsi}, in the special diagonal case $r=t=(2^{\ast})^{\prime}=\frac{2n}{n+2}~(\mu=n-2)$. Define the Coulomb space $\mathcal{C}^{\mu,s}(\mathbb{R}^n)$ as the vector space of measurable functions $u:\mathbb{R}^n\rightarrow\mathbb{R}$ satisfying
\begin{equation*}
\|u\|_{\mathcal{C}^{\mu,s}}=\big(\int_{\mathbb{R}^n}\big(|x|^{-\mu} \ast u^{s}\big)u^{s} dx\big)^{\frac{1}{2s}}<\infty.
\end{equation*}
 It is easy to see that for every measurable function $u\in \mathcal{C}^{\mu,s}(\mathbb{R}^n)$ if and only if $|u|^{s}\in \mathcal{C}^{\mu,1}(\mathbb{R}^n)$. HLS inequality gives that
$$
V(x):=\int_{\mathbb{R}^n}\big(|x|^{-\mu} \ast u^{s}\big)u^{s}\leq C\big(\int_{\mathbb{R}^n}|u|^{\frac{2ns}{2n-\mu}}\big)^{\frac{2n-\mu}{n}},
$$
and thus $L^{\frac{2ns}{2n-\mu}}(\mathbb{R}^n)\subset \mathcal{C}^{\mu,s}(\mathbb{R}^n)$. It is not difficult to check that the Coulomb space  $\mathcal{C}^{\mu,s}(\mathbb{R}^n)$ is a Banach space under the norm $
\|\cdot\|_{\mathcal{C}^{\mu,s}}$ (cf. \cite{Mercuri}). Then it follows from HLS inequality and Sobolev embedding inequality that, for any $u\in D^{1,2}(\mathbb{R}^n)$, the critical case, $p=\frac{2n-\mu}{n-2}$,
\begin{equation}\label{Prm}
C_{HLS}\left(\int_{\mathbb{R}^n}(|x|^{-\mu} \ast|u|^{p})|u|^{p}dx\right)^{\frac{1}{p}}\leq\int_{\mathbb{R}^n}|\nabla u|^2 dx , \quad u\in D^{1,2}(\mathbb{R}^n)
\end{equation}
holds for some positive constant $C_{HLS}$ depending only on $n$ and $\mu$, which can be
characterized by the following minimization problem
\begin{equation}\label{mini-p}
\frac{1}{C_{HLS}}:=\inf\big\{\|\nabla u\|_{L^2}\big|u\in D^{1,2}(\mathbb{R}^n),\hspace{2mm}\Big(\int_{\mathbb{R}^n}\big(|x|^{-\mu} \ast u^{p}\big)u^{p} dx\Big)^{\frac{1}{p}}=1\big\}.
 \end{equation}
It is well-known that the critical nonlocal Hartree equation, up to suitable scaling,
\begin{equation}\label{ele-1.1}
    \Delta u+(|x|^{-\mu}\ast u^{p})u^{p-1}=0 \quad \mbox{in}\quad \mathbb{R}^n,
    \end{equation}
is the Euler-Lagrange equation for the minimization problem \eqref{mini-p}.
 What's more, the authors in \cite{GY18, DY19, GHPS19} independently computed the optimal $C_{HLS}>0$ and classified all positive solutions of \eqref{Prm} are functions of the form
\begin{equation}\label{defU}
W[\xi,\lambda](x)=\alpha_{n,\mu}\big(\frac{\lambda}{1+\lambda^2|x-\xi|^2}\big)^{\frac{n-2}{2}},\hspace{1mm}\lambda\in\mathbb{R}^{+},\hspace{1mm}\xi\in\mathbb{R}^n,
\end{equation}
where $\alpha_{n,\mu}:=(n(n-2))^{\frac{n-2}{4}}S^{\frac{(n-\mu)(2-n)}{4(n-\mu+2)}}C_{n,\mu}^{\frac{2-n}{2(n-\mu+2)}}$
 is a dimension and parameter $\mu$ dependent constant. Furthermore, Gao et al. \cite{GMYZ22} proved the nondegeneracy result at $W[\xi,\lambda]$ forthe linearized operator of equation \eqref{ele-1.1} when $n=6$ and $\mu=4$. Later on, Li et al.\cite{XLi} extended this nondegeneracy theorem to all $\mu\in(0,n)$ with $\mu\in(0,4)$ for $n\geq 3$ by using the spherical harmonic decomposition and the Funk-Hecke formula of the spherical harmonic functions and Morse iteration theorem.
\subsection{The stability problem:}
The start point of the present paper goes back to the study of the classical Sobolev inequality. Brezis and Lieb \cite{BE198585} firstly raised the question of stability for the Sobolev inequality, that is, whether
a remainder term proportional to the quadratic distance of the function $u$ to be the manifold $\mathcal{M}$ - can be added to the right hand side of (\ref{bsic}).
 This open problem was solved few years later by Bianchi and Egnell \cite{BE91}: they established the local stability and the global stability for Sobolev
inequality, for all $u\in D^{1,2}(\mathbb{R}^n)\setminus\mathcal{M}$, there exits $c=c_{BE}>0$ such that
 \begin{equation}\label{BE-24}
 \inf\limits_{f\in\mathcal{M}}\big\|\nabla\big(u-f\big)\big\|_{L^2}^2\leq c_{BE}^{-1}\big(\big\|\nabla u\big\|_{L^2}^2-S^2\big\|u\big\|_{L^{2^\ast}}^2\big).
 \end{equation}
Recently, K\"{o}nig proved that the sharp constant $ c_{BE}$
must be strictly smaller than $\frac{4}{n+4}$ and inequality \eqref{BE-24} always admits a minimizer by establishing two crucial energy estimates of $c_{BE}$ (cf. \cite{T-K-23}, \cite{T-K-24}). After Bianchi-Egnell's seminal work, one would like to know whether Brezis-Lieb's problem could also be solved for the fractional Sobolev inequality or the quasilinear case with general values of $p$. In fact The approach of Bianchi-Egnell in \cite{BE91} depended  heavily on the Hilbert structure of $D^{1,2}(\mathbb{R}^n)$ and on the eigenvalue
properties of a weighted Laplacian in $\mathbb{R}^n$, so it does not seem suitable for the general case (for $p\neq2$) (cf. \cite{C-F-M-P,F-N-18,F-Z-22}).
For the general fractional Sobolev inequalities(cf.\cite{CFW13}), the first-order Sobolev inequality with the explicit lower bound (cf.\cite{D-E-F-F-L}), the hight-order Sobolev inequality (cf.\cite{L-W-99,B-W-W,G-W10}) and
the Hardy-Littlewood-Sobolev (HLS) inequalities (cf. \cite{Ca17, C-L-T, Dolbeault-1, DE22}). Another approach has been developed for studying the f quantitative stability of functional/geometric inequalities: the isoperimetric inequalities (cf. \cite{F-M-P10,F-F-P08,C-E-S}), and conformally invariant Sobolev inequalities on Riemannian manifolds (cf. \cite{E-N-S22,F22}) and so on.

A challenging problem is to study the qualitative stability for the critical points of the Euler-Lagrange equations. Informally, whether a function $u$ almost solves \eqref{bec} must be quantitatively close to Aubin-Talenti bubbles?
In fact, a seminal work of Struwe in \cite{Struwe-1984} proved the
well-known stability of profile decompositions of \eqref{bec}, that is $u$ should be sufficiently close to a sum of weakly-interacting bubbles even if we restrict to nonnegative functions. After the poineering paper \cite{Struwe-1984},
Ciraolo, Figalli, and Maggi in \cite{CFM18} studied the stability results of the Euler-Lagrange equations for the embedding $D^{1,2}(\mathbb{R}^n)\hookrightarrow L^{\frac{2n}{n-2}}(\mathbb{R}^n)$. They
proved firstly a sharp quantitative stability result around one-bubble case $\kappa=1$ and $n\geq3$. Later, Figalli and Glaudo in \cite{FG20} gave a positive result for any dimension $n\in\mathbb{N}$ and $\kappa=1$ and for dimension $3\leq n\leq5$ when the multi-bubbles case $\kappa\geq2$.
In addition, Figalli and Glaudo constructed counter-examples showing that when $n\geq6$, one may have
\begin{equation*}
dist(u,\mathcal{T})\gg\Gamma(u)\hspace{2mm}\mbox{with}\hspace{2mm}\Gamma(u):=\big\|\Delta u+u^{\frac{n+2}{n-2}}\big\|_{(D^{1,2}(\mathbb{R}^n))^{-1}},
\end{equation*}
where $\mathcal{T}$ denotes the manifold of sums of Aubin-Talenti bubbles and
\begin{equation*}
dist(u,\mathcal{T})=\inf\limits_{z_1,z_2,\cdots,z_\kappa\in\mathbb{R}^n,
\lambda_1,\lambda_2,\cdots,\lambda_{\nu}\in\mathbb{R}}\big\|\nabla u-\nabla\big(\sum_{i=1}^\kappa U[z_i,\lambda_i]\big)\big\|_{L^2},
\end{equation*}
and they also propose some conjectures in higher dimension $n\geq6$.
Very recently, Deng, Sun and Wei in \cite{DSW21} proved the optimal estimates for $n\geq6$ by applying finite-dimensional reduction method.
Following the rigidity result \cite{Chen-ou}, an analog of Struwe's result was given by Palatucci and Pisante in \cite{P-P-15} and \cite{T-Konig23}.
 Aryan \cite{Aryan} studied the fractional case $(-\Delta)^{s}$ with $s\in(0,\frac{n}{2})$ and proved the sharp quantitative
results for the critical points of the fractional Sobolev inequalities with $s\in(0,1)$. Chen et al. in \cite{C-K-L-24} investigated the one bubble case $\kappa=1$ and the multi-bubbles case $\kappa\geq2$ for fractional Sobolev inequalities for all $n\in\mathbb{N}$ and $s\in(0,\frac{n}{2})$. We would also like to refer to \cite{Z} for the stability of fractional Sobolev trace inequality within both the functional and critical point settings, and references therein.

Inspired by the above mentioned results for the local Sobolev inequality, it is natural to wonder that if we can investigate the nonlocal case and establish similar stablity results for the nonlocal Sobolev inequalities. Recently, Deng et al. \cite{DSB213} obtained the first result on the gradient-type remainder term of inequality \eqref{Prm}, which can be expressed as follow:
 \[
    C_2{\rm dist}(u,\mathcal{Z})^2
    \geq \int_{\mathbb{R}^n}|\nabla u|^2 dx
    -C_{HLS}\left(\int_{\mathbb{R}^n}\big(|x|^{-\mu} \ast |u|^{p}\big)|u|^{p} dx\right)^{\frac{1}{p}}
    \geq C_1 {\rm dist}(u,\mathcal{Z})^2,
    \]
    where
    $$
    \mathcal{Z}=\big\{cW[\xi,\lambda]:~c\in\mathbb{R},~\xi\in\mathbb{R}^n,~\lambda>0\big\}
    $$
    is an $(n+2)$-dimensional manifold, and ${\rm dist}(u,\mathcal{Z }):=\inf\limits_{c\in\mathbb{R}, \lambda>0, z\in\mathbb{R}^N}\|u-cW[\xi,\lambda]\|_{D^{1,2}(\mathbb{R}^n)}$.
Another way to investigate the stability issue of the nonlocal Sobolev inequality\eqref{Prm} is to study the
stability of profile decompositions to \eqref{ele-1.1} for nonnegative functions
 of nonlocal equation \eqref{ele-1.1}. Inspired by the spirit of Struwe in \cite{Struwe-1984}, a nonlocal version of the stability of profile decompositions to \eqref{ele-1.1} for nonnegative functions can be formulated as follows:\\
%\begin{theorem}\cite{p-y-z24}\label{F4}
{\it {\bf Theorem A.}(\cite{p-y-z24})
   Let $n\geq3$ and $\kappa\geq1$ be positive integers. Let $(u_m)_{m\in\mathbb{N}}\subseteq D^{1,2}(\mathbb{R}^n)$ be a sequence of nonnegative functions such that $$(\kappa-\frac{1}{2})C_{HLS}^{\frac{2n-\mu}{n+2-\mu}}\leq\big\|u_m\big\|_{D^{1,2}(\mathbb{R}^n)}^2\leq(\kappa+\frac{1}{2})C_{HLS}^{\frac{2n-\mu}{n+2-\mu}}$$ with $C_{HLS}=C_{HLS}(n,\mu)$ as in \eqref{Prm}, and assume that
   $$
    \Big\|\Delta u_m+\left(|x|^{-\mu}\ast |u_m|^{p}\right)|u_m|^{p-2}u_m\Big\|_{(D^{1,2}(\mathbb{R}^n))^{-1}}\rightarrow0\quad \mbox{as}\quad m\rightarrow\infty.
    $$
Then, there exist $\kappa$ tuples of points $(\xi_{1}^{(m)},\cdots, \xi_{\kappa}^{(m)})_{m\in\mathbb{N}}$ in $\mathbb{R}^n$ and $\kappa$ tuples of positive real numbers $(\lambda_{1}^{(m)},\cdots,\lambda_{\kappa}^{(m)})_{m\in\mathbb{N}}$  such that
\[
\Big\|\nabla\Big(u_m-\sum_{i=1}^{\kappa}W[\xi_i^{(m)},\lambda_{i}^{(m)}]\Big)\Big\|_{L^2}\rightarrow0\quad \mbox{as}\quad m\rightarrow\infty.
\]}
The above theorem gives us a qualitative answer to the almost rigidity problem posed earlier.
Let $(W_i)_{i=1}^{\kappa}:=(W[\xi_i,\lambda_i])_{i=1}^{\kappa}$ be a family of bubbles. Define the quantity by
\begin{equation*}		Q_{ij}(\xi_i,\xi_j,\lambda_i,\lambda_j)=\min\Big(\frac{\lambda_i}{\lambda_j}+\frac{\lambda_j}{\lambda_i}+\lambda_i\lambda_j|\xi_i-\xi_j|^2\Big)^{-\frac{n-2}{2}}.
	\end{equation*}
It is noticing that $Q_{ij}\rightarrow0$ for any $i,j=1,\cdots,\kappa$ as $m\rightarrow\infty$,
\begin{equation*}
dist_{D^{1,2}}\big(u,\mathcal{M}_0\big)= o\big(\big\|\hat{f}\big\|_{(D^{1,2}(\mathbb{R}^n))^{-1}}\big),
\end{equation*}
where
$\hat{f}:=\Delta u+\left(|x|^{-\mu}\ast u^{p}\right)u^{p-1}$ and the manifold of sums of bubbles $W_i$ given by
$$\mathcal{M}_0=\Big\{\sum_{i=1}^{\kappa}W_i:~\xi_i\in\mathbb{R}^n, ~\lambda_i>0\Big\}.$$
And we say that the family of bubbles is $\delta$-interacting in the following sense:
\begin{equation*}
\mathscr{Q}:=\max\big\{Q_{ij}(\xi_i,\xi_j,\lambda_i,\lambda_j): \quad i,~j=1,\cdots,\kappa\big\}\leq\delta.
\end{equation*}

A quantitative stability problem is the following: Suppose that $u$ is in the neighborhood of a sum of weakly-interacting bubbles $(W_i)_{i=1}^{\kappa}$, can we show that
\begin{equation*}
dist_{D^{1,2}}\big(u,\mathcal{M}_0\big)\lesssim \big\|\hat{f}\big\|_{(D^{1,2}(\mathbb{R}^n))^{-1}},
\end{equation*}
or some kind of quantitative control?
Only very recently the first qualitative stability for
critical points of equation \eqref{ele-1.1} was established in \cite{p-y-z24}(cf. \cite{L}). They generalized the results by Figalli and Glaudo in \cite{FG20} for the Sobolev inequality \eqref{bsic} to a nonlocal version of Sobolev inequality \eqref{Prm} based on the spectrum analysis. {\bf However, the authors in \cite{p-y-z24} only proved the stability results for the one bubble case without dimension restriction and the multiple bubbles case $\kappa\geq2$ if dimension $3\leq n<6-\mu$ and $\mu\in(0,n)$ with $\mu\in(0,4]$.  The aim of the present paper is to establish the quantitative stability estimates for the critical points of inequality \eqref{Prm} for $n\geq6-\mu$ and $\mu\in(0,4]$.}

Let $n\geq6-\mu$ and $\mu\in(0,4]$, we introdcue function $\tau_{n,\mu}(x)$ as
\begin{equation}\label{7-h-0-0}
\tau_{n,\mu}(x):=
\left\lbrace
\begin{aligned}
&
x^{\frac{2^{\ast}-1}{2}}\hspace{9mm}\hspace{8mm}\mbox{for}\hspace{2mm}0<\mu<\frac{n+\mu-2}{2},\\
&
x^{2-\frac{\mu}{n-2}}\hspace{5mm}\hspace{4mm}\hspace{6mm}\mbox{for}\hspace{2mm}\frac{n+\mu-2}{2}\leq\mu<4,
\end{aligned}
\right.
\end{equation}
and $\tau_{n,\mu}(x)$ is increasing near $0$.

The main results are stated in the following theorem:
    \begin{thm}\label{Figalli}
Suppose that $n\geq6-\mu$, $\mu\in(0,n)$ and $\mu\in(0,4]$ satisfy the following assumption
\begin{equation*}
(\sharp)\hspace{4mm}\frac{n^2-6n}{n-4}<\mu<4\hspace{2mm}\mbox{and}\hspace{2mm}n\geq6-\mu,
\end{equation*}
and the number of bubbles $\kappa\geq2$. Then there exist a small constant $\delta=\delta(n,\mu,\kappa)>0$ and a large constant $C=C(n,\mu,\kappa)>0$ such that the following statement holds. If $u\in D^{1,2}(\mathbb{R}^n)$ satisfies
  \begin{equation}\label{w-tittle}
\Big\|\nabla u-\sum_{i=1}^{\kappa}\nabla \widetilde{W}_i\Big\|_{L^2}\leq\delta
\end{equation}
for some $\delta$-interacting family
$(\widetilde{W}_i)_{i=1}^{\kappa}$,
then there is a family $(W_i)_{i=1}^{\kappa}$ of bubbles such that
\begin{equation}\label{tnu}
dist_{D^{1,2}}\big(u,\mathcal{M}_0\big)\leq C\tau_{n,\mu}\big(\big\|\hat{f}\big\|_{(D^{1,2}(\mathbb{R}^n))^{-1}}\big)
\end{equation}
Moreover, for any $i\neq j$, the interaction between the bubbles can be estimated as
\begin{equation}\label{moreover}
\int_{\mathbb{R}^n}\big(|x|^{-\mu}\ast W_i^{p}\big)W_i^{p-1}W_j\leq C\big\|\hat{f}\big\|_{(D^{1,2}(\mathbb{R}^n))^{-1}}.
\end{equation}
    \end{thm}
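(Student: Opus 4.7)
The plan is to run a Lyapunov--Schmidt reduction adapted to the nonlocal Hartree equation, along the lines of Deng-Sun-Wei \cite{DSW21} but working with Riesz-type nonlinearities. Using \eqref{w-tittle} and a compactness argument I would first select parameters $(\xi_i,\lambda_i)_{i=1}^\kappa$ minimising the $D^{1,2}$-distance from $u$ to the manifold of $\kappa$-bubble sums, and decompose $u=\sigma+\rho$ with $\sigma:=\sum_{i=1}^\kappa W[\xi_i,\lambda_i]$. Minimality yields the orthogonality conditions that $\rho$ is $D^{1,2}$-orthogonal to the approximate kernel $\{W_i,\;\partial_{\lambda_i}W_i,\;\partial_{(\xi_i)_k}W_i\}_{i,k}$ of the linearisation at $\sigma$. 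Substituting $u=\sigma+\rho$ into $\Delta u+(|x|^{-\mu}\ast u^{p})u^{p-1}=\hat f$ produces
\begin{equation*}
-\Delta\rho-L_\sigma\rho=I(\sigma)+N(\sigma,\rho)-\hat f,
\end{equation*}
where $L_\sigma\rho=p(|x|^{-\mu}\ast(\sigma^{p-1}\rho))\sigma^{p-1}+(p-1)(|x|^{-\mu}\ast\sigma^p)\sigma^{p-2}\rho$, $N(\sigma,\rho)$ collects the terms of order $\geq 2$ in $\rho$, and the interaction residue is $I(\sigma):=(|x|^{-\mu}\ast\sigma^p)\sigma^{p-1}-\sum_i(|x|^{-\mu}\ast W_i^p)W_i^{p-1}$.

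The second step is coercivity. Combining the nondegeneracy of the linearised Hartree operator at a single bubble, due to Li \cite{XLi} for $\mu\in(0,4)$, with the weak-interaction hypothesis $\mathscr{Q}\le\delta$, I would prove that $L_\sigma$ is coercive on the orthogonal complement of the approximate kernel, uniformly as $\delta\to 0$. Bounding $N(\sigma,\rho)$ in $(D^{1,2})^{-1}$ by HLS and Sobolev embedding then yields the first-stage estimate
\begin{equation*}
\|\rho\|_{D^{1,2}}\lesssim\|I(\sigma)\|_{(D^{1,2})^{-1}}+\|\hat f\|_{(D^{1,2})^{-1}}.
\end{equation*}
This is not yet enough for \eqref{tnu}, since $\|I(\sigma)\|_{(D^{1,2})^{-1}}$ is only controlled by a power of $\mathscr{Q}$, and $\mathscr{Q}$ must still be related back to $\|\hat f\|_{(D^{1,2})^{-1}}$.

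The third step, which is the crux, is to test the reduced equation against the kernel directions $\partial_{\lambda_i}W_i$ and $\partial_{(\xi_i)_k}W_i$. Because each $W_i$ solves \eqref{ele-1.1}, the self-contributions vanish and the projections isolate the off-diagonal integrals in \eqref{moreover} together with weighted variants involving $\partial_{\lambda_j}W_j$ and $\partial_{\xi_j}W_j$. Their sharp asymptotics as $\mathscr{Q}\to 0$ can be computed by splitting $\R^n$ into a neighbourhood of $\xi_i$, a neighbourhood of $\xi_j$, and a transition annulus, and matching the scales of the Riesz convolution in each zone. Balancing these projections against the $\hat f$-contribution (plus a quadratic remainder in $\rho$) yields \eqref{moreover}, and feeding this back into the coercivity bound closes the bootstrap and produces \eqref{tnu}. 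The dichotomy in \eqref{7-h-0-0} reflects the competition between the self-coupling term $(p-1)(|x|^{-\mu}\ast\sigma^p)\sigma^{p-2}\rho^2$, whose HLS bound produces the exponent $(2^\ast-1)/2$ when $\mu<\tfrac{n+\mu-2}{2}$, and the cross-convolution term $p(|x|^{-\mu}\ast(\sigma^{p-1}\rho))\sigma^{p-1}\rho$, which dominates once $\mu\ge\tfrac{n+\mu-2}{2}$ and forces the exponent $2-\mu/(n-2)$. The main obstacle will be exactly this zone-by-zone asymptotic analysis for the nonlocal operator: unlike in \cite{DSW21}, the convolution with $|x|^{-\mu}$ picks up contributions from all scales simultaneously, so one needs refined pointwise bounds on $|x|^{-\mu}\ast(W_i^p W_j^q)$ together with delicate cancellation to avoid logarithmic losses; the technical assumption $(\sharp)$ is precisely what allows the remaining error terms from the zonal decomposition to be absorbed into $\tau_{n,\mu}(\|\hat f\|_{(D^{1,2})^{-1}})$.
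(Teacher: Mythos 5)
Your outline reproduces the energy-method strategy of Figalli--Glaudo (coercivity of $L_\sigma$ on the orthogonal complement, the bound $\|\rho\|_{D^{1,2}}\lesssim\|I(\sigma)\|_{(D^{1,2})^{-1}}+\|\hat f\|_{(D^{1,2})^{-1}}$, then projections onto the kernel directions), but in the regime $n\geq 6-\mu$ this is exactly the scheme that does not close, which is why the paper does something structurally different. The problem is quantitative: with only an energy bound on $\rho$, the linear-in-$\rho$ term $\int\Phi_{n,\mu}[\sigma,\rho]\,\lambda_m\partial_{\lambda_m}W_m$ appearing in your third step can only be estimated by $C(\|\hat f\|_{(D^{1,2})^{-1}}+\|I(\sigma)\|_{(D^{1,2})^{-1}})\approx \|\hat f\|+\tau_{n,\mu}(\mathscr{Q})$, and for the parameters covered by the theorem (e.g. $n=6,7$, or $n=4,5$ with $\mu\geq n-2$) the exponents in \eqref{7-h-0-0} are $\leq 1$, so $\tau_{n,\mu}(\mathscr{Q})$ is \emph{not} $o(\mathscr{Q})$. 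Hence the projected inequality $\mathscr{Q}\lesssim\|\hat f\|+\tau_{n,\mu}(\mathscr{Q})$ does not yield $\mathscr{Q}\lesssim\|\hat f\|$, and neither \eqref{moreover} nor \eqref{tnu} follows; "feeding this back into the coercivity bound" is circular at precisely the borderline/supercritical dimensions this theorem is about (this is also why \cite{FG20}-type arguments, and the earlier result in \cite{p-y-z24}, stop at $3\leq n<6-\mu$).

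What the paper does instead, following \cite{DSW21}, is a finite-dimensional reduction in weighted $C^0$ spaces rather than in the energy space: it splits $\rho=\rho_0+\rho_1$, where $\rho_0$ solves the projected linear system \eqref{coefficients} with datum $\hbar$ and orthogonality taken with respect to the bilinear forms $\int\Phi_{n,\mu}[W_i,\Xi_i^a]\,\cdot$, and the core of the whole proof is the a priori estimate $\|\phi\|_{\ast}\lesssim\|\hbar\|_{\ast\ast}$ of Lemma \ref{estimate2} (Green's representation, the bubble-tree decomposition of $\mathbb{R}^n$ into exterior/core/neck regions, blow-up and nondegeneracy, barrier functions). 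The resulting \emph{pointwise} bound $|\rho_0|\lesssim S_j$ is what produces the sharp rate $\|\nabla\rho_0\|_{L^2}\lesssim\tau_{n,\mu}(\mathscr{Q})$ (Lemma \ref{p00}), while $\rho_1$ is estimated in energy by $\|\hat f\|+\mathscr{Q}^{1+\varepsilon}$ (Lemma \ref{rr-1-00}); only with this splitting is the projection step of Lemma \ref{p2} of the form $\mathscr{Q}\lesssim\|\hat f\|+o(\mathscr{Q})$, closed by an induction over the bubble tree in Lemma \ref{QQ-1}. None of this machinery (the weights $S_j,T_j$, the norms $\|\cdot\|_\ast,\|\cdot\|_{\ast\ast}$, the solvability Lemma \ref{ww10}, the $\rho_0/\rho_1$ decomposition) appears in your proposal, and without it the bootstrap you describe cannot reach the exponent $\tau_{n,\mu}$. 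A minor additional point: the dichotomy in \eqref{7-h-0-0} is not a competition between the two terms of the linearization, but reflects the two different decay regimes of the interaction term $\hbar$ (hence two weight families) according to whether $\mu<n-2$ or $\mu\geq n-2$.
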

As a direct consequence of Theorem A and Theorem \ref{Figalli}, we can conclude the following corollary, proving the desired quantitative estimates of profile decomposition.
 \begin{cor}\label{Figalli2}
  For any dimension $n\geq6-\mu$, $\mu\in(0,n)$ and $\mu\in(0,4]$ satisfying $(\sharp)$, and the number of bubbles $\kappa\geq2$, there exist a constant constant $C=C(n,\mu,\kappa)>0$ such that the following statement holds. For any nonnegative function $u\in D^{1,2}(\mathbb{R}^n)$ such that
\begin{equation*}
\big(\kappa-\frac{1}{2})C_{HLS}^{\frac{2n-\mu}{n+2-\mu}}\leq\|u\|_{D^{1,2}(\mathbb{R}^n)}^2\leq\big(\kappa+\frac{1}{2}\big)C_{HLS}^{\frac{2n-\mu}{n+2-\mu}},
\end{equation*}
then there exist $\kappa$ bubbles $(W_i)_{i=1}^{\kappa}$ such that
\begin{equation*}
dist_{D^{1,2}}\big(u,\mathcal{M}_0\big)\leq C\big\|\hat{f}\big\|_{(D^{1,2}(\mathbb{R}^n))^{-1}}.
\end{equation*}
Furthermore, for any $i\neq j$, the interaction between the bubbles can be estimated as
\begin{equation*}
\int_{\mathbb{R}^n}\big(|x|^{-\mu}\ast W_i^{p}\big)W_i^{p-1}W_j\leq C\big\|\Delta u+\left(|x|^{-\mu}\ast u^{p}\right)u^{p-1}\big\|_{(D^{1,2}(\mathbb{R}^n))^{-1}}.
\end{equation*}
    \end{cor}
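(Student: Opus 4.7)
The plan is to reduce Corollary \ref{Figalli2} to Theorem \ref{Figalli} via a contradiction/compactness argument in which Theorem A supplies the qualitative input needed to remove the a priori closeness hypothesis \eqref{w-tittle}. Suppose the conclusion fails along a sequence $(u_m)_{m\in\N}$ of nonnegative functions in $D^{1,2}(\mathbb{R}^n)$ satisfying the pinching
\[
(\kappa-\tfrac{1}{2})C_{HLS}^{\frac{2n-\mu}{n+2-\mu}}\leq\|u_m\|_{D^{1,2}(\mathbb{R}^n)}^{2}\leq(\kappa+\tfrac{1}{2})C_{HLS}^{\frac{2n-\mu}{n+2-\mu}},
\]
for which either the distance estimate or the interaction estimate is violated. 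Writing $\hat f_m:=\Delta u_m+(|x|^{-\mu}\ast u_m^{p})u_m^{p-1}$, the pinching gives uniform bounds on both $\|u_m\|_{D^{1,2}(\mathbb{R}^n)}$ and $\|\hat f_m\|_{(D^{1,2}(\mathbb{R}^n))^{-1}}$. If $\|\hat f_m\|_{(D^{1,2}(\mathbb{R}^n))^{-1}}$ does not tend to $0$ along a subsequence, comparing $u_m$ to any fixed reference bubble sum via the triangle inequality bounds $dist_{D^{1,2}}(u_m,\mathcal{M}_0)$ uniformly, so the ratio cannot blow up; hence I may assume $\|\hat f_m\|_{(D^{1,2}(\mathbb{R}^n))^{-1}}\to 0$.

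Theorem A then supplies $\kappa$-tuples $(\xi_i^{(m)},\lambda_i^{(m)})$ with $\widetilde W_i^{(m)}:=W[\xi_i^{(m)},\lambda_i^{(m)}]$ such that $\|\nabla(u_m-\sum_{i}\widetilde W_i^{(m)})\|_{L^{2}}\to 0$. The next step I would carry out is to show that $(\widetilde W_i^{(m)})_{i=1}^{\kappa}$ is $\delta_m$-interacting with $\delta_m\to 0$. To this end I would use the identity
\[
\langle\nabla\widetilde W_i^{(m)},\nabla\widetilde W_j^{(m)}\rangle_{L^{2}}=\int_{\mathbb{R}^n}\bigl(|x|^{-\mu}\ast(\widetilde W_i^{(m)})^{p}\bigr)(\widetilde W_i^{(m)})^{p-1}\widetilde W_j^{(m)}\sim c_{n,\mu}\,Q_{ij},
\]
obtained by testing the Euler-Lagrange equation \eqref{ele-1.1} against $\widetilde W_j^{(m)}$, together with the expansion $\|u_m\|_{D^{1,2}(\mathbb{R}^n)}^{2}=\kappa C_{HLS}^{\frac{2n-\mu}{n+2-\mu}}+2\sum_{i<j}\langle\nabla\widetilde W_i^{(m)},\nabla\widetilde W_j^{(m)}\rangle_{L^{2}}+o(1)$ and the observation that the decomposition furnished by Theorem A is a genuine $\kappa$-bubble profile, so two bubbles cannot merge without reducing the effective profile count. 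These ingredients combine to force each $Q_{ij}(\xi_i^{(m)},\xi_j^{(m)},\lambda_i^{(m)},\lambda_j^{(m)})\to 0$.

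Once the family is $\delta$-interacting below the threshold $\delta(n,\mu,\kappa)$ of Theorem \ref{Figalli} and $\|\nabla(u_m-\sum_{i}\widetilde W_i^{(m)})\|_{L^{2}}\to 0$, Theorem \ref{Figalli} applies for $m$ large and produces a family $(W_i^{(m)})_{i=1}^{\kappa}$ of bubbles satisfying both \eqref{tnu} and \eqref{moreover}. The interaction bound of the corollary is exactly \eqref{moreover}, while the distance estimate \eqref{tnu} upgrades to the claimed linear estimate $dist_{D^{1,2}}(u_m,\mathcal{M}_0)\leq C\|\hat f_m\|_{(D^{1,2}(\mathbb{R}^n))^{-1}}$ by noting that under the structural assumption $(\sharp)$ the function $\tau_{n,\mu}$ satisfies $\tau_{n,\mu}(t)\lesssim t$ throughout the small-$t$ range relevant here, contradicting the assumed blow-up. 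The main obstacle I foresee is the $\delta$-interacting verification (Step 3): turning the coarse energy pinching plus the $D^{1,2}$-closeness into pointwise control of the parameters $(\xi_i,\lambda_i)$ relies on the asymptotic identification $\langle\nabla W_i,\nabla W_j\rangle_{L^{2}}\sim c_{n,\mu}\,Q_{ij}$ in the nonlocal setting, which must be argued carefully since this is the only mechanism tying the pinching to the interaction parameters.
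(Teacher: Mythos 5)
Your overall route is the same as the paper's: reduce to Theorem \ref{Figalli} by using Theorem A to produce, once $\|\hat f\|_{(D^{1,2}(\mathbb{R}^n))^{-1}}$ is small, a $\delta$-interacting family of bubbles within $\delta$ of $u$ in $D^{1,2}$ (the paper's proof is exactly this two-line reduction, with the trivial case of non-small $\|\hat f\|$ left implicit). Two points in your write-up, however, do not hold as stated. First, your mechanism for the $\delta$-interaction is insufficient: with the energy window $(\kappa\pm\tfrac12)C_{HLS}^{\frac{2n-\mu}{n+2-\mu}}$, the expansion $\|u_m\|^2=\kappa C_{HLS}^{\frac{2n-\mu}{n+2-\mu}}+2\sum_{i<j}\langle\nabla\widetilde W_i,\nabla\widetilde W_j\rangle+o(1)$ only yields a uniform bound on the nonnegative interactions, not their vanishing, so the pinching alone cannot force $Q_{ij}\to0$. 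The vanishing of $Q_{ij}$ is instead part of the profile decomposition itself (divergence of the relative scaling/translation parameters; the paper records precisely ``$Q_{ij}\to0$ as $m\to\infty$'' right after Theorem A), so this step should be obtained by quoting that fact rather than by your energy argument.

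Second, and more seriously, the final ``upgrade'' is wrong: it is not true that $\tau_{n,\mu}(t)\lesssim t$ for small $t$ throughout the range $(\sharp)$. Indeed $\tau_{n,\mu}(t)=t^{\frac{n+2}{2(n-2)}}$ when $\mu<n-2$, and $\frac{n+2}{2(n-2)}=\frac{9}{10}<1$ for $n=7$; similarly $\tau_{n,\mu}(t)=t^{2-\frac{\mu}{n-2}}$ with exponent $2-\frac{\mu}{2}<1$ for $n=4$, $\mu\in(2,4)$ and $2-\frac{\mu}{3}<1$ for $n=5$, $\mu\in(3,4)$. In all these cases $\tau_{n,\mu}(t)\geq t$ for $t\leq1$, so Theorem \ref{Figalli} gives only the weaker bound $C\tau_{n,\mu}(\|\hat f\|_{(D^{1,2})^{-1}})$ and your argument does not produce the linear estimate claimed in the corollary (the paper's own proof supplies no such upgrade either; it simply invokes Theorem \ref{Figalli}, so the linear form there is not justified by this reduction). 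As written, this step of your proposal would fail for exactly those parameter ranges; the interaction estimate \eqref{moreover}, by contrast, is transferred correctly.
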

    We conclude this subsection with some remarks.
\begin{Rem}
For $\kappa\geq2$, the authors in \cite{p-y-z24} proved the first stability estimates when $3\leq n<6-\mu$ and $0<\mu<4$, namely that the parameters $n$, $\mu$ are chosen in the range: $$n=3\hspace{2mm}\mbox{and}\hspace{2mm}\mu\in(0,3),\hspace{2mm}\mbox{or}\hspace{2mm}n=4\hspace{2mm}\mbox{and}\hspace{2mm}\mu\in(0,2),\hspace{2mm}\mbox{or}\hspace{2mm}n=5\hspace{2mm}\mbox{and}\hspace{2mm} \mu\in(0,1).$$ It would be worthwhile to consider such issues for quantitative estimates of nonlocal Sobolev inequality in the case $n$ and $\mu$ satisfying $(\sharp)$, namely that
\begin{itemize}
\item[$\bullet$] $n=4$ and $\mu\in[2,4)$, or $n=5$ and $\mu\in[3,4)$.
\item[$\bullet$] $n=5$ and $\mu\in[1,3)$, or $n=6$ and $\mu\in(0,4)$, or $n=7$ and $\mu\in(\frac{7}{3},4)$.
\end{itemize}
\end{Rem}
\begin{Rem}\label{re1.4}
It is interesting to observe that the above parameters is essentially optimal for all $n\geq6-\mu$ and $0<\mu\leq4$. To see this,
 let us define the Riesz potentials by
\begin{equation*}
\mathcal{I}_{\mu}\{f\}(x):=\int|x-y|^{-\mu}f(y)dy.
\end{equation*}
Due to the appearance of nonlocal interaction
part,  the arguments become much more complicated. We need to calculate carefully the order of weight functions $s_{i,l}$, $t_{i,l}$, $\hat{s}_{i,l}$, $\hat{t}_{i,l}$, $\tilde{s}_{i,l}$ and $\tilde{t}_{i,l}$ (where $l=1,2$) for the new total weights $S_j$ and $T_j(j=1,2,3)$ defiend in the section 3. Furthermore, in order to carry out the reduction arguments in suitable weighted spaces, one has to establish new entire convolution estimates and $C^0$ estimates to overcome the difficulties caused by nonlocal interactions.
Moreover, to apply Contraction Mapping Theorem to solve problem \eqref{c19}, we need suitable Lipschitz property of the operator $\mathcal{A}$ defined in section 6, which requires that the integral $\mathcal{I}_{\mu}\{(1+|\lambda_i(x-\xi_i)|^2)^p\}(x)$ is finite and well-defined. Hence the parameters $n$ and $\mu$ need to satisfy the following restrictions
$$0<\mu\leq4\hspace{2mm}\mbox{and}\hspace{2mm}\mu>\frac{n^2-6n}{n-4}.$$
Still, it is important to notice that the parameters $\mu=4$ does not valid for achieving the desired estimates in Lemma \ref{ww101}, Lemma \ref{cll-1-0-1}-Lemma \ref{cll-1}.
\end{Rem}
\subsection{Strategy of the proof}\
\newline
The proof of Theorem \ref{Figalli} for the case $n\geq6-\mu$ follows by adapting the strategy in \cite{DSW21}. However we need to overcome some difficulties caused by the Hartree term.
 Denote the error between $u$ and the best approximation $\sigma=\sum_{i=1}^{\kappa}W_i$ by $\rho$, i.e.
$u=\sigma+\rho.$
We are lead to the following decomposition, i.e.
\begin{equation}\label{u-0}
\Delta \rho+\Phi_{n,\mu}[\sigma,\rho]+\hbar+\mathscr{N}(\rho)-\Delta u-\big(|x|^{-\mu}\ast u^{p}\big)u^{p-1}=0,
\end{equation}
where
\begin{equation}\label{I-FAI-1}
\Phi_{n,\mu}[\sigma,\rho]:=p\Big(|x|^{-\mu}\ast \sigma^{p-1}\rho\Big)
\sigma^{p-1}+(p-1)\Big(|x|^{-\mu}\ast\sigma^{p}\Big)
\sigma^{p-2}\rho
\end{equation}
\begin{equation}\label{u-1}
\begin{split}
\hbar:=\Big(|x|^{-\mu}\ast\sigma^{p}\Big)\sigma^{p-1}-\sum_{i=1}^{\kappa}\Big(|x|^{-\mu}\ast W_{i}^{p}\Big)W_{i}^{p-1},
\end{split}
\end{equation}
and
\begin{equation}\label{u-2}
\begin{split}
\mathscr{N}(\rho)
&:=\Big(|x|^{-\mu}\ast(\sigma+\rho)^{p}\Big)(\sigma+\rho)^{p-1}
-\Big(|x|^{-\mu}\ast\sigma^{p}\Big)\sigma^{p-1}\\&
~~~-p
\Big(|x|^{-\mu}\ast\sigma^{p-1}\rho\Big)
\sigma^{p-1}-(p-1)
\Big(|x|^{-\mu}\ast\sigma^{p}\Big)
\sigma^{p-2}\rho.
\end{split}
\end{equation}
From \eqref{w-tittle} we know $\|\rho\|_{(D^{1,2}(\mathbb{R}^n))^{-1}}\leq\delta$. Moreover, we will further decompose $\rho=\rho_0+\rho_1$ and prove the existence of the first approximation $\rho_0$ by solving the following system
\begin{equation}\label{coefficients}
	\left\{\begin{array}{l}
		\displaystyle \Delta \phi+\Phi_{n,\mu}[\sigma,\phi]=\hbar+	\sum_{i=1}^{\kappa}\sum_{a=1}^{n+1}c_{a}^{i}\Phi_{n,\mu}[W_{i},\Xi^{a}_i]\hspace{2mm}\mbox{in}\hspace{4mm} \mathbb{R}^n,\\
		\displaystyle 	\int\Phi_{n,\mu}[W_{i},\Xi^{a}_i]\phi=0,\hspace{4mm}i=1,\cdots, \kappa; ~a=1,\cdots,n+1,
	\end{array}
	\right.
\end{equation}
where $c_a^i$ is family of scalars and $\Xi_i^a$ are the rescaled derivative of $W[\xi,\lambda_i]$ defined as follows
\begin{equation}\label{qta}
\begin{split}
&\Xi_i^a=\frac{1}{\lambda_i}\frac{\partial W[\xi,\lambda_i]}{\partial \xi^{a}}\Big|_{\xi=\xi_i}=(2-n)W[\xi,\lambda_i] \frac{\lambda_i(\cdot^{a}-\xi^a)}{1+\lambda^2|\cdot-x|^2}, \hspace{3mm}\mbox{for}\hspace{2mm}i=1,\cdots,\kappa,\\&
\Xi_{i}^{n+1}=\lambda_{i}\frac{\partial W[\xi_{i},\lambda]}{\partial \lambda}\Big|_{\lambda=\lambda_i}=\frac{n-2}{2}W[\xi,\lambda_i]
			\frac{1-\lambda_i^2|\cdot-x|^2}{1+\lambda_i^2|\cdot-x|^2},\hspace{2mm}\mbox{for}\hspace{2mm}i=1,\cdots,\kappa,
\end{split}
\end{equation}
 $\xi^{a}$ is $a$-th component of $\xi$ for $a=1,\cdots,n$.

We need to point out that a key idea in our proof is to set up weight functions and norms (see the section 3.1). Due to this, we can prove the behavior of interaction of bubbles $\hbar$ (see Lemma \ref{estimate1}): there exists a dimensional constant $C$ (depending only on $n$, $\kappa$ and $\mu$) such that
$$\hbar\lesssim \Big( T_1(x) \hspace{2mm}\mbox{if}\hspace{2mm}0<\mu<\frac{n+\mu-2}{2};\hspace{2mm}T_2(x)\hspace{2mm}\mbox{if}\hspace{2mm}\frac{n+\mu-2}{2}\leq\mu<4\Big).$$
From this inequality and together with some estimates of integral quantities, we can deduce a upper bound of coefficients $c_b^j$ in \eqref{coefficients} (see Lemma \ref{cll}). Furthermore, our second goal consists in showing that the existence and point-wise estimate of $\rho_0$ in section 6,
 \begin{equation*}
|\rho_{0}|\lesssim
\left\lbrace
\begin{aligned}
& S_{1}(x),\hspace{4mm}n=5\hspace{2mm}\mbox{and}\hspace{2mm}\mu\in[1,3),\mbox{or}\hspace{2mm}n=6\hspace{2mm}\mbox{and}\hspace{2mm}\mu\in(0,4),\mbox{or}\hspace{2mm}n=7\hspace{2mm}\mbox{and}\hspace{2mm}\mu\in(\frac{7}{3},4),\\
& S_{2}(x),\hspace{4mm}n=4\hspace{2mm}\mbox{and}\hspace{2mm}\mu\in[2,4),\mbox{or}\hspace{2mm}n=5\hspace{2mm}\mbox{and}\hspace{2mm}\mu\in[3,4).
\end{aligned}
\right.
\end{equation*}
To do this, we shall establish a priori estimate $\|\phi\|_{\ast}\lesssim\|\hbar\|_{\ast\ast}$ in Lemma \ref{estimate2}, which
is indeed the core of our argument in this paper. Finally, we show that $L^2$ estimate for $\nabla\rho_0$ and $\nabla\rho_1$ where $\rho=\rho_0+\rho_1$.
Combining all these ingredients, we prove Theorem \ref{Figalli}.

Deducing Theorem \ref{Figalli} is based on delicate the energy method, the
reduction and blow-up argument, which relies on a series of a-priori estimates in sections 4-7. In particular, the proof of Lemma \ref{estimate2} is most challenging part of the entire proof, which relies on a series of estimates:
\begin{itemize}
\item[$(1)$] We define several kinds of weight functions (depending on $n$ and $\mu$) $s_{i,1}$, $s_{i,2}$, $t_{i,1}$, $t_{i,2}$ and $\hat{s}_{i,1}$ $\hat{s}_{i,2}$, $\hat{t}_{i,1}$ and $\hat{t}_{i,2}$ in different ranges that follow by choosing suitable parameters $n$ and $\mu$.
\item[$(2)$] Proving a rough upper bound of solution of \eqref{coefficients} which is depended on the comparative relationship between $\widetilde{H}_{j}(x)s_{i,1}$, $\widetilde{H}_{j}(x)s_{i,2}$, $\widetilde{H}_{j}(x)\hat{s}_{i,1}$, $\widetilde{H}_{j}(x)\hat{s}_{i,2}$, Hartree-type cross terms (as described in Lemma \ref{cll-2} and Lemma \ref{cll-3} below) and $t_{i,1}$, $t_{i,2}$, $\hat{t}_{i,1}$, $\hat{t}_{i,2}$ separately.
   \item[$(3)$]
    To conclude the proof of step \ref{step5.2} in Lemma \ref{estimate2} by proving some useful convergence results in Proposition \ref{converges-0} and Proposition \ref{converges-4}, and the removability of singularities of a solution in Proposition \ref{converges-3}.
\end{itemize}
\subsection{Structure of the paper}\
\newline
The paper is organized as follows. Theorem~\ref{Figalli} is proved in Section 3. We first establish some the key estimates for integral quantities involving two Talenti bubbles and choose appropriate parameters to deduce the convolution terms. Later on, we give a new definition of weight spaces and norms in Section 2.  In order to get existence of $\rho_0$ in Section 6 and the point-wise estimate in Section 7, we analysis that bubbles with weak interaction, the structure of bubbles tree and some $C^0$ estimates of the error function $\rho$ by Green's representation in Section 4. In Section 5, we deduce a important priori estimate for $\|\cdot\|_{\ast}$.

Throughout this paper, $C$ are indiscriminately used to denote various absolutely positive constants. We say that $a\lesssim b$ if $a\leq Cb$, $a\approx b$ if $a\lesssim b$ and $\gtrsim b$.

\section{Estimates of some integral quantities}
In this section we begin by proving a series of estimates by choosing suitable parameters in convolution terms and computing the integral quantities involving two bubbles. The estimates obtained in Lemmas \ref{FPU1}-\ref{B4-1} are crucial in proving our main results. Let $I=\{1,\cdots,\kappa\}$, throughout this paper we denote by $z_i=\lambda_i(x-\xi_i)$, $i\in I$.
We recall some known results that will be used in the next Lemmas.
\begin{lem}\label{FPU1} Given $n\geq6-\mu$, $\mu\in(0,n)$ and $0<\mu\leq4$, let $W_i$ and $W_j$ be two bubbles. Then, for any fixed $\varepsilon>0$ and any nonnegative exponents such that $\tilde{s}+\tilde{t}=2^\ast$, it holds that
\begin{equation*}
\int W_i^{\tilde{s}}W_j^{\tilde{t}}\approx
\left\lbrace
\begin{aligned}
&Q_{ij}^{\min(\tilde{s},\tilde{t})},\quad\hspace{6mm}\hspace{3mm}\quad|\tilde{s}-\tilde{t}|\geq\varepsilon,\\&
Q_{ij}^{\frac{n}{n-2}}\log(\frac{1}{Q_{ij}}),\quad\quad \tilde{s}=\tilde{t},
\end{aligned}
\right.
\end{equation*}
where the quantity
\begin{equation*}	Q_{ij}=\min\Big(\frac{\lambda_i}{\lambda_j}+\frac{\lambda_j}{\lambda_i}+\lambda_i\lambda_j|\xi_i-\xi_j|^2\Big)^{-\frac{n-2}{2}}.
	\end{equation*}
\end{lem}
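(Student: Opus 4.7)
My plan is to prove this by normalizing the bubbles via the scaling/translation invariance of the integrand (which follows from $\tilde s + \tilde t = 2^\ast$) and then estimating the resulting integral through a region decomposition, following the standard strategy in the interaction theory of Aubin--Talenti bubbles. With the change of variable $y = \lambda_i(x - \xi_i)$, I may assume $\xi_i = 0$, $\lambda_i = 1$, and (relabeling $i,j$ if necessary) $\lambda_j \leq 1$; setting $\eta := \xi_j$ and using \eqref{defU}, one is reduced to estimating
\begin{equation*}
\int_{\mathbb{R}^n} W_i^{\tilde s} W_j^{\tilde t} \;\approx\; \lambda_j^{\tilde t(n-2)/2}\int_{\mathbb{R}^n} \frac{dy}{(1+|y|^2)^{\tilde s(n-2)/2}\,(1+\lambda_j^2|y-\eta|^2)^{\tilde t(n-2)/2}},
\end{equation*}
while a direct calculation gives $Q_{ij}^{2/(n-2)} = \lambda_j^{-1}+\lambda_j+\lambda_j|\eta|^2 \approx \lambda_j^{-1}(1+\lambda_j^2|\eta|^2)$, hence $Q_{ij} \approx \lambda_j^{(n-2)/2}(1+\lambda_j^2|\eta|^2)^{-(n-2)/2}$. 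I would then split $\mathbb{R}^n$ into three regions driven by the two concentration scales: a neighborhood $A_1$ of the origin (where $W_i$ concentrates at scale $1$), a neighborhood $A_2$ of $\eta$ of radius $\sim 1/\lambda_j$ (where $W_j$ concentrates), and the complement $A_3$.

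For the generic case $|\tilde s - \tilde t| \geq \varepsilon$, assume WLOG $\tilde s > \tilde t$ so that $\tilde s(n-2)/2 > n/2$ and the first factor is globally $L^1$. On $A_1$ the second factor is essentially frozen at its value at the origin, giving a contribution comparable to $\lambda_j^{\tilde t(n-2)/2}(1+\lambda_j^2|\eta|^2)^{-\tilde t(n-2)/2} \approx Q_{ij}^{\tilde t}$. On $A_2$, after the change of variable $\tilde y = \lambda_j(y-\eta)$ the roles exchange but with the \emph{smaller} exponent $\tilde t(n-2)/2 < n/2$ now in front of the integrable factor, producing an extra power of $\lambda_j$ and hence a strictly smaller contribution; on $A_3$ the full product decay is fast enough to also be lower order. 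The matching lower bound comes by restricting the integral to a small ball around the origin, which yields $\int W_i^{\tilde s}W_j^{\tilde t} \approx Q_{ij}^{\min(\tilde s, \tilde t)}$.

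For the symmetric case $\tilde s = \tilde t = n/(n-2)$, both exponents equal exactly $n/2$, the borderline of $L^1$-integrability of $(1+|y|^2)^{-\tilde s(n-2)/2}$; this is where the logarithmic factor is born. I would refine $A_1$ into dyadic annuli $\{2^k\le |y|\le 2^{k+1}\}$ up to the crossover scale beyond which the second factor starts decaying faster than $|y|^{-n}$. On each annulus $\int (1+|y|^2)^{-n/2}\,dy$ contributes an $O(1)$ term, and summing the $\approx \log(1/Q_{ij}^{1/(n-2)})$ many dyadic levels, together with the frozen value of the second factor, produces the claimed bound $Q_{ij}^{n/(n-2)}\log(1/Q_{ij})$. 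Lower bounds follow from the same dyadic decomposition, and $A_2, A_3$ are again subdominant.

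The main obstacle I anticipate is the bookkeeping in the symmetric case: one must show the dyadic summation yields a factor of $\log(1/Q_{ij})$ rather than one of the individual scales $\log(\lambda_j^{-1})$ or $\log(1+\lambda_j|\eta|^2)$ in isolation, and one must carefully handle the mixed regime $\lambda_j|\eta|\sim 1$. This reconciliation relies on the identity $Q_{ij}\approx \lambda_j^{(n-2)/2}(1+\lambda_j^2|\eta|^2)^{-(n-2)/2}$, which gives $\log(1/Q_{ij})\approx \log(\lambda_j^{-1}) + \log(1+\lambda_j^2|\eta|^2)$, and on choosing $A_1,A_2,A_3$ so that the upper bounds on $A_2,A_3$ and the lower bound on $A_1$ are simultaneously of the claimed order.
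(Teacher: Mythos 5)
The route you chose is the right one: the paper itself gives no argument for this lemma (it simply cites Proposition B.2 of \cite{FG20}), and your normalization via the scaling invariance forced by $\tilde s+\tilde t=2^\ast$, the identity $Q_{ij}\approx\lambda_j^{(n-2)/2}(1+\lambda_j^2|\xi_j|^2)^{-(n-2)/2}$ after setting $\lambda_i=1$, $\xi_i=0$, and the three-region decomposition with a dyadic count in the balanced case is exactly the standard argument behind that reference.

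There is, however, a genuine gap in the unbalanced case: your two reductions are not simultaneously ``without loss of generality''. The statement is symmetric only under the simultaneous exchange $(i,\tilde s)\leftrightarrow(j,\tilde t)$, so once you have rescaled to $\lambda_i=1$ and relabeled so that $\lambda_j\le1$, you are no longer free to also assume $\tilde s>\tilde t$; the configuration in which the larger exponent sits on the flatter bubble $W_j$ is a genuinely distinct case and your analysis does not cover it. In that case the hierarchy you assert fails: the frozen-factor contribution of $A_1$ is $\approx Q_{ij}^{\tilde t}=Q_{ij}^{\max(\tilde s,\tilde t)}$, which is then the \emph{subdominant} term, while the dominant contribution $\approx Q_{ij}^{\tilde s}=Q_{ij}^{\min(\tilde s,\tilde t)}$ comes from the core region $A_2$ of $W_j$ (for instance, with $\xi_j=\xi_i$, $\lambda:=\lambda_j\le1$ and $a:=\tilde s(n-2)/2<n/2$, the intermediate region $1\le|y|\le\lambda^{-1}$ alone already contributes $\approx\lambda^{a}=Q_{ij}^{\tilde s}$, which dominates the $A_1$ term $\lambda^{b}$, $b=\tilde t(n-2)/2$). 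In particular your matching lower bound, obtained by restricting to a small ball around the origin, only produces $Q_{ij}^{\max(\tilde s,\tilde t)}$ there and is too weak. The repair is routine---keep only one of the two orderings as a WLOG and treat both resulting sub-cases, or, when the larger exponent is on $W_j$, run the frozen-factor upper bound and the lower bound on the ball $B(\xi_j,\lambda_j^{-1})$ instead of around the origin---but as written the claimed dominance of $A_1$ and the stated lower bound do not hold in that case. The balanced case $\tilde s=\tilde t$ is unaffected by this issue, and your dyadic outline there, with the reconciliation $\log(1/Q_{ij})\approx\log(1/\lambda_j)+\log(1+\lambda_j^2|\xi_j|^2)$ after normalization, is sound modulo the bookkeeping you yourself flag.
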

\begin{proof}
See the proof of Proposition B.2 in \cite{FG20}.
\end{proof}
\begin{lem}\cite{FG20}\label{FPU2}
Let $\tilde{s}>\tilde{t}>1$ and $\tilde{s}+\tilde{t}=2^{\ast}$. It holds
$$\int W_i^t\inf(W_i^{\tilde{s}},W_i^{\tilde{t}})=O\big(Q_{ij}^{\frac{n}{n-2}}|\log{Q_{ij}}|\big).$$
\end{lem}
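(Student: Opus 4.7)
The plan is to adapt the scaling-and-splitting strategy used for Lemma~\ref{FPU1} (Proposition~B.2 of \cite{FG20}) so as to accommodate the infimum appearing in the integrand. First, normalize one of the two bubbles by scaling. Second, split the integration domain according to which argument of the infimum is smaller, so that on each piece the integrand reduces to an explicit monomial in the two bubbles. Third, bound the resulting pieces by Lemma~\ref{FPU1} together with a direct computation on a ``transition annulus'' to produce the logarithmic factor.

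\textbf{Setup and decomposition.} By translation and dilation invariance I would reduce to the case $\lambda_j=1$, $\xi_j=0$, so that $W_j$ is the standard bubble at the origin and $W_i=W[\xi,\lambda]$ with $Q_{ij}\approx(\lambda+\lambda^{-1}+\lambda|\xi|^2)^{-(n-2)/2}$; a further rotation lets me assume $\xi=|\xi|e_1$. Then I would write $\mathbb{R}^n=\mathcal{R}_1\cup\mathcal{R}_2$, where on each $\mathcal{R}_k$ the infimum is explicitly identified with one of its two arguments. Since both arguments of the infimum are pure powers of bubbles, the region of transition is controlled by the level set $\{W_i\approx W_j\}$, which in the normalized coordinates is a ball (or its complement) of explicit radius $\rho=\rho(\xi,\lambda)$. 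On each region the integrand becomes a monomial $W_i^{a}W_j^{b}$ with $a+b\leq 2^{\ast}$, to which either Lemma~\ref{FPU1} or the elementary bound $\int W_i^{a}W_j^{b}\lesssim Q_{ij}^{\min(a,b)}$ applies.

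\textbf{Extracting the logarithm.} The factor $|\log Q_{ij}|$ is expected to come from the intermediate \emph{transition annulus} $\{x: W_i(x)\approx W_j(x)\}$, not from the ``pure'' regions $\mathcal{R}_k$ where one bubble strictly dominates. After the normalization in Step~1 this annulus has inner and outer radii whose ratio is a negative power of $Q_{ij}$, and on it the integrand behaves like $|x|^{-n}$ up to constants; this is precisely the mechanism that produces the logarithm in the equal-exponent case of Lemma~\ref{FPU1}. Integrating in polar coordinates over the annulus gives exactly a factor $|\log Q_{ij}|$, while the contributions from the complementary regions are absorbed into the prefactor $Q_{ij}^{n/(n-2)}$ by direct estimates using the explicit form $W[\xi,\lambda](x)\approx (\lambda^{-1}+\lambda|x-\xi|^2)^{-(n-2)/2}$.

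\textbf{Main obstacle.} The delicate point is the case analysis across the three degenerate regimes $\lambda\to 0$, $\lambda\to\infty$, and $|\xi|\to\infty$: in each one must check that the contributions from $\mathcal{R}_1$, $\mathcal{R}_2$, and the transition annulus assemble into $Q_{ij}^{n/(n-2)}|\log Q_{ij}|$ rather than into a strictly smaller power (losing the logarithm) or into a larger power. The constraint $\tilde s+\tilde t=2^{\ast}$ with $\tilde s>\tilde t>1$ is used to ensure that the transition annulus gives exactly the borderline exponent $n/(n-2)$ of Lemma~\ref{FPU1}. However, this is essentially bookkeeping: the combinatorics parallel the equal-exponent case of Lemma~\ref{FPU1}, and the result may be viewed as a variant of \cite[Proposition~B.2]{FG20}.
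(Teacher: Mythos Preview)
The paper does not prove this lemma; it is quoted directly from \cite{FG20} (note also that the displayed formula contains typographical slips---one of the bubbles inside the infimum must carry the index $j$, and the outer exponent should read $\tilde t$). Your scaling-and-splitting outline is the computational approach used in that reference, so in spirit you are aligned with the source.

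One caution on the execution. In your second paragraph you propose bounding each restricted integral $\int_{\mathcal R_k} W_i^{a}W_j^{b}$ by the full-space integral and then invoking Lemma~\ref{FPU1} or the bound $\int W_i^{a}W_j^{b}\lesssim Q_{ij}^{\min(a,b)}$. That shortcut would only return $Q_{ij}^{\tilde t}$, which is strictly weaker than $Q_{ij}^{n/(n-2)}|\log Q_{ij}|$ since $\tilde t<n/(n-2)$. The improvement to the exponent $n/(n-2)$ comes precisely from the restriction to $\{W_i\gtrless W_j\}$: on the region where the bubble carrying the larger exponent is the \emph{smaller} one, the mass concentrates near the transition set and the direct polar-coordinate computation you describe in the third paragraph is what actually produces $Q_{ij}^{n/(n-2)}$ (with the logarithm arising as the borderline case). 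So the ``direct estimates using the explicit form of $W[\xi,\lambda]$'' are the essential step, not Lemma~\ref{FPU1}; make sure your write-up does not fall back on the unrestricted bound.
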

x
\begin{lem}\label{FPU3}
Suppose that $n\geq6-\mu$, $\mu\in(0,n)$ and $0<\mu\leq4$, let $W_1$, $W_2$ and $W_3$ be three bubbles with $\delta$-interaction. Moreover we denote $r=\frac{2n}{2n-\mu}$. It holds
\begin{itemize}
\item[$\bullet$] For $n=6-\mu$, we have
\begin{equation}\label{w-w-w-1}
\int W_{1}^{r}W_{2}^rW_{3}^r\lesssim \mathscr{Q}^{\frac{6-\mu}{4-\mu}}|\log{\mathscr{\mathscr{Q}}}|\hspace{2mm}\mbox{with}\hspace{2mm}\mathscr{Q}:=\max\{Q_{12},Q_{13},Q_{23}\}.
\end{equation}
\item[$\bullet$] For $n>6-\mu$, we have
\begin{equation}\label{2-ww-1}
\int W_{1}^{(p-2)r}W_{2}^rW_{3}^r=\left\lbrace
\begin{aligned}
&\mathscr{Q}^{\frac{6(4-\mu)}{8-\mu}}\big|\log{(\frac{1}{\mathscr{Q}})}\big|^{\frac{3(4-\mu)}{8-\mu}},\hspace{7mm}\quad\quad\quad\quad n=4,\\&
\mathscr{Q}^{\frac{(4-\mu)r}{n-2}+\frac{n\mu}{2(2n-\mu)}}\big|\log{(\frac{1}{\mathscr{Q}})}\big|^{\frac{\mu(n-2)}{2(2n-\mu)}}, \quad\quad n>4.
\end{aligned}
\right.
\end{equation}
\end{itemize}
\end{lem}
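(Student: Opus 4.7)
The strategy is to reduce both triple integrals to products of pairwise integrals via Hölder's inequality and apply Lemma~\ref{FPU1} to each pair. Since the sum of the three exponents is $(p-2)r + r + r = pr = 2^{\ast}$, the Hölder split can always be arranged so that every pairwise factor has total exponent $2^{\ast}$, the regime in which Lemma~\ref{FPU1} applies.

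For the case $n = 6 - \mu$ one has $p = 3$, so all three exponents equal $r$ and $3r = 2^{\ast}$. Factoring symmetrically
\[
W_1^r W_2^r W_3^r = \bigl(W_1^{r/2} W_2^{r/2}\bigr)\bigl(W_1^{r/2} W_3^{r/2}\bigr)\bigl(W_2^{r/2} W_3^{r/2}\bigr)
\]
and applying Hölder with $p_1 = p_2 = p_3 = 3$ reduces the triple integral to
\[
\int W_1^r W_2^r W_3^r \leq \prod_{i<j} \Bigl(\int W_i^{3r/2} W_j^{3r/2}\Bigr)^{1/3}.
\]
Each pairwise integral is the equal-exponent case of Lemma~\ref{FPU1} and is bounded by $Q_{ij}^{n/(n-2)}\log(1/Q_{ij})$. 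Using the identity $n/(n-2) = (6-\mu)/(4-\mu)$ and $Q_{ij} \leq \mathscr{Q}$ yields \eqref{w-w-w-1}.

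For the case $n > 6 - \mu$ one has $p < 3$ and $(p-2)r < r$, so a three-factor Hölder decomposition of the form
\[
W_1^{(p-2)r} W_2^r W_3^r = \bigl(W_1^{\alpha_1} W_2^{\beta_1}\bigr)\bigl(W_1^{\alpha_2} W_3^{\gamma_1}\bigr)\bigl(W_2^{\beta_2} W_3^{\gamma_2}\bigr)
\]
should be used, with $\alpha_1 + \alpha_2 = (p-2)r$, $\beta_1 + \beta_2 = \gamma_1 + \gamma_2 = r$, and Hölder exponents $p_i$ chosen so that each factor has total interior exponent $2^{\ast}$ after being raised to $p_i$; the compatibility condition $\sum 1/p_i = 1$ is automatic from $pr = 2^{\ast}$. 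The split is tuned (with appropriate asymmetry between bubble $W_1$ and the pair $W_2,W_3$) so that, for $n = 4$, all three pairs fall into the equal-exponent case of Lemma~\ref{FPU1}, producing three logarithmic factors that combine into a single log with exponent $\tfrac{3(4-\mu)}{8-\mu}$; while for $n > 4$ only the $(W_2,W_3)$-pair lies in the equal case, yielding the single log with exponent $\tfrac{\mu(n-2)}{2(2n-\mu)}$. In each sub-case one then reads off the powers of $Q_{ij}$ via Lemma~\ref{FPU1}, uses $Q_{ij} \leq \mathscr{Q}$, and arrives at \eqref{2-ww-1}.

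The main obstacle is the precise calibration of the split parameters $\alpha_i, \beta_i, \gamma_i$ (and hence the exponents $p_i$) so that the resulting $Q$-powers sum exactly to $(4-\mu)r/(n-2) + n\mu/(2(2n-\mu))$ and the logarithm appears with the stated exponent. One must also verify positivity of all the split parameters and $p_i$ under the standing hypotheses $n \geq 6-\mu$ and $\mu \in (0,4]$, and treat the boundary $n = 4$ (where additional pairs become equal-exponent) separately from $n > 4$. A secondary subtlety is that the symmetric split $\alpha_1 = \alpha_2$, $\beta_1 = \gamma_1$ gives, a priori, a potentially sharper $\mathscr{Q}$-exponent of $n/(n-2)$; the stated bound is recovered by exploiting $\mathscr{Q} < 1$ together with the elementary comparison $n/(n-2) \geq (4-\mu)r/(n-2) + n\mu/(2(2n-\mu))$ for all parameters in the admissible range.
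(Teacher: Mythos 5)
Your treatment of the case $n=6-\mu$ is correct and coincides with the paper's argument: the symmetric factorization, H\"older with exponents $(3,3,3)$, the equal-exponent case of Lemma \ref{FPU1}, the identity $n/(n-2)=(6-\mu)/(4-\mu)$, and monotonicity near $0$ for $\delta$ small, exactly as in the proof of \eqref{w-w-w-1}.

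For $n>6-\mu$, however, you only sketch the shape of the argument and explicitly defer ``the precise calibration of the split parameters'' -- but that calibration \emph{is} the content of \eqref{2-ww-1}: the exponents $\frac{(4-\mu)r}{n-2}+\frac{n\mu}{2(2n-\mu)}$ and $\frac{\mu(n-2)}{2(2n-\mu)}$ must come out of a concrete choice, which you never exhibit or verify to be admissible. The paper's choice is H\"older with exponents $\tilde p=\frac{4(2n-\mu)}{n(4-\mu)}$ (used twice) and $\tilde q=\frac{2(2n-\mu)}{\mu(n-2)}$, so that
\[
\int W_1^{(p-2)r}W_2^rW_3^r \le \Big(\int W_1^{\frac{2(2n-\mu)r}{n(n-2)}}W_2^{\frac{(2n-\mu)r}{n}}\Big)^{\frac{1}{\tilde p}}\Big(\int W_1^{\frac{2(2n-\mu)r}{n(n-2)}}W_3^{\frac{(2n-\mu)r}{n}}\Big)^{\frac{1}{\tilde p}}\Big(\int W_2^{\frac{(2n-\mu)r}{2(n-2)}}W_3^{\frac{(2n-\mu)r}{2(n-2)}}\Big)^{\frac{1}{\tilde q}};
\]
here each pair of interior exponents sums to $\frac{(2n-\mu)r}{n-2}=2^{\ast}$, the last factor is equal-exponent for every $n$ (yielding $Q_{23}^{\frac{n\mu}{2(2n-\mu)}}|\log(1/Q_{23})|^{\frac{\mu(n-2)}{2(2n-\mu)}}$ after raising to $1/\tilde q$), while for $n>4$ the first two factors have strictly unequal exponents with minimum $\frac{2(2n-\mu)r}{n(n-2)}$, each contributing $Q^{\frac{(4-\mu)r}{2(n-2)}}$, and at $n=4$ these two exponents coincide, producing the additional logarithms. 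Without producing such a split (and checking $\frac{2}{\tilde p}+\frac{1}{\tilde q}=1$, $\tilde p,\tilde q\ge1$), your argument does not yet establish the stated exponents. Moreover, your ``secondary subtlety'' is miscalculated: for $n>6-\mu$ the symmetric split gives the total $\mathscr{Q}$-exponent $(p-2)r+\frac{n}{2n-\mu}$, not $\frac{n}{n-2}$, and the comparison $\frac{n}{n-2}\ge\frac{(4-\mu)r}{n-2}+\frac{n\mu}{2(2n-\mu)}$ holds only for $n\ge4$; that remark is neither needed for, nor a substitute for, the explicit calibration above.
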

\begin{proof}
For $n=6-\mu$, $3r=2^{\ast}$, by the H\"{o}lder inequality, we have
$$\int W_{1}^{r}W_{2}^rW_{3}^r\leq\Big(\int W_{1}^{\frac{3r}{2}}W_{2}^\frac{3r}{2}\Big)^{\frac{1}{3}}\Big(\int W_{1}^{\frac{3r}{2}}W_{3}^\frac{3r}{2}\Big)^{\frac{1}{3}}\Big(\int W_{2}^{\frac{3r}{2}}W_{2}^\frac{3r}{2}\Big)^{\frac{1}{3}}.$$
Combining Lemma \ref{FPU1}, we get
$$\int W_{1}^{r}W_{2}^rW_{3}^r\lesssim Q_{12}^{\frac{6-\mu}{3(4-\mu)}}|\log{Q_{12}}|^{\frac{1}{3}}Q_{13}^{\frac{6-\mu}{3(4-\mu)}}|\log{Q_{13}}|^{\frac{1}{3}}Q_{23}^{\frac{6-\mu}{3(4-\mu)}}|\log{Q_{23}}|^{\frac{1}{3}}.$$
Choosing $\delta$ small and the conclusion \eqref{w-w-w-1} follows.

For $n>6-\mu$, since
$$\frac{2}{\tilde{p}}+\frac{1}{\tilde{q}}=1\hspace{2mm}\mbox{with}\hspace{2mm}\tilde{p}:=\frac{4(2n-\mu)}{n(4-\mu)},\hspace{2mm}\tilde{q}:=\frac{2(2n-\mu)}{\mu(n-2)},$$
the H\"{o}lder inequality implies that
\begin{equation*}
\begin{split}
\widetilde{\Re}&=:\int W_{1}^{(p-2)r}W_{2}^rW_{3}^r\\&\leq\Big(\int W_{1}^{\frac{2(2n-\mu)r}{n(n-2)}}W_{2}^\frac{(2n-\mu)r}{n}\Big)^{\frac{1}{\tilde{p}}}\Big(\int W_{1}^{\frac{2(2n-\mu)r}{n(n-2)}}W_{3}^\frac{(2n-\mu)r}{n}\Big)^{\frac{1}{\tilde{p}}}\Big(\int W_{2}^{\frac{(2n-\mu)r}{2(n-2)}}W_{3}^\frac{(2n-\mu)r}{2(n-2)}\Big)^{\frac{1}{\tilde{q}}}.
\end{split}
\end{equation*}
If $N=4$, we have that
$$
\widetilde{\Re}\lesssim Q_{12}^{\frac{2(4-\mu)}{8-\mu}}\big|\log{(\frac{1}{Q_{12}})}\big|^{\frac{4-\mu}{8-\mu}}Q_{13}^{\frac{2(4-\mu)}{8-\mu}}\big|\log{(\frac{1}{Q_{13}})}\big|^{\frac{4-\mu}
{8-\mu}}Q_{23}^{\frac{2\mu}{8-\mu}}\big|\log{(\frac{1}{Q_{23}})}\big|^{\frac{\mu}{8-\mu}}
$$
by Lemma \ref{FPU1}. If $N>4$, we have that
$$
\widetilde{\Re}\lesssim Q_{12}^{\frac{(4-\mu)r}{2(n-2)}}Q_{13}^{\frac{(4-\mu)r}{2(n-2)}}Q_{23}^{\frac{n\mu}{2(2n-\mu)}}\big|\log{(\frac{1}{Q_{23}})}\big|^{\frac{\mu(n-2)}{2(2n-\mu)}}
$$
by Lemma \ref{FPU1}. Noting that the functions $$f(x):=x^{\frac{8-\mu}{2(4-\mu)}}|\log{x}|^{\frac{4-\mu}{8-\mu}}\hspace{2mm}\mbox{and}
\hspace{2mm}g(x):=x^{\frac{n\mu}{2(2n-\mu)}}|\log{x}|^{\frac{\mu(n-2)}{2(2n-\mu)}}$$
 increasing near zero. Combining the above inequalities yield the conclusion provided that $\delta$ is small enough.
\end{proof}
The following lemma holds true:
\begin{lem}\label{wwc101}
Suppose that the exponents $\mu$ and $n$ satisfy  $n\geq6-\mu$, $\mu\in(0,n)$ and $0<\mu\leq4$.  There exists a $\Gamma_1^a>0$ such that
\begin{equation*}
\begin{split}
 \int\Phi_{n,\mu}[W_{j},\Xi^{a}_j]\Xi_{j}^{b}=
\left\lbrace
\begin{aligned}
&0,\hspace{7.3mm}\hspace{2mm}a\neq b,\\
&\Gamma_0^a,\hspace{5mm}\hspace{2mm}1\leq a=b\leq n+1.
\end{aligned}
\right.
\end{split}
\end{equation*}
If $i\neq j$, there holds
\begin{equation*}
\Big|\int\Big(|x|^{-\mu}\ast W_{i}^{p}\Big)W_{i}^{p-2}\Xi_{i}^{a}\Xi_{j}^{b}+\int\Big(|x|^{-\mu}\ast (W_{i}^{p-1}\Xi_{i}^{a})\Big)W_{i}^{p-1}\Xi_{j}^{b}\Big|\lesssim Q_{ij}, \hspace{2mm}\mbox{if}\hspace{2mm}1\leq a,b\leq n+1.
\end{equation*}
Here $\Gamma_0^1=\cdots=\Gamma_0^n$ and $\Gamma^{n+1}_0$ are composed of some $\Gamma$ functions. Moreover,
we have that the following equality holds
\begin{equation*}
\begin{split}
p\int
&\Big(|x|^{-\mu}\ast W_m^{p-1}\partial_{\lambda_m} W_m\Big)
W_m^{p-1}W_i+(p-1)\int
\Big(|x|^{-\mu}\ast W_m^{p}\Big)
W_m^{p-2}W_i\partial_{\lambda_m} W_m\\&
=\int\Big(|x|^{-\mu}\ast W_i^{p}\Big)
W_i^{p-1}\partial_{\lambda_m} W_m.
\end{split}
\end{equation*}
\end{lem}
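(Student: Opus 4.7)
All three assertions rest on one structural fact: each $\Xi_j^a$ is a kernel element of the linearization of \eqref{ele-1.1} at $W_j$, namely
$$\Delta \Xi_j^a + \Phi_{n,\mu}[W_j,\Xi_j^a] = 0, \qquad 1 \leq a \leq n+1,$$
obtained by differentiating \eqref{ele-1.1} in the parameters $(\xi,\lambda)$ and evaluating at $(\xi_j,\lambda_j)$. Combined with the elementary pointwise bounds $|\Xi_j^a| \lesssim W_j$ for every $a$, which are immediate from the explicit formulas \eqref{qta} (the factors $\frac{\lambda_j(x^a-\xi_j^a)}{1+\lambda_j^2|x-\xi_j|^2}$ and $\frac{1-\lambda_j^2|x-\xi_j|^2}{1+\lambda_j^2|x-\xi_j|^2}$ are uniformly bounded by $1$), this identity drives the whole argument.

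For the same-bubble orthogonality, integration by parts against $\Xi_j^b$ using the kernel identity gives
$$\int \Phi_{n,\mu}[W_j,\Xi_j^a]\Xi_j^b = -\int \Delta \Xi_j^a \cdot \Xi_j^b = \int \nabla \Xi_j^a \cdot \nabla \Xi_j^b,$$
and I then argue by parity. For $1 \leq a \leq n$, the translation mode $\Xi_j^a$ is odd under the reflection $x^a-\xi_j^a \mapsto -(x^a-\xi_j^a)$ and even under the other coordinate reflections centered at $\xi_j$, while the dilation mode $\Xi_j^{n+1}$ is radial about $\xi_j$. For every off-diagonal pair $(a,b)$ I pick a reflection that flips the parity of exactly one factor, forcing the Dirichlet pairing to vanish. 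Rotational symmetry yields $\Gamma_0^1 = \cdots = \Gamma_0^n$, and the explicit values of $\Gamma_0^a$ and $\Gamma_0^{n+1}$ follow from standard Beta/$\Gamma$-function identities after translating to the standard bubble $W[0,1]$.

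For the cross-bubble bound ($i \neq j$), I bound the integrands in absolute value using $|\Xi_i^a| \lesssim W_i$ and $|\Xi_j^b| \lesssim W_j$; for the second term, inserting these estimates directly inside the double integral coming from the Riesz convolution — no manipulation of the kernel is needed. Both terms are then majorized by
$$I := \int \big(|x|^{-\mu} \ast W_i^p\big) W_i^{p-1} W_j = \int \nabla W_i \cdot \nabla W_j,$$
where the last equality uses the Euler--Lagrange equation $-\Delta W_i = (|x|^{-\mu}\ast W_i^p)W_i^{p-1}$. Since $W_i$ and $W_j$ have the same pointwise profile as Aubin--Talenti bubbles up to the multiplicative constant $\alpha_{n,\mu}$, the classical interaction estimate (which is a special case of Lemma~\ref{FPU1} with exponents $2^{\ast}-1$ and $1$) gives $\int \nabla W_i \cdot \nabla W_j = O(Q_{ij})$.

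For the final identity, I recognize the left-hand side as $\int \Phi_{n,\mu}[W_m,\partial_{\lambda_m}W_m]\,W_i$. Since $\partial_{\lambda_m}W_m$ is a scalar multiple of $\Xi_m^{n+1}$, the kernel identity at $W_m$ rewrites this as $-\int \Delta(\partial_{\lambda_m}W_m)\,W_i$; two integrations by parts transfer the Laplacian onto $W_i$, and invoking the Euler--Lagrange equation $-\Delta W_i = (|x|^{-\mu}\ast W_i^p)W_i^{p-1}$ delivers the right-hand side exactly. The main obstacle across the whole proof is claim (ii), which is the only part requiring a genuine two-bubble interaction estimate; but this reduces to the classical Talenti bubble interaction via pointwise comparison of profiles, and so the difficulty is essentially bookkeeping rather than anything new.
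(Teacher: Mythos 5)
Your proposal is correct and follows essentially the same route as the paper: the linearized (kernel) identity $\Delta\Xi_j^a+\Phi_{n,\mu}[W_j,\Xi_j^a]=0$ plus integration by parts reduces the diagonal pairing to $\int\nabla\Xi_j^a\cdot\nabla\Xi_j^b$ (handled by symmetry/$\Gamma$-function computations as in Figalli--Glaudo), the final identity is obtained exactly as in the paper by the analogous identity for $\partial_{\lambda_m}W_m$, integration by parts and the Euler--Lagrange equation for $W_i$, and your cross-bubble estimate via $|\Xi|\lesssim W$ and the identity $|x|^{-\mu}\ast W_i^{p}=\widetilde{\alpha}_{n,\mu}W_i^{2^{\ast}-p}$ (equivalently the Dirichlet pairing), reducing to $\int W_i^{2^{\ast}-1}W_j\approx Q_{ij}$, is precisely the argument the paper delegates to \cite{FG20}.
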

\begin{proof}
 It holds:
$$
 \Delta \Xi_j^a+p\Big(|x|^{-\mu}\ast W_{j}^{p-1}\Xi_j^a\Big)W_{j}^{p-1}+(p-1)\Big(|x|^{-\mu}\ast W_{j}^{p}\Big)W_{j}^{p-2}\Xi_j^a=0
$$
for $1\leq a\leq N+1$.
Therefore, it is easy to check that
\begin{equation*}
\begin{split}
p&\int\Big(|x|^{-\mu}\ast W_{j}^{p-1}\Xi_j^a\Big)W_{j}^{p-1}\Xi_j^b+(p-1)\int\Big(|x|^{-\mu}\ast W_{j}^{p}\Big)W_{j}^{p-2}\Xi_j^a\Xi_j^b\\&=\int\nabla\Xi_j^a\nabla\Xi_j^b=C\int U_i^{2^{\ast}-1}\nabla\big(\frac{1}{\lambda_{i}}\frac{\partial U[\xi,\lambda_i]}{\partial \xi^a}\Big|_{\xi=\xi_i}\big)\nabla\big(\frac{1}{\lambda_{i}}\frac{\partial U[\xi,\lambda_i]}{\partial \xi^b}\Big|_{\xi=\xi_i}\big).
\end{split}
\end{equation*}
Moreover, by lemma \ref{p1-00} gives that
\begin{equation*}
|x|^{-\mu}\ast W_m^{p}
=\widetilde{\alpha}_{n,\mu}W_m^{2^{\ast}-p}(x).
\end{equation*}
Hence one can follow from the same argument as in the proof of \cite{FG20}.
Note that it holds
$$\Delta(\partial_{\lambda_m}W_m)+p\Big(|x|^{-\mu}\ast W_{m}^{p-1}\partial_{\lambda_m} W_m\Big)W_{m}^{p-1}+(p-1)\Big(|x|^{-\mu}\ast W_{m}^{p}\Big)W_{m}^{p-2}\partial_{\lambda_m} W_m=0.$$
Then, the conclusion follow by simple integration by parts. The Lemma is obtained.
\end{proof}

\subsection{Some estimates of the convolution terms.}\
\newline
In this subsection we begin by proving a series of convolution estimates by choosing suitable parameters $n$ and $\mu$. Then, we shall list all the constraints of the constants $n$, $\mu$ which are sufficient for the reduction argument scheme to work. We denote in what follows
\begin{equation*}
\begin{split}
&\mathscr{J}_{\mu}^{(1)}(x):=\frac{1}{|x|^{\mu}}\ast \frac{\lambda_i^{n-\mu/2}}{\tau( z_i)^{2p}}\hspace{4mm}\hspace{3mm}\hspace{3mm}\hspace{2mm}\hspace{2mm}\hspace{2mm}\mbox{and}
\hspace{2mm}\mathscr{J}_{\mu}^{(2)}(x):=\frac{1}{|x|^{\mu}}\ast \frac{\lambda_i^{n-\mu/2}}{\tau(z_i)^{(n-2-\epsilon_0)p}},\\& \mathscr{J}_{\mu}^{(3)}(x):=\frac{1}{|x|^{\mu}}\ast \frac{\lambda_i^{n-\mu/2}}{\tau( z_i)^{(n-2-\Theta)p}}\hspace{5mm}\mbox{and}\hspace{2mm}\mathscr{J}_{\mu}^{(4)}(x):=\frac{1}{|x|^{\mu}}\ast \frac{\lambda_i^{n-\mu/2}}{\tau(z_i)^{(n+\mu-2)p/2}}.
\end{split}
\end{equation*}
Here $\Theta>0$ satisfying the following restrictions
\begin{equation}\label{seta}
\Theta<\left\lbrace
\begin{aligned}
&\min\big\{\frac{\mu}{2},\frac{n+2-2\mu}{2}\big\},\hspace{6mm}\quad\hspace{2mm} 0<\mu<\frac{n+\mu-2}{2},\\&
\min\big\{\frac{\mu}{2},2-\frac{\mu}{2}\big\}, \quad\quad\quad\hspace{7mm}\hspace{2mm}\frac{n+\mu-2}{2}\leq\mu<4,\\&
2,\quad\quad\quad\quad\quad\quad\quad\quad\quad\quad\hspace{5mm}\hspace{2mm}\mu=4.
\end{aligned}
\right.
\end{equation}
In order to conclude the Lipschitz property of $\mathscr{N}$ defined in \eqref{u-2}, we establish now the
following key estimate:
\begin{lem}\label{B4}
Suppose that $n\geq6-\mu$, $\mu\in(0,n)$ and $0<\mu\leq4$,
there exist positive small numbers  $\theta_1,\theta_2,\theta_3$ satisfying the follow restrictions
\begin{equation}\label{ceta}
\left\lbrace
\begin{aligned}
&0<\theta_1<4-\mu,\hspace{4mm}\hspace{4mm}\hspace{7mm}\hspace{2mm}n=6,\\
&0<\theta_2\leq\frac{9-4\mu}{5},\hspace{4mm}\hspace{5mm}\hspace{3.5mm}\hspace{2mm}n=7,\\
&0<\theta_3<\frac{\mu(4-\mu)}{8},\hspace{5mm}\hspace{4mm}\hspace{2mm}n=6
\end{aligned}
\right.
\end{equation}
such that, for $i\in\{1,\cdots,\kappa\}$, there holds
\begin{equation*}
\mathscr{J}_{\mu}^{(1)}(x)\lesssim
\left\lbrace
\begin{aligned}
& \frac{\lambda_{i}^{\mu/2}}{\tau( z_i)^{\mu}},\hspace{4mm}\hspace{4mm}\hspace{4mm}\hspace{4mm}\hspace{4mm}\hspace{4mm}\hspace{1mm}\hspace{2mm}\hspace{2mm}n=4\hspace{2mm}\mbox{and}\hspace{2mm}2\leq\mu<4,\\
& \frac{\lambda_{i}^{\mu/2}}{\tau( z_i)^{\min\{\mu,(5+\mu)/3\}}},\hspace{4mm}\hspace{1mm}\hspace{2mm}\hspace{2mm}n=5\hspace{2mm}\mbox{and}\hspace{2mm}1\leq\mu\leq4,\\
& \frac{\lambda_{i}^{\mu/2}}{\tau( z_i)^{2p-n+\mu-\theta_1}},\hspace{3mm}\hspace{4mm}\hspace{3mm}\hspace{2mm}\hspace{2mm}n=6\hspace{2mm}\mbox{and}\hspace{2mm}0<\mu<4,\\
& \frac{\lambda_{i}^{\mu/2}}{\tau( z_i)^{2p-n+\mu-\theta_2}},\hspace{3mm}\hspace{4mm}\hspace{3mm}\hspace{2mm}\hspace{2mm}n=7\hspace{2mm}\mbox{and}\hspace{2mm}\frac{7}{3}<\mu\leq4,
	\end{aligned}
\right.
\end{equation*}
\begin{equation*}
\mathscr{J}_{\mu}^{(2)}(x)\lesssim \frac{\lambda_i^{\mu/2}}{\tau(z_i)^{\mu}},\hspace{4mm}\hspace{2mm}n=4\hspace{2mm}\mbox{and}\hspace{2mm}2\leq\mu<4,\hspace{2mm}\mbox{or}\hspace{2mm}n=5\hspace{2mm}\mbox{and}\hspace{2mm}3\leq\mu<4,
\end{equation*}
\begin{equation*}
\mathscr{J}_{\mu}^{(3)}(x)\lesssim\frac{\lambda_i^{\mu/2}}{\tau( z_i)^{4}},\hspace{4mm}\hspace{2mm}\mu=4\hspace{2mm}\mbox{and}\hspace{2mm}n=5,\hspace{2mm}\mbox{or}\hspace{2mm}n=7,
\end{equation*}
and that, if $\hspace{2mm}n=5\hspace{2mm}\mbox{and}\hspace{2mm}1\leq\mu<3$ or $n=6\hspace{2mm}\mbox{and}\hspace{2mm}0<\mu<4$, or $n=7\hspace{2mm}\mbox{and}\hspace{2mm}\frac{7}{3}<\mu\leq4$.
Then we have that
\begin{equation*}
\mathscr{J}_{\mu}^{(4)}(x)\lesssim\frac{\lambda_{i}^{\mu/2}}{\tau( z_i)^{\mu}},.
\end{equation*}
\end{lem}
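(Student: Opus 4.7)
The plan is to reduce every $\mathscr{J}_{\mu}^{(k)}$ to a single canonical Riesz-type integral and then bound it by splitting the domain into three regions. By the change of variables $w=\lambda_i(y-\xi_i)$ in the convolution with $|x|^{-\mu}$, each $\mathscr{J}_{\mu}^{(k)}$ takes the form
\[
\mathscr{J}_{\mu}^{(k)}(x)=\lambda_i^{\mu/2}\,\mathcal{I}_{\mu,\alpha_k}(z_i),\qquad\mathcal{I}_{\mu,\alpha}(z):=\int_{\mathbb{R}^n}\frac{dw}{|z-w|^{\mu}\,\tau(w)^{\alpha}},
\]
with $\alpha_1=2p$, $\alpha_2=(n-2-\epsilon_0)p$, $\alpha_3=(n-2-\Theta)p$, $\alpha_4=(n+\mu-2)p/2$. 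Hence it suffices to bound $\mathcal{I}_{\mu,\alpha}(z)$ pointwise in $z$, since the prefactor $\lambda_i^{\mu/2}$ already matches the right-hand sides.

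To estimate $\mathcal{I}_{\mu,\alpha}(z)$ for $|z|\gg 1$ I would decompose $\mathbb{R}^n$ into (i) $|w|\leq|z|/2$, where $|z-w|\simeq|z|$ and the contribution is at most $|z|^{-\mu}\int_{|w|\leq|z|/2}\tau(w)^{-\alpha}dw$; (ii) $|w-z|\leq|z|/2$, where $\tau(w)\simeq|z|$ and the contribution is at most $|z|^{-\alpha}\int_{|z-w|\leq|z|/2}|z-w|^{-\mu}dw\simeq|z|^{n-\mu-\alpha}$; and (iii) $|w|\geq 2|z|$, where $|z-w|\simeq|w|$ and the contribution is $\simeq|z|^{n-\mu-\alpha}$ provided $\alpha+\mu>n$. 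For $|z|\lesssim 1$ one has $\tau(z)\simeq 1$ and the integral is clearly finite under $\alpha>n-\mu$ and $\mu<n$. Summing the three contributions yields the master estimate
\[
\mathcal{I}_{\mu,\alpha}(z)\lesssim\tau(z)^{-\min\{\mu,\,\mu+\alpha-n\}}\quad\text{whenever}\quad\alpha>n-\mu,
\]
with a $\log\tau(z)$ factor appearing in the borderline cases $\alpha=n$ or $\mu+\alpha=n$.

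The four claims are then obtained by computing $\min\{\mu,\mu+\alpha_k-n\}$ explicitly, using the fundamental identity $(n-2)p=2n-\mu$. This gives $\mu+\alpha_2-n=n-\epsilon_0 p$, $\mu+\alpha_3-n=n-\Theta p$, and $\mu+\alpha_4-n=\mu(3n-\mu-2)/(2(n-2))\geq\mu$ (the last inequality following from $\mu\leq n+2$), so the minimum equals $\mu$ for $\epsilon_0,\Theta$ small enough, matching the bounds stated for $\mathscr{J}_{\mu}^{(2)}$, $\mathscr{J}_{\mu}^{(3)}$ (with the role of $\mu$ played by $4$ when $\mu=4$), and $\mathscr{J}_{\mu}^{(4)}$. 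For $\mathscr{J}_{\mu}^{(1)}$, a direct computation shows that $\mu+2p-n$ takes the distinct values $4,\,(\mu+5)/3,\,\mu/2,\,(3\mu-7)/5$ in dimensions $n=4,5,6,7$ respectively; taking the minimum with $\mu$ recovers the displayed exponents in dimensions $4$ and $5$, while in dimensions $6$ and $7$ a small $\theta_1$ or $\theta_2$ loss is subtracted to absorb the associated logarithmic factor.

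The delicate part will be the borderline regime $n\in\{6,7\}$ for $\mathscr{J}_{\mu}^{(1)}$, where the master estimate saturates and the naive bound would carry an unavoidable $\log\tau(z_i)$; the role of the parameters $\theta_1,\theta_2,\theta_3$ together with the compatibility constraints \eqref{seta}--\eqref{ceta} is precisely to trade this logarithm for a tiny polynomial loss while preserving $\alpha_k>n-\mu$ for every $k$. This combined integrability requirement is exactly what pins down the admissible range of $(n,\mu)$ and gives the rationale for assumption $(\sharp)$; once all four $\alpha_k$ satisfy $\alpha_k>n-\mu$ and the $\theta_j$ are chosen small enough, every estimate of the lemma follows from the master bound above.
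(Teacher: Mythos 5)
Your proposal follows essentially the same route as the paper's proof: after scaling out $\lambda_i$, both arguments bound a single convolution integral by splitting $\mathbb{R}^n$ into a ball about the kernel singularity, a ball about $z_i$, and an outer region, and your exponent bookkeeping via $(n-2)p=2n-\mu$ correctly reproduces the values $4$, $(\mu+5)/3$, $\mu/2$, $(3\mu-7)/5$ for $\mathscr{J}^{(1)}_{\mu}$ and the role of the constraints $0<\epsilon_0<\frac{(n-2)p-n}{p}$ and $\mu>\frac{n^2-6n}{n-4}$.

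Two points need attention. First, as written your three regions $\{|w|\le|z|/2\}$, $\{|w-z|\le|z|/2\}$, $\{|w|\ge 2|z|\}$ do not cover $\mathbb{R}^n$: the set $\{|z|/2<|w|<2|z|\}\setminus B(z,|z|/2)$ is missing. This is harmless, since there $|z-w|\gtrsim|z|$, $\tau(w)\simeq|z|$ and the volume is $\lesssim|z|^n$, giving the same contribution $|z|^{n-\mu-\alpha}$, but you should either add this case or, as the paper does, take the outer region to be the full complement of the two balls of radius $|z|/2$ centered at $0$ and $z$. Second, your explanation of the $\theta_1,\theta_2$ losses is inaccurate: for $n=6,7$ one has $\alpha_1<n$ and $\mu+\alpha_1>n$ strictly, so no logarithm occurs and your master estimate already yields the sharp exponent $2p-n+\mu$; the losses in the paper come from its cruder outer-region estimate (it extracts $\tau(d)^{-(2p-n+\mu-\theta)}$ and needs $\theta>0$ to make the leftover weight integrable), not from absorbing a log. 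Since $\tau\ge1$, your sharper bound implies the stated one, so this is only a misattribution. Finally, note that the claimed bound $\mathscr{J}^{(3)}_{\mu}\lesssim\tau(z_i)^{-4}$ requires $\alpha_3\ge n$, i.e. $\Theta\le(n-\mu)/p$, which for $n=5$, $\mu=4$ is stricter than the restriction $\Theta<2$ in \eqref{seta}; your hedge that $\Theta$ be chosen small enough is therefore genuinely needed and should be made explicit.
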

\begin{proof}
Let $d=\frac{\lambda_i}{2}|x-\xi_i|>1$. Then
	\begin{equation}\label{zzi3}
\aligned
	\frac{\mathscr{J}_{\mu}^{(1)}}{\lambda_i^{\mu/2}}=\int_{B_{d}(0)}\frac{1}{|y|^{\mu}}\frac{1}{\tau( \lambda_ix-\lambda_i\xi_{i}-y)^{2p}}&%\leq \frac{C}{(1+ d^2)^{T}}\int_{B_{d}(0)}\frac{1}{\big|y\big|^{\mu}}\\&
\lesssim\frac{ 1}{\tau(d)^{2p-(n-\mu)}}.
	\endaligned
\end{equation}
Hence, together with the following restrictions
\begin{equation*}
\left\lbrace
\begin{aligned}
&2\leq\mu<4\hspace{2mm},\hspace{2mm}\hspace{2mm}\hspace{6mm}\hspace{6mm}\hspace{2mm}\hspace{6mm}\hspace{4mm}\hspace{6mm}\hspace{6mm}\hspace{6mm}\hspace{2mm}\hspace{2mm}n=4,\\
&1\leq\mu\leq4\hspace{2mm}\mbox{and}\hspace{2mm}\mu>\frac{n^2-6n}{n-4},\hspace{2mm}\hspace{2mm}\hspace{2mm}\hspace{6mm}\hspace{4mm}\hspace{1mm}\hspace{2mm}\hspace{2mm}n=5,\\
&0<\mu\leq4\hspace{2mm}\mbox{and}\hspace{2mm}\mu>\frac{n^2-6n}{n-4},\hspace{2mm}\hspace{2mm}\hspace{2mm}\hspace{6mm}\hspace{4mm}\hspace{1mm}\hspace{2mm}\hspace{2mm}n\geq6,
\end{aligned}
\right.
\end{equation*}
we obtain
	\begin{equation}\label{zzi1-1}
\begin{split}
	\int_{B_{d}(\lambda_ix-\lambda_i\xi_{i})}\frac{1}{|y|^{\mu}}\frac{1}{\tau( \lambda_ix-\lambda_i\xi_{i}-y)^{2p}}
	&\leq \frac{1}{d^{\mu}}\int_{B_{d}(0)}\frac{1}{\tau(y)^{2p}}\\&
\lesssim
\left\lbrace
\begin{aligned}
& \frac{1}{\tau( d)^{\mu}},\hspace{4mm}\hspace{2mm}\hspace{4mm}\hspace{4mm}\hspace{6mm}\hspace{4mm}\hspace{2mm}n=4 \hspace{2mm}\mbox{and}\hspace{2mm}2\leq\mu<4,\\
& \frac{1}{\tau( d)^{\min\{\mu,(5+\mu)/3\}}},\hspace{2mm}\hspace{2mm}\hspace{2mm}n=5\hspace{2mm}\mbox{and}\hspace{2mm}1\leq\mu\leq4,\\
& \frac{1}{\tau( d)^{2p-n+\mu}},\hspace{4mm}\hspace{4mm}\hspace{4mm}\hspace{2mm}\hspace{2mm}n\geq6\hspace{2mm}\mbox{and}\hspace{2mm}\frac{n^2-6n}{n-4}<\mu\leq4.
	\end{aligned}
\right.
\end{split}
\end{equation}
Assume that $y\in\mathbb{R}^{n}\backslash (B_{d}(0)\cup B_{d}(\lambda_ix-\lambda_i\xi_{i}))$. Then
	\begin{equation}\label{zzzi-2}
	\big|\lambda_ix-\lambda_i\xi_{i}-y\big|\geq\frac{1}{2}\big|\lambda_ix-\lambda_i\xi_{i}\big|,\hspace{2mm}\big|y\big|\geq\frac{1}{2}\big|\lambda_ix-\lambda_i\xi_{i}\big|,
	\end{equation}
We consider three cases separately.\\
$\bullet$
For $n=5$ and $1\leq\mu\leq4$, one can infer from \eqref{zzzi-2} that
	$$
	\frac{1}{|y|^{\mu}}\frac{1}{\tau(\lambda_ix-\lambda_i\xi_{i}-y)^{2p}}\leq \frac{ C}{\tau( d)^{\min\{\mu,(5+\mu)/3\}}}\frac{1}{|y|^{\mu}}\frac{1}{\tau(\lambda_ix-\lambda_i\xi_{i}-y)^{2p-\min\{\mu,(5+\mu)/3\}}}.
	$$
	If $|y|\leq2|\lambda_ix-\lambda_i\xi_{i}|$, then we can also compute
\begin{equation}\label{zzi2-3}
\begin{split}
	\frac{1}{|y|^{\mu}}\frac{1}{\tau(\lambda_i x-\lambda_i \xi_{i}-y)^{2p-\min\{\mu,(5+\mu)/3\}}}&\leq
	\frac{1}{|y|^{\mu}}\frac{C}{\tau(\lambda_ix-\lambda_i\xi_{i})^{2\cdot2_{\mu}^{\ast}-\min\{\mu,(5+\mu)/3\}}}\\&\leq \frac{1}{|y|^{\mu}}\frac{C}{\tau(y)^{2p-\min\{\mu,(5+\mu)/3\}}}.
\end{split}
\end{equation}
	If $|y|\geq 2|\lambda_ix-\lambda_i\xi_{i}|$, then we have
\begin{equation}\label{xx1-4}
\big|\lambda_i x-\lambda_i \xi_{i}-y\big|\geq\big|y\big|-\big|\lambda_i x-\lambda_i \xi_{i}\big|\geq\frac{1}{2}\big|y\big|.
\end{equation}
As a consequence, combining with \eqref{xx1-4} entails that
\begin{equation*}
	\frac{1}{|y|^{\mu}}\frac{1}{\tau(\lambda_i x-\lambda_i \xi_{i}-y\big)^{2p-\min\{\mu,(5+\mu)/3\}}}\leq
	\frac{1}{|y|^{\mu}}\frac{C}{\tau(y)^{2p-\min\{\mu,(5+\mu)/3\}}}.
	\end{equation*}
	Therefore, we eventually get that
\begin{equation}\label{n=5}
\begin{split}
\int_{\mathbb{R}^{n}\backslash (B_{d}(0)\cup B_{d}(\lambda_i x-\lambda_i \xi_{i}))}&\frac{1}{|y|^{\mu}}\frac{1}{\tau(\lambda_i x-\lambda_i \xi_{i}-y\big)^{2p}}
\\&\lesssim \frac{ 1}{\tau( d)^{\min\{\mu,(5+\mu)/3\}}}\int\frac{1}{|y|^{\mu}}\frac{1}{\tau( y)^{2p-\min\{\mu,(5+\mu)/3\}}}
\lesssim \frac{1}{\tau(d)^{\min\{\mu,(5+\mu)/3\}}}.
\end{split}
\end{equation}
$\bullet$
For $n=4$ and $2\leq\mu<4$, we have the following estimate
\begin{equation}\label{n=4}
\begin{split}
\int_{\mathbb{R}^{N}\backslash (B_{d}(0)\cup B_{d}(\lambda_i x-\lambda_i \xi_{i}))}&\frac{1}{|y|^{\mu}}\frac{1}{\tau(\lambda_i x-\lambda_i  \xi_{i}-y)^{2p}}
\lesssim \frac{ 1}{\tau( d)^{\mu}}\int\frac{1}{|y|^{\mu}}\frac{1}{\tau( y)^{2p-\mu}}
\lesssim \frac{1}{\tau(d)^{\mu}}.
\end{split}
\end{equation}
$\bullet$
For $n\geq6$ and $0<\mu\leq4$, we similarly compute and get
\begin{equation}\label{n=6}
\begin{split}
\int_{\mathbb{R}^{n}\backslash (B_{d}(0)\cup B_{d}(\lambda_i x-\lambda_i \xi_{i}))}&\frac{1}{|y|^{\mu}}\frac{1}{\tau(\lambda_i x-\lambda_i  \xi_{i}-y\big)^{2p}}
\lesssim \frac{ 1}{\tau( d)^{2p-n+\mu-\theta}}\int\frac{1}{|y|^{\mu}}\frac{1}{\tau(y)^{n-\mu+\theta}}
\lesssim \frac{1}{\tau(d)^{2p-n+\mu-\theta}},
\end{split}
\end{equation}
where $\theta>0$ is a small parameter.
Combining this inequality with \eqref{zzi3}, \eqref{zzi1-1}, \eqref{n=5}, \eqref{n=6} and \eqref{n=4} yield the estimate-$\mathscr{J}_{\mu}^{(1)}$.

The proof of $\mathscr{J}_{\mu}^{(2)}$ is very similar to that of the conclusion-$\mathscr{J}_{\mu}^{(1)}$. Therefore, we will just sketch it.
Firstly, we note that $0<\epsilon_0<\frac{(n-2)p-n}{p}$ which means  $(n-2-\epsilon_0)p-n>0$. Therefore
	\begin{equation}\label{zz-i3}
\aligned
	\int_{B_{d}(0)}\frac{1}{|y|^{\mu}}\frac{1}{\tau(\lambda_i x-\lambda_i \xi_{i}-y)^{(n-2-\epsilon_0)p}}&\leq \frac{C}{\tau(d)^{(n-2-\epsilon_0)p}}\int_{B_{d}(0)}\frac{1}{|y|^{\mu}}%\\&\leq \frac{C d^{N-\mu}}{(1+ d^2)^{(2N-\mu)/2}}
\leq\frac{ C}{\tau(d)^{(n-2-\epsilon_0)p-n+\mu}},
	\endaligned
\end{equation}
Moreover, we compute
	\begin{equation}\label{zzi1}
\aligned
\int_{B_{d}(\lambda_i x-\lambda_i \xi_{i})}\frac{1}{|y|^{\mu}}\frac{1}{\tau(\lambda_i x-\lambda_i\xi_{i}-y)^{(n-2-\epsilon_0)p}}
%&\leq \frac{1}{d^{\mu}}\int_{B_{d}(0)}\frac{1}{(1+|y|^2)^{(2N-\mu)/2}}\\&
\leq \frac{C}{\tau(d)^{\mu}}.
	\endaligned
\end{equation}
%Assume that $y\in\mathbb{R}^{N}\backslash (B_{d}(0)\cup B_{d}(\lambda_i x-\lambda_i \xi_{i}))$. Then
	%\begin{equation}\label{zzzi}
	%\big|\lambda_i x-\lambda_i \xi_{i}-y\big|\geq\frac{1}{2}\big|\lambda_i x-\lambda_i \xi_{i}\big|, \big|y\big|\geq\frac{1}{2}\big|\lambda_i x-\lambda_i \xi_{i}\big|,
	%\end{equation}
	%one can infer from \eqref{zzzi} that
	%$$
	%\frac{1}{\big|y\big|^{\mu}}\frac{1}{\big(1+|\lambda_i x-\lambda_i \xi_{i}-y|^2\big)^{(2N-\mu)/2}}\leq \frac{ C}{(1+ d^2)^{\frac{N}{2}-\tau}}\frac{1}{\big|y\big|^{\mu}}\frac{1}{\big(1+|\lambda_i x-\lambda_i \xi_{i}-y|^2\big)^{(N-\mu+\tau)/2}}.
	%$$
	If $|y|\leq2|\lambda_i x-\lambda_i \xi_{i}|$, then we can also compute
\begin{equation}\label{zzi2}
\begin{split}
\frac{1}{|y|^{\mu}}\frac{1}{\tau(\lambda_i x-\lambda_i \xi_{i}-y)^{(n-2-\epsilon_0)p}}\leq\frac{ C}{\tau(d)^{\mu}}
\frac{1}{|y|^{\mu}}\frac{1}{\tau(y)^{(n-2-\epsilon_0)p-\mu}}.
\end{split}
\end{equation}
If $|y|\geq 2|\lambda_i x-\lambda_i\xi_{i}|$,
then we have
%\begin{equation}\label{xx1}
%\big|\lambda_i x-\lambda_i \xi_{i}-y\big|\geq\big|y\big|-\big|\lambda_i x-\lambda_i \xi_{i}\big|\geq\frac{1}{2}\big|y\big|.
%\end{equation}
%As a consequence, combining with \eqref{xx1} entails that
\begin{equation*}
	\frac{1}{|y|^{\mu}}\frac{1}{\tau(\lambda_i x-\lambda_i \xi_{i}-y)^{(n-2-\epsilon_0)p}}\leq
	\frac{ C}{\tau(d)^{\mu}}
\frac{1}{|y|^{\mu}}\frac{1}{\tau(y)^{(n-2-\epsilon_0)p-\mu}}
	\end{equation*}
	Therefore, we eventually get that
\begin{equation*}
\begin{split}
\int_{\mathbb{R}^{n}\backslash (B_{d}(0)\cup B_{d}(\lambda_i x-\lambda_i \xi_{i}))}&\frac{1}{|y|^{\mu}}\frac{1}{\tau(\lambda_i x-\lambda_i \xi_{i}-y)^{(n-2-\epsilon_0)p}}
\leq \frac{ C}{\tau(d)^{\mu}}\int\frac{1}{|y|^{\mu}}\frac{1}{\tau(y)^{(n-2-\epsilon_0)p-\mu}}
\leq \frac{C}{\tau(d)^{\mu}}.
\end{split}
\end{equation*}
Combining this inequality with \eqref{zz-i3}-\eqref{zzi1} and \eqref{zzi2} yields the conclusion. $\mathscr{J}_{\mu}^{(2)}$, $\mathscr{J}_{\mu}^{(3)}$ and $\mathscr{J}_{\mu}^{(4)}$ is derived by simple computations similar to the estimate of $\mathscr{J}_{\mu}^{(1)}$. Thus, the result easily follows.
\end{proof}
The following important estimates will be used later to conclude the proofs of Lemma \ref{estimate1} and Lemmas \ref{cll-0}-\ref{cll-1}.
\begin{lem}\label{B4-1}
	For $n\geq6-\mu$, $\mu\in(0,n)$ and $0<\mu\leq4$, there exist constants $C>0$
and $\Theta>0$ satisfying the restrictions \eqref{seta} such that, for all $i\in\{1,\cdots,\kappa\}$
\begin{equation*}
\frac{1}{\big|x\big|^{\mu}}\ast \frac{\lambda_i^{n-\mu/2}}{\tau(z_i)^{2n-\mu}}
\lesssim\frac{\lambda_i^{\mu/2}}{\tau(z_i)^{\mu}},\hspace{2mm}\frac{1}{\big|x\big|^{\mu}}\ast \frac{\lambda_i^{n-\mu/2}}{\tau(z_i)^{n-\mu+4}}
\lesssim\frac{\lambda_i^{\mu/2}}{\tau(z_i)^{\mu}},\hspace{2mm}\hspace{2mm}\hspace{4mm}\hspace{2mm}\hspace{2mm}n\geq4,
\end{equation*}
\begin{equation*}
\frac{1}{\big|x\big|^{\mu}}\ast \frac{\lambda_i^{n-\mu/2}}{\tau(z_i)^{(3n-\mu+2)/2}}
\lesssim\frac{\lambda_i^{\mu/2}}{\tau(z_i)^{\mu}},\hspace{2mm}\hspace{2mm}n\geq4;\hspace{2mm}\frac{1}{\big|x\big|^{\mu}}\ast \frac{\lambda_i^{n-\mu/2}}{\tau(z_i)^{2n-\mu-\epsilon_0}}
\lesssim\frac{\lambda_i^{\mu/2}}{\tau(z_i)^{\mu}},\hspace{2mm}\hspace{2mm}n=4\hspace{2mm}\mbox{or}\hspace{2mm}n=5,
\end{equation*}
\begin{equation*}
\frac{1}{\big|x\big|^{\mu}}\ast \frac{\lambda_i^{n-\mu/2}}{\tau(
\tilde{z}_i)^{2n-2-\mu/2}}
\lesssim\frac{\lambda_i^{\mu/2}}{\tau(\tilde{z}_i)^{\mu}},\hspace{2mm}\hspace{2mm}\frac{1}{\big|x\big|^{\mu}}\ast \frac{\lambda_i^{n-\mu/2}}{\tau(\tilde{z}_i)^{n-\frac{\mu}{2}}}
\lesssim\frac{\lambda_i^{\mu/2}}{\tau(\tilde{z}_i)^{\frac{\mu}{2}-\Theta}},\hspace{2mm}\tilde{z}_i=z_i/\mathscr{R}_{ij},\hspace{2mm}\hspace{2mm}n\geq4,
\end{equation*}
\begin{equation*}
\frac{1}{\big|x\big|^{\mu}}\ast \frac{\lambda_i^{n-\mu/2}}{\tau(\tilde{z}_i)^{n-2+(n-\mu+2)/2}}
\lesssim\frac{\lambda_i^{\mu/2}}{\tau(\tilde{z}_i)^{\min\{(n+\mu-2)/2,\mu\}}},\hspace{4mm}\tilde{z}_i=z_i/\mathscr{R}_{12},\hspace{4mm}\hspace{2mm}n\geq4.
\end{equation*}
\end{lem}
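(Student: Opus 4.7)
My plan is to prove each inequality by the three-region decomposition that was introduced in the proof of Lemma \ref{B4}. After the rescaling $y\mapsto\lambda_i\xi_i+y/\lambda_i$, the prefactor $\lambda_i^{\mu/2}$ on the right-hand side is produced automatically, and the problem reduces for each estimate to a scale-free convolution bound of the form
\[
\int_{\mathbb{R}^n}\frac{1}{|y|^{\mu}}\,\frac{1}{\tau(w-y)^{\alpha}}\,dy\;\lesssim\;\frac{1}{\tau(w)^{\beta}},
\]
for the appropriate pair $(\alpha,\beta)$ dictated by each inequality, with $w=z_i$ in the first four estimates and $w=\tilde{z}_i$ in the last three (the change of variable $v=w/\mathscr{R}_{ij}$ reduces the rescaled cases back to the same structure). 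Setting $d=\tfrac{1}{2}|w|$, I decompose $\mathbb{R}^n=B_d(0)\cup B_d(w)\cup R_{\text{out}}$ and estimate each region separately.

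\textbf{Region-by-region estimates.} On $B_d(0)$ one has $\tau(w-y)\gtrsim\tau(d)$, so the piece is bounded by $\tau(d)^{-\alpha}\int_{B_d(0)}|y|^{-\mu}\,dy\lesssim\tau(d)^{-\alpha+n-\mu}$, provided $\mu<n$. On $B_d(w)$ one uses $|y|\gtrsim d$ together with the integrable decay $\tau(\cdot)^{-\alpha}$ (valid provided $\alpha>n$) to obtain a contribution of size $\tau(d)^{-\mu}$. On $R_{\text{out}}$ both $|y|$ and $|w-y|$ are $\gtrsim d$, so factoring out $\tau(d)^{-\beta}$ reduces the remainder to $\int_{\mathbb{R}^n}|y|^{-\mu}\tau(y)^{\beta-\alpha}\,dy$, which converges whenever $\alpha-\beta>n-\mu$. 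Summing the three contributions and taking $\beta=\mu$ yields the claimed bounds for the first five estimates; a direct check shows that the corresponding exponents $\alpha\in\{2n-\mu,\,n-\mu+4,\,(3n-\mu+2)/2,\,2n-\mu-\epsilon_0,\,2n-2-\mu/2\}$ all satisfy $\alpha>n$ and $\alpha-\mu>n-\mu$ in the stated dimensional ranges, using $\epsilon_0$ sufficiently small in the fourth case.

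\textbf{The borderline rescaled estimates.} For the inequality with $\alpha=n-\mu/2$ the choice $\beta=\mu/2$ is no longer admissible, because the outer integral $\int|y|^{-\mu}\tau(y)^{-\mu/2}\,dy$ diverges. This is precisely why a loss $\Theta$ is needed: trading $\tau(d)^{-\mu/2}$ for $\tau(d)^{-(\mu/2-\Theta)}$ in the outer region leaves a convergent integral $\int|y|^{-\mu}\tau(y)^{-(n-\mu+\Theta)}\,dy$ under the condition $\Theta<\min(\mu/2,(n+2-2\mu)/2)$ in the regime $0<\mu<(n+\mu-2)/2$, and under the analogous conditions from the other branches of \eqref{seta} in the remaining regimes; this is exactly the restriction imposed on $\Theta$. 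For the estimate with $\alpha=n-2+(n-\mu+2)/2$ the right-hand side is $\tau(\tilde z_i)^{-\min((n+\mu-2)/2,\mu)}$, and this minimum appears naturally because the optimal $\beta$ is governed by whether the $B_d(0)$-contribution $\tau(d)^{-(\alpha-n+\mu)}=\tau(d)^{-(n+\mu-2)/2}$ or the outer piece $\tau(d)^{-\mu}$ dominates.

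\textbf{Main obstacle.} The bulk of the work is the bookkeeping of admissible exponents: for every inequality one must simultaneously satisfy $\mu<n$ (integrability near the origin), $\alpha>n$ (integrability of $\tau(\cdot)^{-\alpha}$ at infinity) and $\alpha-\beta>n-\mu$ (convergence in the outer region). These constraints are tightest in the $\tilde z_i$-estimates and at the boundary value $\mu=4$, which is why the hypothesis $(\sharp)$ and the precise choice of $\Theta$ in \eqref{seta} are exactly what is needed to make all three conditions hold simultaneously; verifying that no single inequality in the statement violates one of them is the only genuinely subtle step, while the three-region estimate itself is essentially mechanical once the exponents are pinned down.
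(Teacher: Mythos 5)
Your overall scheme is exactly the one the paper intends: its ``proof'' of Lemma~\ref{B4-1} is a single sentence referring back to Lemma~\ref{B4}, and your three-region decomposition around $B_d(0)$, $B_d(w)$ and the exterior, with the admissibility bookkeeping $\mu<n$, $\alpha>n$, $\alpha-\beta>n-\mu$, is precisely that argument. For the four estimates expressed in the variable $z_i$ your exponent checks are correct in the interior of the parameter range; note only that at the endpoints the scheme (and in fact the statement itself) degenerates: for the second estimate at $\mu=4$ one has $\alpha=n$, and for the last estimate at $\mu=n-2$ one has $\alpha=n$ and $\alpha-\beta=n-\mu$, in which cases the middle ball and the outer region each produce a logarithmic loss, so your blanket claim that ``$\alpha>n$ and $\alpha-\mu>n-\mu$ in the stated ranges'' is not quite accurate; this looseness is shared by the paper (cf.\ its own caveats about $\mu=4$). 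Also, the divergent integral that forces the loss $\Theta$ should be $\int|y|^{-\mu}\tau(y)^{-(n-\mu)}dy$ rather than $\int|y|^{-\mu}\tau(y)^{-\mu/2}dy$; your subsequent convergent integral $\int|y|^{-\mu}\tau(y)^{-(n-\mu+\Theta)}dy$ is the right one, and for it any $\Theta>0$ with $\Theta<\mu/2$ suffices.

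The one step I would not call mechanical is your reduction of the three $\tilde z_i$-estimates, where you assert that the change of variable $v=w/\mathscr{R}_{ij}$ ``reduces the rescaled cases back to the same structure.'' The kernel $|x|^{-\mu}$ lives at the $x$-scale while $\tau(\tilde z_i)$ lives at scale $\mathscr{R}_{ij}/\lambda_i$, so the honest substitution gives $\frac{1}{|x|^{\mu}}\ast\frac{\lambda_i^{n-\mu/2}}{\tau(\tilde z_i)^{\alpha}}=\lambda_i^{\mu/2}\,\mathscr{R}_{ij}^{\,n-\mu}\int|\tilde z_i-u|^{-\mu}\tau(u)^{-\alpha}\,du$; evaluating at $x=\xi_i$ shows the left-hand side is already of size $\lambda_i^{\mu/2}\mathscr{R}_{ij}^{\,n-\mu}$, so the displayed inequalities, read literally as convolutions in $x$, cannot follow from (and are in fact stronger than) the scale-free bound your regional analysis establishes; they hold only with the extra factor $\mathscr{R}_{ij}^{\,n-\mu}$, equivalently as scale-free bounds in the $\tilde z_i$ variable, which is evidently the intended content given how they are invoked in \eqref{0h}. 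Your proposal should say this explicitly (prove the scale-free inequality in the rescaled variable, or carry the factor), because as written the sentence about the change of variables is precisely the place where a literal reading of the statement would make the argument fail, and it is also the only point at which your otherwise correct reconstruction diverges from what can actually be proved.
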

\begin{proof}
The argument is similar to the proof of Lemma \ref{B4}. Here we omit the details.
\end{proof}

\section{Proof of the main result}
The aim of the section is to prove the main theorem. As a preparation step for the proof of Theorem \ref{Figalli}, we first perform a Lyapunov-Schmidt reduction in a weighted space. In this way, we can establish good $C^0$ estimates for the error term $\rho$ in sections 4-5 and a point-wide estimate of $\rho_0$ in section 7.
\begin{Def}\label{del-1}
Let $W_i$ and $W_j$ be two bubbles, if $\mathscr{R}_{ij}=\sqrt{\lambda_i\lambda_j}|\xi_i-\xi_j|$, then we call them a bubble cluster, other call them a bubble tower.
We also set
\begin{equation}\label{R1}
\mathscr{R}_{ij}=\max\Big\{\sqrt{\lambda_i/\lambda_j}, \sqrt{\lambda_j/\lambda_i}, \sqrt{\lambda_i\lambda_j}|\xi_i-\xi_j|\Big\}=\Re_{ij}^{-1}\quad\text{if}\quad i\neq j\in I,
\end{equation}
and
 $$\mathscr{R}:=\frac{1}{2}\min\limits_{i\neq j}\big\{\mathscr{R}_{ij}:~i,j=1,\cdots,\kappa,~i\neq j\big\}.$$
\end{Def}
\subsection{The weighted space and its norm}
We carry out the Lyapunov-Schmidt reduction argument in a weighted space so that we can establish some good estimates of error terms.
For simplicity of notation, we define three kinds of weight functions depending on $n$ and $\mu$ as follows:
\begin{itemize}
\item[$\bullet$]
For any $0<\mu<\frac{n+\mu-2}{2}$,
\begin{equation*}
\begin{split}
&s_{i,1}(x,\mathscr{R}):=\frac{\lambda_i^{(n-2)/2}}{\tau(z_i)^{2}\mathscr{R}^{n-2}}\chi_{\{|z_i|\leq\mathscr{R}\}},\hspace{6mm}
s_{i,2}(x,\mathscr{R})=\frac{\lambda_{i}^{(n-2)/2}}{\tau(z_i)^{(n+\mu-2)/2}\mathscr{R}^{(n-\mu+2)/2}}\chi_{\{|z_i|>\mathscr{R}\}},\\&
t_{i,1}(x,\mathscr{R}):=\frac{\lambda_i^{(n+2)/2}}{\tau(z_i)^{4}\mathscr{R}^{n-2}}\chi_{\{|z_i|\leq\mathscr{R}\}},\hspace{7mm}
t_{i,2}(x,\mathscr{R}):=\frac{\lambda_{i}^{(n+2)/2}}{\tau(z_i)^{(n+\mu+2)/2}\mathscr{R}^{(n-\mu+2)/2}}\chi_{\{|z_i|>\mathscr{R}\}}.
\end{split}
\end{equation*}
\item[$\bullet$]
For any $\frac{n+\mu-2}{2}\leq\mu<4$, and $0<\epsilon_0<\frac{(n-2)p-n}{p}$ is a parameter,
\begin{equation*}
\begin{split}
&\hat{s}_{i,1}(x,\mathscr{R}):=\frac{\lambda_i^{(n-2)/2}}{\tau(z_i)^{2}\mathscr{R}^{2n-4-\mu}}\chi_{\{|z_i|\leq\mathscr{R}\}},\hspace{6mm}\hat{s}_{i,2}(x,\mathscr{R}):=\frac{\lambda_{i}^{(n-2)/2}}{\tau(z_i)^{n-2-\epsilon_0}\mathscr{R}^{n-\mu+\epsilon_0}}\chi_{\{|z_i|>\mathscr{R}\}},\\&
\hat{t}_{i,1}(x,\mathscr{R}):=\frac{\lambda_i^{(n+2)/2}}{\tau(z_i)^{4}\mathscr{R}^{2n-4-\mu}}\chi_{\{|z_i|\leq\mathscr{R}\}},\hspace{6mm}\hat{t}_{i,2}(x,\mathscr{R}):=\frac{\lambda_{i}^{(n+2)/2}}{\tau(z_i)^{n-\epsilon_0}\mathscr{R}^{n-\mu+\epsilon_0}}\chi_{\{|z_i|>\mathscr{R}\}}.
\end{split}
\end{equation*}
\item[$\bullet$]
When $\mu=4$ and the parameter $\Theta<2$,
\begin{equation*}
\begin{split}
&\tilde{s}_{i,1}(x,\mathscr{R}):=\frac{\lambda_i^{(n-2)/2}}{\tau(z_i)^{2}\mathscr{R}^{n-2}}\chi_{\{|z_i|\leq\mathscr{R}\}},\hspace{6mm}\tilde{s}_{i,2}(x,\mathscr{R}):=\frac{\lambda_{i}^{(n-2)/2}}{\tau(z_i)^{n-2-\Theta}\mathscr{R}^{n-\mu}}\chi_{\{|z_i|>\mathscr{R}\}},\\&
\tilde{t}_{i,1}(x,\mathscr{R}):=\frac{\lambda_i^{\frac{n+2}{2}}}{\tau(z_i)^{4}\mathscr{R}^{n-2}}\chi_{\{|z_i|\leq\mathscr{R}\}},\hspace{6mm}\tilde{t}_{i,2}(x,\mathscr{R}):=\frac{\lambda_{i}^{(n+2)/2}}{\tau(z_i)^{n-\Theta}\mathcal{R}^{n-\mu}}\chi_{\{|z_i|>\mathscr{R}\}}.
\end{split}
\end{equation*}
\end{itemize}
Here the parameter $\Theta>0$ from \eqref{seta}, $\tau(z_i)=(1+|z_i|^2)^{1/2}$ with $z_i=\lambda_i(x-\xi_i)$ throughout the paper.
For the functions $\phi$ and $\mathscr{R}$,
we define the following weighted norms $\|\cdot\|_{\ast}$ and $\|\cdot\|_{\ast\ast}$ that will help us to characterize the behavior of the interaction term $\hbar$.
\begin{Def}\label{st-11}
Define the norm $\|\cdot\|_{\ast}$ as
\begin{equation}\label{f-ai}
\|\phi\|_{\ast}=\sup_{x\in\mathbb{R}^n}\big|\phi(x)\big|S_{j}^{-1}(x),\hspace{2mm}j=1,2, 3,
\end{equation}
and the norm $\|\cdot\|_{\ast\ast}$ as
\begin{equation}\label{h}
\|\hbar\|_{\ast\ast}=\sup_{x\in\mathbb{R}^n}\big|\hbar(x)\big|T_{j}^{-1}(x),\hspace{2mm}j=1,2, 3,
\end{equation}
with the weights
\begin{equation}\label{zv4}
\left\lbrace
\begin{aligned}
 &S_1(x)=\sum_{i=1}^{\kappa}\big[s_{i,1}(x,\mathscr{R})
+s_{i,2}(x,\mathscr{R})\big]\hspace{3mm}\quad\quad 0<\mu<\frac{n+\mu-2}{2},\\
&S_2(x)=\sum_{i=1}^{\kappa}\big[\hat{s}_{i,1}(x,\mathscr{R})
+\hat{s}_{i,2}(x,\mathscr{R})\big]\quad\quad \hspace{3mm}\frac{n+\mu-2}{2}\leq\mu<4,\\
&S_3(x)=\sum_{i=1}^{\kappa}\big[\tilde{s}_{i,1}(x,\mathscr{R})
+\tilde{s}_{i,2}(x,\mathscr{R})\big]\quad\quad\quad  \mu=4,
\end{aligned}
\right.
\end{equation}
and
\begin{equation}\label{zv5}
\left\lbrace
\begin{aligned}
&T_1(x)=\sum_{i=1}^{\kappa}\big[t_{i,1}(x,\mathscr{R})
+t_{i,2}(x,\mathscr{R})\big]\hspace{5mm}\quad\quad 0<\mu<\frac{n+\mu-2}{2},\\
&T_2(x)=\sum_{i=1}^{\kappa}\big[\hat{t}_{i,1}(x,\mathscr{R})
+\hat{t}_{i,2}(x,\mathscr{R})\big]\quad\quad\hspace{5mm}\frac{n+\mu-2}{2}\leq\mu<4,\\
&T_3(x)=\sum_{i=1}^{\kappa}\big[\tilde{t}_{i,1}(x,\mathscr{R})
+\tilde{t}_{i,2}(x,\mathscr{R})\big]\quad\quad\hspace{5mm} \mu=4.
\end{aligned}
\right.
\end{equation}
\end{Def}
\begin{Rem}\label{stxqi}
It is worth mentioning that, the weight functions $S_j(x)$ and $T_j(x)$ defined in \eqref{zv4} and \eqref{zv5} depend on the parameters $n$ and $\mu$. However, the case $n=4$ is not valid for achieving the desired estimates in Lemma \ref{ww101}. With the above choice of $n$ and $\mu$, we indicate all the parameters used in different norms.
\begin{itemize}
\item[$\bullet$] The weight functions $S_1(x)$ and $T_1(x)$ are defined in \eqref{zv4} and \eqref{zv5} if the parameters $n$, $\mu$ are chosen in the range: $n=5$ and $\mu\in[1,3)$, or $n=6$ and $\mu\in(0,4)$,  or $n=7$ and $\mu\in(\frac{7}{3},4)$.
\item[$\bullet$] The weight functions $S_2(x)$ and $T_2(x)$ are defined in \eqref{zv4} and \eqref{zv5} if the parameters $n$, $\mu$ are chosen in the range: $n=4$ and $\mu\in[2,4)$, or $n=5$ and $\mu\in[3,4)$.
\end{itemize}
\end{Rem}
\begin{Rem}\label{stxqi-1}
In Lemma \ref{cll-1}, we need to establish the behavior of $\phi$ at infinity can be bounded by $\|\hbar\|_{\ast\ast}$ up to a multiplicative constant and some small error. Therefore, it is necessary to be given that $\tilde{\mathbf{s}}_{i,in}$ and $\tilde{\mathbf{s}}_{i,out}$ decay faster $s_{i,1}$ and $s_{i,2}$,
$\bar{\mathbf{s}}_{i,in}$ and $\bar{\mathbf{s}}_{i,out}$ decay faster $\hat{s}_{i,1}$ and $\hat{s}_{i,2}$ but when the power of $\tau(z_i)$ in weight functions of the outer region is larger than or equal to $n-2$, the behavior of $s_{i,2}$ and $\hat{s}_{i,2}$ like that is disrupted. Moreover, the critical decay of $\hat{s}_{i,2}$ (in the sense of $\|\cdot\|_{2^{\ast}}$) does not occur which means $\epsilon_0\neq1$ in dimension $n=4$.
\end{Rem}
By utilizing the above norms, we will find a function $\rho_0$ and a set of scalars $\{c_{a}^{i}\}$ such that the following system is satisfied
\begin{equation}\label{c1}
	\left\{\begin{array}{l}
		\displaystyle \Delta \phi+\Phi_{n,\mu}[\sigma,\phi]=\hbar+
		\sum_{i=1}^{\kappa}\sum_{a=1}^{n+1}c_{a}^{i}\Phi_{n,\mu}[W_{i},\Xi^{a}_i]\hspace{4mm}\mbox{in}\hspace{2mm} \mathbb{R}^n,\\
		\displaystyle 	\int\Phi_{n,\mu}[W_{i},\Xi^{a}_i]\phi=0,\hspace{4mm}i=1,\cdots, \kappa; ~a=1,\cdots,n+1.
	\end{array}
	\right.
\end{equation}
To this end, our purpose in what follows is to prove the following results.
\begin{lem}\label{ww10}
There exist $m_0 > 0$ and a constant $C > 0$, independent of $m$, such that for all
$\delta_0\leq\delta$ and all $\hbar\in L^{\infty}(\mathbb{R}^n)$, the system \eqref{c1} has a unique solution $\phi \equiv \mathcal{L}_\delta(\hbar)$ such that for
\begin{equation*}
	\|\mathcal{L}_\delta(\hbar)\|_{\ast}\leq C\|\hbar\|_{\ast\ast},\quad |c_a^i|\leq C\delta\|\hbar\|_{\ast\ast}.
\end{equation*}
\end{lem}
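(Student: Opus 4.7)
The plan is to establish this invertibility via a standard Lyapunov--Schmidt scheme on the weighted spaces, following the strategy of \cite{DSW21} but controlling the additional Riesz convolutions in $\Phi_{n,\mu}$ via the pointwise estimates of Section~2. I would organize the proof in three parts: (i) an a priori estimate for $\phi$, (ii) an estimate of the Lagrange multipliers $c_a^i$, (iii) existence and uniqueness through a Fredholm argument.

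\textbf{A priori estimate.} The first and main step is to prove that any solution of \eqref{c1} satisfies $\|\phi\|_{\ast}\le C\bigl(\|\hbar\|_{\ast\ast}+\sum_{i,a}|c_a^i|\bigr)$. I would argue by contradiction and blow-up: suppose sequences $(\phi_m,\hbar_m,\sigma_m,c_a^{i,m})$ exist with $\|\phi_m\|_{\ast}=1$ but $\|\hbar_m\|_{\ast\ast}+\sum|c_a^{i,m}|\to 0$. Writing Green's representation
\begin{equation*}
\phi_m(x)=c_n\int\frac{1}{|x-y|^{n-2}}\Bigl(\Phi_{n,\mu}[\sigma_m,\phi_m]+\hbar_m+\sum_{i,a}c_a^{i,m}\Phi_{n,\mu}[W_i,\Xi_i^{a}]\Bigr)(y)\,dy,
\end{equation*}
and plugging in the convolution bounds of Lemma~\ref{B4} and Lemma~\ref{B4-1} together with the explicit form of $s_{i,l},t_{i,l}$ (and the $\hat{},\tilde{}$ variants), one checks in each of the three regimes of Definition~\ref{st-11} that $(-\Delta)^{-1}T_j\lesssim S_j$ and $(-\Delta)^{-1}\Phi_{n,\mu}[\sigma_m,S_j]=o(1)\,S_j$ under the assumption $(\sharp)$. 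This yields $|\phi_m(x)|\le \bigl(o(1)+C\|\hbar_m\|_{\ast\ast}\bigr)S_j(x)+O(\sum|c_a^{i,m}|)S_j(x)$, forcing $\|\phi_m\|_{\ast}\to 0$, contradiction. To rule out the possibility that $\|\phi_m\|_{\ast}$ concentrates on a single bubble, I would rescale near each $W_i^{(m)}$ by $\tilde\phi_{i,m}(y):=\lambda_i^{-(n-2)/2}\phi_m(\xi_i+y/\lambda_i)$. By the weighted bound and elliptic regularity, a subsequence converges in $C^0_{loc}$ to a bounded limit $\tilde\phi_{i,\infty}$ solving the linearized Hartree equation at the standard bubble $W[0,1]$. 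The orthogonality conditions in \eqref{c1} pass to the limit, and the nondegeneracy theorem of Li et al.\ \cite{XLi} forces $\tilde\phi_{i,\infty}\equiv 0$, closing the contradiction.

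\textbf{Estimate on $c_a^i$.} Testing the first equation of \eqref{c1} against $\Xi_j^b$ and integrating by parts, using $\Delta\Xi_j^b+\Phi_{n,\mu}[W_j,\Xi_j^b]=0$ together with $\int\Phi_{n,\mu}[W_j,\Xi_j^b]\phi=0$, the left-hand side collapses to $\int(\Phi_{n,\mu}[\sigma,\phi]-\Phi_{n,\mu}[W_j,\phi])\Xi_j^b$, which by the convolution estimates and the $\delta$-weak interaction of $\sigma-W_j$ is $O(\delta\|\phi\|_{\ast})$. Combined with $|\int\hbar\,\Xi_j^b|\lesssim \|\hbar\|_{\ast\ast}$ and Lemma~\ref{wwc101} (which gives a block-diagonal principal part with off-diagonal entries $O(Q_{ij})=O(\delta)$), the linear system in $(c_a^i)$ is near-diagonal and inverts to give $|c_a^i|\le C\delta\bigl(\|\hbar\|_{\ast\ast}+\|\phi\|_{\ast}\bigr)$. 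Reinjecting into the a priori bound absorbs the $c_a^i$ contribution and yields the announced estimates.

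\textbf{Existence and uniqueness.} Once the a priori estimate is in place, uniqueness is automatic and existence follows from the Fredholm alternative on the Hilbert subspace $H:=\{\phi\in D^{1,2}(\mathbb{R}^n):\int\Phi_{n,\mu}[W_i,\Xi_i^{a}]\phi=0\text{ for all }i,a\}$. On $H$, the operator $\phi\mapsto \phi-(-\Delta)^{-1}\Phi_{n,\mu}[\sigma,\phi]$ is of the form identity plus compact (compactness coming from HLS combined with the Rellich embedding applied to the Riesz potential term), hence Fredholm of index zero; injectivity from (i) gives surjectivity and the continuous inverse.

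The main obstacle will be the a priori estimate in part (i): the decay rates of $S_j$ and $T_j$ must match exactly with the convolution bounds of Section~2 in every region $\{|z_i|\le\mathscr{R}\}$ and $\{|z_i|>\mathscr{R}\}$, which is precisely what dictates the three weight regimes of Definition~\ref{st-11}, the parameter restriction $(\sharp)$, and the exclusion of the borderline case $\mu=4$ signalled in Remark~\ref{stxqi-1}. The other two parts are then routine modifications of the local case.
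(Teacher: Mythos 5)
Your overall architecture --- a weighted a priori estimate obtained from the Green representation and a blow-up argument, multiplier estimates obtained by testing against $\Xi_j^b$ and using the near-diagonal interaction matrix of Lemma \ref{wwc101}, and existence/uniqueness via the Fredholm alternative on the orthogonal subspace --- is exactly the route the paper takes (its Lemma \ref{estimate2}, Lemma \ref{cll}, and the short displayed proof of Lemma \ref{ww10}). Parts (ii) and (iii) of your plan are correct and match the paper.

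The gap is in part (i). The assertion that $(-\Delta)^{-1}\Phi_{n,\mu}[\sigma_m,S_j]=o(1)\,S_j$ is false in general: near the concentration regions of the bubbles, $\Phi_{n,\mu}[\sigma_m,S_j]$ is of size $T_j$ and its Newtonian potential is of size $S_j$ with a constant of order one, not $o(1)$. This is precisely the content of Lemma \ref{cll-1}, where the coefficient multiplying $S_1$ (resp. $S_2$) is made small only in the exterior region $\mathcal{C}_{ext,M,1}$, by choosing $M$ and $\widetilde{A}$ large; it is not small everywhere. Your rescaling-plus-nondegeneracy step repairs this only in the core regions $\Omega_i^{(m)}$, i.e. on compact sets at scale $\lambda_i^{-1}$ around $\xi_i$ that stay away from the limit points $\xi_{ij}^{(\infty)}$ of the descendant bubbles: there the rescaled sequence converges (Propositions \ref{converges-0}--\ref{converges-4}) and the orthogonality plus the nondegeneracy of \cite{XLi} kill the limit. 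What is left uncovered are the neck regions $D_i^{(m)}$ of a bubble tower or cluster, namely $\{\,|z_i-\xi_{ij}|\le\varepsilon_0\,\}\setminus\{\,|z_j|<M\,\}$, where the supremum of $|\phi_m|S_j^{-1}$ may well be attained: there the blow-up limit is not a solution of the linearized equation at a single bubble (so nondegeneracy does not apply), and the direct potential estimate is not small either. In the paper this is handled by a separate comparison argument with explicit barriers built from the auxiliary function $G(X,Y)$, i.e. Step \ref{step5.3} together with Propositions \ref{mingti2} and \ref{mingti3}, which yield the differential inequalities $-\Delta\bar{S}_1-\Phi_{n,\mu}[\sigma_m,\bar{S}_1]\ge T_1$ (and the analogue for $\hat{S}_2$) and then a maximum-principle step. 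Without this third ingredient your contradiction argument does not close, so the a priori estimate --- and hence the whole lemma --- is not yet established as written.
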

 Lemma \ref{ww10} is established by the following lemma \ref{estimate1}, lemma \ref{cll} and lemma \ref{estimate2} with different right hand sides.
\begin{lem}\label{estimate1}
Let $n\geq4$ and $\kappa\in\mathbb{N}$. There exist a positive constant $\delta=\delta(n,\kappa)>0$ and large constant $C(n,\kappa,\mu)$ such that
\begin{equation}\label{eq3.7}
\Big\|\Big(|x|^{-\mu}\ast\sigma^{p}\Big)\sigma^{p-1}-\sum_{i=1}^{\kappa}\Big(|x|^{-\mu}\ast W_{i}^{p}\Big)W_{i}^{p-1}\Big\|_{\ast\ast}\leq C(n,\kappa,\mu),
\end{equation}
where $C$ depending only on $n$, $\kappa$ and $\mu$.
\end{lem}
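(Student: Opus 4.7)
The plan is to bound $\hbar$ pointwise and then match the resulting decay against $T_j(x)$ in each of the three parameter regimes corresponding to the weights $T_1,T_2,T_3$ of Definition \ref{st-11}. The starting point is the Euler--Lagrange identity $|x|^{-\mu}\ast W_i^{p}=\widetilde{\alpha}_{n,\mu}W_i^{2^{\ast}-p}$ (cf.\ Lemma \ref{wwc101}), which converts each diagonal single-bubble nonlocal factor into a pure power of $W_i$. Writing $\sigma=\sum_iW_i$ and setting $R_1:=\sigma^{p}-\sum_iW_i^{p}$, $R_2:=\sigma^{p-1}-\sum_iW_i^{p-1}$, a direct expansion gives
\[
\hbar=\widetilde{\alpha}_{n,\mu}\sum_{i\neq j}W_i^{2^{\ast}-p}W_j^{p-1}+\bigl(|x|^{-\mu}\ast R_1\bigr)\,\sigma^{p-1}+\widetilde{\alpha}_{n,\mu}\sum_iW_i^{2^{\ast}-p}\,R_2.
\]
Since every admissible $(n,\mu)$ satisfies $p\in[2,3]$, applying the elementary inequalities
\[
\bigl|(a+b)^{p}-a^{p}-pa^{p-1}b\bigr|\lesssim a^{p-2}b^{2}+b^{p},\qquad \bigl|(a+b)^{p-1}-a^{p-1}\bigr|\lesssim a^{p-2}b+b^{p-1},
\]
iteratively to $W_i+\sum_{j\neq i}W_j$ dominates $R_1,R_2$ pointwise by finite sums of two-bubble monomials $W_i^{a}W_j^{b}$ with $i\neq j$.

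Next, every summand of $\hbar$ is either a local product of two bubbles, which I would estimate via the comparison $W_i^{a}W_j^{b}\lesssim \mathscr{R}_{ij}^{-(n-2)\min(a,b)}\min(W_i,W_j)^{a+b}$ underlying Lemmas \ref{FPU1}--\ref{FPU2}, or a bubble times a convolution $|x|^{-\mu}\ast(W_i^{a}W_j^{b})$. The latter I would split according to which bubble dominates locally and reduce by the scaling $y\mapsto\lambda_i y+\xi_i$ to the single-bubble convolution templates $\mathscr{J}_\mu^{(k)}$ of Lemmas \ref{B4}--\ref{B4-1}. The outcome is that every summand of $\hbar(x)$ obeys a schematic pointwise bound of the form $\lambda_i^{(n+2)/2}\tau(z_i)^{-\alpha}\mathscr{R}_{ij}^{-\beta}$ (or its $i\leftrightarrow j$ analogue) with explicit $\alpha,\beta>0$ depending only on $n,\mu$ and on the type of the term.

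To close the proof, I would check these bounds against $T_j(x)$ on the two regions $\{|z_i|\le\mathscr{R}\}$ and $\{|z_i|>\mathscr{R}\}$ and in each of the three regimes $0<\mu<(n+\mu-2)/2$, $(n+\mu-2)/2\le\mu<4$, and $\mu=4$. On the inner region $\tau(z_i)\le\mathscr{R}\le\mathscr{R}_{ij}$ is used to trade a high power of $\tau(z_i)$ for the $\mathscr{R}^{-(n-2)}$ (resp.\ $\mathscr{R}^{-(2n-4-\mu)}$) factor appearing in $t_{i,1}$ (resp.\ $\hat t_{i,1}$, $\tilde t_{i,1}$); the requirement $\mu>(n^{2}-6n)/(n-4)$ in $(\sharp)$ is precisely what provides the necessary exponent surplus. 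On the outer region $\tau(z_i)>\mathscr{R}$ is used in the opposite direction to fit $t_{i,2}$, $\hat t_{i,2}$, or $\tilde t_{i,2}$; the small parameters $\epsilon_0$ and $\Theta$ of \eqref{seta}--\eqref{ceta} absorb the borderline decays produced by $\mathscr{J}_\mu^{(k)}$ in the regimes $\mu\ge(n+\mu-2)/2$ and $\mu=4$. Summing over the finitely many $(i,j)$ pairs then yields \eqref{eq3.7} with a constant depending only on $n,\mu,\kappa$.

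The main obstacle I expect is the bookkeeping: expanding $R_1,R_2$ produces a dozen or so distinct schematic two-bubble products, each of which has to be checked against $T_j(x)$ on both the inner and outer region in each of the three parameter regimes. The analytical input is already supplied by Lemmas \ref{FPU1}--\ref{FPU2} and \ref{B4}--\ref{B4-1}, so no new estimate is needed; it is the matching of exponents at the boundaries $\mu=(n+\mu-2)/2$, $\mu=4$, and $n=6-\mu$ that forces the precise form of the weights in Definition \ref{st-11} and explains the restriction $(\sharp)$.
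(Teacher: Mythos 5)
Your plan follows essentially the same route as the paper's proof of Lemma \ref{estimate1}: after the algebraic splitting (your three groups correspond, via the identity of Lemma \ref{p1-00}, to the paper's terms $III$, $I$ and $II$ in \eqref{R-0} and \eqref{III}), the remainders $R_1,R_2$ are dominated by pairwise monomials exactly as in Lemma \ref{p1} and \eqref{a0b0}--\eqref{a0b1}, the convolutions are reduced to the single-bubble templates of Lemmas \ref{B4}--\ref{B4-1} after rescaling, the case $\kappa\ge3$ is reduced to pairs as in \eqref{ZUI-2}, and the conclusion is the same region-by-region matching of exponents against $T_j$ on $\{|z_i|\le\mathscr{R}\}$ and $\{|z_i|>\mathscr{R}\}$ in the parameter regimes of Definition \ref{st-11}. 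So the route is not genuinely different; the Euler--Lagrange identity only streamlines the bookkeeping.

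However, one explicitly stated step is wrong: the pointwise comparison $W_i^{a}W_j^{b}\lesssim\mathscr{R}_{ij}^{-(n-2)\min(a,b)}\min(W_i,W_j)^{a+b}$, which you invoke to dispose of the purely local cross terms such as $W_i^{2^{\ast}-p}W_j^{p-1}$, is false. For two bubbles of equal scale $\lambda_i=\lambda_j=1$ at distance $R=\mathscr{R}_{ij}$, at the midpoint both bubbles have size $\approx R^{-(n-2)}$, so the left-hand side is $\approx R^{-(n-2)(a+b)}$ while the right-hand side is $\approx R^{-(n-2)(a+b+\min(a,b))}$, smaller by the factor $R^{(n-2)\min(a,b)}$; the same failure occurs at the crossover radius $|z_i|\approx\mathscr{R}_{ij}$ of a bubble tower. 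Lemmas \ref{FPU1}--\ref{FPU2} are integral estimates and do not rest on any such pointwise bound. The correct treatment of these local products is region-dependent: on the core $\{|z_i|\lesssim\mathscr{R}_{ij}\}$ one uses $W_j\lesssim\lambda_i^{(n-2)/2}\mathscr{R}_{ij}^{2-n}$ (and its rescaled refinement), and outside the cores the explicit decay in the rescaled variable $z_i/\mathscr{R}_{ij}$ --- which is precisely where the paper's case analysis (e.g. \eqref{1-h}--\eqref{3-h}, \eqref{3h}, \eqref{hh0}, \eqref{R-3}) does its work. Your argument is salvageable by replacing the quoted inequality with these region-wise bounds, but as written the verification of the cross terms against $T_j$ rests on an invalid premise.
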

The proof will make us of a series of the key lemmas, which is based on the estimates of the convolution terms in the section 2.

\subsection{Behavior of the interaction bubbles}\
\newline
For all $i\in\{1,\cdots,\kappa\}$ and given two bubbles $W_i$ and $W_j$.
Since the bubbles are weakly interacting, if the bubbles $W_i$ and $W_j$ form a bubble tower, then we denote by $\mathcal{C}(W_i)$ the core region of $W_i$ by
\begin{equation*}
\mathcal{C}(W_i):=\Big\{x\in\mathbb{R}^n:\hspace{2mm}|x-\xi_i|<\frac{1}{\sqrt{\lambda_i\lambda_j}}\Big\},
\hspace{2mm}\mathcal{C}(W_j):=\Big\{x\in\mathbb{R}^n:\hspace{2mm}|x-\xi_j|<\frac{1}{\lambda_j}\sqrt{\frac{\lambda_i}{\lambda_j}}\Big\}.
\end{equation*}
If the bubbles $W_i$ and $W_j$ form a bubble cluster, we denote by $\mathcal{D}(W_i)$ the core region of $W_i$ by
\begin{equation*}
\mathcal{D}(W_i):=\Big\{x\in\mathbb{R}^n:\hspace{2mm}|x-\xi_i|<\sqrt{\frac{\lambda_j}{\lambda_i}}|\xi_i-\xi_j|\Big\},
\hspace{2mm}\mathcal{D}(W_j):=\Big\{x\in\mathbb{R}^n:\hspace{2mm}|x-\xi_j|<\sqrt{\frac{\lambda_i}{\lambda_j}}|\xi_i-\xi_j|\Big\}.
\end{equation*}
Now we are in a position to prove Lemma \ref{estimate1}.\\
\emph{\textbf{Proof of Lemma \ref{estimate1}.}}
Let us consider the simplest two-bubble case.\\
$\bullet$
\textbf{The case of $\kappa=2$.}\\
Let's discuss the two cases of bubble tower case and bubble cluster case separately.
Assume, without loss of generality, that $W_1=W[\xi_1,\lambda_1]$ and $W_2=W[\xi_2,\lambda_2]$ be two bubbles. Then we know that
$$
\hbar=\big[|x|^{-\mu}\ast(W_1+W_2)^{p}\big](W_1+W_2)^{p-1}-\big(|x|^{-\mu}\ast W_{1}^{p}\big)W_{1}^{p-1}-\big(|x|^{-\mu}\ast W_{2}^{p}\big)W_{2}^{p-1}.
$$
In view of \eqref{h}, to conclude the proof of \eqref{eq3.7}, it is sufficient to obtain $\hbar(x)\lesssim T_j(x)(j=1,2,3)$.

We divide our argument into several cases.\\		
$\mathbf{Case\ 1.}$ \textbf{Bubble tower}:\\
Without loss of generality, we may assume that $\lambda_1>\lambda_2$ and $\mathscr{R}_{12}=\sqrt{\lambda_1/\lambda_2}=\Re_{12}^{-1}\gg1$. Then $\mathcal{Q}\approx \mathscr{R}_{12}^{2-n}$. Let $z_1=\lambda_1(x-\xi_1)$, by scaling we find that $W_1(x)=\lambda_1^{(n-2)/2}W[0,1](z_1)=:\lambda_1^{(n-2)/2}W(z_1)$.
\begin{itemize}
\item[$(1)$] \textbf{Core region of $W_1$}:
in this case $|z_1|\leq\frac{1}{2}\mathscr{R}_{12}$.
\end{itemize}
Then we have $W_2\lesssim W_1$, a straightforward computation
\begin{equation*}
W_2(x)=\tilde{\alpha}_{n,\mu}\frac{\lambda_1^{(n-2)/2}}{\mathscr{R}_{12}^{n-2}}\Big(\frac{1}{1+\mathscr{R}_{12}^4|z_1+\alpha_{12}|^2}\Big)^{\frac{n-2}{2}}
\end{equation*}
where $|\alpha_{12}|=|\lambda_{1}(\xi_1-\xi_2)|\leq\mathscr{R}_{12}^2$.
Using the elementary inequality
we get
\begin{equation}\label{R-0}
\aligned
\hbar&\lesssim\big(|x|^{-\mu}\ast W_1^{p}\big)\big[W_1^{p-2}W_2+W_2^{p-1}\big]+\big(|x|^{-\mu}\ast W_2^{p}\big)\big[W_1^{p-1}+W_1^{p-2}W_2\big]\\&
\hspace{2mm}+\big[|x|^{-\mu}\ast \big(W_1^{p-1}W_2+W_1^{p-2}W_2^2\big)\big]\big[W_1^{p-1}+W_1^{p-2}W_2+W_2^{p-1}\big].
\endaligned
\end{equation}
Now we estimate every term of the previous decomposition. Combining Lemma \ref{B4-1}, we get
\begin{equation}\label{1-h}
\begin{split}
\big(|x|^{-\mu}\ast W_1^{p}\big)W_1^{p-2}W_2&=\Big[\frac{1}{|x|^{\mu}}\ast \Big(
\frac{\lambda_1^{\frac{n-2}{2}}}{\tau(z_1)^{n-2}}\Big)^{p}\Big]\Big(
\frac{\lambda_1^{\frac{n-2}{2}}}{\tau(z_1)^{n-2}}\Big)^{p-2}\frac{\lambda_2^{\frac{n-2}{2}}}{\tau(z_2)^{n-2}}\\&\lesssim
\frac{\lambda_1^{2}}{\tau(z_1)^{4}}\frac{\lambda_1^{(n-2)/2}}{\mathscr{R}_{12}^{n-2}}\Big(\frac{1}{1+\mathscr{R}_{12}^4|z_1+\alpha_{12}|^2}\Big)^{\frac{n-2}{2}}
\approx\frac{\lambda_{1}^{(n+2)/2}}{\mathscr{R}_{12}^{n-2}\tau(z_1)^{4}}.
\end{split}
\end{equation}
As in the previous case, combined with $W_2\lesssim W_1$, we infer that
\begin{equation}\label{2-h}
\big(|x|^{-\mu}\ast W_1^{p}\big)W_2^{p-1}+\big(|x|^{-\mu}\ast W_2^{p}\big)\big[W_1^{p-1}+|W_1|^{p-2}W_2\big]\lesssim\frac{\lambda_{1}^{(n+2)/2}}{\mathscr{R}_{12}^{n-2}\tau(z_1)^{4}},
\end{equation}
and
\begin{equation}\label{3-h}
\big[|x|^{-\mu}\ast\big(W_1^{p-1}W_2+W_1^{p-2}W_2^2\big)\big]\big[W_1^{p-1}+|W_1|^{p-2}W_2+
W_2^{p-1}\big]\lesssim\frac{\lambda_{1}^{(n+2)/2}}{\mathscr{R}_{12}^{n-2}\tau(z_1)^{4}}.
\end{equation}
Putting \eqref{1-h}-\eqref{3-h} it follows that
\begin{equation}\label{5-h}
\hbar\lesssim
\left\lbrace
\begin{aligned}
&\frac{\lambda_{1}^{(n+2)/2}}{\mathscr{R}_{12}^{n-2}\tau(z_1)^{4}},
\hspace{6mm}\hspace{4mm}\hspace{6mm}\hspace{6mm}\hspace{2mm}0<\mu<\frac{n+\mu-2}{2},\hspace{2mm}|z_1|\leq\frac{1}{2}\mathscr{R}_{12},\\
&\frac{\lambda_{1}^{(n+2)/2}}{\mathscr{R}_{12}^{2n-4-\mu}\tau(z_1)^{4}},
\hspace{4mm}\hspace{6mm}\hspace{1mm}\hspace{6mm}\hspace{2mm}\frac{n+\mu-2}{2}\leq\mu\leq4,\hspace{2mm}|z_1|\leq\frac{1}{2}\mathscr{R}_{12}.
\end{aligned}
\right.
\end{equation}
\begin{itemize}
\item[$(2)$] \textbf{Outside the Core region of $W_1$}: in this case we consider $\frac{1}{3}\mathscr{R}_{12}\leq|z_1|\leq2\mathscr{R}_{12}^2$.
\end{itemize}
We compute $\mathscr{R}\lesssim I+II+III$ with
\begin{equation}\label{III}
\begin{split}
&I:=\big[|x|^{-\mu}\ast\big((W_1+W_2)^{p}-W_1^{p}-W_2^{p}\big)\big](W_1^{p-1}+W_2^{p-1})\\&
II:=\big(|x|^{-\mu}\ast W_1^{p}\big)\big((W_1+W_2)^{p-1}-W_1^{p-1}-W_2^{p-1}\big)\\&
III:=\big(|x|^{-\mu}\ast W_1^{p}\big)W_2^{p-1}+\big(|x|^{-\mu}\ast W_2^{p}\big)W_1^{p-1}.
\end{split}
\end{equation}
To estimate $I$, $II$ and $III$, we first introduce two inequalities. Without loss of generality, we may assume $A,B>0$. Then for all $p\geq2$,
we have  that
    \begin{equation}\label{a0b0}
    \begin{split}
    (A+B)^{p}-A^{p}-B^{p}= A^{p}\big[(1+\frac{B}{A})^{p}-1-(\frac{B}{A})^{p}\big]\lesssim A^{p-1}B+  A^{p-2}B^2,
    \end{split}
    \end{equation}
    and
    \begin{equation}\label{a0b1}
    \begin{split}
    (A+B)^{p-1}-A^{p-1}-B^{p-1}= A^{p-1}\big[(1+\frac{B}{A})^{p-1}-1-(\frac{B}{A})^{p-1}\big]
    \lesssim   A^{p-2}B.
    \end{split}
    \end{equation}
\textbf{Estimate of $I$.} For $I$,
along with
\begin{equation}\label{ww-1}
W_1\approx\frac{\lambda_1^{(n-2)/2}}{|z_1|^{n-2}}~~\text{and} ~~W_2\approx\frac{\lambda_1^{(n-2)/2}}{\mathscr{R}_{12}^{n-2}}
\lesssim\frac{\lambda_1^{(n-2)/2}}{|z_1|^{(n-2)/2}},
\end{equation}
so we get form \eqref{a0b0} that
\begin{equation*}
\begin{split}
\big(W_1+W_2\big)^{p}-W_1^{p}-W_2^{p}
\lesssim\frac{1}{\mathscr{R}_{12}^{2n-2-\mu/2}}\frac{\lambda_1^{\frac{2n-\mu}{2}}}{|\tilde{z}_1|^{2n-2-\mu/2}}+\frac{1}{\mathscr{R}_{12}^{n-2+(n-\mu+2)/2}}\frac{\lambda_1^{\frac{2n-\mu}{2}}}{|\tilde{z}_1|^{n-2+(n-\mu+2)/2}},
\end{split}
\end{equation*}
where $\tilde{z}_1=z_1/\mathscr{R}_{12}$.
Therefore
\begin{equation}\label{0h}
\begin{split}
\Big[|x|^{-\mu}\ast\Big(\big(W_1+W_2\big)^{p}&-W_1^{p}-W_2^{p}\Big)\Big]W_1^{p-1}
\lesssim\frac{\lambda_1^{\frac{n-\mu+2}{2}}}{\mathcal{R}_{12}^{3n-3\mu/2}}\Big(|x|^{-\mu}\ast\big(\frac{\lambda_1^{\frac{2n-\mu}{2}}}{|\tilde{z}_1|^{2n-2-\mu/2}}\big)\Big)\frac{1}{|\tilde{z}_1|^{n-\mu+2}}
\\&+\frac{\lambda_1^{\frac{2n-\mu}{2}}}{\mathscr{R}_{12}^{n-2+3(n-\mu+2)/2}}\Big(|x|^{-\mu}\ast\big(\frac{\lambda_1^{\frac{2n-\mu}{2}}}{|\tilde{z}_1|^{n-2+(n-\mu+2)/2}}\big)\Big)\frac{1}{|\tilde{z}_1|^{n-\mu+2}}
\\&
\lesssim\frac{1}{\mathscr{R}_{12}^{2n-2-3\mu/2}}\frac{\lambda_{1}^{\frac{N+2}{2}}}{|z_1|^{n+2}}
+\frac{\lambda_{1}^{\frac{n+2}{2}}}{\mathscr{R}_{12}^{n-2+(n-\mu+2)/2}}\frac{\mathscr{R}_{12}^{\min\{\frac{n+\mu-2}{2},\mu\}}}{|z_1|^{\min\{\frac{n+\mu-2}{2},\mu\}+n-\mu+2}}\\&
\approx\frac{1}{\mathscr{R}_{12}^{2n-2-3\mu/2}}\frac{\lambda_{1}^{\frac{n+2}{2}}}{\tau(z_1)^{n+2}}
+\frac{\lambda_{1}^{\frac{n+2}{2}}}{\mathscr{R}_{12}^{n-2+(n-\mu+2)/2}}\frac{\mathscr{R}_{12}^{\min\{\frac{n+\mu-2}{2},\mu\}}}{\tau(z_1)^{\min\{\frac{n+\mu-2}{2},\mu\}+n-\mu+2}}.
\end{split}
\end{equation}
where we applied the estimate similar to Proposition \ref{B4-1}.
Similarly, we have
\begin{equation}\label{1h}
\begin{split}
\big[|x|^{-\mu}\ast\big((W_1&+W_2)^{p}-W_1^{p}-W_2^{p}\big)\big]W_2^{p-1}\\&
\lesssim\frac{1}{\mathscr{R}_{12}^{2n-2-3\mu/2}}\frac{\lambda_{1}^{\frac{N+2}{2}}}{\tau(z_1)^{(n+\mu+2)/2}}
+\frac{\lambda_{1}^{\frac{n+2}{2}}}{\mathscr{R}_{12}^{n-2+(n-\mu+2)/2}}\frac{\mathscr{R}_{12}^{\min\{\frac{n+\mu-2}{2},\mu\}}}{\tau(z_1)^{\min\{\frac{n+\mu-2}{2},\mu\}+\frac{n-\mu+2}{2}}}.
\end{split}
\end{equation}
As a result,
\begin{equation}\label{1h000}
\begin{split}
I\lesssim \mbox{RHS}\hspace{2mm}\mbox{of}\hspace{2mm}\eqref{0h}+\mbox{RHS}\hspace{2mm}\mbox{of}\hspace{2mm}\eqref{1h}.
\end{split}
\end{equation}
\textbf{Estimate of $II$.}
By \eqref{a0b1}
$$\big(W_1+W_2\big)^{p-1}-W_1^{p-1}-W_2^{p-1}=W_2^{p-1}\big[(1+\frac{W_1}{W_2})^{p-1}-1-(\frac{W_1}{W_2})^{p-1}\big].$$
Directly computing, one has
\begin{equation}\label{2h}
\begin{split}
II&\lesssim
\frac{\lambda_{1}^{\frac{n+2}{2}}}{\mathscr{R}_{12}^{3n-2\mu+2}}\frac{1}{|\tilde{z}_1|^{\mu}}\Big|\big(1+\frac{1}{|\tilde{z}_1|^{n-2}}\big)^{p-1}-1-\frac{1}{|\tilde{z}_1|^{n-\mu+2}}\Big|\\&
\lesssim\frac{1}{\mathscr{R}_{12}^{3n-2\mu+2}}\frac{\lambda_{1}^{\frac{n+2}{2}}}{|\tilde{z}_1|^{\mu}}\frac{1}{|\tilde{z}_1|^{n-2}}\approx\frac{1}{\mathscr{R}_{12}^{2n-3\mu+4}}\frac{\lambda_{1}^{\frac{n+2}{2}}}{\tau(z_1)^{n-2+\mu}}
\end{split}
\end{equation}
by Lemma \ref{B4-1}. \\
\textbf{Estimate of $III$.} As in the previous case, we get
\begin{equation}\label{3h}
\begin{split}
III&
\lesssim\frac{\lambda_{1}^{\frac{n+2}{2}}}{\mathscr{R}_{12}^{2n-\mu+(n-\mu+2)/2}}\frac{1}{| \tilde{z}_1|^{(n+\mu+2)/2}}+\frac{\lambda_{1}^{\frac{n+2}{2}}}{\mathscr{R}_{12}^{2n-3\mu/2+2}}\frac{1}{|\tilde{z}_1|^{n-\mu/2+2-\Theta}}\\&
\approx\frac{\lambda_{1}^{\frac{n+2}{2}}}{\mathscr{R}_{12}^{2n-2\mu}}\frac{1}{\langle z_1\rangle^{(n+\mu+2)/2}}+\frac{\lambda_{1}^{\frac{n+2}{2}}}{\mathscr{R}_{12}^{n-\mu+\Theta}}\frac{1}{\langle z_1\rangle^{n-\mu/2+2-\Theta}}.
\end{split}
\end{equation}
We now divide our argument into two cases.
\begin{itemize}
\item[$(1)$] For $0<\mu<\frac{n+\mu-2}{2}$. In view of \eqref{0h}, \eqref{1h}, \eqref{2h} and \eqref{3h},
\begin{equation}\label{h-8}
\begin{split}
\hbar&\lesssim\Big(\frac{1}{\mathscr{R}_{12}^{2n-2-3\mu/2}}+\frac{1}{\mathscr{R}_{12}^{n-2-\mu+(n-\mu+2)/2}}\Big)\Big|\frac{\lambda_{1}^{\frac{n+2}{2}}}{\tau(z_1)^{n+2}}+\frac{\lambda_{1}^{\frac{n+2}{2}}}{\langle z_1\rangle^{(n+\mu+2)/2}}\bigg|\\&+\frac{\lambda_{1}^{\frac{n+2}{2}}}{\mathscr{R}_{12}^{2n-3\mu+4}\tau(z_1)^{n-2+\mu}}+
\frac{\lambda_{1}^{\frac{n+2}{2}}}{\mathscr{R}_{12}^{2n-2\mu}\tau(z_1)^{(n+\mu+2)/2}}+\frac{\lambda_{1}^{\frac{n+2}{2}}}{\mathscr{R}_{12}^{n-\mu+\Theta}\tau(z_1)^{n-\mu/2+2-\Theta}}.
\end{split}
\end{equation}
\item[$(2)$] For $\frac{n+\mu-2}{2}\leq\mu\leq4$. Combining \eqref{0h}, \eqref{1h}, \eqref{2h} and \eqref{3h}, we get
\begin{equation}\label{0h-8}
\begin{split}
\hbar&\lesssim\frac{1}{\mathscr{R}_{12}^{2n-2-3\mu/2}}\Big|\frac{\lambda_{1}^{\frac{n+2}{2}}}{\tau(z_1)^{n+2}}+\frac{\lambda_{1}^{\frac{n+2}{2}}}{\tau(z_1)^{(n+\mu+2)/2}}\Big|+\frac{1}{\mathscr{R}_{12}^{n-\mu}}\Big|\frac{\lambda_{1}^{\frac{n+2}{2}}}{\langle z_1\rangle^{(3n-\mu+2)/2}}+\frac{\lambda_{1}^{\frac{n+2}{2}}}{\tau(z_1)^{n}}\Big|\\&+\frac{\lambda_{1}^{\frac{n+2}{2}}}{\mathscr{R}_{12}^{2n-3\mu+4}\tau(z_1)^{n-2+\mu}}+
\frac{\lambda_{1}^{\frac{n+2}{2}}}{\mathscr{R}_{12}^{2n-2\mu}\tau(z_1)^{(n+\mu+2)/2}}+\frac{\lambda_{1}^{\frac{n+2}{2}}}{\mathscr{R}_{12}^{n-\mu+\Theta}\tau(z_1)^{n-\mu/2+2-\Theta}}.
\end{split}
\end{equation}
\end{itemize}
Because the parameters are controlled by
\begin{equation*}
\begin{split}
2n-3\mu+4\geq2n-2\mu\geq 2n-2-\frac{3\mu}{2}\geq\frac{3n-3\mu-2}{2}\hspace{2mm}\mbox{for}\hspace{2mm}0<\mu<\frac{n+\mu-2}{2};
\end{split}
\end{equation*}
\begin{equation*}
\begin{split}
n+2\geq n-2+\mu\geq\frac{n+\mu+2}{2}\leq n-\frac{\mu}{2}+2-\Theta\hspace{4mm}\mbox{for}\hspace{2mm}0<\mu<\frac{n+\mu-2}{2};
\end{split}
\end{equation*}
\begin{equation*}
\begin{split}
2n-3\mu+4\geq2n-2\mu\geq 2n-2-\frac{3\mu}{2}\geq n-\mu\hspace{4mm}\mbox{for}\hspace{2mm}\frac{n+\mu-2}{2}<\mu\leq4
\end{split}
\end{equation*}
and
\begin{equation*}
\begin{split}
n+2\geq n-2+\mu\geq\frac{n+\mu+2}{2}\geq n,\hspace{2mm}\frac{3n-\mu+2}{2}\geq n-\frac{\mu}{2}+2-\Theta\geq n\hspace{2mm}\mbox{for}\hspace{2mm}\frac{n+\mu-2}{2}<\mu<4.
\end{split}
\end{equation*}
Here $\Theta<\min\{\mu/2,(n+2-2\mu)/2\}$ is a small positive number.
Collecting \eqref{h-8} and \eqref{0h-8}, we conclude that for $|z_1|\geq\frac{1}{3}\mathscr{R}_{12}$,
\begin{equation}\label{3h-8}
\hbar\lesssim
\left\lbrace
\begin{aligned}
&\frac{1}{\mathscr{R}_{12}^{\min\{n-\mu+\Theta,n-2-\mu+(n-\mu+2)/2\}}}\frac{\lambda_{1}^{\frac{n+2}{2}}}{\tau(z_1)^{(n+\mu+2)/2}},\hspace{6mm}\hspace{4mm}\hspace{1mm}\hspace{2mm}\hspace{2mm}0<\mu<\frac{n+\mu-2}{2},\\
&\frac{1}{\mathscr{R}_{12}^{n-\mu}}\frac{\lambda_{1}^{\frac{n+2}{2}}}{\tau(z_1)^{\min\{n,n-\frac{\mu}{2}+2-\Theta\}}},\hspace{4mm}\hspace{6mm}\hspace{6mm}\hspace{6mm}\hspace{4mm}\hspace{2mm}\hspace{4mm}\hspace{6mm}\hspace{6mm}\hspace{2mm}\frac{n+\mu-2}{2}\leq\mu\leq4.
\end{aligned}
\right.
\end{equation}

Let $z_2=\lambda_2(x-\xi_2)$, by scaling we find that $W_2(x)=\lambda_2^{(n-2)/2}W[0,1](z_2)=:\lambda_2^{(n-2)/2}W(z_2)$,  and
\begin{equation*}
W_1(x)=\tilde{\alpha}_{n,\mu}\frac{\lambda_2^{(n-2)/2}}{\mathscr{R}_{12}^{n-2}}\Big(\frac{1}{\mathscr{R}_{12}^4+|z_2+\beta_{21}|^2}\Big)^{\frac{n-2}{2}}
\end{equation*}
with $|\beta_{21}|=|\lambda_{2}(\xi_2-\xi_1)|<1$.
\begin{itemize}
\item[$(3)$] \textbf{Core region of $W_2$}: In this case we consider
 $1\leq|z_2+\beta_{21}|\leq\mathscr{R}_{12}$, it means $\mathscr{R}_{12}^2\leq|z_1|\leq\mathscr{R}_{12}^3$, then we have $W_1\lesssim W_2$. Similar to the proof of \eqref{5-h}, we have that
\end{itemize}
\begin{equation}\label{6-h}
\hbar\lesssim
\left\lbrace
\begin{aligned}
&\frac{\lambda_{1}^{(n+2)/2}}{\mathscr{R}_{12}^{n-2}\tau(z_2)^{4}},
\hspace{6mm}\hspace{4mm}\hspace{6mm}\hspace{6mm}\hspace{2mm}0<\mu<\frac{n+\mu-2}{2},\hspace{2mm}|z_2|\leq\frac{1}{2}\mathscr{R}_{12},\\
&\frac{\lambda_{1}^{(n+2)/2}}{\mathscr{R}_{12}^{2n-4-\mu}\tau z_2)^{4}}.
\hspace{4mm}\hspace{6mm}\hspace{3mm}\hspace{6mm}\hspace{2mm}\frac{n+\mu-2}{2}\leq\mu\leq4,\hspace{2mm}|z_2|\leq\frac{1}{2}\mathscr{R}_{12}.
\end{aligned}
\right.
\end{equation}
\begin{itemize}
\item[$(4)$]
\textbf{Outside the Core region of $W_2$}: in this case we consider $|z_2+\beta_{21}|\geq \frac{1}{2}\mathscr{R}_{12}$.
\end{itemize}
Coupling
$$
|\big(W_1+W_2)^{p}- W_1^{p}-W_2^{p}\big|\lesssim W_2^{p} ~~\text{and}~~\hspace{4mm}\big|(W_1+W_2)^{p-1}- W_1^{p-1}-W_2^{p-1}\big|\lesssim W_2^{p-1},
$$
we get that
\begin{equation}\label{hh0}
\aligned
\hbar&\lesssim%\Big(|x|^{-\mu}\ast W_2^{p}\Big)W_2^{p-1}\\&\lesssim
\Big[\frac{1}{|x|^{\mu}}\ast \Big(
\frac{\lambda_2^{\frac{n-2}{2}}}{\tau(z_2)^{n-2}}\Big)^{p}\Big]\Big(
\frac{\lambda_2^{\frac{n-2}{2}}}{\tau(z_2)^{n-2}}\Big)^{p-1}
%\approx\frac{\lambda_2^{\frac{N+2}{2}}}{\langle z_2\rangle^{N+2}}
\lesssim\frac{\lambda_{2}^{(n+2)/2}}{\mathscr{R}_{12}^{(n-\mu+2)/2}\tau( z_2)^{(n+\mu+2)/2}},\hspace{6mm}\mbox{for}\hspace{2mm}|z_2|\geq\frac{1}{2}\mathscr{R}_{12}.
\endaligned
\end{equation}
by the Proposition \ref{B4-1}. Since
$$n-\mu+\Theta>\frac{1}{2}(n-\mu+2),\hspace{2mm}n-2-\mu+\frac{1}{2}(n-\mu+2)>\frac{1}{2}(n-\mu+2),$$
then as a consequence, we can use a unified estimates to control all these situations. By \eqref{5-h}, \eqref{3h-8}, \eqref{6-h}, and \eqref{hh0},
so that we eventually have
\begin{equation}\label{7-h}
\hbar\lesssim
\left\lbrace
\begin{aligned}
&
\sum_{i=1}^{2}\bigg[\frac{\lambda_i^{\frac{n+2}{2}}}{\mathscr{R}^{n-2}\tau( z_i)^{4}}\chi_{\{|z_i|\leq\frac{\mathscr{R}}{2}\}}
+\frac{1}{\mathscr{R}_{12}^{(n-\mu+2)/2}}\frac{\lambda_{i}^{\frac{n+2}{2}}}{\tau( z_i)^{(n+\mu+2)/2}}\chi_{\{|z_i|\geq\frac{\mathscr{R}}{2}\}}\bigg],\hspace{2mm}\hspace{4mm}\hspace{2mm}\hspace{2mm}0<\mu<\frac{n+\mu-2}{2},\\
&
\sum_{i=1}^{2}\bigg[\frac{\lambda_i^{\frac{N+2}{2}}}{\mathscr{R}^{2n-4-\mu}\tau( z_i)^{4}}\chi_{\{|z_i|\leq\frac{\mathscr{R}}{2}\}}+\frac{\lambda_{i}^{\frac{n+2}{2}}}{\mathscr{R}^{n-\mu}\tau( z_i)^{\min\{n,n-\frac{\mu}{2}+2-\Theta\}}}\chi_{\{|z_i|\geq\frac{\mathscr{R}}{2}\}}\bigg],\hspace{3mm}\hspace{1mm}\frac{n+\mu-2}{2}\leq\mu\leq4.
\end{aligned}
\right.
\end{equation}
$\mathbf{Case\ 2.}$ Bubble cluster: \\
If  $\sigma=W[\xi_1,\lambda_1]+W[\xi_2,\lambda_2]$ forming a bubble cluster $\lambda_1\lambda_2|\xi_1-\xi_2|^2\geq\frac{\lambda_1}{\lambda_2}$ but with $\lambda_1\gg\lambda_2$.  Assume that $\mathscr{R}_{12}=\sqrt{\lambda_1/\lambda_2}|\xi_1-\xi_2|=\Re_{12}^{-1}\gg1$. Then $\mathcal{Q}\approx \mathscr{R}_{12}^{2-n}$. Let $z_1=\lambda_1(x-\xi_1)$, by scaling we find that $W_1(x)=\lambda_1^{(n-2)/2}W(z_1)$,  and direct calculation gives
\begin{equation*}
W_2(x)=\tilde{\alpha}_{n,\mu}\frac{\lambda_1^{(n-2)/2}}{\mathscr{R}_{12}^{n-2}}\Big[\frac{1}{\lambda_2^2|\xi_1-\xi_2|^2}+\big|\frac{z_1}{\lambda_1|\xi_1-\xi_2|}+\gamma_{12}\big|^2\Big]^{-\frac{n-2}{2}}
\end{equation*}
with $|\gamma_{12}|=\frac{\xi_1-\xi_2}{|\xi_1-\xi_2|}$.
\begin{itemize}
\item[$(1)$] \textbf{Core region of $W_1$:}
in this case we consider $|z_1|\leq\frac{1}{2}\mathscr{R}_{12}$.
\end{itemize}
Then we have $W_2\lesssim W_1$.
Combining the decomposition \eqref{R-0}, we get
\begin{equation}\label{R-1}
\hbar\lesssim\frac{\lambda_{2}^{(n+2)/2}}{\mathscr{R}_{12}^{n-2}\tau(z_1)^{4}}, \hspace{2mm}\mbox{if}\hspace{2mm}0<\mu<\frac{n+\mu-2}{2};\hspace{2mm}\hbar\lesssim\hspace{2mm}\frac{\lambda_{1}^{(n+2)/2}}{\mathscr{R}_{12}^{2n-4-\mu}\tau (z_1)^{4}},\hspace{2mm}\mbox{if}\hspace{2mm}\frac{n+\mu-2}{2}\leq\mu\leq4.
\end{equation}
\begin{itemize}
\item[$(2)$] \textbf{Outside the Core region of $W_1$}:
we consider $\frac{1}{3}\mathscr{R}_{12}\leq|z_1|\leq\frac{1}{2}\lambda_1|\xi_1-\xi_2|=\frac{1}{2}\sqrt{\frac{\lambda_1}{\lambda_2}}\mathscr{R}_{12}$.
\end{itemize}
We have the estimate
\begin{equation*}
W_1\approx\frac{\lambda_1^{\frac{n-2}{2}}}{|z_1|^{n-2}}~~\text{and} ~~W_2\approx\frac{\lambda_1^{\frac{n-2}{2}}}{\mathscr{R}_{12}^{n-2}}
\leq\big(\frac{1}{2\mathscr{R}_{12}}\sqrt{\frac{\lambda_1}{\lambda_2}}\big)^{\frac{n-2}{2}}\frac{\lambda_1^{\frac{n-2}{2}}}{|z_1|^{(n-2)/2}}\lesssim\frac{\lambda_1^{\frac{n-2}{2}}}{|z_1|^{(n-2)/2}}.
\end{equation*}
 We further decompose $\hbar\lesssim I+II+III$ by \eqref{III}. Then all estimates $I$, $II$ and $III$ are derived by a simply computation similar to \eqref{0h}, \eqref{1h}, \eqref{2h} and \eqref{3h}. Adding up estimates for $I$, $II$ and $III$ we also obtain \eqref{3h-8} for $R$.

Let $z_2=\lambda_2(x-\xi_2)$, we have that $W_2(x)=\lambda_2^{(n-2)/2}W(z_2)$ and
\begin{equation*}
W_1(x)=\alpha_{n,\mu}\frac{\lambda_2^{(n-2)/2}}{\mathcal{R}_{12}^{n-2}}\Big[\frac{1}{\lambda_2^2|\xi_1-\xi_2|^2}+\big|\frac{z_2}{\lambda_1|\xi_1-\xi_2|}-\gamma_{12}\big|^2\Big]^{-\frac{n-2}{2}}.
\end{equation*}
\begin{itemize}
\item[$(3)$] \textbf{Core region of $W_2$}: in this case we consider $|z_2|\leq\frac{1}{2}\mathscr{R}_{12}$ and $\Big|\frac{z_2}{\lambda_1|\xi_1-\xi_2|}-\gamma_{12}\Big|\geq\frac{1}{3}$.
\end{itemize}
Then similar to the estimate of \eqref{R-1}, $\hbar$ is controlled by
\begin{equation}\label{R-2}
\hbar\lesssim\frac{\lambda_{2}^{(n+2)/2}}{\mathscr{R}_{12}^{n-2}\tau (z_2)^{4}}, \hspace{2mm}\mbox{if}\hspace{2mm}0<\mu<\frac{n+\mu-2}{2};\hspace{2mm}\hbar\lesssim\frac{\lambda_{1}^{(n+2)/2}}{\mathscr{R}_{12}^{2n-4-\mu}\tau( z_2)^{4}},\hspace{2mm}\mbox{if}\hspace{2mm}\frac{n+\mu-2}{2}\leq\mu\leq4.
\end{equation}
\begin{itemize}
\item[$(4)$] \textbf{Outside the Core region of $W_2$}:
in this case we consider $|z_2|\geq \frac{1}{3}\mathscr{R}_{12}$ and $|z_1|\geq\lambda_{1}|\xi_1-\xi_2|\geq\frac{1}{3}\mathscr{R}_{12}.$ We evaluate
\end{itemize}
\begin{equation*}
|\big(W_1+W_2)^{p}- W_1^{p}-W_2^{p}\big|\lesssim W_1^{p} +W_2^{p}\approx\frac{\lambda_{1}^{(2n-\mu)/2}}{\tau( z_1)^{2n-\mu}}+\frac{\lambda_{2}^{(2n-\mu)/2}}{\tau(z_2)^{n-\mu/2}},
\end{equation*}
\begin{equation*}
\big|(W_1+W_2)^{p-1}- W_1^{p-1}-W_2^{p-1}\big|\lesssim W_1^{p-1} +W_2^{p-1}\approx\frac{\lambda_{1}^{(n-\mu+2)/2}}{\tau( z_1)^{n-\mu+2}}+\frac{\lambda_{2}^{(n-\mu+2)/2}}{\tau(z_2)^{(n-\mu+2)/2}}.
\end{equation*}
Combining these two estimates we get that
\begin{equation}\label{R-3}
\begin{split}
\hbar&\lesssim\big(|x|^{-\mu}\ast W_1^{p}\big)W_1^{p-1}+\big(|x|^{-\mu}\ast W_2^{p}\big)W_2^{p-1}\\&
\lesssim\frac{\lambda_{1}^{(n+2)/2}}{\mathscr{R}_{12}^{(n-\mu+2)/2}\tau( z_1)^{(n+\mu+2)/2}}+\frac{\lambda_{2}^{(n+2)/2}}{\mathscr{R}_{12}^{(n-\mu+2)/2}\tau( z_2)^{(n+\mu+2)/2}}.
\end{split}
\end{equation}
by Lemma \ref{B4-1}. From \eqref{3h-8} and \eqref{R-3}, we conclude that for $|z_i|\geq\frac{1}{2}\mathscr{R}$
\begin{equation}\label{R-4-1}
\hbar\lesssim
\left\lbrace
\begin{aligned}
&\sum_{i=1}^{2}\frac{1}{\mathscr{R}_{12}^{(n-\mu+2)/2}}\frac{\lambda_{i}^{\frac{n+2}{2}}}{\tau( z_i)^{(n+\mu+2)/2}},\hspace{6mm}\hspace{4mm}\hspace{1mm}\hspace{2mm}\hspace{2mm}0<\mu\leq\frac{n+\mu-2}{2},\\
&\sum_{i=1}^{2}\frac{1}{\mathscr{R}_{12}^{n-\mu}}\frac{\lambda_{i}^{\frac{n+2}{2}}}{\tau( z_i)^{n}},\hspace{6mm}\hspace{6mm}\hspace{6mm}\hspace{6mm}\hspace{6mm}\hspace{6mm}\hspace{2mm}\frac{n+\mu-2}{2}\leq\mu\leq4.
\end{aligned}
\right.
\end{equation}

In order to get the desired estimates of $\hbar$ for $\sigma=W[\xi_1,\lambda_1]+W[\xi_2,\lambda_2]$, either a bubble tower or a bubble cluster,  by the computations in \eqref{7-h} and \eqref{R-4-1},
we eventually get from \eqref{7-h}, \eqref{R-1}, \eqref{R-2} and \eqref{R-4-1} that $\hbar$ is controlled by
\begin{equation*}
\hbar\lesssim
\left\lbrace
\begin{aligned}
&
\sum_{i=1}^{2}\bigg[\frac{\lambda_i^{\frac{n+2}{2}}}{\mathscr{R}^{n-2}\tau( z_i)^{4}}\chi_{\{|z_i|\leq\frac{\mathscr{R}}{2}\}}
+\frac{1}{\mathscr{R}_{12}^{(n-\mu+2)/2}}\frac{\lambda_{i}^{\frac{n+2}{2}}}{\tau( z_i)^{(n+\mu+2)/2}}\chi_{\{|z_i|\geq\frac{\mathscr{R}}{2}\}}\bigg],\hspace{6mm}\hspace{2mm}\hspace{2mm}0<\mu\leq\frac{n+\mu-2}{2},\\
&
\sum_{i=1}^{2}\bigg[\frac{\lambda_i^{\frac{n+2}{2}}}{\mathscr{R}^{2n-4-\mu}\tau( z_i)^{4}}\chi_{\{|z_i|\leq\frac{\mathscr{R}}{2}\}}+\frac{\lambda_{i}^{\frac{n+2}{2}}}{\mathscr{R}^{n-\mu}\tau( z_i)^{\min\{n,n-\frac{\mu}{2}+2-\Theta\}}}\chi_{\{|z_i|\geq\frac{\mathscr{R}}{2}\}}\bigg],\hspace{2mm}\hspace{2mm}\frac{n+\mu-2}{2}\leq\mu\leq4.
\end{aligned}
\right.
\end{equation*}
$\bullet$ \textbf{The case of $\kappa\geq3$.}\\
In order to make sure that the estimates of $\hbar$ in the case $\kappa=2$ can be used to control the $\hbar$ for any finite number of bubbles, it will need to carry out a check that the following inequality holds
\begin{equation}\label{ZUI-2}
\begin{split}
&~\big(|x|^{-\mu}\ast\sigma^{p}\big)\sigma^{p-1}-\sum_{i=1}^{\kappa}\big(|x|^{-\mu}\ast W_{i}^{p}\big)W_{i}^{p-1}\\&
\lesssim \sum\limits_{i\neq j}\Big[\big(|x|^{-\mu}\ast (W_{i}+W_{j})^{p}\big)(W_{i}+W_{j})^{p-1}-\big(|x|^{-\mu}\ast W_{i}^{p}\big)W_{i}^{p-1}-\big(|x|^{-\mu}\ast W_{j}^{p}\big)W_{j}^{p-1}\Big]
\end{split}
\end{equation}
We show that this is indeed true by direct computations the above inequality.
Combining estimates of $\hbar$ in the two bubbles and \eqref{ZUI-2} yields the conclusion. Summarizing, we conclude that
\begin{equation}\label{zv5-0}
\hbar\lesssim\left\lbrace
\begin{aligned}
&\sum_{i=1}^{\kappa}\big[t_{i,1}(x,\frac{\mathscr{R}}{2})
+t_{i,2}(x,\frac{\mathscr{R}}{2})\big]\lesssim\sum_{i=1}^{\kappa}\big[t_{i,1}(x,\frac{\mathscr{R}}{2})
+t_{i,2}(x,\mathscr{R})\big],\hspace{3mm}\quad\quad 0<\mu<\frac{n+\mu-2}{2},\\
&\sum_{i=1}^{\kappa}\big[\hat{t}_{i,1}(x,\mathscr{R})
+\hat{t}_{i,2}(x,\frac{\mathscr{R}}{2})\big]\lesssim\sum_{i=1}^{\kappa}\big[\hat{t}_{i,1}(x,\mathscr{R})
+\hat{t}_{i,2}(x,\mathscr{R})\big],\quad\quad\quad  \frac{n+\mu-2}{2}\leq\mu<4,\\
&\sum_{i=1}^{\kappa}\big[\tilde{t}_{i,1}(x,\frac{\mathscr{R}}{2})
+\tilde{t}_{i,2}(x,\frac{\mathscr{R}}{2})\big]\lesssim\sum_{i=1}^{\kappa}\big[\tilde{t}_{i,1}(x,\mathscr{R})
+\tilde{t}_{i,2}(x,\mathscr{R})\big],\quad\quad\quad  \mu=4.
\end{aligned}
\right.
\end{equation}
Hence the lemma follows.  \qed

\subsection{Proof of Lemma \ref{ww10}}
In the following, in order to estimate $c_a^i$ for $i = 1, 2, \cdot\cdot\cdot,\kappa$ and $a=1,\cdots,n+1$ in the system \eqref{c1}, we establish now the following key estimates:
\begin{lem}\label{cll-0-0}
Assume that $n\geq6-\mu$, $\mu\in(0,n)$ and $0<\mu\leq4$,  we have that
\begin{equation}\label{selqa}
\left\lbrace
\begin{aligned}
&\big\|S_1\big\|_{L^{2^{\ast}}}\lesssim\frac{1}{\mathscr{R}^{(n+2)/2}},  \hspace{4mm}\hspace{4mm}\hspace{3mm}\hspace{8mm}\hspace{2mm}0<\mu<\frac{n+\mu-2}{2},\\&
\big\|S_2\big\|_{L^{2^{\ast}}}\lesssim\frac{1}{\mathscr{R}^{2n-4-\mu}},
\hspace{6mm}\hspace{6mm}\hspace{3mm}\hspace{4mm}\hspace{2mm}\frac{n+\mu-2}{2}\leq\mu<4,
\end{aligned}
\right.
\end{equation}
and that
\begin{equation}\label{eqa-2-1-0}
\left\lbrace
\begin{aligned}
&\big\|T_1\big\|_{L^{(2^{\ast})^{\prime}}}\lesssim\frac{1}{\mathscr{R}^{(n+2)/2}}, \hspace{4mm}\hspace{4mm}\hspace{2mm}\hspace{8mm}\hspace{2mm}0<\mu<\frac{n+\mu-2}{2},\\&
\big\|T_2\big\|_{L^{(2^{\ast})^{\prime}}}\lesssim\frac{1}{\mathscr{R}^{n-\mu+(n-2)/2}},
\hspace{6mm}\hspace{4mm}\hspace{2mm}\frac{n+\mu-2}{2}\leq\mu<4.
\end{aligned}
\right.
\end{equation}
Here we denote by $(2^{\ast})^{\prime}=\frac{2n}{n+2}$ the H\"{o}lder conjugate of $2^{\ast}$ and $0<\epsilon_0<\frac{(n-2)p-n}{p}$ is parameter.
 \end{lem}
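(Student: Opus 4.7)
The proof of Lemma \ref{cll-0-0} is essentially a careful bookkeeping exercise, with the underlying idea being the rescaling $z = \lambda_i(x-\xi_i)$ for each bubble. The plan is to compute each of the four types of $L^p$ integrals separately (summands of the form $s_{i,l}$, $\hat s_{i,l}$, $t_{i,l}$, $\hat t_{i,l}$ for $l=1,2$), observe that the powers of $\lambda_i$ in the numerators of the weights were engineered so that after raising to $2^{\ast}$ (resp.\ $(2^{\ast})'$) and combining with the Jacobian $\lambda_i^{-n}$ of the change of variables, the $\lambda_i$-dependence disappears entirely. Thus each term reduces to a one-variable integral in $|z|$, depending only on $\mathscr{R}$, which can be estimated in closed form. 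The final bounds on $\|S_j\|_{L^{2^{\ast}}}$ and $\|T_j\|_{L^{(2^{\ast})'}}$ then follow from the triangle inequality and the fact that the sum over $i=1,\dots,\kappa$ absorbs into the implicit constant.

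For the inner pieces $s_{i,1}, \hat s_{i,1}, t_{i,1}, \hat t_{i,1}$, supported on $\{|z_i|\leq \mathscr{R}\}$, the rescaled integral takes the shape
\[
\frac{1}{\mathscr{R}^{\alpha}}\int_{|z|\leq \mathscr{R}} \frac{dz}{(1+|z|^2)^{\beta}},
\]
where the pair $(\alpha,\beta)$ is determined by the exponents in the denominator of the weight together with $(n,\mu)$. The inner integral is uniformly bounded in $\mathscr{R}$ when $2\beta>n$, gives a factor $\log \mathscr{R}$ when $2\beta=n$, and produces a factor $\mathscr{R}^{n-2\beta}$ when $2\beta<n$; in each case, taking the $1/(2^{\ast})$ or $1/(2^{\ast})'$ root leaves the correct power of $\mathscr{R}$. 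For the outer pieces $s_{i,2}, \hat s_{i,2}, t_{i,2}, \hat t_{i,2}$, supported on $\{|z_i|> \mathscr{R}\}$, we have $1+|z|^2\sim |z|^2$ on the region of integration, and the exponents in the outer denominators were chosen precisely so that the corresponding $2\beta$ strictly exceeds $n$. The tail integral is then of the form $\mathscr{R}^{n-2\beta}$, which combines with the explicit $\mathscr{R}^{-(n-\mu+2)/2}$ (in the $S_1, T_1$ case) or $\mathscr{R}^{-(n-\mu+\epsilon_0)}$ (in the $S_2, T_2$ case) to yield exactly the asserted decay.

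The most delicate point I expect to be the correct choice and tracking of the parameter $\epsilon_0 \in (0, \frac{(n-2)p-n}{p})$ appearing in the hat-weights: this parameter controls the outer exponent $n-2-\epsilon_0$ and must be small enough to keep $2\beta>n$ (so the tail integral converges) while the resulting total exponent in $\mathscr{R}$ still matches $n-\mu+(n-2)/2$ after the norm is taken. A secondary technical nuisance is the borderline case (most notably $n=6$ for $S_1,T_1$) where the inner integral contributes a logarithmic factor; this has to be absorbed into the claimed power $\mathscr{R}^{-(n+2)/2}$ by verifying that the polynomial gap is strict enough to swallow the log, or equivalently by interpreting the estimate modulo constants depending only on $n,\mu,\kappa$. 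Once the exponent arithmetic is carried out case by case under the assumption $(\sharp)$ together with the range restrictions listed in Remark \ref{stxqi}, the bounds \eqref{selqa} and \eqref{eqa-2-1-0} drop out.
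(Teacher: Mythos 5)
Your proposal is correct in outline and follows essentially the same route as the paper's proof: the paper likewise rescales each of the eight weight pieces so that the $\lambda_i$-powers cancel against the Jacobian, evaluates the resulting radial integrals over $\{|z_i|\leq\mathscr{R}\}$ and $\{|z_i|>\mathscr{R}\}$ separately in the two regimes of $\mu$, and sums over $i=1,\dots,\kappa$. One caveat: your proposed treatment of the borderline case is not really available as stated, since at $n=6$ the inner pieces saturate the claimed power exactly (so the logarithm cannot be swallowed by a ``polynomial gap''), and at $n=5$ the inner pieces only yield $\mathscr{R}^{-(n-2)}$ rather than $\mathscr{R}^{-(n+2)/2}$; however, the paper's own displayed bounds $\int[s_{i,1}]^{2^{\ast}}\lesssim\mathscr{R}^{-n(2^{\ast}-1)}$ and $\int[t_{i,1}]^{(2^{\ast})^{\prime}}\lesssim\mathscr{R}^{-n}$ (exact only for $n=7$) gloss over precisely the same point, so your argument is at the same level of precision as the published one.
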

 \begin{proof}
We consider separately the following two cases.

\textbf{Case 1.}
$\frac{n+\mu-2}{2}\leq\mu<4$. In this case, we have
$$
\int[\hat{s}_{i,1}(x,\mathscr{R})]^{2^{\ast}}=\frac{1}{\mathscr{R}^{(2n-4-\mu)2^{\ast}}}\int_{|z_i|\leq\mathscr{R}}\frac{\lambda_i^{n}}{\tau( z_i)^{4n/(n-2)}}dx\lesssim\frac{1}{\mathscr{R}^{(2n-4-\mu)2^{\ast}}}.
$$
$$
\int[\hat{s}_{i,2}(x,\mathscr{R})]^{2^{\ast}}=\frac{1}{\mathscr{R}^{(n-\mu+\epsilon_0)2^{\ast}}}\int_{|z_i|\geq\mathscr{R}}\frac{\lambda_i^{n}}{\tau( z_i)^{(n-2-\epsilon_0)2^{\ast}}}dx\lesssim\frac{1}{\mathscr{R}^{(2n-\mu-2)2^{\ast}-n}}.
$$
Similar arguments give that
$$
\int[\hat{t}_{i,1}(x,\mathscr{R})]^{(2^{\ast})^{\prime}}=\frac{1}{\mathscr{R}^{2n(2n-4-\mu)/(n+2)}}\int_{|z_i|\leq\mathscr{R}}\frac{\lambda_i^{n}}{\tau( z_i)^{8n/(n+2)}}dx\lesssim\frac{1}{\mathscr{R}^{(2n-\mu)(2^{\ast})^{\prime}-n}}.
$$
$$
\int[\hat{t}_{i,2}(x,\mathscr{R})]^{(2^{\ast})^{\prime}}=\frac{1}{\mathscr{R}^{(n-\mu+\epsilon_0)(2^{\ast})^{\prime}}}\int_{|z_i|\geq\mathscr{R}}\frac{\lambda_i^{n}}{\tau( z_i)^{(n-\epsilon_0)(2^{\ast})^{\prime}}}dx\lesssim\frac{1}{\mathscr{R}^{(2n-\mu)(2^{\ast})^{\prime}-n}}.
$$
Coming back to the definition \eqref{zv4} and \eqref{zv5}, so that, summing over $i$, we get
\begin{equation}\label{1eq-2-1}
\int (S_2)^{2^{\ast}}\lesssim\frac{1}{\mathscr{R}^{(2n-4-\mu)2^{\ast}}}\hspace{2mm}\mbox{and}\hspace{2mm} \int (T_2)^{(2^{\ast})^{\prime}}\lesssim\frac{1}{\mathscr{R}^{(2n-\mu)(2^{\ast})^{\prime}-n}}\hspace{4mm}\mbox{for}\hspace{2mm} \frac{n+\mu-2}{2}\leq\mu<4.
\end{equation}
\textbf{Case 2.} $0<\mu<\frac{n+\mu-2}{2}$. Then
$$
\int[s_{i,1}(x,\mathscr{R})]^{2^{\ast}}=\frac{1}{\mathscr{R}^{2n}}\int_{|z_i|\leq\mathscr{R}}\frac{\lambda_i^{n}}{\tau( z_i)^{4n/(n-2)}}dx\lesssim\frac{1}{\mathscr{R}^{n(2^{\ast}-1)}}.
$$
$$
\int[s_{i,2}(x,\mathscr{R})]^{2^{\ast}}=\frac{1}{\mathscr{R}^{n(n-\mu+2)/(n-2)}}\int_{|z_i|\geq\mathscr{R}}\frac{\lambda_i^{n}}{\tau( z_i)^{n(n+\mu-2)/(n-2)}}dx\lesssim\frac{1}{\mathscr{R}^{n(2^{\ast}-1)}}.
$$
Similar to the above calculations, we also get that
$$
\int[t_{i,1}(x,\mathscr{R})]^{(2^{\ast})^{\prime}}=\frac{1}{\mathscr{R}^{2n(n-2)/(n+2)}}\int_{|z_i|\leq\mathscr{R}}\frac{\lambda_i^{n}}{\tau( z_i)^{8n/(n+2)}}dx\lesssim\frac{1}{\mathscr{R}^{n}}.
$$
$$
\int[t_{i,2}(x,\mathscr{R})]^{(2^{\ast})^{\prime}}=\frac{1}{\mathscr{R}^{n(n-\mu+2)/(n+2)}}\int_{|z_i|\geq\mathscr{R}}\frac{\lambda_i^{n}}{\tau( z_i)^{n(n+\mu+2)/(n+2)}}dx\lesssim\frac{1}{\mathscr{R}^{n}}.
$$
Coming back to the definition \eqref{zv4} and \eqref{zv5}, so that we eventually have
\begin{equation}\label{e2qa}
\int (S_1)^{2^{\ast}}\lesssim\frac{1}{\mathscr{R}^{n(2^{\ast}-1)}}\hspace{2mm}\mbox{and}\hspace{2mm} \int (T_1)^{(2^{\ast})^{\prime}}\lesssim\frac{1}{\mathscr{R}^{n}}\hspace{4mm}\mbox{for}\hspace{2mm} 0<\mu<\frac{n+\mu-2}{2}.
\end{equation}
Combining \eqref{1eq-2-1} and \eqref{e2qa} yields the conclusion.
 \end{proof}

 \begin{lem}\label{cll}
 Assume that $n\geq6-\mu$, $\mu\in(0,n)$ and $0<\mu\leq4$ satisfying $(\sharp)$. Let $\phi$, $h$ and $c_b^j$ satisfy the system \eqref{c1} for $\mathscr{R}=\mathscr{R}_m$ and  $\sigma=\sum_{i=1}^{\kappa}W_i$ is a family of $\delta-$interacting bubbles, there holds:\\
$(i).$ $n=4$ and $\mu\in[2,4)$, or $n=5$ and $\mu\in(3,4)$,
 $$|c_b^j|\lesssim\mathscr{Q}^{2-\frac{\mu}{n-2}}\|\hbar\|_{\ast\ast}+\mathscr{Q}^{2(2-\frac{\mu}{n-2})}\|\phi\|_{\ast},\hspace{3mm}j=1,\cdots, \kappa~\text{and}~b=1,\cdots,n+1.
$$
$(ii).$ $n=5$ and $\mu\in[1,3)$, or $n=6$ and $\mu\in(0,4)$, or $n=7$ and $\mu\in(\frac{7}{3},4)$,
$$|c_b^j|\lesssim\mathscr{Q}^{\min\{1,\frac{n-\mu}{n-2}\}}\|\hbar\|_{\ast\ast}+\mathscr{Q}^{2^{\ast}-1}\|\phi\|_{\ast},\hspace{3mm}j=1,\cdots, \kappa~\text{and}~b=1,\cdots,n+1.$$
 \end{lem}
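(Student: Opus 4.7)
The plan is to extract $c_b^j$ by testing the first equation in \eqref{c1} against the rescaled derivative $\Xi_j^b$. Integration by parts together with the linearized equation $\Delta\Xi_j^b+\Phi_{n,\mu}[W_j,\Xi_j^b]=0$ and the orthogonality condition $\int\Phi_{n,\mu}[W_j,\Xi_j^b]\phi=0$ eliminates the Laplacian contribution, producing the linear system
\begin{equation*}
\sum_{i=1}^{\kappa}\sum_{a=1}^{n+1} c_a^i \int \Phi_{n,\mu}[W_i,\Xi_i^a]\,\Xi_j^b \;=\; \int \Phi_{n,\mu}[\sigma,\phi]\,\Xi_j^b \;-\; \int \hbar\,\Xi_j^b.
\end{equation*}
By Lemma \ref{wwc101} the coefficient matrix equals $\mathrm{diag}(\Gamma_0^b)$ plus an $O(\mathscr{Q})$ off-diagonal perturbation, hence is invertible for $\mathscr{Q}$ small, so it suffices to control the two integrals on the right in terms of $\|\hbar\|_{**}$ and $\|\phi\|_{*}$.

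For the $\hbar$-term I combine $|\hbar(x)|\le\|\hbar\|_{**}T_j(x)$ with $|\Xi_j^b|\lesssim W_j$, reducing matters to explicit integrals $\int t_{i,\ell}\,W_j\,dx$ and their $\hat{t}$-analogues. The $i=j$ contribution dominates and is evaluated by the rescaling $y=\lambda_j(x-\xi_j)$: in Case (i) the inner region gives $\int\hat{t}_{j,1}W_j\approx \mathscr{R}^{-(2n-4-\mu)}=\mathscr{Q}^{2-\mu/(n-2)}$, while in Case (ii) one has $\int t_{j,1}W_j\approx \mathscr{R}^{-(n-2)}=\mathscr{Q}$, with outer terms strictly smaller by an additional power of $\mathscr{R}$. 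The $i\ne j$ cross integrals are handled via Lemma \ref{FPU1}, Lemma \ref{FPU2} and Lemma \ref{B4-1}, and in Case (ii) with $\mu>2$ they may produce the smaller exponent $(n-\mu)/(n-2)$, which is the source of the $\min\{1,(n-\mu)/(n-2)\}$ in the statement.

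For the $\Phi_{n,\mu}[\sigma,\phi]$-term the decisive point is the following vanishing identity: by Fubini applied to the Riesz convolution,
\begin{equation*}
\int \Phi_{n,\mu}[W_j,\phi]\,\Xi_j^b \;=\; \int \Phi_{n,\mu}[W_j,\Xi_j^b]\,\phi \;=\; 0
\end{equation*}
thanks to the orthogonality constraint in \eqref{c1}. Hence only $\Phi_{n,\mu}[\sigma,\phi]-\Phi_{n,\mu}[W_j,\phi]$ survives, and its pointwise size is controlled by cross products $W_i^{\alpha}W_j^{\beta}$ with $i\ne j$. Substituting $|\phi|\le\|\phi\|_{*}S_j$ and applying the two- and three-bubble estimates of Lemma \ref{FPU1}, Lemma \ref{FPU2}, Lemma \ref{FPU3}, Lemma \ref{B4} and Lemma \ref{B4-1} then yields the announced scaling $\mathscr{Q}^{2^{\ast}-1}$ in Case (ii) and $\mathscr{Q}^{2(2-\mu/(n-2))}$ in Case (i). The main obstacle will be keeping these cross-term bounds sharp uniformly across the bubble-tower configuration (core region $\mathcal{C}(W_i)$) and the bubble-cluster configuration (core region $\mathcal{D}(W_i)$), where the behavior of $|x|^{-\mu}\ast\cdot$ against the weights $s_{i,\ell}$ and $\hat{s}_{i,\ell}$ changes qualitatively with the range of $\mu$, forcing a region decomposition parallel to that of Lemma \ref{estimate1}. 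Once these bounds are in place, inverting the nearly-diagonal coefficient matrix delivers both estimates.
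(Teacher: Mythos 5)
Your proposal is correct and follows essentially the same route as the paper: test \eqref{c1} against $\Xi_j^b$, use the linearized equation plus the orthogonality constraint (equivalently the symmetry of the Riesz kernel) to cancel the pure $W_j$-part of the $\Phi_{n,\mu}$-term, bound $\int\hbar\,\Xi_j^b$ by the weighted integrals $\int T_jW_j$ of Lemma \ref{4B4-0}, bound the remaining $\phi$-contribution after inserting $|\phi|\le\|\phi\|_{\ast}S_j$, and invert the near-diagonal Gram matrix supplied by Lemma \ref{wwc101}. The only cosmetic difference is that the paper evaluates the surviving $\phi$-term by H\"older as $\|S_j\|_{L^{2^{\ast}}}\|T_j\|_{L^{(2^{\ast})'}}$ via Lemma \ref{cll-0-0} (using $(\sigma^{p-1}-W_j^{p-1})W_j\le\sigma^{p}-\sum_iW_i^{p}$ and Lemma \ref{estimate1}), whereas you invoke the bubble-interaction lemmas directly, which amounts to the same estimate.
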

 \begin{proof}
 Multiplying \eqref{c1} by $\Xi_{j}^{b}$, $(j = 1, 2, \cdot\cdot\cdot,\kappa)$ and integrating, we see that $c_{b}^{j}$ satisfies
\begin{equation}\label{a1}
     \begin{split}
&\int\big(\Delta \phi+\Phi_{n,\mu}[\sigma,\phi]\big)\Xi_{j}^{b}
		=\int \mathscr{R}\Xi_{j}^{b}+		\int\sum_{i=1}^{\kappa}\sum_{a=1}^{n+1}c_{a}^{i}\Phi_{n,\mu}[W_{i},\Xi^{a}_i]\Xi_{j}^{b}.
\end{split}
\end{equation}
We are led to estimate each term in the left and right hand side of \eqref{a1}. Since $\phi$ satisfy
$\int_{\mathbb{R}^{N}}\Phi_{n,\mu}[W_{j},\Xi^{b}_j](z)\phi=0$
for $j=1,\cdots, \kappa$ and $a=1,\cdots,n+1$, we get
\begin{equation}\label{zb1}
\begin{split}
\int_{\mathbb{R}^{N}}\Phi_{n,\mu}[\sigma,\phi]\Xi^{b}_j&=p\int\Big(\Big(|x|^{-\mu}\ast\sigma^{p-1}\phi\Big)
\sigma^{p-1}\Xi_{j}^{b}-\Big(|x|^{-\mu}\ast W_{j}^{p-1}\Xi_{j}^{b}\Big)W_{j}^{p-1}\phi\Big)\\&+(p-1)\int\Big(\Big(|x|^{-\mu}\ast\sigma^{p}\Big)
\sigma^{p-2}\phi\Xi_{j}^{b}-\Big(|x|^{-\mu}\ast W_{j}^{p}\Big)W_{j}^{p-2}\phi\Xi_{j}^{b}\Big).
\end{split}
\end{equation}
Observe there holds
\begin{equation*}
\Big(\sigma^{p-1}-W_{j}^{p-1}\Big)W_{j}\leq\sum\limits_{i}\Big(\sigma^{p-1}-W_{i}^{p-1}\Big)W_i=\sigma^{p}-\sum\limits_{i}W_{i}^{p}~~\text{for every}~~i.
\end{equation*}
If $0<\mu<\frac{n+\mu-2}{2}$,
using $|\Xi_{j}^{b}|\lesssim W_j$, we have
\begin{equation}\label{zb2}
\begin{split}
&\Big|\int\Big(\Big(|x|^{-\mu}\ast\sigma^{p-1}\phi\Big)
\sigma^{p-1}\Xi_{j}^{b}-\Big(|x|^{-\mu}\ast W_{j}^{p-1}\Xi_{j}^{b}\Big)W_{j}^{p-1}\phi\Big)\Big|\\&
\leq\int\Big|\Big(|x|^{-\mu}\ast\sigma^{p-1}\Xi_{j}^{b}\Big)\Big(
\sigma^{p-1}-W_{j}^{p-1}\Big)\phi\Big|+\int\Big|\Big(|x|^{-\mu}\ast \Big(\sigma^{p-1}-W_{j}^{p-1}\Big)\Xi_{j}^{b}\Big)
W_{j}^{p-1}\phi\Big|\\&
\lesssim\|\phi\|_{\ast}\int\Big|\Big(|x|^{-\mu}\ast \sigma^{p}\Big)
\sigma^{p-1}S_1-\sum\limits_{i}\Big(|x|^{-\mu}\ast W_{i}^{p}\Big)W_{i}^{p-1}\Big)S_1\Big|\\&+\|\phi\|_{\ast}\int\Big|\Big(|x|^{-\mu}\ast \sigma^{p}\Big)
W_{j}^{p-1}S_1-\sum_{i}\Big(|x|^{-\mu}\ast W_{i}^{p}\Big)
W_{j}^{p-1}S_1\Big|\lesssim\|\phi\|_{\ast}\int S_1(x)T_1(x).
\end{split}
\end{equation}
Moreover, we notice that
$\sum_{i=1}W_{i}^{p-1}\leq (\sum_{i=1}W_i)^{p-1}~~\text{for every}~~i,$
so we get from the
Proposition \ref{estimate1} and $|\Xi_{j}^{b}|\lesssim W_j$ that
\begin{equation}\label{zb3}
\begin{split}
&\Big|\int\Big(|x|^{-\mu}\ast|\sigma|^{p}\Big)
\sigma^{p-2}\phi\Xi_{j}^{b}-\Big(|x|^{-\mu}\ast W_{j}^{p}\Big)W_{j}^{p-2}\phi\Xi_{j}^{b}\Big|\\&
%\lesssim\|\phi\|_{\ast}\int\Big(|x|^{-\mu}\ast\sigma^{p}\Big)
%\Big(\sigma^{p-2}-W_j^{p-2}\Big)W_jS_1+\|\phi\|_{\ast}\int\Big(|x|^{-\mu}\ast \Big(\sigma^{p}-W_{j}^{p}\Big)\Big)W_{j}^{p-1}S_1\\&
\lesssim\|\phi\|_{\ast}\int\Big(|x|^{-\mu}\ast\sigma^{2_{\mu}^{\ast}}\Big)
\Big(\sigma^{p-1}-\sum\limits_{i}W_i^{p-1}\Big)S_1\\&+\|\phi\|_{\ast}\int\Big(\Big(|x|^{-\mu}\ast \sigma^{p}\big) \sigma^{p-1}-\sum\limits_{i}\big(|x|^{-\mu}\ast W_{i}^{p}\big)W_{i}^{p-1}\Big)S_1\lesssim\|\phi\|_{\ast}\int S_1(x)T_1(x).
\end{split}
\end{equation}
When $\frac{n+\mu-2}{2}<\mu<4$, we have
\begin{equation}\label{zb2-00}
\begin{split}
&\Big|\int\Big(\Big(|x|^{-\mu}\ast\sigma^{p-1}\phi\Big)
\sigma^{p-1}\Xi_{j}^{b}-\Big(|x|^{-\mu}\ast W_{j}^{p-1}\Xi_{j}^{b}\Big)W_{j}^{p-1}\phi\Big)\Big|
\lesssim\|\phi\|_{\ast}\int S_2(x)T_2(x).
\end{split}
\end{equation}
\begin{equation}\label{zb3-00}
\begin{split}
&\Big|\int\Big(|x|^{-\mu}\ast|\sigma|^{p}\Big)
\sigma^{p-2}\phi\Xi_{j}^{b}-\Big(|x|^{-\mu}\ast W_{j}^{p}\Big)W_{j}^{p-2}\phi\Xi_{j}^{b}\Big|
\lesssim\|\phi\|_{\ast}\int S_2(x)T_2(x).
\end{split}
\end{equation}
Concerning the right-hand side of \eqref{zb1}-\eqref{zb3-00}, by Lemma~\ref{cll-0-0}, the following estimates hold:
\begin{itemize}
\item[$(1)$]
If $n=4$ and $\mu\in[2,4)$, or $n=5$ and $\mu\in[3,4)$, we get that
\begin{equation}\label{eqa-2-1}
     \begin{split}
\|\phi\|_{\ast}\int S_2(x)T_2(x)\leq\big\|S_2\big\|_{L^{2^{\ast}}}\big\|T_2\big\|_{L^{(2^{\ast})^{\prime}}}\|\phi\|_{\ast}\lesssim\frac{1}{\mathscr{R}^{2(2n-4-\mu)}}\approx\mathscr{Q}^{2(2-\frac{\mu}{n-2})}\|\phi\|_{\ast}.
\end{split}
\end{equation}
\item[$(2)$]
If $n=5$ and $\mu\in[1,3)$, or $n=6$ and $\mu\in(0,4)$, or $n=7$ and $\mu\in(\frac{7}{3},4)$, we get that
\begin{equation}\label{eqa-2-3}
     \begin{split}
\|\phi\|_{\ast}\int S_1(x)T_1(x)\leq\big\|S_1\big\|_{L^{2^{\ast}}}\big\|T_1\big\|_{L^{(2^{\ast})^{\prime}}}\|\phi\|_{\ast}\lesssim\frac{1}{\mathscr{R}^{n+2}}\approx\mathscr{Q}^{2^{\ast}-1}\|\phi\|_{\ast}.
\end{split}
\end{equation}
\end{itemize}
Recall that, the first term of right hand side of \eqref{a1}, when $n=4$ and $\mu\in[2,4)$, or $n=5$ and $\mu\in[3,4)$, we have
\begin{equation*}
\Big|\int\hbar\Xi_{j}^{b}\Big|\lesssim \big\|\hbar\big\|_{\ast\ast}\sum_{j=1}^\kappa\int T_2W_j=\big\|\hbar\big\|_{\ast\ast}\sum_{j=1}^\kappa \Big(\int \hat{t}_{i,1}w_{i,1}+\int \hat{t}_{i,2}w_{i,2}+\int \hat{t}_{i,1}w_{j,1}+\int \hat{t}_{i,2}w_{j,2}\Big).
\end{equation*}
Using Lemma \ref{4B4-0} we get that the RHS is bounded by (up to a constant) $\mathscr{R}^{-(2n-4-\mu)}\big\|\hbar\big\|_{\ast\ast}\approx\mathscr{Q}^{2-\frac{\mu}{n-2}}\big\|\hbar\big\|_{\ast\ast}$.
When $n=5$ and $\mu\in[1,3)$, or $n=6$ and $\mu\in(0,4)$, or $n=7$ and $\mu\in(\frac{7}{3},4)$, we obtain
\begin{equation*}
\Big|\int\hbar\Xi_{j}^{b}\Big|\lesssim \big\|\hbar\big\|_{\ast\ast}\sum_{j=1}^\kappa\int T_1W_j=\big\|\hbar\big\|_{\ast\ast}\sum_{j=1}^\kappa \Big(\int t_{i,1}w_{i,1}+\int t_{i,2}w_{i,2}+\int t_{i,1}w_{j,1}+\int t_{i,2}w_{j,2}\Big)
\end{equation*}
by $|\Xi_{j}^{b}|\lesssim W_j$. Applying Lemma \ref{4B4-0} again, we deduced that the RHS is bounded by (up to a constant) $\mathscr{R}^{-\min\{n-2,n-\mu\}}\big\|\hbar\big\|_{\ast\ast}\approx\mathscr{Q}^{\min\{1,\frac{n-\mu}{n-2}\}}\big\|\hbar\big\|_{\ast\ast}$.

The estimate for the last integral in \eqref{a1} is the most delicate.
\begin{equation}\label{a4}
\begin{split}
\int\sum_{i=1}^{\kappa}\sum_{a=1}^{n+1}c_{a}^{i}\Phi_{n,\mu}[W_{i},\Xi^{a}_i]\Xi_{j}^{b}
=\sum_{j=1}^{\kappa}\sum_{a=1}^{n+1}c_{a}^{j}\int\Phi_{n,\mu}[W_{j},\Xi^{a}_j]\Xi_{j}^{b}
+\sum_{i\neq j}^{\kappa}\sum_{a=1}^{n+1}c_{a}^{i}\int\Phi_{n,\mu}[W_{i},\Xi^{a}_i]\Xi_{j}^{b}.
\end{split}
\end{equation}
Let us estimate each term of the righ hand side of \eqref{a4}.
Thanks to the Proposition~\ref{wwc101}, as is easily
checked, there exists some positive constant $\Gamma_{0}^{a}$ such that for $1\leq a=b\leq n+1$,
\begin{equation*}
(p-1)\int\Big(|x|^{-\mu}\ast W_{j}^{p}\Big)W_{j}^{p-2}\Xi_{j}^{a}\Xi_{j}^{b}+p\int\Big(|x|^{-\mu}\ast (W_{j}^{p-1}\Xi_{j}^{a})\Big)W_{j}^{p-1}\Xi_{j}^{b}=\Gamma_{0}^{a},
\end{equation*}
On the other hand, for $a\neq b$,
\begin{equation*}
(p-1)\int\Big(|x|^{-\mu}\ast W_{j}^{p}\Big)W_{j}^{p-2}\Xi_{j}^{a}\Xi_{j}^{b}+p\int\Big(|x|^{-\mu}\ast (W_{j}^{p-1}\Xi_{j}^{a})\Big)W_{j}^{p-1}\Xi_{j}^{b}=0.
\end{equation*}
In conclusion, there exists a positive constant $\Gamma^{b}_0$ such that
\begin{equation}\label{ww-3}
\begin{split}
\sum_{j=1}^{\kappa}\sum_{a=1}^{n+1}c_{a}^{j}&\int\Big((p-1)\Big(|x|^{-\mu}\ast W_{j}^{p}\Big)W_{j}^{p-2}\Xi_{j}^{a}+p\Big(|x|^{-\mu}\ast (W_{i}^{p-1}\Xi_{j}^{a})\Big)W_{j}^{p-1}\Big)\Xi_{j}^{b}
=c_{b}^{j}\Gamma_0^b,
\end{split}
\end{equation}
and again thanks to the Proposition~\ref{wwc101} for $i\neq j$ and $1\leq a,b\leq n+1$, we have
\begin{equation}\label{ww-2}
\begin{split}
\sum_{i\neq j}^{\kappa}\sum_{a=1}^{n+1}c_{a}^{i}&\int\Phi_{n,\mu}[W_{i},\Xi^{a}_i]\Xi_{j}^{b}
=\sum_{i\neq j}^{\kappa}\sum_{a=1}^{n+1}c_{a}^{i}O(Q_{ij}).
\end{split}
\end{equation}
Combining \eqref{a4}, \eqref{ww-3} and \eqref{ww-2}, we get
\begin{equation*}
\begin{split}
\int&\sum_{i=1}^{\kappa}\sum_{a=1}^{n+1}c_{a}^{i}\Phi_{n,\mu}[W_{i},\Xi^{a}_i]\Xi_{j}^{b}
=c_{b}^{j}\Gamma_0^b+\sum_{i\neq j}^{\kappa}\sum_{a=1}^{n+1}c_{a}^{i}O(\mathcal{Q}_{ij})
=\int\Phi_{n,\mu}[\sigma,\phi]\Xi_{j}^{b}
		+\int \hbar\Xi_{j}^{b}.
\end{split}
\end{equation*}
Combining this equality with \eqref{zb1}-\eqref{eqa-2-3} yields the conclusion.
\end{proof}

We consider
\begin{equation}\label{0}
	\left\{\begin{array}{l}
		\displaystyle \Delta \phi+\Phi_{n,\mu}[\sigma,\phi]
		=\hbar\hspace{4.14mm}\mbox{in}\hspace{1.14mm} \mathbb{R}^n,\\
		\displaystyle \int\Phi_{n,\mu}[W_{i},\Xi^{a}_i]\phi=0,\hspace{4mm}i=1,\cdots, \kappa; ~a=1,\cdots,n+1.
	\end{array}
	\right.
\end{equation}
In conclusion, we deduce the following important a-priori estimate.
\begin{lem}\label{estimate2}
Assume that $n\geq6-\mu$, $\mu\in(0,n)$ and $0<\mu\leq4$ satisfying $(\sharp)$. Let $\phi$ be the solution to problem \eqref{0}. Then it holds that
$$\|\phi\|_{\ast}\leq C \|\hbar\|_{\ast\ast}, $$
where $\|\phi\|_{\ast}=\sup_{x\in\mathbb{R}^n}\big|\phi(x)\big|S_j^{-1}(x)$ and $\|\hbar\|_{\ast\ast}=\sup_{x\in\mathbb{R}^N}\big|\hbar(x)\big|T_j^{-1}(x)$ with weight functions from \eqref{f-ai}-\eqref{h}.
\end{lem}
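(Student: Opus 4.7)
The plan is to argue by contradiction via a blow-up / concentration-compactness scheme, following the broad strategy of Deng--Sun--Wei \cite{DSW21} but adapted to the nonlocal operator $\Phi_{n,\mu}$. Assume, for the sake of contradiction, that there exist sequences of $\delta_m$-interacting bubble configurations $\sigma_m=\sum_{i=1}^{\kappa} W[\xi_i^{(m)},\lambda_i^{(m)}]$ with $\mathscr{R}_m\to\infty$, right-hand sides $\hbar_m$, and solutions $\phi_m$ of \eqref{0} with the orthogonality conditions, such that $\|\phi_m\|_{\ast}=1$ while $\|\hbar_m\|_{\ast\ast}\to 0$. The goal is to show this is inconsistent with the orthogonality/nondegeneracy of the linearized Hartree operator.

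First I would convert \eqref{0} into its integral form using the Green's representation for $-\Delta$ on $\mathbb{R}^n$, obtaining
\[
\phi_m(x)=c_n\int\frac{1}{|x-y|^{n-2}}\Bigl(\Phi_{n,\mu}[\sigma_m,\phi_m](y)-\hbar_m(y)\Bigr)\,dy.
\]
Using the weighted bounds $|\phi_m|\le\|\phi_m\|_{\ast}S_j$ and $|\hbar_m|\le\|\hbar_m\|_{\ast\ast}T_j$, together with the Riesz-type convolution estimates of Lemmas \ref{B4}--\ref{B4-1} in Section~2, each piece of $\Phi_{n,\mu}[\sigma_m,\phi_m]$ (the "diagonal" pieces on a single bubble $W_i$ and the "cross" pieces coupling $W_i$ with $W_j$, $i\ne j$) can be converted into weighted $L^\infty$-bounds expressed in the scales $s_{i,\ell}, \hat s_{i,\ell}, \tilde s_{i,\ell}$ and $t_{i,\ell}, \hat t_{i,\ell}, \tilde t_{i,\ell}$. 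This yields a first rough estimate of the form $|\phi_m(x)|\lesssim \|\hbar_m\|_{\ast\ast}T_j\ast|x|^{2-n}+\varepsilon_m S_j(x)$ with $\varepsilon_m\to0$ away from the bubble cores, which in particular shows that $|\phi_m|=o(S_j)$ uniformly outside bounded neighborhoods (in $z_i$-variables) of the $\xi_i^{(m)}$'s; this is the role played by the auxiliary estimates (the cross-bubble Hartree terms handled along the lines of Lemmas~\ref{cll-2}--\ref{cll-3} in the paper's outline).

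Since $\|\phi_m\|_{\ast}=1$, the previous step implies there is an index $i_m\in\{1,\dots,\kappa\}$ and a point $x_m\in\mathbb{R}^n$ with $|z_{i_m}^{(m)}(x_m)|\le C$ such that $|\phi_m(x_m)|S_j(x_m)^{-1}\ge 1/2$. Rescale
\[
\tilde\phi_m(z):=\lambda_{i_m}^{-(n-2)/2}\,\phi_m\!\left(\xi_{i_m}^{(m)}+z/\lambda_{i_m}^{(m)}\right),
\]
so that $|\tilde\phi_m|$ is uniformly bounded on compact sets. The rescaled equation becomes a linearized nonlocal equation about $W=W[0,1]$ plus error terms coming from the other bubbles, which vanish locally thanks to the weak-interaction hypothesis $\mathscr{R}_m\to\infty$ and the convolution decay estimates of Section~2. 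Using these I would pass to the limit (via the convergence and singularity-removability results announced in Propositions~\ref{converges-0}, \ref{converges-3}, \ref{converges-4}) to obtain a limit function $\tilde\phi_\infty$, bounded on $\mathbb{R}^n$ with decay $|\tilde\phi_\infty(z)|\lesssim \tau(z)^{-2}$, solving
\[
\Delta\tilde\phi_\infty+p(|x|^{-\mu}\ast W^{p-1}\tilde\phi_\infty)W^{p-1}+(p-1)(|x|^{-\mu}\ast W^p)W^{p-2}\tilde\phi_\infty=0.
\]
The orthogonality conditions in \eqref{0}, passed to the limit, force $\int\Phi_{n,\mu}[W,\Xi^a]\tilde\phi_\infty=0$ for $a=1,\dots,n+1$. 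By the nondegeneracy theorem of Li \cite{XLi} (and Gao et al.\ \cite{GMYZ22} for $n=6,\mu=4$), the kernel of this linearized operator is exactly spanned by the $\Xi^a$'s, whence $\tilde\phi_\infty\equiv 0$. On the other hand the normalization $|\phi_m(x_m)|S_j(x_m)^{-1}\ge 1/2$ translates, in rescaled variables, into $|\tilde\phi_\infty(z_\infty)|\ge c>0$ at some limit point $z_\infty$, contradicting $\tilde\phi_\infty\equiv 0$.

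The main obstacle, and what distinguishes this from the local case of \cite{DSW21}, is the rough $L^\infty$-step: the nonlocal term $\Phi_{n,\mu}[\sigma_m,\phi_m]$ mixes all bubbles through the Riesz potential $|x|^{-\mu}\ast\cdot$, so that even when $\phi_m$ is concentrated near $W_{i_m}$ the cross terms $(|x|^{-\mu}\ast W_i^{p-1}\phi_m)W_j^{p-1}$ with $i\ne j$ have to be controlled uniformly in the weighted norms. This is precisely where the dichotomy $0<\mu<\tfrac{n+\mu-2}{2}$ versus $\tfrac{n+\mu-2}{2}\le\mu<4$ enters and why three different weight families $S_j,T_j$ (with different inner/outer decays $s_{i,1}$ vs.\ $s_{i,2}$, etc.) had to be introduced in Section~3.1: each convolution estimate from Section~2 must match a specific weight profile, and verifying these matches case by case (using $\tilde z_i=z_i/\mathscr{R}_{ij}$ in the overlap regions and the parameter $\Theta$ from \eqref{seta}) constitutes the bulk of the technical work. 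Once this rough estimate is in place, the blow-up argument and appeal to nondegeneracy are comparatively standard.
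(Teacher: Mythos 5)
Your overall strategy (contradiction, weighted Green's representation, blow-up at a bubble core, nondegeneracy plus orthogonality) is the same skeleton as the paper's proof, but there is a genuine gap: your two regimes do not cover $\mathbb{R}^n$. The Green's-representation estimate (the paper's Lemma \ref{cll-1} and Step \ref{step5.1}) only yields smallness of $|\phi_m|S_j^{-1}$ in the exterior region where \emph{all} rescaled coordinates satisfy $|z_i^{(m)}|>M$, and the blow-up at the core of a bubble $W_{i_0}$ only gives $\bar\phi_m^{(l)}\to 0$ in $C^0_{loc}$ \emph{away from} the limit points $\xi_{i_0 j}^{(\infty)}$ of the descendant bubbles $j\in\mathscr{H}(i_0)$ (Propositions \ref{converges-0}, \ref{converges-3}, \ref{converges-4}). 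When $\kappa\ge 2$ and the configuration contains bubble towers, the supremum defining $\|\phi_m\|_{\ast}$ may be approached in the neck region $\mathcal{C}_{neck,M,3}$: points with $|z_{i_0}^{(m)}-\xi_{i_0j}^{(m)}|\le\varepsilon_0$ but $|z_j^{(m)}|\ge M$, which are neither in the exterior region nor seen by the local limit. Your claim that the rough estimate gives $|\phi_m|=o(S_j)$ ``uniformly outside bounded neighborhoods of the $\xi_i^{(m)}$'s'' is exactly what fails there, and the paper has to close this with a separate comparison/maximum-principle argument (Step \ref{step5.3}) built on the barrier functions $\bar S_1$, $\hat S_2$ constructed from $G(X,Y)\approx\min\{X,Y\}$ and the differential inequalities of Propositions \ref{mingti2}--\ref{mingti3}, i.e.\ $-\Delta \bar S_1-\Phi_{n,\mu}[\sigma_m,\bar S_1]\ge T_1$ on $D_{i_0}^{(m)}$. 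Without an argument of this type your contradiction with $\|\phi_m\|_{\ast}=1$ is not established.

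A secondary but important point: your rescaling $\tilde\phi_m(z)=\lambda_{i_m}^{-(n-2)/2}\phi_m(\xi_{i_m}^{(m)}+z/\lambda_{i_m}^{(m)})$ is normalized by the bubble height rather than by the weight, whereas the weight at a core point is of size $S_j(x_m)\approx \lambda_{i_m}^{(n-2)/2}\mathscr{R}_m^{2-n}\tau(z_{i_m})^{-2}$ (or $\mathscr{R}_m^{4+\mu-2n}$ in the second regime). With your normalization the rescaled functions tend to zero trivially because $\mathscr{R}_m\to\infty$, and no nontrivial limit survives to contradict nondegeneracy; you must divide by $S_j(x_m)$ as in the paper's definition of $\bar\phi_m^{(l)}$, and then the limit is only bounded by $\sum_{j\in\mathscr{H}(i_0)}|z-\xi_{i_0j}^{(\infty)}|^{-(n+\mu-2)/2}+M^2$ near the singular points, which is precisely why the removable-singularity step (Proposition \ref{converges-3}) is needed before nondegeneracy can be invoked.
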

The key point here is to prove the priori estimate for $\|\phi\|_{\ast}$.
Once this is done, a well-known standard argument (cf. \cite{Manuel}) shows that Lemma \ref{ww10}. However, for clarity and coherence, the proof of Lemma \ref{estimate2} will be deferred and presented in Section 5.

We next give the following proof.\\
\emph{\textbf{Proof of Lemma \ref{ww10}.}}
We now proceed similarly to the Proposition $4.1$ in \cite{Manuel}. Let $\mathscr{M}$ be given by
\begin{equation*}
\begin{split}
\mathscr{M}=&\Big\{\phi\in D^{1,2}(\mathbb{R}^n):\big\langle\phi,\Xi_{i}^{a}\big\rangle_{D^{1,2}(\mathbb{R}^n)}=\int\Phi_{n,\mu}[W_{i},\Xi^{a}_i]\phi=0,\hspace{2mm}\mbox{for}\hspace{2mm}i=1,\cdots, \kappa; ~a=1,\cdots,n+1\Big\},
\end{split}
\end{equation*}
and equipped with the following inner product$
\big\langle\phi,\psi\big\rangle_\mathscr{M}=\big\langle\phi,\psi\big\rangle_{D^{1,2}(\mathbb{R}^n)}
=\int \nabla\varphi\nabla\psi.$
We solving the system \eqref{c1} in the weak form is equivalent to finding a function $\phi\in \mathscr{M}$ such that
$$
\big\langle\phi,\psi\big\rangle_{\mathscr{M}}=\Big\langle \Phi_{n,\mu}[\sigma,\phi]-\hbar,\psi\Big\rangle_{L^2}, \hspace{2mm}\mbox{for all}\hspace{2mm}\psi\in \mathscr{M},
$$
which in operate form can be written as
\begin{equation}\label{adeta}
\begin{split}
\phi=\mathcal{T}_{\delta}+\tilde{\hbar},
\end{split}
\end{equation}
where $\mathcal{T}_{\delta}$ is a compact operator on $\mathscr{M}$ and $\tilde{\hbar}\in \mathscr{M}$ depends linearly in $\hbar$.
So that, by Fredholm's alternative theorem, there exists $\phi\in \mathscr{M}$ is the unique solution of \eqref{adeta} provided that where the only solution $\phi$ solves $\phi=\mathcal{T}_{\delta}(\phi)$
is $\phi\equiv0$ in $\mathscr{M}$. Our main goal now is to show that the equation
\begin{equation*}
\begin{split}
 \Delta \phi&+\Phi_{n,\mu}[\sigma,\phi]=
		\sum_{i=1}^{\kappa}\sum_{a=1}^{n+1}c_{a}^{i}\Phi_{n,\mu}[W_{i},\Xi^{a}_i](z).
\end{split}
\end{equation*}
has a trival solution in $\mathscr{M}$. Assume from now on by contradiction that there exists a non-trivial solution $\phi=\phi_{\delta}$ for $\delta$ small so that we may choose $\|\phi_\delta\|_{\ast}=1$. On the other hand, from the Lemma \ref{cll} and Lemma \ref{estimate2}, we observe that
\begin{equation*}
\begin{split}
\|\phi_\delta\|_{\ast}&\leq C\sum_{i=1}^{\kappa}\sum_{a=1}^{n+1}|c_{a}^{i}|\\&\lesssim
\left\lbrace
\begin{aligned}
& \mathscr{Q}^{2^{\ast}-1}\|\phi_\delta\|_{\ast},\hspace{4mm}n=5\hspace{2mm}\mbox{and}\hspace{2mm}\mu\in[1,3),\mbox{or}\hspace{2mm}n=6\hspace{2mm}\mbox{and}\hspace{2mm}\mu\in(0,4),\mbox{or}\hspace{2mm}n=7\hspace{2mm}\mbox{and}\hspace{2mm}\mu\in(\frac{7}{3},4),\\
& \mathscr{Q}^{2(2-\frac{\mu}{n-2})}\|\phi_{\delta}\|_{\ast},\hspace{4mm}n=4\hspace{2mm}\mbox{and}\hspace{2mm}\mu\in[2,4),\mbox{or}\hspace{2mm}n=5\hspace{2mm}\mbox{and}\hspace{2mm}\mu\in[3,4).
\end{aligned}
\right.
\end{split}
\end{equation*}
However this estimate gives a contradiction for $\mathscr{Q}$ is small enough. Therefore, the system \eqref{c1} admits a unique solution in $\mathscr{M}$ and the conclusion follow by the Lemma \ref{cll}. \qed

\subsection{Proof of Theorem \ref{Figalli}}
Recalling that the hight order term $\mathscr{N}$ in Lemma \ref{nnn1}, we have
\begin{equation*}
\mathscr{N}(\sigma,\phi)\lesssim \mathscr{N}_1(\sigma,\phi)+\mathscr{N}_2(\sigma,\phi)+\cdots+\mathscr{N}_9(\sigma,\phi).
\end{equation*}
Then we multiply the (LHS) of the equation (\ref{u-0}) by $\Xi_{m}^{n+1}$ and integrating by
parts, we get
\begin{equation}\label{0-h-1}
\begin{split}
&\Big|\int \Xi_{m}^{n+1}\hbar\Big|=\Big|\int\Big(\Delta \rho+\mathscr{N}(\sigma,\phi)+\hat{f}+\Phi_{n,\mu}[\sigma,\phi]\Big)\Xi_{m}^{n+1}\Big|\\&
\lesssim\big\|\hat{f}\big\|_{(D^{1,2}(\mathbb{R}^n))^{-1}}+\Big|\Phi_{n,\mu}[\sigma,\phi]\Xi_{m}^{n+1}\Big|+\int \Big|\mathscr{N}_1(\sigma,\phi)\Xi_{m}^{n+1}\Big|+\cdots+\int \Big|\mathscr{N}_9(\sigma,\phi)\Xi_{m}^{n+1}\Big|,
\end{split}
\end{equation}
where we have used the orthogonality conditions, $|\Xi_{m}^{n+1}|\lesssim W_m$ and $\|W_{m}\|_{(D^{1,2}(\mathbb{R}^n))^{-1}}\leq C(n)$.
\begin{lem}\label{p2}
Assume that $n\geq6-\mu$, $\mu\in(0,n)$ and $0<\mu\leq4$. Letting $W_m$ be given by $\sigma=\sigma_m+W_m \hspace{2mm}\mbox{where}\hspace{2mm} \sigma_{m}=\sum_{i=1,i\neq m}^{\kappa}W_i.$  Then we have
\begin{equation*}
\begin{split}
\int \Xi_{m}^{n+1}\hbar&=\sum_{i=1,i\neq m}^{\kappa}\int\Big(|x|^{-\mu}\ast W_i^{p}\Big)
W_i^{p-1}\lambda_{m}\partial_{\lambda_m} W_m+o(\mathcal{\mathscr{Q}}).
\end{split}
\end{equation*}
\end{lem}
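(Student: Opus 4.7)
The plan is to compute $\int \hbar\,\Xi_m^{n+1}$ by a Taylor-style expansion of the Hartree-nonlinearity around the single bubble $W_m$, identifying the linearization piece via the key algebraic identity from Lemma~\ref{wwc101} and controlling the quadratic remainder through the three-bubble estimates in Lemma~\ref{FPU3} together with the convolution bounds of Lemmas~\ref{B4}--\ref{B4-1}. Concretely, write $\sigma=W_m+\sigma_m$ and decompose
\[
(|x|^{-\mu}\ast\sigma^{p})\sigma^{p-1}
=(|x|^{-\mu}\ast W_m^{p})W_m^{p-1}
+\Phi_{n,\mu}[W_m,\sigma_m]+\mathscr{N}_m,
\]
with $\Phi_{n,\mu}[W_m,\sigma_m]$ the linearization in \eqref{I-FAI-1} and $\mathscr{N}_m$ a remainder made of terms which are quadratic (or of higher order) in $\sigma_m$, handled via the elementary inequalities \eqref{a0b0}-\eqref{a0b1} when $p<2$.

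First I would show that the leading free-bubble term drops out: testing against $\Xi_m^{n+1}=\lambda_m\partial_{\lambda_m}W_m$ and integrating by parts using $-\Delta W_m=(|x|^{-\mu}\ast W_m^{p})W_m^{p-1}$ gives
\[
\int(|x|^{-\mu}\ast W_m^{p})W_m^{p-1}\Xi_m^{n+1}=\int\nabla W_m\cdot\nabla\Xi_m^{n+1}
=\tfrac{1}{2}\lambda_m\partial_{\lambda_m}\|\nabla W_m\|_{L^2}^{2}=0,
\]
by scale invariance of the Dirichlet integral of $W_m$. Next, since $\sigma_m=\sum_{i\neq m}W_i$, I apply the key identity from Lemma~\ref{wwc101} bubble-by-bubble together with the symmetry of the Riesz convolution to rewrite the linearization test integral as
\[
\int\Phi_{n,\mu}[W_m,\sigma_m]\,\Xi_m^{n+1}
=\sum_{i\neq m}\int(|x|^{-\mu}\ast W_i^{p})W_i^{p-1}\Xi_m^{n+1}.
\]
This is the origin of the explicit main term in the statement of the lemma.

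The main effort will be in the third step: bounding $\int\mathscr{N}_m\,\Xi_m^{n+1}$ by $o(\mathscr{Q})$. The terms in $\mathscr{N}_m$ are schematically of type
\[
\big(|x|^{-\mu}\ast(W_m^{p-2}\sigma_m^{2})\big)W_m^{p-1},\qquad
\big(|x|^{-\mu}\ast(W_m^{p-1}\sigma_m)\big)W_m^{p-2}\sigma_m,\qquad
\big(|x|^{-\mu}\ast W_m^{p}\big)W_m^{p-3}\sigma_m^{2},
\]
plus purely cross contributions from bubbles $W_i,W_j$ with $i,j\neq m$. Using $|\Xi_m^{n+1}|\lesssim W_m$ and Fubini, each resulting integral is a three-bubble quantity which, by Lemma~\ref{FPU3} (with the convolution reductions from Lemmas~\ref{B4}--\ref{B4-1} for the Riesz potential), is dominated by $\mathscr{Q}^{1+\alpha}|\log\mathscr{Q}|^{\beta}$ for some $\alpha>0$ depending on $n,\mu$ through \eqref{seta}, i.e.\ of order $o(\mathscr{Q})$. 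Finally, combining with the identity $\sum_{i}\int(|x|^{-\mu}\ast W_i^{p})W_i^{p-1}\Xi_m^{n+1}=0+\sum_{i\neq m}\int(|x|^{-\mu}\ast W_i^{p})W_i^{p-1}\Xi_m^{n+1}$ and collecting the three steps yields the desired identity.

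The hard part will be Step~3, because the Taylor expansion is only smooth when $p\geq 2$; for the regime $p<2$ encompassed by the assumption $(\sharp)$, the expansion must be replaced by the elementary pointwise bounds \eqref{a0b0}--\eqref{a0b1}, and the subsequent control of the three-bubble convolution integrals requires balancing the singular kernel $|x|^{-\mu}$ against the decay of $W_m$ very carefully --- precisely the delicate optimisation of exponents performed in Lemma~\ref{B4-1}. A secondary technical point is that one must split the integration region into the cores of $W_m$, of the $W_i$'s, and the outer region, and verify that in each region the relevant quadratic interaction decays strictly faster than $\mathscr{Q}$; this is where the hypothesis $n\geq 6-\mu$ and the parameter restriction $(\sharp)$ enter in an essential way.
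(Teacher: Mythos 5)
Your Steps 1 and 2 are individually correct (Step 2 is exactly the paper's use of the last identity in Lemma~\ref{wwc101}, via symmetry of the Riesz kernel and self-adjointness of the linearized operator), but the overall bookkeeping defeats the statement you are trying to prove. Writing $N(u):=(|x|^{-\mu}\ast u^{p})u^{p-1}$, so that $\hbar=N(\sigma)-\sum_{i=1}^{\kappa}N(W_i)$, your decomposition gives
\begin{equation*}
\int\hbar\,\Xi_m^{n+1}
=\int N(W_m)\,\Xi_m^{n+1}
+\int\Phi_{n,\mu}[W_m,\sigma_m]\,\Xi_m^{n+1}
+\int\mathscr{N}_m\,\Xi_m^{n+1}
-\sum_{i=1}^{\kappa}\int N(W_i)\,\Xi_m^{n+1},
\end{equation*}
and after your Step 1 (the first and the $i=m$ terms vanish) the exact identity of your Step 2 makes the linearization term cancel \emph{identically} against $-\sum_{i\neq m}\int N(W_i)\Xi_m^{n+1}$. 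Hence, if your Step 3 claim $\int\mathscr{N}_m\Xi_m^{n+1}=o(\mathscr{Q})$ were true, you would have proved $\int\hbar\,\Xi_m^{n+1}=o(\mathscr{Q})$, not the lemma; and that conclusion is false in general, since the asserted main term equals $\widetilde{\alpha}_{n,\mu}\sum_{i\neq m}\int W_i^{2^{\ast}-1}\lambda_m\partial_{\lambda_m}W_m\approx\pm\sum_{i\neq m}Q_{im}$ by Lemma~\ref{FPU2-1} and \eqref{Qij}, which is generically of order $\mathscr{Q}$. So "collecting the three steps" does not yield the desired identity.

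The actual gap is Step 3: $\mathscr{N}_m$ is not globally quadratic in $\sigma_m$. Near the cores of the bubbles $W_i$, $i\neq m$, one has $\sigma_m\gg W_m$, the expansion around $W_m$ is invalid there, and on that region $\mathscr{N}_m\approx N(\sigma_m)-\Phi_{n,\mu}[W_m,\sigma_m]\approx\sum_{i\neq m}N(W_i)$; its pairing with $\Xi_m^{n+1}$ is exactly of size $\mathscr{Q}$ and is precisely where the main term of the lemma comes from. (A secondary point: even the honestly quadratic pieces are not $o(\mathscr{Q})$ without a region restriction, e.g.\ $\int\big(|x|^{-\mu}\ast W_m^{p-2}W_i^{2}\big)W_m^{p}\sim\int W_m^{2^{\ast}-2}W_i^{2}\approx Q_{im}^{4/(n-2)}\gg Q_{im}$ when $n>6$; they become $O(\mathscr{Q}^{1+\varepsilon})$ only on $\{\kappa W_m\geq\sigma_m\}$, where $\sigma_m\lesssim W_m$.) This is why the paper splits $\mathbb{R}^n$ into $\{\kappa W_m\geq\sigma_m\}$ and $\{\sigma_m>\kappa W_m\}$ ($\mathscr{I}_1$--$\mathscr{I}_4$), expands around $W_m$ only on the first set and around $\sigma_m$ on the second, shows that the subtracted $\sum_{i\neq m}N(W_i)$ only ever appears restricted to $\{\kappa W_m\geq\sigma_m\}$ or combined with $N(\sigma_m)$, where it contributes $o(\mathscr{Q})$, keeps the linear term $\Phi_{n,\mu}[W_m,\sigma_m]$ over all of $\mathbb{R}^n$ up to $O(\mathscr{Q}^{1+\varepsilon})$ extension errors, and only at the very end converts that surviving linear term via the Lemma~\ref{wwc101} identity. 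You would need to restructure your argument along these lines; as written, Steps 2 and 3 cannot simultaneously hold together with the lemma's conclusion.
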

\begin{proof}
From the definition of $\hbar$ implies that
\begin{equation*}
\begin{split}
\int \Xi_{m}^{n+1}\hbar&=\int \Big[\Big(|x|^{-\mu}\ast\sigma^{p}\Big)\sigma^{p-1}-\sum_{i=1}^{\kappa}\Big(|x|^{-\mu}\ast W_{i}\Big)W_{i}^{p-1}\Big]\lambda_{m}\partial_{\lambda_m} W_m\\&=\mathscr{I}_1+\mathscr{I}_2+\mathscr{I}_3+\mathscr{I}_4,
\end{split}
\end{equation*}
with
\begin{equation*}
\begin{split}
&\mathscr{I}_1:=\int_{\{\kappa W_m\geq\sigma_m\}} \Big[\Big(|x|^{-\mu}\ast\sigma^{p}\Big)\sigma^{p-1}-\Big(|x|^{-\mu}\ast W_m^{p}\Big)W_m^{p-1},
\\&\hspace{6mm}\hspace{4mm}-p
\Big(|x|^{-\mu}\ast W_m^{p-1}\sigma_m\Big)
W_m^{p-1}-(p-1)
\Big(|x|^{-\mu}\ast W_m^{p}\Big)
W_m^{p-2}\sigma_{m}\Big]\lambda_{m}\partial_{\lambda_m} W_m,
\end{split}
\end{equation*}
\begin{equation*}
\begin{split}
&\mathscr{I}_2:=\int_{\{\kappa W_m\geq\sigma_m\}} \Big[p
\Big(|x|^{-\mu}\ast W_m^{p-1}\sigma_m\Big)
W_m^{p-1}+(p-1)
\Big(|x|^{-\mu}\ast W_m^{p}\Big)
W_m^{p-2}\sigma_{m}\\&\hspace{6mm}\hspace{4mm}-\sum_{i=1,i\neq m}^{\kappa}\Big(|x|^{-\mu}\ast W_{i}^{p}\Big)W_{i}^{p-1}\Big]\lambda_{m}\partial_{\lambda_m} W_m,
\end{split}
\end{equation*}
\begin{equation*}
\begin{split}
&\mathscr{I}_3:=\int_{\{\sigma_m>\kappa W_m\}} \Big[\Big(|x|^{-\mu}\ast\sigma^{p}\Big)\sigma^{p-1}-\Big(|x|^{-\mu}\ast\sigma_m^{p}\Big)\sigma_m^{p-1}
\\&\hspace{6mm}\hspace{4mm}-p
\Big(|x|^{-\mu}\ast\sigma_m^{p-1}W_m\Big)
\sigma_m^{p-1}-(p-1)
\Big(|x|^{-\mu}\ast\sigma_m^{p}\Big)
\sigma_m^{p-2}W_{m}\Big]\lambda_{m}\partial_{\lambda_m} W_m,
\end{split}
\end{equation*}
\begin{equation*}
\begin{split}
&\mathscr{I}_4:=\int_{\{\sigma_m>\kappa W_m\}} \Big[\Big(|x|^{-\mu}\ast\sigma_m^{p}\Big)\sigma_m^{p-1}
+p
\Big(|x|^{-\mu}\ast\sigma_m^{p-1}W_m\Big)
\sigma_m^{p-1}\\&\hspace{6mm}\hspace{4mm}+(p-1)
\Big(|x|^{-\mu}\ast\sigma_m^{p}\Big)
\sigma_m^{p-2}W_{m}-\sum_{i=1}^{\kappa}\Big(|x|^{-\mu}\ast W_{i}^{p}\Big)W_{i}^{p-1}\Big]\lambda_{m}\partial_{\lambda_m} W_m.
\end{split}
\end{equation*}
Now we evaluate separately the various terms.
A similar to the decomposition of \eqref{nnn1-0},
\begin{equation*}
\begin{split}
\big|\mathscr{I}_1\big|\lesssim \int_{\{\kappa W_m\geq\sigma_m\}}\Big|\Big(\mathscr{N}_1(W_m,\sigma_m)+\mathscr{N}_2(W_m,\sigma_m)+\cdots+\mathscr{N}_9(W_m,\sigma_m)\Big)\lambda_{m}\partial_{\lambda_m} W_m\Big|
\end{split}
\end{equation*}
Combining Lemma \ref{p1-00}
and Lemma~\ref{FPU1},
we get that
\begin{equation*}
\begin{split}
\int_{\{\kappa W_m\geq\sigma_m\}}\Big|\mathcal{I}_{\mu}\{W_m^{p}\}\sigma_{m}^{p-1}(x)\lambda_{m}\partial_{\lambda_m} W_m\Big|&\lesssim \widetilde{\alpha}_{n,\mu}\int_{\{\kappa W_m\geq\sigma_m\}}\Big|W_m^{2^{\ast}-1}\sigma_{m}^2\lambda_{m}\partial_{\lambda_m} W_m\Big|
\\&\lesssim\int_{\{\kappa W_m\geq\sigma_m\}}W_m^{2^{\ast}-2}\sigma_{m}^2\approx \mathscr{Q}^{1+\varepsilon}.
\end{split}
\end{equation*}
We similarly compute and get
\begin{equation*}
\begin{split}
\int_{\{\kappa W_m\geq\sigma_m\}}\Big|\big(\mathcal{I}_{\mu}\{W_{m}^{p-2}\sigma_m^2\}W_{m}^{p-1}+\mathcal{I}_{\mu}\{\sigma_{m}^{p}\}W_{m}^{p-1}\big)\lambda_{m}\partial_{\lambda_m} W_m\Big|\lesssim\mathscr{Q}^{1+\varepsilon},
\end{split}
\end{equation*}
\begin{equation*}
\begin{split}
\Big|\int_{\{\kappa W_m\geq\sigma_m\}}\int_{\{\kappa W_m\geq\sigma_m\}}\frac{W_m^{p-1}\sigma_{m}\sigma_{m}^{p-1}}{|x-y|^{\mu}}\lambda_{m}\partial_{\lambda_m} W_m\Big|
\lesssim\mathscr{Q}^{1+\varepsilon},
\end{split}
\end{equation*}
and by HLS inequality, we deduce that
\begin{equation*}
\begin{split}
&\Big|\int_{\{\kappa W_m\geq\sigma_m\}}\int_{\{\kappa W_m<\sigma_m\}}\frac{W_m^{p-1}\sigma_{m}\sigma_{m}^{p-1}}{|x-y|^{\mu}}\lambda_{m}\partial_{\lambda_m} W_m\Big|\\&
\lesssim\Big(\int_{\{\kappa W_m\geq\sigma_m\}}W_m^{\frac{2n(p-1)}{2N-\mu}}\sigma_{m}^{\frac{2n}{2n-\mu}}\Big)^{(2n-\mu)/(2n)}\Big(\int W_m^{\frac{2n(p-\varepsilon)}{2n-\mu}}\sigma_{m}^{\frac{2n\varepsilon}{2n-\mu}}\Big)^{(2n-\mu)/(2n)}\lesssim\mathscr{Q}^{1+\varepsilon},
\end{split}
\end{equation*}
Therefore,
$$\int_{\{\kappa W_m\geq\sigma_m\}}\Big|\big(\mathcal{I}_{\mu}\{W_{m}^{p-1}\sigma_m\}\sigma_{m}^{p-1}+\mathcal{I}_{\mu}\{W_{m}^{p-2}\sigma_m^2\}\sigma_{m}^{p-1}+\mathcal{I}_{\mu}\{\sigma_m^p\}\sigma_{m}^{p-1}\big)\lambda_{m}\partial_{\lambda_m} W_m\Big|\lesssim\mathscr{Q}^{1+\varepsilon}.$$
Moreover, by HLS inequality and Lemma~\ref{FPU1}, we get
\begin{equation*}
\begin{split}
\Big|\int_{\{\kappa W_m\geq\sigma_m\}}\int_{\{\kappa W_m\geq\sigma_m\}}\frac{W_m^{p-1}\sigma_{m}W_{m}^{p-2}\sigma_{m}}{|x-y|^{\mu}}\lambda_{m}\partial_{\lambda_m} W_m\Big|&\leq\Big(\int_{\{\kappa W_m\geq\sigma_m\}}W_m^{\frac{2n(p-1)}{2n-\mu}}\sigma_{m}^{\frac{2n}{2n-\mu}}\Big)^{\frac{2n-\mu}{n}}
\\&\approx \mathscr{Q}^{2}
\end{split}
\end{equation*}
and
\begin{equation*}
\begin{split}
&\Big|\int_{\{\kappa W_m\geq\sigma_m\}}\int_{\{\kappa W_m\leq\sigma_m\}}\frac{W_m^{p-1}\sigma_{m}W_{m}^{p-2}\sigma_{m}}{|x-y|^{\mu}}\lambda_{m}\partial_{\lambda_m} W_m\Big|\\&\leq\Big[\int_{\{\kappa W_m\geq\sigma_m\}}W_m^{\frac{2n(p-1)}{2n-\mu}}\sigma_{m}^{\frac{2n}{2n-\mu}}\Big]^{(2n-\mu)/(2n)}\Big[\int_{\{\kappa W_m\leq\sigma_m\}}W_m^{\frac{2n(p-1)}{2n-\mu}}\sigma_{m}^{\frac{2n}{2n-\mu}}\Big]^{(2n-\mu)/(2n)}
\approx \mathscr{Q}^{2}
\end{split}
\end{equation*}
Similar to the above argument
\begin{equation*}
\begin{split}
\int_{\{\kappa W_m\geq\sigma_m\}}\Big|\big(\mathcal{I}_{\mu}\{W_{m}^{p-2}\sigma_m^2\}W_{m}^{p-2}\sigma_m+\mathcal{I}_{\mu}\{\sigma_m^p\}W_{m}^{p-2}\sigma_m\big)\lambda_{m}\partial_{\lambda_m} W_m\Big|\lesssim\mathscr{Q}^{2}.
\end{split}
\end{equation*}
As a consequence, we are able to conclude that
\begin{equation}\label{I1}
\big|\mathscr{I}_1\big|\lesssim\mathscr{Q}^{1+\varepsilon}.
\end{equation}
Similarly, it holds that $\mathscr{I}_3\lesssim\mathscr{Q}^{1+\varepsilon}.$
For $\mathscr{I}_4$, let us first collect some simple calculations.
We first get that
\begin{equation}\label{i4-0}
\aligned
\int_{\{\sigma_m>\kappa W_m\}} &\Big(\int_{\{\sigma_m>\kappa W_m\}}
\frac{\sigma_m^{p-1}(y)W_m(y)
\sigma_m^{p-1}W_m}{|x-y|^{\mu}}+
\int_{\{\kappa W_m\geq\sigma_m\}}\frac{\sigma_m^{p-1}(y)
W_m(y)\sigma_m^{p-1}W_{m}}{|x-y|^{\mu}}\Big)\\&
\lesssim\Big(\int_{\{\sigma_m>\kappa W_m\}}\sigma_m^{\frac{2n(p-1)}{2n-\mu}}W_{m}^{\frac{2n-\mu}{2n}}\Big)^{(2n-\mu)/n}+\int_{\{\sigma_m>\kappa W_m\}}
|\sigma_m|^{p-1}
W_m^{2^{\ast}+1-p}\\&
\lesssim \Big(\int \sigma_m^{\frac{2n(p-1)+\mu}{2n-\mu}}W_{m}^{1+\varepsilon}\Big)^{(2n-\mu)/n}+\int
\sigma_m^{2^{\ast}-1-\varepsilon}
W_m^{1+\varepsilon}.
\endaligned
\end{equation}
And
\begin{equation}\label{i4-1}
\aligned
\int_{\{\sigma_m>\kappa W_m\}} &\Big(\int_{\{\sigma_m>\kappa W_m\}}
\frac{\sigma_m^{p}(y)
\sigma_m^{p-2}W_m^2}{|x-y|^{\mu}}+
\int_{\{\kappa W_m\geq\sigma_m\}}\frac{\sigma_m^{p}(y)
\sigma_m^{p-2}W_{m}^2}{|x-y|^{\mu}}\Big)
\lesssim \int
\sigma_m^{2^{\ast}-1-\varepsilon}
W_m^{1+\varepsilon}.
\endaligned
\end{equation}
Moreover, we use the elementary inequalities \eqref{pmr3}-\eqref{pmr4} from Lemma \ref{p1}, we have that
\begin{equation}\label{pmr5}
\aligned
&\int_{\{\sigma_m>\kappa W_m\}} \Big[\Big(|x|^{-\mu}\ast\sigma_m^{p}\Big)\sigma_m^{p-1}
-\sum_{i=1, i\neq m}^{\kappa}\Big(|x|^{-\mu}\ast W_{i}^{p}\Big)W_{i}^{p-1}\Big]W_m\\&
\lesssim\int_{\{\sigma_m>\kappa W_m\}} \Big[|x|^{-\mu}\ast\Big(\sigma_m^{p}-\sum_{i=1, i\neq m}^{\kappa}W_{i}^{p}\Big)\Big]\sigma_m^{p-1}W_m
\\&\hspace{6mm}\hspace{6mm}+\sum_{i=1, i\neq m}^{\kappa}\int_{\{\sigma_m>\kappa W_m\}} \Big(|x|^{-\mu}\ast W_{i}^{p}\Big)\Big(\sigma_m^{p-1}-W_{i}^{p-1}\Big)W_m\\&
\lesssim\sum\limits_{\substack{1\leq i<j\leq\kappa\\ i\neq m, j\neq m}}\int_{\{\sigma_m>\kappa W_m\}} \Big(|x|^{-\mu}\ast\Big(W_{i}^{p-1}W_{j}+W_{i}^{p-2}W_{j}^2\Big)\Big)|\sigma_m|^{p-1}W_m\\&\hspace{6mm}\hspace{6mm}+\int_{\{\sigma_m>\kappa W_m\}}\Big(\sum_{i=1, i\neq m}^{\kappa}|x|^{-\mu}\ast W_{i}^{p}\Big)\Big(\sum\limits_{\substack{1\leq i<j\leq\kappa\\ i\neq m, j\neq m}}W_{i}^{p-2}W_{j}+\sum_{l\neq i, l\neq m}W_l^{p-1}\Big)W_m.
\endaligned
\end{equation}
Calculating the first term on the right-hand side, we get that
\begin{equation}\label{pmr5-1}
\aligned
\int_{\{\sigma_m>\kappa W_m\}} \Big(|x|^{-\mu}\ast\Big(W_{i}^{p-1}W_{j}\Big)\Big)\sigma_m^{p-1}W_m&\lesssim\big\|W_{i}^{p-1}W_{j}\big\|_{L^{r}}\big\||\sigma_m|^{p-\varepsilon}W_m^{\varepsilon}\big\|_{L^{r}}
\lesssim\mathscr{Q}^{1+\varepsilon},\hspace{2mm}\mbox{for}\hspace{2mm}i\neq j, %\hspace{2mm}r=\frac{2n}{2n-\mu},
\endaligned
\end{equation}
\begin{equation}\label{pmr5-1-0}
\aligned
\int_{\{\sigma_m>\kappa W_m\}} \Big(|x|^{-\mu}\ast\Big(W_{i}^{p-2}W_{j}^2\Big)\Big)\sigma_m^{p-1}W_m&\lesssim\big\|W_{i}^{p-2}W_{j}^2\big\|_{L^{r}}\big\||\sigma_m|^{p-1}W_m\big\|_{L^{r}}
\lesssim \mathscr{Q}^{p-1},\hspace{2mm}\mbox{for}\hspace{2mm}i\neq j.
%\hspace{2mm}r=\frac{2n}{2n-\mu}.
\endaligned
\end{equation}
Similar to the calculation of \eqref{pmr5-1}-\eqref{pmr5-1-0}, we also get that
\begin{equation}\label{pmr5-2}
\aligned
\int_{\{\sigma_m>\kappa W_m\}}\Big(\sum_{i=1, i\neq m}^{\kappa}\Big(|x|^{-\mu}\ast W_{i}^{p}\Big)\Big)&\Big(\sum\limits_{\substack{1\leq i<j\leq\kappa\\ i\neq m, j\neq m}}W_{i}^{p-2}W_{j}W_{m}\Big)\lesssim \mathscr{Q}^{\frac{(4-\mu)r}{n-2}+\frac{n\mu}{2(2n-\mu)}}\big|\log{(\frac{1}{\mathscr{Q}})}\big|^{\frac{\mu(n-2)}{2(2n-\mu)}}.
\endaligned
\end{equation}
By Lemma \ref{FPU3} and
choosing $\varepsilon>$ small such that $1+\varepsilon<p-1-\varepsilon$, we have
\begin{equation}\label{pmr5-3}
\aligned
\int_{\{\sigma_m>\kappa W_m\}}\Big(\sum_{i=1, i\neq m}^{\kappa}&\Big(|x|^{-\mu}\ast W_{i}^{p}\Big)\Big)\Big(\sum_{l\neq i, l\neq m}^{\kappa}W_l^{p-1}W_m\Big)\\&=\Big(\mathscr{Q}^{1+\varepsilon}\hspace{2mm}\mbox{if}\hspace{2mm}0<\mu<4;\hspace{2mm}\mathscr{Q}^{\frac{n}{n-2}}\log{\frac{1}{\mathscr{Q}}}\hspace{2mm}\mbox{if}\hspace{2mm}\mu=4\Big)
\endaligned
\end{equation}
by Lemmas \ref{FPU2} and \ref{FPU1}. We get from \eqref{pmr5}-\eqref{pmr5-2} and \eqref{pmr5-3} that
\begin{equation}\label{pmr5-4}
LHS~\text{of}~\eqref{pmr5}\lesssim\mathscr{Q}^{1+\varepsilon}
\end{equation}
Substituting this estimate and \eqref{i4-0}-\eqref{i4-1} into the right-hand side of $\mathscr{I}_4$, we have that
\begin{equation}\label{I4-0}
\aligned
|\mathscr{I}_4|&=\big|\int_{\{\sigma_m>\kappa W_m\}} \Big[\Big(|x|^{-\mu}\ast\sigma_m^{p}\Big)\sigma_m^{p-1}
+p\Big(|x|^{-\mu}\ast\sigma_m^{p-1}W_m\Big)
\sigma_m^{p-1}\\&\hspace{2mm}+(p-1)
\Big(|x|^{-\mu}\ast\sigma_m^{p}\Big)
\sigma_m^{p-2}W_{m}-\sum_{i=1}^{\kappa}\Big(|x|^{-\mu}\ast W_{i}^{p}\Big)W_{i}^{p-1}\Big]\lambda_{m}\partial_{\lambda_m} W_m\big|\\&
\lesssim \mathscr{Q}^{1+\varepsilon}+\int_{\{\sigma_m>\kappa W_m\}}\Big(|x|^{-\mu}\ast W_{m}^{p}\Big)W_{m}^{p}.
\endaligned
\end{equation}
Observe that by Lemma \ref{FPU2}
\begin{equation}\label{I4-1}
\aligned
\int_{\{\sigma_m>\kappa W_m\}}\Big(|x|^{-\mu}\ast W_{m}^{p}\Big)W_{m}^{p}&%\leq\widetilde{\alpha}_{n,\mu}\sum_{i=1,i\neq m}\int_{\{W_i>\kappa W_m\}}W_m^{2^{\ast}}\\&
\leq\widetilde{\alpha}_{n,\mu}\sum_{i=1,i\neq m}\int W_m^{2^{\ast}-1-\varepsilon}\inf(W_i^{1+\varepsilon},W_m^{1+\varepsilon})\\&
=O\big(Q_{im}^{\frac{n}{n-2}}|\log Q_{im}|\big)=o(\mathscr{Q}).
\endaligned
\end{equation}
Recalling that $\mathscr{I}_2$,
it is clear that
\begin{equation}\label{I11}
\aligned
\mathscr{I}_2&-p\int
\Big(|x|^{-\mu}\ast(W_m^{p-1}\sum_{i=1,i\neq m}^{\kappa}W_i)\Big)
W_m^{p-1}\lambda_{m}\partial_{\lambda_m} W_m\\&\hspace{4mm}+(p-1)\int\sum_{i=1,i\neq m}^{\kappa}
\Big(|x|^{-\mu}\ast W_m^{p}\Big)
W_m^{p-2}W_i\lambda_{m}\partial_{\lambda_m} W_m\\&
=- \int_{\{\kappa W_m<\sigma_m\}}
\Big[p\Big(|x|^{-\mu}\ast W_m^{p-1}\sigma_m\Big)
W_m^{p-1}\lambda_{m}\partial_{\lambda_m} W_m-(p-1)
\Big(|x|^{-\mu}\ast W_m^{p}\Big)\\&\hspace{4mm}\times
W_m^{p-2}\sigma_{m}\Big]\lambda_{m}\partial_{\lambda_m} W_m-\int_{\{\kappa W_m\geq\sigma_m\}}\sum_{i=1,i\neq m}^{\kappa}\Big(|x|^{-\mu}\ast W_{i}^{p}\Big)W_{i}^{p-1}\lambda_{m}\partial_{\lambda_m} W_m.
\endaligned
\end{equation}
An analogous argument as  $\mathscr{I}_1$ and $\mathscr{I}_2$ tell us that
\begin{equation}\label{I4-2}
\aligned
RHS~\text{of}~\eqref{I11}\lesssim\mathscr{Q}^{1+\varepsilon}.
\endaligned
\end{equation}
From \eqref{I4-0}, \eqref{I4-1} and \eqref{I4-2} we conclude that
\begin{equation}\label{I4-3}
\aligned
\int \Xi_{m}^{n+1}\hbar=\int \lambda_{m}\partial_{\lambda_m} W_m\hbar&=p\sum_{i=1,i\neq m}^{\kappa}\int
\Big(|x|^{-\mu}\ast W_m^{p-1}W_i\Big)
W_m^{p-1}\lambda_{m}\partial_{\lambda_m} W_m\\&+(p-1)\sum_{i=1,i\neq m}^{\kappa}\int
\Big(|x|^{-\mu}\ast W_m^{p}\Big)
W_m^{p-2}W_i\lambda_{m}\partial_{\lambda_m} W_m+o(\mathscr{Q}).
\endaligned
\end{equation}
Furthermore, according to Lemma \ref{wwc101} we have
\begin{equation*}
\begin{split}
p\int
&\Big(|x|^{-\mu}\ast W_m^{p-1}W_i\Big)
W_m^{p-1}\lambda_{m}\partial_{\lambda_m} W_m+(p-1)\int
\Big(|x|^{-\mu}\ast W_m^{p}\Big)
W_m^{p-2}W_i\lambda_{m}\partial_{\lambda_m} W_m\\&
=\int\Big(|x|^{-\mu}\ast W_i^{p}\Big)
W_i^{p-1}\lambda_{m}\partial_{\lambda_m} W_m.
\end{split}
\end{equation*}
Combining this equality with \eqref{I4-3} yields the conclusion.
\end{proof}

\begin{lem}\label{FPU2-1}
There exists a dimensional constant $C$ such that
$$\int W_i^{2^{\ast}-1}\lambda_m\partial_{\lambda_m}W_m=-C(n)\big(Q_{im}^{-\frac{2}{n-2}}-2\frac{\lambda_i}{\lambda_m}\big)Q_{im}^{\frac{n}{n-2}}+O(Q_{im}^{\frac{n}{n-2}}\log Q_{im}^{-1}).$$
\end{lem}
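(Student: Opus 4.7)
The plan is to reduce the identity to a direct computation on a single two-parameter integral and extract its leading asymptotic. Since $W[\xi,\lambda] = c_{n,\mu}U[\xi,\lambda]$ with $U$ the classical Aubin--Talenti bubble and $c_{n,\mu} := \alpha_{n,\mu}/(n(n-2))^{(n-2)/4}$, the left-hand side reduces to $c_{n,\mu}^{2^{\ast}}\int U_i^{2^{\ast}-1}\lambda_m\partial_{\lambda_m}U_m\,dx$, so the dimensional constant $C(n)$ in the statement absorbs $c_{n,\mu}^{2^{\ast}}$ together with a purely numerical factor (consistent with the paper's convention of implicit $\mu$-dependence). By the scale and translation invariance $U[\xi,\lambda](x) = \lambda^{(n-2)/2}U[0,1](\lambda(x-\xi))$, I may assume $\lambda_m = 1$ and $\xi_m = 0$. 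Introducing the rescaled parameters $\tau := \lambda_i/\lambda_m$ and $a := \lambda_m(\xi_i - \xi_m)$, so that $Q_{im}^{-2/(n-2)} = \tau + 1/\tau + \tau|a|^2$, the change of variable $y = \lambda_m(x-\xi_m)$ yields
\[
\int W_i^{2^{\ast}-1}\lambda_m\partial_{\lambda_m}W_m\,dx = C_{0}(n,\mu)\,\tau^{(n+2)/2}\int_{\mathbb{R}^n}\frac{1-|y|^2}{(1+\tau^2|y-a|^2)^{(n+2)/2}(1+|y|^2)^{n/2}}\,dy,
\]
with $C_{0}(n,\mu)$ an explicit positive constant.

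Next I perform the substitution $y = a + z/\tau$, which concentrates the rapidly decaying kernel $(1+\tau^2|y-a|^2)^{-(n+2)/2}$ at $z = 0$ and rewrites the integrand in a form amenable to Taylor expansion. Using $|y|^2 = |a|^2 + 2a\cdot z/\tau + |z|^2/\tau^2$ and $dy = \tau^{-n}\,dz$, the right-hand side above becomes
\[
C_{0}(n,\mu)\,\tau^{-(n-2)/2}\int_{\mathbb{R}^n}\frac{1-|a|^2-2a\cdot z/\tau-|z|^2/\tau^2}{(1+|z|^2)^{(n+2)/2}\bigl(1+|a|^2+2a\cdot z/\tau+|z|^2/\tau^2\bigr)^{n/2}}\,dz.
\]
Taylor-expanding the slow factor $(1+|a|^2+2a\cdot z/\tau+|z|^2/\tau^2)^{-n/2}$ around its constant value at $z=0$ and using that the linear-in-$z$ pieces integrate to zero by odd symmetry against $(1+|z|^2)^{-(n+2)/2}$, the leading contribution is
\[
\frac{1-|a|^2}{(1+|a|^2)^{n/2}}\,\tau^{-(n-2)/2}\int_{\mathbb{R}^n}\frac{dz}{(1+|z|^2)^{(n+2)/2}}.
\]
A direct algebraic check (using $Q_{im}^{n/(n-2)} = (\tau+1/\tau+\tau|a|^2)^{-n/2}$ and expanding $-(Q_{im}^{-2/(n-2)}-2\lambda_i/\lambda_m)Q_{im}^{n/(n-2)} = (\tau-1/\tau-\tau|a|^2)(\tau+1/\tau+\tau|a|^2)^{-n/2}$) identifies this leading term with $-C(n)(Q_{im}^{-2/(n-2)}-2\lambda_i/\lambda_m)Q_{im}^{n/(n-2)}$ after matching powers of $\tau$ and $(1+|a|^2)$.

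The main obstacle is the rigorous control of the remainder. The quadratic terms in the Taylor expansion produce contributions of size $|z|^2/\tau^2 \cdot (1+|z|^2)^{-(n+2)/2}$, whose integral over $\mathbb{R}^n$ is only logarithmically divergent; accordingly, I would split the $z$-integration at the natural scale $|z|\sim\tau$ where the expansion of the slow factor ceases to be valid. For $|z|\leq\tau$ the Taylor remainder estimate yields an error of size $\tau^{-(n-2)/2}\cdot\tau^{-2}\cdot\log\tau \sim Q_{im}^{n/(n-2)}\log Q_{im}^{-1}$, while for $|z|>\tau$ the original unexpanded integrand is directly summable with the same bound. The symmetric regime $\tau\to 0$ is reached from the $\tau\to\infty$ analysis via the involution exchanging the roles of $W_i$ and $W_m$, and the cluster regime $|a|\to\infty$ with $\tau$ bounded is treated by the same Taylor-plus-cutoff scheme with the roles of $|a|$ and $\tau$ interchanged in the bookkeeping. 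Unifying these three degenerate limits into a single estimate, each contributing a different piece of the combination $Q_{im}^{-2/(n-2)}-2\lambda_i/\lambda_m$, with uniform constants independent of the relative size of $\tau$ and $|a|$, is the technical heart of the argument.
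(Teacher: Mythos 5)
The paper offers no proof of this lemma at all---it simply cites Bahri's 1989 monograph---so the question is whether your self-contained computation is sound. Your setup (reduce to the Aubin--Talenti profile, normalize $\lambda_m=1$, $\xi_m=0$, substitute $y=a+z/\tau$, Taylor-expand the slow factor, cut off where the expansion degenerates) is exactly the classical argument, and it does produce the leading term $C\,\tau^{-(n-2)/2}\frac{1-|a|^2}{(1+|a|^2)^{n/2}}$ with an error $O(Q_{im}^{n/(n-2)}\log Q_{im}^{-1})$; the identification with $-C(n)\big(Q_{im}^{-2/(n-2)}-2\lambda_i/\lambda_m\big)Q_{im}^{n/(n-2)}$ is also correct there. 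But all of this is valid only in the regime $\tau=\lambda_i/\lambda_m\gtrsim 1$ (including the cluster regime $|a|\to\infty$), where $(1+|z|^2)^{-(n+2)/2}$ really is the fast factor.

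The genuine gap is your treatment of $\tau\to 0$. The integral $\int W_i^{2^{\ast}-1}\lambda_m\partial_{\lambda_m}W_m$ is not symmetric in $(i,m)$: exchanging the roles gives $\int W_m^{2^{\ast}-1}\lambda_i\partial_{\lambda_i}W_i$, i.e.\ the derivative of the symmetric quantity $\int\nabla W_i\cdot\nabla W_m$ with respect to the \emph{other} parameter, whose expansion involves $\lambda_m/\lambda_i$ rather than $\lambda_i/\lambda_m$. So ``the involution exchanging the roles of $W_i$ and $W_m$'' does not reduce $\tau\to0$ to your $\tau\to\infty$ analysis; and this regime cannot be discarded, since under the ordering \eqref{lamta} the paper applies the lemma in \eqref{Qij} precisely for $i<m$, i.e.\ $\lambda_i\le\lambda_m$. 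In that regime your substitution makes $(1+|a|^2+2a\cdot z/\tau+|z|^2/\tau^2)^{-n/2}$ rapidly varying, the expansion about $z=0$ fails, and your ``leading term'' is of order $+\tau^{-(n-2)/2}$ whereas the true value (and the right-hand side of the lemma) is of order $-\tau^{(n-2)/2}$. To close this you must either redo the direct analysis there---on $1\lesssim|y|\lesssim 1/\tau$ one has $\lambda_m\partial_{\lambda_m}W_m\approx-\frac{n-2}{2}W_m\sim -|y|^{2-n}$ with the wide bubble $W_i^{2^{\ast}-1}$ now the slow factor, and the inversion identity $\int |w|^{2-n}(1+|w|^2)^{-(n+2)/2}dw=\int(1+|w|^2)^{-(n+2)/2}dw$ is what makes the same constant $C(n)$ reappear---or work with $\lambda_m\partial_{\lambda_m}\int\nabla W_i\cdot\nabla W_m$ and justify term-by-term differentiation of the expansion of the symmetric interaction, neither of which is in your proposal. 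As written, the argument covers only half of the parameter range the lemma (and the paper's use of it) requires.
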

\begin{proof}
The proof of can be found in \cite{Bahri-1989}.
\end{proof}
GLOG, we may assume that
\begin{equation}\label{lamta}
\lambda_1\leq\lambda_1\leq,\dots,\leq\lambda_\kappa.
\end{equation}
Then, using Lemma \ref{FPU2-1} implies that for $i=1,\cdots, \kappa$
\begin{equation}\label{Qij}
\big|\int W_i^{2^{\ast}-1}\lambda_m\partial_{\lambda_m}W_m\big|\lesssim Q_{im}, \hspace{2mm}i\neq m;\hspace{2mm}-\int W_i^{2^{\ast}-1}\lambda_m\partial_{\lambda_m}W_m\approx Q_{im},\hspace{2mm}i<m.
\end{equation}

As a consequence of Lemma \ref{Ni-1} and Lemma \ref{Ni-2}, we deduce the following important estimate.
\begin{lem}\label{QQ-1}
Assume that $n\geq6-\mu$, $\mu\in(0,n)$ and $0<\mu\leq4$. If $\delta$ is small we have
\begin{equation}\label{Qf}
\mathscr{Q}\lesssim\big\|\hat{f}\big\|_{(D^{1,2}(\mathbb{R}^n))^{-1}},
\end{equation}
where $\hat{f}:=-\Delta u-\big(|x|^{-\mu}\ast u^{p}\big)u^{p-1}$.
 \end{lem}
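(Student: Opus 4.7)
The plan is to test the error equation \eqref{u-0} against the rescaled derivative $\Xi_m^{n+1}=\lambda_m\partial_{\lambda_m}W_m$ for a carefully chosen index $m$ and exploit the sign structure of the resulting interaction integrals. First I would use the orthogonality condition $\int\Phi_{n,\mu}[W_m,\Xi_m^{n+1}]\rho=0$ built into the Lyapunov-Schmidt reduction \eqref{c1}, together with the integration-by-parts identity $\int\Xi_m^{n+1}\Delta\rho=-\int\rho\,\Phi_{n,\mu}[W_m,\Xi_m^{n+1}]=0$, to extract from \eqref{0-h-1} the controlling inequality
\[
\Bigl|\int\Xi_m^{n+1}\hbar\Bigr|\;\le\;\|\hat{f}\|_{(D^{1,2}(\mathbb{R}^n))^{-1}}+\Bigl|\int\Xi_m^{n+1}\bigl(\Phi_{n,\mu}[\sigma,\rho]-\Phi_{n,\mu}[W_m,\rho]\bigr)\Bigr|+\sum_{j=1}^{9}\int\bigl|\mathscr{N}_j(\sigma,\rho)\,\Xi_m^{n+1}\bigr|.
\]

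Next I would evaluate the left-hand side asymptotically. By Lemma \ref{p2} together with the pointwise Hartree identity $|x|^{-\mu}\ast W_i^p=\widetilde{\alpha}_{n,\mu}W_i^{2^\ast-p}$ (Lemma \ref{p1-00}), one has
\[
\int\Xi_m^{n+1}\hbar\;=\;\widetilde{\alpha}_{n,\mu}\sum_{i\ne m}\int W_i^{2^\ast-1}\,\lambda_m\partial_{\lambda_m}W_m\;+\;o(\mathscr{Q}).
\]
Using the scale ordering \eqref{lamta} and first taking $m=\kappa$, every remaining bubble satisfies $\lambda_i\le\lambda_\kappa$, so the sharp asymptotic \eqref{Qij} of Lemma \ref{FPU2-1} gives $-\int W_i^{2^\ast-1}\lambda_\kappa\partial_{\lambda_\kappa}W_\kappa\approx c_iQ_{i\kappa}$ with $c_i>0$ uniform. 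All the contributions carry the same sign, so cancellation is impossible and $|\int\Xi_\kappa^{n+1}\hbar|\gtrsim\max_{i<\kappa}Q_{i\kappa}$. An induction on $\kappa$---peeling off $W_\kappa$ and repeating the argument on the reduced configuration $\sigma-W_\kappa$, whose residual $\hat{f}$ only gains already-controlled lower-order interaction terms---upgrades this partial control to the full $\mathscr{Q}=\max_{i<j}Q_{ij}$.

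Finally, each error term on the right-hand side must be shown to be $o(\mathscr{Q})$. For the linear piece, $\Phi_{n,\mu}[\sigma,\rho]-\Phi_{n,\mu}[W_m,\rho]$ consists only of Hartree cross terms between distinct bubbles, so combining $|\Xi_m^{n+1}|\lesssim W_m$ with the pointwise bound $|\rho_0|\lesssim\|\hbar\|_{\ast\ast}S_j(x)$ from the reduction (Lemma \ref{ww10}) and the convolution estimates of Lemma \ref{B4-1} produces a strict decay gain beyond $\mathscr{Q}$. The quadratic nonlinear pieces $\mathscr{N}_j$ are handled by splitting $\rho=\rho_0+\rho_1$, applying the Hardy-Littlewood-Sobolev inequality, and invoking the $L^2$-estimates of Lemmas \ref{Ni-1}--\ref{Ni-2}.

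The main obstacle is the bookkeeping that forces these errors to be genuinely $o(\mathscr{Q})$ rather than merely $O(\mathscr{Q})$: this is precisely where the parameter restriction $(\sharp)$ of Theorem \ref{Figalli} is needed, since it guarantees that the weighted norms $\|\rho_0\|_{\ast}$ and $\|\nabla\rho_1\|_{L^2}$ decay strictly faster than $\mathscr{Q}$. Once the absorption into the left-hand side is legitimate for $\delta$ small, combining the lower bound $|\int\Xi_\kappa^{n+1}\hbar|\gtrsim\mathscr{Q}$ with the upper bound above yields \eqref{Qf}.
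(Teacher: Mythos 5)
Your overall strategy is the paper's: test \eqref{u-0} against $\Xi_m^{n+1}$, use $\int\nabla\rho\nabla\Xi_m^{n+1}=0$, reduce the left-hand side to $\widetilde{\alpha}_{n,\mu}\sum_{i\neq m}\int W_i^{2^\ast-1}\lambda_m\partial_{\lambda_m}W_m+o(\mathscr{Q})$ via Lemma \ref{p2} and Lemma \ref{p1-00}, exploit the sign information of Lemma \ref{FPU2-1} under the ordering \eqref{lamta}, and control the remaining terms by Lemmas \ref{Ni-1}--\ref{Ni-2}. The step that does not work is your induction mechanism. The paper never modifies $u$ or the bubble family: having proved $(\Pi_\kappa)$ with $m=\kappa$, it tests the \emph{same} equation against $\Xi_\varsigma^{n+1}$ for $\varsigma=\kappa-1,\dots,2$, splits $\sum_{i\neq\varsigma}\int W_i^{2^\ast-1}\lambda_\varsigma\partial_{\lambda_\varsigma}W_\varsigma$ into the terms $i<\varsigma$ (all of one sign and $\approx Q_{i\varsigma}$) and the terms $i>\varsigma$ (merely bounded by $Q_{i\varsigma}$, which the inductive hypothesis $(\Pi_{\varsigma+1})$ already controls by $\|\hat f\|+o(\mathscr{Q})$), and so gets $\sum_{i<\varsigma}Q_{i\varsigma}\lesssim\|\hat f\|+o(\mathscr{Q})$. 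Your proposal instead peels off $W_\kappa$ and reruns the argument for the reduced configuration, claiming its residual ``only gains already-controlled lower-order interaction terms.'' That is not true: the residual of $u-W_\kappa$ contains cross terms between $W_\kappa$ and the error $\rho$, whose dual norm is of size $\|\nabla\rho\|_{L^2}\lesssim\tau_{n,\mu}(\mathscr{Q})+\|\hat f\|+\dots$, and in the regime of Theorem \ref{Figalli} the quantity $\tau_{n,\mu}(\mathscr{Q})$ is of order $\mathscr{Q}$ or strictly larger (e.g.\ $\mathscr{Q}^{(n+2)/(2(n-2))}=\mathscr{Q}^{9/10}$ for $n=7$, and $\mathscr{Q}^{2-\mu/(n-2)}$ with exponent $<1$ for $n=4$, $\mu>2$). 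Hence the peeled problem's residual is not $\lesssim\|\hat f\|+o(\mathscr{Q})$, the interactions among the inner $\kappa-1$ bubbles (which may realize $\mathscr{Q}$) are only bounded by this larger quantity, and the induction does not close; moreover the orthogonality relations and the a priori estimates of Lemmas \ref{p2}, \ref{Ni-1}, \ref{Ni-2} are proved for the original family and would have to be re-derived for the modified function.

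Two smaller inaccuracies in the same direction: the vanishing $\int\Xi_m^{n+1}\Delta\rho=0$ follows from the optimality of the approximating family (the orthogonality \eqref{EP1} of the full error $\rho$), not from the constraints in \eqref{c1}, which are imposed only on $\rho_0$; and your closing claim that $(\sharp)$ forces $\|\rho_0\|_\ast$ and $\|\nabla\rho_1\|_{L^2}$ to decay strictly faster than $\mathscr{Q}$ is false for the reason above. What the argument actually needs, and what Lemmas \ref{p2}, \ref{Ni-1} and \ref{Ni-2} deliver, is that each error term is $\lesssim\|\hat f\|_{(D^{1,2}(\mathbb{R}^n))^{-1}}+o(\mathscr{Q})$ (the $\rho_1$-contributions enter through $\|\hat f\|$, not through an $o(\mathscr{Q})$ gain), after which the $o(\mathscr{Q})$ term is absorbed for $\delta$ small.
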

\begin{proof}
We are going to show that the statement of Lemma holds by induction. For $n=4$ and $\mu\in[2,3)$, or $n=5$ and $\mu\in[1,4)$, or $n=6$ and $\mu\in(0,4)$, or $n=7$ and $\mu\in(\frac{7}{3},4)$, and $2\leq\varsigma\leq\kappa$, introducing the induction hypothesis
$$(\Pi_{\varsigma}):\hspace{4mm}\sum_{j=\varsigma}^{\kappa}\sum_{i=1}^{j-1}Q_{ij}\lesssim
\big\|\hat{f}\big\|_{(D^{1,2}(\mathbb{R}^n))^{-1}}+o(\mathscr{Q}).$$
Directly computing, one has
\begin{equation}\label{www-q}
\sum_{i=1,i\neq m}^{\kappa}\int\Big(|x|^{-\mu}\ast W_i^{p}\Big)
W_i^{p-1}\lambda_{m}\partial_{\lambda_m} W_m=\widetilde{\alpha}_{n,\mu}\sum_{i=1,i\neq m}^{\kappa}\int W_i^{2^{\ast}-1}\lambda_{m}\partial_{\lambda_m} W_m.
\end{equation}
Therefore, using \eqref{0-h-1}, \eqref{Qij}, Lemma \ref{p2}, Lemma \ref{Ni-1} and Lemma \ref{Ni-2} and taking $m=\kappa$, we get the following two cases holds:\\
$\bullet$ $n=4$ and $\mu\in[2,3)$, or $n=5$ and $\mu\in[1,4)$, or $n=6$ and $\mu\in(0,4)$, or $n=7$ and $\mu\in(\frac{7}{3},4)$,
\begin{equation*}
\begin{split}
\sum_{i=1}^{\kappa-1}Q_{i\kappa}&\approx\widetilde{\alpha}_{n,\mu}^{-1}\int \Xi_{m}^{n+1}\hbar+o(\mathscr{Q})\lesssim
\big\|\hat{f}\big\|_{(\mathcal{D}^{1,2}(\mathbb{R}^n))^{-1}}+o(\mathscr{Q}).
\end{split}
\end{equation*}
The above estimates tell us $(\Pi_{\kappa})$ holds true. Now we can assume that the statement of $(\Pi_{\varsigma+1})$ holds, then we claim that $(\Pi_{\varsigma})$ is true. Indeed, choosing $m=\varsigma$ in this case, it follows by \eqref{0-h-1}, \eqref{Qij}, \eqref{www-q}, Lemma \ref{p2}, Lemma \ref{Ni-1} and Lemma \ref{Ni-2} that
\begin{equation*}
\begin{split}
\sum_{i=1}^{\varsigma-1}Q_{i\kappa}&\lesssim\widetilde{\alpha}_{n,\mu}^{-1}\sum_{i=\varsigma+1}^{\kappa}\big|\int\Big(|x|^{-\mu}\ast W_i^{p}\Big)
W_i^{p-1}\lambda_{m}\partial_{\lambda_m} W_m\big|+\big\|\hat{f}\big\|_{(D^{1,2}(\mathbb{R}^n))^{-1}}+o(\mathscr{Q})\\&
\lesssim\big\|\hat{f}\big\|_{(D^{1,2}(\mathbb{R}^n))^{-1}}+o(\mathscr{Q}).
\end{split}
\end{equation*}
Then the claim follows by induction. Hence, we have the bound
$$\mathscr{Q}\lesssim\big\|\hat{f}\big\|_{(\mathcal{D}^{1,2}(\mathbb{R}^n))^{-1}}+o(\mathscr{Q}).$$
$\bullet$ If $n=4$ and $\mu\in(3,4)$, and $2\leq\varsigma\leq\kappa$, we set the induction hypothesis
$$(\widetilde{\Pi}_{\varsigma}):\hspace{4mm}\sum_{j=\varsigma}^{\kappa}\sum_{i=1}^{j-1}Q_{ij}\lesssim
\big\|\hat{f}\big\|_{(D^{1,2}(\mathbb{R}^n))^{-1}}+o(\mathscr{Q}^{2-\frac{\mu}{2}}).$$
Analogously, it follows by induction that
$$\mathscr{Q}\lesssim\big\|\hat{f}\big\|_{(\mathcal{D}^{1,2}(\mathbb{R}^n))^{-1}}+o(\mathscr{Q}^{2-\frac{\mu}{2}}).$$
As $\delta$ is small, which concludes the proof.
\end{proof}
We are now in position to conclude the proof of Theorem \ref{Figalli}.

\emph{\textbf{Proof of Theorem \ref{Figalli}.}} We first note that, by Lemma \ref{p00} and Lemma \ref{rr-1-00}, we immediately get the bound
\begin{equation}\label{popo}
\big\|\nabla\rho_0\big\|_{L^2}\lesssim\tau_{n,\mu}(\mathscr{Q}),
\end{equation}
and when $n=4$ and $\mu\in[2,4)$, or $n=5$ and $\mu\in[3,4)$,
$$\big\|\nabla\rho_1\big\|_{L^2}\lesssim\big\|\hat{f}\big\|_{(D^{1,2}(\mathbb{R}^n))^{-1}}
+\mathscr{Q}^{3-\frac{\mu}{n-2}}.$$
Combining these bounds with \eqref{Qf} and using that $\tau_{n,\mu}$ is non-decreasing near $0$, we deduce that
$$\big\|\nabla\rho\big\|_{L^2}\leq\big\|\nabla\rho_0\big\|_{L^2}+\big\|\nabla\rho_1\big\|_{L^2}\lesssim\tau_{n,\mu}\big(\big\|\hat{f}\big\|_{(D^{1,2}(\mathbb{R}^n))^{-1}}\big).$$
When $n=5$ and $\mu\in[1,3)$, or $n=6$ and $\mu\in(0,4)$, or $n=7$ and $\mu\in(\frac{7}{3},4)$, we have the bound
\begin{equation*}	
\big\|\nabla\rho_1\big\|_{L^2}\lesssim\big\|\hat{f}\big\|_{(D^{1,2}(\mathbb{R}^n))^{-1}}
+\mathscr{Q}^{\min\{2,\frac{n-\mu}{n-2}+1\}}.
\end{equation*}
Thus, combining \eqref{Qf} and \eqref{popo}, we conclude that
\begin{equation*}	
\big\|\nabla\rho_1\big\|_{L^2}\lesssim\big\|\hat{f}\big\|_{(D^{1,2}(\mathbb{R}^n))^{-1}}
+\mathscr{Q}^{\min\{2,\frac{n-\mu}{n-2}+1\}}.
\end{equation*}
which concludes the proof of \eqref{tnu}.

Finally, \eqref{moreover} follows by \eqref{Qf} and
\begin{equation*}
\int\Big(|x|^{-\mu}\ast W_i^{p}\Big)
W_i^{p-1}W_j=\int W_i^{2^{\ast}-1}W_j\approx Q_{ij}\leq\mathscr{Q}.
\end{equation*}
Theorem \ref{Figalli} is proven. \qed

\emph{\textbf{Proof of  Corollary~\ref{Figalli2}.}}
We can now prove Corollary~\ref{Figalli2} by using Theorems \ref{Figalli} and a nonlocal version of profile decompositions to \eqref{ele-1.1} in Theorem A.
Then there exists $\epsilon>0$ such that
$$\big\|\Delta u+\left(|x|^{-\mu}\ast u^{p}\right)u^{p-1}\big\|_{(D^{1,2}(\mathbb{R}^n))^{-1}}\leq\epsilon.$$
Therefore, for any $\delta>0$, we have
$$
\big\|\nabla u-\sum_{i=1}^{\kappa}\nabla W_i\big\|_{L^2}\leq\delta,
$$
where $(W_i)_{1\leq i\leq\kappa}$ is a $\delta$-interacting family of Talenti bubbles, which concludes the proof.

\section{Estimates of the weak interaction bubbles and weights}
This section is devoted to derive a rough $C^0$ estimate of solution to problem \eqref{0} by exploiting Green's function representation of the solution in Lemma \ref{cll-1}, which will be concluded by some technical Lemmas \ref{B3}-\ref{cll-0} and Lemmas \ref{cll-1-0}-\ref{cll-3}.
\begin{lem}\label{B3}For any constant $2<\varsigma_0<n$, there is a constant $C>0$, such that
\begin{equation*}
\int_{\mathbb{R}^{N}}\frac{1}{|\hat{y}-x|^{n-2}}\frac{1}{(1+|x|^2)^{\varsigma_0/2}}dx
\leq C\left\lbrace
\begin{aligned}
&\frac{1}{(1+|\hat{y}|^2)^{\varsigma_0/2-1}},\hspace{5mm}\hspace{2mm}2<\varsigma_0<n,\\&
\frac{1+\log\sqrt{1+|\hat{y}|^2}}{(1+|\hat{y}|^2)^{(n-2)/2}},\hspace{2mm}\hspace{2mm}\varsigma_0=n,
\\&\frac{1}{(1+|\hat{y}|^2)^{(n-2)/2}},\hspace{4mm}\hspace{2mm}\varsigma_0>n.
\end{aligned}
\right.
\end{equation*}
\end{lem}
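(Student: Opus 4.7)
The plan is to reduce this estimate to a standard Riesz-convolution bound by splitting the domain of integration according to the size of $|x|$ relative to $R:=\sqrt{1+|\hat y|^2}$. When $R$ is bounded (say $|\hat y|\leq 2$), the three right-hand sides are all bounded below by positive constants, so it suffices to observe that the left-hand side is uniformly bounded in $\hat y$: the singularity $|\hat y-x|^{-(n-2)}$ is locally integrable in $\mathbb{R}^n$, and the weight $(1+|x|^2)^{-\varsigma_0/2}$ is integrable at infinity precisely because $\varsigma_0>2$.

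For $R$ large I would decompose $\mathbb{R}^n=\Omega_A\cup\Omega_B\cup\Omega_C$, where $\Omega_A=\{|x|\leq R/2\}$ (so that $|\hat y-x|\approx R$), $\Omega_B=\{|x-\hat y|\leq R/2\}$ (so that $|x|\approx R$), and $\Omega_C=\{|x|>R/2,\ |x-\hat y|>R/2\}$. On $\Omega_A$ the kernel collapses to $R^{-(n-2)}$ and the piece reduces to $R^{-(n-2)}\int_{|x|\leq R/2}(1+|x|^2)^{-\varsigma_0/2}\,dx$; a polar-coordinate calculation shows that the inner integral is of order $R^{n-\varsigma_0}$ when $\varsigma_0<n$, of order $\log R$ when $\varsigma_0=n$, and of order $1$ when $\varsigma_0>n$. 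Multiplying by $R^{-(n-2)}$ produces respectively $R^{2-\varsigma_0}$, $R^{-(n-2)}\log R$, and $R^{-(n-2)}$, which is precisely the target decay in each of the three regimes.

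The contributions from $\Omega_B$ and $\Omega_C$ will both turn out to be of the single order $R^{2-\varsigma_0}$. On $\Omega_B$ the weight is essentially $R^{-\varsigma_0}$ and the remaining singular integral $\int_{|x-\hat y|\leq R/2}|x-\hat y|^{-(n-2)}\,dx$ is of order $R^2$. On $\Omega_C$ I would split further into the intermediate shell $R/2<|x|\leq 2R$, where the same $R^{-\varsigma_0}\cdot R^2$ bound applies, and the far field $|x|\geq 2R$, where $|x-\hat y|\approx|x|$ so the integrand is controlled by $|x|^{-(n-2+\varsigma_0)}$ whose tail integrates to a constant times $R^{2-\varsigma_0}$ (convergent because $\varsigma_0>2$).

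Summing the three contributions recovers the stated bounds, after noting that $R^{2-\varsigma_0}\leq R^{-(n-2)}$ when $\varsigma_0>n$, so the $\Omega_A$-term is dominant in that regime and is comparable to $R^{-(n-2)}\log R$ in the critical case $\varsigma_0=n$ (where the logarithm arises naturally from $\int_0^{R/2}r^{n-1-\varsigma_0}\,dr\approx\log R$, the only mildly delicate point of the computation). The whole argument is elementary once the three-region split is in place, and I do not anticipate a real obstacle.
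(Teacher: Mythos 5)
Your proof is correct. The paper itself gives no argument here — it simply refers to Appendix B of \cite{FG20} — and your three-region splitting $\{|x|\le R/2\}$, $\{|x-\hat y|\le R/2\}$, and the remainder (with the further near/far split in the last region) is precisely the standard computation behind that citation, with the logarithm correctly isolated in the borderline case $\varsigma_0=n$. One harmless remark: the only hypothesis actually used, and needed, is $\varsigma_0>2$ (the ``$2<\varsigma_0<n$'' in the statement is inconsistent with the cases $\varsigma_0\ge n$), and your argument indeed relies only on $\varsigma_0>2$ for the convergence at infinity.
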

\begin{proof}
The conclusion follows the same proof as in Appendix B of in \cite{FG20} except minor modifications.
\end{proof}
For this, we need the following useful approximation result.
\begin{lem}\label{cll-0}
 Under the assumptions of Lemma \ref{estimate2}. If $n=5$ and $\mu\in[1,3)$, or $n=6$ and $\mu\in(0,4)$, or $n=7$ and $\mu\in(\frac{7}{3},4)$, then
 \begin{equation}\label{zz-0}
(|\hat{y}|^{-(n-2)}\ast T_1)(\hat{y})\approx S_{1}(\hat{y}).
 \end{equation}
If $n=4$ and $\mu\in[2,4)$, or $n=5$ and $\mu\in[3,4)$, then
  \begin{equation}\label{zz-0-0}
(|\hat{y}|^{-(n-2)}\ast T_2)(\hat{y})dx\approx S_{2}(\hat{y}).
 \end{equation}
\end{lem}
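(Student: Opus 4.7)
By linearity, both \eqref{zz-0} and \eqref{zz-0-0} split as a sum over bubble indices $i=1,\dots,\kappa$. The plan is to show for each $i$ that
\[
|y|^{-(n-2)}\ast(t_{i,1}+t_{i,2})\approx s_{i,1}+s_{i,2}
\]
(and analogously for the hatted weights), then to verify that the cross contributions --- the convolution of a piece centred at $\xi_i$ evaluated near a different bubble centre $\xi_j$ --- are dominated by the diagonal terms, using the weak-interaction separation $\mathscr{R}_{ij}\geq 2\mathscr{R}$ of Definition~\ref{del-1}. After the change of variables $z=\lambda_i(x-\xi_i)$ and $\hat{z}_i=\lambda_i(\hat{y}-\xi_i)$, each diagonal piece reduces to integrals of the form
\[
\mathcal{J}^{\mathrm{in}}_\alpha(\hat{z}_i)=\int_{|z|\leq\mathscr{R}}\frac{dz}{\tau(z)^\alpha|\hat{z}_i-z|^{n-2}},\qquad
\mathcal{J}^{\mathrm{out}}_\beta(\hat{z}_i)=\int_{|z|>\mathscr{R}}\frac{dz}{\tau(z)^\beta|\hat{z}_i-z|^{n-2}},
\]
with $(\alpha,\beta)=(4,(n+\mu+2)/2)$ in the $T_1$/$S_1$ case and $(\alpha,\beta)=(4,n-\epsilon_0)$ in the $T_2$/$S_2$ case, read directly off the decay of the weights.

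\textbf{Case analysis.} Three regimes of $\hat{z}_i$ are to be treated. In the \emph{inner regime} $|\hat{z}_i|\leq\mathscr{R}/2$ the cutoff in $\mathcal{J}^{\mathrm{in}}_\alpha$ is essentially inactive, so Lemma~\ref{B3} with $\varsigma_0=\alpha=4<n$ (using $n\geq 5$ in the $S_1$-regime) yields $\mathcal{J}^{\mathrm{in}}_4(\hat{z}_i)\approx\tau(\hat{z}_i)^{-2}$; after multiplication by the $\lambda_i$ and $\mathscr{R}$ prefactors this reconstitutes exactly $s_{i,1}$ (resp.~$\hat{s}_{i,1}$). The complementary outer contribution $\mathcal{J}^{\mathrm{out}}_\beta$ is controlled via $|\hat{z}_i-z|\geq|z|/2$ by $\mathscr{R}^{-(\beta-2)}$ and absorbed into $s_{i,1}$ (resp.~$\hat{s}_{i,1}$) via the algebraic identity between $\mathscr{R}$-exponents built into the weights. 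In the \emph{outer regime} $|\hat{z}_i|>2\mathscr{R}$ the roles reverse: Lemma~\ref{B3} applied to $\mathcal{J}^{\mathrm{out}}_\beta$ with $\varsigma_0=\beta$ (strictly between $2$ and $n$ under $(\sharp)$ and the choice of $\epsilon_0$) produces $\tau(\hat{z}_i)^{-(\beta-2)}$, reproducing $s_{i,2}$ (resp.~$\hat{s}_{i,2}$), while $\mathcal{J}^{\mathrm{in}}_4$ is bounded by $|\hat{z}_i|^{-(n-2)}\int_{|z|\leq\mathscr{R}}\tau(z)^{-4}dz\lesssim\mathscr{R}^{n-4}|\hat{z}_i|^{-(n-2)}$, and a power count using $\mu<n-2$ (resp.\ $\mu-\epsilon_0<n-2$) shows this is subdominant to $s_{i,2}$. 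In the \emph{transition regime} $\mathscr{R}/2\leq|\hat{z}_i|\leq 2\mathscr{R}$ both $s_{i,1}(\hat{y})$ and $s_{i,2}(\hat{y})$ are of order $\lambda_i^{(n-2)/2}\mathscr{R}^{-n}$, matching the common boundary values of $\mathcal{J}^{\mathrm{in}}_4$ and $\mathcal{J}^{\mathrm{out}}_\beta$.

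\textbf{Lower bound and cross terms.} The reverse inequality is obtained by restricting each convolution to the subregion where both the weight density and the Newton kernel are of constant order, namely $\{z:\;|z-\hat{z}_i|\lesssim\tau(\hat{z}_i)\}$ intersected with the support of the weight; this yields matching lower bounds in both main regimes. For cross contributions from $j\neq i$, the key observation is that on the support of $t_{i,k}$ the quantity $|\hat{y}-x|$ is comparable to the rescaled separation $\mathscr{R}_{ij}$; hence each cross term carries a positive power of $\mathscr{R}/\mathscr{R}_{ij}\leq 1/2$ relative to the diagonal weights at $\xi_j$, and is absorbed.

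\textbf{Main obstacle.} The principal difficulty is the exponent bookkeeping needed to keep every application of Lemma~\ref{B3} in the non-logarithmic case $2<\varsigma_0<n$, across the whole admissible parameter range. The delicate points are the boundary values $\varsigma_0=n$, which occur for instance in the $S_2$-regime at $n=4$ with the inner exponent $\alpha=4$: the logarithmic correction produced there must be checked to be absorbed by the slack built into the choice $n-2-\epsilon_0$ (rather than $n-2$) in the definition of $\hat{s}_{i,2}$. The condition $(\sharp)$, and in particular $\mu>(n^2-6n)/(n-4)$, is exactly what makes it possible to pick $\epsilon_0\in(0,((n-2)p-n)/p)$ small enough so that all four exponents $(\alpha,\beta)$ and their shifts lie in the admissible range simultaneously. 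A secondary task is to match the $\mathscr{R}$-exponents between the inner and outer cases, which holds by construction because the weights $t_{i,k}$ and $\hat{t}_{i,k}$ are designed (up to multiplicative constants) as $-\Delta s_{i,k}$ and $-\Delta\hat{s}_{i,k}$ respectively, so that the Newton convolution inverts the Laplacian and returns the original weight.
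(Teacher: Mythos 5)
Your proposal follows essentially the same route as the paper's proof: after the rescaling $z_i=\lambda_i(x-\xi_i)$, $\hat z_i=\lambda_i(\hat y-\xi_i)$, the paper estimates each convolution $|\cdot|^{-(n-2)}\ast t_{i,k}$ (resp. $\hat t_{i,k}$) individually, splits at $|\hat z_i|\sim\mathscr{R}$, uses $|\hat z_i-z_i|\approx|z_i|$ in the inner regime and Lemma \ref{B3} in the outer regime, exactly as you describe; your lower bound by restricting to $\{|z-\hat z_i|\lesssim\tau(\hat z_i)\}$ is the natural way to make the paper's ``$\approx$'' two-sided and is sound, as is your power count showing the inner piece is subdominant to $s_{i,2}$ (resp. $\hat s_{i,2}$) in the outer regime.

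Two caveats. First, your ``cross term'' step is both unnecessary and wrongly justified: since $T_j$ and $S_j$ are sums over $i$ of globally defined weights and your diagonal comparison has constants uniform in $i$, summing over $i$ already gives the lemma; and the claim that $|\hat y-x|$ is comparable to $\mathscr{R}_{ij}$ on the support of $t_{i,k}$ is false, because $t_{i,2}$ is supported on all of $\{|z_i|>\mathscr{R}\}$, which is essentially the whole space. Second, your resolution of the $n=4$ logarithm is misdirected. The log produced by $\varsigma_0=4=n$ in the inner convolution of $\hat t_{i,1}$ appears in the region $|\hat z_i|\le\mathscr{R}$, where $\hat s_{i,2}$ vanishes identically (its cutoff is $\chi_{\{|z_i|>\mathscr{R}\}}$), so the $\epsilon_0$-slack in the \emph{outer} weight cannot absorb an \emph{inner} logarithmic excess over $\hat s_{i,1}$; relatedly, the guiding identity ``$t_{i,k}\approx-\Delta s_{i,k}$'' fails precisely at $n=4$, where $-\Delta\big((1+|z|^2)^{-1}\big)\approx\tau(z)^{-6}$ rather than $\tau(z)^{-4}$. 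The paper itself disposes of the $\hat t_{i,1}$ convolution with a one-line ``similarly \dots $\lesssim$'' and does not discuss this point, so relative to the paper your write-up is not less complete; but as stated your absorption mechanism does not close the case $n=4$, $\mu\in[2,4)$, and you would need either a direct verification that the logarithmic factor is harmless there or a different comparison in the inner region.
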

\begin{proof}

We split the proof into two cases, depending choice of $n$ and $\mu$, for $t_{i,2}$ and $\hat{t}_{i,2}$.\\
Case $1$. Assume first $n=5$ and $\mu\in[1,3)$, or $n=6$ and $\mu\in(0,4)$, or $n=7$ and $\mu\in(\frac{7}{3},4)$.\\
We now choose $z_i=\lambda_{i}(x-\xi_i)$, $\hat{z}_i=\lambda_{i}(\hat{y}-\xi_i)$.
Then, by a direct computation and using Lemma \ref{B3}, we deduce that
\begin{equation}\label{zi2}
\begin{split}
\int\frac{1}{|\hat{y}-x|^{n-2}}t_{i,2}(x)dx&=\frac{\lambda_{i}^{(n-2)/2}}{\mathscr{R}^{(n-\mu+2)/2}}\int_{|z_i|\geq\mathscr{R}}\frac{1}{|\hat{z}_i-z_i|^{n-2}}
\frac{1}{(1+|z_{i}|^2)^{(n+\mu+2)/4}}dz_i.
\end{split}
\end{equation}
\begin{itemize}
\item[$\bullet$] If $|\hat{z}_i|\leq\frac{2}{3}\mathscr{R}$, then $|\hat{z}_i-z_i|\approx|z_i|$ on $\{x: |z_{i}|\geq\mathscr{R}\}$.
Then, by a simple computation
\begin{equation*}
\begin{split}
\int_{|z_i|\geq\mathscr{R}}\frac{1}{|\hat{z}_i-z_i|^{n-2}}
\frac{1}{(1+|z_{i}|^2)^{(n+\mu+2)/4}}dz_i\approx\mathscr{R}^{2-\frac{n+\mu+2}{2}}
\chi_{\{|\hat{z}_i|\leq\frac{2\mathcal{R}}{3}\}}.
\end{split}
\end{equation*}
\item[$\bullet$] If $|\hat{z}_i|\geq\frac{2}{3}\mathscr{R}$. Then using Lemma \ref{B3} and we deduce that
  \begin{equation*}
\begin{split}
\int_{|z_i|\geq\mathscr{R}}\frac{1}{|\hat{z}_i-z_i|^{n-2}}
\frac{1}{(1+|z_{i}|^2)^{(n+\mu+2)/4}}dz_i\approx\frac{1}{(1+|\hat{z}_{i}|^2)^{(n+\mu-2)/4}}\chi_{\{|\hat{z}_i|\geq\frac{2\mathscr{R}}{3}\}}
\end{split}
\end{equation*}
then as a consequence
\begin{equation}\label{zi2-1}
\begin{split}
\int\frac{1}{|\hat{y}-x|^{n-2}}t_{i,2}(x)dx&\approx\frac{\lambda_{i}^{(n-2)/2}}{\mathscr{R}^{n}}
\chi_{\{|\hat{z}_i|\leq\frac{2\mathscr{R}}{3}\}}+\frac{\lambda_{i}^{(n-2)/2}}{\mathscr{R}^{(n-\mu+2)/2}}\frac{1}{(1+|\hat{z}_{i}|^2)^{(n+\mu-2)/4}}\chi_{\{|\hat{z}_i|\geq\frac{2\mathscr{R}}{3}\}}\\&
=s_{i,1}(\hat{y})+s_{i,2}(\hat{y}).
\end{split}
\end{equation}
An analogous argument tell us that
\begin{equation}\label{zi2-2}
\begin{split}
\int\frac{1}{|\hat{y}-x|^{n-2}}t_{i,1}(x)dx\lesssim s_{i,1}(\hat{y})+s_{i,2}(\hat{y}).
\end{split}
\end{equation}
\end{itemize}
Case $2$. Assume now $n=4$ and $\mu\in[2,4)$, or $n=5$ and $\mu\in[3,4)$.\\
Then. repeating the argument of case 1, we get
\begin{itemize}
\item[$\bullet$] If $|\hat{z}_i|\leq\frac{2}{3}\mathscr{R}$, then $|\hat{z}_i-z_i|\approx|z_i|$ on $\{x:|z_{i}|\geq\mathscr{R}\}$.
Then, we get
\begin{equation*}
\begin{split}
\int_{|z_i|\geq\mathscr{R}}\frac{1}{|\hat{z}_i-z_i|^{n-2}}
\frac{1}{(1+|z_{i}|^2)^{(n-\epsilon_0)/2}}dz_i\approx\frac{1}{\mathscr{R}^{n-2-\epsilon_0}}
\chi_{\{|\hat{z}_i|\leq\frac{2\mathscr{R}}{3}\}}.
\end{split}
\end{equation*}
\item[$\bullet$] If $|\hat{z}_i|\geq\frac{2}{3}\mathscr{R}$. We deduce that by Lemma \ref{B3}
  \begin{equation*}
\begin{split}
\int_{|z_i|\geq\mathscr{R}}\frac{1}{|\hat{z}_i-z_i|^{n-2}}
\frac{1}{(1+|z_{i}|^2)^{(n-\epsilon_0)/2}}dz_i\approx\frac{1}{(1+|\hat{z}_{i}|^2)^{(n-2-\epsilon_0)/2}}\chi_{\{|\hat{z}_i|\geq\frac{2\mathscr{R}}{3}\}}
\end{split}
\end{equation*}
and therefore
\begin{equation}\label{zi2-3}
\begin{split}
\int\frac{1}{|\hat{y}-x|^{n-2}}\hat{t}_{i,2}(x)dx&\approx\frac{\lambda_{i}^{(n-2)/2}}{\mathscr{R}^{2n-2-\mu}}
\chi_{\{|\hat{z}_i|\leq\frac{2\mathscr{R}}{3}\}}+\frac{\lambda_{i}^{(n-2)/2}}{\mathscr{R}^{n-\mu+\epsilon_0}}\frac{1}{(1+|\hat{z}_{i}|^2)^{(n-2-\epsilon_0)/2}}\chi_{\{|\hat{z}_i|\geq\frac{2\mathscr{R}}{3}\}}\\&
= \hat{s}_{i,1}(\hat{y})+\hat{s}_{i,2}(\hat{y}).
\end{split}
\end{equation}
Similarly
\begin{equation}\label{zi2-4}
\begin{split}
\int\frac{1}{|\hat{y}-x|^{n-2}}\hat{t}_{i,1}(x)dx\lesssim \hat{s}_{i,1}(\hat{y})+\hat{s}_{i,2}(\hat{y}).
\end{split}
\end{equation}
\end{itemize}
Combining \eqref{zi2-1}-\eqref{zi2-2} and \eqref{zi2-3}-\eqref{zi2-4}
yielding the result.
\end{proof}
From now on, we set $\xi_{ij}=\lambda_{i}(\xi_j-\xi_i)$. Then
the assumption \eqref{lamta} tell us that $\mathscr{R}_{ij}\sqrt{\lambda_i/\lambda_j}\approx \tau(\xi_{ij})$. Furthermore, we define $\widetilde{H}_{j}(x):=\frac{\lambda_j^{2}}{\tau(z_{j})^{4}}$,
and we will deduce the following a series of technical results that will be useful also later.
\begin{lem}\label{cll-1-0-1}
For each $i\neq j$, assume that $\lambda_i\leq\lambda_j$. When $n\geq6-\mu$ and  $0<\mu<\frac{n+\mu-2}{2}$. Then we have that
\begin{equation}\label{zzz-0-1}
\widetilde{H}_{j}(x)s_{i,1}\lesssim\mathscr{R}_{ij}^{-2}t_{j,1}+\mathscr{R}^{-2}(t_{i,1}+t_{j,2}),
\end{equation}
\begin{equation}\label{zzz-1-1}
\widetilde{H}_{j}(x)s_{i,2}\lesssim\mathscr{R}_{ij}^{-2}t_{j,1}+\mathscr{R}^{-2}(v_{i,2}+t_{j,2}),
\end{equation}
\begin{equation}\label{zzz-2-1}
\widetilde{H}_{i}(x)s_{j,1}\lesssim\mathscr{R}_{ij}^{-2}t_{j,1},
\end{equation}
\begin{equation}\label{zzz-3-1}
\widetilde{H}_{i}(x)s_{j,2}\lesssim\tau( \xi_{ij})^{-2}t_{i,1}+\tau(\xi_{ij})^{-2}\big(t_{i,2}+t_{j,2}\big),
\end{equation}
and then that
\begin{equation}\label{zzz-4-1}
\widetilde{H}_{i}(x)s_{j,2}\lesssim\big(\big(\frac{\lambda_{i}}{\lambda_j}\big)^2+\varepsilon^2\big)t_{j,2}
\hspace{4mm}\mbox{on the set}\hspace{2mm}\{x:|z_i-\xi_{ij}|\leq\varepsilon\},
\end{equation}
\begin{equation}\label{zzz-5-1}
s_{j,2}\lesssim
\left\lbrace
\begin{aligned}
&\tau(\xi_{ij})^{n+\mu-4}\varepsilon^{-\frac{n+\mu-2}{2}}
\big(s_{i,1}+s_{i,2}\big),\hspace{2mm}n>6-\mu\hspace{2mm}\mbox{and}\hspace{2mm}\mbox{on the set}\hspace{2mm}\{x:|z_i-\xi_{ij}|>\varepsilon\},\\&
\tau(\xi_{ij})^{\frac{n+\mu-2}{2}}\varepsilon^{-\frac{n+\mu-2}{2}}
\big(s_{i,1}+s_{i,2}\big),\hspace{2mm}n=6-\mu\hspace{2mm}\mbox{and}\hspace{2mm}\mbox{on the set}\hspace{2mm}\{x:|z_i-\xi_{ij}|>\varepsilon\},
\end{aligned}
\right.
\end{equation}
for all $0<\varepsilon<1$. Furthermore, for $\widetilde{A}>1$,
\begin{equation}\label{zzz-6-1}
\widetilde{H}_{i}(x)s_{j,2}\lesssim\big(|\xi_{ij}|+\widetilde{A}\big)^{-2}t_{j,2}\hspace{4mm}\mbox{on the set}\hspace{2mm}\{x:|z_i|\geq2|\xi_{ij}|+\widetilde{A}\},
\end{equation}
and
\begin{equation}\label{zzz-7-1}
\widetilde{H}_{i}(x)s_{j,2}\lesssim\tau(\xi_{ij})^{\varrho_1}\big(\varepsilon^{-\frac{4}{n+\mu-2}}t_{i,1}+\varepsilon t_{j,2}\big)\hspace{2mm}\mbox{with}\hspace{2mm}\varrho_1=\frac{2(n+\mu-6)}{n+\mu+2},\hspace{2mm}\mbox{on the set}\hspace{2mm}\{x:|z_i|\leq \mathscr{R}\}
\end{equation}
for all $0<\varepsilon<1$.
\end{lem}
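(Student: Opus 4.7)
The plan is to verify each of the pointwise inequalities \eqref{zzz-0-1}--\eqref{zzz-7-1} by partitioning $\mathbb{R}^n$ according to which of the indicator sets $\{|z_i|\lessgtr \mathscr{R}\}$ and $\{|z_j|\lessgtr \mathscr{R}\}$ are active, and then comparing the explicit factors of $\tau(z_i)$, $\tau(z_j)$, $\mathscr{R}$ and $\lambda_i/\lambda_j$ directly. The only algebraic inputs are: the scaling identity $z_j = (\lambda_j/\lambda_i)(z_i-\xi_{ij})$ (equivalently $|z_j| = (\lambda_j/\lambda_i)\lambda_i|x-\xi_j|$); the relation $\mathscr{R}_{ij}\approx \tau(\xi_{ij})\sqrt{\lambda_j/\lambda_i}$ coming from $\lambda_i\le \lambda_j$ (already noted in the paper just before the lemma); and the triangle inequality for $\tau(\cdot)$.

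First I would dispose of \eqref{zzz-0-1}--\eqref{zzz-3-1}, which are the easy ones. After cancelling the prefactor $\lambda_j^{(n+2)/2}$ hidden in $t_{j,1}$ against $\lambda_j^{2}\lambda_i^{(n-2)/2}$ in $\widetilde{H}_j s_{i,1}$, and isolating the cutoff of $s_{i,1}$ which forces $\tau(z_i)\le \mathscr{R}$, each inequality reduces to either $(\lambda_i/\lambda_j)^{(n-2)/2}\tau(z_i)^{-2}\lesssim \mathscr{R}_{ij}^{-2}$ (when the $t_{j,1}$ region contributes) or to the dual bound $(\lambda_i/\lambda_j)^{(n-2)/2}\tau(z_j)^{-4}\lesssim \mathscr{R}^{-2}\tau(z_i)^{-2}$ (when the $t_{i,1}$ or $t_{j,2}$ region contributes). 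Both are immediate from $\mathscr{R}_{ij}^{-2}\approx (\lambda_i/\lambda_j)\tau(\xi_{ij})^{-2}$ and the observation that $\tau(z_j)$ is bounded below by $(\lambda_j/\lambda_i)|z_i-\xi_{ij}|$ whenever $|z_j|\ge 1$.

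For \eqref{zzz-4-1} and \eqref{zzz-6-1} I would localise the estimate to the relevant geometric region: on $\{|z_i-\xi_{ij}|\le \varepsilon\}$ we have $|z_j|\lesssim (\lambda_j/\lambda_i)\varepsilon$, so $\tau(z_j)\lesssim 1\vee((\lambda_j/\lambda_i)\varepsilon)$ and plugging this into $t_{j,2}$ yields the claimed $((\lambda_i/\lambda_j)^2+\varepsilon^2)$-prefactor after absorbing the $\mathscr{R}^{-(n-\mu+2)/2}$ weight via $\mathscr{R}_{ij}$; on $\{|z_i|\ge 2|\xi_{ij}|+\widetilde A\}$ the same device gives $\tau(z_j)\gtrsim |\xi_{ij}|+\widetilde A$. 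Inequality \eqref{zzz-5-1} is similar: on $\{|z_i-\xi_{ij}|>\varepsilon\}$ one has $\tau(z_j)\gtrsim (\lambda_j/\lambda_i)\varepsilon$, so $s_{j,2}$ can be traded for a power of $\varepsilon$ times $\tau(\xi_{ij})^{n+\mu-4}$ or $\tau(\xi_{ij})^{(n+\mu-2)/2}$ depending on whether $n>6-\mu$ or $n=6-\mu$; the two regimes simply reflect whether the $\mathscr{R}^{(n-\mu+2)/2}$ weight in $s_{i,1}, s_{i,2}$ is saturated by $\tau(\xi_{ij})$ or not.

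The hard part will be \eqref{zzz-7-1}. On $\{|z_i|\le \mathscr{R}\}$ one must interpolate between the inner-core estimate (which matches $t_{i,1}$) and the outer decay of $s_{j,2}$ (which matches $t_{j,2}$), and the exponent $\varrho_1=2(n+\mu-6)/(n+\mu+2)$ is not apparent a priori. I would split $\{|z_i|\le \mathscr{R}\}$ into $\{|z_i-\xi_{ij}|\le \varepsilon^{\alpha}\tau(\xi_{ij})^{\beta}\}$ and its complement, with $\alpha,\beta$ chosen so that the two contributions balance. On the inner piece one uses \eqref{zzz-4-1} to compare with $t_{j,2}$, producing a factor $\varepsilon$; on the outer piece one invokes \eqref{zzz-5-1} to replace $s_{j,2}$ by $s_{i,1}+s_{i,2}$, which after multiplication by $\widetilde H_i$ matches $t_{i,1}$ up to a factor $\varepsilon^{-4/(n+\mu-2)}$. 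Optimising the split produces exactly the exponent $\varrho_1$; the main delicate check is that the same choice of $\alpha,\beta$ works simultaneously in every sub-region of $\{|z_i|\le \mathscr{R}\}$, including the transition region where $\tau(z_j)\approx 1$. All remaining bookkeeping (powers of $\lambda_i/\lambda_j$, correct $\mathscr{R}$-exponents in $t_{i,1},t_{j,2}$) is then routine using $\mathscr{R}_{ij}\approx \tau(\xi_{ij})\sqrt{\lambda_j/\lambda_i}$.
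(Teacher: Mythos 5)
Your treatment of \eqref{zzz-0-1}--\eqref{zzz-6-1} is broadly in line with the paper's (pointwise comparison of the weights via $z_j=(\lambda_j/\lambda_i)(z_i-\xi_{ij})$, $\mathscr{R}_{ij}\sqrt{\lambda_i/\lambda_j}\approx\tau(\xi_{ij})$ and a case analysis in the spirit of Lemma \ref{B3-0}), although for \eqref{zzz-1-1} and \eqref{zzz-3-1} on the region $\{|z_i|\ge\mathscr{R},\,|z_j|\ge\mathscr{R}\}$ a single ``dual bound'' does not cover all configurations of $\tau(z_i),\tau(z_j),\lambda_i/\lambda_j$: there one must split the weight between $t_{i,2}$ and $t_{j,2}$, which the paper does by a H\"older/Young interpolation with explicit exponents ($\theta=8/(n+\mu+6)$, etc.); this is a repairable under-specification, not a fatal one.

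The genuine gap is exactly where you expect difficulty, namely \eqref{zzz-7-1}, and your proposed route cannot produce the exponent $\varrho_1$. Write $s=n+\mu$. On the inner ball $\{|z_i-\xi_{ij}|\le r\}$, \eqref{zzz-4-1} gives at best the prefactor $(\lambda_i/\lambda_j)^2+r^2$ in front of $t_{j,2}$ (and the term $(\lambda_i/\lambda_j)^2$ alone already need not be $\lesssim\tau(\xi_{ij})^{\varrho_1}\varepsilon$, e.g.\ when $\lambda_i\approx\lambda_j$ and $\varepsilon$ is tiny); on the complement, \eqref{zzz-5-1} combined with $\widetilde H_i(s_{i,1}+s_{i,2})\lesssim t_{i,1}$ gives the prefactor $\tau(\xi_{ij})^{s-4}r^{-(s-2)/2}$ in front of $t_{i,1}$. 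Balancing $r^2$ against $\tau(\xi_{ij})^{s-4}r^{-(s-2)/2}$ forces $r\approx\tau(\xi_{ij})^{2(s-4)/(s+2)}$ and yields the prefactor $\tau(\xi_{ij})^{4(s-4)/(s+2)}$, which is strictly larger than $\varrho_1=2(s-6)/(s+2)$ for every admissible $s$; requiring both pieces to be $\lesssim\tau(\xi_{ij})^{\varrho_1}(\cdots)$ would force $\tau(\xi_{ij})^{2(s-2)/(s+2)}\lesssim r\lesssim\tau(\xi_{ij})^{(s-6)/(s+2)}$, which is impossible. The loss is structural: \eqref{zzz-5-1} only records the threshold $|z_i-\xi_{ij}|>\varepsilon$ and discards the actual decay in $\tau(z_j)$, so no choice of splitting radius (even one depending on $\varepsilon$ and $\tau(\xi_{ij})$, and even allowing $r>1$, which anyway exits the range of validity of \eqref{zzz-4-1}--\eqref{zzz-5-1}) can recover the sharp power. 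The paper does not split the region at all: on $\{|z_i|\le\mathscr{R}\}$ it factorizes pointwise $\widetilde H_i(x)s_{j,2}\lesssim(\lambda_i/\lambda_j)^{\varrho_1/2}\mathscr{R}^{\varrho_1}\,(t_{i,1})^{p_3^\star}(t_{j,2})^{q_3^\star}$ with $p_3^\star=\tfrac{4}{n+\mu+2}$, $q_3^\star=\tfrac{n+\mu-2}{n+\mu+2}$ (using only $\tau(z_i)\ge1$ and $6-\mu\le n\le 6+\mu$), converts the prefactor into $\tau(\xi_{ij})^{\varrho_1}$ via $\mathscr{R}\le\mathscr{R}_{ij}\approx\tau(\xi_{ij})\sqrt{\lambda_j/\lambda_i}$, and then applies Young's inequality with parameter $\varepsilon$ to the product $(t_{i,1})^{p_3^\star}(t_{j,2})^{q_3^\star}$. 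To fix your argument you would have to keep the pointwise dependence on $\tau(z_j)$ (equivalently on $|z_i-\xi_{ij}|$) instead of the worst-case thresholds in \eqref{zzz-4-1}--\eqref{zzz-5-1}, which in effect reproduces the paper's interpolation.
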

The proof will make us of the following lemma, which is proven in
\cite[Lemma 2.3]{DSW21}.
\begin{lem}\label{B3-0} For each $k\neq j$, let
	$$	f_{k,j}(x)=\frac{1}{(1+|z_{k}|^2)^{\alpha_1/2}}\frac{1}{(1+|z_{j}|^2)^{\beta_1/2}}=
\big(\frac{1}{\tau(z_k)}\big)^{\alpha_1}\big(\frac{1}{\tau( z_j)}\big)^{\beta_1},
	$$
	where $\alpha_1\geq 0$ and $\beta_1\geq0$ are two constants. Assume that $\lambda_k\leq\lambda_j$, $1\leq\mathscr{R}_{kj}/2$.
Then, for any constants $\alpha_1$ and $\beta_1\geq\gamma_1\geq0$, there is a constant $C>0$, such that
\begin{equation}\label{R-4-1-00}
f_{k,j}(x)\lesssim
\left\lbrace
\begin{aligned}
&\frac{1}{\mathscr{R}_{kj}^{\alpha_{1}}}(\frac{\lambda_{j}}{\lambda_k })^{\alpha_{1}/2}\frac{1}{\tau( z_j)^{\beta_{1}}},\hspace{6mm}\hspace{6mm}\hspace{6mm}\hspace{8mm}\hspace{6mm}\hspace{2mm}\hspace{2mm}|z_j|\leq\sqrt{\frac{\lambda_j}{\lambda_k}}\frac{\mathscr{R}_{kj}}{2},\\
&\frac{1}{\mathscr{R}_{kj}^{\gamma_{1}}}(\frac{\lambda_{j}}{\lambda_k })^{\beta_{1}-\gamma_{1}/2}\frac{1}{\tau( z_i)^{\alpha_{1}+\beta_{1}-\gamma_{1}}},\hspace{6mm}\hspace{6mm}\hspace{6mm}\hspace{2mm}|z_j|\geq\sqrt{\frac{\lambda_j}{\lambda_k}}\frac{\mathscr{R}_{kj}}{2}.
\end{aligned}
\right.
\end{equation}
\end{lem}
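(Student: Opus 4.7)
The plan is to prove Lemma~\ref{B3-0} by a direct case analysis, splitting on the location of $x$ relative to $\xi_j$. The key algebraic input is the identity
\begin{equation*}
z_k \;=\; \frac{\lambda_k}{\lambda_j}\,z_j \;+\; \lambda_k(\xi_j - \xi_k),
\end{equation*}
together with the structural bound $\lambda_k|\xi_j-\xi_k| = \sqrt{\lambda_k/\lambda_j}\cdot\sqrt{\lambda_k\lambda_j}|\xi_j-\xi_k| \leq \sqrt{\lambda_k/\lambda_j}\,\mathscr{R}_{kj}$, which follows immediately from Definition~\ref{del-1}. The assumption $\lambda_k\le\lambda_j$ together with $\mathscr{R}_{kj}\ge 2$ will also be used repeatedly to absorb constants.

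In Case 1 ($|z_j|\le\sqrt{\lambda_j/\lambda_k}\,\mathscr{R}_{kj}/2$), the target reduces to showing the pointwise bound $\tau(z_k)\gtrsim\sqrt{\lambda_k/\lambda_j}\,\mathscr{R}_{kj}$. I would subdivide according to which term realizes $\mathscr{R}_{kj}$ in Definition~\ref{del-1}. If $\mathscr{R}_{kj}=\sqrt{\lambda_j/\lambda_k}$, then $\sqrt{\lambda_k/\lambda_j}\,\mathscr{R}_{kj}=1$ and the bound is trivial from $\tau(z_k)\ge 1$. If instead $\mathscr{R}_{kj}=\sqrt{\lambda_k\lambda_j}\,|\xi_j-\xi_k|$, then $\lambda_k|\xi_j-\xi_k|=\sqrt{\lambda_k/\lambda_j}\,\mathscr{R}_{kj}$, and the reverse triangle inequality applied to the identity above gives
\begin{equation*}
|z_k| \;\ge\; \lambda_k|\xi_j-\xi_k| - \tfrac{\lambda_k}{\lambda_j}|z_j| \;\ge\; \sqrt{\lambda_k/\lambda_j}\,\mathscr{R}_{kj} - \tfrac{1}{2}\sqrt{\lambda_k/\lambda_j}\,\mathscr{R}_{kj} \;=\; \tfrac{1}{2}\sqrt{\lambda_k/\lambda_j}\,\mathscr{R}_{kj}.
\end{equation*}
Raising to the power $-\alpha_1$ delivers the first line of \eqref{R-4-1-00}, while the harmless factor $\tau(z_j)^{-\beta_1}$ is preserved untouched.

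In Case 2 ($|z_j|\ge\sqrt{\lambda_j/\lambda_k}\,\mathscr{R}_{kj}/2$), the plan is to interpolate the weight $\tau(z_j)^{-\beta_1}$ as $\tau(z_j)^{-\gamma_1}\,\tau(z_j)^{-(\beta_1-\gamma_1)}$, which is legal since $\gamma_1\in[0,\beta_1]$. The first factor is controlled by the Case 2 hypothesis, which combined with $\mathscr{R}_{kj}\ge 2$ yields $|z_j|\gtrsim\sqrt{\lambda_j/\lambda_k}\,\mathscr{R}_{kj}\gtrsim 1$, so
\begin{equation*}
\tau(z_j)^{-\gamma_1} \;\lesssim\; (\lambda_j/\lambda_k)^{-\gamma_1/2}\,\mathscr{R}_{kj}^{-\gamma_1}.
\end{equation*}
For the second factor, I use the identity displayed at the outset together with $\lambda_k|\xi_j-\xi_k|\le\sqrt{\lambda_k/\lambda_j}\,\mathscr{R}_{kj}\le\frac{\lambda_k}{\lambda_j}|z_j|$ (the last inequality being precisely the Case 2 hypothesis) to deduce $|z_k|\le 2\frac{\lambda_k}{\lambda_j}|z_j|$, and hence $\tau(z_j)\gtrsim\frac{\lambda_j}{\lambda_k}\tau(z_k)$; therefore
\begin{equation*}
\tau(z_j)^{-(\beta_1-\gamma_1)} \;\lesssim\; (\lambda_k/\lambda_j)^{\beta_1-\gamma_1}\,\tau(z_k)^{-(\beta_1-\gamma_1)}.
\end{equation*}
Multiplying the two displays by the remaining factor $\tau(z_k)^{-\alpha_1}$ and simplifying the exponent of $\lambda_k/\lambda_j$ to $\beta_1-\gamma_1/2$ produces the bound
\begin{equation*}
f_{k,j}(x) \;\lesssim\; \mathscr{R}_{kj}^{-\gamma_1}\,(\lambda_k/\lambda_j)^{\beta_1-\gamma_1/2}\,\tau(z_k)^{-(\alpha_1+\beta_1-\gamma_1)},
\end{equation*}
which is in fact sharper than the second line of \eqref{R-4-1-00}: since $\lambda_k/\lambda_j\le 1\le\lambda_j/\lambda_k$ and $\beta_1-\gamma_1/2\ge 0$, one has $(\lambda_k/\lambda_j)^{\beta_1-\gamma_1/2}\le(\lambda_j/\lambda_k)^{\beta_1-\gamma_1/2}$, giving the claimed estimate. (I am assuming the $z_i$ appearing in the statement is a typographical slip for $z_k$.)

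The only genuine obstacle is the bookkeeping in Case 2: one must simultaneously extract the $\mathscr{R}_{kj}^{-\gamma_1}$ decay from the lower bound on $|z_j|$ and convert the leftover $\tau(z_j)^{-(\beta_1-\gamma_1)}$ into $\tau(z_k)^{-(\beta_1-\gamma_1)}$ via the ratio $\lambda_j/\lambda_k$. This is exactly what the splitting $\beta_1=\gamma_1+(\beta_1-\gamma_1)$ permits, and it explains why the hypothesis $\gamma_1\in[0,\beta_1]$ is the natural range for the interpolation parameter.
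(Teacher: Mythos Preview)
Your argument is correct and is the natural direct proof; the paper itself does not prove this lemma but cites \cite[Lemma~2.3]{DSW21}, where the same case analysis appears. Two small remarks. First, in Case~2 the passage from $|z_k|\lesssim\frac{\lambda_k}{\lambda_j}|z_j|$ to $\tau(z_j)\gtrsim\frac{\lambda_j}{\lambda_k}\tau(z_k)$ is not quite immediate: you must also absorb the constant $1$ in $\tau(z_k)=(1+|z_k|^2)^{1/2}$, which requires $\tau(z_j)\gtrsim\lambda_j/\lambda_k$. This follows from the Case~2 hypothesis combined with $\mathscr{R}_{kj}\ge\sqrt{\lambda_j/\lambda_k}$ (Definition~\ref{del-1}), giving $|z_j|\ge\tfrac12\sqrt{\lambda_j/\lambda_k}\,\mathscr{R}_{kj}\ge\tfrac12\lambda_j/\lambda_k$; you should make this step explicit. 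Second, your observation that the computation actually yields the sharper factor $(\lambda_k/\lambda_j)^{\beta_1-\gamma_1/2}$ rather than $(\lambda_j/\lambda_k)^{\beta_1-\gamma_1/2}$ is correct, and your reading of $z_i$ as a typo for $z_k$ is the intended interpretation.
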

Our main goal is to show that the comparative relationship between
$\widetilde{H}_{j}(x)s_{i,1}$, $\widetilde{H}_{j}(x)s_{i,2}$, and $\widetilde{H}_{j}(x)\hat{s}_{i,1}$, $\widetilde{H}_{j}(x)\hat{s}_{i,2}$ and some Hartree type cross terms (as described in Lemma \ref{cll-2} and Lemma \ref{cll-3} below) and $t_{i,1}$, $t_{i,2}$, $\hat{t}_{i,1}$, $\hat{t}_{i,2}$ separately. We proceed similarly to \cite{DSW21}. However, due to the different definitions of $s_{i,2}$, $t_{i,2}$, $\hat{s}_{i,1}$, $\hat{t}_{i,1}$, $\hat{s}_{i,2}$ and $\hat{t}_{i,2}$ with the parameter $\mu$, there are still significant differences and difficulties in obtaining the desired estimate compared to \cite{DSW21}. Moreover, in the cases involving Hartree type cross terms, we must borrow some estimates of the convolution terms and Young's inequality to find the suitable weight function (see Lemma \ref{cll-2} and Lemma \ref{cll-3}). In particular, when $\mu$ is sufficiently close $0$ or $4$, it is the most difficult cases in our proof, which means that many novel skills are required.

We next give the following proof.

\emph{\textbf{Proof of Lemma \ref{cll-1-0-1}}}.
First note that
$$\widetilde{H}_{j}(x)s_{i,1}=\frac{1}{\mathscr{R}^{n-2}}\frac{\lambda_j^{2}}{(1+|z_{j}|^2)^{2}} \frac{\lambda_i^{(n-2)/2}}{(1+|z_{i}|^2)}=\frac{1}{\mathscr{R}^{n-2}}\frac{\lambda_j^{2}}{\tau( z_{j})^4}\frac{\lambda_i^{(n-2)/2}}{\tau(z_i)^{(n-2)/2}}.$$
then we obtain
$$
\widetilde{H}_{j}(x)s_{i,1}\lesssim \mathscr{R}_{ij}^{-2}t_{j,1}+\mathscr{R}^{-2}(t_{i,1}+t_{j,2}),\hspace{2mm}\widetilde{\mathscr{S}}_{j}(x)s_{i,1}\lesssim \mathscr{R}_{ij}^{-2}t_{j,1},
$$
that is \eqref{zzz-0-1} and \eqref{zzz-2-1}.

$\bullet$
To prove \eqref{zzz-1-1},  we distinguish two cases, depending on whether $|z_j|\leq\mathscr{R}$ or not.

Case 1. On the set $\{x: |z_j|\leq\mathscr{R}\}$.

Observe now that, since Lemma \ref{B3-0}, $n>6-\mu$, and $0<\mu\leq4$, we have
\begin{equation*}
\begin{split}
\widetilde{H}_{j}(x)s_{i,2}&\approx\frac{1}{\mathscr{R}^{(n-\mu+2)/2}}\frac{\lambda_j^{2}}{\tau( z_{j})^4}\frac{\lambda_i^{(n-2)/2}}{\tau(z_i)^{(n+\mu-2)/2}}\\&
\lesssim\mathscr{R}^{\frac{n+\mu-6}{2}}\frac{1}{\mathscr{R}_{ij}^{(n+\mu-2)/2}}\Big(\frac{\lambda_i}{\lambda_j}\Big)^{\frac{n-\mu+2}{2}}t_{j,1}\lesssim\frac{1}{\mathscr{R}_{ij}^2}t_{j,1}.
\end{split}
\end{equation*}

Case 2. On the set $\{x: |z_i|\geq\mathscr{R},\hspace{2mm}|z_j|\geq\mathscr{R}\}$.

We can take the exponent $\theta$, $p_1^{\star}$ and $q_1^{\star}$ satisfying
$$\theta=\frac{8}{n+\mu+6},\hspace{2mm}p_1^{\star}=\frac{2(\mu+2-n)}{n+\mu+6},\hspace{2mm}\hspace{2mm}q_1^{\star}=\frac{2(n-\mu+2)}{n+\mu+6}.$$
Then using H\"{o}lder inequality, we obtain
\begin{equation*}
\begin{split}
\widetilde{H}_{j}(x)s_{i,2}&\approx\frac{1}{\mathscr{R}^{(n-\mu+2)/2}}\frac{\lambda_j^{2}}{\tau( z_{j})^4}\frac{\lambda_i^{(n-2)/2}}{\tau( z_i)^{(n+\mu-2)/2}}=\frac{\lambda_i^{p_1^{\star}}}{\lambda_j^{q_1^{\star}}}\Big(\frac{t_{i,2}}{\tau( z_i)^{2}}\Big)^{1-\theta}\Big(\frac{t_{j,2}}{\tau(z_j)^{2}}\Big)^{\theta}\\&
\lesssim\frac{1}{\mathscr{R}^2}t_{i,2}+\frac{1}{\mathscr{R}^2}t_{j,2}.
\end{split}
\end{equation*}

$\bullet$
To prove \eqref{zzz-3-1}. We consider three cases.

Case 1. On the set $\{x: \mathscr{R}\leq|z_j|\leq\sqrt{\lambda_j/\lambda_i}\mathscr{R}_{ij}/2\}$.

By Lemma \ref{B3-0}, we have
$\mathscr{R}_{ij}\sqrt{\lambda_{i}/\lambda_j}\approx\langle\xi_{ij}\rangle$, we have
\begin{equation*}
\begin{split}
\widetilde{H}_{i}(x)s_{j,2}&\approx\frac{1}{\mathscr{R}^{(n-\mu+2)/2}}\frac{\lambda_i^{2}}{\tau( z_{i})^4}\frac{\lambda_j^{(n-2)/2}}{\tau(z_j)^{(n+\mu-2)/2}}
\lesssim\frac{1}{\tau(\xi_{ij})^2}t_{i,2}.
\end{split}
\end{equation*}

Case 2. On the set $\{x: |z_j|\geq\sqrt{\lambda_j/\lambda_i}\mathscr{R}_{ij}/2,\hspace{2mm}|z_j|\leq\mathscr{R}\}$.

Now, noticing that by $n\leq6+\mu$
$$\Big(\frac{\lambda_i}{\lambda_j}\Big)^{\frac{n-\mu+2}{2}}\leq\Big(\frac{\lambda_i}{\lambda_j}\Big)^{\frac{n-4}{2}},$$
so that, we get for $n>6-\mu$,
\begin{equation*}
\begin{split}
\widetilde{H}_{i}(x)s_{j,2}&\approx\frac{1}{\mathscr{R}^{(n+\mu-2)/4}}\frac{\lambda_i^{2}}{\tau( z_{i})^4}\frac{\lambda_j^{(n-2)/2}}{\tau(z_j)^{(n+\mu-2)/2}}
\lesssim\mathscr{R}^{\frac{n+\mu-6}{2}}\frac{1}{\mathscr{R}_{ij}^{(n+\mu-2)/2}}\frac{\lambda_j}{\lambda_i}t_{i,1}\lesssim\frac{1}{\tau( \xi_{ij})^2}t_{i,1}.
\end{split}
\end{equation*}
by choosing $\gamma_1=\frac{n+\mu-2}{2}$ in Lemma \ref{B3-0}. On the other hand, for $n=6-\mu$
\begin{equation*}
\begin{split}
\widetilde{H}_{i}(x)s_{j,2}&\approx\frac{1}{\mathscr{R}}\frac{\lambda_i^{2}}{\tau( z_{i})^4}\frac{\lambda_j^{(4-\mu)/2}}{\tau(z_j)^{2}}
\lesssim\frac{1}{\mathscr{R}_{ij}^{2}}\frac{\lambda_j}{\lambda_i}t_{i,1}\lesssim\frac{1}{\tau( \xi_{ij})^2}t_{i,1}.
\end{split}
\end{equation*}
Case 3. On the set $\{x: |z_j|\geq\sqrt{\lambda_j/\lambda_i}\mathscr{R}_{ij}/2,\hspace{2mm}|z_j|\geq\mathscr{R}\}$.

Note that
$$\Big(\frac{\lambda_j}{\lambda_i}\Big)^{\varpi-1}\leq\frac{\lambda_j}{\lambda_i}\hspace{2mm}\mbox{with}\hspace{2mm}\varpi=\frac{2(n-\mu+2)}{n+\mu+2},$$
then, combining H\"{o}lder inequality gives
\begin{equation*}
\begin{split}
\widetilde{H}_{i}(x)s_{j,2}&\approx\frac{1}{\mathscr{R}^{(n-\mu+2)/2}}\frac{\lambda_i^{2}}{\tau( z_{i})^4}\frac{\lambda_j^{(n-2)/2}}{\tau( z_j)^{(n+\mu-2)/2}}=\Big(\frac{\lambda_j}{\lambda_i}\Big)^{\varpi}\frac{1}{\tau( z_j)^2}\big(t_{i,2}\big)^{p_2^{\star}}\big(t_{j,2}\big)^{q_2^{\star}}\\&
\lesssim\Big(\frac{\lambda_j}{\lambda_i}\Big)^{\varpi}\frac{1}{\tau( z_j)^2}\big(t_{i,2}+t_{j,2}\big)\leq
\frac{1}{\tau(\xi_{ij})^2}\big(t_{i,2}+t_{j,2}\big).
\end{split}
\end{equation*}
by choosing
$$p_2^{\star}=\frac{8}{n+\mu+2},\hspace{2mm}\hspace{2mm}q_2^{\star}=\frac{n+\mu-6}{n+\mu+2}.$$

$\bullet$
To prove \eqref{zzz-4-1}. A direct computation shows that, on the set $\{x:|z_i-\xi_{ij}|\leq\varepsilon\}$,
\begin{equation*}
\begin{split}
\widetilde{H}_{i}(x)s_{j,2}\approx\frac{1}{\mathscr{R}^{(n-\mu+2)/2}}\frac{\lambda_i^{2}}{\tau( z_{i})^4}\frac{\lambda_j^{(n-2)/2}}{\tau( z_j)^{(n+\mu-2)/2}}&=\Big(\frac{\lambda_{i}}{\lambda_j}\Big)^2\frac{\tau( z_j)^2}{\tau( z_i)^4}\frac{1}{\mathscr{R}^{(n-\mu+2)/2}}\frac{\lambda_j^{\frac{n+2}{2}}}{\tau( z_j)^{(n+\mu+2)/2}}\\&
\lesssim\Big(\big(\frac{\lambda_{i}}{\lambda_j}\big)^2+\varepsilon^2\Big)t_{j,2}.
\end{split}
\end{equation*}

$\bullet$
To prove \eqref{zzz-5-1}. On the set $\{x:|z_i-\xi_{ij}|>\varepsilon\}$, we have $\tau(z_j)\geq(\lambda_j\varepsilon\tau(z_i))/(\lambda_i\tau(\xi_{ij}))$, and for $n\geq6-\mu$,
$$
s_{i,2}\lesssim\big(\mathscr{R}_{ij}\big)^{\frac{n+\mu-6}{2}}\big(s_{i,1}+s_{i,2}\big).
$$
Therefore, using $0<\mu<(n+\mu-2)/2$, we have that for $6-\mu<n\leq6+\mu$,
\begin{equation*}
\begin{split}
s_{j,2}\lesssim\Big(\frac{\lambda_{i}}{\lambda_j}
\Big)^\frac{\mu}{2}\Big(\frac{\tau(\xi_{ij})}{\varepsilon}\Big)^{\frac{n+\mu-2}{2}}s_{i,2}
&\lesssim \big(\mathscr{R}_{ij}\big)^{\frac{n+\mu-6}{2}}\Big(\frac{\lambda_{i}}{\lambda_j}\Big)^\frac{\mu}{2}
\Big(\frac{\tau(\xi_{ij})}{\varepsilon}\Big)^{\frac{n+\mu-2}{2}}
\big(s_{i,1}+s_{i,2}\big)
\\&\lesssim \tau(\xi_{ij})^{n+\mu-4}\varepsilon^{-\frac{n+\mu-2}{2}}
\big(s_{i,1}+s_{i,2}\big),
\end{split}
\end{equation*}
as desired by $\mathscr{R}_{ij}\sqrt{\lambda_{i}/\lambda_j}\approx\tau(\xi_{ij})$.
When $n=6-\mu$, we have
$$s_{j,2}\lesssim\tau(\xi_{ij})^{\frac{n+\mu-2}{2}}\varepsilon^{-\frac{n+\mu-2}{2}}
\big(s_{i,1}+s_{i,2}\big).$$

$\bullet$
To prove \eqref{zzz-6-1}. We use $\tau(z_j)^2\leq2(\lambda_j/\lambda_i)^2|z_i|^2$ for $\{x:|z_i|>2|\xi_{ij}|+\widetilde{A}\}$, and therefore
\begin{equation*}
\begin{split}
\widetilde{H}_{i}(x)s_{j,2}\approx\frac{1}{\mathscr{R}^{(n-\mu+2)/2}}\frac{\lambda_i^{2}}{\tau( z_{i})^4}\frac{\lambda_j^{(n-2)/2}}{\tau( z_j)^{(n+\mu-2)/2}}&=\Big(\frac{\lambda_{i}}{\lambda_j}\Big)^2\frac{\tau( z_j)^2}{\tau( z_i)^4}\frac{1}{\mathscr{R}^{(n-\mu+2)/2}}\frac{\lambda_j^{\frac{n+2}{2}}}{\tau( z_j)^{(n+\mu+2)/2}}\\&
\lesssim\Big(\frac{1}{|\xi_{ij}|+\widetilde{A}}\Big)^2t_{j,2}.
\end{split}
\end{equation*}

$\bullet$
To prove \eqref{zzz-7-1}. On the set $\{x:|z_i|\leq \mathscr{R}\}$, there are two cases.

Case 1. For $n>6-\mu$.
Writing
$$ \varrho=\frac{2(n+\mu-6)}{n+\mu+2},\hspace{2mm}p_{3}^{\star}=\frac{4}{n+\mu+2},\hspace{2mm}q_{3}^{\star}=\frac{n+\mu-2}{n+\mu+2},$$
and by $6-\mu<n<6+\mu$ and $\mathscr{R}_{ij}\sqrt{\lambda_{i}/\lambda_j}\approx\tau(\xi_{ij})$, as a consequence, by H\"{o}lder inequality,
\begin{equation*}
\begin{split}
\widetilde{H}_{i}(x)s_{j,2}\approx\frac{1}{\mathscr{R}^{(n-\mu+2)/2}}\frac{\lambda_i^{2}}{\tau( z_{i})^4}\frac{\lambda_j^{(n-2)/2}}{\tau( z_j)^{(n+\mu-2)/2}}&\lesssim\Big(\sqrt{\frac{\lambda_{i}}{\lambda_j}}\Big)^{\varrho} \mathscr{R}^{\varrho}\big(t_{i,1})^{p_{3}^{\star}}\big(t_{j,2})^{q_{3}^{\star}}\\&
\lesssim\tau(\xi_{ij})^{\varrho}\Big(\varepsilon^{-\frac{4}{n+\mu-2}}t_{i,1}+\varepsilon t_{j,2}\Big).
\end{split}
\end{equation*}

Case 2. For $n=6-\mu$, we have $\varrho=0$.
\begin{equation*}
\widetilde{H}_{i}(x)s_{j,2}\lesssim\varepsilon^{-1}t_{i,1}+\varepsilon t_{j,2},
\end{equation*}
and concluding the proof. \qed

\begin{lem}\label{cll-1-0}
For each $i\neq j$, assume that $\lambda_i\leq\lambda_j$. If $n\geq6-\mu$ and  $\frac{n+\mu-2}{2}\leq\mu<4$. Then we have that
\begin{equation}\label{zzz-0-1-1}
\widetilde{H}_{j}(x)\hat{s}_{i,1}\lesssim\mathscr{R}_{ij}^{-2}\hat{t}_{j,1}+\mathscr{R}^{-2}(\hat{t}_{i,1}+\hat{t}_{j,2}),
\end{equation}
\begin{equation}\label{zzz-1-1-1}
\widetilde{H}_{j}(x)\hat{s}_{i,2}\lesssim\mathscr{R}_{ij}^{-2}\hat{t}_{j,1}+\mathscr{R}^{-2}(\hat{t}_{i,2}+\hat{t}_{j,2}),
\end{equation}
\begin{equation}\label{zzz-2-1-1}
\widetilde{H}_{i}(x)\hat{s}_{j,1}\lesssim\mathscr{R}_{ij}^{-2}\hat{t}_{j,1},
\end{equation}
\begin{equation}\label{zzz-3-1-1}
\widetilde{H}_{i}(x)\hat{s}_{j,2}\lesssim\tau( \xi_{ij})^{-\epsilon_0}\hat{t}_{i,1}+\tau(\xi_{ij})^{-2}\hat{t}_{i,2}+\mathscr{R}^{-2}\big(\hat{t}_{i,2}+\hat{t}_{j,2}\big)\hspace{2mm}\mbox{with}\hspace{2mm}0<\epsilon_0<1,
\end{equation}
and then that
\begin{equation}\label{zzz-4-1-1}
\widetilde{H}_{i}(x)\hat{s}_{j,2}\lesssim\big(\big(\frac{\lambda_{i}}{\lambda_j}\big)^2+\varepsilon^2\big)\hat{t}_{j,2}
\hspace{4mm}\mbox{on the set}\hspace{2mm}\{x:|z_i-\xi_{ij}|\leq\varepsilon\},
\end{equation}
\begin{equation}\label{zzz-5-1-1}
\hat{s}_{j,2}\lesssim
\left\lbrace
\begin{aligned}
&\tau(\xi_{ij})^{2-\epsilon_0}\varepsilon^{\epsilon_0-2}
\big(\hat{s}_{i,1}+\hat{s}_{i,2}\big),\hspace{4.5mm}n=4\hspace{2mm}\mbox{and}\hspace{2mm}\mbox{on the set}\hspace{2mm}\{x:|z_i-\xi_{ij}|>\varepsilon\},\\&
\tau(\xi_{ij})^{4-2\epsilon_0}\varepsilon^{\varepsilon_0-2}
\big(\hat{s}_{i,1}+\hat{s}_{i,2}\big),\hspace{3.2mm}n=5\hspace{2mm}\mbox{and}\hspace{2mm}\mbox{on the set}\hspace{2mm}\{x:|z_i-\xi_{ij}|>\varepsilon\},
\end{aligned}
\right.
\end{equation}
for all $0<\varepsilon<1$ and where $0<\epsilon_0<1$ is a parameter. Furthermore, for $\bar{A}>1$,
\begin{equation}\label{zzz-6-1-1}
\widetilde{H}_{i}(x)\hat{s}_{j,2}\lesssim\big(|\xi_{ij}|+\bar{A}\big)^{-2}\hat{t}_{j,2}\hspace{4mm}\hspace{2mm}\mbox{and}\hspace{2mm}\mbox{on the set}\hspace{2mm}\{x:|z_i|\geq2|\xi_{ij}|+\bar{A}\},
\end{equation}
and
\begin{equation}\label{zzz-7-1-1}
\widetilde{H}_{i}(x)\hat{s}_{j,2}\lesssim
\left\lbrace
\begin{aligned}
&\varepsilon^{-\frac{2-\epsilon_0}{2}}\hat{t}_{i,1}+\varepsilon \hat{t}_{j,2},\hspace{6mm}\hspace{6mm}\hspace{8mm}\hspace{6mm}n=4\hspace{2mm}\mbox{and}\hspace{2mm}\mbox{on the set}\hspace{2mm}\{x:|z_i|\leq \mathscr{R}\},\\&
\tau(\xi_{ij})^{\frac{2(1-\epsilon_0)}{5-\epsilon_0}}\Big(\varepsilon^{-\frac{3-\epsilon_0}{2}}\hat{t}_{i,1}+\varepsilon \hat{t}_{j,2}\Big),\hspace{2mm}n=5\hspace{2mm}\mbox{and}\hspace{2mm}\mbox{on the set}\hspace{2mm}\{x:|z_i|\leq \mathscr{R}\},
\end{aligned}
\right.
\end{equation}
for all $0<\varepsilon<1$ and $0<\epsilon_0<\frac{(n-2)p-n}{p}$ is a parameter.
\end{lem}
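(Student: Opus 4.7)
The plan is to prove each of the bounds \eqref{zzz-0-1-1}--\eqref{zzz-7-1-1} in strict parallel with the corresponding statement of Lemma \ref{cll-1-0-1}, replacing $s_{i,l},t_{i,l}$ by the new weights $\hat{s}_{i,l},\hat{t}_{i,l}$ that are adapted to the range $\frac{n+\mu-2}{2}\le\mu<4$ via the exponent $\epsilon_0$ with $0<\epsilon_0<\tfrac{(n-2)p-n}{p}$. The uniform strategy is: on each of the regions cut out by $\{|z_i|\lessgtr\mathscr{R}\}$ and $\{|z_j|\lessgtr\sqrt{\lambda_j/\lambda_i}\,\mathscr{R}_{ij}/2\}$, first apply Lemma \ref{B3-0} to exchange decay in $\tau(z_j)$ for powers of $\mathscr{R}_{ij}$ and of the ratio $\lambda_j/\lambda_i$, then dominate the resulting product by a convex combination of $\hat{t}_{i,l}$ and $\hat{t}_{j,l}$ through H\"older--Young with exponents depending on $n,\mu,\epsilon_0$, and finally use the identification $\mathscr{R}_{ij}\sqrt{\lambda_i/\lambda_j}\approx\tau(\xi_{ij})$ to rewrite the constants as negative powers of $\tau(\xi_{ij})$.

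The two ``easy'' inequalities \eqref{zzz-0-1-1} and \eqref{zzz-2-1-1} come directly from factoring $\widetilde{H}_{j}\hat{s}_{i,1}$ and applying Lemma \ref{B3-0} with $\gamma_1=2$; the common prefactor $\mathscr{R}^{-(2n-4-\mu)}$ in both $\hat{s}_{i,1}$ and $\hat{t}_{j,1}$ makes the $\mathscr{R}$-powers line up automatically. For \eqref{zzz-1-1-1} I split into $\{|z_j|\le\mathscr{R}\}$, where Lemma \ref{B3-0} with $\gamma_1=n-2-\epsilon_0$ together with the constraint $\mu\ge(n+\mu-2)/2$ produces the bound $\mathscr{R}_{ij}^{-2}\hat{t}_{j,1}$, and $\{|z_i|,|z_j|\ge\mathscr{R}\}$, on which I select a H\"older pair $(\theta,1-\theta)$ so that $\widetilde{H}_{j}\hat{s}_{i,2}=\bigl(\hat{t}_{i,2}/\tau(z_i)^2\bigr)^{1-\theta}\bigl(\hat{t}_{j,2}/\tau(z_j)^2\bigr)^{\theta}$ times a ratio of $\lambda_i,\lambda_j$ that is absorbed by $\mathscr{R}$; the range of $\epsilon_0$ is precisely what guarantees $\theta\in(0,1)$.

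For \eqref{zzz-3-1-1} I divide into three sub-cases according to whether $|z_j|$ lies below $\mathscr{R}$, between $\mathscr{R}$ and $\sqrt{\lambda_j/\lambda_i}\,\mathscr{R}_{ij}/2$, or above this quantity: in the first Lemma \ref{B3-0} with $\gamma_1=n-2-\epsilon_0$ converts the product into $\tau(\xi_{ij})^{-\epsilon_0}\hat{t}_{i,1}$; in the second the identification $\mathscr{R}_{ij}\sqrt{\lambda_i/\lambda_j}\approx\tau(\xi_{ij})$ yields $\tau(\xi_{ij})^{-2}\hat{t}_{i,2}$; in the third a H\"older pair chosen so that the sum of exponents on $\hat{t}_{i,2}$ and $\hat{t}_{j,2}$ equals $1$ gives the remaining term. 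The pointwise estimates \eqref{zzz-4-1-1} and \eqref{zzz-6-1-1} follow by writing $\widetilde{H}_{i}\hat{s}_{j,2}=(\lambda_i/\lambda_j)^{2}\tau(z_j)^{2-\epsilon_0}\tau(z_i)^{-4}\hat{t}_{j,2}$ and bounding the ratio $\tau(z_j)^{2-\epsilon_0}/\tau(z_i)^4$ using, respectively, $|z_i-\xi_{ij}|\le\varepsilon$ and $|z_i|\ge2|\xi_{ij}|+\bar A$. Estimate \eqref{zzz-5-1-1} is obtained by first writing $\hat{s}_{j,2}\lesssim(\lambda_i/\lambda_j)^{(n-2-\epsilon_0)/2}(\tau(\xi_{ij})/\varepsilon)^{n-2-\epsilon_0}\hat{s}_{i,2}$ on $\{|z_i-\xi_{ij}|>\varepsilon\}$ and then bounding $\hat{s}_{i,2}$ by $\mathscr{R}_{ij}^{\alpha}(\hat{s}_{i,1}+\hat{s}_{i,2})$ with $\alpha$ chosen so the residual power of $\tau(\xi_{ij})$ equals $2-\epsilon_0$ when $n=4$ and $4-2\epsilon_0$ when $n=5$.

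The main obstacle is \eqref{zzz-7-1-1}, where on $\{|z_i|\le\mathscr{R}\}$ one must balance the $\epsilon_0$-weighted decay of $\hat{s}_{j,2}$ and $\hat{t}_{j,2}$ against the ``core'' weight $\hat{t}_{i,1}$. The plan is to set $p_3^{\star}=2/(n-\epsilon_0)$ and $q_3^{\star}=(n-2-\epsilon_0)/(n-\epsilon_0)$ and write
\[
\widetilde{H}_{i}(x)\hat{s}_{j,2}\lesssim\Bigl(\sqrt{\lambda_i/\lambda_j}\Bigr)^{\varrho}\mathscr{R}^{\varrho}\bigl(\hat{t}_{i,1}\bigr)^{p_3^{\star}}\bigl(\hat{t}_{j,2}\bigr)^{q_3^{\star}}
\]
for a dimensionally-determined exponent $\varrho=\varrho(n,\mu,\epsilon_0)$ that vanishes when $n=4$ and equals $\tfrac{2(1-\epsilon_0)}{5-\epsilon_0}$ when $n=5$. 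The identity $\mathscr{R}_{ij}\sqrt{\lambda_i/\lambda_j}\approx\tau(\xi_{ij})$ then converts the prefactor into a power of $\tau(\xi_{ij})$, and Young's inequality $A^{p_3^{\star}}B^{q_3^{\star}}\lesssim \varepsilon^{-q_3^{\star}/p_3^{\star}}A+\varepsilon B$ produces the announced bound. Checking that $\varrho\ge0$, that the H\"older exponents are in $(0,1)$, and that the $\epsilon_0$-dependent powers match the stated $\varepsilon^{-(2-\epsilon_0)/2}$ (resp.\ $\varepsilon^{-(3-\epsilon_0)/2}$) is where the argument is most delicate; the constraint $0<\epsilon_0<\tfrac{(n-2)p-n}{p}$ from Remark \ref{stxqi-1} is exactly what is needed to keep every exponent admissible.
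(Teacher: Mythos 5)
Your proposal is correct and follows essentially the route the paper intends: the paper in fact omits this proof entirely, deferring to the ``tedious argument similar to'' Lemma \ref{cll-1-0-1}, and your plan (region splitting via Lemma \ref{B3-0}, H\"older--Young with $\epsilon_0$-adapted exponents, and the identification $\mathscr{R}_{ij}\sqrt{\lambda_i/\lambda_j}\approx\tau(\xi_{ij})$) is exactly that adaptation, with the exponent bookkeeping in the delicate case \eqref{zzz-7-1-1} checking out as you state. One small caveat: in \eqref{zzz-5-1-1} for $n=5$ the transport of the full $\tau(z_j)^{-(n-2-\epsilon_0)}$ decay gives $\varepsilon^{\epsilon_0-3}$ rather than the stated $\varepsilon^{\epsilon_0-2}$ (the $\tau(\xi_{ij})^{4-2\epsilon_0}$ power is as you say), a discrepancy in the paper's statement rather than in your argument, and harmless for its later use since $\varepsilon=\bar A^{-1}$ there.
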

\begin{proof}
We conclude the proofs by some tedious argument similar to Lemma \ref{cll-1-0} and so we skip it.

\end{proof}

We denote the integral quantities in what follows:
\begin{equation}\label{hhstar-1}
\widehat{H}_{ji,1}(x)=:\frac{\lambda_i^{\frac{2n-\mu}{2}}\mathscr{R}^{2-n}}{\tau(z_{i})^{2p}},
\hspace{2mm}\widehat{H}_{ji,2}(x)=:\frac{\lambda_i^{\frac{2n-\mu}{2}}\mathscr{R}^{-\frac{n-\mu+2}{2}}}{\tau(z_{i})^{(n+\mu-2)p/2}},
\hspace{2mm}\widehat{K}_{j}(x):=		\frac{\lambda_j^{\frac{n-\mu+2}{2}}}{\tau(z_{j})^{n-\mu+2}}.
\end{equation}
\begin{lem}\label{cll-2}
For each $i\neq j$, let $\lambda_i\leq\lambda_j$.  If $n\geq6-\mu$ and  $0<\mu<\frac{n+\mu-2}{2}$, we have that
\begin{equation}\label{zzzz-0}
\Big(|x|^{-\mu}\ast\widehat{H}_{ji,1}(y)\Big)\widehat{K}_j(x)\lesssim
\left\lbrace
\begin{aligned}
&\mathscr{R}_{ij}^{-\min\{\mu,(5+\mu)/3\}}t_{j,1}+\mathscr{R}^{-\mu}\big(t_{i,1}+t_{j,2}\big),\hspace{1mm}\hspace{2mm}n=5\hspace{2mm}\mbox{and}\hspace{2mm}1\leq\mu<3,
\\&\mathscr{R}_{ij}^{-(2p-n+\mu-\theta_1)}t_{j,1}+\mathscr{R}^{-\mu}\big(t_{i,1}+t_{j,2}\big),\hspace{4.5mm}\hspace{2mm}n=6\hspace{2mm}\mbox{and}\hspace{2mm}0<\mu<4,
\\&
\mathscr{R}_{ij}^{-(2p-n+\mu-\theta_2)}t_{j,1}+\mathscr{R}^{-\mu}\big(t_{i,1}+t_{j,2}\big),\hspace{4.5mm}\hspace{2mm}n=7\hspace{2mm}\mbox{and}\hspace{2mm}\frac{7}{3}<\mu\leq4.
\end{aligned}
\right.
\end{equation}
\begin{equation}\label{zzzz-1}
\Big(|x|^{-\mu}\ast\widehat{H}_{ji,2}(y)\Big)\widehat{K}_j(x)\lesssim
\mathscr{R}_{ij}^{-(6+\mu-n)/2}t_{j,1}+\mathscr{R}^{-\frac{n+2-\mu}{2}}\big(t_{i,2}+t_{j,2}\big),
\end{equation}
\begin{equation}\label{zzzz-2}
\Big(|x|^{-\mu}\ast\widehat{H}_{ij,1}(y)\Big)\widehat{K}_i(x)\lesssim
\left\lbrace
\begin{aligned}
&
\mathscr{R}_{ij}^{-(n-2-\mu+\min\{\mu,(5+\mu)/3\})}t_{j,1},\hspace{5mm}\hspace{2mm}n=5\hspace{2mm}\mbox{and}\hspace{2mm}1\leq\mu<3,
\\&
\mathscr{R}_{ij}^{-(2p-2-\theta_1)}t_{j,1},\hspace{11mm}\hspace{7.5mm}\hspace{8mm}\hspace{2mm}n=6\hspace{2mm}\mbox{and}\hspace{2mm}0<\mu<4,
\\&
\mathscr{R}_{ij}^{-(2p-2-\theta_2)}t_{j,1},\hspace{11mm}\hspace{7.5mm}\hspace{8mm}\hspace{2mm}n=7\hspace{2mm}\mbox{and}\hspace{2mm}\frac{7}{3}<\mu\leq4.
\end{aligned}
\right.
\end{equation}
\begin{equation}\label{zzzz-3}
\Big(|x|^{-\mu}\ast\widehat{H}_{ij,2}(y)\Big)\widehat{K}_i(x)\lesssim
\tau(\xi_{ij})^{-(n-\mu+2)/2}\big(t_{i,1}+t_{j,2}\big)+
\tau(\xi_{ij})^{-\varpi^{\star}_1}\big(t_{i,2}+t_{j,2}\big),
\end{equation}
where
$$\varpi^{\star}_1=\frac{(n-\mu+2)(\bar{\eta}+\mu)}{2(n+2-\bar{\eta})}\hspace{2mm}\mbox{with}\hspace{2mm}0<\bar{\eta}<\mu,$$
and that
\begin{equation}\label{zzzz-4}
\Big(|x|^{-\mu}\ast\widehat{H}_{ij,2}(y)\Big)\widehat{K}_i(x)\lesssim \Big((\frac{\lambda_{i}}{\lambda_j})^2+\varepsilon^2\Big)^{\frac{n-\mu+2}{4}}t_{j,2},
\hspace{2mm}\mbox{on the set}\hspace{2mm}\{x:|z_i-\xi_{ij}|\leq\varepsilon\},
\end{equation}
for all $0<\varepsilon<1$. Moreover, we have that for\hspace{2mm} $\widetilde{A}>1$
\begin{equation}\label{zzzz-5}
\Big(|x|^{-\mu}\ast\widehat{H}_{ij,2}(y)\Big)\widehat{K}_i(x)\lesssim\Big(|\xi_{ij}|+\widetilde{A}\Big)^{-(n-\mu+2)/2}t_{j,2}
,\hspace{2mm}\mbox{on the set}\hspace{2mm}\{x:|z_i|\geq2|\xi_{ij}|+\widetilde{A}\},
\end{equation}
and that
\begin{equation}\label{zzzz-6}
\Big(|x|^{-\mu}\ast\widehat{H}_{ij,2}(y)\Big)\widehat{K}_i(x)\lesssim
\tau(\xi_{ij})^{\zeta_1^{\star}}\Big(\varepsilon^{-\frac{2\mu}{n-\mu+2}}t_{i,1}+\varepsilon t_{j,2}\Big)\hspace{2mm}\mbox{with}\hspace{2mm}\zeta_1^{\star}=\frac{(n-\mu+2)\mu}{n+\mu+2}
\end{equation}
for $x\in\{x:|z_i|\leq \mathscr{R}\}$ and for all $0<\varepsilon<1$.
\end{lem}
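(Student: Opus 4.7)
The strategy is to reduce each of the six estimates to a two-step procedure. First, I would replace the convolutions $|x|^{-\mu}\ast\widehat{H}_{ji,\ell}$ and $|x|^{-\mu}\ast\widehat{H}_{ij,\ell}$ by purely pointwise, one-bubble expressions by invoking the convolution bounds in Lemma \ref{B4}; second, I would bound the resulting product of two single-bubble factors against the weights $t_{i,\ell}$, $t_{j,\ell}$ by means of the two-bubble interaction lemma \ref{B3-0}, exactly mirroring the case analysis carried out in Lemma \ref{cll-1-0-1}.

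For the first step, observe that $(2n-\mu)/2=n-\mu/2$, so $\widehat{H}_{ji,1}=\mathscr{R}^{2-n}\cdot\lambda_i^{n-\mu/2}/\tau(z_i)^{2p}$ and $\widehat{H}_{ji,2}=\mathscr{R}^{-(n-\mu+2)/2}\cdot\lambda_i^{n-\mu/2}/\tau(z_i)^{(n+\mu-2)p/2}$. The bounds for $\mathscr{J}_\mu^{(1)}$ and $\mathscr{J}_\mu^{(4)}$ in Lemma \ref{B4} therefore yield
\[
|x|^{-\mu}\ast\widehat{H}_{ji,1}\;\lesssim\;\mathscr{R}^{2-n}\,\frac{\lambda_i^{\mu/2}}{\tau(z_i)^{\beta(n,\mu)}},\qquad |x|^{-\mu}\ast\widehat{H}_{ji,2}\;\lesssim\;\mathscr{R}^{-(n-\mu+2)/2}\,\frac{\lambda_i^{\mu/2}}{\tau(z_i)^{\mu}},
\]
with $\beta(n,\mu)=\min\{\mu,(5+\mu)/3\}$ for $n=5$, and $\beta(n,\mu)=2p-n+\mu-\theta_\ell$ for $n=6,7$. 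Multiplying by $\widehat{K}_j=\lambda_j^{(n-\mu+2)/2}/\tau(z_j)^{n-\mu+2}$, each of \eqref{zzzz-0}--\eqref{zzzz-6} then reduces to bounding a product $\tau(z_i)^{-\alpha}\tau(z_j)^{-(n-\mu+2)}$ with explicit $\lambda$- and $\mathscr{R}$-prefactors.

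For the second step, I plan to split on whether $|z_j|\lessgtr\sqrt{\lambda_j/\lambda_i}\,\mathscr{R}_{ij}/2$ (or the symmetric split in $z_i$ for \eqref{zzzz-2}--\eqref{zzzz-3}) and apply Lemma \ref{B3-0} with $\alpha_1=\beta(n,\mu)$ or $\mu$ and $\beta_1=n-\mu+2$. In the inner region this directly produces the $t_{j,1}$ contribution with the claimed $\mathscr{R}_{ij}$-power; in the outer region, when $|z_i|\le\mathscr{R}$ one recovers $t_{i,1}$, and when $|z_i|,|z_j|\ge\mathscr{R}$ a H\"older interpolation (with free exponents whose choice is dictated by the requirement that the $\mathscr{R}$-power match $(n-\mu+2)/2$) gives $t_{i,2}+t_{j,2}$. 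The identity $\mathscr{R}_{ij}\sqrt{\lambda_i/\lambda_j}\approx\tau(\xi_{ij})$ converts $\mathscr{R}_{ij}$-powers into $\tau(\xi_{ij})$-powers for the $ij$-symmetric estimates, and the auxiliary parameter $\bar\eta$ in \eqref{zzzz-3} appears precisely as the free interpolation exponent in that outer H\"older step.

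The localized bounds \eqref{zzzz-4}, \eqref{zzzz-5}, \eqref{zzzz-6} do not require Lemma \ref{B3-0}: on $\{|z_i-\xi_{ij}|\le\varepsilon\}$ the inequality $\tau(z_j)\lesssim\bigl((\lambda_i/\lambda_j)^2+\varepsilon^2\bigr)^{1/2}\tau(\xi_{ij})$, combined with the first-step pointwise bound on the convolution, yields \eqref{zzzz-4}; on $\{|z_i|\ge 2|\xi_{ij}|+\widetilde A\}$ the estimate $\tau(z_j)\le 2(\lambda_j/\lambda_i)\tau(z_i)$ gives \eqref{zzzz-5}; and on the core $\{|z_i|\le\mathscr{R}\}$ a H\"older split of the product $\tau(z_i)^{-\mu}\tau(z_j)^{-(n-\mu+2)}$ between $t_{i,1}$ and $t_{j,2}$ with a free parameter $\varepsilon$ produces \eqref{zzzz-6}, the exponent $\zeta_1^\star=(n-\mu+2)\mu/(n+\mu+2)$ arising from optimizing the $(\lambda_i/\lambda_j)$ ratios. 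The main obstacle throughout is the bookkeeping of the H\"older exponents in the outer-region pieces of \eqref{zzzz-0}--\eqref{zzzz-3}: they must be tuned so that the resulting $\mathscr{R}$-power is exactly $(n-\mu+2)/2$ on the $t_{i,2}+t_{j,2}$ term; this is where the hypothesis $0<\mu<(n+\mu-2)/2$ together with $n\ge 6-\mu$ is used, as it guarantees that the intermediate exponent lies between $4$ and $(n+\mu+2)/2$, making the interpolation feasible.
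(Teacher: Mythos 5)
Your plan follows essentially the same route as the paper's proof: the convolution estimates of Lemma \ref{B4} (the $\mathscr{J}_{\mu}^{(1)}$ and $\mathscr{J}_{\mu}^{(4)}$ bounds) first turn $\big(|x|^{-\mu}\ast\widehat{H}_{\cdot,\ell}\big)\widehat{K}_{\cdot}$ into a pointwise two-bubble product, and then the region splits of Lemma \ref{B3-0} combined with Young/H\"older interpolation (with $\bar{\eta}$ as the free exponent in the outer region of \eqref{zzzz-3}, and $\mathscr{R}_{ij}\sqrt{\lambda_i/\lambda_j}\approx\tau(\xi_{ij})$ converting $\mathscr{R}_{ij}$-powers into $\tau(\xi_{ij})$-powers) reproduce the paper's case analysis almost verbatim. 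Only execution-level details need correcting: on $\{|z_i-\xi_{ij}|\leq\varepsilon\}$ the valid bound is $\tau(z_j)\leq(\lambda_j/\lambda_i)\bigl((\lambda_i/\lambda_j)^2+\varepsilon^2\bigr)^{1/2}$ rather than your $\tau(z_j)\lesssim\bigl((\lambda_i/\lambda_j)^2+\varepsilon^2\bigr)^{1/2}\tau(\xi_{ij})$, which fails for bubble towers where $\tau(\xi_{ij})\approx1$ (the factor $(\lambda_i/\lambda_j)^{(n-\mu+2)/2}$ already produced by $\widehat{K}_i$ absorbs the ratio, as in the paper); moreover in \eqref{zzzz-6} the product to split is $\tau(z_j)^{-\mu}\tau(z_i)^{-(n-\mu+2)}$ (indices swapped in your sketch), and the outer-region prefactor to be matched is $\mathscr{R}^{-\mu}$ in \eqref{zzzz-0}, not $(n-\mu+2)/2$ across the board.
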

\begin{proof}
Recalling that $\widehat{H}_{ji,1}$, $\widehat{H}_{ji,2}$ and $\widehat{K}_j$.
Then, due to Lemmas \ref{B4}-\ref{B4-1}, we obtain
\begin{equation}\label{fai-1}
\Big(|x|^{-\mu}\ast\widehat{H}_{ji,1}(y)\Big)\widehat{K}_j(x)\lesssim
\frac{1}{\mathscr{R}^{n-2}}\widehat{F}_{ji,1}(x)
\end{equation}
with
\begin{equation*}
\widehat{F}_{ji,1}(x)=
\left\lbrace
\begin{aligned}
& \frac{\lambda_{i}^{\mu/2}}{\tau( z_i)^{\min\{\mu,(5+\mu)/3\}}}\frac{\lambda_j^{(n-\mu+2)/2}}{\tau( z_{j})^{n-\mu+2}},\hspace{4mm}\hspace{1mm}\hspace{2mm}\hspace{2mm}n=5\hspace{2mm}\mbox{and}\hspace{2mm}1\leq\mu<3,\\
& \frac{\lambda_{i}^{\mu/2}}{\tau( z_i)^{2p-n+\mu-\theta_1}}\frac{\lambda_j^{(n-\mu+2)/2}}{\tau( z_{j})^{n-\mu+2}},\hspace{3mm}\hspace{4mm}\hspace{3mm}\hspace{2mm}\hspace{2mm}n=6\hspace{2mm}\mbox{and}\hspace{2mm}0<\mu<4,\\
& \frac{\lambda_{i}^{\mu/2}}{\tau( z_i)^{2p-n+\mu-\theta_2}}\frac{\lambda_j^{(n-\mu+2)/2}}{\tau( z_{j})^{n-\mu+2}},\hspace{3mm}\hspace{4mm}\hspace{3mm}\hspace{2mm}\hspace{2mm}n=7\hspace{2mm}\mbox{and}\hspace{2mm}\frac{7}{3}<\mu\leq4,
	\end{aligned}
\right.
\end{equation*}
where $\theta_1,\theta_2$ are defined in \eqref{ceta} and
\begin{equation}\label{fai-2}
\Big(|x|^{-\mu}\ast\widehat{H}_{ji,2}\Big)\widehat{K}_j(x)\lesssim
\frac{1}{\mathscr{R}^{(n-\mu+2)/2}}\widehat{\mathscr{F}}_{ji,2}(x)
\hspace{2mm}\mbox{with}\hspace{2mm}
\widehat{\mathscr{F}}_{ji,2}(x)=\frac{\lambda_{i}^{\mu/2}}{\tau( z_i)^{\mu}}\frac{\lambda_j^{(n-\mu+2)/2}}{\tau(z_{j})^{n-\mu+2}}.
\end{equation}
$\bullet$
To prove \eqref{zzzz-0}, we distinguish two cases, depending on $n$ and $\mu$.

Case 1. On the set $\{x: |z_j|\leq\mathscr{R}\}$.
A direct computation, due to Lemma \ref{B3-0}, it follows that
\begin{equation*}
\left\lbrace
\begin{aligned}
&\Big(|x|^{-\mu}\ast\widehat{H}_{ji,1}(y)\Big)\widehat{K}_j(x)\lesssim
\mathscr{R}_{ij}^{-\min\{\mu,(5+\mu)/3\}}t_{j,1},\hspace{2mm}\hspace{2mm}n=5\hspace{2mm}\mbox{and}\hspace{2mm}1\leq\mu<3,
\\&\Big(|x|^{-\mu}\ast\widehat{H}_{ji,1}(y)\Big)\widehat{K}_j(x)\lesssim
\mathscr{R}_{ij}^{-(2p-n+\mu-\theta_1)}t_{j,1},\hspace{5mm}\hspace{2mm}n=6\hspace{2mm}\mbox{and}\hspace{2mm}0<\mu<4,
\\&
\Big(|x|^{-\mu}\ast\widehat{H}_{ji,1}(y)\Big)\widehat{K}_j(x)\lesssim
\mathscr{R}_{ij}^{-(2p-n+\mu-\theta_2)}t_{j,1},\hspace{5mm}\hspace{2mm}n=7\hspace{2mm}\mbox{and}\hspace{2mm}\frac{7}{3}<\mu\leq4.
\end{aligned}
\right.
\end{equation*}
Case 2. On the set $\{x: |z_i|\leq\mathscr{R},|z_j|\geq\mathscr{R}\}$. Exploiting Young's inequality with exponents $s^{\star}_1=\frac{n-\mu+2}{n+2}$ and $t^{\star}_1=\frac{\mu}{n+2}$ we have, for $n=5$ and $1\leq\mu\leq\frac{5}{2}$
\begin{equation*}
\begin{split}
\Big(|x|^{-\mu}\ast\widehat{H}_{ji,1}(y)\Big)\widehat{K}_j(x)&\lesssim
\frac{1}{\mathscr{R}^{(n-\mu+2)s^{\star}_1/2+(n-2)t^{\star}_1}}
\frac{1}{\tau(z_j)^{l_1}}\frac{1}{\tau(z_j)^{l_2}}(\frac{t_{j,2}}{\tau( z_j)^{\mu}})^{s^{\star}_1}(t_{i,1})^{t^\star_1}\\&
\lesssim\frac{1}{\mathscr{R}^{(n-\mu+2)s^{\star}_1/2+(n-2)t^{\star}_1}}\big(\varepsilon^{-1}
\frac{t_{j,2}}{\tau( z_j)^{\mu}}+\varepsilon^{\frac{s^{\star}_1}{t^{\star}_1}}t_{i,1}\big)\lesssim\frac{t_{j,2}}{\mathscr{R}^{\mu}}+\frac{t_{i,1}}{\mathscr{R}^{\mu}},
\end{split}
\end{equation*}
where we choosing $l_{1}$, $l_2$ and $\varepsilon$ satisfying
$$
l_1=\frac{n-3\mu+2}{2}s^{\star}_1>0,\hspace{2mm}l_2=\mu-4s^{\star}_1>0,\hspace{2mm}\varepsilon=\mathscr{R}^{-[(n-\mu+2)s^{\star}_1/2+(n-2)t^{\star}_1]}.
$$
Similarly, we obtain, for $n=5$ and $\frac{5}{2}<\mu<3$, $n=6\hspace{2mm}\mbox{and}\hspace{2mm}0<\mu<4$, $n=7\hspace{2mm}\mbox{and}\hspace{2mm}\frac{7}{3}<\mu\leq4$,
\begin{equation*}
\begin{split}
\Big(|x|^{-\mu}\ast\widehat{H}_{ji,1}(y)\Big)\widehat{K}_j(x)&\lesssim
\frac{1}{\mathscr{R}^{(n-\mu+2)s^{\star}_1/2+(n-2)t^{\star}_1}}
\frac{1}{\tau(z_j\rangle^{l_1})}\frac{1}{\tau( z_j)^{l_2}}(\frac{t_{j,2}}{\tau( z_j)^{\mu}})^{s^{\star}_1}(t_{i,1})^{t^\star_1}\\&
\lesssim\frac{1}{\mathscr{R}^{(n-\mu+2)s^{\star}_1/2+(n-2)t^{\star}_1}}\big(\varepsilon^{-1}
\frac{t_{j,2}}{\tau( z_j)^{\mu}}+\varepsilon^{\frac{s^{\star}_1}{t^{\star}_1}}t_{i,1}\big)\lesssim\frac{t_{j,2}}{\mathscr{R}^{\mu}}+\frac{v_{i,1}}{\mathscr{R}^{\mu}},
\end{split}
\end{equation*}
%\textcolor{red}{For $n=6\hspace{2mm}\mbox{and}\hspace{2mm}0<\mu<4$},
%\begin{equation*}
%\begin{split}
%\Big(|x|^{-\mu}\ast\widehat{H}_{ji,1}(y)\Big)\widehat{K}_j(x)&\lesssim
%\frac{1}{\mathscr{R}^{(n-\mu+2)s^{\star}_1/2+(n-2)t^{\star}_1}}
%\frac{1}{\tau(z_j)^{l_1}}\frac{1}{\tau(z_j)^{l_2}}(\frac{t_{j,2}}{\tau( z_j)^{\mu}})^{s^{\star}_1}(t_{i,1})^{t^\star_1}\\&
%\lesssim\frac{1}{\mathscr{R}^{(n-\mu+2)s^{\star}_1/2+(n-2)t^{\star}_1}}\big(\varepsilon^{-1}
%\frac{t_{j,2}}{\tau( z_j)^{\mu}}+\varepsilon^{\frac{s^{\star}_1}{t^{\star}_1}}t_{i,1}\big)\lesssim\frac{t_{j,2}}{\mathscr{R}^{\mu}}+\frac{t_{i,1}}{\mathscr{R}^{\mu}},
%\end{split}
%\end{equation*}
%\textcolor{red}{For $\hspace{2mm}n=7\hspace{2mm}\mbox{and}\hspace{2mm}\frac{7}{3}<\mu\leq4$},
%\begin{equation*}
%\begin{split}
%\Big(|x|^{-\mu}\ast\widehat{H}_{ji,1}(y)\Big)\widehat{K}_j(x)&\lesssim
%\frac{1}{\mathscr{R}^{(n-\mu+2)s^{\star}_1/2+(n-2)t^{\star}_1}}
%\frac{1}{\tau(z_j)^{l_1}}\frac{1}{\tau(z_j)^{l_2}}(\frac{t_{j,2}}{\tau( z_j)^{\mu}})^{s^{\star}_1}(t_{i,1})^{t^\star_1}\\&
%\lesssim\frac{1}{\mathscr{R}^{(n-\mu+2)s^{\star}_1/2+(n-2)t^{\star}_1}}\big(\varepsilon^{-1}
%\frac{t_{j,2}}{\tau( z_j)^{\mu}}+\varepsilon^{\frac{s^{\star}_1}{t^{\star}_1}}t_{i,1}\big)\lesssim\frac{t_{j,2}}{\mathscr{R}^{\mu}}+\frac{t_{i,1}}{\mathscr{R}^{\mu}},
%\end{split}
%\end{equation*}

To prove \eqref{zzzz-1}, we consider two cases.

Case 1. On the set $\{x: |z_j|\leq\mathscr{R}\}$.

Observe now that, since Lemma \ref{B3-0} and \eqref{fai-1}, we have
\begin{equation*}
\begin{split}
\Big(|x|^{-\mu}\ast\widehat{H}_{ji,2}\Big)\widehat{K}_j(x)\lesssim\frac{1}{\mathscr{R}^{(n-\mu+2)/2}}\frac{\lambda_{i}^{\mu/2}}{\tau( z_i)^{\mu}}\frac{\lambda_j^{(n-\mu+2)/2}}{\tau(z_{j})^{n-\mu+2}}
\lesssim\frac{1}{\mathscr{R}_{ij}^{(6+\mu-n)/2}}t_{j,1}.
\end{split}
\end{equation*}

Case 2. On the set $\{x: |z_i|\geq\mathscr{R},\hspace{2mm}|z_j|\geq\mathscr{R}\}$.

Then using Young's inequality with the exponents $\theta^{\star}=\frac{\mu}{n+2}$ and $1-\theta^{\star}=\frac{n-\mu+2}{\mu+2}$ we have
\begin{equation*}
\begin{split}
\Big(|x|^{-\mu}\ast\widehat{H}_{ji,2}\Big)\widehat{K}_j(x)&\lesssim\frac{1}{\mathscr{R}^{(n-\mu+2)/2}}\frac{\lambda_{i}^{\mu/2}}{\tau( z_i)^{\mu}}\frac{\lambda_j^{(n-\mu+2)/2}}{\tau( z_{j})^{n-\mu+2}}=\Big(\frac{t_{i,2}}{\tau( z_i)^{\frac{n+2-\mu}{2}}}\Big)^{\theta^{\star}}\Big(\frac{t_{j,2}}{\tau( z_j)^{\frac{n+2-\mu}{2}}}\Big)^{1-\theta^{\star}}\\&
\lesssim\tau(z_i)^{-\frac{n+2-\mu}{2}}t_{i,2}+\tau( z_j)^{-\frac{n+2-\mu}{2}}t_{j,2}
\lesssim\frac{1}{\mathscr{R}^\frac{n+2-\mu}{2}}t_{i,2}+\frac{1}{\mathscr{R}^\frac{n+2-\mu}{2}}t_{j,2}.
\end{split}
\end{equation*}

$\bullet$
To prove \eqref{zzzz-2}. On the set $\{x: \hspace{2mm}|z_j|\leq\mathscr{R}\}$, noticing that
 $$2p-2-\theta_1>0,\hspace{2mm}2p-2-\theta_2>0,\hspace{2mm}n-2-\mu+\min\{\mu,(5+\mu)/3\}>0$$
by \eqref{ceta}. Therefore, thanks to Lemma \ref{B3-0}, $\mathscr{R}\leq\mathscr{R}_{ij}/2$ and \eqref{fai-1}, we obtain
\begin{equation*}
\left\lbrace
\begin{aligned}
&\Big(|x|^{-\mu}\ast\widehat{H}_{ij,1}(y)\Big)\widehat{K}_i(x)\lesssim
\mathscr{R}_{ij}^{-(n-2-\mu+\min\{\mu,(5+\mu)/3\})}t_{j,1},\hspace{2mm}\hspace{2mm}n=5\hspace{2mm}\mbox{and}\hspace{2mm}1\leq\mu<3,
\\&
\Big(|x|^{-\mu}\ast\widehat{H}_{ij,1}(y)\Big)\widehat{K}_i(x)\lesssim
\mathscr{R}_{ij}^{-(2p-2-\theta_1)}t_{j,1},\hspace{9mm}\hspace{6mm}\hspace{9mm}\hspace{2mm}n=6\hspace{2mm}\mbox{and}\hspace{2mm}0<\mu<4,
\\&
\Big(|x|^{-\mu}\ast\widehat{H}_{ij,1}(y)\Big)\widehat{K}_i(x)\lesssim
\mathscr{R}_{ij}^{-(2p-2-\theta_2)}t_{j,1},\hspace{9mm}\hspace{6mm}\hspace{9mm}\hspace{2mm}n=7\hspace{2mm}\mbox{and}\hspace{2mm}\frac{7}{3}<\mu\leq4.
\end{aligned}
\right.
\end{equation*}
$\bullet$
To prove \eqref{zzzz-3}. We split the proof in three cases.

Case 1. On the set $\{x: \mathscr{R}\leq|z_j|\leq\sqrt{\lambda_j/\lambda_i}\mathscr{R}_{ij}/2\}$.
By Lemma \ref{B3-0}, \eqref{fai-2} and $\mathscr{R}_{ij}\sqrt{\lambda_{i}/\lambda_j}\approx\tau(\xi_{ij})$, we get
\begin{equation*}
\begin{split}
\Big(|x|^{-\mu}\ast\widehat{H}_{ij,2}(y)\Big)\widehat{K}_i(x)
&\lesssim\frac{1}{\mathscr{R}^{(n-\mu+2)/2}}\frac{\lambda_j^{\mu/2}}{\tau( z_{j})^\mu}\frac{\lambda_i^{(n-\mu+2)/2}}{\tau(z_i)^{n-\mu+2}}\\&
\lesssim\Big(\frac{\lambda_j}{\lambda_i}\Big)^{(n-\mu+2)/4}\Big(\frac{1}{\mathscr{R}_{ij}}\Big)^{(n-\mu+2)/2}
\frac{\lambda_{j}^{(n+2)/2}\mathscr{R}^{(n-\mu+2)/2}}{\tau( z_j)^{(n+\mu+2)/2}}\lesssim\frac{1}{\tau(\xi_{ij})^{(n-\mu+2)/2}}t_{j,2}.
\end{split}
\end{equation*}

Case 2. On the set $\{x: |z_j|\geq\sqrt{\lambda_j/\lambda_i}\mathscr{R}_{ij}/2,\hspace{2mm}|z_j|\leq\mathscr{R}\}$.
Observe now that, due to Lemma \ref{B3-0}, for some dimensional $n<6+\mu$ we have
\begin{equation*}
\begin{split}
\Big(|x|^{-\mu}\ast\widehat{H}_{ij,2}(y)\Big)\widehat{K}_i(x)
&\lesssim\frac{1}{\mathscr{R}^{(n-\mu+2)/2}}\frac{\lambda_j^{\mu/2}}{\tau( z_{j})^\mu}\frac{\lambda_i^{(n-\mu+2)/2}}{\tau(z_i)^{n-\mu+2}}\\&
\lesssim\mathscr{R}^{\frac{n+\mu-6}{2}}\frac{1}{\mathscr{R}_{ij}^{n-2}}\Big(\frac{\lambda_j}{\lambda_i}\Big)^{(n-\mu-2)/2}t_{i,1}\lesssim\frac{1}{\tau( \xi_{ij})^{(n-\mu+2)/2}}t_{i,1}.
\end{split}
\end{equation*}

Case 3. On the set $\{x: |z_j|\geq\sqrt{\lambda_j/\lambda_i}\mathscr{R}_{ij}/2,\hspace{2mm}|z_j|\geq\mathscr{R}\}$.
First note that,
$$
\mathscr{R}_{ij}^{-\frac{(n+\mu+2)(\bar{\eta}+\mu)}{2(n+2-\bar{\eta})}}\Big(\frac{\lambda_j}{\lambda_i}\Big)^{\frac{(n-\mu+2)}{2(n+2-\bar{\eta})}-\frac{(n-\mu+2)(\bar{\eta}+\mu)}{4(n+2-\bar{\eta})}}
\lesssim\Big(\mathscr{R}_{ij}\sqrt{\lambda_i/\lambda_j}\Big)^{-\frac{(n-\mu+2)(\bar{\eta}+\mu)}{2(n+2-\bar{\eta})}},
$$
where $0<\bar{\eta}<\mu$ is small. Then by Young's inequality impies
\begin{equation*}
\begin{split}
\Big(|x|^{-\mu}\ast\widehat{H}_{ij,2}(y)\Big)\widehat{K}_i(x)
&\lesssim\frac{1}{\mathscr{R}^{(n-\mu+2)/2}}\frac{\lambda_j^{\mu/2}}{\tau( z_{j})^\mu}\frac{\lambda_i^{(n-\mu+2)/2}}{\tau( z_i)^{n-\mu+2}}\lesssim\Big(\frac{\lambda_j}{\lambda_i}\Big)^{\varpi^{\star}_2}\frac{1}{\tau( z_j)^{\varpi^{\star}_1}}\big(t_{i,2}\big)^{s_2^{\star}}\big(t_{j,2}\big)^{t_2^{\star}}\\&
\lesssim\Big(\frac{\lambda_j}{\lambda_i}\Big)^{\varpi^{\star}_2}\frac{1}{\tau( z_j)^{\varpi^{\star}_1}}\big(t_{i,2}+t_{j,2}\big)\leq
\frac{1}{(\xi_{ij})^{\varpi^{\star}_1}}\big(t_{i,2}+t_{j,2}\big).
\end{split}
\end{equation*}
where $\varpi^{\star}_1$, $\varpi^{\star}_2$, $s_2^{\star}$ and $t_2^{\star}$ are given by
$$\varpi^{\star}_1=\frac{(n-\mu+2)(\bar{\eta}+\mu)}{2(n+2-\bar{\eta})},\hspace{2mm}\varpi^{\star}_2=\frac{\bar{\eta}(n-\mu+2)}{2(n+2-\bar{\eta})},\hspace{2mm}s_2^{\star}=\frac{n+2-\mu}{n+2-\bar{\eta}},\hspace{2mm}\hspace{2mm}t_2^{\star}=\frac{\mu-\bar{\eta}}{n+2-\bar{\eta}}.$$
$\bullet$
To prove \eqref{zzzz-4}, for $x\in\{x:|z_i-\xi_{ij}|\leq\varepsilon\}$, we first note that
$$\tau( z_j)^{\frac{n-\mu+2}{2}}\leq\Big(1+(\lambda_j/\lambda_i)^2\varepsilon^2\Big)^{\frac{n-\mu+2}{4}}.$$
Therefore, simple computations give,
\begin{equation*}
\begin{split}
\Big(|x|^{-\mu}\ast\widehat{H}_{ij,2}(y)\Big)\widehat{K}_i(x)
\lesssim\frac{1}{\mathscr{R}^{(n-\mu+2)/2}}\frac{\lambda_j^{\mu/2}}{\tau( z_{j})^\mu}\frac{\lambda_i^{(n-\mu+2)/2}}{\tau( z_i)^{n-\mu+2}}&=\Big(\frac{\lambda_{i}}{\lambda_j}\Big)^{\frac{n-\mu+2}{2}}\frac{\tau( z_j)^{(n-\mu+2)/2}}{\tau(z_i)^{n-\mu+2}}t_{j,2}\\&
\lesssim\Big((\frac{\lambda_{i}}{\lambda_j})^2+\varepsilon^2\Big)^{\frac{n-\mu+2}{4}}t_{j,2}.
\end{split}
\end{equation*}
$\bullet$
To prove \eqref{zzzz-5},
for $x\in\{x:|z_i|>2|\xi_{ij}|+\tilde{A}\}$, since
$$\tau( z_j)^\frac{n-\mu+2}{2}\leq2^{\frac{n-\mu+2}{4}}(\lambda_j/\lambda_i)^\frac{n-\mu+2}{2}|z_i|^\frac{n-\mu+2}{2},$$
so that
\begin{equation*}
\begin{split}
\Big(|x|^{-\mu}\ast\widehat{H}_{ij,2}(y)\Big)\widehat{K}_i(x)
\lesssim\frac{1}{\mathscr{R}^{(n-\mu+2)/2}}\frac{\lambda_j^{\mu/2}}{\tau( z_{j})^\mu}\frac{\lambda_i^{(n-\mu+2)/2}}{\tau( z_i)^{n-\mu+2}}&=\Big(\frac{\lambda_{i}}{\lambda_j}\Big)^{\frac{n-\mu+2}{2}}\frac{\tau( z_j)^{(n-\mu+2)/2}}{\tau(z_i)^{n-\mu+2}}t_{j,2}\\&
\lesssim\Big(|\xi_{ij}|+\tilde{A}\Big)^{-(n-\mu+2)/2}t_{j,2}.
\end{split}
\end{equation*}
$\bullet$
To prove \eqref{zzzz-6}. On the set $\{x:|z_i|\leq \mathscr{R}\}$, for $n<6+\mu$ which gives
$$\frac{n-\mu+2}{2}\geq\frac{(n-2)(n-\mu+2)}{n+\mu+2}$$
and introducing
$$ s_{3}^{\star}=\frac{n-\mu+2}{n+\mu+2},\hspace{2mm}t_{3}^{\star}=\frac{2\mu}{n+\mu+2}.$$
Then Young's inequality implies
\begin{equation*}
\begin{split}
\Big(|x|^{-\mu}\ast\widehat{H}_{ij,2}(y)\Big)\widehat{K}_i(x)
&\lesssim\frac{1}{\mathscr{R}^{(n-\mu+2)/2}}\frac{\lambda_j^{\mu/2}}{\tau( z_{j})^\mu}\frac{\lambda_i^{(n-\mu+2)/2}}{\tau( z_i)^{n-\mu+2}}\\&\lesssim\Big(\frac{\lambda_{i}}{\lambda_j}\Big)^{\frac{1}{2}\zeta_1^{\star}} \mathscr{R}^{\zeta_1^{\star}}\big(t_{i,1})^{s_{3}^{\star}}\big(t_{j,2})^{t_{3}^{\star}}
\lesssim\tau(\xi_{ij})^{\zeta_1^{\star}}\Big(\varepsilon^{-\frac{2\mu}{n-\mu+2}}t_{i,1}+\varepsilon t_{j,2}\Big)
\end{split}
\end{equation*}
with $\zeta_1^{\star}$ is given by
$\zeta_1^{\star}=\frac{(n-\mu+2)}{n+\mu+2}$ and we conclude the proof.
\end{proof}
\begin{lem}\label{cll-0-i}
 We have that
 \begin{equation*}
\int\frac{1}{|\hat{y}-x|^{n-2}}\frac{1}{\mathscr{R}^{2n-4-\mu}}
\frac{\lambda_i^\frac{n+2}{2}}{\tau( z_{i})^6}dx\lesssim \bar{\mathbf{s}}_{i,in}+\bar{\mathbf{s}}_{i,out},\hspace{2mm}\hspace{2mm}n=4\hspace{2mm}\mbox{and}\hspace{2mm}2<\mu<4, \hspace{2mm}\mbox{or}\hspace{2mm}n=5\hspace{2mm}\mbox{and}\hspace{2mm}3<\mu<4,
 \end{equation*}
 \begin{equation*}
\int\frac{1}{|\hat{y}-x|^{n-2}}\frac{1}{\mathscr{R}^{n-\mu+\epsilon_0}}
\frac{\lambda_i^\frac{n+2}{2}}{\tau(z_{i})^6}dx\lesssim \bar{\mathbf{s}}_{i,in}+\bar{\mathbf{s}}_{i,out},\hspace{2mm}\hspace{2mm}n=4\hspace{2mm}\mbox{and}\hspace{2mm}2<\mu<4, \hspace{2mm}\mbox{or}\hspace{2mm}n=5\hspace{2mm}\mbox{and}\hspace{2mm}3<\mu<4,
 \end{equation*}
 where $\bar{\mathbf{s}}_{i,in}$ and $\bar{\mathbf{s}}_{i,out}$ be given by
$$
\bar{\mathbf{s}}_{i,in}:=\frac{1}{\mathscr{R}^{2n-4-\mu}}\frac{\lambda_i^{\frac{n-2}{2}}}{\tau( z_{i})^{4}}\chi_{\{|z_{i}|\leq\mathscr{R}\}},\hspace{2mm}\bar{\mathbf{s}}_{i,out}:=\frac{1}{\mathscr{R}^{n-\mu+\epsilon_0}}\frac{\lambda_i^{\frac{n-2}{2}}}{\tau(  z_{i})^{n-2}}\chi_{\{|z_{i}|\geq\mathscr{R}\}}.
$$.
\end{lem}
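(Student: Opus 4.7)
The plan is to reduce both estimates to a single scalar integral by a rescaling, apply Lemma~\ref{B3} (the Riesz-potential bound for weighted functions), and then match the $\tau$-decay against $\bar{\mathbf{s}}_{i,in}+\bar{\mathbf{s}}_{i,out}$ by splitting the target region according to whether $|\hat{z}_i|\leq \mathscr{R}$ or not, in the same spirit as the proof of Lemma~\ref{cll-0}. First I would perform the change of variables $z_i=\lambda_i(x-\xi_i)$, $\hat{z}_i=\lambda_i(\hat{y}-\xi_i)$, so that both integrals collapse, up to the explicit prefactor $\mathscr{R}^{-(2n-4-\mu)}$ or $\mathscr{R}^{-(n-\mu+\epsilon_0)}$ and a factor $\lambda_i^{(n-2)/2}$, into the dimensionless convolution
\[
\mathcal{I}(\hat{z}_i):=\int_{\mathbb{R}^n}\frac{1}{|\hat{z}_i-z_i|^{n-2}}\,\frac{1}{\tau(z_i)^6}\,dz_i.
\]
Since $\varsigma_0=6>n$ in both ranges $n=4$, $\mu\in(2,4)$ and $n=5$, $\mu\in(3,4)$, the third alternative of Lemma~\ref{B3} gives the uniform global bound $\mathcal{I}(\hat{z}_i)\lesssim \tau(\hat{z}_i)^{-(n-2)}$.

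Next I would split the analysis according to the position of $\hat{z}_i$. On the outer region $\{|\hat{z}_i|\geq \mathscr{R}\}$, the bound from Lemma~\ref{B3} already gives the correct $\tau(\hat{z}_i)^{-(n-2)}$ decay carried by $\bar{\mathbf{s}}_{i,out}$; matching the $\mathscr{R}$-prefactors then reduces to the elementary comparison $\mathscr{R}^{-(2n-4-\mu)}\leq \mathscr{R}^{-(n-\mu+\epsilon_0)}$ (resp.\ an equality for the second inequality), which is valid in our parameter range for $\epsilon_0>0$ sufficiently small. On the inner region $\{|\hat{z}_i|\leq \mathscr{R}\}$, matching $\bar{\mathbf{s}}_{i,in}\approx \lambda_i^{(n-2)/2}\mathscr{R}^{-(2n-4-\mu)}\tau(\hat{z}_i)^{-4}$ requires a finer decomposition of the $z_i$-integration into the three subregions $\{|z_i|\leq |\hat{z}_i|/2\}$, $\{|z_i-\hat{z}_i|\leq|\hat{z}_i|/2\}$ and $\{|z_i|\geq 2|\hat{z}_i|\}$, exactly as in Case~1 and Case~2 of the proof of Lemma~\ref{cll-0}. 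In the first and third subregion the faster decay $\tau(z_i)^{-6}$ of our integrand yields a contribution of order $\tau(\hat{z}_i)^{-4}$; in the second subregion one simply factors out $\tau(\hat{z}_i)^{-6}$ and integrates $|z_i-\hat{z}_i|^{-(n-2)}$ on a ball, producing $|\hat{z}_i|^2\tau(\hat{z}_i)^{-6}\lesssim \tau(\hat{z}_i)^{-4}$.

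The main obstacle will be the sharp matching in the inner region for the first inequality, where the prefactor $\mathscr{R}^{-(2n-4-\mu)}$ is strictly stronger than $\mathscr{R}^{-(n-\mu+\epsilon_0)}$ and the crude estimate $\mathcal{I}(\hat{z}_i)\lesssim \tau(\hat{z}_i)^{-(n-2)}$ coming directly from Lemma~\ref{B3} is not sufficient (since $n-2<4$ here). One genuinely needs the sub-region decomposition above in order to extract the stronger $\tau(\hat{z}_i)^{-4}$ decay that absorbs into $\bar{\mathbf{s}}_{i,in}$, and this is the only place where the restriction $2<\mu<4$ (for $n=4$), $3<\mu<4$ (for $n=5$) is used quantitatively, ensuring that the geometric-series bookkeeping on the boundary $|\hat{z}_i|\approx \mathscr{R}$ closes with a small $\epsilon_0>0$. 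Once this case is handled, the second inequality follows identically, with the lighter prefactor $\mathscr{R}^{-(n-\mu+\epsilon_0)}$ making the corresponding comparisons automatic.
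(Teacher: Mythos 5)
Your reduction and the appeal to Lemma~\ref{B3} reproduce what the paper itself does (its proof of this lemma is exactly ``rescale and apply Lemma~\ref{B3}''), but the decisive step you add for the inner region does not work, and this is a genuine gap rather than a bookkeeping issue. In the subregion $\{|z_i|\le|\hat z_i|/2\}$ one has $|\hat z_i-z_i|\approx|\hat z_i|$, and since $\tau(\cdot)^{-6}$ is integrable on $\mathbb{R}^n$ for $n=4,5$ with positive mass near the origin, that piece of $\mathcal{I}(\hat z_i)$ is \emph{comparable to} $\tau(\hat z_i)^{-(n-2)}$ — in fact $\mathcal{I}(\hat z_i)\ge c\,\tau(\hat z_i)^{-(n-2)}$ with $c=\int_{|z|\le1}\tau(z)^{-6}\,dz>0$ — and not of order $\tau(\hat z_i)^{-4}$ as you assert (your claim is correct for the third subregion and for the ball around $\hat z_i$, but not for the region near the origin, where the extra decay of $\tau^{-6}$ is useless because the integral already converges). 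Since $n-2<4$ in both admissible dimensions, no decomposition can upgrade the decay: at $|\hat z_i|\approx\mathscr{R}/2$ your claimed inner bound would force $\mathscr{R}^{-(2n-4-\mu)}\tau(\hat z_i)^{-(n-2)}\lesssim\mathscr{R}^{-(2n-4-\mu)}\tau(\hat z_i)^{-4}$, i.e. $\mathscr{R}^{6-n}\lesssim1$, which fails for $n=4,5$; the restrictions $2<\mu<4$, $3<\mu<4$ and the smallness of $\epsilon_0$ cannot help because $\mu$ enters only through the common prefactor. So the rate $\tau^{-(n-2)}$ from Lemma~\ref{B3} is not a ``crude'' bound to be beaten — it is the sharp rate of the convolution, and your inner-region matching cannot be completed along this route.

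Two further points. First, your outer-region prefactor comparison is reversed for $n=4$: there $2n-4-\mu=4-\mu<4-\mu+\epsilon_0=n-\mu+\epsilon_0$, so $\mathscr{R}^{-(2n-4-\mu)}\le\mathscr{R}^{-(n-\mu+\epsilon_0)}$ is false for any $\epsilon_0>0$ (it does hold for $n=5$ with $\epsilon_0<1$). Second, be aware that the difficulty you ran into reflects the statement itself: the paper's one-line proof only yields $\lambda_i^{(n-2)/2}\mathscr{R}^{-(2n-4-\mu)}\tau(\hat z_i)^{-(n-2)}$ (resp. with the prefactor $\mathscr{R}^{-(n-\mu+\epsilon_0)}$), and where the lemma is invoked (proof of Lemma~\ref{cll-1}, Case 2) the term comes with an extra factor $\mathscr{R}^{-2}$ and the final bound also allows absorption into $\mathscr{R}^{-2}S_2(\hat y)$, whose inner weight $\hat s_{i,1}$ decays only like $\tau(\hat z_i)^{-2}$. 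A correct and usable version of what you are trying to prove should therefore compare the convolution with the $\hat s_{i,1}+\hat s_{i,2}$-type weights in the inner region rather than with $\bar{\mathbf{s}}_{i,in}$; as written, the pointwise bound by $\bar{\mathbf{s}}_{i,in}$ there is not attainable by any refinement of your decomposition.
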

\begin{proof}
The conclusion follows by applying Lemma \ref{B3} and the relation of $\hat{z}_i=\lambda_{i}(\hat{y}-\xi_i)$ and $\mathscr{R}$ and we skip it.
\end{proof}
We set
\begin{equation}\label{hstar-00}
H^{\star}_{ji,1}(x):=\frac{\lambda_i^{\frac{2n-\mu}{2}}\mathscr{R}^{4+\mu-2n}}{\tau(z_{i})^{2p}},
\hspace{2mm}H^{\star}_{ji,2}(x):=\frac{\lambda_i^{\frac{2n-\mu}{2}}\mathscr{R}^{-(n-\mu+\epsilon_0)}}{\tau(z_{i})^{(n-2-\epsilon_0)p}}.
\end{equation}
\begin{lem}\label{cll-3}
For each $i\neq j$, let $\lambda_i\leq\lambda_j$. If $n\geq6-\mu$ and  $\frac{n+\mu-2}{2}\leq\mu<4$, we have that
\begin{equation}\label{zzzz-00}
\Big(|x|^{-\mu}\ast H^{\star}_{ji,1}(y)\Big)\widehat{K}_j(x)\lesssim
\mathscr{R}_{ij}^{-2}\hat{t}_{j,1}+\mathscr{R}^{-2}\hat{t}_{j,2}+\mathscr{R}^{-2}\hat{t}_{i,1}+\frac{1}{\mathscr{R}^{2n-4-\mu}}
\frac{\lambda_i^\frac{n+2}{2}}{\tau( z_{i})^6},
\end{equation}
\begin{equation}\label{zzzz-10}
\Big(|x|^{-\mu}\ast H^{\star}_{ji,2}(y)\Big)\widehat{K}_j(x)\lesssim
\mathscr{R}_{ij}^{-2}\hat{t}_{j,1}+\mathscr{R}^{-2}\hat{t}_{j,2}+\mathscr{R}^{-2}\hat{t}_{i,2}+\frac{1}{\mathscr{R}^{2n-4-\mu}}
\frac{\lambda_i^\frac{n+2}{2}}{\tau( z_{i})^6},
\end{equation}
\begin{equation}\label{zzzz-20}
\Big(|x|^{-\mu}\ast H^{\star}_{ij,1}(y)\Big)\widehat{K}_i(x)\lesssim
\mathscr{R}_{ij}^{-(n-2)}\hat{t}_{j,1},
\end{equation}
\begin{equation}\label{zzzz-30}
\Big(|x|^{-\mu}\ast H^{\star}_{ij,2}(y)\Big)\widehat{K}_i(x)\lesssim
\frac{1}{\mathscr{R}_{ij}^{4+\mu+\epsilon_0-n}}\hat{t}_{i,1}+
\big(\frac{1}{\tau(\xi_{ij})^{n-\mu-\epsilon_0}}+\frac{1}{\mathscr{R}_{ij}^{2+\epsilon_0}}\big)\big(\hat{t}_{i,2}+\hat{t}_{j,2}\big)+\frac{1}{\mathscr{R}^{n-\mu+\epsilon_0}}
\frac{\lambda_j^\frac{n+2}{2}}{\tau( z_{j})^6},
\end{equation}
and that
\begin{equation}\label{zzzz-40}
\Big(|x|^{-\mu}\ast H^{\star}_{ij,2}(y)\Big)\widehat{K}_i(x)\lesssim \Big(\big(\frac{\lambda_i}{\lambda_j}\big)^2+\varepsilon^2\Big)^{\frac{n-\epsilon_0-\mu}{4}}\hat{t}_{j,2},
\hspace{2mm}\mbox{on the set}\hspace{2mm}\{x:|z_i-\xi_{ij}|\leq\varepsilon\},
\end{equation}
for all $0<\varepsilon<1$. Moreover, we have that
\begin{equation}\label{zzzz-50}
\Big(|x|^{-\mu}\ast H^{\star}_{ij,2}(y)\Big)\widehat{K}_i(x)\lesssim\Big(|\xi_{ij}|+\bar{A}\Big)^{-\frac{(n-\mu+2)}{2}}\hat{t}_{j,2}
,\hspace{2mm}\mbox{on the set}\hspace{2mm}\{x:|z_i|\geq2|\xi_{ij}|+\bar{A}\},
\end{equation}
and that, for $x\in\{x:|z_i|\leq \mathscr{R}\}$,
\begin{equation}\label{zzzz-60}
\Big(|x|^{-\mu}\ast H^{\star}_{ij,2}(y)\Big)\widehat{K}_i(x)
\lesssim
\left\lbrace
\begin{aligned}
&\varepsilon^{-\frac{\mu}{4-\mu-\epsilon_0}}\hat{t}_{i,1}+\varepsilon \hat{t}_{j,2}, \hspace{16mm}\hspace{16mm}\hspace{4.5mm}\hspace{2mm}n=4\hspace{2mm}\mbox{and}\hspace{2mm}2\leq\mu<4,\\&
\tau(\xi_{ij})^{\frac{(n-\epsilon_0-\mu)(1-\epsilon_0)}{n-\epsilon_0}}\Big(\varepsilon^{-\frac{2\mu}{5-\mu-\epsilon_0}}\hat{t}_{i,1}+\varepsilon \hat{t}_{j,2}\Big),\hspace{2mm}\hspace{2mm}n=5\hspace{2mm}\mbox{and}\hspace{2mm}3\leq\mu<4,
\end{aligned}
\right.
\end{equation}
for all $0<\varepsilon<1$.
\end{lem}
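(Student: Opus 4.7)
The strategy is to follow the same scheme used in Lemma \ref{cll-2}, transplanting it into the ``hat'' weight system appropriate for the regime $\frac{n+\mu-2}{2}\le\mu<4$. The two basic ingredients are the pointwise convolution bounds recorded in Lemma \ref{B4} (in particular the $\mathscr{J}_\mu^{(1)}$ estimate for $H^{\star}_{ji,1}$ and $H^{\star}_{ij,1}$, and the $\mathscr{J}_\mu^{(2)}$ estimate for $H^{\star}_{ji,2}$ and $H^{\star}_{ij,2}$), together with the two-bubble comparison Lemma \ref{B3-0} and Young's inequality, which is used to interpolate between $\hat{t}_{\cdot,1}$ and $\hat{t}_{\cdot,2}$ on the appropriate regions. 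Throughout, the identity $\mathscr{R}_{ij}\sqrt{\lambda_i/\lambda_j}\approx \tau(\xi_{ij})$ translates $\mathscr{R}_{ij}$-weights into $\tau(\xi_{ij})$-weights.

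First I would apply Lemma \ref{B4} to obtain the pointwise dominations
\[
|x|^{-\mu}\ast H^{\star}_{ji,1}(y)\;\lesssim\; \mathscr{R}^{4+\mu-2n}\,\frac{\lambda_i^{\mu/2}}{\tau(z_i)^{\mu}},\qquad |x|^{-\mu}\ast H^{\star}_{ji,2}(y)\;\lesssim\; \mathscr{R}^{-(n-\mu+\epsilon_0)}\,\frac{\lambda_i^{\mu/2}}{\tau(z_i)^{\mu}},
\]
valid under the dimensional restriction $(\sharp)$. Multiplying by $\widehat{K}_j$ or $\widehat{K}_i$ reduces each assertion to a product of two bubble-type factors of exactly the form treated in Lemma \ref{cll-1-0}. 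I then repeat, term by term, the region decomposition of Lemma \ref{cll-2}: for \eqref{zzzz-00}--\eqref{zzzz-20} I split into $\{|z_j|\le\mathscr{R}\}$ and $\{|z_j|\ge\mathscr{R}\}\cap\{|z_i|\ge\mathscr{R}\}$ and invoke Lemma \ref{B3-0} with a suitable interpolation exponent $\gamma_1$ in the inner region and Young's inequality in the outer region, matching powers so that everything lies in $\hat{t}_{j,1}+\hat{t}_{j,2}+\hat{t}_{i,\alpha}$. Because the hat-weight $\hat{t}_{\cdot,1}$ carries $\tau(z)^4$ rather than $\tau(z)^6$, the convolution bound delivers one extra factor $\tau(z_i)^{-2}$ which cannot be absorbed into $\hat{t}_{i,1}$; this explains and accounts for the leftover residual $\mathscr{R}^{4+\mu-2n}\lambda_i^{(n+2)/2}\tau(z_i)^{-6}$ appearing in the stated bounds \eqref{zzzz-00}, \eqref{zzzz-10} and \eqref{zzzz-30}, which will subsequently be controlled by the companion estimate Lemma \ref{cll-0-i}.

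For \eqref{zzzz-40}, \eqref{zzzz-50} and \eqref{zzzz-60} the arguments mirror the last three items of Lemma \ref{cll-2}. On $\{|z_i-\xi_{ij}|\le\varepsilon\}$ the bound $\tau(z_j)\le (1+(\lambda_j/\lambda_i)^2\varepsilon^2)^{1/2}$ trades the factor $\tau(z_j)^{(n-\epsilon_0-\mu)/2}$ for the claimed $((\lambda_i/\lambda_j)^2+\varepsilon^2)^{(n-\epsilon_0-\mu)/4}$. On $\{|z_i|\ge 2|\xi_{ij}|+\bar A\}$ the bound $\tau(z_j)\le 2(\lambda_j/\lambda_i)|z_i|$ again reduces the product to a multiple of $\hat{t}_{j,2}$. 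On $\{|z_i|\le\mathscr{R}\}$ I would apply Young's inequality with exponents tailored to the dimension ($n=4$ versus $n=5$) and to $\epsilon_0$: the choices $\tfrac{\mu}{4-\mu-\epsilon_0}$ and $\tfrac{2\mu}{5-\mu-\epsilon_0}$ in \eqref{zzzz-60} are precisely those which align the resulting powers of $\tau(z_i)$ with $\hat{t}_{i,1}$ and of $\tau(z_j)$ with $\hat{t}_{j,2}$.

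The main obstacle is bookkeeping rather than technique. Since the exponents of both $\tau(z_i)$ and $\tau(z_j)$ on the right-hand side are rigidly dictated by the definitions of $\hat{t}_{\cdot,\alpha}$, the interpolation parameter $\gamma_1$ in Lemma \ref{B3-0} and the Young exponents must be chosen simultaneously, and one must check at every step that $\epsilon_0\in\bigl(0,\tfrac{(n-2)p-n}{p}\bigr)$ leaves all positivity constraints on the remaining exponents satisfiable. The admissible range of $\epsilon_0$, coupled with the constraint $\mu>(n^2-6n)/(n-4)$ in $(\sharp)$, is exactly what makes this joint choice possible; these are the hypotheses that must be invoked to certify convergence of each convolution integral in Lemma \ref{B4} and to ensure that no exponent degenerates when one of the sub-cases is treated.
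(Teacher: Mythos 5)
Your proposal follows essentially the same route as the paper: bound the convolutions via Lemma \ref{B4} to reduce to products of two bubble factors, split off the pure-$i$ residual $\mathscr{R}^{4+\mu-2n}\lambda_i^{(n+2)/2}\tau(z_i)^{-6}$ (to be absorbed later through Lemma \ref{cll-0-i}), and treat the remaining cross terms by the region decompositions of Lemmas \ref{cll-1-0-1}/\ref{cll-1-0} together with Lemma \ref{B3-0}, Young's inequality, and $\mathscr{R}_{ij}\sqrt{\lambda_i/\lambda_j}\approx\tau(\xi_{ij})$, exactly as in the paper's treatment of \eqref{zzzz-30}--\eqref{zzzz-60}. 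Two small points to tighten in the bookkeeping: for $n=5$ the $\mathscr{J}_{\mu}^{(1)}$ bound gives decay $\tau(z_i)^{-\min\{\mu,(5+\mu)/3\}}=\tau(z_i)^{-(5+\mu)/3}$ rather than $\tau(z_i)^{-\mu}$, and the residual term arises from the Young splitting of the product $F^{\star}$ into a pure-$i$ piece plus a cross piece (the paper's \eqref{HKF-1}--\eqref{HKF-2-1}), not merely from a mismatch between $\tau(z_i)^{4}$ in $\hat{t}_{i,1}$ and the convolution decay.
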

\begin{Rem}
At this stage, for $n=4$ and $\mu=2$, or $n=5$ and $\mu=3$, it is clear that $n-\mu+2=4$ in $\widehat{K}_j$ is a good candidate for low order perturbations compared to $\hat{t}_{j,1}$ in Definition \ref{st-11}. However, for the remaining cases, it is clear that $n-\mu+2<4$, which is the most difficult to estimate that all the left-hand sides of Lemma \ref{cll-3} are comparable to $\hat{t}_{j,1}$ or $\hat{t}_{j,2}$ by exploiting Lemma \ref{B3-0}.
\end{Rem}
Recalling that $H^{\star}_{ji,1}$, $H^{\star}_{ji,2}$ and $\widehat{K}_j$ in \eqref{hhstar-1} and \eqref{hstar-00}.
Then, thanks to Lemma \ref{B4}, we deduce
\begin{equation*}
\Big(|x|^{-\mu}\ast H^{\star}_{ji,1}(y)\Big)\widehat{K}_j(x)\lesssim
\mathscr{R}^{4+\mu-2n}F^{\star}_{ji,1}(x)
\end{equation*}
with
\begin{equation*}
F^{\star}_{ji,1}(x)=
\left\lbrace
\begin{aligned}
& \frac{\lambda_{i}^{\mu/2}}{\tau( z_i)^{\mu}}\frac{\lambda_j^{(n-\mu+2)/2}}{\tau( z_{j})^{n-\mu+2}},\hspace{8mm}\hspace{10mm}\hspace{6mm}\hspace{4mm}\hspace{2mm}n=4\hspace{2mm}\mbox{and}\hspace{2mm}2\leq\mu<4,\\
& \frac{\lambda_{i}^{\mu/2}}{\tau( z_i)^{(5+\mu)/3}}\frac{\lambda_j^{(n-\mu+2)/2}}{\tau( z_{j})^{n-\mu+2}},\hspace{8mm}\hspace{3mm}\hspace{3mm}\hspace{4mm}\hspace{1mm}\hspace{2mm}n=5\hspace{2mm}\mbox{and}\hspace{2mm}3\leq\mu<4,	\end{aligned}
\right.
\end{equation*}
and
\begin{equation}\label{f-star-1}
\Big(|x|^{-\mu}\ast H^{\star}_{ji,2}\Big)\widehat{K}_j(x)\lesssim
\mathscr{R}^{-(n-\mu+\epsilon_0)}F^{\star}_{ji,2}(x),\hspace{2mm}\Big(|x|^{-\mu}\ast H^{\star}_{ij,2}\Big)\widehat{K}_i(x)\lesssim
\mathscr{R}^{-(n-\mu+\epsilon_0)}F^{\star}_{ij,2}(x)
\end{equation}
where for$\hspace{2mm}n=4\hspace{2mm}\mbox{and}\hspace{2mm}2\leq\mu<4,\hspace{2mm}\mbox{or}\hspace{2mm}n=5\hspace{2mm}\mbox{and}\hspace{2mm}3\leq\mu<4$,
\begin{equation}\label{f-star-2}
F^{\star}_{ji,2}(x)=
\frac{\lambda_{i}^{\mu/2}}{\tau(z_i)^{\mu}}\frac{\lambda_j^{(n-\mu+2)/2}}{\tau( z_{j})^{n-\mu+2}},\hspace{2mm}F^{\star}_{ij,2}(x)=
\frac{\lambda_{j}^{\mu/2}}{\tau(z_j)^{\mu}}\frac{\lambda_i^{(n-\mu+2)/2}}{\tau( z_{i})^{n-\mu+2}}.
\end{equation}
%Set
%$$\tilde{p}=\frac{4}{\mu-2},\hspace{2mm}\tilde{q}=\frac{4}{6-\mu}.$$
Young's inequality tell us that
\begin{equation}\label{HKF-1}
\frac{F^{\star}_{ji,1}(x)}{\mathscr{R}^{4-\mu}}\lesssim\frac{1}{\mathscr{R}^{4-\mu}}
\frac{\lambda_i^3}{\tau( z_{i})^6}+\frac{1}{\mathscr{R}^{4-\mu}}
\frac{\lambda_i\lambda_j^2}{\tau( z_{i})^2\tau( z_{j})^4}, \hspace{6mm}\hspace{6mm}n=4,\hspace{2mm}2<\mu<4,
\end{equation}
and
\begin{equation}\label{HKF-2}
\frac{F^{\star}_{ji,1}(x)}{\mathscr{R}^{6-\mu}}\lesssim\frac{1}{\mathscr{R}^{6-\mu}}
\frac{\lambda_i^\frac{7}{2}}{\tau( z_{i})^6}+\frac{1}{\mathscr{R}^{6-\mu}}
\frac{\lambda_i^{\frac{3}{2}}\lambda_j^{2}}{\tau( z_{i})^{2(37-7\mu)/3(7-\mu)}\tau( z_{j})^4}, \hspace{2mm}n=5,\hspace{2mm}3<\mu<4.
\end{equation}
Similarly, if$\hspace{2mm}n=4\hspace{2mm}\mbox{and}\hspace{2mm}2\leq\mu<4,\hspace{2mm}\mbox{or}\hspace{2mm}n=5\hspace{2mm}\mbox{and}\hspace{2mm}3\leq\mu<4$,
\begin{equation}\label{HKF-1-0}
\frac{F^{\star}_{ji,2}(x)}{\mathscr{R}^{n-\mu+\epsilon_0}}\lesssim\frac{1}{\mathscr{R}^{n-\mu+\epsilon_0}}
\frac{\lambda_i^\frac{n+2}{2}}{\tau( z_{i})^6}+\frac{1}{\mathscr{R}^{n-\mu+\epsilon_0}}
\frac{\lambda_i^{\frac{n-2}{2}}\lambda_j^2}{\tau( z_{i})^2\tau( z_{j})^4},
\end{equation}
and
\begin{equation}\label{HKF-2-1}
\frac{F^{\star}_{ij,2}(x)}{\mathscr{R}^{n-\mu+\epsilon_0}}\lesssim\frac{1}{\mathscr{R}^{n-\mu+\epsilon_0}}
\frac{\lambda_j^\frac{n+2}{2}}{\tau( z_{j})^6}+\frac{1}{\mathscr{R}^{n-\mu+\epsilon_0}}
\frac{\lambda_i^{2}\lambda_j^{\frac{n-2}{2}}}{\tau(z_{i})^4\tau(z_{j})^{2}}.
\end{equation}

We now turn to the proof of Theorem \ref{cll-3} itself. First of all we note that,  to prove \eqref{zzzz-30}, using Lemma \ref{cll-0-i}, we get that $$\tau(z_{i})^{-4}\tau(z_{j})^{-2}\lesssim\mathscr{R}_{ij}^{-4}(\lambda_j/\lambda_i)^2\tau(z_{j})^{-2},\hspace{2mm}\mbox{on the set}\hspace{2mm}\{x: \mathscr{R}\leq|z_j|\leq\sqrt{\lambda_j/\lambda_i}\mathscr{R}_{ij}/2\}.$$
Combining this bound with the second term of RHS of \eqref{HKF-2-1}, we get that for $n=5$ and $3<\mu<4$,
$$\frac{1}{\mathscr{R}^{n-\mu+\epsilon_0}}
\frac{\lambda_i^{2}\lambda_j^{\frac{n-2}{2}}}{\tau(z_{i})^4\tau(z_{j})^{2}}\lesssim\tau(\xi_{ij})^{-(1+\epsilon_0)}\frac{\lambda_j}{\lambda_i}\hat{t}_{i,2}.$$
As a consequence, in order to establish \eqref{zzzz-30}, we distinguish two cases \eqref{f-star-1} and \eqref{f-star-2}, or \eqref{HKF-2-1}.

\emph{\textbf{Proof of Lemma \ref{cll-3}}.}
Arguing in an analogous way to Lemma \ref{cll-1-0-1} for the second quantities of \eqref{HKF-1}, \eqref{HKF-2} and \eqref{HKF-1-0}, and the second terms of \eqref{f-star-1} and \eqref{f-star-2}, or the second term of \eqref{HKF-2-1}, we are able to conclude the proofs by some simple computations and Lemma \ref{cll-0-i}. Therefore, we will just sketch the proof of \eqref{zzzz-30}-\eqref{zzzz-60}.
%$\bullet$
%A direct computation, due to Lemma \ref{B3-0}, it follows that
%\begin{equation*}
%\frac{1}{\mathscr{R}^{4-\mu}}
%\frac{\lambda_i\lambda_j^2}{\tau( z_{i})^2\tau( z_{j})^4}+\hspace{2mm}\frac{1}{\mathscr{R}^{6-\mu}}
%\frac{\lambda_i^{\frac{3}{2}}\lambda_j^{-2}}{\tau( z_{i})^{2(37-7\mu)/3(7-\mu)}\tau( z_{j})^4}\lesssim\mathscr{R}_{ij}^{-2}\hat{t}_{j,1}+\mathscr{R}^{-2}\hat{t}_{j,2}+\mathscr{R}^{-2}(\hat{t}_{i,1}+\hat{t}_{i,2}).
%\end

$\bullet$
To prove \eqref{zzzz-30}. We divide the proof in three cases.

Case 1. On the set $\{x: \mathscr{R}\leq|z_j|\leq\sqrt{\lambda_j/\lambda_i}\mathscr{R}_{ij}/2\}$.
 Thanks to Lemma \ref{B3-0}, \eqref{f-star-2} and $\mathscr{R}_{ij}\sqrt{\lambda_{i}/\lambda_j}\approx\tau(\xi_{ij})$, we get
\begin{equation*}
\begin{split}
\frac{F^{\star}_{ij,2}(x)}{\mathscr{R}^{n-\mu+\epsilon_0}}=\frac{1}{\mathscr{R}^{n-\mu+\epsilon_0}}
\frac{\lambda_{j}^{\mu/2}}{\tau(z_j)^{\mu}}\frac{\lambda_i^{(n-\mu+2)/2}}{\tau( z_{i})^{n-\mu+2}}
\lesssim\Big(\sqrt{\frac{\lambda_j}{\lambda_i}}\Big)^{n-\mu-\epsilon_0}\frac{1}{\mathscr{R}_{ij}^{2+\epsilon_0}}
\hat{t}_{i,2}\lesssim\frac{1}{\tau(\xi_{ij})^{n-\mu-\epsilon_0}}\hat{t}_{i,2}.
\end{split}
\end{equation*}

Case 2. On the set $\{x: |z_j|\geq\sqrt{\lambda_j/\lambda_i}\mathscr{R}_{ij}/2,\hspace{2mm}|z_j|\leq\mathscr{R}\}$.
Using Lemma \ref{B3-0} and \eqref{HKF-2-1}, we obtain
\begin{equation*}
\begin{split}
\frac{1}{\mathscr{R}^{n-\mu+\epsilon_0}}
\frac{\lambda_i^{2}\lambda_j^{\frac{n-2}{2}}}{\tau(z_{i})^4\tau(z_{j})^{2}}
\lesssim\frac{\mathscr{R}^{n-4-\epsilon_0}}{\mathscr{R}_{ij}^{\mu}}\Big(\sqrt{\frac{\lambda_j}{\lambda_i}}\Big)^{n-2-\mu}\hat{t}_{i,1}
\lesssim\frac{1}{\mathscr{R}_{ij}^{4+\mu+\epsilon_0-n}}\hat{t}_{i,1}.
\end{split}
\end{equation*}

Case 3. On the set $\{x: |z_j|\geq\sqrt{\lambda_j/\lambda_i}\mathscr{R}_{ij}/2,\hspace{2mm}|z_j|\geq\mathscr{R}\}$.
Using Young's inequality, we have
\begin{equation*}
\begin{split}
\frac{1}{\mathscr{R}^{n-\mu+\epsilon_0}}
\frac{\lambda_{j}^{\mu/2}}{\tau(z_j)^{\mu}}\frac{\lambda_i^{(n-\mu+2)/2}}{\tau( z_{i})^{n-\mu+2}}
\lesssim\frac{1}{\tau(z_j)^{2+\epsilon_0}}\Big(\frac{\lambda_i}{\lambda_j}\Big)^{\frac{\mu(2+\epsilon_0)}{2(n-\epsilon_0)}}\big(\hat{t}_{i,2}\big)^{\frac{n-\epsilon_0-\mu}{n-\epsilon_0}}
\big(\hat{t}_{j,2}\big)^{\frac{\mu}{n-\epsilon_0}}
\lesssim\frac{1}{\mathscr{R}^{2+\epsilon_0}}\big(\hat{t}_{i,2}+\hat{t}_{j,2}\big).
\end{split}
\end{equation*}
To prove \eqref{zzzz-40}, for $x\in\{x:|z_i-\xi_{ij}|\leq\varepsilon\}$, we first note that $\tau(z_j)^2\leq1+(\lambda_j/\lambda_i)^2\varepsilon^2$.
Then we obtain
\begin{equation*}
\begin{split}
\frac{1}{\mathscr{R}^{n-\mu+\epsilon_0}}
\frac{\lambda_{j}^{\mu/2}}{\tau(z_j)^{\mu}}\frac{\lambda_i^{(n-\mu+2)/2}}{\tau( z_{i})^{n-\mu+2}}
\lesssim\big(\frac{\lambda_{i}}{\lambda_j}\big)^{\frac{n-\mu+2}{2}}\Big(1+\big(\frac{\lambda_j}{\lambda_i}\big)^2\varepsilon^2\Big)^{\frac{n-\epsilon_0-\mu}{2}}t_{j,2}
\lesssim\Big(\big(\frac{\lambda_i}{\lambda_j}\big)^2+\varepsilon^2\Big)^{\frac{n-\epsilon_0-\mu}{4}}\hat{t}_{j,2}.
\end{split}
\end{equation*}
$\bullet$
To prove \eqref{zzzz-50},
for $x\in\{x:|z_i|>2|\xi_{ij}|+\bar{A}\}$, we have
\begin{equation*}
\begin{split}
\frac{1}{\mathscr{R}^{n-\mu+\epsilon_0}}
\frac{\lambda_{j}^{\mu/2}}{\tau(z_j)^{\mu}}\frac{\lambda_i^{(n-\mu+2)/2}}{\tau( z_{i})^{n-\mu+2}}
\lesssim\big(\frac{\lambda_{i}}{\lambda_j}\big)^{\frac{n-\mu+2}{2}}\frac{\tau(z_j)^{n-\epsilon_0-\mu}}{\tau(z_i)^{n-\mu+2}}\hat{t}_{j,2}
\lesssim\Big(|\xi_{ij}|+\bar{A}\Big)^{\frac{n-\mu+2}{2}}\hat{t}_{j,2}.
\end{split}
\end{equation*}
$\bullet$
To prove \eqref{zzzz-60}. On the set $\{x:|z_i|\leq \mathscr{R}\}$,
combining Young's inequality and
$$n-\mu+2>\frac{4(n-\epsilon_0-\mu)}{n-\epsilon_0},\hspace{2mm}\frac{\mu(\epsilon_0+2)}{n-\epsilon_0}>\frac{(n-\epsilon_0-\mu)(1-\epsilon_0)}{n-\epsilon_0},$$
then we get
\begin{equation*}
\begin{split}
\frac{1}{\mathscr{R}^{n-\mu+\epsilon_0}}
\frac{\lambda_{j}^{\mu/2}}{\tau(z_j)^{\mu}}\frac{\lambda_i^{(n-\mu+2)/2}}{\tau( z_{i})^{n-\mu+2}}
\lesssim\Big(\frac{\lambda_{i}}{\lambda_j}\Big)^{\frac{\epsilon_0+2}{2(n-\epsilon_0)}} \big(\frac{1}{\mathscr{R}}\big)^{\frac{(n-\epsilon_0-\mu)(4+\epsilon_0-n)}{n-\epsilon_0}}\big(\hat{t}_{i,1})^{\frac{n-\epsilon_0-\mu}{n-\epsilon_0}}\big(\hat{t}_{j,2})^{\frac{\mu}{n-\epsilon_0}}.
\end{split}
\end{equation*}
Therefore, if $n=4$ and $2\leq\lambda<4$, we have
$$\frac{F^{\star}_{ij,2}(x)}{\mathscr{R}^{n-\mu+\epsilon_0}}\lesssim\varepsilon^{-\frac{\mu}{4-\mu-\epsilon_0}}\hat{t}_{i,1}+\varepsilon \hat{t}_{j,2}.$$
When $n=5$ and $3\leq\lambda<4$, we obtain
Therefore, if $n=4$ and $2\leq\lambda<4$, we have
$$\frac{F^{\star}_{ij,2}(x)}{\mathscr{R}^{n-\mu+\epsilon_0}}\lesssim\tau(\xi_{ij})^{\frac{(n-\epsilon_0-\mu)(1-\epsilon_0)}{n-\epsilon_0}}\Big(\varepsilon^{-\frac{2\mu}{5-\mu-\epsilon_0}}\hat{t}_{i,1}+\varepsilon \hat{t}_{j,2}\Big).$$
We conclude the proof.
\qed

Lemmas \ref{B3}-\ref{cll-0} and Lemmas \ref{cll-1-0}-\ref{cll-3} then imply the following important prior estimates.
\begin{lem}\label{cll-1}
 Under the assumptions of Lemma \ref{estimate2}. There exist a dimensional constant $ C_{n,\mu,\kappa}>0$ with the following property:\\
$(i)$ If $n=5$ and $\mu\in[1,3)$, or $n=6$ and $\mu\in(0,4)$, or $n=7$ and $\mu\in(\frac{7}{3},4)$, then
 \begin{equation*}
 \begin{split}
 &\frac{1}{S_{1}(x)}\int\frac{1}{|\hat{y}-x|^{n-2}}\big[\Phi_{n,\mu}[\sigma,\phi]-\hbar(x)\big]dx
\leq
 C_{n,\mu,\kappa}
\Big(\|\hbar\|_{\ast\ast}+\widetilde{A}^{n+\mu-4+\frac{n+\mu-2}{2}}\frac{\widetilde{S}_1(x)}{S_1(x)}\|\phi\|_{\ast}\Big)
 \\&+ C_{n,\mu,\kappa}
\left\lbrace
\begin{aligned}
&(\mathscr{R}^{-\frac{(\mu+1)}{2}}\widetilde{A}^{n-\mu+2}+\widetilde{A}^{-2}+\widetilde{A}^{-\varpi^{\star}_1})\|\phi\|_{\ast}, \hspace{8mm}\hspace{8mm}\hspace{7mm}\hspace{2mm}n=5\hspace{2mm}\mbox{and}\hspace{2mm}\mu\in[1,3),\\&
(\mathscr{R}^{-\min\{\frac{\mu}{2},2p-6+\mu-\theta_1\}}\widetilde{A}^{n-\mu+2}+\widetilde{A}^{-2}+\widetilde{A}^{-\varpi^{\star}_1})\|\phi\|_{\ast},
\hspace{3.5mm}\hspace{2mm}n=6\hspace{2mm}\mbox{and}\hspace{2mm}\mu\in(0,4),\\&
(\mathscr{R}^{-\min\{\frac{\mu}{2},2p-7+\mu-\theta_2\}}\widetilde{A}^{n-\mu+2}+\widetilde{A}^{-2}+\widetilde{A}^{-\varpi^{\star}_1})\|\phi\|_{\ast},
\hspace{3.5mm}\hspace{2mm}n=7\hspace{2mm}\mbox{and}\hspace{2mm}\mu\in(\frac{7}{3},4),
\end{aligned}
\right.
\end{split}
\end{equation*}
where $\widetilde{S}_1(x)=\sum_{i\neq j}(\tilde{\mathbf{s}}_{i,in}+\tilde{\mathbf{s}}_{i,out})$ is given by
 $$\tilde{\mathbf{s}}_{i,in}:=\lambda_i^{\frac{n-2}{2}}\frac{1}{\mathscr{R}^{n-2}}\frac{1}{\tau( z_{i})^{4}}\chi_{\{|z_{i}|\leq\mathscr{R}\}},\hspace{2mm}\tilde{\mathbf{s}}_{i,out}:=\lambda_i^{\frac{n-2}{2}}\frac{1}{\mathscr{R}^{4}}\frac{1}{\tau( z_{i})^{n-2}}\chi_{\{|z_{i}|\geq\mathscr{R}\}}.
$$
$(ii)$ If $n=4$ and $\mu\in[2,4)$, or $n=5$ and $\mu\in[3,4)$, then
 \begin{equation}\label{zz-2}
 \begin{split}
 &\frac{1}{S_{2}(x)}\int\frac{1}{|\hat{y}-x|^{n-2}}\big[\Phi_{n,\mu}[\sigma,\phi]-\hbar(x)\big]dx
\leq C_{n,\mu,\kappa}
\Big(\|\hbar\|_{\ast\ast}+(\bar{A}^4\mathscr{R}^{-4}+\bar{A}^{-(n-\mu-\epsilon_0)}+\bar{A}^{-\epsilon_0})\|\phi\|_{\ast}\Big)\\
&+C_{n,\mu,\kappa}\|\phi\|_{\ast}\frac{\bar{S}_2(x)}{S_2(x)}\Big(\bar{A}^{4-2\epsilon_0}\hspace{2mm}\mbox{if}\hspace{2mm}n=4\hspace{2mm}\mbox{and}\hspace{2mm}\mu\in[2,4);\hspace{2mm}\bar{A}^{6-3\epsilon_0}\hspace{2mm}\mbox{if}\hspace{2mm}n=5\hspace{2mm}\mbox{and}\hspace{2mm}\mu\in[3,4)\Big),
  \end{split}
 \end{equation}
 where $\bar{S}_2(x)=\sum_{i\neq j}(\bar{\mathbf{s}}_{i,in}+\bar{\mathbf{s}}_{i,out})$ with $\bar{\mathbf{s}}_{i,in}$ and $\bar{\mathbf{s}}_{i,out}$ are found in Lemma \ref{cll-0-i}.
\end{lem}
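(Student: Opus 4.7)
The plan is to use Green's representation for the Poisson equation satisfied by $\phi$. Since \eqref{0} can be rewritten as $-\Delta\phi = \Phi_{n,\mu}[\sigma,\phi] - \hbar$, we have the pointwise formula $\phi(\hat y) = c_n \int |\hat y - x|^{-(n-2)} (\Phi_{n,\mu}[\sigma,\phi] - \hbar)(x)\, dx$, and the goal is to bound the right-hand side by a constant multiple of $S_j(\hat y)$ times the quantities appearing in the statement. I would split the integrand into three types of contributions and handle each via a different tool developed in Section 2 and in the preceding subsection.

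The first, and easiest, contribution is $\hbar$ itself: by definition of $\|\cdot\|_{\ast\ast}$, one has $|\hbar(x)| \leq \|\hbar\|_{\ast\ast} T_j(x)$, and Lemma \ref{cll-0} gives $\int |\hat y - x|^{-(n-2)} T_j(x)\, dx \lesssim S_j(\hat y)$, producing the $\|\hbar\|_{\ast\ast}$ term in the conclusion. The second contribution is the diagonal part of $\Phi_{n,\mu}[\sigma,\phi]$, i.e.\ all summands of the form $(|x|^{-\mu} \ast W_j^{p-1}\phi)W_j^{p-1}$ and $(|x|^{-\mu} \ast W_j^p) W_j^{p-2}\phi$ attached to a single bubble $W_j$. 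Using the classification identity $|x|^{-\mu} \ast W_j^p = \widetilde\alpha_{n,\mu} W_j^{2^{\ast}-p}$ from the Hartree-type Riesz identity cited earlier, these reduce to pointwise expressions of the form $\widetilde H_j(x)|\phi(x)|$. Inserting $|\phi| \leq \|\phi\|_{\ast} S_j(x)$ and applying Lemmas \ref{cll-1-0-1}, \ref{cll-1-0} converts each product $\widetilde H_j \cdot s_{i,l}$ (or $\hat s_{i,l}$, $\tilde s_{i,l}$) into a multiple of the $t_{i,l}$'s with an $\mathscr R^{-2}$ or $\mathscr R_{ij}^{-2}$ prefactor. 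Integrating against the Newton kernel through Lemma \ref{cll-0} then returns $S_j(\hat y)$ times a negligible constant that is absorbed in the final bound.

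The delicate part, and also the main obstacle, is the cross part of $\Phi_{n,\mu}[\sigma,\phi]$, i.e.\ the genuinely nonlocal summands where $W_i$ appears inside the convolution and $W_j$ outside for $i \neq j$. The direct estimate $|\phi| \leq \|\phi\|_{\ast} S_i$ produces Hartree-type kernels of the form $(|x|^{-\mu} \ast \widehat H_{ji,l})\widehat K_j(x)$ with $\widehat H_{ji,l}$, $\widehat K_j$ as in \eqref{hhstar-1}, which are precisely what Lemmas \ref{cll-2}, \ref{cll-3} evaluate in the regimes $0 < \mu < (n+\mu-2)/2$ and $(n+\mu-2)/2 \leq \mu < 4$ respectively. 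I would partition $\mathbb R^n$ into the near-diagonal strip $\{|z_i - \xi_{ij}| \leq \varepsilon\}$, the far field $\{|z_i| \geq 2|\xi_{ij}| + \widetilde A\}$ (respectively with $\bar A$), and the intermediate region; on each zone I apply the matching bound from Lemma \ref{cll-2} or \ref{cll-3}, then optimise $\varepsilon$ to balance $\varepsilon^{-\alpha} t_{i,1}$ against $\varepsilon\, t_{j,2}$. After integrating against $|\hat y - x|^{-(n-2)}$ via Lemma \ref{cll-0}, the near-diagonal region generates exactly the correction $\widetilde A^{n+\mu-4+(n+\mu-2)/2}\widetilde S_1/S_1$ in (i) (resp.\ $\bar A^{4-2\epsilon_0}\bar S_2/S_2$ in (ii)), while the far field and intermediate regions contribute the $\widetilde A^{-2}$, $\widetilde A^{-\varpi_1^{\star}}$ (resp.\ $\bar A^{-(n-\mu-\epsilon_0)}$, $\bar A^{-\epsilon_0}$) small factors.

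The hard part will be keeping the exponent arithmetic tight. One must simultaneously match the $\mathscr R$-decay, which is controlled by the parameters $\theta_1$, $\theta_2$, $\Theta$, $\epsilon_0$ from Section 2, with the $\widetilde A$ (or $\bar A$) powers, so that the residual term $\widetilde S_1(\hat y)/S_1(\hat y)$ carries precisely the exponent of $\widetilde A$ required to close the fixed-point iteration in Section 6. This matching is especially delicate when $\mu$ approaches $4$, where $\Theta$ must stay strictly below $2$, and in the borderline case $n = 6-\mu$, where Lemma \ref{cll-1-0-1} only provides the slightly weaker bound \eqref{zzz-5-1}; both cases force one to replace the naive Young-type estimate in the intermediate region by the sharper H\"older splittings used to prove Lemmas \ref{cll-2}, \ref{cll-3}.
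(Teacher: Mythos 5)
Your overall architecture coincides with the paper's proof: represent $\phi$ by the Newton potential of $\Phi_{n,\mu}[\sigma,\phi]-\hbar$, absorb $\hbar$ through $|\hbar|\leq\|\hbar\|_{\ast\ast}T_j$ and Lemma \ref{cll-0}, control the local-type products $\widetilde{H}_j\cdot s_{i,l}$ (resp. $\hat{s}_{i,l}$) with Lemmas \ref{cll-1-0-1} and \ref{cll-1-0}, treat the Hartree kernels $(|x|^{-\mu}\ast\widehat{H}_{ji,l})\widehat{K}_j$ with Lemmas \ref{cll-2} and \ref{cll-3} after a case distinction governed by $\lambda_i$ versus $\lambda_j$ and $\tau(\xi_{ij})$ versus $\widetilde{A}$ (with $\varepsilon=\widetilde{A}^{-1}$, resp. $\bar{A}^{-1}$), and finally integrate against $|\hat{y}-x|^{-(n-2)}$ via Lemmas \ref{B3} and \ref{cll-0}. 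In that sense the route is the same as the paper's.

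Two points in your sketch, however, would fail as written. First, the reduction of the diagonal terms: the identity of Lemma \ref{p1-00} applies only to $|x|^{-\mu}\ast W_j^{p}$, so it handles $(|x|^{-\mu}\ast W_j^{p})W_j^{p-2}\phi\approx\widetilde{H}_j|\phi|$, but it does \emph{not} reduce $(|x|^{-\mu}\ast W_j^{p-1}\phi)W_j^{p-1}$ to a pointwise expression, because $\phi$ sits inside the convolution. Since you reserve the convolution machinery (Lemmas \ref{cll-2}, \ref{cll-3}) for the cross terms $i\neq j$, the diagonal nonlocal term is left uncovered by your plan; in the paper it is handled by inserting $|\phi|\leq\|\phi\|_{\ast}S_j$ \emph{inside} the convolution, splitting via Young's inequality as in \eqref{k-z1}--\eqref{kz3} (resp. \eqref{kz1}--\eqref{kz-22}), and applying Lemmas \ref{B4}--\ref{B4-1} and \ref{B3}; this is precisely one of the sources of the remainder $\widetilde{S}_1$ (resp. $\bar{S}_2$). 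Second, your attribution of regions to terms is reversed: the large factor $\widetilde{A}^{n+\mu-4+\frac{n+\mu-2}{2}}\widetilde{S}_1/S_1$ does not come from the near-diagonal strip but from its complement $\{|z_i-\xi_{ij}|>\varepsilon\}$, where one dominates $s_{j,2}$ by $\tau(\xi_{ij})^{n+\mu-4}\varepsilon^{-\frac{n+\mu-2}{2}}(s_{i,1}+s_{i,2})$ as in \eqref{zzz-5-1} and \eqref{IIIv3-1} with $\varepsilon=\widetilde{A}^{-1}$, $\tau(\xi_{ij})\leq\widetilde{A}$; the strip $\{|z_i-\xi_{ij}|\leq\varepsilon\}$ instead produces the $\big((\lambda_j/\lambda_i)^2+\varepsilon^2\big)$-type factors of \eqref{zzz-4-1} and \eqref{zzzz-4}, hence $\mathscr{R}^{-4}\widetilde{A}^{4}+\widetilde{A}^{-2}$, while $\widetilde{A}^{-\varpi^{\star}_1}$ arises in the separated case $\tau(\xi_{ij})\geq\widetilde{A}$ through \eqref{zzzz-3}. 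Both issues are repairable with the tools you already cite, but the exponent bookkeeping that yields the precise powers of $\widetilde{A}$, $\bar{A}$ and $\mathscr{R}$ in the statement depends on getting these assignments right, so as written the argument does not close.
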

\begin{proof}
The Green's function representation of the solution yield
$$
\phi(\hat{y})=C\int\frac{1}{|\hat{y}-x|^{n-2}}\Big[\Phi_{n,\mu}[\sigma,\phi]-\hbar(x)\Big]dx
$$
For the last integral above, we have
\begin{itemize}
\item[$\bullet$]
 For $n=5$ and $\mu\in[1,3)$, or $n=6$ and $\mu\in(0,4)$, or $n=7$ and $\mu\in(\frac{7}{3},4)$, from \eqref{zz-0} we infer that
 \begin{equation*}
 \int\frac{1}{|\hat{y}-x|^{n-2}}\hbar(x)dx\leq C\|\hbar\|_{\ast\ast}\int\frac{1}{|\hat{y}-x|^{n-2}}T_1(x)dx\lesssim\|\hbar\|_{\ast\ast}S_{1}(\hat{y}).
 \end{equation*}
 \item[$\bullet$]
For $n=4$ and $\mu\in[2,4)$, or $n=5$ and $\mu\in[3,4)$,
from \eqref{zz-0-0} we deduce that
 \begin{equation*}
 \int\frac{1}{|\hat{y}-x|^{n-2}}\hbar(x)dx\leq C\|\hbar\|_{\ast\ast}\int\frac{1}{|\hat{x}-x|^{n-2}}T_2(x)dx\lesssim\|\hbar\|_{\ast\ast}S_{2}(\hat{y}).
 \end{equation*}
\end{itemize}
Using Lemmas \ref{B3} and \ref{cll-1-0} to handle the remainder terms.
we consider two cases, depending choice of $n$ and $\mu$.\\
$\mathbf{Case\ 1.}$ $n=5$ and $\mu\in[1,3)$, or $n=6$ and $\mu\in(0,4)$, or $n=7$ and $\mu\in(\frac{7}{3},4)$. Then from Lemma \ref{B4-1} we infer that
\begin{equation*}
\begin{split}
&\mathcal{I}_1:=\int\frac{1}{|\hat{y}-x|^{n-2}}\Big(|x|^{-\mu}\ast\sigma^{p}\Big)
\sigma^{p-2}(x)\phi(x)dx\\&
\lesssim\|\phi\|_{\ast}\int\frac{1}{|\hat{y}-x|^{n-2}}		\sum_{i,j=1}^{\kappa}\frac{\lambda_j^{\mu/2}}{(1+|z_{j}|^2)^{\mu/2}}\sum_{i,j=1}^{\kappa}		\frac{\lambda_j^{(4-\mu)/2}}{(1+|z_{j}|^2)^{(4-\mu)/2}}\Big(s_{i,1}+s_{i,2}\Big)dx\\&
:=\|\phi\|_{\ast}(G_{ji,1}+G_{ji,2})(\hat{y}),
\end{split}
\end{equation*}
\begin{equation*}
\begin{split}
&\mathcal{I}_2:=\int\frac{1}{|\hat{y}-x|^{n-2}}\Big(|x|^{-\mu}\ast \sigma^{p-1}\phi\Big)
\sigma^{p-1}(x)dx\\&
\lesssim\|\phi\|_{\ast}\int\frac{1}{|\hat{y}-x|^{n-2}}\Big[\frac{1}{|x|^\mu}\ast		\Big(\sum_{i,j=1}^{\kappa}\widehat{K}_j(y)\big(s_{i,1}+s_{i,2}\big)(y)\Big)\Big]\sum_{i,j=1}^{\kappa}\widehat{K}_j(x)dx:=\|\phi\|_{\ast}(\widehat{G}_{ji,1}+\widehat{G}_{ji,2})(\hat{y}).
\end{split}
\end{equation*}
Letting $\tilde{\mathbf{s}}_{i,in}$ and $\tilde{\mathbf{s}}_{i,out}$ be given by
$$
\tilde{\mathbf{s}}_{i,in}:=\lambda_i^{\frac{n-2}{2}}\frac{1}{\mathscr{R}^{n-2}}\frac{1}{\langle z_{i}\rangle^{4}}\chi_{\{|z_{i}|\leq\mathscr{R}\}},\hspace{2mm}\tilde{\mathbf{s}}_{i,out}:=\lambda_i^{\frac{n-2}{2}}\frac{1}{\mathscr{R}^{4}}\frac{1}{\langle z_{i}\rangle^{n-2}}\chi_{\{|z_{i}|\geq\mathscr{R}\}}.
$$
There are four cases.

$\bullet$ If $i=j$, and denote $z_i=\lambda_{i}(x-\xi_i)$, $\hat{z}_i=\lambda_{i}(\hat{y}-\xi_i)$ then since Lemma \ref{B3} we get
\begin{equation*}
\begin{split}
G_{ii,1}&\lesssim \sum_{i=1}^{\kappa}\lambda_i^{2}\int\frac{1}{|\hat{y}-x|^{n-2}}\frac{s_{i,1}}{(1+|z_{i}|^2)^{2}}dx\\&
%\lesssim \sum_{i=1}^{\kappa}\frac{\lambda_i^{\frac{n-2}{2}}}{\mathcal{R}^{n-2}}\Big[\int\frac{1}{|\hat{z}_i-z_i|^{n-2}}\frac{1}{(1+|z_{i}|^2)^{3}}\chi_{\{|\hat{z}_i|\leq\mathcal{R}\}}dz_i+
%\frac{1}{(1+|\hat{z}_{i}|^2)^{(n-2)/2}}\int_{\{|z_i\leq \mathcal{R}|\}}\frac{1}{(1+|z_{i}|^2)^{3}}dz_i\Big]\\&
\lesssim \sum_{i=1}^{\kappa}\lambda_i^{\frac{n-2}{2}}\Big[\frac{1}{\mathscr{R}^{n-2}}\frac{1}{(1+|\hat{z}_{i}|^2)^2}\chi_{\{|\hat{z}_i|\leq2\mathscr{R}\}}+\frac{1}{\mathscr{R}^{4}}\frac{1}{(1+|\hat{z}_{i}|^2)^{(n-2)/2}}\chi_{\{|\hat{z}_i|\geq2\mathscr{R}\}}\Big]
\lesssim\sum_{i=1}^{\kappa}\big[\tilde{\mathbf{s}}_{i,in}+\tilde{\mathbf{s}}_{i,out}\big]
\end{split}
\end{equation*}
and analogously we can take an exponent $\varsigma_0$ satisfying
$\varsigma_0=\frac{n+\mu+6}{2}<n$ in Lemma \ref{B3}.
Then, from $|\hat{z}_i-z_i|\approx|z_i|$ on $\{|z_{i}|>\mathscr{R}\}$ we deduce that
\begin{equation*}
\begin{split}
G_{ii,2}&
\lesssim\sum_{i=1}^{\kappa}\lambda_i^{2}\int\frac{1}{|\hat{y}-x|^{n-2}}\frac{s_{i,2}}{(1+|z_{i}|^2)^{2}}dx\\&
%\lesssim
%\sum_{i=1}^{\kappa}\frac{\lambda_i^{\frac{n-2}{2}}}{\mathcal{R}^{(n-\mu+2)/2}}\Big[\int_{\{|z_i|>\mathcal{R}\}}\frac{dz_i}{|z_{i}|^{n-2+(n+\mu+6)/2}}+\int\frac{1}{|\hat{z}_i-z_i|^{n-2}}\frac{dz_i}{(1+|z_{i}|^2)^{(n+\mu+6)/4}}\chi_{\{|\hat{z}_i|\geq\mathcal{R}/2\}}\Big]\\&
\lesssim\sum_{i=1}^{\kappa}\lambda_i^{\frac{n-2}{2}}\Big[\frac{1}{\mathscr{R}^{n+2}}\chi_{\{|\hat{z}_i|\leq\frac{\mathscr{R}}{2}\}}+
\frac{1}{\mathscr{R}^{(n-\mu+2)/2}}\frac{1}{(1+|\hat{z}_{i}|^2)^{(n+\mu+2)/4}}\chi_{\{|\hat{z}_i|\geq\frac{\mathscr{R}}{2}\}}\Big]
\lesssim\sum_{i=1}^{\kappa}\big[\tilde{\mathbf{s}}_{i,in}+\tilde{\mathbf{s}}_{i,out}\big].
\end{split}
\end{equation*}
Similar to the above argument and from Lemmas \ref{B4-1}, \ref{B3} and \ref{cll-1-0} that
\begin{equation*}
\begin{split}
\widehat{G}_{ji,1}
&= \int\frac{1}{|\hat{y}-x|^{n-2}}\Big[\frac{1}{|x|^\mu}\ast		\sum_{j=1}^{\kappa}\widehat{K}_i(y)s_{i,1}(y)\Big]\sum_{j=1}^{\kappa}\widehat{K}_i(x)dx\lesssim\sum_{i=1}^{\kappa}\big[\tilde{\mathbf{s}}_{i,in}+\tilde{\mathbf{s}}_{i,out}\big],
\end{split}
\end{equation*}
and $\mu<n-2$ that
\begin{equation*}
\begin{split}
\widehat{G}_{ji,2}
&= \int\frac{1}{|\hat{y}-x|^{n-2}}\Big[\frac{1}{|x|^\mu}\ast		\sum_{j=1}^{\kappa}\widehat{K}_i(y)s_{i,2}(y)\Big]\sum_{j=1}^{\kappa}\widehat{K}_i(x)dx
\lesssim\sum_{i=1}^{\kappa}\big[\tilde{\mathbf{s}}_{i,in}+\tilde{\mathbf{s}}_{i,out}\big].
\end{split}
\end{equation*}
$\bullet$ If $i\neq j$, we assume first $\lambda_i<\lambda_j$. Then, we get from \eqref{zz-0}, \eqref{zzz-0-1} and \eqref{zzz-1-1} that
\begin{equation*}
\begin{split}
(G_{ji,1}+G_{ji,2})(\hat{y})\lesssim\sum_{i,j=1}^{\kappa}\mathscr{R}^{-2}\int\frac{1}{|\hat{y}-x|^{n-2}}T_1(x)
\lesssim\sum_{i,j=1}^{\kappa}\mathscr{R}^{-2}S_1(\hat{y}).
\end{split}
\end{equation*}
Recalling that $\widehat{K}_{j}(x)$, $\widehat{H}_{ji,1}(x)$, $\widehat{H}_{ji,2}(x)$, $s_{i,1}$ and $s_{i,2}$, by Young's inequality we have
\begin{equation}\label{k-z1}
\widehat{K}_j(y)s_{i,1}=\frac{\mathscr{R}^{2-n}\lambda_j^{(n-\mu+2)/2}}{(1+|z_{j}|^2)^{(n-\mu+2)/2}} \frac{\lambda_i^{(n-2)/2}}{(1+|z_{i}|^2)}\lesssim\frac{\mathscr{R}^{2-n}\lambda_j^{(2n-\mu)/2}}{(1+|z_{j}|^2)^{(2n-\mu)/2}}+\widehat{H}_{ji,1}(x),
\end{equation}
\begin{equation}\label{kz2}
\widehat{K}_j(y)s_{i,2}=\frac{\mathscr{R}^{-\frac{n-\mu+2}{2}}\lambda_j^{(n-\mu+2)/2}}{(1+|z_{j}|^2)^{(n-\mu+2)/2}} \frac{\lambda_i^{(n-2)/2}}{(1+|z_{i}|^2)^{(n+\mu-2)/4}}\lesssim\frac{\mathscr{R}^{-\frac{n-\mu+2}{2}}\lambda_j^{(2n-\mu)/2}}{(1+|z_{j}|^2)^{(2n-\mu)/2}}+\widehat{H}_{ji,2}(x).
\end{equation}
where since $(n+\mu-2)p/2>(n-\mu+4)/2$.
Therefore, due to Lemmas \ref{B4}-\ref{B4-1}, we obtain
\begin{equation}\label{kz3}
\begin{split}
\Big(\mathscr{R}^{-\frac{n-\mu+2}{2}}+\mathscr{R}^{2-n}\Big)\int&\frac{1}{|\hat{y}-x|^{n-2}}\Big[\frac{1}{|x|^\mu}\ast	\frac{\lambda_j^{(2n-\mu)/2}}{(1+|z_{j}|^2)^{(2n-\mu)/2}}\Big]\sum_{j=1}^{\kappa}\widehat{K}_j(x)dx\lesssim\sum_{i=1}^{\kappa}\big[\tilde{\mathbf{s}}_{j,in}+\tilde{\mathbf{s}}_{j,out}\big].
\end{split}
\end{equation}
Using Lemmas \ref{B4}-\ref{B4-1}, Lemma \ref{cll-0} and \ref{cll-2} to handle the remainder term, so that we get from \eqref{zzzz-0} and \eqref{zzzz-1} that, for $n=5\hspace{2mm}\mbox{and}\hspace{2mm}1\leq\mu<3$,
\begin{equation*}
\begin{split}
\widetilde{I}&=:\int\frac{1}{|\hat{y}-x|^{n-2}}\Big[\frac{1}{|x|^\mu}\ast		\Big(\sum_{i,j=1}^{\kappa}\big(\widehat{H}_{ji,1}(y)+\widehat{H}_{ji,2}(y)\big)\Big]\sum_{i,j=1}^{\kappa}\widehat{K}_j(x)dx
\\&\lesssim\sum_{i,j=1}^{\kappa}\mathscr{R}^{-\frac{(\mu+1)}{2}}\int\frac{1}{|\hat{y}-x|^{n-2}}T_1(x)\lesssim\sum_{i,j=1}^{\kappa}\mathscr{R}^{-\frac{(\mu+1)}{2}}S_1(\hat{y}).
\end{split}
\end{equation*}
Likewise we have, for $n=6\hspace{2mm}\mbox{and}\hspace{2mm}0<\mu<4$,
\begin{equation*}
\begin{split}
\widetilde{I}\lesssim\sum_{i,j=1}^{\kappa}\mathscr{R}^{-\min\{\mu/2,2p-6+\mu-\theta_1\}}\int\frac{1}{|\hat{y}-x|^{n-2}}T_1(x)\lesssim\sum_{i,j=1}^{\kappa}\mathscr{R}^{-\min\{\mu/2,2p-6+\mu-\theta_1\}}S_1(\hat{y}),
\end{split}
\end{equation*}
and for $n=7\hspace{2mm}\mbox{and}\hspace{2mm}\frac{7}{3}<\mu\leq4$,
\begin{equation*}
\begin{split}
\widetilde{I}\lesssim\sum_{i,j=1}^{\kappa}\mathscr{R}^{-\min\{\mu/2,2p-7+\mu-\theta_1\}}\int\frac{1}{|\hat{y}-x|^{n-2}}T_1(x)\lesssim\sum_{i,j=1}^{\kappa}\mathscr{R}^{-\min\{\mu/2,2p-7+\mu-\theta_2\}}S_1(\hat{y}).
\end{split}
\end{equation*}
On the other hand, noticing that
\begin{equation*}
\widehat{K}_i(y)s_{j,1}\lesssim\frac{\mathscr{R}^{2-n}\lambda_i^{(2n-\mu)/2}}{(1+|z_{i}|^2)^{(2n-\mu)/2}}+\widehat{H}_{ij,1}(x),\hspace{2mm}
\widehat{K}_i(y)s_{j,2}\lesssim\frac{\mathscr{R}^{-\frac{n-\mu+2}{2}}\lambda_i^{(2n-\mu)/2}}{(1+|z_{i}|^2)^{(2n-\mu)/2}}+\widehat{H}_{ij,2}(x).
\end{equation*}
Consequence
\begin{equation*}
\begin{split}
\Big(\mathscr{R}^{-\frac{n+\mu-2}{2}}+\mathscr{R}^{2-n}\Big)\int&\frac{1}{|\hat{y}-x|^{n-2}}\Big[\frac{1}{|x|^\mu}\ast	\frac{\lambda_i^{(2n-\mu)/2}}{(1+|z_{i}|^2)^{(2n-\mu)/2}}\Big]\sum_{i=1}^{\kappa}\widehat{K}_i(x)dx\lesssim\sum_{i=1}^{\kappa}\big[\tilde{\mathbf{s}}_{i,in}+\tilde{\mathbf{s}}_{i,out}\big].
\end{split}
\end{equation*}
$\bullet$ When $i\neq j$, we assume now $\lambda_i\geq\lambda_j$ and $\langle \xi_{ij}\rangle\geq \tilde{A}$. Using Lemma \ref{cll-0}, \eqref{zzz-2-1} and \eqref{zzz-3-1}, we get that
\begin{equation*}
\begin{split}
(G_{ji,1}+G_{ji,2})(\hat{y})\lesssim\sum_{i,j=1}^{\kappa}(\mathscr{R}^{-2}+\tilde{A}^{-2})S_1(\hat{y}).
\end{split}
\end{equation*}
By Lemma \ref{cll-0}, \eqref{zzzz-2} and \eqref{zzzz-3} we have that for $n=5\hspace{2mm}\mbox{and}\hspace{2mm}1\leq\mu<3$, or $n=6\hspace{2mm}\mbox{and}\hspace{2mm}0<\mu<4$, or $n=7\hspace{2mm}\mbox{and}\hspace{2mm}\frac{7}{3}<\mu\leq4$,
\begin{equation*}
\begin{split}
\widetilde{II}&=:\int\frac{1}{|\hat{y}-x|^{n-2}}\Big[\frac{1}{|x|^\mu}\ast		\Big(\sum_{i,j=1}^{\kappa}\big(H_{ij,1}(y)+H_{ij,2}(y)\big)\Big)\Big]\sum_{i=1}^{\kappa}\widehat{K}_i(x)dx
\\&\lesssim\big(\mathscr{R}^{-(3-\mu+\min\{\mu,(5+\mu)/3\})}+\mathscr{R}^{-(2p-2-\theta_1)}+\mathscr{R}^{-(2p-2-\theta_2)}+\tilde{A}^{-(n-\mu+2)/2}+
\tilde{A}^{-\varpi^{\star}_1}\big)S_1(\hat{y}).
\end{split}
\end{equation*}
$\bullet$ When $i\neq j$, we assume finally $\lambda_i\geq\lambda_j$ and $\langle \xi_{ij}\rangle\leq \tilde{A}$. Using \eqref{zzz-2-1}, \eqref{zzz-4-1}, \eqref{zzz-5-1} and the fact that $\lambda_i/\lambda_j\lesssim\mathscr{R}^{-2}\tilde{A}^2$, and choosing $\varepsilon=\tilde{A}^{-1}$ we deduce that, for $n>6-\mu$,
\begin{equation*}
\begin{split}
(G_{ji,1}+G_{ji,2})(\hat{y})&\lesssim\sum_{i,j=1}^{\kappa}\Big[\big[\big(\frac{\lambda_{j}}{\lambda_i}\big)^2+\varepsilon^{2}\big]S_1(\hat{y})
+\tau(\xi_{ij})^{n+\mu-4}\varepsilon^{-\frac{n+\mu-2}{2}}
\int\frac{\lambda_j^{2}}{|\hat{y}-x|^{n-2}}\frac{(s_{j,1}+s_{j,2})}{(1+|z_{j}|^2)^{2}}
		\Big]\\&
\lesssim\sum_{i,j=1}^{\kappa}\big[\mathscr{R}^{-4}\tilde{A}^{4}+\tilde{A}^{-2}\big]S_1(\hat{y})+
\sum_{i,j=1}^{\kappa}\tilde{A}^{n+\mu-4+\frac{n+\mu-2}{2}}\big[\tilde{\mathbf{s}}_{j,in}+\tilde{\mathbf{s}}_{j,out}\big].
\end{split}
\end{equation*}
When $n=6-\mu$,
$$
(G_{ji,1}+G_{ji,2})(\hat{y})\lesssim\sum_{i,j=1}^{\kappa}\big[\mathscr{R}^{-4}\tilde{A}^{4}+\tilde{A}^{-2}\big]S_1(\hat{y})+
\sum_{i,j=1}^{\kappa}\tilde{A}^{n+\mu-2}\big[\tilde{\mathbf{s}}_{j,in}+\tilde{\mathbf{s}}_{j,out}\big].
$$
In view of \eqref{zzzz-2} and \eqref{zzzz-4}, we obtain
for $n=5\hspace{2mm}\mbox{and}\hspace{2mm}1\leq\mu<3$,
\begin{equation}\label{IIIv3-0}
\begin{split}
\widetilde{III}&=:\sum_{i,j=1}^{\kappa}\int\frac{1}{|\hat{y}-x|^{n-2}}\Big[\Big(\frac{1}{|x|^\mu}\ast		\widehat{H}_{ij,1}(y)\Big)\widehat{K}_i(x)+\Big(\frac{1}{|x|^\mu}\ast\widehat{H}_{ij,2}(y)\Big)\widehat{K}_i(x)\chi_{\{|z_i-\xi_{ij}|\leq\varepsilon\}}\Big]dx
\\&\lesssim\sum_{i,j=1}^{\kappa}\big[\mathscr{R}^{-(n-\mu+2)}\tilde{A}^{n-\mu+2}+\tilde{A}^{-2}\big]S_1(\hat{y}).
\end{split}
\end{equation}
where we also taking $\varepsilon=\tilde{A}^{-1}$ and $\lambda_j/\lambda_i\lesssim\mathscr{R}^{-2}\tilde{A}^2$. Moreover applying Lemmas \ref{B4-1}, \ref{B3} and \eqref{zzz-5-1} to get that for $x\in\{x:|z_i-\xi_{ij}|>\varepsilon\}$,
\begin{equation}\label{IIIv3-1}
\begin{split}
\sum_{i,j=1}^{\kappa}&\int\frac{1}{|\hat{y}-x|^{n-2}}\Big[\frac{1}{|x|^\mu}\ast		\widehat{K}_j(y)s_{i,2}(y)\chi_{\{|z_i-\xi_{ij}|>\varepsilon\}}\Big]\widehat{K}_j(x)\\&\lesssim
\sum_{i,j=1}^{\kappa}\tilde{A}^{n+\mu-4+\frac{n+\mu-2}{2}}\int\frac{1}{|\hat{y}-x|^{n-2}}
\Big[\frac{1}{|x|^\mu}\ast		\widehat{K}_j(y)[s_{j,1}(y)+s_{j,2}(y)]\Big]\widehat{K}_j(x)\\&
\lesssim \sum_{i,j=1}^{\kappa}\tilde{A}^{n+\mu-4+\frac{n+\mu-2}{2}}\big[\tilde{\mathbf{z}}_{j,in}+\tilde{\mathbf{s}}_{j,out}\big].
\end{split}
\end{equation}
Therefore by \eqref{k-z1}-\eqref{kz3} and \eqref{IIIv3-0} and \eqref{IIIv3-1} we have that
\begin{equation*}
\begin{split}
(\widehat{G}_{ji,1}+\widehat{G}_{ji,2})(\hat{y})&\lesssim
\sum_{i,j=1}^{\kappa}\big[\mathscr{R}^{-(n-\mu+2)}\tilde{A}^{n-\mu+2}+\tilde{A}^{-2}\big]S_1(\hat{y})\\&+
\sum_{i,j=1}^{\kappa}\big[1+\tilde{A}^{n+\mu-4+\frac{n+\mu-2}{2}}\big]\big[\tilde{\mathbf{s}}_{j,in}+\tilde{\mathbf{s}}_{j,out}\big].
\end{split}
\end{equation*}
When $n=6\hspace{2mm}\mbox{and}\hspace{2mm}0<\mu<4$, or $n=7\hspace{2mm}\mbox{and}\hspace{2mm}\frac{7}{3}<\mu\leq4$, choosing $\theta_1,\theta_2$ small enough, by \eqref{zzzz-2} and \eqref{zzzz-4}we have
$$\widetilde{III}\lesssim\sum_{i,j=1}^{\kappa}\big[\mathscr{R}^{-(n-\mu+2)}\tilde{A}^{n-\mu+2}+\tilde{A}^{-2}\big]S_1(\hat{y}),$$
so we get from \eqref{kz3} and \eqref{IIIv3-1} that
\begin{equation*}
\begin{split}
(\widehat{G}_{ji,1}+\widehat{G}_{ji,2})(\hat{y})&\lesssim
\sum_{i,j=1}^{\kappa}\big[\mathscr{R}^{-(n-\mu+2)}\tilde{A}^{n-\mu+2}+\tilde{A}^{-2}\big]S_1(\hat{y})\\&+
\sum_{i,j=1}^{\kappa}\big[1+\tilde{A}^{n+\mu-4+\frac{n+\mu-2}{2}}\big]\big[\tilde{\mathbf{s}}_{j,in}+\tilde{\mathbf{s}}_{j,out}\big].
\end{split}
\end{equation*}
Combining these estimates, we can easily deduce the desired estimate.

$\mathbf{Case\ 2.}$ $n=4$ and $\mu\in[2,4)$, or $n=5$ and $\mu\in[3,4)$.
Recalling that $\bar{\mathbf{s}}_{i,in}$ and $\bar{\mathbf{s}}_{i,out}$ be given by
$$
\bar{\mathbf{s}}_{i,in}:=\lambda_i^{\frac{n-2}{2}}\frac{1}{\mathscr{R}^{2n-4-\mu}}\frac{1}{\langle z_{i}\rangle^{4}}\chi_{\{|z_{i}|\leq\mathscr{R}\}},\hspace{2mm}\bar{\mathbf{s}}_{i,out}:=\lambda_i^{\frac{n-2}{2}}\frac{1}{\mathscr{R}^{n-\mu+\epsilon_0}}\frac{1}{\langle z_{i}\rangle^{n-2}}\chi_{\{|z_{i}|\geq\mathscr{R}\}}.
$$
The proof is then analogous to the case $1$ and we omit it.

Then from Lemma \ref{B4-1} we infer that
\begin{equation*}
\begin{split}
&\mathcal{I}_1
\lesssim\|\phi\|_{\ast}\int\frac{1}{|\hat{y}-x|^{n-2}}		\sum_{i,j=1}^{\kappa}\widetilde{H}_{j}(x)\Big(\hat{s}_{i,1}+\hat{s}_{i,2}\Big)dx:=\|\phi\|_{\ast}(J_{ji,1}+J_{ji,2})(\hat{y}),
\end{split}
\end{equation*}
\begin{equation*}
\begin{split}
&\mathcal{I}_2
\lesssim\|\phi\|_{\ast}\int\frac{1}{|\hat{y}-x|^{n-2}}\Big[\frac{1}{|x|^\mu}\ast		\Big(\sum_{i,j=1}^{\kappa}\widehat{\mathscr{K}}_j(y)\big(\hat{s}_{i,1}+\hat{s}_{i,2}\big)(y)\Big)\Big]\sum_{i,j=1}^{\kappa}\widehat{K}_j(x)dx:=\|\phi\|_{\ast}(\tilde{J}_{ji,1}+\tilde{J}_{ji,2})(\hat{y}).
\end{split}
\end{equation*}
There are four cases.

$\bullet$ If $i=j$, and denote $z_i=\lambda_{i}(x-\xi_i)$, $\hat{z}_i=\lambda_{i}(\hat{y}-\xi_i)$ then since Lemma \ref{B3} we get
\begin{equation*}
\begin{split}
(J_{ii,1}+J_{ii,2})(\hat{y})&\lesssim \sum_{i=1}^{\kappa}\lambda_i^{2}\int\frac{1}{|\hat{y}-x|^{n-2}}\frac{(\hat{s}_{i,1}+\hat{s}_{i,2})}{(1+|z_{i}|^2)^{2}}dx
%\lesssim \sum_{i=1}^{\kappa}\frac{\lambda_i^{\frac{n-2}{2}}}{\mathcal{R}^{n-2}}\Big[\int\frac{1}{|\hat{z}_i-z_i|^{n-2}}\frac{1}{(1+|z_{i}|^2)^{3}}\chi_{\{|\hat{z}_i|\leq\mathcal{R}\}}dz_i+
%\frac{1}{(1+|\hat{z}_{i}|^2)^{(n-2)/2}}\int_{\{|z_i\leq \mathcal{R}|\}}\frac{1}{(1+|z_{i}|^2)^{3}}dz_i\Big]\\&
%\lesssim \sum_{i=1}^{\kappa}\lambda_i^{\frac{n-2}{2}}\Big[\frac{1}{\mathscr{R}^{2n-4-\mu}}\frac{1}{(1+|\hat{z}_{i}|^2)^2}\chi_{\{|\hat{z}_i|\leq2\mathscr{R}\}}+\frac{1}{\mathscr{R}^{n+2-\mu}}\frac{1}{(1+|\hat{z}_{i}|^2)^{(n-2)/2}}\chi_{\{|\hat{z}_i|\geq2\mathscr{R}\}}\Big]
\lesssim\sum_{i=1}^{\kappa}\big[\bar{\mathbf{s}}_{i,in}+\bar{\mathbf{s}}_{i,out}\big]
\end{split}
\end{equation*}
%and analogously we can take an exponent $\varsigma_0$ satisfying
%Then, from $|\hat{z}_i-z_i|\approx|z_i|$ on $\{|z_{i}|>\mathscr{R}\}$ we deduce that
%\begin{equation*}
%\begin{split}
%\bar{\mathscr{G}}_{ii,2}&
%\lesssim\sum_{i=1}^{\kappa}\lambda_i^{2}\int\frac{1}{|\hat{y}-x|^{n-2}}\frac{\hat{s}_{i,2}}{(1+|z_{i}|^2)^{2}}dx\\&
%\lesssim
%\sum_{i=1}^{\kappa}\frac{\lambda_i^{\frac{n-2}{2}}}{\mathcal{R}^{(n-\mu+2)/2}}\Big[\int_{\{|z_i|>\mathcal{R}\}}\frac{dz_i}{|z_{i}|^{n-2+(n+\mu+6)/2}}+\int\frac{1}{|\hat{z}_i-z_i|^{n-2}}\frac{dz_i}{(1+|z_{i}|^2)^{(n+\mu+6)/4}}\chi_{\{|\hat{z}_i|\geq\mathcal{R}/2\}}\Big]\\&
%\lesssim\sum_{i=1}^{\kappa}\lambda_i^{\frac{n-2}{2}}\Big[\frac{1}{\mathscr{R}^{2n-\mu}}\chi_{\{|\hat{z}_i|\leq\frac{\mathscr{R}}{2}\}}+
%\frac{1}{\mathscr{R}^{n-\mu+\epsilon_0}}\frac{1}{(1+|\hat{z}_{i}|^2)^{(n-2)/2}}\chi_{\{|\hat{z}_i|\geq\frac{\mathscr{R}}{2}\}}\Big]\\&
%\lesssim\sum_{i=1}^{\kappa}\big[\bar{\mathbf{s}}_{i,in}+\bar{\mathbf{s}}_{i,out}\big].
%\end{split}
%\end{equation*}
Similar to the argument of $\bar{\mathscr{G}}_1$, $\bar{\mathscr{G}}_2$ and from Lemmas \ref{B4-1}, \ref{B3} and \ref{cll-1-0} that
\begin{equation*}
\begin{split}
\tilde{J}_{ii,1}+\tilde{J}_{ii,2}
&= \int\frac{1}{|\hat{y}-x|^{n-2}}\Big[\frac{1}{|x|^\mu}\ast		\sum_{j=1}^{\kappa}\widehat{K}_i(y)(\hat{s}_{i,1}(y)+\hat{s}_{i,2}(y))\Big]\sum_{j=1}^{\kappa}\widehat{K}_i(x)dx\lesssim\sum_{i=1}^{\kappa}\big[\bar{\mathbf{s}}_{i,in}+\bar{\mathbf{s}}_{i,out}\big],
\end{split}
\end{equation*}
%and that
%\begin{equation*}
%\begin{split}
%\bar{\mathscr{G}}_{ji,2}
%&= \int\frac{1}{|\hat{y}-x|^{n-2}}\Big[\frac{1}{|x|^\mu}\ast		\sum_{j=1}^{\kappa}\widehat{K}_i(y)\hat{s}_{i,2}(y)\Big]\sum_{j=1}^{\kappa}\widehat{K}_i(x)dx
%\lesssim\sum_{i=1}^{\kappa}\big[\bar{\mathbf{s}}_{i,in}+\bar{\mathbf{s}}_{i,out}\big].
%\end{split}
%\end{equation*}
$\bullet$ If $i\neq j$, we assume first $\lambda_i<\lambda_j$.
Then, we get from \eqref{zz-0-0}, \eqref{zzz-0-1-1} and \eqref{zzz-1-1-1} that
\begin{equation*}
\begin{split}
(J_{ji,1}+J_{ji,2})(\hat{y})\lesssim\sum_{i,j=1}^{\kappa}\mathscr{R}^{-2}\int\frac{1}{|\hat{y}-x|^{n-2}}T_2(x)
\lesssim\sum_{i,j=1}^{\kappa}\mathscr{R}^{-2}S_2(\hat{y}).
\end{split}
\end{equation*}
Recalling that $\widehat{K}_{j}(x)$, $H^{\star}_{ji,1}(x)$, $H^{\star}_{ji,2}(x)$, $\hat{s}_{i,1}$ and $\hat{s}_{i,2}$, by Young's inequality we have
\begin{equation}\label{kz1}
\widehat{K}_j(y)\hat{s}_{i,1}=\frac{\mathscr{R}^{4+\mu-2n}\lambda_j^{(n-\mu+2)/2}}{(1+|z_{j}|^2)^{(n-\mu+2)/2}} \frac{\lambda_i^{(n-2)/2}}{(1+|z_{i}|^2)}\lesssim\frac{\mathscr{R}^{4+\mu-2n}\lambda_j^{(2n-\mu)/2}}{(1+|z_{j}|^2)^{(2n-\mu)/2}}+H^{\star}_{ji,1}(x),
\end{equation}
\begin{equation}\label{kz-22}
\widehat{K}_j(y)\hat{s}_{i,2}=\frac{\mathscr{R}^{-(n-\mu+\epsilon_0)}\lambda_j^{(n-\mu+2)/2}}{(1+|z_{j}|^2)^{(n-\mu+2)/2}} \frac{\lambda_i^{(n-2)/2}}{(1+|z_{i}|^2)^{n-2-\epsilon_0}}\lesssim\frac{\mathscr{R}^{-(n-\mu+\epsilon_0)}\lambda_j^{(2n-\mu)/2}}{(1+|z_{j}|^2)^{(2n-\mu)/2}}+H^{\star}_{ji,2}(x).
\end{equation}
Therefore, due to Lemmas \ref{B4}-\ref{B4-1}, we obtain
\begin{equation*}
\begin{split}
\Big(\frac{1}{\mathscr{R}^{2n-4-\mu}}+\frac{1}{\mathscr{R}^{n-\mu+\epsilon_0}}\Big)\int&\frac{1}{|\hat{y}-x|^{n-2}}\Big[\frac{1}{|x|^\mu}\ast	\frac{\lambda_j^{(2n-\mu)/2}}{(1+|z_{j}|^2)^{(2n-\mu)/2}}\Big]\sum_{j=1}^{\kappa}\widehat{K}_j(x)dx\lesssim\sum_{i=1}^{\kappa}\big[\bar{\mathbf{s}}_{j,in}+\bar{\mathbf{s}}_{j,out}\big].
\end{split}
\end{equation*}
Using Lemmas \ref{B4}-\ref{B4-1}, Lemma \ref{cll-0} and \ref{cll-2} to handle the remainder term, so that we get from Lemma \ref{cll-0}, \eqref{zzzz-00}-\eqref{zzzz-10} and Lemma \ref{cll-0-i} that
\begin{equation*}
\begin{split}
&\int\frac{1}{|\hat{y}-x|^{n-2}}\Big[\frac{1}{|x|^\mu}\ast		\Big(\sum_{i,j=1}^{\kappa}\big(H^{\star}_{ji,1}(y)+H^{\star}_{ji,2}(y)\big)\Big]\sum_{i,j=1}^{\kappa}\widehat{K}_j(x)dx
\\&\lesssim\sum_{i,j=1}^{\kappa}\mathscr{R}^{-2}\int\frac{1}{|\hat{y}-x|^{n-2}}\big(T_2(x)+\frac{1}{\mathscr{R}^{2n-4-\mu}}
\frac{\lambda_i^\frac{n+2}{2}}{\tau( z_{i})^6}\big)\lesssim\sum_{i,j=1}^{\kappa}\mathscr{R}^{-2}S_2(\hat{y})+\sum_{i,j=1}^{\kappa}\big(\bar{\mathbf{s}}_{i,in}+\bar{\mathbf{s}}_{i,out}\big).
\end{split}
\end{equation*}
$\bullet$ When $i\neq j$, we assume now $\lambda_i\geq\lambda_j$ and $\langle \xi_{ij}\rangle\geq \bar{A}$. Using Lemma \ref{cll-0}, \eqref{zzz-2-1-1} and \eqref{zzz-3-1-1}, we get that
\begin{equation*}
\begin{split}
(J_{ji,1}+J_{ji,2})(\hat{y})\lesssim\sum_{i,j=1}^{\kappa}(\mathscr{R}^{-2}+\bar{A}^{-\epsilon_0}+\bar{A}^{-2})S_2(\hat{y}).
\end{split}
\end{equation*}
By Lemma \ref{cll-0}, Lemma \ref{cll-0-i}, \eqref{zzzz-20} and \eqref{zzzz-30} we have
\begin{equation*}
\begin{split}
&\int\frac{1}{|\hat{y}-x|^{n-2}}\Big[\frac{1}{|x|^\mu}\ast		\Big(\sum_{i,j=1}^{\kappa}\big(H^{\star}_{ij,1}(y)+H^{\star}_{ij,2}(y)\big)\Big)\Big]\sum_{i=1}^{\kappa}\widehat{K}_i(x)dx
\\&\lesssim\sum_{i,j=1}^{\kappa}\big(\bar{A}^{-(n-\mu-\epsilon_0)}+\mathscr{R}_{ij}^{-\min\{n-2,2+\epsilon_0\}}\big)S_2(\hat{y})
+\sum_{i,j=1}^{\kappa}(\bar{\mathbf{s}}_{j,in}+\bar{\mathbf{s}}_{j,out}).
\end{split}
\end{equation*}
$\bullet$ When $i\neq j$, we assume finally $\lambda_i\geq\lambda_j$ and $\langle \xi_{ij}\rangle\leq \bar{A}$. Using \eqref{zzz-2-1-1}, \eqref{zzz-4-1-1}, \eqref{zzz-5-1-1} and the fact that $\lambda_i/\lambda_j\lesssim\mathscr{R}^{-2}\bar{A}^2$, and choosing $\varepsilon=\bar{A}^{-1}$ we deduce that, for $n=4$,
\begin{equation*}
\begin{split}
(J_{ji,1}&+J_{ji,2})(\hat{y})\lesssim\sum_{i,j=1}^{\kappa}\Big[\big[\big(\frac{\lambda_{j}}{\lambda_i}\big)^2+\varepsilon^{2}\big]S_2(\hat{y})
+\tau(\xi_{ij})^{2-\epsilon_0}\varepsilon^{\epsilon_0-2}
\int\frac{\lambda_i^{2}}{|\hat{y}-x|^{n-2}}\frac{(\hat{s}_{j,1}+\hat{s}_{j,2})}{(1+|z_{j}|^2)^{2}}
		\Big]\\&
\lesssim\sum_{i,j=1}^{\kappa}\big[\mathscr{R}^{-4}\bar{A}^{4}+\bar{A}^{-2}\big]S_2(\hat{y})+
\sum_{i,j=1}^{\kappa}[\bar{\mathbf{s}}_{j,in}+\bar{\mathbf{s}}_{j,out}\big]\Big(\bar{A}^{4-2\epsilon_0}\hspace{2mm}\mbox{if}\hspace{2mm}n=4;\hspace{1mm}\bar{A}^{6-3\epsilon_0}\hspace{2mm}\mbox{if}\hspace{2mm}n=5\Big).
\end{split}
\end{equation*}
By Lemmas \ref{B4-1}, Lemma \ref{cll-0}, \eqref{zzz-5-1-1}, \eqref{zzzz-20} and \eqref{zzzz-40}, taking $\varepsilon=\bar{A}^{-1}$ and $\lambda_j/\lambda_i\lesssim\mathscr{R}^{-2}\bar{A}^2$, we obtain
\begin{equation*}
\begin{split}
&\sum_{i,j=1}^{\kappa}\int\frac{1}{|\hat{y}-x|^{n-2}}\Big[\Big(\frac{1}{|x|^\mu}\ast		\widehat{H}_{ij,1}^{\star}(y)\Big)\widehat{K}_i(x)+\Big(\frac{1}{|x|^\mu}\ast\widehat{H}_{ij,2}^{\star}(y)\Big)\widehat{K}_i(x)\Big]dx
\\&\lesssim\sum_{i,j=1}^{\kappa}\Big(\frac{\bar{A}^{4}}{\mathscr{R}^{4}}+\frac{1}{\bar{A}^{2}}\Big)^{\frac{n-\epsilon_0-\mu}{4}}S_2(\hat{y})
+\Big(\bar{A}^{4-2\epsilon_0}[\bar{\mathbf{s}}_{j,in}+\bar{\mathbf{s}}_{j,out}\big]\hspace{2mm}\mbox{if}\hspace{2mm}n=4;\hspace{1mm}\bar{A}^{6-3\epsilon_0}[\bar{\mathbf{s}}_{j,in}+\bar{\mathbf{s}}_{j,out}\big]\hspace{2mm}\mbox{if}\hspace{2mm}n=5\Big).
\end{split}
\end{equation*}
Combining these estimates, we eventually get that the desired estimate and concluding the proof.
\end{proof}

\section{Proof of Lemma \ref{estimate2}}
The whole section is dedicated to the proof of Lemma \ref{estimate2},
which is established by the following four steps with several key Propositions with different right hand sides. To show Lemma \ref{estimate2}, we are going to apply the tree structure of $\delta$-interation bubbles as $\delta\rightarrow0$. The concept of bubble-trees was investigated in \cite{DSW21,Druet,Parker,G-Tian}.
\subsection{Tree structure of weak interaction bubbles }
To reformulate the tree structure by introducing the sequence of $\kappa$ bubbles  $\big\{W_i^{\left(m\right)}:=W[\xi_i^{\left(m\right)},\lambda_i^{\left(m\right)}],~i=1,\cdots,\kappa\big\}_{i=1}^{\infty}$
and satisfying
\begin{equation}\label{TTT}
		\left\lbrace
\begin{aligned}
&\lambda_1^{(m)}\leq\cdots\leq\lambda_{\kappa}^{(m)}\hspace{2mm}\mbox{for all}\hspace{2mm}m\in\mathbb{N},\\&
\mathscr{R}_m:=\frac{1}{2}\min\limits_{i\neq j}\big\{\mathscr{R}_{ij}^{(m)}:~i,j=1,\cdots,\kappa,~i\neq j\big\},\\&
\mbox{either}\hspace{2mm}\lim\limits_{m\rightarrow\infty}\xi_{ij}^{(m)}=\xi_{ij}^{(\infty)}\in\mathbb{R}^n\hspace{2mm}\mbox{or}\hspace{2mm}\lim\limits_{m\rightarrow\infty}\xi_{ij}^{(m)}\rightarrow\infty,
\end{aligned}
\right.
\end{equation}
where $\xi_{ij}^{(m)}=\lambda_{i}^{(m)}(\xi_{j}^{(m)}-\xi_{i}^{(m)})$.
\begin{Def}
Let  $\preceq$ be a strict partial
order on a set $\widetilde{T}$, and $\prec$ the corresponding strict partial order on a set $\widetilde{T}$.
\begin{itemize}
\item[$\bullet$]
We say that a partially ordered set $(\widetilde{T},\prec)$ is a tree if for any $\tilde{t}\in\widetilde{T}$ such that the set $\{\tilde{s}\in\widetilde{T}:~\tilde{s}\prec\tilde{t}\}$ is well-ordered by the relation $\prec$.
\item[$\bullet$]
A descendant of $\tilde{s}\in\widetilde{T}$ is any element $\tilde{t}\in\widetilde{T}$ such that $\tilde{s}\prec\tilde{t}$.
\end{itemize}
\end{Def}
The next lemma, proven in \cite{DSW21}.
\begin{lem}
 For any sequence of $\{W_{i}^{(m)}\}$ satisfying \eqref{TTT}, we set a relation
$\prec$ on $I=\{1,\cdots,\kappa\}$ as
$$i\prec j~\Leftrightarrow~i<j\hspace{2mm}\mbox{and}\hspace{2mm}\hspace{2mm}\lim\limits_{m\rightarrow\infty}\xi_{ij}^{(m)}=\xi_{ij}^{(\infty)}\in\mathbb{R}^n.$$
Then $\prec$ is a strict partial order and there exists $\kappa^{\ast}\in\{1,\cdots,\kappa\}$ such that $I$ can be be expressed as a tree.
\end{lem}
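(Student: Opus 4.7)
The plan is to verify the three defining properties of a strict partial order and then read off the tree (forest) structure. Irreflexivity is immediate, since $i\prec j$ demands $i<j$, so $i\not\prec i$. Asymmetry is likewise built into the strict inequality $i<j$. The work is in transitivity and in the chain property of predecessors.

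For transitivity the main step is to extract the following dichotomy from hypothesis \eqref{TTT} combined with weak interaction: if $i<j$ and $\xi_{ij}^{(m)}\to\xi_{ij}^{(\infty)}\in\mathbb{R}^n$, then $\lambda_i^{(m)}/\lambda_j^{(m)}\to 0$. Indeed $|\xi_{ij}^{(m)}|=\lambda_i^{(m)}|\xi_j^{(m)}-\xi_i^{(m)}|$ is bounded, and
$$\sqrt{\lambda_i^{(m)}\lambda_j^{(m)}}\,|\xi_i^{(m)}-\xi_j^{(m)}|=\sqrt{\lambda_j^{(m)}/\lambda_i^{(m)}}\,|\xi_{ij}^{(m)}|.$$
Since $i<j$ forces $\sqrt{\lambda_i^{(m)}/\lambda_j^{(m)}}\le 1$ and since $\mathscr{R}_{ij}^{(m)}\to\infty$ by weak interaction, one of the remaining two candidates in the $\max$ defining $\mathscr{R}_{ij}^{(m)}$ must diverge; were it the product $\sqrt{\lambda_i\lambda_j}|\xi_i-\xi_j|$ while $\lambda_j/\lambda_i$ stayed bounded, then $|\xi_{ij}^{(m)}|$ would diverge, contradicting the assumption. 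Hence $\lambda_j^{(m)}/\lambda_i^{(m)}\to\infty$. Using this, if $i\prec j$ and $j\prec k$, I decompose
$$\xi_{ik}^{(m)}=\xi_{ij}^{(m)}+\frac{\lambda_i^{(m)}}{\lambda_j^{(m)}}\,\xi_{jk}^{(m)},$$
whose right-hand side converges to $\xi_{ij}^{(\infty)}\in\mathbb{R}^n$, while $\lambda_i^{(m)}/\lambda_k^{(m)}=(\lambda_i^{(m)}/\lambda_j^{(m)})(\lambda_j^{(m)}/\lambda_k^{(m)})\to 0$; combined with $i<k$ this yields $i\prec k$.

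For the tree property I need to check that for each $t\in I$ the predecessor set $\{s\in I:s\prec t\}$ is totally ordered. Given $i_1<i_2<j$ with $i_1\prec j$ and $i_2\prec j$, the identity
$$\xi_{i_1 i_2}^{(m)}=\xi_{i_1 j}^{(m)}-\frac{\lambda_{i_1}^{(m)}}{\lambda_{i_2}^{(m)}}\,\xi_{i_2 j}^{(m)}$$
exhibits $\xi_{i_1 i_2}^{(m)}$ as a bounded sequence (both factors $\lambda_{i_1}^{(m)}/\lambda_{i_2}^{(m)}\le 1$ and $\xi_{i_k j}^{(m)}$ bounded), so the alternative in \eqref{TTT} forces $\xi_{i_1 i_2}^{(m)}\to\xi_{i_1 i_2}^{(\infty)}\in\mathbb{R}^n$. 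Hence $i_1\prec i_2$, and predecessors form a chain.

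Finally, I will define $\kappa^\ast$ as the number of $\prec$-minimal elements of $I$; each $i\in I$ has a unique minimal ancestor, producing a partition of $I$ into $\kappa^\ast$ rooted chains-closed sub-posets, i.e., a forest of $\kappa^\ast$ trees (or one tree if $\kappa^\ast=1$). The hard part will be the dichotomy in the second paragraph, namely ensuring convergence of $\xi_{ij}^{(m)}$ in $\mathbb{R}^n$ really forces $\lambda_i^{(m)}/\lambda_j^{(m)}\to 0$ (rather than merely staying bounded), because transitivity and the chain property both depend on it; the rest is bookkeeping with the decomposition identities above and the dichotomy built into \eqref{TTT}.
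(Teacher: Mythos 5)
Your proof is correct, and it is essentially the standard argument (the paper itself gives no proof here, deferring to \cite{DSW21}, where the verification proceeds by exactly these scaling identities $\xi_{ik}^{(m)}=\xi_{ij}^{(m)}+\tfrac{\lambda_i^{(m)}}{\lambda_j^{(m)}}\xi_{jk}^{(m)}$ and $\xi_{i_1i_2}^{(m)}=\xi_{i_1j}^{(m)}-\tfrac{\lambda_{i_1}^{(m)}}{\lambda_{i_2}^{(m)}}\xi_{i_2j}^{(m)}$ together with the dichotomy in \eqref{TTT}). Irreflexivity and transitivity, plus total ordering of each predecessor set, are exactly what the paper's definition of a tree requires, and your identification of $\kappa^{\ast}$ with the number of $\prec$-minimal elements matches the intended forest/root structure.

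One remark: the step you single out as the ``hard part'' --- that $i\prec j$ forces $\lambda_i^{(m)}/\lambda_j^{(m)}\to0$ --- does require $\mathscr{R}_{ij}^{(m)}\to\infty$, which is supplied by the weak-interaction context in which the lemma is applied but is not literally one of the conditions listed in \eqref{TTT}. It is, however, dispensable: since $i<j$ gives $\lambda_i^{(m)}/\lambda_j^{(m)}\le1$ and both $\xi_{ij}^{(m)}$, $\xi_{jk}^{(m)}$ are bounded, the decomposition already shows $\xi_{ik}^{(m)}$ is bounded, and the dichotomy in \eqref{TTT} (the limit exists, finite or infinite) then forces convergence in $\mathbb{R}^n$ --- exactly the argument you use for the chain property. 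So transitivity, like the chain property, needs only boundedness plus the dichotomy, and your proof goes through without any appeal to the interaction strength.
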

For each $i\in I$, we set $\mathscr{H}(i)$ be the set of descendants
of $i\in I$, that is, $\mathscr{H}(i)=\{j\in I:~i\prec j\}$, or equivalently $$\mathscr{H}(i):=\{j\in I:\hspace{2mm} i<j,\hspace{2mm}\lim\limits_{m\rightarrow\infty}\xi_{ij}^{(m)}=\xi_{ij}^{(\infty)}\in\mathbb{R}^n\}.$$
Moreover, we define the sets
$$B(0,\widetilde{A}):=\{x:|z_i^{(m)}|\leq\widetilde{A}\}\hspace{2mm}\mbox{and}\hspace{2mm}B(0,\bar{A}):=\{x:|z_i^{(m)}|\leq\bar{A}\}.$$
We can find the following.
\begin{lem}\label{cll-11-2}
For any $\widetilde{A}>0$. Assume that $n\geq6-\mu$, $\mu\in(0,n)$ and $0<\mu\leq4$, $j\notin\mathscr{H}(i)$. Then we have that, uniformly for $x\in B(0,\widetilde{A})$,
$W_j^{(m)}(x)=o(1)W_i^{(m)}(x)$, and
\begin{equation}\label{zzz-1-2}
s_{j,1}+s_{j,2}=o(1)s_{i,1};\hspace{2mm}t_{j,1}+t_{j,2}=o(1)t_{i,1}\hspace{2mm}\mbox{for} \hspace{2mm} 0<\mu<\frac{n+\mu-2}{2},
\end{equation}
and that, uniformly for $x\in B(0,\bar{A})$, $W_j^{(m)}(x)=o(1)W_i^{(m)}(x)$,
\begin{equation}\label{zzz-2-2}
\hat{s}_{j,1}+\hat{s}_{j,2}=o(1)\hat{s}_{i,1};\hspace{2mm}\hat{t}_{j,1}+\hat{t}_{j,2}=o(1)\hat{t}_{i,1}\hspace{2mm}\mbox{for} \hspace{2mm}\frac{n+\mu-2}{2}\leq\mu<4.
\end{equation}
\end{lem}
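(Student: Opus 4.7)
The plan is to first establish the pointwise bubble decay $W_j^{(m)}(x) = o(1)W_i^{(m)}(x)$ uniformly on the bounded set, and then transfer this estimate to the weight functions $s, t, \hat s, \hat t$ by direct comparison using the common $\mathscr{R}_m$-powers that appear in their definitions. The condition $j \notin \mathscr{H}(i)$ splits into two cases by the construction of the tree: (I) $j < i$, so $\lambda_j^{(m)} \leq \lambda_i^{(m)}$ by the ordering in \eqref{TTT}; (II) $j > i$ with $\lim_{m \to \infty} \xi_{ij}^{(m)} = \infty$, so $\lambda_j^{(m)} \geq \lambda_i^{(m)}$ and $\lambda_i^{(m)}|\xi_i^{(m)} - \xi_j^{(m)}| \to \infty$. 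In both cases $\mathscr{R}_{ij}^{(m)} \to \infty$ (since the family is $\delta$-interacting with $\delta \to 0$), which will drive every estimate.

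For the bubble comparison on $B(0,\widetilde{A})$, note $W_i^{(m)}(x) \geq c_{\widetilde{A}} (\lambda_i^{(m)})^{(n-2)/2}$ since $\tau(z_i^{(m)}) \leq \tau(\widetilde{A})$. For $W_j^{(m)}$ we use the triangle inequality
\[
\lambda_j^{(m)}|x - \xi_j^{(m)}| \;\geq\; \lambda_j^{(m)}|\xi_i^{(m)} - \xi_j^{(m)}| - \frac{\lambda_j^{(m)}}{\lambda_i^{(m)}}\,\widetilde{A}.
\]
In case (I), if $\lambda_j^{(m)}/\lambda_i^{(m)} \to 0$ then $(\lambda_j/\lambda_i)^{(n-2)/2}$ already gives $o(1)$; otherwise $\mathscr{R}_{ij}^{(m)} \to \infty$ forces $\sqrt{\lambda_i \lambda_j}|\xi_i - \xi_j| \to \infty$, hence $\tau(z_j^{(m)}) \to \infty$ uniformly on $B(0,\widetilde{A})$. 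In case (II), $\lambda_i|x - \xi_j| \geq |\xi_{ij}^{(m)}| - \widetilde{A} \to \infty$, so $\lambda_j|x-\xi_j| \geq (\lambda_j/\lambda_i)(|\xi_{ij}^{(m)}| - \widetilde{A}) \to \infty$; combined with $\lambda_j \geq \lambda_i$ this yields $W_j/W_i \lesssim (\lambda_i/\lambda_j)^{(n-2)/2}(|\xi_{ij}^{(m)}| - \widetilde{A})^{-(n-2)} = o(1)$.

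To deduce the weight estimates \eqref{zzz-1-2} and \eqref{zzz-2-2}, observe that for $m$ large one has $\widetilde{A} < \mathscr{R}_m$, so $s_{i,1}(x) \geq c \,\lambda_i^{(n-2)/2}/\mathscr{R}_m^{n-2}$ and $s_{i,2}(x) = 0$ on $B(0,\widetilde{A})$. I would split the estimate of $s_{j,1}+s_{j,2}$ according to $|z_j^{(m)}| \lessgtr \mathscr{R}_m$: when $|z_j^{(m)}| \leq \mathscr{R}_m$ the ratio $s_{j,1}/s_{i,1}$ reduces to $(\lambda_j/\lambda_i)^{(n-2)/2}\tau(z_i)^2/\tau(z_j)^2$, which is $o(1)$ by the previous paragraph; when $|z_j^{(m)}| > \mathscr{R}_m$ the ratio $s_{j,2}/s_{i,1}$ picks up an extra factor $\mathscr{R}_m^{(n+\mu-6)/2}/\tau(z_j)^{(n+\mu-2)/2}$, which is controlled using $\mathscr{R}_m \leq \mathscr{R}_{ij}^{(m)}/2$ and the key inequality $\mathscr{R}_{ij}^{(m)} \geq |\xi_{ij}^{(m)}|\sqrt{\lambda_j/\lambda_i}$ (valid once $|\xi_{ij}^{(m)}| \geq 1$). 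The analogous estimate for $t_{j,1}+t_{j,2}$ vs $t_{i,1}$ follows verbatim since $t_{i,l}/s_{i,l}$ shares the same $z_i$-profile up to the factor $\lambda_i^2/\tau(z_i)^2$, and the claims for $\hat s, \hat t$ on the range $\frac{n+\mu-2}{2}\le\mu<4$ on $B(0,\bar A)$ follow by the same comparison with the modified exponents.

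The main obstacle will be case (II) with $\lambda_j^{(m)}/\lambda_i^{(m)} \to \infty$: here the bubble $W_j$ has a larger peak than $W_i$, and one must use the fact that on $B(0,\widetilde{A})$ we are actually far from that peak, combined with the assumption $n \geq 6-\mu$ (which makes the exponent $(n+\mu-6)/2$ nonnegative) and the lower bound $\mathscr{R}_{ij} \geq |\xi_{ij}|\sqrt{\lambda_j/\lambda_i}$, to absorb the $(\lambda_j/\lambda_i)$-factor into the $(|\xi_{ij}|-\widetilde{A})^{-1}$-decay coming from the tree condition $\xi_{ij}^{(m)} \to \infty$. Once this is handled for $s_{j,2}$, all remaining estimates for $t, \hat s, \hat t$ follow by the same mechanism.
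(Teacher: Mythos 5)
Your proposal is correct and follows essentially the same route as the paper: a case split on $j\notin\mathscr{H}(i)$, the pointwise comparison $W_j^{(m)}=o(1)W_i^{(m)}$ obtained from the boundedness of $z_i^{(m)}$ together with either $\lambda_j^{(m)}/\lambda_i^{(m)}\to0$ or $\tau(z_j^{(m)})\to\infty$, and then direct ratio comparisons of the weight functions using $\tau(z_j^{(m)})\gtrsim\mathscr{R}_m$ (resp.\ $\gtrsim(\lambda_j^{(m)}/\lambda_i^{(m)})\,|\xi_{ij}^{(m)}|$), $\mathscr{R}_m\le\mathscr{R}_{ij}^{(m)}/2$ and $\mathscr{R}_{ij}^{(m)}\ge\sqrt{\lambda_j^{(m)}/\lambda_i^{(m)}}\,|\xi_{ij}^{(m)}|$. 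The one step to make explicit is that in your case (II) ($\lambda_j\ge\lambda_i$ with $|\xi_{ij}^{(m)}|\to\infty$) the inner weights $s_{j,1}$, $\hat s_{j,1}$ have support disjoint from $B(0,\widetilde{A})$ (resp.\ $B(0,\bar A)$) for $m$ large, which is exactly how the paper absorbs the $(\lambda_j/\lambda_i)$-factor there and is the precise form of the ``far from the peak'' argument you sketch in your closing paragraph.
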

\begin{proof}
For $j\notin\mathscr{H}(i)$, we then consider separately the following two cases.\\
$\mathbf{Case\ 1.}$ $j<i$, $\lim\limits_{m\rightarrow\infty}\xi_{ij}^{(m)}$ exists and $\lim\limits_{m\rightarrow\infty}|\xi_{ij}^{(m)}|<\infty$. Noticing that $W_{i}^{(m)}(x)\approx\lambda_{i}^{\frac{n-2}{2}}\tau(z_i)^{2-n}$ and  $W_{j}^{(m)}(x)\approx\lambda_{j}^{\frac{n-2}{2}}\tau(z_j)^{2-n}$. Passing to the limit as $m\rightarrow\infty$ on the set $B(0,\widetilde{A})$ or $B(0,\bar{A})$ we would get
\begin{equation*}
W_{j}^{(m)}(x)\lesssim o(1)W_{i}^{(m)}(x).
\end{equation*}
A direct computation, it follows that
\begin{equation*}
\begin{split}
&s_{j,1}=o(1)s_{i,1},\hspace{2mm}s_{j,2}=o(1)s_{i,1};\hspace{2mm}t_{j,1}=o(1)t_{i,1},\hspace{2mm}t_{j,2}=o(1)t_{i,1} \hspace{2mm}\mbox{for}\hspace{2mm} 0<\mu<\frac{n+\mu-2}{2},\\&
\hat{s}_{j,1}=o(1)\hat{s}_{i,1},\hspace{2mm}\hat{s}_{j,2}=o(1)\hat{s}_{i,1};\hspace{2mm}\hat{t}_{j,1}=o(1)\hat{t}_{i,1},\hspace{2mm}\hat{t}_{j,2}=o(1)\hat{t}_{i,1} \hspace{2mm}\mbox{for} \hspace{2mm} \frac{n+\mu-2}{2}\leq\mu<4.
\end{split}
\end{equation*}
$\mathbf{Case\ 2.}$ $\lim\limits_{m\rightarrow\infty}|\xi_{ij}^{(m)}|=\infty$ and $\mathscr{R}_{ij}=\sqrt{\lambda_j/\lambda_i}|\xi_{ij}^{(m)}|$. On the set  $B(0,\widetilde{A})$ or $B(0,\bar{A})$, $W_{j}^{(m)}(x)\lesssim o(1)W_{i}^{(m)}(x)$ if $m$ is large enough, plus the fact that
$|z_j|\geq\frac{1}{2}\sqrt{\lambda_j/\lambda_i}\mathscr{R}_{ij}$, and
\begin{equation*}
\begin{split}
&s_{j,1}=o(1)s_{i,1},\hspace{2mm}z_{j,2}=o(1)s_{i,1};\hspace{2mm}t_{j,1}=o(1)t_{i,1},\hspace{2mm}t_{j,2}=o(1)t_{i,1} \hspace{2mm}\mbox{for} \hspace{2mm} 0<\mu<\frac{n+\mu-2}{2},\\&
\hat{s}_{j,1}=o(1)\hat{s}_{i,1},\hspace{2mm}\hat{s}_{j,2}=o(1)\hat{s}_{i,1};\hspace{2mm}\hat{t}_{j,1}=o(1)\hat{t}_{i,1},\hspace{2mm}\hat{t}_{j,2}=o(1)\hat{t}_{i,1} \hspace{2mm}\mbox{for} \hspace{2mm} \frac{n+\mu-2}{2}<\mu<4.
\end{split}
\end{equation*}
For $\lambda_j\leq\lambda_i$, one has
\begin{equation*}
\frac{s_{j,2}}{s_{i,1}}=\mathscr{R}^{\frac{n+\mu-6}{2}}\big(\frac{\lambda_j}{\lambda_i}\big)^{\frac{n-2}{2}}
\frac{\tau(z_i)^2}{\tau(z_{j})^{(n+\mu-2)/2}}\rightarrow0, \hspace{2mm}\mbox{for} \hspace{2mm} 0<\mu<\frac{n+\mu-2}{2},
\end{equation*}
\begin{equation*}
\frac{\hat{s}_{j,2}}{\hat{s}_{i,1}}=\mathscr{R}^{n-4-\epsilon_0}\big(\frac{\lambda_j}{\lambda_i}\big)^{\frac{n-2}{2}}
\frac{\tau(z_i)^2}{\tau(z_{j})^{n-2-\epsilon_0}}\rightarrow0, \hspace{2mm}\hspace{2mm}\hspace{2mm}\mbox{for} \hspace{2mm} \frac{n+\mu-2}{2}\leq\mu<4.
\end{equation*}
If $\lambda_j>\lambda_i$, the support of $s_{j,1}$ does not intersect $B(0,\widetilde{A})$ and the support of $\hat{s}_{j,1}$ does not intersect $B(0,\bar{A})$. Thus, $t_{j,1}=o(1)t_{i,1},\hspace{2mm}t_{j,2}=o(1)t_{i,1}$ and $\hat{t}_{j,1}=o(1)\hat{t}_{i,1},\hspace{2mm}\hat{t}_{j,2}=o(1)\hat{t}_{i,1}$  for $0<\mu<4$. Thus we conclude the proof.
\end{proof}
As a consequence of this Lemma, we find the following.
\begin{lem}\label{cll-11-3}
 Assume that $n\geq6-\mu$, $\mu\in(0,n)$ and $0<\mu\leq4$, $j\notin\mathscr{H}(i)$. Then we have that
\begin{equation}\label{zzz-2}
\sum\limits_{j=1}^{\kappa}W_j^{(m)}=\sum\limits_{j\in\mathscr{H}(i)}W_j^{(m)}+(1+o(1))W_i^{(m)},\hspace{3mm}\mbox{uniformly for}\hspace{2mm}x\in B(0,\widetilde{A})\hspace{2mm}\mbox{or}\hspace{2mm}x\in B(0,\bar{A}),
\end{equation}
and that uniformly for $x\in B(0,\widetilde{A})$,
in the case $0<\mu<\frac{n+\mu-2}{2}$,
\begin{equation}\label{zzz-8-1}
\left\lbrace
\begin{aligned}
&S_1(x)=\sum\limits_{j\in\mathscr{H}(i)}(s_{j,1}(x)+s_{j,2}(x))+(1+o(1))s_{i,1}(x),\\
&T_1(x)=\sum\limits_{j\in\mathscr{H}(i)}(t_{j,1}(x)+t_{j,2}(x))+(1+o(1))t_{i,1}(x),
\end{aligned}
\right.
\end{equation}
and that uniformly for $x\in B(0,\bar{A})$,
in the case $\frac{n+\mu-2}{2}\leq\mu<4$,
\begin{equation}\label{zzz-9-2}
\left\lbrace
\begin{aligned}
&S_2(x)=\sum\limits_{j\in\mathscr{H}(i)}(\hat{s}_{j,1}(x)+\hat{s}_{j,2}(x))+(1+o(1))\hat{s}_{i,1}(x),\\
&T_2(x)=\sum\limits_{j\in\mathscr{H}(i)}(\hat{t}_{j,1}(x)+\hat{t}_{j,2}(x))+(1+o(1))\hat{t}_{i,1}(x).
\end{aligned}
\right.
\end{equation}
\end{lem}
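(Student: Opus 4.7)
The plan is to derive Lemma \ref{cll-11-3} as an immediate corollary of Lemma \ref{cll-11-2}, by splitting the index set $I=\{1,\cdots,\kappa\}$ into the three disjoint pieces $\{i\}$, $\mathscr{H}(i)$, and $I\setminus(\mathscr{H}(i)\cup\{i\})$. The virtue of this partition is that every index in the third piece is a non-descendant of $i$ distinct from $i$, so each such index falls exactly within the scope of Lemma \ref{cll-11-2}.

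First I would write
\begin{equation*}
\sum_{j=1}^\kappa W_j^{(m)}(x) \;=\; W_i^{(m)}(x) + \sum_{j\in\mathscr{H}(i)} W_j^{(m)}(x) + \sum_{\substack{j\notin\mathscr{H}(i)\\ j\neq i}} W_j^{(m)}(x),
\end{equation*}
and then invoke Lemma \ref{cll-11-2}, which gives $W_j^{(m)}(x)=o(1)\,W_i^{(m)}(x)$ uniformly on $B(0,\widetilde{A})$ (or $B(0,\bar{A})$) as $m\to\infty$ for every $j$ appearing in the last sum. Since $\kappa$ is a fixed finite number, adding these $o(1)W_i^{(m)}$ terms together preserves the $o(1)$ behaviour, so the third piece can be absorbed into a $o(1)W_i^{(m)}(x)$ factor. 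Combining with $W_i^{(m)}$ itself gives the $(1+o(1))W_i^{(m)}$ contribution and establishes \eqref{zzz-2}.

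For the weighted sums, I would carry out the identical three-way decomposition on the sums defining $S_1, T_1$ in the regime $0<\mu<(n+\mu-2)/2$, and on $S_2, T_2$ in the regime $(n+\mu-2)/2 \leq \mu < 4$, see Definition \ref{st-11}. Applying the second and third conclusions of Lemma \ref{cll-11-2}, namely
\begin{equation*}
s_{j,1}+s_{j,2}=o(1)s_{i,1},\quad t_{j,1}+t_{j,2}=o(1)t_{i,1} \qquad (0<\mu<\tfrac{n+\mu-2}{2}),
\end{equation*}
and
\begin{equation*}
\hat{s}_{j,1}+\hat{s}_{j,2}=o(1)\hat{s}_{i,1},\quad \hat{t}_{j,1}+\hat{t}_{j,2}=o(1)\hat{t}_{i,1} \qquad (\tfrac{n+\mu-2}{2}\leq\mu<4),
\end{equation*}
uniformly on the relevant balls for every $j\notin\mathscr{H}(i)$, $j\neq i$, the non-descendant contributions to $S_1,T_1$ (resp.\ $S_2,T_2$) get absorbed into the $(1+o(1))$ prefactor of the $i$-th term, which is exactly the content of \eqref{zzz-8-1} and \eqref{zzz-9-2}.

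Since the core comparison estimates are already contained in Lemma \ref{cll-11-2}, there is no real obstacle here; the statement is essentially a bookkeeping consequence. The only point worth being careful about is the uniformity of the error $o(1)$ on $B(0,\widetilde{A})$ (or $B(0,\bar{A})$), but this is already the form in which Lemma \ref{cll-11-2} is stated, and because $\kappa$ is fixed and independent of $m$ the triangle inequality preserves uniformity when one sums the finite number of non-descendant contributions.
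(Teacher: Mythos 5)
Your route is the paper's route: the paper offers no separate proof of Lemma \ref{cll-11-3} and presents it precisely as a consequence of Lemma \ref{cll-11-2}, via the three-way split of $I$ into $\{i\}$, $\mathscr{H}(i)$ and the remaining non-descendants, summing the finitely many uniform $o(1)$ contributions. Your argument for \eqref{zzz-2} is complete as written.

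One small point you gloss over in \eqref{zzz-8-1} and \eqref{zzz-9-2}: the $i$-th summand of $S_1$ (resp.\ $T_1$, $S_2$, $T_2$) is $s_{i,1}+s_{i,2}$ (resp.\ $t_{i,1}+t_{i,2}$, etc.), whereas the target identity carries only $(1+o(1))s_{i,1}$; Lemma \ref{cll-11-2} says nothing about comparing $s_{i,2}$ with $s_{i,1}$, so absorbing the whole $i$-th term into $(1+o(1))s_{i,1}$ needs one extra observation. Namely, since the bubbles are $1/m$-interacting, $\mathscr{R}_m\rightarrow\infty$, so for $m$ large one has $\widetilde{A}<\mathscr{R}_m$ (resp.\ $\bar{A}<\mathscr{R}_m$), and on $B(0,\widetilde{A})=\{|z_i^{(m)}|\leq\widetilde{A}\}$ the cut-offs $\chi_{\{|z_i|>\mathscr{R}\}}$ force $s_{i,2}=t_{i,2}=\hat{s}_{i,2}=\hat{t}_{i,2}\equiv0$. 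With that line added, your bookkeeping gives exactly \eqref{zzz-8-1} and \eqref{zzz-9-2}, and the uniformity argument (finitely many indices, each estimate uniform on the fixed ball) is fine.
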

To prove the main results in Lemma \ref{estimate2}, the arguments depend a lot on the nondegeneracy property of the positive solutions of equation \eqref{ele-1.1}.
It is a key ingredient in the stability analysis of functional inequality and Lyapunov-Schmidt reduction method of constructing  blow-up solutions of the equation (cf. \cite{MUSSO1, Manuel, MUSSO2, MUSSO3, DeMuS-pp}).
\begin{Prop}[Non-degeneracy\cite{GMYZ22,XLi}]\label{prondgr}
Assume $n\geq 3$, $0<\mu\leq n$ and $0<\mu\leq4$. Then the solution
 $ W[\xi,\lambda](x)$ of the equation
\begin{equation*}
    -\Delta u=(|x|^{-\mu}\ast u^{p})u^{p-1} \quad \mbox{in}\quad \mathbb{R}^n.
    \end{equation*}
is non-degenerate in the sense that all bounded solutions of linearized equation
\begin{equation*}%\label{deflop}
-\Delta v-p\left(|x|^{-\mu} \ast (W^{p-1}\phi)\right)W^{p-1}
-(p-1)\left(|x|^{-\mu} \ast W^{p}\right)W^{p-2}\phi=0
\end{equation*}
are the linear combination of the functions
\begin{equation*}
\partial_\lambda W[\xi,\lambda],\hspace{2mm}\partial_{\xi_1}W[\xi,\lambda],\cdots,\partial_{\xi_n}W[\xi,\lambda].
\end{equation*}
\end{Prop}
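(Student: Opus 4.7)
\textbf{Proof proposal for Proposition \ref{prondgr}.}
The plan is to reduce to the unit bubble and then diagonalize the linearized operator by stereographic projection and spherical harmonics, exactly in the spirit indicated by the paper's remark on \cite{XLi}. By translation invariance and the scaling $v(x)\mapsto \lambda^{(n-2)/2}v(\lambda(x-\xi))$ of the linearized equation, it suffices to prove the result at $\xi=0$, $\lambda=1$, i.e.\ to classify bounded solutions $v$ of
\begin{equation*}
-\Delta v \;=\; p\bigl(|x|^{-\mu}\ast(W^{p-1}v)\bigr)W^{p-1}+(p-1)\bigl(|x|^{-\mu}\ast W^{p}\bigr)W^{p-2}v
\end{equation*}
with $W(x)=\alpha_{n,\mu}(1+|x|^2)^{-(n-2)/2}$. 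First I would establish the decay of any bounded solution $v$: a Morse/Moser iteration applied to the linearized equation (using the $L^{2n/(n-\mu)}$-boundedness of the Riesz potential of $W^{p-1}v$) upgrades boundedness to $v(x)=O(|x|^{2-n})$ at infinity; this is what makes the forthcoming $L^{2}$--expansion on $\mathbb{S}^{n}$ legitimate and also rules out solutions coming from the non-$L^{2}$ kernel.

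Next, I would perform the inverse stereographic projection $\pi:\mathbb{S}^{n}\setminus\{\text{north pole}\}\to\mathbb{R}^{n}$ and set $\tilde v(\omega)=(1+|\pi(\omega)|^{2})^{(n-2)/2}v(\pi(\omega))/\alpha_{n,\mu}$. Under this conformal change the Aubin--Talenti bubble becomes a constant, $-\Delta_{\mathbb{R}^n}+(\text{conformal factor})$ turns into the conformal Laplacian $-\Delta_{\mathbb{S}^n}+\tfrac{n(n-2)}{4}$, and the convolution $|x|^{-\mu}\ast f$ transforms (up to an explicit Jacobian) into convolution on $\mathbb{S}^n$ with the chordal kernel $|1-\omega\cdot\eta|^{-\mu/2}$. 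The linearized equation then takes the sphere form
\begin{equation*}
\Bigl(-\Delta_{\mathbb{S}^n}+\tfrac{n(n-2)}{4}\Bigr)\tilde v \;=\; c_{n,\mu}\Bigl[p\,\mathcal{K}_\mu(\tilde v)+(p-1)\,\Lambda_\mu\,\tilde v\Bigr],
\end{equation*}
where $\mathcal{K}_\mu$ is convolution against $|1-\omega\cdot\eta|^{-\mu/2}$ on $\mathbb{S}^n$ and $\Lambda_\mu=\int_{\mathbb{S}^n}|1-\omega\cdot\eta|^{-\mu/2}d\sigma$ is a constant.

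The decisive step is the spectral analysis. I expand $\tilde v=\sum_{k\ge0}\sum_{l}a_{k,l}Y_{k,l}$ in spherical harmonics. The Laplace--Beltrami eigenvalue on the $k$-th mode is $k(k+n-1)$, while the Funk--Hecke formula applied to the zonal kernel $|1-\omega\cdot\eta|^{-\mu/2}$ produces explicit eigenvalues $\mu_k(n,\mu)$ written in closed form as ratios of Gamma functions. The non-degeneracy question reduces to determining exactly those $k$ for which
\begin{equation*}
k(k+n-1)+\tfrac{n(n-2)}{4}\;=\;c_{n,\mu}\bigl[p\,\mu_k(n,\mu)+(p-1)\Lambda_\mu\bigr].
\end{equation*}
The $k=0$ and $k=1$ modes are known by construction to satisfy this identity: $k=0$ corresponds to the scaling parameter $\partial_\lambda W$ (a radial zero mode), and $k=1$ corresponds to the $n$ translations $\partial_{\xi_a}W$. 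The main obstacle is therefore the sharp monotonicity check that for $k\ge 2$ the left side strictly exceeds the right side, which requires carefully exploiting the restriction $0<\mu\le 4$ together with the explicit Funk--Hecke ratios; the range $\mu\le 4$ is exactly what guarantees the decay of $\mu_k(n,\mu)$ in $k$ to be slower than quadratic so that the gap is strictly positive for $k\ge 2$. This monotonicity, combined with the decay estimate from the first paragraph (which forces $v$ to lie in the $L^2(\mathbb{S}^n)$-closure so that the mode expansion converges), shows that only the $k=0$ and $k=1$ modes can appear, yielding that $v$ is a linear combination of $\partial_\lambda W,\partial_{\xi_1}W,\dots,\partial_{\xi_n}W$, which is the desired non-degeneracy.
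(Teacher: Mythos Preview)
Your overall strategy---reduce to the unit bubble, gain $|x|^{2-n}$ decay by Moser iteration, pull back to $\mathbb{S}^n$ by stereographic projection, and diagonalize via the Funk--Hecke formula---is exactly the route the paper invokes: the proposition is quoted from \cite{GMYZ22,XLi} rather than proved here, and the introduction summarizes the argument of \cite{XLi} as ``spherical harmonic decomposition and the Funk--Hecke formula \ldots\ and Morse iteration''.

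There is, however, a concrete mis-identification in your mode analysis. Under stereographic projection the constant ($k=0$) harmonic pulls back to a multiple of $W$ itself, not to $\partial_\lambda W$; the radial function
\[
\partial_\lambda W[0,1](x)=\tfrac{n-2}{2}\,\alpha_{n,\mu}\,\frac{1-|x|^2}{(1+|x|^2)^{n/2}}
\]
becomes, after dividing by the conformal weight, a multiple of $-\omega_{n+1}$, which is a $k=1$ harmonic. Thus \emph{all} $n+1$ zero modes $\partial_\lambda W,\partial_{\xi_1}W,\dots,\partial_{\xi_n}W$ sit in the $(n+1)$-dimensional $k=1$ eigenspace, and the $k=0$ mode must be \emph{excluded}, not included---otherwise you would produce an $(n+2)$-dimensional kernel, contradicting the statement. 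Your own spectral identity already shows this exclusion: the equation for $W$ gives $c_{n,\mu}\Lambda_\mu=\tfrac{n(n-2)}{4}$, and since $\mu_0=\Lambda_\mu$, the right side at $k=0$ equals $(2p-1)\tfrac{n(n-2)}{4}$, strictly larger than the left side $\tfrac{n(n-2)}{4}$ because $p>1$. With this correction the argument closes as intended: equality holds only at $k=1$, while strict inequality at $k=0$ and (via the Gamma-ratio monotonicity of the Funk--Hecke eigenvalues, which is where $0<\mu\le 4$ enters) at $k\ge 2$ rules out all other modes.
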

\subsection{Proof of the Lemma \ref{estimate2}.}

For each $i\in I=\left\{1,\cdots,\kappa\right\}$, recall that
	\begin{equation*}		z_i^{\left(m\right)}=\lambda_i^{\left(m\right)}(x-\xi_i^{\left(m\right)}),\hspace{2mm} \mbox{and}\hspace{2mm} \xi_{ij}^{\left(m\right)}
=\lambda_i^{\left(m\right)}\left(\xi_j^{\left(m\right)}-\xi_i^{\left(m\right)}\right), ~~~\mbox{for all}~ j\neq i.
	\end{equation*}
	and we can assume that
\begin{equation}\label{AA-00}
C^{\star}:=1+\max\limits_{i,j\in I, m\geq0}\{|\xi_{ij}^{(m)}|:\hspace{2mm}i<j\hspace{2mm}\mbox{and}\hspace{2mm}\lim\limits_{m\rightarrow\infty}\xi_{ij}^{(m)}\in\mathbb{R}^n\}<\infty.
\end{equation}
\textbf{Decomposition of $\mathbb{R}^n$.}
Let we define the sets
	\begin{equation*}
		\begin{split}
			\Omega^{\left(m\right)}
			:=\mathop{\bigcup}\limits_{i\in I}\left\{x:\vert y_i^{\left(m\right)}\vert\leq M\right\}\hspace{2mm}\mbox{and}\hspace{2mm}
			\Omega_i^{\left(m\right)}
			:=\left\{x:\vert y_i^{\left(m\right)}\vert\leq M,
\vert y_i^{\left(m\right)}-\xi_{ij}^{\left(m\right)}\vert\geq\varepsilon_0,\forall j\in \mathscr{H}(i)\right\},
		\end{split}
	\end{equation*}
	where $M=M(n,\mu,\kappa)>0$ is a large constant and $\varepsilon_0=\varepsilon_0(n,\mu,\kappa)>0$ is a small constant to be determined later. Then decomposition of $\mathbb{R}^n$ given by
$$\mathbb{R}^n=\mathcal{C}_{ext,M,1}+\mathcal{C}_{core,M,2}+\mathcal{C}_{neck,M,3}$$
with
$$\mathcal{C}_{ext,M,1}:=\bigcap\limits_{i\in I}\left\{x:\vert y_i^{\left(m\right)}\vert>M\right\},\hspace{2mm}\mathcal{C}_{core,M,2}:=\bigcup\limits_{i\in I}\Omega_i^{\left(m\right)}\hspace{2mm}\mbox{and}\hspace{2mm} \mathcal{C}_{neck,M,3}:=\bigcup\limits_{i\in I}D_i^{(m)},$$
where $D_i^{(m)}$ is given by
	\begin{equation*}
		D_i^{(m)}:=\mathop{\bigcup}\limits_{j\in \mathscr{H}(i)}
		\left\{x:
		\vert z_i^{(m)}-\xi_{ij}^{(m)}\vert\leq\varepsilon_0
		\right\}\setminus\Big(\mathop{\bigcup}\limits_{j\in \mathscr{H}(i)}\left\{x:\vert z_j^{(m)}\vert< M\right\}\Big).
	\end{equation*}
\emph{Conclusion of the proof of Lemma \ref{estimate2}}.
We consider two cases:
\begin{itemize}
\item[$(i)$] $n=5$ and $\mu\in[1,3)$, or $n=6$ and $\mu\in(0,4)$, or $n=7$ and $\mu\in(\frac{7}{3},4)$.
\item[$(ii)$] $n=4$ and $\mu\in[2,4)$, or $n=5$ and $\mu\in[3,4)$.
\end{itemize}
Since the proofs of the two case are very similar, we will give a
detailed proof of case $(i)$, and indicate the necessary changes when
proving case $(ii)$. From now on we will concentrate on case $(i)$ and we follow the contradiction-based argument outlined in \cite{DSW21}.

Assume that the conclusion of Lemma \ref{estimate2} does not hold true, in other words that, up to a subsequence, there exists a sequence of functions
$\hbar=\hbar_m$ with $\Vert \hbar_m\Vert_{\ast\ast}\rightarrow0$ as $m\rightarrow\infty$, and $\phi=\phi_m$ with $\Vert\phi_m\Vert_\ast=1$ solving the equation
	\begin{equation}\label{00}
	\left\{\begin{array}{l}
		\displaystyle \Delta \phi_m+\Phi_{n,\mu}[\sigma_m,\phi_m]
		=\hbar_m\hspace{4.14mm}\mbox{in}\hspace{1.14mm} \mathbb{R}^n,\\
		\displaystyle \int\Phi_{n,\mu}[W_{i},\Xi^{a}_i]\phi_m=0,\hspace{4mm}i=1,\cdots, \kappa; ~a=1,\cdots,n+1,
	\end{array}
	\right.
\end{equation}
and $1/m$-interacting bubbles $\big\{W_i^{\left(m\right)}\big\}_{i=1}^{\infty},$
	For simplicity, here and after we denote $W_i:=W_i^{\left(m\right)}$ and $\sigma_m:=\sum_{i=1}^\kappa W_i^{\left(m\right)}$. In order to have the desired contradiction with $\|\phi_{m}\|_{\ast}=1$ for any $m$, we divide the proof of Lemma \ref{estimate2} into several steps. At a first step, we claim that the following holds true.
\begin{step}\label{step5.1}
 When $\left\{x_m\right\}\subset\mathcal{C}_{ext,M,1}$. For $n=5$ and $\mu\in[1,3)$, or $n=6$ and $\mu\in(0,4)$, or $n=7$ and $\mu\in(\frac{7}{3},4)$,  we have that
\begin{equation}\label{n-m-1}
\phi_{m}(x)<S_1(x)\hspace{2mm}\mbox{as}\hspace{2mm}m\rightarrow+\infty.
\end{equation}
For $n=4\hspace{2mm}\mbox{and}\hspace{2mm}\mu\in[2,4),\mbox{or}\hspace{2mm}n=5\hspace{2mm}\mbox{and}\hspace{2mm}\mu\in(3,4]$, we have that
\begin{equation}\label{n-m-1-1}
\phi_{m}(x)<S_2(x)\hspace{2mm}\mbox{as}\hspace{2mm}m\rightarrow+\infty.
\end{equation}
\end{step}
\emph{Proof of step \ref{step5.1}}.
We note that
\begin{equation}\label{s-tital}
\begin{split}
&\tilde{\mathbf{s}}_{i,in}\leq2M^{-2}s_{i,1},\hspace{2mm}\tilde{\mathbf{s}}_{i,out}\leq2M^{-\frac{n-\mu-2}{2}}s_{i,2},
\hspace{2mm}\mbox{for}\hspace{2mm} 0<\mu<\frac{n+\mu-2}{2},\\&
\bar{\mathbf{s}}_{i,in}\leq2M^{-2}\hat{s}_{i,1},\hspace{2mm}\bar{\mathbf{s}}_{i,out}\leq2M^{-\epsilon_0}\hat{s}_{i,2},
\hspace{2mm}\mbox{for} \hspace{2mm} \frac{n+\mu-2}{2}\leq\mu<4.
\end{split}
\end{equation}
Using Lemma \ref{cll-1}-(i), there exists $C_{n,\mu,\kappa}$ such that
 \begin{equation*}
 \begin{split}
|\phi(x)|S_{1}^{-1}(x)&\leq
 C_{n,\mu,\kappa}
\Big(\|\hbar\|_{\ast\ast}+\tilde{A}^{n+\mu-4+\frac{n+\mu-2}{2}}\widetilde{S}_1(x)S_1^{-1}(x)\|\phi\|_{\ast}\Big)
 \\&+ C_{n,\mu,\kappa}
\left\lbrace
\begin{aligned}
&(\mathscr{R}^{-\frac{(\mu+1)}{2}}\tilde{A}^{n-\mu+2}+\tilde{A}^{-2}+\tilde{A}^{-\varpi^{\star}_1})\|\phi\|_{\ast}, \hspace{8mm}\hspace{8mm}\hspace{6mm}n=5\hspace{2mm}\mbox{and}\hspace{2mm}\mu\in[1,3),\\&
(\mathscr{R}^{-\min\{\frac{\mu}{2},2p-6+\mu-\theta_1\}}\tilde{A}^{n-\mu+2}+\tilde{A}^{-2}+\tilde{A}^{-\varpi^{\star}_1})\|\phi\|_{\ast},
\hspace{1.5mm}\hspace{1mm}n=6\hspace{2mm}\mbox{and}\hspace{2mm}\mu\in(0,4),\\&
(\mathscr{R}^{-\min\{\frac{\mu}{2},2p-7+\mu-\theta_2\}}\tilde{A}^{n-\mu+2}+\tilde{A}^{-2}+\tilde{A}^{-\varpi^{\star}_1})\|\phi\|_{\ast},
\hspace{1.5mm}\hspace{1mm}n=7\hspace{2mm}\mbox{and}\hspace{2mm}\mu\in(\frac{7}{3},4),
\end{aligned}
\right.
\end{split}
\end{equation*}
Then, choosing $M=M_{n,\mu,\kappa,\bar{A}}$ sufficiently large such that
$$2C_{n,\mu,\kappa}\tilde{A}^{n+\mu-4+\frac{n+\mu-2}{2}}M^{-\frac{n-\mu-2}{2}}\leq(100\kappa)^{-1},$$ we obtain
$$\tilde{A}^{n+\mu-4+\frac{n+\mu-2}{2}}\widetilde{S}_1(x)<(100\kappa)^{-1}S_1(x),\hspace{2mm}\mbox{on}\hspace{2mm}\mathcal{C}_{ext,M,1}.$$
Moreover, choosing $\tilde{A}=\tilde{A}_{n,\mu,\kappa}$ sufficiently large such that $C_{n,\mu,\kappa}(\tilde{A}^{-2}+\tilde{A}^{-\varpi^{\star}_1}))<(100\kappa)^{-1}$,
so that we eventually have $\|\phi_{m}(x)|S_{1}^{-1}(x)\leq o(1)+(50\kappa)^{-1}$
as $m\rightarrow+\infty$, as desired. The proof of \eqref{n-m-1-1} for $n=4\hspace{2mm}\mbox{and}\hspace{2mm}\mu\in[2,4),\mbox{or}\hspace{2mm}n=5\hspace{2mm}\mbox{and}\hspace{2mm}\mu\in(3,4]$ is completely analogous by Lemma \ref{cll-1}-(ii) and \eqref{s-tital} except some minor modifications. So we omit it. \qed

Let the blow-up sequences $\bar{\phi}_{m}^{(l)}$, $\bar{\mathscr{R}}_m^{(l)}$, $\bar{\sigma}_{m}^{(l)}(l=1,2)$ be given by
	\begin{equation*}
		\left\lbrace
\begin{aligned}
&\bar{\phi}_m^{(l)}(z):=\phi_m(z/\lambda_{i_0}^{(m)}+\xi_{i_0j}^{(m)})S_l^{-1}(x_m); \hspace{2mm}      \bar{\hbar}_m^{(l)}(z):=(\lambda_{i_0}^{(m)})^{-2}\hbar_m(z/\lambda_{i_0}^{(m)}+\xi_{i_0j}^{(m)})S_l^{-1}(x_m);     \\
&\bar{\sigma}_m^{(l)}(z):=W[0,1](z)+\sum_{j\in I\setminus\{i_0\}}W[\xi_{i_0j}^{(m)},\lambda_{j}^{(m)}/\lambda_{i_0}^{(m)}](z),
		\end{aligned}
\right.
\end{equation*}
	with $z=\lambda_{i_0}^{(m)}(x-\xi_{i_0j}^{(m)})$, so that $\bar{\phi}_m^{(l)}$ satisfies
	\begin{equation}\label{fai}
	\left\{\begin{array}{l}
		\displaystyle \Delta \bar{\phi}_m^{(l)}(z)+\Phi_{n,\mu}[\bar{\sigma}_m^{(l)}(\bar{z}),\bar{\phi}_m^{(l)}(\bar{z})]
		=\bar{\hbar}_m^{(l)}(z)\hspace{4.14mm}\mbox{in}\hspace{1.14mm} \mathbb{R}^n,\\
		\displaystyle \int\Big[(p-1)\Big(|z|^{-\mu}\ast W_{i}^{p}(\bar{z})\Big)W_{i}^{p-2}(z)\bar{\phi}_m^{(l)}(z)\Xi^{a}+p\Big(|x|^{-\mu}\ast W_{i}^{p-1}(\tilde{z})\Xi^{a}\Big)W_{i}^{p-1}(z)\bar{\phi}_m^{(l)}(z)\Big]=0,\\i=1,\cdots, \kappa; ~a=1,\cdots,n+1,
	\end{array}
	\right.
\end{equation}
with $\bar{z}=\lambda_{i_0}^{(m)}(\tilde{z}-\xi_{i_0j}^{(m)})$ and
$$\Phi_{n,\mu}[\bar{\sigma}_m^{(l)}(\bar{z}),\bar{\phi}_m^{(l)}(\bar{z})]:=p\Big(|z|^{-\mu}\ast \bar{\sigma}_{m}^{p-1}(\bar{z})\bar{\phi}_m^{(l)}(\bar{z})\Big)
\bar{\sigma}^{p-1}(z)+(p-1)\Big(|z|^{-\mu}\ast\bar{\sigma}_m^{p}(\bar{z})\Big)
\bar{\sigma}_{m}^{p-2}(z)\bar{\phi}_m^{(l)}(z).$$
Here $W=W[0,1](z)$ and $\Xi^a=\Xi^a(z)=\Xi^a(z)$. Let $\xi_{i_0j}^{(\infty)}=\lim_{m\rightarrow\infty}\xi_{i_0j}^{\left(m\right)}$ and define
	\begin{equation}\label{nongfushanquan}
		\begin{split}
			&E_1:=\mathop{\bigcap}\limits_{j\in\mathscr{H}(i)}
			\left\{z:\vert z-\xi_{i_0j}^{(\infty)}\vert\geq1/L\right\}.
		\end{split}
	\end{equation}
Let $K_L:=\left\{z:\vert z\vert\leq L\right\}\cap E_1$. If we choose $L\geq2\max\left\{M,\varepsilon^{-1}\right\}$, it is easy to see that $\Omega_i^{\left(m\right)}\subset\subset K_L$ for $m$ large enough.	
\begin{step}\label{step5.2}
 When $\left\{x_m\right\}\subset\mathcal{C}_{core,M,2}$. For $n=5$ and $\mu\in[1,3)$, or $n=6$ and $\mu\in(0,4)$, or $n=7$ and $\mu\in(\frac{7}{3},4)$,  we have that	
\begin{equation}\label{n-m-2}
|\phi_{m}^{(1)}|(x)=o(1)S_1(x)\hspace{4mm}
\end{equation}
$\mbox{as}\hspace{2mm}m\hspace{2mm}\mbox{large enough}$. For $n=4\hspace{2mm}\mbox{and}\hspace{2mm}\mu\in[2,4),\mbox{or}\hspace{2mm}n=5\hspace{2mm}\mbox{and}\hspace{2mm}\mu\in(3,4]$, we have that
\begin{equation}\label{n-m-2-0}
|\phi_{m}^{(2)}|(x)=o(1)S_2(x)\hspace{4mm}\hspace{2mm}\mbox{as}\hspace{2mm}m\rightarrow+\infty.
\end{equation}
\end{step}
To prove estimate \eqref{n-m-2} by contradiction, up to a subsequence, we choose $\epsilon_1>0$ and $x_m\in\mathcal{C}_{core,M,2}$ such that
\begin{equation}\label{n-m-3}
|\phi_{m}|(x_m)>\epsilon_1S_1(x_m),
\end{equation}
At this point it essentially remains to prove the following result.
\begin{Prop}\label{diyigemingti}
Assume that $n=5$ and $\mu\in[1,3)$, or $n=6$ and $\mu\in(0,4)$, or $n=7$ and $\mu\in(\frac{7}{3},4)$.
In each compact subset $K_L$, we have the following estimate, uniformly for $z\in K_L$
\begin{equation}\label{KM-0}
			\bar{\sigma}_m^{(1)}(z)\rightarrow W[0,1](z)
	\hspace{2mm}\mbox{as}\hspace{2mm} m\rightarrow\infty.
		\end{equation}
 Moreover we have that
		\begin{equation}\label{km-1}
			\vert\bar{\phi}_m^{(1)}\vert(z)\lesssim\sum_{j\in\mathscr{H}(i_0) }\big(\frac{M}{|z-\xi_{i_0j}^{(\infty)}|}\big)^{\frac{n+\mu-2}{2}}+M^2,\hspace{2mm}\mbox{for} \hspace{2mm}z\in K_L,
		\end{equation}
\begin{equation}\label{km-2}
\begin{split}
\vert\bar{\hbar}_m^{(1)}\vert(z)\rightarrow0,\hspace{2mm}\mbox{as}\hspace{2mm} m\rightarrow\infty,\hspace{2mm}\mbox{for}\hspace{2mm}z\in K_L.
\end{split}
		\end{equation}
The above estimates \eqref{KM-0} and \eqref{km-2} also hold for $\bar{\sigma}_m^{(2)}(z)$ and $\vert\bar{\hbar}_m^{(2)}\vert(z)$ in dimension $n=4\hspace{2mm}\mbox{and}\hspace{2mm}\mu\in[2,4),\mbox{or}\hspace{2mm}n=5\hspace{2mm}\mbox{and}\hspace{2mm}\mu\in(3,4]$. Furthermore, we have that
\begin{equation}\label{km-1-kl}
\begin{split}
			\vert\bar{\phi}_m^{(2)}\vert(z)&\lesssim\sum_{j\in\mathscr{H}(i_0) }\big(\frac{M}{|z-\xi_{i_0j}^{(\infty)}|}\big)^{n-2-\epsilon_0}+M^2,\hspace{2mm}\mbox{for} \hspace{2mm}z\in K_L.
\end{split}
		\end{equation}
\end{Prop}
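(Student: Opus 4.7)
\textbf{Proposal for Proposition \ref{diyigemingti}.} The plan is to unpack the three rescalings and exploit that the choice $z=\lambda_{i_0}^{(m)}(x-\xi_{i_0}^{(m)})$ is tailored to the bubble $W_{i_0}$ responsible for the blow-up point $x_m\in\Omega_{i_0}^{(m)}$: the $i_0$-th bubble becomes $W[0,1]$; each $W_j$ with $j\in\mathscr{H}(i_0)$ concentrates at $\xi_{i_0j}^{(\infty)}\in\mathbb{R}^n$ (because $|\xi_{i_0j}^{(m)}|$ bounded forces $\mathscr{R}_{i_0j}\approx\sqrt{\lambda_j^{(m)}/\lambda_{i_0}^{(m)}}\to\infty$, hence $\lambda_j^{(m)}/\lambda_{i_0}^{(m)}\gtrsim\mathscr{R}^2\to\infty$); and every other bubble vanishes uniformly on compacta by Lemma~\ref{cll-11-2}. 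This gives \eqref{KM-0} directly.

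The key preliminary is the exact size of the weight at $x_m$. Since $x_m\in\Omega_{i_0}^{(m)}$ satisfies $|z_{i_0}(x_m)|\leq M$ and $|z_{i_0}(x_m)-\xi_{i_0j}^{(m)}|\geq\varepsilon_0$ for every $j\in\mathscr{H}(i_0)$, Lemma~\ref{cll-11-3} yields
\[
S_1(x_m)\approx s_{i_0,1}(x_m)\approx\frac{\lambda_{i_0}^{(n-2)/2}}{M^2\,\mathscr{R}^{n-2}},\qquad S_2(x_m)\approx \hat{s}_{i_0,1}(x_m)\approx\frac{\lambda_{i_0}^{(n-2)/2}}{M^2\,\mathscr{R}^{2n-4-\mu}},
\]
up to multiplicative constants depending only on $n,\mu,\kappa,\varepsilon_0$.

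For \eqref{km-1} I would use $|\phi_m(x)|\leq\|\phi_m\|_{\ast}\,S_1(x)=S_1(x)$ and bound $S_1(x)/S_1(x_m)$ term by term on $K_L$. The $i_0$-term gives $s_{i_0,1}(x)/S_1(x_m)\lesssim M^2/\tau(z)^2\lesssim M^2$, producing the additive $M^2$ in the claim. For $j\in\mathscr{H}(i_0)$ and $m$ large, $|z_j(x)|=(\lambda_j/\lambda_{i_0})|z-\xi_{i_0j}^{(m)}|\gtrsim\mathscr{R}^2/L>\mathscr{R}$ on $K_L$, so only $s_{j,2}$ is active, and a direct substitution using $\lambda_j/\lambda_{i_0}\gtrsim\mathscr{R}^2$ produces
\[
\frac{s_{j,2}(x)}{S_1(x_m)}\lesssim M^2\,\mathscr{R}^{(n-\mu-6)/2}\,|z-\xi_{i_0j}^{(\infty)}|^{-(n+\mu-2)/2}.
\]
The standing hypothesis $n\geq 6-\mu$, together with $n\leq 6+\mu$ in range (i) (easily checked case by case), makes the $\mathscr{R}$-exponent nonpositive, and since $(n+\mu-2)/2\geq 2$ we absorb $M^2\leq M^{(n+\mu-2)/2}$ to match the claim. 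Contributions from $k\notin\mathscr{H}(i_0)\cup\{i_0\}$ are $o(1)\cdot s_{i_0,1}(x)$ by Lemma~\ref{cll-11-2}, hence negligible. For $l=2$ the computation is identical modulo replacing $s_{j,2}$ with $\hat{s}_{j,2}$: the exponent in $\tau(z_j)$ becomes $n-2-\epsilon_0$ and the prefactor becomes $\mathscr{R}^{-(n-\mu+\epsilon_0)}$, giving \eqref{km-1-kl}.

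For \eqref{km-2}, since $|\hbar_m(x)|\leq\|\hbar_m\|_{\ast\ast}\,T_l(x)$ and the prefactor $(\lambda_{i_0}^{(m)})^{-2}$ in $\bar{\hbar}_m^{(l)}$ exactly cancels the extra $\lambda_{i_0}^{2}$ that $t_{i_0,1}$ carries relative to $s_{i_0,1}$, the same termwise calculation (now with $T_l$ in place of $S_l$, and with decay improved by one extra power of $\tau(z_j)$ in the outer regions) shows $(\lambda_{i_0}^{(m)})^{-2}\,T_l(x)/S_l(x_m)\lesssim C(n,\mu,\kappa,L,M)$ uniformly on $K_L$. Multiplying by $\|\hbar_m\|_{\ast\ast}\to 0$ proves \eqref{km-2}.

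\textbf{Main obstacle.} The central difficulty is the exponent bookkeeping: once one inserts $\lambda_j/\lambda_{i_0}\gtrsim\mathscr{R}^2$ into $s_{j,2}(x)/S_1(x_m)$, the surviving power of $\mathscr{R}$ is exactly $(n-\mu-6)/2$, which is nonpositive precisely under the standing assumption $n\geq 6-\mu$. This is the step where the dimension restriction of the theorem gets used, and it is exactly why the weights $s_{i,2}$, $\hat{s}_{i,2}$ in Definition~\ref{st-11} must carry the particular exponents $(n+\mu-2)/2$ and $n-2-\epsilon_0$: any weaker decay in the outer region would destroy the uniform bound on $K_L$ and invalidate the forthcoming blow-up argument that uses the limit equation on $W[0,1]$ together with the nondegeneracy Proposition~\ref{prondgr}.
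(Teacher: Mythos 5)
Your argument is correct and reaches the stated conclusions, but at the key ratio estimate you proceed differently from the paper. The paper bounds $S_1(x)/S_1(x_m)$ via the elementary inequality \eqref{AB-0}, matching each weight at $x$ with the \emph{same} weight at $x_m$ (see \eqref{S-1}--\eqref{T-1}): since $|z_j(x_m)|\gg\mathscr{R}$ for $x_m\in\Omega_{i_0}^{(m)}$, the relevant ratios are $s_{i_0,1}(x)/s_{i_0,1}(x_m)=\tau(\zeta_m)^2/\tau(z)^2\leq 1+M^2$ and $s_{j,2}(x)/s_{j,2}(x_m)\leq\big(|\zeta_m-\xi_{i_0j}|/|z-\xi_{i_0j}|\big)^{(n+\mu-2)/2}$, in which every power of $\mathscr{R}$ and of the $\lambda$'s cancels identically, so no condition of the type $n\leq6+\mu$ enters at this point. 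You instead lower-bound the whole denominator by its dominant term, $S_l(x_m)\gtrsim s_{i_0,1}(x_m)\gtrsim\lambda_{i_0}^{(n-2)/2}M^{-2}\mathscr{R}^{-(n-2)}$ (resp.\ $\mathscr{R}^{-(2n-4-\mu)}$), and pay the residual factor $\mathscr{R}^{(n-\mu-6)/2}$ (resp.\ $\mathscr{R}^{\epsilon_0-2}$), which must then be checked nonpositive; your case-by-case check does pass in the stated ranges, so the route works, but it is less sharp and calls for two corrections. First, in your closing paragraph you attribute the nonpositivity of $(n-\mu-6)/2$ to $n\geq6-\mu$; it is the opposite inequality $n\leq6+\mu$ that does this (as you correctly say in the body), and since the paper's matched-ratio argument needs neither inequality here, this step is not ``where the dimension restriction gets used.'' Second, for $l=2$ and $n=4$ the absorption $M^2\leq M^{\,n-2-\epsilon_0}$ fails, and your computation yields $M^{\epsilon_0}\big(M/|z-\xi_{i_0j}^{(\infty)}|\big)^{2-\epsilon_0}$, a factor $M^{\epsilon_0}$ worse than \eqref{km-1-kl}; this is harmless for the subsequent singularity-removal and blow-up arguments but is strictly weaker than the stated form, which the matched ratios give directly. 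Finally, in \eqref{km-2} the cancellation of $(\lambda_{i_0}^{(m)})^{-2}$ is exact only for the $i_0$-terms; for $j\in\mathscr{H}(i_0)$ the weight $t_{j,2}$ carries $\lambda_j^{(n+2)/2}$ and \emph{two} (not one) extra powers of $\tau(z_j)^{-1}$, worth $(\lambda_{i_0}/\lambda_j)^2$ on $K_L$, and it is these that offset the surplus $(\lambda_j/\lambda_{i_0})^2$; with that bookkeeping your conclusion $(\lambda_{i_0}^{(m)})^{-2}T_l(x)/S_l(x_m)\leq C(n,\mu,\kappa,L,M)$, hence \eqref{km-2} after multiplying by $\|\hbar_m\|_{\ast\ast}\to0$, is correct.
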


\emph{Proof of Proposition \ref{diyigemingti}}.
Thanks to Lemma \ref{cll-11-3}, to conclude the proof of \eqref{KM-0}, it sufficient to obtain
$$\sum_{j\in I\setminus\{i_0\}}W[\xi_{i_0j}^{(m)},\lambda_{j}^{(m)}/\lambda_{i_0}^{(m)}](z)=o(1)W[0,1](z),\hspace{2mm}\mbox{for}\hspace{2mm}z\in K_{L},\hspace{2mm}j\in\mathscr{H}(i_0).$$
In the case $j\in\mathscr{H}(i_0)$, one must have $\lambda_{j}^{(m)}/\lambda_{i_0}^{(m)}\rightarrow\infty$ as $m\rightarrow\infty$.
Then together with $z\in K_L$, we get that
$$\frac{W[\xi_{i_0j}^{(m)},\lambda_{j}^{(m)}/\lambda_{i_0}^{(m)}]}{W[0,1]}\leq(\lambda_{j}^{(m)}/\lambda_{i_0}^{(m)})^{(2-n)/2}(2L)^{2n-4}\rightarrow0\hspace{2mm}\mbox{as}\hspace{2mm}m\rightarrow\infty.$$
as desired.

First, for all $A_1,\cdots,A_\kappa>0$ and $B_1,\cdots,B_\kappa>0$, we notice that
\begin{equation}\label{AB-0}
(\sum_{i=1}^{\kappa}B_i)^{-1}\sum_{i=1}^{\kappa}A_i\leq\max\Big\{\frac{A_i}{B_i},\cdots,\frac{A_\kappa}{B_\kappa}\Big\}\leq\sum_{i=1}^{\kappa}B_i^{-1}A_i.
\end{equation}
Then we get from \eqref{zzz-8-1}-\eqref{zzz-9-2} that, for $0<\mu<\frac{n+\mu-2}{2}$,
\begin{equation}\label{S-1}
\begin{split}
&\frac{S_1(z/\lambda_{i_0}^{(m)}+\xi_{i_0j}^{(m)})}{S_1(x_m)}\leq\frac{\sum_{j\in\mathscr{H}(i_0)}^{\kappa}(s_{j,1}(x)+s_{j,2}(x))+s_{i_0,1}(x)}{\sum_{j\in\mathscr{H}(i_0)}^{\kappa}(s_{j,1}(x_{m})+s_{j,2}(x_{m}))+s_{i_0,1}(x_{m})}
\\&\lesssim\frac{s_{i_0,1}(x)}{s_{i_0,1}(x_{m})}+\sum_{j\in\mathscr{H}(i_0)}^{\kappa}\bigg[\frac{s_{j,1}(x)}{s_{j,1}(x_{m})}_{\big\{\big|\lambda_{j}^{(m)}/\lambda_{i_0}^{(m)}(z-\xi_{i_0j}^{(\infty)})\big|<\mathscr{R}_m\big\}}
+{\frac{s_{j,2}(x)}{s_{j,2}(x_{m})}}_{\big\{\big|\lambda_{j}^{(m)}/\lambda_{i_0}^{(m)}(z-\xi_{i_0j}^{(\infty)})\big|\geq\mathscr{R}_m\big\}}\bigg],
\end{split}
\end{equation}
\begin{equation}\label{T-1}
\begin{split}
&\frac{T_1(z/\lambda_{i_0}^{(m)}+\xi_{i_0j}^{(m)})}{(\lambda_{i_0}^{(m)})^2S_1(x_m)}\lesssim\frac{(\lambda_{i_0}^{(m)})^{-2}\sum_{j\in\mathscr{H}(i_0)}^{\kappa}(t_{j,1}(x)+t_{j,2}(x))+t_{i_0,1}(x)
}{\sum_{j\in\mathscr{H}(i_0)}^{\kappa}(s_{j,1}(x_{m})+s_{j,2}(x_{m}))+s_{i_0,1}(x_{m})}
\lesssim\frac{(\lambda_{i_0}^{(m)})^{-2}t_{i_0,1}(x)}{s_{i_0,1}(x_{m})}\\&+\sum_{j\in\mathscr{H}(i_0)}^{\kappa}\bigg[\frac{(\lambda_{i_0}^{(m)})^{-2}t_{j,1}(x)}{s_{j,1}(x_{m})}_{\big\{\big|\lambda_{j}^{(m)}/\lambda_{i_0}^{(m)}(z-\xi_{i_0j}^{(\infty)})\big|<\mathscr{R}_m\big\}}
+\frac{(\lambda_{i_0}^{(m)})^{-2}t_{j,2}(x)}{s_{j,2}(x_{m})}_{\big\{\big|\lambda_{j}^{(m)}/\lambda_{i_0}^{(m)}(z-\xi_{i_0j}^{(\infty)})\big|\geq\mathscr{R}_m\big\}}\bigg].     \end{split}
\end{equation}
Here and several times in the sequel, we use the elementary inequality \eqref{AB-0}.
Note that, there exists a large number $m_L\in\mathbb{N}$ such that
\begin{equation*}
z\in K_L,~j\in\mathscr{H}(i_0),~m\geq m_L \Rightarrow
 \big|\lambda_{j}^{(m)}/\lambda_{i_0}^{(m)}(z-\xi_{i_0j}^{(\infty)})\big|\gg\mathscr{R}_m.
\end{equation*}
Therefore, in order to conclude the proof of \eqref{km-1}-\eqref{km-2}, we establish now the following key estimates:
\begin{equation*}
\frac{s_{i_0,1}(x)}{s_{i_0,1}(x_{m})}= \frac{\tau(\zeta_m)^2}{\tau(z)^2}\leq1+M^2,\hspace{2mm}\frac{(\lambda_{i_0}^{(m)})^{-2}v_{i_0,1}(x)}{z_{i_0,1}(x_{m})}=\frac{\tau(\zeta_m)^2}{\tau(z)^4}\leq1+M^2,\hspace{2mm} \hspace{2mm}0<\mu<\frac{n+\mu-2}{2},
\end{equation*}
where $z=\lambda_{i_0}(x-\xi_{i_0})$ and $\zeta_m=\lambda_{i_0}(x_m-\xi_{i_0})$.
In the case $i_0<j$, $\lim\limits_{m\rightarrow\infty}\xi_{ij}^{(m)}$ exists, one have $\lambda_{j}^{(m)}/\lambda_{i_0}^{(m)}\rightarrow\infty$ as $m\rightarrow\infty$. We from \eqref{AA-00} and $|\zeta_{m}|\leq M$ that
\begin{equation*}
\frac{s_{j,2}(x)}{s_{j,2}(x_{m})}= \frac{\tau(\lambda_j(x_m-\xi_j))^{\frac{n+\mu-2}{2}}}{\tau(\lambda_j(x-\xi_j))^{\frac{n+\mu-2}{2}}}\leq\frac{|\zeta_m-\xi_{i_0j}|^{\frac{n+\mu-2}{2}}}{|z-\xi_{i_0j}|^{\frac{n+\mu-2}{2}}}
\leq \big(\frac{M}{|z-\xi_{i_0j}|}\big)^{\frac{n+\mu-2}{2}},\hspace{1mm} \hspace{1mm}0<\mu<\frac{n+\mu-2}{2},
\end{equation*}
\begin{equation*}
\frac{(\lambda_{i_0}^{(m)})^{-2}t_{j,2}(x)}{s_{j,2}(x_{m})}=
\frac{\tau(\lambda_j(x_m-\xi_j))^{\frac{n+\mu-2}{2}}}{\tau(\lambda_j(x-\xi_j))^{\frac{n+\mu+2}{2}}}
\leq\frac{|\zeta_m-\xi_{i_0j}|^{\frac{n+\mu-2}{2}}}{|z-\xi_{i_0j}|^{\frac{n+\mu+2}{2}}}
\leq M^{\frac{n+\mu+2}{2}}L^{\frac{n+\mu-2}{2}},\hspace{1mm} 0<\mu<\frac{n+\mu-2}{2}.
\end{equation*}
Combining these estimates, we conclude.
In the case $n=4\hspace{2mm}\mbox{and}\hspace{2mm}\mu\in[2,4),\mbox{or}\hspace{2mm}n=5\hspace{2mm}\mbox{and}\hspace{2mm}\mu\in(3,4]$,
the conclusion follows directly by computations similar to the above case and we omit it. Proposition \ref{diyigemingti} is proven. \qed

Next, we conclude the proof of step \ref{step5.2} by proving the following useful convergence results in Proposition \ref{converges-0} and Proposition \ref{converges-4}, and the removability of singularities of a solution in Proposition \ref{converges-3}. Here we only give the proof of the statement for case $(i)$, and by same argument, it is easy to check that the results hold for case $(ii)$.

Recall that $\bar{\phi}_m^{(1)}(z):=\phi_m(z/\lambda_{i_0}^{(m)}+\xi_{i_0j}^{(m)})S_1^{-1}(x_m)$. Using \eqref{fai} and the standard elliptic regularity, one can prove that
\begin{equation}\label{1-fai}
\sup\limits_{m\in\mathbb{N}}\|\bar{\phi}_m^{(1)}\|_{C^{1}(\bar{B})}\leq C
\end{equation}
for any open ball $B\subset K_{L}$. Therefore, by the diagonal argument, up to subsequence, we have
\begin{equation}\label{loc}
\bar{\phi}_m^{(1)}\rightarrow\bar{\phi}^{(1)}_{\infty}\hspace{2mm}\mbox{in}\hspace{2mm} C_{loc}^{0}\big(\mathbb{R}^n\setminus\{\xi_{i_0j}^{(\infty)}:j\in\mathscr{H}(i_0)\}\big)
	\hspace{2mm}\mbox{as}\hspace{2mm} m\rightarrow\infty
\end{equation}
for some function $\bar{\phi}^{(1)}_{\infty}$. Furthermore, combining \eqref{km-1} gives
$$\vert\bar{\phi}^{(1)}_{\infty}(z)\vert
\lesssim\sum_{j\in\mathscr{H}(i_0)}^{\kappa}\big(\frac{M}{|z-\xi_{i_0j}^{(\infty)}|}\big)^{\frac{n+\mu-2}{2}}+M^2,                  ~\mbox{in}~\mathbb{R}^n\setminus\{\xi_{i_0j}^{(\infty)}:j\in\mathscr{H}(i_0)\},
$$
%and
%$$\vert\bar{\phi}^{(2)}(z)\vert\lesssim\sum_{j\in\mathscr{H}(i_0) }\big(\frac{M}{|z-\xi_{i_0j}|}\big)^{n-2-\epsilon_0}+M^2,~\mbox{in}~\mathbb{R}^n\setminus\left\{\bar{\xi}_j:j\in\mathscr{H}(i_0)\right\}.$$
As the consequence of \eqref{fai} and \eqref{1-fai}, we have the following result.

\begin{Prop}\label{converges-0}
It holds
\begin{equation}\label{converges-1}					\Delta\bar{\phi}^{(1)}_{\infty}+\Phi_{n,\mu}[W[0,1],\bar{\phi}^{(1)}_{\infty}]
		=0,    \text{in}~\mathbb{R}^n\setminus\{\xi_{i_0j}^{(\infty)}:j\in\mathscr{H}(i_0)\}
		\end{equation}
and
\begin{equation}\label{converges-2}
\displaystyle \int \Phi_{n,\mu}[W[0,1],\Xi^{a}[0,1]]\bar{\phi}^{(1)}_{\infty}=0,\hspace{2mm}\mbox{for all}\hspace{2mm}i=1,\cdots,\kappa\hspace{2mm}\mbox{and}\hspace{2mm}a=1,\cdots,n+1
\end{equation}
\end{Prop}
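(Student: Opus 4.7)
The plan is to pass to the limit $m\to\infty$ in the rescaled equation satisfied by $\bar{\phi}_m^{(1)}$,
\begin{equation*}
\Delta\bar{\phi}_m^{(1)}+\Phi_{n,\mu}\bigl[\bar{\sigma}_m^{(1)},\bar{\phi}_m^{(1)}\bigr]=\bar{\hbar}_m^{(1)}\quad\text{on }\mathbb{R}^n,
\end{equation*}
working in the sense of distributions on the punctured set $\mathbb{R}^n\setminus\{\xi_{i_0j}^{(\infty)}:j\in\mathscr{H}(i_0)\}$. First I would fix an arbitrary test function $\psi\in C_c^{\infty}\bigl(\mathbb{R}^n\setminus\{\xi_{i_0j}^{(\infty)}\}\bigr)$, whose support then lies in some compact $K_L$ with $L$ large. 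Multiplying the equation by $\psi$ and integrating, the $\Delta\bar{\phi}_m^{(1)}$ term passes to $\int\bar{\phi}_\infty^{(1)}\Delta\psi$ by the $C^1_{loc}$-convergence from \eqref{loc}, and the forcing $\int\bar{\hbar}_m^{(1)}\psi$ vanishes in the limit by \eqref{km-2}.

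The main obstacle is passing to the limit in the two nonlocal terms appearing inside $\Phi_{n,\mu}[\bar{\sigma}_m^{(1)},\bar{\phi}_m^{(1)}]$, namely
\begin{equation*}
p\bigl(|x|^{-\mu}\ast\bar{\sigma}_m^{p-1}\bar{\phi}_m^{(1)}\bigr)\bar{\sigma}_m^{p-1}\quad\text{and}\quad(p-1)\bigl(|x|^{-\mu}\ast\bar{\sigma}_m^{p}\bigr)\bar{\sigma}_m^{p-2}\bar{\phi}_m^{(1)}.
\end{equation*}
The strategy is a dominated-convergence argument. Near each singular point $\xi_{i_0j}^{(\infty)}$ ($j\in\mathscr{H}(i_0)$) the integrand $\bar{\sigma}_m^{p-1}\bar{\phi}_m^{(1)}$ develops a concentrating bubble piece $W[\xi_{i_0j}^{(m)},\lambda_j^{(m)}/\lambda_{i_0}^{(m)}]$ that, although bounded in the natural Hartree norm, does not converge uniformly. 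I would split $\bar{\sigma}_m^{(1)}=W[0,1]+\mathcal{E}_m$ and show, using the pointwise bound \eqref{km-1} for $\bar{\phi}_m^{(1)}$ together with the convolution estimates from Lemmas \ref{B4}--\ref{B4-1} and HLS, that the contributions of $\mathcal{E}_m$ to $|x|^{-\mu}\ast(\bar{\sigma}_m^{p-1}\bar{\phi}_m^{(1)})$ localise around the singular points $\xi_{i_0j}^{(\infty)}$ and hence vanish locally uniformly on the support of $\psi$ as $m\to\infty$. The remaining main piece $|x|^{-\mu}\ast(W^{p-1}\bar{\phi}_m^{(1)})$ converges pointwise to $|x|^{-\mu}\ast(W^{p-1}\bar{\phi}_\infty^{(1)})$ by the Lebesgue dominated convergence theorem: the pointwise bound in \eqref{km-1} gives an integrable majorant, using that $(n+\mu-2)/2<n$ and $W^{p-1}$ decays like $|z|^{-(n-\mu+2)}$ at infinity, so the convolution is absolutely convergent. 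The second nonlocal term is treated identically, now using the uniform $L^{2n/(2n-\mu)}$-bound on $\bar{\sigma}_m^{p}$ and the $C^1_{loc}$-convergence of $\bar{\phi}_m^{(1)}$ on the support of the cut-off.

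Finally, for the orthogonality conditions \eqref{converges-2}, I would take the condition $\int\Phi_{n,\mu}[W_{i_0},\Xi_{i_0}^{a}]\phi_m=0$ from \eqref{00} corresponding to $i=i_0$ and change variables $x=z/\lambda_{i_0}^{(m)}+\xi_{i_0}^{(m)}$; by scale invariance of the critical Hartree nonlinearity this becomes (up to the common factor $S_1(x_m)$)
\begin{equation*}
\int\Phi_{n,\mu}[W[0,1],\Xi^{a}[0,1]](z)\,\bar{\phi}_m^{(1)}(z)\,dz=0.
\end{equation*}
One then passes to the limit $m\to\infty$ by another dominated convergence argument: $\Phi_{n,\mu}[W[0,1],\Xi^{a}[0,1]]$ decays at infinity like $|z|^{-(n+2)}$ (from the explicit form of $\Xi^{a}$ combined with the Riesz-potential estimate of Lemma \ref{p1-00}), which together with the bound \eqref{km-1} on $\bar{\phi}_m^{(1)}$ gives an integrable majorant away from and near the singularities $\xi_{i_0j}^{(\infty)}$, since $(n+\mu-2)/2<n$. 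The limit reads exactly \eqref{converges-2}. The hardest point in the whole argument remains controlling the ``tail'' bubbles $\mathcal{E}_m$ inside the convolution: this is where the precise structure of the bubble tree from Section 5.1 and the estimates of Section 2 on Riesz-potential convolutions with translated bubbles are essential.
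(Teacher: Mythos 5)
Your overall architecture --- pass to the limit in the rescaled equation, isolate the main bubble $W[0,1]$ from the other bubbles, and rescale the orthogonality conditions --- is the same as the paper's; the only structural difference is that you work with the distributional formulation against $\psi\in C_c^\infty$, while the paper uses the Green representation $\bar{\phi}_m^{(1)}(z)=C\int|z-\omega|^{2-n}\big(\Phi_{n,\mu}[\bar{\sigma}_m^{(1)},\bar{\phi}_m^{(1)}]-\bar{\hbar}_m^{(1)}\big)\,d\omega$, and this difference is immaterial. The genuine gap is in your justification of the limit of the nonlocal terms. You invoke the pointwise bound \eqref{km-1} as an integrable majorant over all of $\mathbb{R}^n$, but \eqref{km-1} is proved only for $z\in K_L$, i.e.\ on a compact set away from the blow-up points $\xi_{i_0j}^{(\infty)}$; on the small balls $B(\xi_{i_0j}^{(\infty)},\epsilon_0)$ and on the exterior region the only available control is the weighted bound $\|\phi_m\|_{\ast}=1$, which translates into the $m$-dependent bounds \eqref{ML2}--\eqref{LM2} involving $S_1^{-1}(x_m)\big(s_{j,1}^{(m)}+s_{j,2}^{(m)}\big)$. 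Moreover, the contributions of these regions to the convolutions do \emph{not} vanish as $m\to\infty$ for fixed $\epsilon_0$, $\widetilde{A}$, $M$: the paper's estimates yield bounds of the form $O\big(\epsilon_0^{\,n-\mu}+\epsilon_0^{2}+M^2\widetilde{A}^{-2}+M^{\frac{n+\mu-2}{2}}\widetilde{A}^{-\frac{n+\mu-2}{2}}\big)+o_m(1)$ (see \eqref{ff11}, \eqref{f13}, \eqref{1-F}, \eqref{2-F}), and the limit equation \eqref{converges-1} is obtained only afterwards, by letting $\epsilon_0\to0$, $\widetilde{A}\to\infty$ and applying Fatou's lemma to bound the corresponding contributions of $\bar{\phi}_\infty^{(1)}$. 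Hence your claim that the contributions of $\mathcal{E}_m$ and of the near-singular regions ``vanish locally uniformly as $m\to\infty$'' is not correct as stated, and without the three-region decomposition $\Omega_1\cup\Omega_2\cup\Omega_3$ and the convolution estimates of Lemmas \ref{cll-0}, \ref{cll-1-0-1} and \ref{cll-2} the dominated-convergence argument has no valid majorant.

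The same issue affects your proof of \eqref{converges-2}: after rescaling the orthogonality identity, the decay $|z|^{-(n+2)}$ of $\Phi_{n,\mu}[W[0,1],\Xi^{a}[0,1]]$ does not by itself furnish a majorant, because near the points $\xi_{i_0j}^{(\infty)}$ the bound on $\bar{\phi}_m^{(1)}$ degenerates into the concentrating weights; the paper controls these pieces by $\epsilon_0^{2}$ and by negative powers of $\widetilde{A}$, uniformly in $m$, and only then concludes. These defects are fixable --- the ingredients you name (Lemmas \ref{B4}--\ref{B4-1}, HLS, the bubble-tree structure) are the right ones --- but the key quantitative step, showing that the near-singular and far-field contributions are uniformly small in $m$ and arbitrarily small in $\epsilon_0$ and $\widetilde{A}^{-1}$, is asserted rather than carried out, and it is precisely the hard part of the paper's proof.
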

The proof will make us of the following lemma, which is proven similar to Lemma \ref{B4}.
\begin{lem}\label{5-7}
For any $0<\mu<4$, and $0<\delta^{\prime}<2$ there is a constant $C>0$, such that
$$\int_{\mathbb{R}^n}\frac{1}{|\omega-y|^\mu}\frac{1}{(1+|y|)^{n-\mu+2}}dy\leq\frac{C}{(1+|\omega|)^{\delta^{\prime}}},~\mu\neq2; \hspace{2mm}(\frac{C\log|\omega|}{(1+|\omega|)^2}),~\mu=2.$$
\end{lem}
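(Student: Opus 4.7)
The plan is to establish this standard convolution estimate by decomposing $\mathbb{R}^n$ into three regions according to the relative sizes of $|y|$, $|\omega-y|$ and $|\omega|$. First I would observe that for $|\omega|\leq 1$ the bound is immediate: the integrand has a locally integrable singularity at $y=\omega$ (because $\mu<4\leq n$ under the standing assumption $(\sharp)$) and decays like $|y|^{-(n+2)}$ at infinity, so the integral is uniformly $O(1)$. The substantive work is therefore confined to the regime $|\omega|\geq 1$.

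For $|\omega|\geq 1$, I would set $A_1=\{y:|y|\leq|\omega|/2\}$, $A_2=\{y:|\omega-y|\leq|\omega|/2\}$ and $A_3=\mathbb{R}^n\setminus(A_1\cup A_2)$, and estimate the three pieces separately. On $A_1$ the factor $|\omega-y|^{-\mu}\lesssim|\omega|^{-\mu}$ pulls out, reducing matters to the one-dimensional integral $\int_0^{|\omega|/2}r^{n-1}(1+r)^{-(n-\mu+2)}\,dr$, whose asymptotic behavior splits into three cases depending on the sign of $\mu-2$. On $A_2$ I would pull out $(1+|y|)^{-(n-\mu+2)}\lesssim|\omega|^{-(n-\mu+2)}$ and use $\mu<n$ to bound the remaining $\int_{|z|\leq|\omega|/2}|z|^{-\mu}\,dz\lesssim|\omega|^{n-\mu}$; this yields $O(|\omega|^{-2})$. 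On $A_3$ both denominators are $\gtrsim|\omega|$, and after further splitting at $|y|=2|\omega|$ (where $|\omega-y|\approx|y|$ in the far field), this contribution is again $O(|\omega|^{-2})$.

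Combining these estimates gives the desired bound. The $A_1$ computation produces $|\omega|^{-\mu}$ for $\mu<2$, $|\omega|^{-2}\log|\omega|$ for $\mu=2$, and $|\omega|^{-2}$ for $\mu>2$, and this dominates (or matches) the uniform $O(|\omega|^{-2})$ contributions coming from $A_2$ and $A_3$. Hence for $\mu\neq2$ the total is bounded by $C(1+|\omega|)^{-\delta'}$ for any admissible $\delta'\in(0,2)$ (matching the natural decay rate $\min\{\mu,2\}$), while for $\mu=2$ the unavoidable logarithm yields the stated form $C\log|\omega|/(1+|\omega|)^2$. The main (but purely computational) obstacle is the case analysis in the radial $A_1$ integral, which is precisely where the threshold $\mu=2$ manifests and where the logarithmic factor is generated; the estimates on $A_2$ and $A_3$ are straightforward, with the assumption $\mu<n$ entering only through the singularity bound on $A_2$.
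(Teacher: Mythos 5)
Your proof is correct and follows essentially the same route the paper intends: the paper gives no separate argument for this lemma, merely asserting it is proven like Lemma \ref{B4}, whose proof is exactly your three-region splitting (near the origin where the weight is largest, near the singularity of the kernel, and the far field), with the same case analysis at $\mu=2$ producing the logarithm. The only caveat is in the lemma's own formulation rather than in your argument: as your computation shows, the decay for $\mu<2$ is $|\omega|^{-\mu}$, so the stated bound can only hold for $\delta^{\prime}\le\min\{\mu,2\}$, which is what your phrase ``admissible $\delta^{\prime}$'' correctly captures.
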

\emph{Proof of Proposition \ref{converges-0}.}
Fix any $z\in B^{\prime}\subset K_L\subset\mathbb{R}^n\setminus\{\xi_{i_0j}^{(\infty)}:j\in\mathscr{H}(i_0)\}$. Then from \eqref{S-1} get that
\begin{equation}\label{S2}
S_1(z/\lambda_{i_0}^{(m)}+\xi_{i_0j}^{(m)})S_1^{-1}(x_m)\lesssim1.
\end{equation}
Note that \eqref{fai}, we have
$$\bar{\phi}_m^{(1)}=\int\frac{1}{|z-\omega|^{n-2}}\big(\Phi_{n,\mu}[\bar{\sigma}_m^{(1)}(\omega),\bar{\phi}_m^{(1)}(\omega)]
		-\bar{\hbar}_m^{(1)}(\omega)\big)d\omega.$$
Thanks to the assume $\|\hbar_{m}\|_{\ast\ast}\rightarrow0$, then as $m\rightarrow\infty$ we have
$$\int\frac{1}{|z-\omega|^{n-2}}\bar{\hbar}_m^{(1)}(\omega)d\omega\rightarrow0.$$
We show that
\begin{equation}\label{2-fai}
\lim\limits_{m\rightarrow\infty}\int\frac{1}{|z-\omega|^{n-2}}\Phi_{n,\mu}[\bar{\sigma}_m^{(1)}(\omega),\bar{\phi}_m^{(1)}(\omega)]d\omega=\int\frac{1}{|z-\omega|^{n-2}}\Phi_{n,\mu}[W[0,1],\bar{\phi}^{(1)}_{\infty}]d\omega
\end{equation}
Given $\widetilde{A}>4C^{\star}$ and $\epsilon_0>0$, we decompose $\mathbb{R}^n$ as follow:
\begin{equation*}
\begin{split}
\mathbb{R}^n&=\big(B(0,\widetilde{A})\cap\big(\cup_{i\in\mathscr{H}(i_0)}B(\xi_{i_0j}^{(\infty)},\epsilon_0)\big)\big)
\bigcup\big(B(0,\widetilde{A})\setminus\big(\cup_{i\in\mathscr{H}(i_0)}B(\xi_{i_0j}^{(\infty)},\epsilon_0)\big)\big)
\bigcup B(0,\widetilde{A})^{c}\\&
:=\Omega_1\cup\Omega_2\cup\Omega_3.
\end{split}
\end{equation*}
To prove \eqref{2-fai}, we set
\begin{equation*}
\begin{split}
F_1&=p\int_{\mathbb{R}^n\times\mathbb{R}^n}\frac{W^{p-1}(\omega)}{|z-\omega|^{n-2}}\frac{W^{p-1}(y)\bar{\phi}^{(1)}_{m}(y)}{|\omega-y|^\mu}d\omega dy+(p-1)\int_{\mathbb{R}^n\times\mathbb{R}^n}\frac{W^{p-2}(\omega)\bar{\phi}^{(1)}_{m}(\omega)}{|z-\omega|^{n-2}}\frac{W^{p}(y)}{|\omega-y|^\mu}d\omega dy\\&
=p\Big(\int_{\mathbb{R}^n\times\Omega_1}\cdots+\int_{\mathbb{R}^n\times\Omega_2}\cdots+\int_{\mathbb{R}^n\times\Omega_3}\cdots\Big)+
(p-1)\Big(\int_{\Omega_1\times\mathbb{R}^n}\cdots+\int_{\Omega_2\times\mathbb{R}^n}\cdots+\int_{\Omega_3\times\mathbb{R}^n}\cdots\Big)\\&
:=p\Big(F_{11}+F_{12}+F_{13}\Big)+(p-1)\Big(F^{\prime}_{11}+F^{\prime}_{12}+F^{\prime}_{13}\Big).
\end{split}
\end{equation*}
\begin{equation*}
\begin{split}
F_2&=p\int_{\mathbb{R}^n}\frac{1}{|z-\omega|^{n-2}}\Big[\Big(|\omega|^{-\mu}\ast(\bar{\sigma}_m^{(1)})^{p-1}\bar{\phi}^{(1)}_{m}\Big)(\bar{\sigma}_m^{(1)})^{p-1}
-\Big(|\omega|^{-\mu}\ast W^{p-1}\bar{\phi}^{(1)}_{m}\Big)W^{p-1}\Big]d\omega \\&
+(p-1)\int_{\mathbb{R}^n}\frac{1}{|z-\omega|^{n-2}}\bigg\{\Big[\Big(|\omega|^{-\mu}\ast(\bar{\sigma}_m^{(1)})^{p}\Big)(\bar{\sigma}_m^{(1)})^{p-2}
-\Big(|\omega|^{-\mu}\ast W^{p}\Big)W^{p-2}\Big]\bar{\phi}^{(1)}_{m}\bigg\}d\omega\\&
=p\int_{\mathbb{R}^n}\frac{1}{|z-\omega|^{n-2}}\Big[\Big(\int_{\Omega_1}\cdots-\int_{\Omega_1}\cdots\Big)+\Big(\int_{\Omega_2}\cdots-\int_{\Omega_2}\cdots\Big)
+\Big(\int_{\Omega_3}\cdots-\int_{\Omega_3}\cdots\Big)\\&
+(p-1)\int_{\mathbb{R}^n}\frac{1}{|z-\omega|^{n-2}}\Big[\Big(\int_{\Omega_1}\cdots-\int_{\Omega_1}\cdots\Big)+\Big(\int_{\Omega_2}\cdots-\int_{\Omega_2}\cdots\Big)
+\Big(\int_{\Omega_3}\cdots-\int_{\Omega_3}\cdots\Big)\\&
:=p\Big(F_{21}+F_{22}+F_{23}\Big)+(p-1)\Big(F^{\prime}_{21}+F^{\prime}_{22}+F^{\prime}_{23}\Big).
\end{split}
\end{equation*}
We choose $L\geq2\max\left\{\widetilde{A},\epsilon_0^{-1}\right\}$, it is easy to check that $\Omega_2\subset K_L$ for $m$ large enough. Combining Proposition \ref{diyigemingti} and \eqref{loc}, then we get
\begin{equation}\label{F12}
\begin{split}
pF_{12}+(p-1)F^{\prime}_{12}\rightarrow&
p\int_{\mathbb{R}^n}\frac{W^{p-1}(\omega)}{|z-\omega|^{n-2}}\Big(\int_{\Omega_2}\frac{W^{p-1}(y)\bar{\phi}^{(1)}_{\infty}(y)}{|\omega-y|^\mu}dy\Big)d\omega \\&+(p-1)\int_{\Omega_2}\frac{W^{p-2}(\omega)\bar{\phi}^{(1)}_{\infty}(\omega)}{|z-\omega|^{n-2}}\Big(\int_{\mathbb{R}^n}\frac{W^{p}(y)}{|\omega-y|^\mu}dy\Big)d\omega,
\end{split}
\end{equation}
\begin{equation}\label{F22}
pF_{22}+(p-1)F^{\prime}_{22}\rightarrow0.
\end{equation}
It is sufficient to handle remainder terms $F_{11}$, $F_{13}$, $F^{\prime}_{11}$,
$F^{\prime}_{13}$, $F_{21}$, $F_{23}$, $F^{\prime}_{21}$ and $F^{\prime}_{23}$.

For $F_{11}$ and $F_{11}^{\prime}$, it holds that
\begin{equation}\label{ML2}
|\bar{\phi}^{(1)}_{m}|\lesssim M^2+S_1^{-1}(x_m)\sum_{j\in\mathscr{H}(i_0)}(s_{j,1}^{(m)}+s_{j,2}^{(m)})\big((\lambda_{i_0}^{(m)})^{-1}z+\xi_{i_0j}^{(m)}\big)
\hspace{2mm}\mbox{for}\hspace{2mm}x\in\Omega_1,
\end{equation}
\begin{equation}\label{LM2}
\begin{split}
|\bar{\phi}^{(1)}_{m}|&\lesssim M^2\widetilde{A}^{-2}+M^{\frac{n+\mu-2}{2}}\widetilde{A}^{-\frac{n+\mu-2}{2}}\\&+\sum_{j\prec i_0}\big(\frac{\lambda_{i_0}^{(m)}}{\lambda_{i_0}^{(m)}}\big)^{2}+S_1^{-1}(x_m)\sum_{|\xi_{i_0j}^{(m)}|\rightarrow\infty}(s_{j,1}^{(m)}+s_{j,2}^{(m)})\big((\lambda_{i_0}^{(m)})^{-1}z+\xi_{i_0j}^{(m)}\big)
\hspace{2mm}\mbox{for}\hspace{2mm}x\in\Omega_3.
\end{split}
\end{equation}
For any $i\in\mathscr{H}(i_0)$. If $|\omega-\xi_{i_0i}^{(\infty)}|\leq2\epsilon_0$, we have
\begin{equation}\label{jx-1}
\begin{split}
\int_{\mathbb{R}^n}\frac{W^{p-1}(\omega)}{|z-\omega|^{n-2}}\Big(\int_{\Omega_1}\frac{W^{p-1}(y)}{|\omega-y|^\mu} dy\Big)d\omega&\lesssim\int_{\mathbb{R}^n}\frac{1}{|z-\omega|^{n-2}}W[0,1]^{p-1}(\omega)\Big(\int_{B(\xi_{i_0i}^{(\infty)},\epsilon_0)}\frac{W^{p-1}(y)}{|\omega-y|^\mu} dy\Big)d\omega\\&
\lesssim\int_{\mathbb{R}^n}\frac{1}{|z-\omega|^{n-2}}W[0,1]^{p-1}(\omega)\Big(\int_{B(\omega,3\epsilon_0)}\frac{dy}{|\omega-y|^\mu} \Big)d\omega\\&
\lesssim
\left\lbrace
\begin{aligned}
&\frac{\epsilon_0^{n-\mu}}{1+|\omega|^{n-\mu}},\hspace{6.5mm}\hspace{8mm}\hspace{2mm}\mu>2,\\&
\frac{\epsilon_0^{n-\mu}(1+\log|\omega|)}{1+|\omega|^{n-2}},\hspace{4mm}\hspace{2mm}\mu=2,\\&
\frac{\epsilon_0^{n-\mu}}{1+|\omega|^{n-2}},\hspace{7mm}\hspace{8mm}\hspace{2mm}\mu<2.
\end{aligned}
\right.
%\\&
%\lesssim \epsilon_0^{n-\mu}.
\end{split}
\end{equation}
If $|\omega-\xi_{i_0i}^{(\infty)}|\geq2\epsilon_0$, we also have
\begin{equation}\label{jx2}
\begin{split}
\int_{\mathbb{R}^n}\frac{W^{p-1}(\omega)}{|z-\omega|^{n-2}}\Big(\int_{\Omega_1}\frac{W^{p-1}(y)}{|\omega-y|^\mu} dy\Big)d\omega
\lesssim \epsilon_0^{n-\mu}.
\end{split}
\end{equation}
For any $i\in\mathscr{H}(i_0)$. Similar to the argument of \eqref{jx-1} and \eqref{jx2}, combining Lemma \ref{5-7} and Lemma \ref{p1-00}, then we get
\begin{equation}\label{jx3}
\begin{split}
\int_{\Omega_1}\frac{W^{p-2}(\omega)}{|z-\omega|^{n-2}}\int_{\mathbb{R}^n}\frac{W^{p}(y)}{|\omega-y|^\mu}d\omega dy
%&=\int_{\Omega_1}\frac{1}{|z-\omega|^{n-2}}W[0,1]^{2^{\ast}-2}(\omega)d\omega\\&
\lesssim\int_{B(\xi_{i_0i}^{(\infty)},\epsilon_0)}\frac{1}{|z-\omega|^{n-2}}W[0,1]^{2^{\ast}-2}(\omega)d\omega\lesssim\epsilon_0^2.
\end{split}
\end{equation}
Furthermore, thanks to Lemma \ref{cll-0}, \eqref{zzzz-2}, \eqref{zzzz-4}, \eqref{S2} and \eqref{ML2}, we have
\begin{equation*}
\begin{split}
&\frac{1}{S_1(x_m)}\sum_{j\in\mathscr{H}(i_0)}\int_{\mathbb{R}^n}\frac{W^{p-1}(\omega)}{|z-\omega|^{n-2}}\int_{B(\xi_{i_0j,}^{(\infty)},\epsilon_0)}\frac{W^{p-1}(y)(s_{j,1}^{(m)}+s_{j,2}^{(m)})
\big((\lambda_{i_0}^{(m)})^{-1}z+\xi_{i_0j}^{(m)}\big)}{|\omega-y|^\mu}d\omega dy\\
\lesssim&\frac{1}{S_1(x_m)}
\left\lbrace
\begin{aligned}
&
\sum_{j\in\mathscr{H}(i_0)}\big[\mathscr{R}_{m}^{-(3-\mu+\min\{\mu,\frac{5+\mu}{3}\})}+\epsilon_{0}^{\frac{n-\mu+2}{2}}+(\frac{\lambda_{i_0}^{(m)}}{\lambda_{j}^{(m)}})^{\frac{n-\mu+2}{2}}\big]
(s_{j,1}^{(m)}+s_{j,2}^{(m)})
\big((\lambda_{i_0}^{(m)})^{-1}z+\xi_{i_0j}^{(m)}\big),\\&\hspace{2mm}\mbox{for}\hspace{2mm}n=5\hspace{2mm}\mbox{and}\hspace{2mm}\mu\in[1,3),
\\&
\sum_{j\in\mathscr{H}(i_0)}\big[\mathscr{R}_{m}^{-(2p-2-\theta_1)}+\epsilon_{0}^{\frac{n-\mu+2}{2}}+(\frac{\lambda_{i_0}^{(m)}}{\lambda_{j}^{(m)}})^{\frac{n-\mu+2}{2}}\big](s_{j,1}^{(m)}+s_{j,2}^{(m)})
\big((\lambda_{i_0}^{(m)})^{-1}z+\xi_{i_0j}^{(m)}\big),\\&\hspace{2mm}\mbox{for}\hspace{2mm}n=6\hspace{2mm}\mbox{and}\hspace{2mm}\mu\in(0,4),
\\&
\sum_{j\in\mathscr{H}(i_0)}\big[\mathscr{R}_{m}^{-(2p-2-\theta_2)}+\epsilon_{0}^{\frac{n-\mu+2}{2}}+(\frac{\lambda_{i_0}^{(m)}}{\lambda_{j}^{(m)}})^{\frac{n-\mu+2}{2}}\big](s_{j,1}^{(m)}+s_{j,2}^{(m)})
\big((\lambda_{i_0}^{(m)})^{-1}z+\xi_{i_0j}^{(m)}\big),\\&\hspace{2mm}\mbox{for}\hspace{2mm}n=7\hspace{2mm}\mbox{and}\hspace{2mm}\mu\in(\frac{7}{3},4),
\end{aligned}
\right.
\\ \lesssim
&\epsilon_{0}^{\frac{n-\mu+2}{2}}+o(1),
\hspace{4mm}\mbox{for}\hspace{2mm}n=5\hspace{2mm}\mbox{and}\hspace{2mm}\mu\in[1,3),
\hspace{2mm}\mbox{or}\hspace{2mm}n=6\hspace{2mm}\mbox{and}\hspace{2mm}\mu\in(0,4),
\hspace{2mm}\mbox{or}\hspace{2mm}n=7\hspace{2mm}\mbox{and}\hspace{2mm}\mu\in(\frac{7}{3},4).
\end{split}
\end{equation*}
Similarly, by Lemma \ref{cll-0}, Lemma \ref{p1-00}, \eqref{zzz-2-1}, \eqref{zzz-4-1}, \eqref{S2} and \eqref{ML2}, we obtain
\begin{equation*}
\begin{split}
&\frac{1}{S_1(x_m)}\sum_{j\in\mathscr{H}(i_0)}\int_{B(\xi_{i_0j,}^{(\infty)},\epsilon_0)}\frac{W^{p-1}(\omega)}{|z-\omega|^{n-2}}\Big(\int_{\mathbb{R}^n}\frac{W^{p}(y)}{|\omega-y|^\mu} dy\Big)(s_{j,1}^{(m)}+s_{j,2}^{(m)})
\big((\lambda_{i_0}^{(m)})^{-1}z+\xi_{i_0j}^{(m)}\big)d\omega\\ \lesssim
&\epsilon_{0}^{2}+o(1),
\hspace{4mm}\mbox{for}\hspace{2mm}n=5\hspace{2mm}\mbox{and}\hspace{2mm}\mu\in[1,3),
\hspace{2mm}\mbox{or}\hspace{2mm}n=6\hspace{2mm}\mbox{and}\hspace{2mm}\mu\in(0,4),
\hspace{2mm}\mbox{or}\hspace{2mm}n=7\hspace{2mm}\mbox{and}\hspace{2mm}\mu\in(\frac{7}{3},4).
\end{split}
\end{equation*}
On the other hand, in virtue of \eqref{S-1},
\eqref{S2} and \eqref{ML2}, and choosing $\epsilon_0$ so small that $\frac{1}{2}|\xi_{i_0i,}^{(\infty)}-\xi_{i_0j,}^{(\infty)}|>\epsilon_0$, we get
\begin{equation*}
\begin{split}
&\sum_{\substack{i,j\in\mathscr{H}(i_0),\\
\xi_{i_0i,}^{(\infty)}\neq\xi_{i_0j,}^{(\infty)}}}\int_{\mathbb{R}^n}\frac{W^{p-1}(\omega)}{|z-\omega|^{n-2}}
\int_{B(\xi_{i_0i,}^{(\infty)},\epsilon_0)}\frac{1}{S_1(x_m)}\frac{W^{p-1}(y)(s_{j,1}^{(m)}+s_{j,2}^{(m)})
\big((\lambda_{i_0}^{(m)})^{-1}z+\xi_{i_0j}^{(m)}\big)}{|\omega-y|^\mu}d\omega dy\\
\lesssim &\sum_{\substack{i,j\in\mathscr{H}(i_0),\\
\xi_{i_0i,}^{(\infty)}\neq\xi_{i_0j,}^{(\infty)}}}\int_{\mathbb{R}^n}\frac{W^{p-1}(\omega)}{|z-\omega|^{n-2}}
\int_{B(\xi_{i_0i,}^{(\infty)},\epsilon_0)}\frac{W^{p-1}(y)
}{|\omega-y|^\mu}\Big[\frac{1}{|\omega-\xi_{i_0j,}^{(\infty)}|^2}+(\frac{1}{|\omega-\xi_{i_0j,}^{(\infty)}|})^{\frac{n+\mu-2}{2}}\Big]d\omega dy\\&
\lesssim\epsilon_{0}^{n-\mu}\sum_{i\in\mathscr{H}(i_0)}\int_{\mathbb{R}^n}\frac{1}{|z-\omega|^{n-2}}W[0,1]^{p-1}(\omega)d\omega\lesssim\epsilon_{0}^{n-\mu}.
\end{split}
\end{equation*}
Similarly, choosing $\epsilon_0$ so small that $\frac{1}{2}|\xi_{i_0i,}^{(\infty)}-\xi_{i_0j,}^{(\infty)}|>\epsilon_0$, and together with Lemma \ref{p1-00}, \eqref{S-1},
\eqref{S2} and \eqref{ML2}, we also obtain
\begin{equation*}
\begin{split}
&\sum_{\substack{i,j\in\mathscr{H}(i_0),\\
\xi_{i_0i,}^{(\infty)}\neq\xi_{i_0j,}^{(\infty)}}}\int_{B(\xi_{i_0i,}^{(\infty)},\epsilon_0)}\frac{1}{S_1(x_m)}\frac{W^{p-1}(y)(s_{j,1}^{(m)}+s_{j,2}^{(m)})
\big((\lambda_{i_0}^{(m)})^{-1}z+\xi_{i_0j}^{(m)}\big)}{|z-\omega|^{n-2}}
\Big(\int_{\mathbb{R}^n}\frac{W^{p-1}(y)}{|\omega-y|^\mu} dy\Big)d\omega\\
\lesssim&\sum_{i\in\mathscr{H}(i_0)}\int_{B(\xi_{i_0i,}^{(\infty)},\epsilon_0)}\frac{1}{|z-\omega|^{n-2}}W[0,1]^{2^{\ast}-2}(\omega)d\omega\lesssim\epsilon_{0}^{2}.
\end{split}
\end{equation*}
As a consequence,  we eventually get
\begin{equation}\label{ff11}
F_{11}+F_{11}^{\prime}\lesssim\epsilon_{0}^{n-\mu}+\epsilon_{0}^{2}+o(1).
\end{equation}

For $F_{13}$ and $F_{13}^{\prime}$, using Lemma \ref{5-7}, we obtain
$$\int_{\mathbb{R}^n}\frac{W^{p-1}(\omega)}{|z-\omega|^{n-2}}\Big(\int_{\Omega_3}\frac{W^{p-1}(y)}{|\omega-y|^\mu} dy\Big)d\omega\lesssim1,\hspace{2mm}\int_{\Omega_3}\frac{W^{p-2}(\omega)}{|z-\omega|^{n-2}}\int_{\mathbb{R}^n}\frac{W^{p}(y)}{|\omega-y|^\mu}d\omega dy\lesssim1.$$
We set
$$h(z):=\int_{\mathbb{R}^n}\frac{1}{|z-\omega|^{n-2}}\frac{1}{S_1(x_m)}\big(t_{i_0,1}^{(m)}+t_{i_0,2}^{(m)}+t_{j,1}^{(m)}+t_{j,2}^{(m)}\big)
\big((\lambda_{i_0}^{(m)})^{-1}z+\xi_{i_0j}^{(m)}\big)d\omega.$$
Also, using Lemma \ref{cll-0}, \eqref{zzzz-0}-\eqref{zzzz-3} together with \eqref{S2} and \eqref{LM2}, we get
\begin{equation*}
\begin{split}
&\frac{1}{S_1(x_m)}\sum_{|\xi_{i_0j}^{(m)}|\rightarrow\infty}\int_{\mathbb{R}^n}\frac{W^{p-1}(\omega)}{|z-\omega|^{n-2}}\int_{\Omega_3}\frac{W^{p-1}(y)(s_{j,1}^{(m)}+s_{j,2}^{(m)})
\big((\lambda_{i_0}^{(m)})^{-1}z+\xi_{i_0j}^{(m)}\big)}{|\omega-y|^\mu}d\omega dy\\
\lesssim&
\left\lbrace
\begin{aligned}
&
\big[\mathscr{R}_{m}^{-\frac{6+\mu-n}{2}}+\sum_{|\xi_{i_0j}^{(m)}|\rightarrow\infty}\big(\xi_{i_0j}^{(m)})^{-\frac{n-\mu+2}{2}}+\tau(\xi_{i_0j}^{(m)})^{-\varpi^{\star}_1}\big)\big]h(z)
,\hspace{2mm}\hspace{4mm}\hspace{9mm}\hspace{9mm}\hspace{3mm}\hspace{8mm}\hspace{2mm}n=5,
\\&
\big[\mathscr{R}_{m}^{-(2p-n+\mu-\theta_1)}+\mathscr{R}_{m}^{-\frac{6+\mu-n}{2}}+\sum_{|\xi_{i_0j}^{(m)}|\rightarrow\infty}\big(\tau(\xi_{i_0j}^{(m)})^{-\frac{n-\mu+2}{2}}+\tau(\xi_{i_0j}^{(m)})^{-\varpi^{\star}_1}\big)\big]
h(z),\hspace{2mm}\hspace{2mm}n=6,
\\&
\big[\mathscr{R}_{m}^{-(2p-n+\mu-\theta_2)}+\mathscr{R}_{m}^{-\frac{6+\mu-n}{2}}+\sum_{|\xi_{i_0j}^{(m)}|\rightarrow\infty}\big(\xi_{i_0j}^{(m)})^{-\frac{n-\mu+2}{2}}+\tau(\xi_{i_0j}^{(m)})^{-\varpi^{\star}_1}\big)\big]
h(z),\hspace{2mm}\hspace{4mm}\hspace{2mm}n=7,
\end{aligned}
\right.
\\ \lesssim
&o(1),
\hspace{4mm}\mbox{for}\hspace{2mm}n=5\hspace{2mm}\mbox{and}\hspace{2mm}\mu\in[1,3),
\hspace{2mm}\mbox{or}\hspace{2mm}n=6\hspace{2mm}\mbox{and}\hspace{2mm}\mu\in(0,4),
\hspace{2mm}\mbox{or}\hspace{2mm}n=7\hspace{2mm}\mbox{and}\hspace{2mm}\mu\in(\frac{7}{3},4).
\end{split}
\end{equation*}
Using  Lemma \ref{cll-0} and Lemma \ref{cll-1-0-1}, we similarly compute and get
\begin{equation*}
\begin{split}
&\frac{1}{S_1(x_m)}\sum_{|\xi_{i_0j}^{(m)}|\rightarrow\infty}\int_{\Omega_3}\frac{W^{p-1}(\omega)}{|z-\omega|^{n-2}}\Big(\int_{\mathbb{R}^n}\frac{W^{p}(y)}{|\omega-y|^\mu} dy\Big)(s_{j,1}^{(m)}+s_{j,2}^{(m)})
\big((\lambda_{i_0}^{(m)})^{-1}z+\xi_{i_0j}^{(m)}\big)d\omega\\ \lesssim
&o(1),
\hspace{4mm}\mbox{for}\hspace{2mm}n=5\hspace{2mm}\mbox{and}\hspace{2mm}\mu\in[1,3),
\hspace{2mm}\mbox{or}\hspace{2mm}n=6\hspace{2mm}\mbox{and}\hspace{2mm}\mu\in(0,4),
\hspace{2mm}\mbox{or}\hspace{2mm}n=7\hspace{2mm}\mbox{and}\hspace{2mm}\mu\in(\frac{7}{3},4).
\end{split}
\end{equation*}
Now, combining these estimates with \eqref{LM2}, we finally get
\begin{equation}\label{f13}
F_{13}+F_{13}^{\prime}\lesssim M^2\widetilde{A}^{-2}+M^{\frac{n+\mu-2}{2}}\widetilde{A}^{-\frac{n+\mu-2}{2}}+o(1).
\end{equation}

Furthermore,
recalling the convergence
\begin{equation*}
\bar{\phi}_m^{(1)}\rightarrow\bar{\phi}^{(1)}_{\infty}\hspace{2mm}\mbox{in}\hspace{2mm} \mathbb{R}^n\setminus\{\xi_{i_0j}^{(\infty)}:j\in\mathscr{H}(i_0)\}
	\hspace{2mm}\mbox{as}\hspace{2mm} m\rightarrow\infty.
\end{equation*}
Thus, applying Fatou's Lemma, \eqref{ML2} and \eqref{LM2}, we deduce that
\begin{equation*}
\begin{split}
&p\int_{\mathbb{R}^n}\frac{W^{p-1}(\omega)}{|z-\omega|^{n-2}}\int_{\Omega_{1}}\frac{\big(W^{p-1}|\bar{\phi}^{(1)}_{\infty}|\big)(y)}{|\omega-y|^\mu}d\omega dy+(p-1)\int_{\Omega_{1}}\frac{\big(W^{p-2}|\bar{\phi}^{(1)}_{\infty}|\big)(\omega)}{|z-\omega|^{n-2}}\int_{\mathbb{R}^n}\frac{W^{p}(y)}{|\omega-y|^\mu}d\omega dy\\
\leq&\liminf\limits_{m\rightarrow\infty}\bigg\{\Big(p\int_{\mathbb{R}^n}\frac{W^{p-1}}{|z-\omega|^{n-2}}\int_{\Omega_{1}}\frac{\big(W^{p-1}|\bar{\phi}^{(1)}_{m}|\big)}{|\omega-y|^\mu} +(p-1)\int_{\Omega_{1}}\frac{\big(W^{p-2}|\bar{\phi}^{(1)}_{m}|\big)}{|z-\omega|^{n-2}}\int_{\mathbb{R}^n}\frac{W^{p}}{|\omega-y|^\mu}\Big)d\omega dy\bigg\}\\
\lesssim&\epsilon_{0}^{n-\mu}+\epsilon_{0}^{2}(\epsilon_{0}^{n-\mu}+\epsilon_{0}^{2}),
\end{split}
\end{equation*}
and
\begin{equation*}
\begin{split}
&p\int_{\mathbb{R}^n}\frac{W^{p-1}(\omega)}{|z-\omega|^{n-2}}\int_{\Omega_{3}}\frac{\big(W^{p-1}|\bar{\phi}^{(1)}_{\infty}|\big)(y)}{|\omega-y|^\mu}d\omega dy+(p-1)\int_{\Omega_{3}}\frac{\big(W^{p-2}|\bar{\phi}^{(1)}_{\infty}|\big)(\omega)}{|z-\omega|^{n-2}}\int_{\mathbb{R}^n}\frac{W^{p}(y)}{|\omega-y|^\mu}d\omega dy\\
\leq&\liminf\limits_{m\rightarrow\infty}\bigg\{\Big(p\int_{\mathbb{R}^n}\frac{W^{p-1}}{|z-\omega|^{n-2}}\int_{\Omega_{3}}\frac{\big(W^{p-1}|\bar{\phi}^{(1)}_{m}|\big)}{|\omega-y|^\mu} +(p-1)\int_{\Omega_{3}}\frac{\big(W^{p-2}|\bar{\phi}^{(1)}_{m}|\big)}{|z-\omega|^{n-2}}\int_{\mathbb{R}^n}\frac{W^{p}}{|\omega-y|^\mu}\Big)d\omega dy\bigg\}\\
\lesssim&M^2\widetilde{A}^{-2}+M^{\frac{n+\mu-2}{2}}\widetilde{A}^{-\frac{n+\mu-2}{2}}.
\end{split}
\end{equation*}
Therefore, combining this bound with \eqref{F12}-\eqref{F22} and \eqref{ff11}-\eqref{f13}, we conclude that
\begin{equation}\label{1-F}
F_1=\int_{\mathbb{R}^n}\frac{1}{|z-\omega|^{n-2}}\Phi_{n,\mu}[W[0,1],\bar{\phi}^{(1)}_{\infty}]d\omega+O\big(M^2\widetilde{A}^{-2}+M^{\frac{n+\mu-2}{2}}\widetilde{A}^{-\frac{n+\mu-2}{2}}
+\epsilon_{0}^{n-\mu}+\epsilon_{0}^{2}\big)+o(1).
\end{equation}
The estimates for $F_2$ are completely analogous except minor modifications by applying Lemma \ref{cll-0}, Lemma \ref{cll-1-0-1}, Lemma \ref{cll-2}, Lemma \ref{cll-1}, Lemma \ref{cll-11-2} and \eqref{S2}. Thus, we deduce
\begin{equation}\label{2-F}
F_2\lesssim\widetilde{A}^{-2}+\widetilde{A}^{-\varpi^{\star}_1})+M^2\widetilde{A}^{-2}+M^{\frac{n+\mu-2}{2}}\widetilde{A}^{-\frac{n+\mu-2}{2}}+o(1).
\end{equation}
By collecting the above the definition of $F_1$ and $F_2$, \eqref{2-fai}, \eqref{1-F} and \eqref{2-F}, we can conclude the proof of \eqref{converges-1} by choosing $\epsilon_0>0$ small enough and $\widetilde{A}>0$ sufficiently large.

To prove \eqref{converges-2}, combining the convergence
$
\bar{\phi}_m^{(1)}\rightarrow\bar{\phi}^{(1)}_{\infty}\hspace{2mm}\mbox{in}\hspace{2mm} \Omega_2\hspace{2mm}\mbox{as}\hspace{2mm} m\rightarrow\infty
$, by Lebesgue's dominated convergence theorem we deduce that
\begin{equation*}
\begin{split}
&p\int_{\Omega_2}\Big[\Big(\int_{\mathbb{R}^n}\frac{W[0,1]^{p-1}\Xi^{a}[0,1]}{|x-y|^{\mu}}\Big)
W[0,1]^{p-1}+(p-1)\Big(\int_{\mathbb{R}^n}\frac{W[0,1]^{p}}{|x-y|^\mu}\Big)
W[0,1]^{p-2}\Xi^{a}[0,1]\Big]\bar{\phi}_m^{(1)}\\&
\rightarrow p\int_{\Omega_2}\Big[\Big(\int_{\mathbb{R}^n}\frac{W[0,1]^{p-1}\Xi^{a}[0,1]}{|x-y|^{\mu}}\Big)
W[0,1]^{p-1}+(p-1)\Big(\int_{\mathbb{R}^n}\frac{W[0,1]^{p}}{|x-y|^\mu}\Big)
W[0,1]^{p-2}\Xi^{a}[0,1]\Big]\bar{\phi}_\infty^{(1)}
\end{split}
\end{equation*}
as $m\rightarrow\infty$.
Also, thanks to \eqref{ML2} and definitions of $s_{j,1}$ and $s_{j,2}$, together with Lemma \ref{p1-00} and Fatou's Lemma, we deduce that
\begin{equation*}
\begin{split}
&p\int_{\Omega_1}\Big[\Big(\int\frac{W[0,1]^{p-1}\Xi^{a}[0,1]}{|x-y|^{\mu}}\Big)
W[0,1]^{p-1}+(p-1)\Big(\int\frac{W[0,1]^{p}}{|x-y|^\mu}\Big)
W[0,1]^{p-2}\Xi^{a}[0,1]\Big]\bar{\phi}_\infty^{(1)}\\
\leq& \liminf\limits_{m\rightarrow\infty}\bigg\{ p\int_{\Omega_1}\Big[\Big(\int\frac{W[0,1]^{p-1}\Xi^{a}[0,1]}{|x-y|^{\mu}}\Big)
W[0,1]^{p-1}+(p-1)\Big(\int\frac{W[0,1]^{p}}{|x-y|^\mu}\Big)
W[0,1]^{p-2}\Xi^{a}[0,1]\Big]\bar{\phi}_m^{(1)}\bigg\}\\
\lesssim&\liminf\limits_{m\rightarrow\infty}\bigg\{M^2 \int_{\Omega_1}W[0,1]^{2^{\ast}-2}+\int_{\Omega_1}W[0,1]^{2^{\ast}-2}\sum_{j\in\mathscr{H}(i_0)}\Big[\big(\frac{M}{|\omega-\xi_{i_0j,}^{(\infty)}|}\big)^2+\big(\frac{M}{|\omega-\xi_{i_0j,}^{(\infty)}|}\big)^{\frac{n+\mu-2}{2}}\Big]\bigg\}\\&\lesssim\epsilon_0^2.
\end{split}
\end{equation*}
Furthermore, applying \eqref{LM2} and Fatou's Lemma, we conclude that
\begin{equation*}
\begin{split}
&p\int_{\Omega_3}\Big[\Big(\int\frac{W[0,1]^{p-1}\Xi^{a}[0,1]}{|x-y|^{\mu}}\Big)
W[0,1]^{p-1}+(p-1)\Big(\int\frac{W[0,1]^{p}}{|x-y|^\mu}\Big)
W[0,1]^{p-2}\Xi^{a}[0,1]\Big]\bar{\phi}_\infty^{(1)}\\
\leq& \liminf\limits_{m\rightarrow\infty}\bigg\{
p\int_{\Omega_3}\Big[\Big(\int\frac{W[0,1]^{p-1}\Xi^{a}[0,1]}{|x-y|^{\mu}}\Big)
W[0,1]^{p-1}+(p-1)\Big(\int\frac{W[0,1]^{p}}{|x-y|^\mu}\Big)
W[0,1]^{p-2}\Xi^{a}[0,1]\Big]\bar{\phi}_m^{(1)}\bigg\}\\
\lesssim&  \liminf\limits_{m\rightarrow\infty}\bigg\{\int_{\Omega_3}W[0,1]^{2^{\ast}-2}\Big[M^2\widetilde{A}^{-2}+M^{\frac{n+\mu-2}{2}}\widetilde{A}^{-\frac{n+\mu-2}{2}}+\sum_{j\prec i_0}\big(\frac{\lambda_{i_0}^{(m)}}{\lambda_{i_0}^{(m)}}\big)^{2}\Big]\\&+
\int_{\Omega_3}W[0,1]^{2^{\ast}-2}\sum_{|\xi_{i_0j,}^{(\infty)}|\rightarrow\infty}\Big[\big(\frac{|\xi_{i_0j,}^{(\infty)}|}{|\omega-\xi_{i_0j,}^{(\infty)}|}\big)^2+
\big(\frac{|\xi_{i_0j,}^{(\infty)}|}{|\omega-\xi_{i_0j,}^{(\infty)}|}\big)^{\frac{n+\mu-2}{2}}\Big]\bigg\} \\& \lesssim\big[M^2\widetilde{A}^{-2}+M^{\frac{n+\mu-2}{2}}\widetilde{A}^{-\frac{n+\mu-2}{2}}\big]\widetilde{A}^{-2}+\widetilde{A}^{-2}.
\end{split}
\end{equation*}
and hence we have that \eqref{converges-2} follows. This concludes the proof.
\qed

The following we derive a result on the removability of singularities of
a solution.
\begin{Prop}\label{converges-3}
If the function $g(z)$ satisfies
	\begin{equation}\label{gz}
		\left\{
		\begin{array}{ll}
			\Delta g+\Phi_{n,\mu}[W[0,1],g]
		=0,        &\text{in}~\mathbb{R}^n\setminus\{\xi_{i_0j}^{(\infty)}:j\in\mathscr{H}(i_0)\},            \\
\vert g(z)\vert			\lesssim\sum_{j\in\mathscr{H}(i_0)}\big(\frac{M}{|z-\xi_{i_0j}^{(\infty)}|}\big)^{\frac{n+\mu-2}{2}}+M^2,                  &\text{in}~\mathbb{R}^n\setminus\{\xi_{i_0j}^{(\infty)}:j\in\mathscr{H}(i_0)\},
		\end{array}
		\right.
	\end{equation}
Then $g\in L^{\infty}(\mathbb{R}^n).$
\end{Prop}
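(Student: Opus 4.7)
\smallskip
\noindent\textbf{Proof plan.} The strategy is to show that the isolated singularities of $g$ at the points $\xi^{\star}_j:=\xi^{(\infty)}_{i_0 j}$, $j\in\mathscr{H}(i_0)$, are removable, and then to combine interior regularity for the nonlocal equation with the control at infinity encoded in \eqref{gz} to obtain global boundedness. A preliminary observation is that in every sub-case of case $(i)$ of Lemma \ref{estimate2} (i.e.\ $n=5$, $\mu\in[1,3)$; $n=6$, $\mu\in(0,4)$; $n=7$, $\mu\in(7/3,4)$) one has $(n+\mu-2)/2<n-2$, equivalently $n-\mu-2>0$. Hence the pointwise bound \eqref{gz} yields $g\in L^{q}_{\mathrm{loc}}(\mathbb{R}^n)$ for every $q<\frac{2n}{n+\mu-2}$, and in particular $g\in L^{1}_{\mathrm{loc}}(\mathbb{R}^n)$. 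Moreover, $\Phi_{n,\mu}[W[0,1],g]\in L^{1}_{\mathrm{loc}}(\mathbb{R}^n)$: by Lemma \ref{p1-00} the factor $|x|^{-\mu}\ast W^{p}$ equals $\widetilde{\alpha}_{n,\mu}W^{2^{\ast}-p}$ and is bounded, while $|x|^{-\mu}\ast(W^{p-1}g)$ is well defined via Hardy--Littlewood--Sobolev and the decay of $W$.

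\smallskip
\noindent The second step is to upgrade the equation in \eqref{gz} to a distributional identity on all of $\mathbb{R}^n$ by a classical cutoff argument. Choose $\eta_\epsilon\in C_c^{\infty}(\mathbb{R}^n)$ with $\eta_\epsilon\equiv 0$ on $\bigcup_{j}B_\epsilon(\xi^{\star}_j)$, $\eta_\epsilon\equiv 1$ outside $\bigcup_{j}B_{2\epsilon}(\xi^{\star}_j)$, and $|\nabla\eta_\epsilon|\lesssim\epsilon^{-1}$, $|\Delta\eta_\epsilon|\lesssim\epsilon^{-2}$. Testing the equation against $\eta_\epsilon\varphi$ for an arbitrary $\varphi\in C_c^{\infty}(\mathbb{R}^n)$ and expanding $\Delta(\eta_\epsilon\varphi)=\eta_\epsilon\Delta\varphi+2\nabla\eta_\epsilon\cdot\nabla\varphi+\varphi\Delta\eta_\epsilon$, the cross terms are supported in the annular shells $B_{2\epsilon}(\xi^{\star}_j)\setminus B_\epsilon(\xi^{\star}_j)$; using the bound in \eqref{gz} they are controlled by
\begin{equation*}
\sum_{j\in\mathscr{H}(i_0)}\int_{B_{2\epsilon}(\xi^{\star}_j)\setminus B_\epsilon(\xi^{\star}_j)}|g|\,\epsilon^{-2}\,dx\lesssim \epsilon^{-(n+\mu-2)/2}\cdot\epsilon^{-2}\cdot\epsilon^{n}=\epsilon^{(n-\mu-2)/2}\longrightarrow 0.
\end{equation*}
Dominated convergence handles the remaining two integrals (using $g\in L^1_{\mathrm{loc}}$ and $\Phi_{n,\mu}[W[0,1],g]\in L^1_{\mathrm{loc}}$), so that $\Delta g+\Phi_{n,\mu}[W[0,1],g]=0$ holds in $\mathcal{D}'(\mathbb{R}^n)$.

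\smallskip
\noindent For the final step, since $W[0,1]$ is smooth and rapidly decaying, the identity $|x|^{-\mu}\ast W^{p}=\widetilde{\alpha}_{n,\mu}W^{2^{\ast}-p}$ from Lemma \ref{p1-00} together with HLS allows a short bootstrap showing that $\Phi_{n,\mu}[W[0,1],g]$ is in $L^{q'}_{\mathrm{loc}}$ for some $q'>n/2$; standard elliptic regularity then yields $g\in C^{0}(\mathbb{R}^n)$, hence bounded on each compact set. Outside a fixed neighborhood of $\{\xi^{\star}_j\}$ the first sum in \eqref{gz} is uniformly bounded and in fact tends to zero as $|z|\to\infty$, while the additive constant $M^{2}$ provides a uniform upper bound; combining these facts with the continuity just obtained gives $g\in L^{\infty}(\mathbb{R}^n)$. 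The principal technical point lies in the cutoff computation of the second step: the strict inequality $(n+\mu-2)/2<n-2$, which holds precisely in the cases $(i)$ addressed here, is what makes the annular shells negligible and the singularities genuinely removable, while verifying that the nonlocal Hartree-type nonlinearity survives the limit requires the exact reduction of the Riesz potential provided by Lemma \ref{p1-00}.
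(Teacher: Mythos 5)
Your proof is essentially correct, but it follows a genuinely different route from the paper. The paper works entirely with the Green's representation $g(z)=C\int|z-\omega|^{2-n}\Phi_{n,\mu}[W[0,1],g]\,d\omega$ (up to the decomposition $H_1+H_2+H_3$), splits the kernels through the pointwise estimates \eqref{wamiga}--\eqref{z-wamiga} together with Lemmas \ref{B3}, \ref{p1-00} and \ref{5-7}, and obtains an improved bound $|g(z)|\le C\big(1+\sum_j|z-\xi_{i_0j}^{(\infty)}|^{-\delta''}+\sum_j|z-\xi_{i_0j}^{(\infty)}|^{-\sigma''}\big)$ with strictly smaller singular exponents; boundedness then follows by feeding this bound back into the representation and iterating. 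You instead first extend the equation distributionally across the points $\xi_{i_0j}^{(\infty)}$ by a cutoff argument (the shell terms vanish like $\epsilon^{(n-\mu-2)/2}$ precisely because $\mu<n-2$ in the case-(i) range, which is the same structural fact the paper exploits implicitly through the exponent $(n+\mu-2)/2<n-2$), and then run a linear elliptic $L^{q}$ bootstrap to get continuity near the singular points, concluding with the bound of \eqref{gz} at infinity. The paper's iteration stays at the level of explicit convolution estimates already developed in Sections 2 and 4 and tracks the exponents quantitatively; your route is more conceptual and isolates the removability mechanism cleanly, at the price of invoking Calder\'on--Zygmund/$W^{2,q}$ regularity for the Laplacian.

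One step you should spell out: the claim that ``a short bootstrap'' puts $\Phi_{n,\mu}[W[0,1],g]$ in $L^{q'}_{\mathrm{loc}}$ with $q'>n/2$ is not a one-shot application of HLS. Initially $g$, and hence the term $(|x|^{-\mu}\ast W^{p})W^{p-2}g=\widetilde{\alpha}_{n,\mu}W^{2^{\ast}-2}g$, lies only in $L^{q}_{\mathrm{loc}}$ for $q<\frac{2n}{n+\mu-2}$, and in every case-(i) regime one has $n+\mu\ge 6$, so this threshold is at or below $n/2$. The bootstrap does close, because the right-hand side is linear in $g$ with bounded local coefficients and the nonlocal piece $(|x|^{-\mu}\ast(W^{p-1}g))W^{p-1}$ is no more singular than $g$ itself (its worst local behavior is $|x-\xi_{i_0j}^{(\infty)}|^{-(3\mu-n-2)/2}$ when $\mu>(n+2)/3$, which is dominated by $|x-\xi_{i_0j}^{(\infty)}|^{-(n+\mu-2)/2}$ since $\mu\le n$); hence each application of $W^{2,q}$ regularity and Sobolev embedding lowers $1/q$ by $2/n$ without stalling, and finitely many iterations reach $q>n/2$. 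With that verification included, your argument gives the same conclusion as Proposition \ref{converges-3}.
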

\begin{proof}
Let $A^{\star}:=1+\max_{j\in\mathscr{H}(i_0)}|\xi_{i_0j}^{(\infty)}|$. In view of \eqref{gz}, to conclude the proof of Proposition \ref{gz}, it is sufficient to obtain bound in the set $B(0,4A^{\star})\setminus\{\xi_{i_0j}^{(\infty)}:j\in\mathscr{H}(i_0)\}.$
For any $z\in B(0,4A^{\star})\setminus\{\xi_{i_0j}^{(\infty)}:j\in\mathscr{H}(i_0)\}$. Then we have
\begin{equation*}
\begin{split}
|g(z)|&\lesssim\int\frac{1}{|z-\omega|^{n-2}}\Big[\Big(\int\frac{W[0,1]^{p-1}}{|\omega-y|^{\mu}}dy\Big)
W[0,1]^{p-1}+\Big(\int\frac{W[0,1]^{p}}{|\omega-y|^\mu}dy\Big)
W[0,1]^{p-2}\Big]d\omega\\&
+\sum_{j\in\mathscr{H}(i_0)}\int\frac{1}{|z-\omega|^{n-2}}\Big(\int\frac{W[0,1]^{p-1}}{|\omega-y|^{\mu}}\big(\frac{1}{|y-\xi_{i_0j}^{(\infty)}|}\big)^{\frac{n+\mu-2}{2}}dy\Big)
W[0,1]^{p-1}d\omega\\&+\sum_{j\in\mathscr{H}(i_0)}\int\frac{1}{|z-\omega|^{n-2}}\Big(\int\frac{W[0,1]^{p}}{|\omega-y|^\mu}dy\Big)
W[0,1]^{p-2}\big(\frac{1}{|\omega-\xi_{i_0j}^{(\infty)}|}\big)^{\frac{n+\mu-2}{2}}\Big]d\omega:=H_1+H_2+H_3.
\end{split}
\end{equation*}
In order to estimate $H_2$ and $H_3$, we now establish the following key estimates:
\begin{equation}\label{wamiga}
\frac{1}{|\omega-y|^{\mu}}\big(\frac{1}{|y-\xi_{i_0j}^{(\infty)}|}\big)^{\frac{n+\mu-2}{2}}\leq
\frac{C}{|\omega-\xi_{i_0j}^{(\infty)}|^{\sigma^{\prime}}}\Big(\frac{1}{|y-\omega|^{(n+3\mu-2)/2-\sigma^{\prime}}}
+\frac{1}{|y-\xi_{i_0j}^{(\infty)}|^{(n+3\mu-2)/2-\sigma^{\prime}}}\Big)
\end{equation}
for any a constant $0<\sigma^{\prime}<\min\{\mu,2+\delta^{\prime\prime}\}$ where $\delta^{\prime\prime}<\sigma^{\prime}$. And for any $\sigma^{\prime\prime}<(n-\mu+2)/2$, there is a constant $C$ such that
\begin{equation}\label{z-wamiga}
\frac{1}{|z-\omega|^{n-2}}\big(\frac{1}{|\omega-\xi_{i_0j}^{(\infty)}|}\big)^{\frac{n+\mu-2}{2}}\leq
\frac{C}{|z-\xi_{i_0j}^{(\infty)}|^{\sigma^{\prime\prime}}}\Big(\frac{1}{|\omega-z|^{(3n+\mu+2)/2-\sigma^{\prime\prime}}}
+\frac{1}{|\omega-\xi_{i_0j}^{(\infty)}|^{(3n+\mu+2)/2-\sigma^{\prime\prime}}}\Big).
\end{equation}
By handing the cases: $B^{\prime}:=\{y:|y|\leq\frac{1}{2}|\omega|~\mbox{for}~\frac{1}{2}|\omega|\geq1\}$, $B^{\prime\prime}:=\{y:|y-\omega|\leq\frac{1}{2}|\omega|~\mbox{for}~\frac{1}{2}|\omega|\geq1\}$ and $\{y:~\mathbb{R}^n\setminus(B^{\prime}\cup B^{\prime\prime})\}$ separately,
we deduce that
\begin{equation}\label{w01}
\int_{\mathbb{R}^n} \frac{1}{|y-\omega|^{(n+3\mu-2)/2-\sigma^{\prime}}}W[0,1]^{p-1}dy\leq
\left\lbrace
\begin{aligned}
&\frac{C}{(1+|\omega|)^{(n+\mu)/2-\sigma^{\prime}}},\hspace{10mm}\hspace{2mm}\mu>2,\\&
\frac{C(1+\log|\omega|)}{(1+|\omega|)^{(n+3\mu-2)/2-\sigma^{\prime}}},\hspace{5mm}\hspace{2mm}\mu=2,\\&
\frac{C}{(1+|\omega|)^{(n+3\mu-2)/2-\sigma^{\prime}}},\hspace{5.5mm}\hspace{2mm}\mu<2.
\end{aligned}
\right.
\end{equation}
Analogous, in the cases $C^{\prime}:=\{y:|y|\leq\frac{1}{2}|\xi_{i_0j}^{(\infty)}|,~\frac{1}{2}|\omega|\geq1\}$, $C^{\prime\prime}:=\{y:|y-\omega|\leq\frac{1}{2}|\xi_{i_0j}^{(\infty)}|,~\frac{1}{2}|\xi_{i_0j}^{(\infty)}|\geq1\}$ and $\{y:~\mathbb{R}^n\setminus(C^{\prime}\cup C^{\prime\prime})\}$ separately, we obtain
\begin{equation}\label{10w02}
\int_{\mathbb{R}^n} \frac{1}{|y-\xi_{i_0j}^{(\infty)}|^{(n+3\mu-2)/2-\sigma^{\prime}}}W[0,1]^{p-1}dy\leq
\left\lbrace
\begin{aligned}
&\frac{C}{(1+|\xi_{i_0j}^{(\infty)}|)^{(n+\mu)/2-\sigma^{\prime}}},\hspace{8mm}\hspace{2mm}\mu>2,\\&
\frac{C(1+\log|\xi_{i_0j}^{(\infty)}|)}{(1+|\xi_{i_0j}^{(\infty)}|)^{(n+3\mu-2)/2-\sigma^{\prime}}},\hspace{3mm}\hspace{2mm}\mu=2,\\&
\frac{C}{(1+|\xi_{i_0j}^{(\infty)}|)^{(n+3\mu-2)/2-\sigma^{\prime}}},\hspace{3.5mm}\hspace{2mm}\mu<2.
\end{aligned}
\right.
\end{equation}
Thus, by \eqref{wamiga}, \eqref{w01} and \eqref{10w02}, direct computations and we deduce
\begin{equation}\label{110w02-1}
H_2\leq C\sum_{j\in\mathscr{H}(i_0)}\Big[\int_{B(0,2A^{\star})^c}\frac{1}{|z-\omega|^{n-2}}\frac{1}{|\omega|^{n-\mu+2+\sigma^{\prime}}}dw
+\int_{B(0,2A^{\star})}\frac{1}{|z-\omega|^{n-2}}\frac{1}{|\omega-\xi_{i_0j}^{(\infty)}|^{\sigma^{\prime}}}d\omega\Big]
\end{equation}
On the one hand, similarly to \eqref{wamiga}-\eqref{z-wamiga} we obatin
for any a constant $\delta^{\prime\prime}<\sigma^{\prime}$, there is a constant $C$ such that
\begin{equation}\label{zz10w02}
\frac{1}{|z-\omega|^{n-2}}\frac{1}{|\omega-\xi_{i_0j}^{(\infty)}|^{\sigma^{\prime}}}\leq
\frac{C}{|z-\xi_{i_0j}^{(\infty)}|^{\delta^{\prime\prime}}}\Big(\frac{1}{|\omega-z|^{n-2+\sigma^{\prime}-\delta^{\prime\prime}}}
+\frac{1}{|\omega-\xi_{i_0j}^{(\infty)}|^{n-2+\sigma^{\prime}-\delta^{\prime\prime}}}\Big).
\end{equation}
As a result,
\begin{equation}\label{zy}
\begin{split}
&\frac{C}{|z-\xi_{i_0j}^{(\infty)}|^{\delta^{\prime\prime}}}\int_{B(0,2A^{\star})}\frac{1}{|\omega-\xi_{i_0j}^{(\infty)}|^{n-2+\sigma^{\prime}-\delta^{\prime\prime}}}d\omega
\\&=\frac{C}{|z-\xi_{i_0j}^{(\infty)}|^{\delta^{\prime\prime}}}\Big(\int_{|\omega-\xi_{i_0j}^{(\infty)}|<\frac{|\xi_{i_0j}^{(\infty)}|}{2}}\cdots
+\int_{|\omega|<\frac{|\xi_{i_0j}^{(\infty)}|}{2}}\cdots+\int_{\{\frac{|\xi_{i_0j}^{(\infty)}|}{2}\leq|\omega|<2A^\star\}\cap\{|\omega-\xi_{i_0j}^{(\infty)}|\geq\frac{|\xi_{i_0j}^{(\infty)}|}{2}\}}
\Big)\\&
\leq C\frac{1}{|z-\xi_{i_0j}^{(\infty)}|^{\delta^{\prime\prime}}}\Big(|\xi_{i_0j}^{(\infty)}|^{2+\delta^{\prime\prime}-\sigma^{\prime}}+(A^{\star})^{2+\delta^{\prime\prime}-\sigma^{\prime}}\Big).
\end{split}
\end{equation}
On the other hand, by Lemma \ref{B3}, Lemma \ref{p1-00} and Lemma \ref{5-7}, we have
\begin{equation}\label{H111}
H_1+\int_{B(0,2A^{\star})^c}\frac{1}{|z-\omega|^{n-2}}\frac{1}{|\omega|^{n-\mu+2+\sigma^{\prime}}}dw
\leq \left\lbrace
\begin{aligned}
&\frac{C}{(1+|\omega|)^{2}},\hspace{12mm}\hspace{2mm}\mu\neq2+\delta^{\prime}\hspace{2mm}\mbox{or}2+\sigma^{\prime},\\&
\frac{C(1+\log|\omega|)}{(1+|\omega|)^{2}},\hspace{5mm}\hspace{2mm}\mu=2+\delta^{\prime}\hspace{2mm}\mbox{or}2+\sigma^{\prime}.
\end{aligned}
\right.
\end{equation}
Similarly, combining \eqref{z-wamiga} and  Lemma \ref{p1-00}, we obtain
\begin{equation}\label{zz10wzzy}
\begin{split}H_3&\leq C \sum_{j\in\mathscr{H}(i_0)}\Big[\int_{B(0,2A^{\star})^c}\frac{1}{|z-\omega|^{n-2}}\frac{1}{|\omega|^{(n+\mu+6)/2}}dw
+\int_{B(0,2A^{\star})}\frac{1}{|z-\omega|^{n-2}}\frac{1}{|\omega-\xi_{i_0j}^{(\infty)}|^{(n+\mu-2)/2}}d\omega\Big]\\&
\leq C\Big(1+\sum_{j\in\mathscr{H}(i_0)}\frac{1}{|z-\xi_{i_0j}^{(\infty)}|^{\sigma^{\prime\prime}}}\Big) \hspace{3mm}\text{in}~\mathbb{R}^n\setminus\{\xi_{i_0j}^{(\infty)}:j\in\mathscr{H}(i_0)\}.
\end{split}
\end{equation}
By collecting \eqref{110w02-1}, \eqref{zz10w02}, \eqref{zy}, \eqref{H111} and \eqref{zz10wzzy}, we finally get
$$g(z)\leq C\Big(1+\sum_{j\in\mathscr{H}(i_0)}\frac{1}{|z-\xi_{i_0j}^{(\infty)}|^{\delta^{\prime\prime}}}+\sum_{j\in\mathscr{H}(i_0)}\frac{1}{|z-\xi_{i_0j}^{(\infty)}|^{\sigma^{\prime\prime}}}\Big) \hspace{3mm}\text{in}~\mathbb{R}^n\setminus\{\xi_{i_0j}^{(\infty)}:j\in\mathscr{H}(i_0)\}.
$$
where $C$ are the strictly positive constant which depends only on $n$, $\kappa$, $\mu$, $\delta^{\prime\prime}$ and $\sigma^{\prime\prime}$.
We now turn to the proof of \eqref{wamiga}. Feeding back this bound
into the previous estimate of $g(z)$ and the conclusion then follows from iterating the above argument.

We now turn to establish \eqref{wamiga} and \eqref{z-wamiga}. Let $d=|\omega-\xi_{i_0j}^{(\infty)}|$. If $y\in B(\xi_{i_0j}^{(\infty)},d)$, then we have $|y-\omega|\geq\frac{1}{2}|\omega-\xi_{i_0j}^{(\infty)}|$ and $|y-\omega|\geq\frac{1}{2}|y-\xi_{i_0j}^{(\infty)}|$. Thus, we deduce
\begin{equation}\label{LHS}
\mbox{LHS}~\mbox{of}~\eqref{wamiga}\leq
\frac{C}{|\omega-\xi_{i_0j}^{(\infty)}|^{\sigma^{\prime}}}\Big(\frac{1}{|y-\xi_{i_0j}^{(\infty)}|^{(n+3\mu-2)/2-\sigma^{\prime}}}\Big)\hspace{2mm}\mbox{in}\hspace{2mm}
B(\xi_{i_0j}^{(\infty)},d).
\end{equation}
Similarly, we have
\begin{equation*}
\mbox{LHS}~\mbox{of}~\eqref{wamiga}\leq
\frac{C}{|\omega-\xi_{i_0j}^{(\infty)}|^{\sigma^{\prime}}}\Big(\frac{1}{|y-\omega|^{(n+3\mu-2)/2-\sigma^{\prime}}}\Big)\hspace{2mm}\mbox{in}\hspace{2mm}
B(\omega,d).
\end{equation*}
For $\mathbb{R}^n\setminus\big(B(\xi_{i_0j}^{(\infty)},d)\cup\big)B(\omega,d)$. If in the cases $|y-\xi_{i_0j}^{(\infty)}|\geq2|\xi_{i_0j}^{(\infty)}-\omega|$ and $|y-\xi_{i_0j}^{(\infty)}|\leq2|\xi_{i_0j}^{(\infty)}-\omega|$, we still obtain the estimate \eqref{LHS} holds. Moreover, statement of \eqref{z-wamiga} also hold by the same argument and which concludes the proof of
Proposition \ref{converges-3}.
\end{proof}
\begin{Prop}\label{converges-4}
Up to a subsequence, we have
\begin{equation}\label{converges-5}
			\bar{\phi}_m^{(1)}(z)\rightarrow0 \hspace{2mm}\mbox{in}\hspace{2mm} C_{loc}^{0}\big(\mathbb{R}^n\setminus\{\xi_{i_0j}^{(\infty)}:j\in\mathscr{H}(i_0)\}\big)
	\hspace{2mm}\mbox{as}\hspace{2mm} m\rightarrow\infty.
		\end{equation}
\end{Prop}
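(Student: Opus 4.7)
The plan is to combine the three preceding results, namely the local convergence \eqref{loc}, the limiting equation of Proposition~\ref{converges-0}, and the removable-singularity statement of Proposition~\ref{converges-3}, with the nondegeneracy theorem (Proposition~\ref{prondgr}), in order to identify the limit $\bar{\phi}^{(1)}_{\infty}$ as a linear combination of the functions $\Xi^{a}[0,1]$ ($a=1,\dots,n+1$), and then to use the orthogonality conditions \eqref{converges-2} to force every coefficient to vanish.

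First, by Proposition~\ref{diyigemingti} and \eqref{loc}, the limit $\bar{\phi}^{(1)}_{\infty}$ satisfies the hypotheses of Proposition~\ref{converges-3}, hence $\bar{\phi}^{(1)}_{\infty}\in L^{\infty}(\mathbb{R}^{n})$. Together with \eqref{converges-1}, standard elliptic regularity applied to the nonlocal equation $\Delta g+\Phi_{n,\mu}[W[0,1],g]=0$ and the fact that the singular set $\{\xi_{i_0j}^{(\infty)}:j\in\mathscr{H}(i_0)\}$ consists of finitely many points yields that $\bar{\phi}^{(1)}_{\infty}$ extends to a bounded $C^{2}$ solution of the linearized critical Hartree equation on the whole $\mathbb{R}^{n}$: this is where a short argument is needed to show that the Riesz convolutions $|x|^{-\mu}\ast(W^{p-1}\bar{\phi}^{(1)}_{\infty})$ and $|x|^{-\mu}\ast W^{p}$ are bounded and continuous, so the equation holds across the removed points in the distributional, and thus classical, sense.

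Invoking now the nondegeneracy result Proposition~\ref{prondgr} (valid throughout the admissible range of $n$ and $\mu$), we deduce the existence of scalars $\alpha_{0},\alpha_{1},\dots,\alpha_{n}$ such that
\begin{equation*}
\bar{\phi}^{(1)}_{\infty}(z)=\alpha_{0}\,\Xi^{n+1}[0,1](z)+\sum_{a=1}^{n}\alpha_{a}\,\Xi^{a}[0,1](z).
\end{equation*}
Substituting this expression into the orthogonality identities \eqref{converges-2} and using the diagonalization property of Lemma~\ref{wwc101}, namely $\int\Phi_{n,\mu}[W[0,1],\Xi^{a}]\Xi^{b}=\Gamma_{0}^{a}\delta_{ab}$ with $\Gamma_{0}^{a}>0$, we conclude $\alpha_{a}=0$ for every $a=0,1,\dots,n$, so $\bar{\phi}^{(1)}_{\infty}\equiv0$ on $\mathbb{R}^{n}$. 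Combined with \eqref{loc}, this proves \eqref{converges-5}.

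The delicate point, and the only real obstacle, is the removability step: because the nonlocal operator $\Phi_{n,\mu}$ involves convolutions with $|x|^{-\mu}$, one must verify that the singularity bound \eqref{gz} of order $(n+\mu-2)/2$ at each $\xi_{i_0j}^{(\infty)}$ is sufficiently mild so that both $W^{p-1}\bar{\phi}^{(1)}_{\infty}$ and its Riesz potential remain locally integrable with enough regularity to justify the equation being satisfied across the points $\xi_{i_0j}^{(\infty)}$. The iteration already carried out in the proof of Proposition~\ref{converges-3}, which upgrades a polynomial singularity to an $L^{\infty}$ bound in finitely many steps, provides precisely the regularity needed; once boundedness is secured, classical interior Schauder estimates applied to the local part $\Delta g$ (treating the nonlocal contribution as a continuous right-hand side) give $C^{2}$ regularity on the whole space, after which the nondegeneracy argument proceeds unambiguously.
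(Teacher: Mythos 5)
Your proposal is correct and follows essentially the same route as the paper: removability of the singularities via Proposition \ref{converges-3}, identification of the bounded limit through the nondegeneracy result, and elimination of the resulting coefficients by the orthogonality conditions \eqref{converges-2} together with the diagonalization in Lemma \ref{wwc101}. The paper states this argument only in compressed form, and your write-up simply makes the same steps explicit.
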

\begin{proof}
By Proposition \ref{converges-0}, $\bar{\phi}^{(1)}_{\infty}$ satisfies that \eqref{converges-1} and \eqref{converges-2}. and combining Proposition \ref{converges-3}, it follows that
each singular $\xi_{i_0j}^{(\infty)}$ of $\bar{\phi}^{(1)}_{\infty}$ is removable. The conclusion then follows from the orthogonality condition and non-degeneracy of bubbles $W_i$.
\end{proof}
Now we are in position to prove step \ref{step5.2} by combining the blow-up argument.

\emph{Proof of step \ref{step5.2}}. Since $\vert \zeta_m\vert:=\vert\lambda_{i_0}^{(m)}(x_m-\xi_{i_0j}^{(m)})\vert\leq M$ and $\vert \zeta_m-\xi_{i_0j}^{\left(m\right)}\vert\geq\varepsilon_0$, up to a subsequence, then we have $\lim_{m\rightarrow\infty}\zeta_m=\zeta_\infty\notin\{\xi_{i_0j}^{(\infty)}:j\in\mathscr{H}(i_0)\}$ . However this estimate gives a contradiction with $\bar{\phi}^{(1)}_{\infty}\equiv0$ by Proposition \ref{converges-4}. \qed
\begin{step}\label{step5.3}
 When $\left\{x_m\right\}\subset\mathcal{C}_{neck,M,3}$.
Assume that $n=5$ and $\mu\in[1,3)$, or $n=6$ and $\mu\in(0,4)$, or $n=7$ and $\mu\in(\frac{7}{3},4)$, we have that $\phi_{m}(x)<S_1(x)$ as $m\rightarrow+\infty$. The above statements also hold, namely that $\phi_{m}(x)<S_2(x)$ as $m$ enough large for $n=4\hspace{2mm}\mbox{and}\hspace{2mm}\mu\in[2,4),\mbox{or}\hspace{2mm}n=5\hspace{2mm}\mbox{and}\hspace{2mm}\mu\in(3,4]$.
\end{step}

 For $\theta\in (0,\frac{1}{2})$, we define $G(X,Y)$ as follows:
	\begin{equation*}
			G(X,Y)=\frac{X+Y}{2}-\sqrt{(\frac{X-Y}{2})^2+\theta XY},
	\end{equation*}
which approximates the function	$\min\{X,Y\}$.

\begin{Def}
Define the weighted functions $\bar{\mathbf{s}}_{i,1}^{\star}(x)$ and $\tilde{\mathbf{s}}_{i,1}^{\star}(x)$ as
\begin{equation*}
\begin{split}
&\bar{\mathbf{s}}_{i,1}^{\star}(x):=\sum_{j\in\mathscr{H}(i_0) }\lambda_{i}^{\frac{n-2}{2}}\mathscr{R}^{2-n}\big(1+|\xi_{ij}|^2+\varepsilon_0^{-2}|z_i^{(m)}-\xi_{ij}^{(m)}|^2\big)^{-1},\hspace{6mm}\hspace{3mm}\hspace{3mm}0<\mu<\frac{n+\mu-2}{2},\\&
\tilde{\mathbf{s}}_{i,1}^{\star}(x):=\sum_{j\in\mathscr{H}(i_0) }\lambda_{i}^{\frac{n-2}{2}}\mathscr{R}^{4+\mu-2n}\big(1+|\xi_{ij}|^2+\varepsilon_0^{-2}|z_i^{(m)}-\xi_{ij}^{(m)}|^2\big)^{-1},\hspace{4mm}\hspace{3mm}\frac{n+\mu-2}{2}\leq\mu<4.
\end{split}
\end{equation*}
\end{Def}
	\begin{Prop}\label{mingti2}
	Given $x\in D_{i}^{(m)}$. In the above notation we have that
		\begin{equation}\label{z-1}
\begin{split}
			&\frac{1}{3}s_{i,1}\leq\bar{\mathbf{s}}_{i,1}^{\star}(x)\leq 3\kappa s_{i,1}(x),\hspace{6mm}\hspace{6mm}\hspace{4mm}\hspace{1mm}\hspace{2mm} 0<\mu<\frac{n+\mu-2}{2},\\&
\Delta_{x}\bar{\mathbf{s}}_{i,1}^{\star}(x)\leq-\frac{\kappa(\kappa-1)(n-4)}{16\varepsilon_0^2}t_{i,1}(x),\hspace{3mm}0<\mu<\frac{n+\mu-2}{2},
		\end{split}
\end{equation}
\begin{equation}\label{z-2}
			\frac{1}{3}\hat{s}_{i,1}\leq\tilde{\mathbf{s}}_{i,1}^{\star}(x)\leq 3\kappa \hat{s}_{i,1}(x),\hspace{6mm}\hspace{3mm}\hspace{2mm} \hspace{3mm} \hspace{2mm} \frac{n+\mu-2}{2}\leq\mu<4,
\end{equation}
\begin{equation}\label{z-3}
\Delta_{x}\tilde{\mathbf{s}}_{i,1}^{\star}(x)\leq-\frac{\kappa(\kappa-1)}{9\varepsilon_0^2}\hat{t}_{i,1}(x),\hspace{3mm} \hspace{3mm}\hspace{3mm}\hspace{2mm}  \frac{n+\mu-2}{2}\leq\mu<4.
\end{equation}
	\end{Prop}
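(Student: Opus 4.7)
The plan is to reduce both statements to the analysis of the elementary function
\begin{equation*}
f_{ij}(x) := \bigl(1+|\xi_{ij}|^{2} + \varepsilon_0^{-2}|z_i^{(m)}-\xi_{ij}^{(m)}|^{2}\bigr)^{-1},
\end{equation*}
observing that $z_i^{(m)}-\xi_{ij}^{(m)} = \lambda_i(x-\xi_j)$, so $f_{ij}(x)=(A_{ij}+B|x-\xi_j|^{2})^{-1}$ with $A_{ij} := 1+|\xi_{ij}|^{2}$ independent of $x$ and $B:=\varepsilon_0^{-2}\lambda_i^{2}$. Then $\bar{\mathbf s}_{i,1}^{\star}=\lambda_i^{(n-2)/2}\mathscr R^{2-n}\sum_{j\in\mathscr H(i_0)}f_{ij}$ and similarly for $\tilde{\mathbf s}_{i,1}^{\star}$, only $\mathscr R^{2-n}$ is replaced by $\mathscr R^{4+\mu-2n}$.

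First I would establish the pointwise equivalences \eqref{z-1} and \eqref{z-2}. On $D_i^{(m)}$ there exists at least one $j_0\in\mathscr H(i_0)$ with $|z_i^{(m)}-\xi_{ij_0}^{(m)}|\leq\varepsilon_0$, so $\varepsilon_0^{-2}|z_i^{(m)}-\xi_{ij_0}^{(m)}|^{2}\leq 1$ and hence $A_{ij_0}+B|x-\xi_{j_0}|^{2}\asymp 1+|\xi_{ij_0}|^{2}\asymp \tau(z_i^{(m)})^{2}$, the last equivalence following from $\bigl||z_i^{(m)}|-|\xi_{ij_0}^{(m)}|\bigr|\leq \varepsilon_0$. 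This yields $\bar{\mathbf s}_{i,1}^{\star}\geq\tfrac13 s_{i,1}$. For the upper bound, for every $j$ one has $A_{ij}+B|x-\xi_j|^{2}\gtrsim \tau(z_i^{(m)})^{2}$ (splitting into $|\xi_{ij}|\lesssim|z_i^{(m)}|$ and the opposite case), so each of the at most $\kappa$ summands is $\lesssim s_{i,1}$. The analogous computation works for $\tilde{\mathbf s}_{i,1}^{\star}$.

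The core of the proof is the Laplacian estimate. A direct differentiation yields
\begin{equation*}
\Delta_{x} f_{ij}=-2nB\,f_{ij}^{2}+8B^{2}f_{ij}^{3}|x-\xi_j|^{2}
=-2B\,f_{ij}^{2}\Bigl(n-4\,\frac{B|x-\xi_j|^{2}}{A_{ij}+B|x-\xi_j|^{2}}\Bigr).
\end{equation*}
For $j=j_0$ the ratio in parentheses is $\leq\tfrac{1}{A_{ij_0}+1}\leq\tfrac12$ because $B|x-\xi_{j_0}|^{2}\leq 1$ and $A_{ij_0}\geq 1$. For $j\neq j_0$ one uses the geometric separation $|\xi_{ij}^{(m)}-\xi_{ij_0}^{(m)}|\geq c>0$ (a consequence of the tree hypothesis and \eqref{AA-00}): after choosing $\varepsilon_0$ small in terms of $c$, we have $|z_i^{(m)}-\xi_{ij}^{(m)}|\geq c/2$, and again $A_{ij}\geq 1$ lets us bound the ratio by a universal constant strictly less than $n/4$. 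In every case one gets $\Delta_x f_{ij}\leq -c(n)\,B\,f_{ij}^{2}$ with $c(n)>0$ uniform, including the borderline $n=4$ needed in \eqref{z-3}.

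Summing over $j\in\mathscr H(i_0)$ and using $\sum_{j}f_{ij}^{2}\gtrsim \tau(z_i^{(m)})^{-4}$ (directly from the lower-bound step above), we conclude
\begin{equation*}
\Delta_{x}\bar{\mathbf s}_{i,1}^{\star}\leq -c(n)\,\varepsilon_0^{-2}\lambda_i^{2}\cdot \lambda_i^{(n-2)/2}\mathscr R^{2-n}\tau(z_i^{(m)})^{-4}\lesssim -\,\varepsilon_0^{-2}\,t_{i,1},
\end{equation*}
and similarly $\Delta_{x}\tilde{\mathbf s}_{i,1}^{\star}\lesssim -\,\varepsilon_0^{-2}\,\hat t_{i,1}$ upon replacing $\mathscr R^{2-n}$ by $\mathscr R^{4+\mu-2n}$. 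The explicit combinatorial constants $\kappa(\kappa-1)$ appear when one tracks which fraction of the $|\mathscr H(i_0)|$ cross terms survive the sharpening in the estimate of the ratio above. The main obstacle I anticipate is precisely the degeneration at $n=4$ in \eqref{z-3}: the naive bound $\Delta f_{ij}\leq -2(n-4)Bf_{ij}^{2}$ is useless there, and one genuinely has to exploit the uniform bound on $Bf_{ij}|x-\xi_j|^{2}$ throughout $D_i^{(m)}$, which in turn hinges on choosing $\varepsilon_0$ small relative to the geometric separation $\min_{j\neq j'}|\xi_{ij}^{(\infty)}-\xi_{ij'}^{(\infty)}|$ coming from the tree structure.
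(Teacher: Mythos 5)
Your strategy is essentially the paper's: you work with the same model function $h_j(x)=\bigl(1+|\xi_{ij}^{(m)}|^2+\varepsilon_0^{-2}|z_i^{(m)}-\xi_{ij}^{(m)}|^2\bigr)^{-1}$, compute its Laplacian exactly, obtain the pointwise equivalence with $s_{i,1}$ (resp.\ $\hat s_{i,1}$) from the active index $j_0$ with $|z_i^{(m)}-\xi_{ij_0}^{(m)}|\le\varepsilon_0$, and extract the coercive negative term from that same index; your identity $\Delta f_{ij}=-2Bf_{ij}^2\bigl(n-4\tfrac{B|x-\xi_j|^2}{A_{ij}+B|x-\xi_j|^2}\bigr)$ is exactly the paper's $\Delta h_j=\lambda_i^2\bigl(-2(n-4)\varepsilon_0^{-2}h_j^2-8\varepsilon_0^{-2}(1+|\xi_{ij}|^2)h_j^3\bigr)$ rewritten, and your treatment of $j=j_0$ at $n=4$ coincides with the paper's use of the cubic term.

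There is, however, one intermediate claim that is wrong as stated, though harmless to the conclusion. For $j\neq j_0$ you assert that the separation $|\xi_{ij}^{(m)}-\xi_{ij_0}^{(m)}|\ge c>0$ plus $A_{ij}\ge1$ bounds the ratio by a universal constant strictly below $n/4$, so that $\Delta f_{ij}\le -c(n)Bf_{ij}^2$ uniformly, "including $n=4$". This fails on two counts: first, for $j,j_0\in\mathscr H(i)$ the limits $\xi_{ij}^{(\infty)}$ and $\xi_{ij_0}^{(\infty)}$ may coincide (a tower inside the cluster), so no such $c$ exists in general; second, and more importantly, being far from $\xi_{ij}^{(m)}$ makes $B|x-\xi_j|^2=\varepsilon_0^{-2}|z_i^{(m)}-\xi_{ij}^{(m)}|^2$ large, so the ratio tends to $1$ as $\varepsilon_0\to0$ (since $A_{ij}\le 1+(C^{\star})^2$ is bounded), and at $n=4$ the factor $n-4\cdot\text{ratio}=4A_{ij}f_{ij}$ degenerates like $\varepsilon_0^2$ rather than being bounded below. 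Consequently the summation step "$\Delta_x\bar{\mathbf s}_{i,1}^{\star}\le -c(n)B\sum_j f_{ij}^2$" is not justified as written. The fix is immediate and is what the paper implicitly does: from the exact identity every term satisfies $\Delta f_{ij}\le -8A_{ij}Bf_{ij}^3\le 0$ for $n\ge4$, so the non-active terms may simply be discarded, and the single index $j_0$, for which $A_{ij_0}f_{ij_0}\ge\tfrac12$ and $f_{ij_0}\gtrsim\tau(z_i)^{-2}$, already yields $\Delta_x\tilde{\mathbf s}_{i,1}^{\star}\lesssim-\varepsilon_0^{-2}\hat t_{i,1}$ (and likewise $\Delta_x\bar{\mathbf s}_{i,1}^{\star}\lesssim-\varepsilon_0^{-2}(n-4)t_{i,1}$ via the quadratic term when $n\ge5$). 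With that correction your argument matches the paper's proof; the tracking of the constant $\kappa(\kappa-1)$ is loose in both, but that is immaterial to how the proposition is used.
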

	\begin{proof}
The proof of \eqref{z-1} and \eqref{z-2} can follow the same proof in \cite{DSW21} except minor modifications. We aim to show that \eqref{z-3} for the case $\frac{n+\mu-2}{2}\leq\mu<4$. Define $h_j(x)=\big(1+|\xi_{ij}^{(m)}|^2+\varepsilon_0^{-2}|z_i^{(m)}-\xi_{ij}^{(m)}|^2\big)^{-1}$, we get $\tilde{\mathbf{s}}_{i,1}^{\star}(x)=\sum_{j\in\mathscr{H}(i_0) }^{\kappa}\lambda_{i}^{\frac{n-2}{2}}\mathscr{R}^{4+\mu-2n}h_j$ and
$$\Delta_{x}h_j(x)=\lambda_i^2\big(-2(n-4)\varepsilon_0^{-2}h_j^2-8\varepsilon_0^{-2}(1+|\xi_{ij}^{(m)}|^2)h_j^3\big)$$
Hence, if $n=4$ and $2\leq\mu<4$, combining $h_j\geq\frac{1}{3}\tau(z_i)^{-2}$ on the set $\{|z_i^{(m)}-\xi_{ij}^{(m)}|\leq\varepsilon_0\}$ gives
$$\Delta_xh_j=-\frac{1+|\xi_{ij}^{(m)}|^2}{1+|\xi_{ij}^{(m)}|^2+\varepsilon_0^{-2}|z_i^{(m)}-\xi_{ij}^{(m)}|^2}h_j^2\leq-\frac{1}{18}\tau(z_i)^{-4}.$$
Then, a direct computation shows that, on the set $\{|z_i^{(m)}-\xi_{ij}^{(m)}|\leq\varepsilon_0\}$,
$$\Delta_{x}\tilde{\mathbf{s}}_{i,1}^{\star}(x)\leq\sum_{j\in\mathscr{H}(i_0) }\lambda_{i}^{\frac{n-2}{2}}\mathscr{R}^{4+\mu-2n}\Delta_xh_j\leq-\frac{\kappa(\kappa-1)}{9\varepsilon_0^{2}}\hat{t}_{i,1}.$$
When $n=5$ and $3\leq\mu<4$, we obtain that on the set $\{|z_i^{(m)}-\xi_{ij}^{(m)}|\leq\varepsilon_0\}$,
$$\Delta_{x}\tilde{\mathbf{s}}_{i,1}^{\star}(x)\leq\sum_{j\in\mathscr{H}(i_0) }\lambda_{i}^{\frac{n-2}{2}}\mathscr{R}^{4+\mu-2n}\Delta_xh_j\leq-2\sum_{j\in\mathscr{H}(i_0) }\lambda_{i}^{\frac{n-2}{2}}\mathscr{R}^{4+\mu-2n}h_j^2\leq-\frac{\kappa(\kappa-1)}{9\varepsilon_0^{2}}\hat{t}_{i,1},$$
and concluding the proof.
\end{proof}

Define barrier function $\bar{S}_1(x)$ and $\bar{S}_2(x)$ is given by
$$\bar{S}_1(x)=\sum_{j\in\mathscr{H}(i_0) }\lambda_{j}^{\frac{n-2}{2}}G\Big(\frac{\mathscr{R}^{2-n}}{\tau(z_j)^{2}},\frac{\mathscr{R}^{(\mu-2-n)/2}}{\tau(z_i)^{(n+\mu-2)/2}}\Big)+\bar{\mathbf{s}}_{i_0,1}^{\star}(x)
,\hspace{5mm}\hspace{2mm}0<\mu<\frac{n+\mu-2}{2},$$
$$\hat{S}_2(x)=\sum_{j\in\mathscr{H}(i_0) }\lambda_{j}^{\frac{n-2}{2}}G\Big(\frac{\mathscr{R}^{4+\mu-2n}}{\tau(z_j)^{2}},\frac{\mathscr{R}^{\mu-n-\epsilon_0}}{\tau(z_i)^{n-2-\epsilon_0}}\Big)+\tilde{\mathbf{s}}_{i_0,1}^{\star}(x)
,\hspace{6mm}\hspace{2mm}\frac{n+\mu-2}{2}\leq\mu<4.$$
We claim that the following estimate holds true.
	\begin{Prop}\label{mingti3}
In the above notation, we have that in the region $D_i$
		\begin{equation*}
			\begin{split}			\Delta_{x}\bar{S}_1(x)\leq&-\frac{(n+\mu-2)(n-\mu-2)}{8}(1+o(1))\sum_{j\in\mathscr{H}(i_0)}(t_{i,1}+t_{i,2})\\&
-\frac{\kappa(\kappa-1)(n-4)}{16\varepsilon_0^2}t_{i_0,1},\hspace{6mm}\hspace{3mm}0<\mu<n-2,
			\end{split}
		\end{equation*}
\begin{equation*}
			\begin{split}	
\Delta_{x}\hat{S}_2(x)\leq-\frac{\epsilon_0(n-2-\epsilon_0)}{2}(1+o(1))\sum_{j\in\mathscr{H}(i_0)}(\hat{t}_{i,1}+\hat{t}_{i,2})-\frac{\kappa(\kappa-1)}{9\varepsilon_0^2}\hat{t}_{i_0,1},\hspace{3mm}\hspace{3mm}n-2\leq\mu<4.
\end{split}
\end{equation*}	
	\end{Prop}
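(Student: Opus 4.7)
The last term on the right-hand side of each asserted inequality, namely $-\tfrac{\kappa(\kappa-1)(n-4)}{16\varepsilon_0^{2}}t_{i_0,1}$ or $-\tfrac{\kappa(\kappa-1)}{9\varepsilon_0^{2}}\hat t_{i_0,1}$, is supplied directly by Proposition~\ref{mingti2} applied to the summand $\bar{\mathbf{s}}_{i_0,1}^{\star}$ (resp.\ $\tilde{\mathbf{s}}_{i_0,1}^{\star}$). The bulk of the argument therefore consists in estimating $\Delta_x G(X_j,Y_j)$ for each $j\in\mathscr{H}(i_0)$, where $X_j:=\mathscr{R}^{2-n}\tau(z_j)^{-2}$ and $Y_j:=\mathscr{R}^{(\mu-2-n)/2}\tau(z_j)^{-(n+\mu-2)/2}$ in the first case, with the analogous pair (exponent $n-2-\epsilon_0$ replacing $(n+\mu-2)/2$) in the second case. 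The chain rule yields
\[
\Delta_x G(X_j,Y_j)=G_X\,\Delta X_j+G_Y\,\Delta Y_j+G_{XX}|\nabla X_j|^{2}+2G_{XY}\nabla X_j\cdot\nabla Y_j+G_{YY}|\nabla Y_j|^{2},
\]
so the proof splits naturally into analyzing the linear part and the quadratic second-order correction.

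First I would compute the individual Laplacians using the elementary identity
\[
\Delta\bigl[\tau(z_j)^{-2\alpha}\bigr]=2\alpha\lambda_j^{2}\tau(z_j)^{-2(\alpha+2)}\bigl[(2\alpha+2-n)|z_j|^{2}-n\bigr],
\]
applied with $\alpha=1$ for $X_j$ and with $\alpha=(n+\mu-2)/4$ (resp.\ $(n-2-\epsilon_0)/2$) for $Y_j$. Throughout $D_i$ one has $|z_j|\ge M\gg 1$, so the bracket is dominated by the quadratic term $(2\alpha+2-n)|z_j|^{2}$, which is strictly negative thanks precisely to the structural hypotheses $n>4$, $\mu<n-2$, and $\epsilon_0<n-2$. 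Converting the resulting expressions in terms of the weight functions of Definition~\ref{st-11} gives
\[
\lambda_j^{(n-2)/2}\Delta X_j=-2(n-4)(1+o(1))\,t_{j,1},\qquad \lambda_j^{(n-2)/2}\Delta Y_j=-\tfrac{(n+\mu-2)(n-\mu-2)}{2}(1+o(1))\,t_{j,2}
\]
in the first case, and the analogue with $\epsilon_0(n-2-\epsilon_0)$ and $\hat t_{j,1},\hat t_{j,2}$ in the second.

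Second, the mollified-minimum function $G(X,Y)=\tfrac{X+Y}{2}-\sqrt{((X-Y)/2)^{2}+\theta XY}$ satisfies $G_X,G_Y\in(0,1)$ with $G_X+G_Y=1-\theta(X+Y)/(2\sqrt{((X-Y)/2)^{2}+\theta XY})$. On the subregion of $D_i$ where $X_j\le Y_j$ (i.e.\ where $s_{j,1}$ dominates $s_{j,2}$) one finds $G_X=1-O(\theta)$ and $G_Y=O(\theta)$; on $\{Y_j\le X_j\}$ the roles reverse. Combined with the previous step, the linear part of $\Delta_x G(X_j,Y_j)$ yields $-\tfrac{(n+\mu-2)(n-\mu-2)}{8}(1+o(1))(t_{j,1}+t_{j,2})$, the coefficient $(n+\mu-2)(n-\mu-2)/8$ being a common lower bound for the two constants $2(n-4)$ and $(n+\mu-2)(n-\mu-2)/2$ throughout the admissible range of $(n,\mu)$. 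The second case is entirely parallel and produces the coefficient $\epsilon_0(n-2-\epsilon_0)/2$.

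Third, the quadratic correction must be shown to be $o(1)$ compared to the leading order. Joint concavity of $G$ gives $G_{XX},G_{YY}\le 0$ and $|G_{XY}|\lesssim\sqrt{\theta}(XY)^{-1/2}$, while elementary differentiation yields $|\nabla X_j|\lesssim\lambda_j|z_j|\mathscr{R}^{2-n}\tau(z_j)^{-4}$ and $|\nabla Y_j|\lesssim\lambda_j|z_j|\mathscr{R}^{(\mu-2-n)/2}\tau(z_j)^{-(n+\mu+2)/2}$. Choosing $\theta=\theta(n,\mu,\kappa,\varepsilon_0)$ small enough absorbs this whole contribution into the $(1+o(1))$ factor. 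Summing over $j\in\mathscr{H}(i_0)$ and adding the Proposition~\ref{mingti2} bound completes the proof. The main obstacle is exactly this third step: as $\theta\to 0$ the function $G$ becomes non-smooth along $\{X=Y\}$ and the mixed derivative $G_{XY}$ blows up like $(\theta XY)^{-1/2}$ near that locus, precisely where the leading-order linear part also degenerates. Ensuring the quadratic correction is genuinely subordinate to the linear part therefore requires a careful quantitative balance between $\theta$, the threshold $\varepsilon_0$ defining $D_i$, and the decay rates of $\nabla X_j,\nabla Y_j$.
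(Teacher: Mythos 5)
Your skeleton matches the paper's: the $-\frac{\kappa(\kappa-1)(n-4)}{16\varepsilon_0^2}t_{i_0,1}$ (resp.\ $-\frac{\kappa(\kappa-1)}{9\varepsilon_0^2}\hat t_{i_0,1}$) term does come from Proposition~\ref{mingti2}, and the computation of $\Delta_x X_j$, $\Delta_x Y_j$ is the right starting point. But your treatment of the second-order terms is a genuine gap, and it is the step you yourself flag as ``the main obstacle.'' The stated bound $|G_{XY}|\lesssim\sqrt{\theta}\,(XY)^{-1/2}$ is false: on the diagonal $X=Y$ one computes $G_{XY}=\frac{1/2-\theta}{2\sqrt{\theta}\,X}+\frac{\sqrt{\theta}}{4X}$, which blows up like $\theta^{-1/2}$, so ``choosing $\theta$ small enough'' makes this term larger, not smaller --- exactly the inconsistency you concede in your last paragraph without resolving it. The resolution is that no estimate of $G_{XY}$ is needed at all: for $\theta\in(0,\tfrac12)$ the form $(\frac{X-Y}{2})^2+\theta XY$ is positive definite, so $\sqrt{\cdot}$ of it is convex and $G$ is concave; hence the full quadratic correction $G_{XX}|\nabla X_j|^2+2G_{XY}\nabla X_j\cdot\nabla Y_j+G_{YY}|\nabla Y_j|^2=(\nabla X_j,\nabla Y_j)D^2G(\nabla X_j,\nabla Y_j)^{T}\le0$ pointwise (the blow-up of $G_{XY}$ is compensated inside the negative semidefinite Hessian), and one simply has $\Delta_x G(X_j,Y_j)\le G_X\Delta_x X_j+G_Y\Delta_x Y_j$. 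This is what the paper uses; $\theta$ is a fixed parameter and never needs to be tuned against $\varepsilon_0$ or $M$.

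Your handling of the linear part also needs repair. The claim ``$G_X=1-O(\theta)$, $G_Y=O(\theta)$ on $\{X_j\le Y_j\}$'' fails near the crossover $X_j\approx Y_j$ (i.e.\ $|z_j|\approx\mathscr{R}$), where $G_X=\tfrac12-\tfrac{\sqrt{\theta}}{2}$; and your coefficient for $\Delta Y_j$ is off by a factor $2$ (the correct bound is $\Delta_x Y_j\le-\frac{(n+\mu-2)(n-\mu-2)}{4}\lambda_j^2 Y_j\,\tau(z_j)^{-2}$). The paper avoids the region splitting altogether: since $\Delta_x X_j\le -c\,\lambda_j^2X_j\tau(z_j)^{-2}$ and $\Delta_x Y_j\le -c\,\lambda_j^2Y_j\tau(z_j)^{-2}$ with the common constant $c=\frac{(n+\mu-2)(n-\mu-2)}{4}$ (resp.\ $\epsilon_0(n-2-\epsilon_0)$ in the second case), the homogeneity of degree one of $G$ and Euler's identity give $G_X\,\frac{X_j}{\tau^2}+G_Y\,\frac{Y_j}{\tau^2}=G\bigl(\frac{X_j}{\tau^2},\frac{Y_j}{\tau^2}\bigr)\ge\frac12\min\bigl(\frac{X_j}{\tau^2},\frac{Y_j}{\tau^2}\bigr)$, which is uniformly comparable to $\tfrac12(t_{j,1}+t_{j,2})$ (resp.\ $\hat t_{j,1}+\hat t_{j,2}$) and yields the stated coefficients $\frac{(n+\mu-2)(n-\mu-2)}{8}$ and $\frac{\epsilon_0(n-2-\epsilon_0)}{2}$ directly, with no case distinction at the diagonal. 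As written, your argument does not close; with the concavity observation and the Euler-identity step it becomes the paper's proof.
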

	\begin{proof}
In case $0<\mu<\frac{n+\mu-2}{2}$,
we first set $X_j=\frac{\mathscr{R}^{2-n}}{\tau(z_i)^{2}}$, $Y_j=\frac{\mathscr{R}^{(\mu-2-n)/2}}{\tau(z_i)^{(n+\mu-2)/2}}$.
By a straightforward computation, we get
\begin{equation*}
			\begin{split}		
\Delta_{x}\bar{S}_1(x)\leq\sum_{j\in\mathscr{H}(i_0) }\lambda_{j}^{\frac{n-2}{2}}\Big(\frac{\partial G}{\partial X}(X_j,Y_j)\Delta_{x}X_j\Big)+\frac{\partial G}{\partial Y}(X_j,Y_j)\Delta_{x}Y_j\Big)+\Delta_{x}\bar{\mathbf{s}}_{i_0,1}^{\star}.
	\end{split}
		\end{equation*}		
It is easy to check that
\begin{equation*}
			\begin{split}	
\Delta_{x}Y_j=\Delta_{x}\frac{\mathscr{R}^{(\mu-2-n)/2}}{\tau(z_j)^{(n+\mu-2)/2}}&=-\frac{(n+\mu-2)(n-\mu-2)}{4}\frac{\lambda_j^2\mathscr{R}^{(\mu-2-n)/2}}{\tau(z_i)^{(n+\mu+2)/2}}-
\frac{(n+\mu)^2-4}{4}\frac{\lambda_j^2\mathscr{R}^{(\mu-2-n)/2}}{\tau(z_i)^{(n+\mu+6)/2}}
\\&\leq-\frac{(n+\mu-2)(n-\mu-2)}{4}\frac{\lambda_j^2Y_j}{\tau(z_j)^{2}},
\end{split}
		\end{equation*}	
$$\Delta_{x}X_j=\Delta_{x}\frac{\mathscr{R}^{2-n}}{\tau(z_j)^{2}}\leq-\frac{(n+\mu-2)(n-\mu-2)}{4}\frac{\lambda_j^2X_j}{\tau(z_j)^{2}}.$$
In view of $G$ is homogeneous degree $1$, we have
\begin{equation}\label{G-X-Y}
\frac{\partial G}{\partial X}(X_j,Y_j)=\frac{\partial G}{\partial X}(X_j/\tau(z_i)^{2},Y_j/\tau(z_i)^{2})\hspace{2mm}\mbox{and}\hspace{2mm} X\frac{\partial G}{\partial X}+Y\frac{\partial G}{\partial Y}=G(X,Y).
\end{equation}	
Together with the fact that $G(X,Y)\geq\frac{1}{2}\min\{X,Y\}$ and \eqref{z-1},
so that we are able to conclude that
\begin{equation*}
			\begin{split}	
\Delta_{x}\bar{S}_1(x)&\leq-\frac{(n+\mu-2)(n-\mu-2)}{4}\sum_{j\in\mathscr{H}(i_0) }\lambda_{j}^{\frac{n+2}{2}}G\Big(\frac{X_j}{\tau(z_j)^{2}},\frac{Y_j}{\tau(z_j)^{2}}\Big)+\Delta_{x}\bar{\mathbf{s}}_{i_0,1}^{\star}\\&
%\leq-\frac{(n+\mu-2)(n-\mu-2)}{8}\sum_{j\in\mathscr{H}(i_0)}^{\kappa}\lambda_{j}^{\frac{n+2}{2}}\min\{\frac{\mathcal{R}^{2-n}}{\tau(z_i)^{4}},\frac{\mathcal{R}^{(\mu-2-n)/2}}{\tau(z_i)^{(n+\mu+2)/2}}\}-\frac{n-4}{8\varepsilon_0^2}v_{i_0,1}\\&
\leq-\frac{(n+\mu-2)(n-\mu-2)}{8}(1+o(1))\sum_{j\in\mathscr{H}(i_0)}(t_{i,1}+t_{i,2})-\frac{\kappa(\kappa-1)(n-4)}{16\varepsilon_0^2}t_{i_0,1}.
\end{split}
\end{equation*}	
In case $\frac{n+\mu-2}{2}\leq\mu<4$. As in the previous case, we take $\tilde{X}_j=\frac{\mathscr{R}^{4+\mu-2n}}{\tau(z_j)^{2}}$, $\tilde{Y}_j=\frac{\mathscr{R}^{\mu-n-\epsilon_0}}{\tau(z_j)^{n-2-\epsilon_0}}$.
Then
$$\Delta_{x}\tilde{X}_j=\Delta_{x}\frac{\mathscr{R}^{4+\mu-2n}}{\tau(z_j)^{2}}=-8\frac{\lambda_j^2\mathscr{R}^{(4+\mu-2n)}}{\tau(z_j)^{6}}\leq-8\frac{\lambda_j^2\tilde{X}_j}{\tau(z_j)^{2}},$$
\begin{equation*}
			\begin{split}	
\Delta_{x}\tilde{Y}_j=\Delta_{x}\frac{\mathscr{R}^{\mu-n-\epsilon_0}}{\tau(z_j)^{n-2-\epsilon_0}}&=-\epsilon_0(n-2-\epsilon_0)\frac{\lambda_j^2\mathscr{R}^{(\mu-n-\epsilon_0)}}{\tau(z_j)^{n-\epsilon_0}}-(n-\epsilon_0)(n-2-\epsilon_0)\frac{\lambda_j^2\mathscr{R}^{(\mu-n-\epsilon_0)}}{\tau(z_j)^{n+2-\epsilon_0}}\\&
\leq-\epsilon_0(n-2-\epsilon_0)\frac{\lambda_j^2\tilde{Y}_j}{\tau(z_j)^{2}}.
\end{split}
		\end{equation*}	
Combining all this together, and using \eqref{z-3}-\eqref{G-X-Y}, we get that
\begin{equation*}
			\begin{split}	
\Delta_{x}\hat{S}_2(x)%&\leq-8\sum_{j\in\mathscr{H}(i_0) }^{\kappa}\lambda_{j}^{\frac{n+2}{2}}G\Big(\frac{X_j}{\tau(z_j)^{2}},\frac{Y_j}{\tau(z_j)^{2}}\Big)+\Delta_{x}\bar{z}_{i_0,1}\\&
%\leq-\frac{(n+\mu-2)(n-\mu-2)}{2}\sum_{j\in\mathscr{H}(i_0)}^{\kappa}\lambda_{j}^{\frac{n+2}{2}}\min\{\frac{\mathcal{R}^{2-n}}{\tau(z_i)^{4}},\frac{\mathcal{R}^{(\mu-2-n)/2}}{\tau(z_i)^{(n+\mu+2)/2}}\}-\frac{n-4}{8\varepsilon_0^2}v_{i_0,1}\\&
\leq-\frac{\epsilon_0(n-2-\epsilon_0)}{2}(1+o(1))\sum_{j\in\mathscr{H}(i_0)}(\hat{t}_{i,1}+\hat{t}_{i,2})-\frac{\kappa(\kappa-1)}{9\varepsilon_0^2}\hat{t}_{i_0,1},
\end{split}
\end{equation*}	
and the proposition follows.
\end{proof}
Now we are in position to prove step \ref{step5.3} by combining the above arguments.

\emph{Proof of step \ref{step5.3}}. We divide the proof of step \ref{step5.3} into two parts.

$\bullet$ We prove now that, for $x\in D_{i_0}^{(m)}$
\begin{equation}\label{eq557}	
-\Delta_{x}\bar{S}_1-\Phi_{n,\mu}[\sigma_m,\bar{S}_1]\geq T_1, \hspace{3mm}\hspace{2mm}0<\mu<\frac{n+\mu-2}{2}.
\end{equation}
\begin{equation}\label{xZ58}	
-\Delta_{x}\hat{S}_2-\Phi_{n,\mu}[\sigma_m,\bar{S}_2]\geq T_2, \hspace{3mm}\hspace{2mm}\frac{n+\mu-2}{2}\leq\mu<4.
\end{equation}
Recalling that $\Phi_{n,\mu}[\sigma_m,\bar{S}_i](i=1,2)$ in \eqref{I-FAI-1}. Then, for all $i>0$, there exist some constants $\textbf{C}_1=\textbf{C}(n,\mu,\kappa)$ and $\textbf{C}_2=\textbf{C}(n,\mu,\kappa)$ such that
\begin{equation*}
\left\lbrace
\begin{aligned}
&\sigma_m^{p}
\leq\sum\limits_{i=1}^{\kappa}W_i^{p}+C_1\sum\limits_{1\leq i\neq l\leq\kappa}W_i^{p-1}W_l+C_2\sum\limits_{1\leq i\neq l\leq\kappa}W_i^{p-2}W_l^2\leq\textbf{C}_1\sum\limits_{i=1}^{\kappa}W_i^{p},\\&
\sigma_m^{p-1}
\leq\sum\limits_{i=1}^{\kappa}W_i^{p-1}+C_3\sum\limits_{1\leq i\neq l\leq\kappa}W_i^{p-2}W_l\leq\textbf{C}_2\sum\limits_{i=1}^{\kappa}W_i^{p-1};\hspace{2mm}\sigma_m^{p-2}\leq\sum\limits_{i=1}^{\kappa}W_i^{p-2},
\end{aligned}
\right.
\end{equation*}
where we drop the superscript in $W_{i}^{(m)}$ if there is no confusion. Moreover, note that, by the the fact that $\frac{1}{2}\min\{X,Y\}\leq G(X,Y)\leq\min\{X,Y\}$ and $\bar{S}_1(x)\approx S_1(x)$ we have
\begin{equation}\label{sgema-0}
\frac{1}{4}S_{1}(x)\leq\bar{S}_1(x)\leq3\kappa S_1(x),\hspace{2mm}\frac{1}{4}S_{2}(x)\leq\hat{S}_2(x)\leq3\kappa S_2(x)\hspace{2mm}\mbox{on}\hspace{2mm}D_{i_0}^{(m)}.
\end{equation}
Thus,
for $m$ large, in the region $D_i^{(m)}\subset\{|z_{i_0}^{(m)}|\leq M\}$,
using Lemma \ref{B4-1} and Lemma \ref{cll-11-3}, we eventually have that
\begin{equation}\label{sgema-1}
			\begin{split}	
\Big(|x|^{-\mu}\ast\sigma_{m}^{p}\Big)
\sigma_{m}^{p-2}\bar{S}_1&\leq3\kappa\textbf{C}_1\sum\limits_{i,j\in\mathscr{H}(i_0)}\widetilde{H}_{i}(x)(s_{j,1}+s_{j,2})
+3\kappa\textbf{C}_1\sum\limits_{j\in\mathscr{H}(i_0)}\widetilde{H}_{j}(x)s_{i_0,1}\\&+(1+o(1))\textbf{C}_1\widetilde{H}_{i_0}(x)\Big(\sum\limits_{j\in\mathscr{H}(i_0)}(s_{j,1}+s_{j,2})+s_{i_0,1}\Big),
\hspace{2mm}\mbox{for}\hspace{2mm}0<\mu<\frac{n+\mu-2}{2},
\end{split}
\end{equation}
We will now use Lemma \ref{cll-1-0-1} to construct bounded for the RHL of \eqref{sgema-1}.
Then, with the help of Lemma \ref{cll-1-0-1}, by \eqref{zzz-0-1}-\eqref{zzz-1-1}, \eqref{zzz-2-1}, \eqref{zzz-6-1} we obtain
\begin{equation*}
\widetilde{H}_{i}(x)(s_{j,1}+s_{j,2})\lesssim[M^{-2}+o(1)](t_{i,1}+t_{i,2}+t_{j,1}+t_{j,2}),
\hspace{2mm}\mbox{for}\hspace{2mm}i\in\mathscr{H}(i_0)\hspace{2mm}\mbox{and}\hspace{2mm}x\in D_{i_0}^{(m)},
\end{equation*}
\begin{equation*}
\widetilde{H}_{j}(x)s_{i_0,1}\lesssim o(1)(t_{i_0,1}+t_{j,1}+t_{j,2}),\hspace{2mm}\mbox{for}\hspace{2mm}j\in\mathscr{H}(i_0),
\end{equation*}
Then, using \eqref{zzz-2-1} and \eqref{zzz-7-1} implies that for all $j\in\mathscr{H}(i_0)$
\begin{equation*}
\widetilde{H}_{i_0}(x)(s_{j,1}+s_{j,2})+\widetilde{H}_{i_0}s_{i_0,1}\lesssim o(1)t_{j,1}+t_{i_0,1}+(C^{\star})^{\varrho_1}\Big(\varepsilon^{-\frac{4}{n+\mu-2}}t_{i,1}+\varepsilon t_{j,2}\Big)\hspace{2mm}\mbox{with}\hspace{2mm}\varrho_1=\frac{2(n+\mu-6)}{n+\mu+2}
\end{equation*}
for all $0<\varepsilon<1$ and where $C^{\star}$ is defined in \eqref{AA-00}. Summarizing, by plugging the above estimates in \eqref{sgema-1}, we get that
\begin{equation}\label{sgema-10}
\begin{split}
\Big(|x|^{-\mu}\ast\sigma_{m}^{p}\Big)
\sigma_{m}^{p-2}\bar{S}_1\lesssim[(C^{\star})^{\varrho_1}\varepsilon+M^{-2}+o(1)]\sum\limits_{j\in\mathscr{H}(i_0)}(t_{j,1}+t_{j,2})
+[(C^{\star})^{\varrho_1}\varepsilon^{-\frac{4}{n+\mu-2}}+o(1)]t_{i_0,1}
\end{split}
\end{equation}
for all $0<\varepsilon<1$ and where $\varrho_1=\frac{2(n+\mu-6)}{n+\mu+2}$.

In order to conclude the proofs of \eqref{eq557} and \eqref{xZ58}, recalling that $\widehat{K}_{i}(x)$, $\widehat{H}_{ij,1}(x)$, $\widehat{H}_{ij,2}(x)$, and further set
\begin{equation}\label{sgema-4}
\mathcal{H}_{i}(x)=\Big(|x|^{-\mu}\ast\big(\frac{\mathscr{R}^{2-n}\lambda_i^{(2n-\mu)/2}}{(1+|z_{i}|^2)^{(2n-\mu)/2}}+
\frac{\mathscr{R}^{-\frac{n-\mu+2}{2}}\lambda_i^{(2n-\mu)/2}}{(1+|z_{i}|^2)^{(2n-\mu)/2}}\big)\Big)\widehat{K}_i(x),
\end{equation}
$$
\mathcal{I}_{ij,1}=\big(|x|^{-\mu}\ast\widehat{H}_{ij,1}(x)\big)\widehat{K}_i(x),\hspace{3mm}\mathcal{L}_{ij,1}:=\big(|x|^{-\mu}\ast\widehat{H}_{ij,2}(x)\big)\widehat{K}_i(x)
$$
and then as a consequence
\begin{equation}\label{sgema-2}
			\begin{split}	
\Big(|x|^{-\mu}\ast\sigma_{m}^{p-1}\bar{S}_1\Big)
\sigma_{m}^{p-1}\lesssim 3\kappa\textbf{C}_2^2\sum\limits_{i,j=1}\Big(\mathcal{I}_{ij,1}+\mathcal{L}_{ij,1}\Big)+3\kappa\textbf{C}_2^2\sum\limits_{i=1}\mathcal{H}_{i}(x),\hspace{2mm}0<\mu<\frac{n+\mu-2}{2}.
\end{split}
\end{equation}
In particular, in the region $D_i^{(m)}\subset\{|z_{i_0}^{(m)}|\leq M\}$, for $m$ large, using \eqref{sgema-0} and Lemma \ref{cll-11-3}, we have the following:
\begin{equation*}
\sum\limits_{i,j=1}^{\kappa}\mathcal{I}_{ij,1}\leq
\left\lbrace
\begin{aligned}
&C\sum\limits_{i,j\in\mathscr{H}(i_0)}\mathscr{R}^{2-n}\lambda_{j}^{\mu/2}\mathcal{Z}_1\big(\widehat{K}_{i}(x)+(1+o(1))\widehat{K}_{i_0}(x)\big),
\hspace{4mm}\hspace{2mm}n=5\hspace{2mm}\mbox{and}\hspace{2mm}1\leq\mu<3,\\& C\sum\limits_{i,j\in\mathscr{H}(i_0)}\mathscr{R}^{2-n}\lambda_{j}^{\mu/2} \mathcal{Z}_2\big(\widehat{K}_i(x)+(1+o(1))\widehat{K}_{i_0}(x)\big),\hspace{4mm} \hspace{2mm}n=6\hspace{2mm}\mbox{and}\hspace{2mm}0<\mu<4,\\& C\sum\limits_{i,j\in\mathscr{H}(i_0)}\mathscr{R}^{2-n}\lambda_{j}^{\mu/2}\mathcal{Z}_3\big(\widehat{K}_i(x)+(1+o(1))\widehat{K}_{i_0}(x)\big),
\hspace{4mm}\hspace{2mm}n=7\hspace{2mm}\mbox{and}\hspace{2mm}\frac{7}{3}<\mu\leq4.
\end{aligned}
\right.
\end{equation*}
where $\mathcal{Z}_1:=\tau(z_j)^{-\min\{\mu,(5+\mu)/3\}}$, $\mathcal{Z}_2:=\tau(z_j)^{-(2p-n+\mu-\theta_1)}$ and $\mathcal{Z}_3:=\tau(z_j)^{-(2p-n+\mu-\theta_2)}$.
Furthermore, we ahve
\begin{equation*}
\begin{split}
\sum\limits_{i,j=1}^{\kappa}\mathcal{L}_{ij,1} \leq C\sum\limits_{i,j\in\mathscr{H}(i_0)}\mathscr{R}^{-\frac{n-\mu+2}{2}}\lambda_{j}^{\mu/2}\tau(z_j)^{\mu}\big(\widehat{K}_i(x)+(1+o(1))\widehat{K}_{i_0}(x)\big).
\end{split}
\end{equation*}
Then, combining the above inequalities with \eqref{zzzz-0} and \eqref{zzzz-2} gives that for $i\in\mathscr{H}(i_0)\hspace{2mm}\mbox{and}\hspace{2mm}x\in D_{i_0}^{(m)}$,
\begin{equation*}
\mathcal{I}_{ij,1}\lesssim o(1)(t_{i,1}+t_{i_0,1}+t_{j,1}+t_{j,2}),\hspace{2mm}\hspace{2mm}n=5\hspace{2mm}\mbox{and} \hspace{2mm}1\leq\mu<3,
\end{equation*}
Similarly we compute and give that
\begin{equation*}
\mathcal{I}_{ij,1}\lesssim o(1)(t_{i,1}+t_{i_0,1}+t_{j,1}+t_{j,2}),\hspace{3mm}\hspace{2mm}n=6\hspace{2mm}\mbox{and} \hspace{2mm}0<\mu<4,
\end{equation*}
\begin{equation*}
\mathcal{I}_{ij,1}\lesssim o(1)(t_{i,1}+t_{i_0,1}+t_{j,1}+t_{j,2}),\hspace{3mm}\hspace{2mm}n=7\hspace{2mm}\mbox{and} \hspace{2mm}\frac{7}{3}<\mu<4,
\end{equation*}
and from \eqref{zzzz-1} and \eqref{zzzz-5} that for $0<\mu<\frac{n+\mu-2}{2}$
\begin{equation*}
\mathcal{L}_{ij,1}\lesssim [M^{-\frac{n-\mu+2}{2}}+o(1)](t_{i,2}+t_{j,1}+t_{j,2})+(C^{\star})^{\zeta_1^{\star}}\Big(\varepsilon^{-\frac{2\mu}{n-\mu+2}}t_{i_0,1}+\varepsilon t_{j,2}\Big)\hspace{2mm}\mbox{with}\hspace{2mm}\zeta_1^{\star}=\frac{(n-\mu+2)\mu}{n+\mu+2}.
\end{equation*}
Recall that \eqref{sgema-4}, by a straightforward computation, it follows that
\begin{equation*}
\mathcal{H}_i\lesssim\Big(\mathscr{R}^{-\frac{n+\mu-2}{2}}+\mathscr{R}^{2-n}\Big)\frac{\lambda_{i}^{(n+2)/2}}{\tau(z_{i})^{n+2}}
\lesssim[M^{-\frac{n-\mu+2}{2}}+M^{2-n}](t_{i,1}+t_{i_0,1}+t_{i,2}),\hspace{2mm}x\in D_{i_0}^{(m)}.
\end{equation*}
Together with the above estimates, \eqref{sgema-1} and \eqref{sgema-2}, we eventually have
\begin{equation}\label{sigema-z1}
\begin{split}	
\Phi_{n,\mu}[\sigma_m,\bar{S}_1]&=(p-1)\Big(|x|^{-\mu}\ast\sigma_{m}^{p}\Big)
\sigma_{m}^{p-2}\bar{S}_1+p\Big(|x|^{-\mu}\ast\sigma_{m}^{p-1}\bar{S}_1\Big)
\sigma_{m}^{p-1}\\&
\lesssim[(C^{\star})^{\varrho_1}\varepsilon+(C^{\star})^{\zeta_1^{\star}}\varepsilon+M^{-2}+M^{-\frac{n-\mu+2}{2}}+o(1)]\sum_{j\in\mathscr{H}(i_0)}(t_{j,1}+t_{j,2})
\\&+[(C^{\star})^{\varrho_1}\varepsilon^{-\frac{4}{n+\mu-2}}+(C^{\star})^{\zeta_1^{\star}}\varepsilon^{-\frac{2\mu}{n-\mu+2}}+M^{-\frac{n-\mu+2}{2}}+o(1)]t_{i_0,1}
\end{split}
\end{equation}
for all $0<\varepsilon<1$ and where $\varrho_1=\frac{2(n+\mu-6)}{n+\mu+2}$ and $\zeta_1^{\star}=\frac{(n-\mu+2)\mu}{n+\mu+2}$. Furthermore, it follows from Proposition \ref{mingti3} that
\begin{equation*}
			\begin{split}	
&-\Delta_{x}\bar{S}_1-\Phi_{n,\mu}[\sigma_m,\bar{S}_1]\\&\geq[\frac{(n+\mu-2)(n-\mu-2)}{8}-\tilde{\textbf{C}}_{n,\mu,\kappa}(\bar{A}^{\varrho_1}\varepsilon+\bar{A}^{\zeta_1^{\star}}\varepsilon+M^{-2}+M^{-\frac{n-\mu+2}{2}})-o(1)]\sum_{j\in\mathscr{H}(i_0)}(t_{i,1}+t_{i,2})\\&
+[\frac{\kappa(\kappa-1)(n-4)}{16\varepsilon_0^2}-\tilde{\textbf{C}}_{n,\mu,\kappa}(\bar{A}^{\varrho_1}\varepsilon^{-\frac{4}{n+\mu-2}}+\bar{A}^{\zeta_1^{\star}}\varepsilon^{-\frac{2\mu}{n-\mu+2}}+M^{-\frac{n-\mu+2}{2}})-o(1)]t_{i_0,1}
\end{split}
\end{equation*}
with $\varrho_1=\frac{2(n+\mu-6)}{n+\mu+2}$ and $\zeta_1^{\star}=\frac{(n-\mu+2)\mu}{n+\mu+2}$, and where  $\tilde{\textbf{C}}_{n,\mu,\kappa}$ is constant hidden in $\lesssim$-notation in inequalities \eqref{sgema-10} and \eqref{sgema-2}-\eqref{sigema-z1}.
With the above inequality at our disposal, we can easily conclude the proof of \eqref{eq557}. Indeed, choosing $\varepsilon(n,\mu,\kappa,C^{\star})$ small, $M=M(n,\mu,\kappa,C^{\star})$ and $m$ large enough such that
\begin{equation*}
			\begin{split}	
\frac{(n+\mu-2)(n-\mu-2)}{8}-\tilde{\textbf{C}}_{n,\mu,\kappa}((C^{\star})^{\varrho_1}\varepsilon+(C^{\star})^{\zeta_1^{\star}}\varepsilon+M^{-2}+M^{-\frac{n-\mu+2}{2}})-o(1)\geq1,
\end{split}
\end{equation*}			
and we may choose $\varepsilon_0$ small such that
\begin{equation*}
			\begin{split}
\frac{\kappa(\kappa-1)(n-4)}{16\varepsilon_0^2}-\tilde{\textbf{C}}_{n,\mu,\kappa}((C^{\star})^{\varrho_1}\varepsilon^{-\frac{4}{n+\mu-2}}+(C^{\star})^{\zeta_1^{\star}}\varepsilon^{-\frac{2\mu}{n-\mu+2}}+M^{-\frac{n-\mu+2}{2}})-o(1)\geq1.
\end{split}
\end{equation*}
The result follows. The proof of \eqref{xZ58} follows by simple computations from Lemma \ref{cll-1-0}, Lemma \ref{cll-3}, Proposition \ref{mingti2} and Proposition \ref{mingti3} similar to \eqref{eq557} and so we omit it.

$\bullet$ Given $x\in D_{i_0}^{(m)}$, we aim to show that
\begin{equation}\label{xZ-3}
|\phi_{m}^{(1)}(x)|\leq \frac{1}{6\kappa}\bar{S}_1(x),\hspace{3mm}\hspace{2mm}0<\mu<\frac{n+\mu-2}{2},
\end{equation}
\begin{equation}\label{xZ-4}
|\phi_{m}^{(2)}(x)|\leq \frac{1}{6\kappa}\hat{S}_2(x),\hspace{3mm}\hspace{2mm}\frac{n+\mu-2}{2}\leq\mu<4.
\end{equation}
We set $g_{\pm}^{(i)}(x)=\frac{1}{6\kappa}\bar{S}_i(x)\pm\phi_{m}^{(i)}$ where $i=1,2$.
Observe first by \eqref{00} that $\Delta_{x} \phi_m+\mathscr{I}[\sigma_m,\phi_m]=\hbar_m\hspace{1.14mm}\mbox{in}\hspace{1.14mm} \mathbb{R}^n$ so that
\begin{equation}\label{xZ-ii-0}
\Delta g_{\pm}^{(1)}+\Phi_{n,\mu}[\sigma_m,g_{\pm}^{(1)}]=\frac{1}{6\kappa}[\Delta \bar{S}_1+\Phi_{n,\mu}[\sigma_m,g_{\pm}^{(1)}]]\pm\hbar_m\leq0\hspace{2mm}\mbox{on}\hspace{2mm}D_{i_0}^{(m)},\hspace{2mm}\mbox{for}\hspace{2mm}0<\mu<\frac{n+\mu-2}{2}.
\end{equation}
%\begin{equation}\label{xZ-ii-0-1}
%\Delta g_{\pm}^{(2)}+\Phi_{n,\mu}[\sigma_m,g_{\pm}^{(2)}]=\frac{1}{6\kappa}[\Delta \hat{S}_2+\Phi_{n,\mu}[\sigma_m,g_{\pm}^{(2)}]]\pm\hbar_m\leq0\hspace{2mm}\mbox{on}\hspace{2mm}D_{i_0}^{(m)},\hspace{2mm}\mbox{for}\hspace{2mm}\frac{n+\mu-2}{2}\leq\mu<4.
%\end{equation}
Furthermore, we claim that
\begin{equation}\label{xZ-ii-1}
g_{\pm}^{(1)}(x)\geq0\hspace{2mm}\mbox{on}\hspace{2mm}D_{i_0}^{(m)},\hspace{2mm}\mbox{for}\hspace{2mm}0<\mu<\frac{n+\mu-2}{2},
\end{equation}
%\begin{equation}\label{xZ-ii-1-2}
%g_{\pm}^{(2)}(x)\geq0\hspace{2mm}\mbox{on}\hspace{2mm}D_{i_0}^{(m)},\hspace{2mm}\mbox{for}\hspace{2mm}\frac{n+\mu-2}{2}\leq\mu<4.
%\end{equation}
By contradiction, if \eqref{xZ-ii-1} does not hold true, we may assume that there exists a minimum point $x_0$ of $\tilde{g}_{\pm}^{(1)}(x):=g_{\pm}^{(1)}(x)/\bar{S}_1$ such that $\tilde{g}_{\pm}^{(1)}(x_0)<0$. Then, we have
$$
p\Big(|x|^{-\mu}\ast\sigma_{m}^{p-1}\bar{S}_1\Big)
\sigma_{m}^{p-1}\tilde{g}_{\pm}^{(1)}(x_0)= p\Big(|x|^{-\mu}\ast\sigma_{m}^{p-1}g_{\pm}^{(1)}(x_0)\Big)
\sigma_{m}^{p-1}.
$$
Combining \eqref{eq557} and \eqref{xZ-ii-0}, we conclude that
\begin{equation*}
\begin{split}
\Delta\tilde{g}_{\pm}^{(1)}(x_0)&=-2\nabla \tilde{g}_{\pm}^{(1)}(x_0)\frac{\nabla\bar{S}_1}{\bar{S}_1}+\frac{1}{\bar{S}_1}\big(\Delta_{x}g_{\pm}^{(1)}(x_0)-\frac{\Delta_{x}\bar{S}_1}{\bar{S}_1}g_{\pm}^{(1)}(x_0)\big)\\&
\leq-2\nabla \tilde{g}_{\pm}^{(1)}(x_0)\frac{\nabla\bar{S}_1}{\bar{S}_1}+\frac{1}{\bar{S}_1}\big(\Delta_{x}g_{\pm}^{(1)}(x_0)+\frac{\Phi_{n,\mu}[\sigma_m,\bar{S}_1]}{{\bar{S}_1}}g_{\pm}^{(1)}(x_0)+\frac{T_1}{{\bar{S}_1}}g_{\pm}^{(1)}(x_0)\big) <0.
\end{split}
\end{equation*}
which gives a contradiction and concludes the proof of \eqref{xZ-3} by \eqref{sgema-0}. The proof of \eqref{xZ-4} follows the strategy of \eqref{xZ-3} by exploiting \eqref{xZ58} and so we skip it. \qed

\begin{step}\label{step5.7}
Conclusion.
\end{step}
\emph{Proof of step \ref{step5.7}.} The conclusion follows by Steps \ref{step5.1}-\ref{step5.2} and \ref{step5.3}. Lemma \ref{estimate2} is proven. \qed

\section{Existence of the first approximation function}
We complete here the proof of Theorem \ref{Figalli} by proving Proposition~\ref{estimate1} and Proposition \ref{estimate2}.
Let $c_{a}^i$ be a family of scalar solving,
\begin{equation}\label{c1-00}
	\left\{\begin{array}{l}
		\displaystyle \Delta (\sigma+\phi)+\Big(|x|^{-\mu}\ast \big(\sigma+\phi\big)^{p}\Big)
\big(\sigma+\phi\big)^{p-1}
		\displaystyle =
		\sum_{i=1}^{\kappa}\sum_{a=1}^{n+1}c_{a}^{i}\Phi_{n,\mu}[W_{i},\Xi^{a}_i]\hspace{4.14mm}\mbox{in}\hspace{1.14mm} \mathbb{R}^n,\\
		\displaystyle \int\Phi_{n,\mu}[W_{i},\Xi^{a}_i]\phi=0,\hspace{4mm}i=1,\cdots, \kappa; ~a=1,\cdots,n+1.
	\end{array}
	\right.
\end{equation}
 Recalling $\hbar$ and $\mathscr{N}(\phi)$ from \eqref{u-1}-\eqref{u-2}, then \eqref{c1-00} is written as
\begin{equation}\label{AA}
\begin{split}
	\Delta \psi&+\Phi_{n,\mu}[\sigma,\phi]+\hbar+\mathscr{N}(\phi)=\sum_{i=1}^{\kappa}\sum_{a=1}^{n+1}c_{a}^{i}\Phi_{n,\mu}[W_{i},\Xi^{a}_i].
\end{split}
\end{equation}

\subsection{Lipschitz character of the higher-order term}\
\newline
In order to use the Contraction Mapping Theorem to prove that \eqref{AA} is uniquely solvable in
the set that $\|\phi\|_{\ast}$ is small, we estimate all the terms in
$\mathscr{N}(\phi)=\mathscr{N}_1+\mathscr{N}_2+\cdots+\mathscr{N}_9$
and $\hbar$ in the $\|\cdot\|_{\ast\ast}$-norm, defined in \eqref{h}.
\begin{lem}\label{ww101}
For all $i\in\{1,\cdots,\kappa\}$,
	$n\geq6-\mu$, $\mu\in(0,n)$ and $0<\mu\leq4$, there exist two constants $\delta_0=\delta_0(n,\kappa,\mu)$ and $\nu_0=\nu_0(n,\kappa,\mu)$, depending on $n$, $\kappa$ and $\mu$, such that
\begin{equation}\label{N-15}
\| \mathscr{N}(\phi)\|_{\ast\ast}\leq
\delta_0\mathscr{R}^{(p-2)(\mu-n-\epsilon_0)}\|\phi\|_{\ast}^{p-1},\hspace{2mm}0<\mu<\frac{n+\mu-2}{2},
\end{equation}
where $0<\epsilon_0<((n-2)p-n)/p$ and that
\begin{equation*}
\|\mathscr{N}(\phi)\|_{\ast\ast}\leq
\nu_0(\mathscr{R}^{-(4-\mu-\theta_1)}+\mathscr{R}^{-\frac{n-\mu+2}{2}(p-2)})\|\phi\|_{\ast}^{p-1},\hspace{2mm}\frac{n+\mu-2}{2}\leq\mu<4,
\end{equation*}
where $0<\theta_1<4-\mu$ is defined in \eqref{ceta}.
\end{lem}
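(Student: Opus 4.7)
The strategy is to write $\mathscr{N}(\phi)$ as an explicit sum of nine terms by simultaneous Taylor expansion of $(\sigma+\phi)^{p}$ and $(\sigma+\phi)^{p-1}$. Since $p=(2n-\mu)/(n-2)\in(2,3]$ on our parameter range, the elementary inequalities
$|(A+B)^{p}-A^{p}-pA^{p-1}B|\lesssim A^{p-2}B^{2}+|B|^{p}$
and
$|(A+B)^{p-1}-A^{p-1}-(p-1)A^{p-2}B|\lesssim |B|^{p-1}$
(used separately in the regions $|B|\lesssim A$ and $|B|\gtrsim A$) produce, after multiplying the two expansions and subtracting the linear part $\Phi_{n,\mu}[\sigma,\phi]$, precisely nine cross terms $\mathscr{N}_{1},\dots,\mathscr{N}_{9}$, each of the schematic form $(|x|^{-\mu}\ast(\sigma^{p-a}|\phi|^{a}))\sigma^{p-1-b}|\phi|^{b}$ with $a+b\ge2$.

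For each $\mathscr{N}_{k}$ the plan is to insert the pointwise bound $|\phi(x)|\le\|\phi\|_{\ast}S_{j}(x)$, which pulls out a prefactor $\|\phi\|_{\ast}^{a+b}$ and reduces matters to controlling the purely geometric quantity $(|x|^{-\mu}\ast(\sigma^{p-a}S_{j}^{a}))\sigma^{p-1-b}S_{j}^{b}$. Expanding $\sigma^{p-a}S_{j}^{a}$ according to the bubble index and the inner/outer splitting in the definitions of $s_{i,1},s_{i,2}$ (respectively $\hat s_{i,1},\hat s_{i,2}$), the inner convolution is bounded by the pointwise convolution estimates of Lemma \ref{B4} and Lemma \ref{B4-1}, which are designed exactly for the weight exponents that appear here. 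The remaining outer factor $\sigma^{p-1-b}S_{j}^{b}$ is then matched against $T_{j}$ via the pointwise weight comparisons of Lemmas \ref{cll-1-0-1} and \ref{cll-1-0}: each extra power of $S_{j}$ beyond the first costs a negative power of $\mathscr{R}$ because the outer weight $s_{i,2}$ (resp.\ $\hat s_{i,2}$) is supported where $|z_{i}|\ge\mathscr{R}$ and carries the prefactor $\mathscr{R}^{-(n-\mu+2)/2}$ (resp.\ $\mathscr{R}^{-(n-\mu+\epsilon_{0})}$).

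Summing the nine resulting majorants and dividing through by $T_{j}$ yields the two stated bounds. In the first regime $0<\mu<(n+\mu-2)/2$ the dominant contribution is the purely $\phi$-powered term $(|x|^{-\mu}\ast|\phi|^{p})|\phi|^{p-1}$, and the cumulative loss coming from $(p-1)+(p-1)-1=2p-3$ insertions of the outer weight collapses to the single clean factor $\mathscr{R}^{(p-2)(\mu-n-\epsilon_{0})}$. In the second regime $(n+\mu-2)/2\le\mu<4$ the two terms on the right-hand side correspond respectively to the cross terms with one factor of $\phi$ in each slot (producing $\mathscr{R}^{-(4-\mu-\theta_{1})}$, where $\theta_{1}$ from \eqref{ceta} absorbs the borderline integrability loss) and to the pure $\phi^{p-1}$ term (producing $\mathscr{R}^{-(p-2)(n-\mu+2)/2}$).

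The main obstacle is controlling the convolution $|x|^{-\mu}\ast\tau(z_{i})^{-2p}$ uniformly near the boundary of $(\sharp)$. The condition $\mu>(n^{2}-6n)/(n-4)$ is exactly what ensures $(n-2)p-n>0$ and therefore the convergence of the outer integral used in Lemma \ref{B4}, as explained in Remark \ref{re1.4}; when this inequality is barely satisfied, the small parameters $\theta_{1},\theta_{2}$ of \eqref{ceta} must be inserted to keep every convolution estimate non-trivial, and this is exactly why the cleaner exponent $(p-2)(\mu-n-\epsilon_{0})$ available in the first regime must be replaced by the two-term bound in the second. The limiting case $\mu=4$ (where $p=2$ and the pure $\phi^{p-1}$ term degenerates) is genuinely excluded, in agreement with the strict inequality $\mu<4$ appearing on the right-hand side of the lemma.
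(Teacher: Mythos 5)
Your overall strategy---expanding $\mathscr{N}(\phi)$ into the nine cross terms of Lemma \ref{nnn1}, inserting the pointwise bound $|\phi|\le\|\phi\|_{\ast}S_{j}$, and reducing everything to the convolution estimates of Lemmas \ref{B4}--\ref{B4-1} together with a bookkeeping of the powers of $\mathscr{R}$ carried by the inner/outer weights---is the same as the paper's (which implements it through the bounds \eqref{xigma}--\eqref{xigma1} on $\|\sigma\|_{\ast}$, the auxiliary functions $\widehat{G}_{\lambda,\mu},\widetilde{G}_{\lambda,\mu},\check{G}_{\lambda,\mu}$, and exponent checks such as \eqref{nu1} and \eqref{ceta1}). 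The genuine gap is in your regime-to-bound assignment, which is where the plan, as you describe it, cannot be carried out. The parameter $\epsilon_{0}$ enters only through the weights $\hat{s}_{i,2},\hat{t}_{i,2}$, i.e.\ only in the regime $\frac{n+\mu-2}{2}\le\mu<4$ (the cases $n=4$, $\mu\in[2,4)$ and $n=5$, $\mu\in[3,4)$); in the regime $0<\mu<\frac{n+\mu-2}{2}$ the outer weights $s_{i,2},t_{i,2}$ carry the prefactor $\mathscr{R}^{-(n-\mu+2)/2}$ and contain no $\epsilon_{0}$ at all, so no computation with the weights $S_{1},T_{1}$ can ``collapse'' to the factor $\mathscr{R}^{(p-2)(\mu-n-\epsilon_{0})}$ as you assert. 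Symmetrically, $\theta_{1}$ is defined in \eqref{ceta} only for $n=6$, which lies in the regime $0<\mu<\frac{n+\mu-2}{2}$, so it cannot govern the bound you attribute to the complementary regime. Carrying the computation out honestly yields $\mathscr{R}^{-(p-2)(n-\mu+\epsilon_{0})}$ in the $S_{2}/T_{2}$ regime and the two-term bound $\mathscr{R}^{-(4-\mu-\theta_{1})}+\mathscr{R}^{-\frac{n-\mu+2}{2}(p-2)}$ in the $S_{1}/T_{1}$ regime; this is exactly what the paper's proof of Lemma \ref{ww101} establishes, and it is the opposite of the pairing printed in the lemma statement (whose case labels appear to be transposed). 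Your narrative was built to match the printed labels and is therefore incompatible with the definitions of $S_{j},T_{j},\epsilon_{0},\theta_{1}$ that you yourself invoke.

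Two further inaccuracies. First, the condition $\mu>\frac{n^{2}-6n}{n-4}$ does not ensure $(n-2)p-n>0$: that quantity equals $n-\mu$ and is positive automatically; what the condition ensures is $2p-n+\mu>0$, i.e.\ the positivity of the decay exponent of $|x|^{-\mu}\ast\tau(z_{i})^{-2p}$ appearing in Lemma \ref{B4}. Second, the ``dominant'' term cannot be the purely $\phi$-powered term $\bigl(|x|^{-\mu}\ast|\phi|^{p}\bigr)|\phi|^{p-1}$, which carries $2p-1$ powers of $\|\phi\|_{\ast}$; the term responsible for the stated power $\|\phi\|_{\ast}^{p-1}$ is $\mathscr{N}_{1}=\bigl(|x|^{-\mu}\ast\sigma^{p}\bigr)\phi^{p-1}$, while the remaining eight terms produce higher powers of $\|\phi\|_{\ast}$ (cf.\ \eqref{N15}) that are absorbed afterwards. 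More generally, the substance of the lemma lies in verifying the exponent inequalities term by term and range by range; your proposal asserts the outcome of these checks without performing them, and with the regime assignment as written those checks would fail.
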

\begin{proof}
The point is to estimate each integral of the previous decomposition in $\mathscr{N}(\phi)$ in the $\|\cdot\|_{\ast\ast}$-norm.
The following estimates hold true:
\begin{equation}\label{xigma}
\begin{split}
&\|\sigma\|_{\ast}\leq C\mathscr{R}^{n-2},\hspace{2mm}\mbox{for}\hspace{2mm}|z_i|\leq\mathscr{R}, \hspace{2mm}0<\mu<\frac{n+\mu-2}{2},\\&
\|\sigma\|_{\ast}\leq C\mathscr{R}^{2n-4-\mu},\hspace{2mm}\mbox{for}\hspace{2mm}|z_i|\leq\mathscr{R}, \hspace{2mm}\frac{n+\mu-2}{2}\leq\mu<4.
\end{split}
\end{equation}
\begin{equation}\label{xigma1}
\begin{split}
 &\|\sigma\|_{\ast}\leq C\mathscr{R}^{\frac{n-\mu+2}{2}},\hspace{2mm}\mbox{for}\hspace{2mm}|z_i|\geq\mathscr{R}, \hspace{2mm}0<\mu<\frac{n+\mu-2}{2},\\
&\|\sigma\|_{\ast}\leq C\mathscr{R}^{n-\mu+\epsilon_0},\hspace{2mm}\mbox{for}\hspace{2mm}|z_i|\geq\mathscr{R}, \hspace{2mm}\frac{n+\mu-2}{2}\leq\mu<4.
\end{split}
\end{equation}
\emph{Estimates of $\|\mathscr{N}_{1}\|_{\ast\ast}$.}
We introduce the functions
$$\widehat{G}_{\lambda,\mu}(x):=\bigg[\frac{1}{|x|^{\mu}}\ast \Big(\sum_{i=1}^{\kappa}\frac{\lambda_i^{(n-2)/2}}{\tau( z_i)^{2}}\Big)^{p}\bigg]\bigg[\sum_{i=1}^{\kappa}
\frac{\lambda_i^{(n-2)/2}}{\tau(z_i)^{2}}\bigg]^{p-1},$$
$(1)$ Exploiting the Lemma \ref{B4}, \eqref{zv5} and \eqref{xigma},
the following estimates hold for $\mathscr{N}_{1}$:\\
When $n=4$ and $\mu\in[2,4)$, for $|z_i|\leq\mathscr{R}$,
\begin{equation*}
\aligned
\mathscr{N}_1(\phi)
&\leq \frac{C}{\mathscr{R}^{(2n-4-\mu)(2p-1)}}\|\phi\|_{\ast}^{p-1}\|\sigma\|_{\ast}^{p}\widehat{G}_{\lambda,\mu}(x)
\leq C \|\phi\|_{\ast}^{p-1}\mathscr{R}^{-(2n-\mu-4)(p-2)}\hat{t}_{i,1}(x,\mathscr{R}).
\endaligned
\end{equation*}
$\mbox{When}\hspace{2mm}n=5\hspace{2mm}\mbox{and}\hspace{2mm}\mu\in[1,4)$,
\begin{equation*}
\aligned
\mathscr{N}_1(\phi)
&\leq \big(\frac{C}{\mathscr{R}^{3n-2\mu+2}}+\frac{C}{\mathscr{R}^{(2n-4-\mu)(2p-1)}}\big)\big\|\phi\big\|_{\ast}^{p-1}\big\|\sigma\big\|_{\ast}^{p}\widehat{G}_{\lambda,\mu}(x)
\\&\leq C \big\|\phi\big\|_{\ast}^{p-1}(\mathscr{R}^{\mu-4}t_{i,1}(x,\mathscr{R})+\mathscr{R}^{-(2n-\mu-4)(p-2)}\hat{t}_{i,1}(x,\mathscr{R})),\hspace{2mm}\mbox{for}\hspace{2mm}|z_i|\leq\mathscr{R}.
\endaligned
\end{equation*}
where we used the fact that $$\min\Big\{2(p-1)+\mu,2(p-1)+(5+\mu)/3\Big\}\geq4.$$
$\mbox{If}\hspace{2mm}n=6\hspace{2mm}\mbox{and}\hspace{2mm}0<\mu<4$, or $n=7\hspace{2mm}\mbox{and}\hspace{2mm}\frac{7}{3}<\mu<4$,
\begin{equation*}
\aligned
\mathscr{N}_1(\phi)
&\leq \frac{C}{\mathscr{R}^{3n-2\mu+2}}\big\|\phi\big\|_{\ast}^{p-1}\big\|\sigma\big\|_{\ast}^{p}\widehat{G}_{\lambda,\mu}(x)
\leq C \big\|\phi\big\|_{\ast}^{p-1}\big(\frac{t_{i,1}(x,\mathscr{R})}{\mathscr{R}^{4-\mu-\theta_1}}+\mathscr{R}^{\mu-4}t_{i,1}(x,\mathscr{R})\big),\hspace{2mm}\mbox{for}\hspace{2mm}|z_i|\leq\mathscr{R}.
\endaligned
\end{equation*}
%since $2\cdot2_\mu^{\ast}-N+\mu-\theta\geq4.$
$(2)$
If $n=4$ and $\mu\in[2,4)$, or $n=5\hspace{2mm}\mbox{and}\hspace{2mm}\mu\in(3,4)$, in this case we work in \eqref{zv4}-\eqref{zv5} for $S_2(x)$ and $T_2(x)$. We set
$$\widetilde{G}_{\lambda,\mu}(x):=\bigg[\frac{1}{|x|^{\mu}}\ast \Big(\sum_{i=1}^{\kappa}\frac{\lambda_i^{(n-2)/2}}{\tau( z_i)^{n-2-\epsilon_{0}}}\Big)^{p}\bigg]\bigg[\sum_{i=1}^{\kappa}
\frac{\lambda_i^{(n-2)/2}}{\tau(z_i)^{n-2-\epsilon_{0}}}\bigg]^{p-1},\hspace{2mm}|z_i|\geq\mathscr{R}.$$
Owing to the Lemma \ref{B4} and \eqref{xigma1},
we get
\begin{equation*}
\aligned
\mathscr{N}_1(\phi)
&\leq C\|\phi\|_{\ast}^{p-1}\|\sigma\|_{\ast}^{p}
\mathscr{R}^{-(2p-1)(n-\mu+\epsilon_0)}\widetilde{G}_{\lambda,\mu,\mathscr{R}}(x)
\leq C\mathscr{R}^{-(p-2)(n-\mu+\epsilon_0)}\big\|\phi\big\|_{\ast}^{p-1}\hat{t}_{i,2}(x,\mathscr{R}).
\endaligned
\end{equation*}
%Similarly, if $n=5$ and $\mu=4$, in this case we work in \eqref{zv4}-\eqref{zv5} for $S_3(x)$ and $T_3(x)$. Then
%$$\mathscr{N}_1(\phi)\leq C\big\|\phi\big\|_{\ast}^{p-1}\tilde{t}_{i,2}(x,\mathscr{R}).$$
$(3)$ If $n=5\hspace{2mm}\mbox{and}\hspace{2mm}\mu\in[1,3)$, or $n=6\hspace{2mm}\mbox{and}\hspace{2mm}\mu\in(0,4)$,  or $n=7\hspace{2mm}\mbox{and}\hspace{2mm}\mu\in(\frac{7}{3},4)$, in this case we work in \eqref{zv4}-\eqref{zv5} for $S_1(x)$ and $T_1(x)$.
Let $\check{Z}_{\lambda,\mu}$ is given by
$$\check{G}_{\lambda,\mu}(x)=\bigg[\frac{1}{|x|^{\mu}}\ast \Big(\sum_{i=1}^{\kappa}\frac{\lambda_i^{(n-2)/2}}{\tau( z_i)^{(n+\mu-2)/2}}\Big)^{p}\bigg]\bigg[\sum_{i=1}^{\kappa}
\frac{\lambda_i^{(n-2)/2}}{\tau(z_i)^{(n+\mu-2)/2}}\bigg]^{p-1}.$$
There are three cases:\\
$\mathbf{(i).}$ If $n=5\hspace{2mm}\mbox{and}\hspace{2mm}\mu\in[1,3)$, by the Lemma \ref{B4} and \eqref{xigma1}, we obtain
\begin{equation*}
\aligned
\mathscr{N}_1(\phi)
&\leq C\|\phi\|_{\ast}^{p-1}\|\sigma\|_{\ast}^{p}
\mathscr{R}^{-\frac{n-\mu+2}{2}(2p-1)}\check{G}_{\lambda,\mu}(x)
\leq C\mathscr{R}^{-\frac{n-\mu+2}{2}(p-2)}\|\phi\|_{\ast}^{p-1}t_{i,2}(x,\mathscr{R}),
\endaligned
\end{equation*}
since \begin{equation}\label{nu1}
\min\Big\{\mu+\frac{(n+\mu-2)}{2}(p-1),\frac{5+\mu}{3}+\frac{(n+\mu-2)}{2}(p-1)\Big\}\geq \frac{n+\mu+2}{2}.
\end{equation}
$\mathbf{(ii).}$
If $n=6\hspace{2mm}\mbox{and}\hspace{2mm}\mu\in(0,4)$, by notice that, since \eqref{ceta}, we deduce that
\begin{equation}\label{ceta1}
2p-n+\mu-\theta_3+\frac{(n+\mu-2)}{2}(p-1)\geq\frac{n+\mu+2}{2}.
\end{equation}
Combining the Lemma \ref{B4} and \eqref{xigma1} gives that
\begin{equation}\label{n-1}
\aligned
\mathscr{N}_1(\phi)
&\leq C\big\|\phi\big\|_{\ast}^{p-1}\big\|\sigma\big\|_{\ast}^{p}
\mathscr{R}^{-\frac{n-\mu+2}{2}(2p-1)}\check{G}_{\lambda,\mu}(x)
\leq C\mathscr{R}^{-\frac{n-\mu+2}{2}(p-2)}\big\|\phi\big\|_{\ast}^{p-1}t_{i,2}(x,\mathscr{R}).
\endaligned
\end{equation}
$\mathbf{(iii).}$ If $n=7$ and $\frac{7}{3}\leq\mu<4$, similar to the argument of \eqref{n-1}, in virtue of $\mu+\frac{(n+\mu-2)}{2}(p-1)>\frac{n+\mu+2}{2}$, we obtain
 \begin{equation}\label{nu6-1}
\aligned
\mathscr{N}_1(\phi)\leq C\mathscr{R}^{-\frac{n-\mu+2}{2}(p-2)}\|\phi\|_{\ast}^{p-1}t_{i,2}(x,\mathscr{R}),
\endaligned
\end{equation}
Putting these estimates together, we eventually get
\begin{equation*}
\|\mathscr{N}_1(\phi)\|_{\ast\ast}\leq C_1\mathscr{R}^{-(p-2)(n-\mu+\epsilon_0)}\big\|\phi\big\|_{\ast}^{p-1}
\end{equation*}
if the parameters $n$ and $\mu$ are chosen in the following range
$$n=4\hspace{2mm}\mbox{and}\hspace{2mm}\mu\in[2,4),\hspace{2mm}\mbox{or} \hspace{2mm}n=5\hspace{2mm}\mbox{and}\hspace{2mm}\mu\in(3,4).$$
And we have
\begin{equation*}
\|\mathscr{N}_1(\phi)\|_{\ast\ast}\leq
C_2(\mathscr{R}^{-(4-\mu-\theta_1)}+\mathscr{R}^{-\frac{n-\mu+2}{2}(p-2)})\|\phi\|_{\ast}^{p-1},
\end{equation*}
if the parameters $n$ and $\mu$ are chosen in the following range
$$n=5\hspace{2mm}\mbox{and}\hspace{2mm}\mu\in[1,3),\hspace{2mm}\mbox{or} \hspace{2mm}n=6\hspace{2mm}\mbox{and}\hspace{2mm}\mu\in(0,4),\hspace{2mm}\mbox{or} \hspace{2mm}n=7\hspace{2mm}\mbox{and}\hspace{2mm}\mu\in(\frac{7}{3},4).$$
Here we have used the elementary inequality \eqref{AB-0}.

In a very similar way one gets the estimate of $N_2,\cdots,N_9$ in the $\|\cdot\|_{\ast\ast}$-norm.

\emph{Estimates of $\|\mathscr{N}_{i}\|_{\ast\ast}(i=2,\dots,9).$} If $n=4\hspace{2mm}\mbox{and}\hspace{2mm}\mu\in[2,4),\hspace{2mm}\mbox{or} \hspace{2mm}n=5\hspace{2mm}\mbox{and}\hspace{2mm}\mu\in[3,4)$, we get that
\begin{equation}\label{N15}
\sum_{i=2}^{9}\| \mathscr{N}_i\|_{\ast\ast}\leq
C_3\mathscr{R}^{-(p-2)(n-\mu+\epsilon_0)}(\|\phi\|_{\ast}^{2}+\|\phi\|_{\ast}^{3}+\|\phi\|_{\ast}^{p}+\|\phi\|_{\ast}^{p+1}+\|\phi\|_{\ast}^{2p+1}),
\end{equation}
and that when the parameters $n$ and $\mu$ are chosen in the following range
$$n=5\hspace{2mm}\mbox{and}\hspace{2mm}\mu\in[1,3),\hspace{2mm}\mbox{or} \hspace{2mm}n=6\hspace{2mm}\mbox{and}\hspace{2mm}\mu\in(0,4),\hspace{2mm}\mbox{or} \hspace{2mm}n=7\hspace{2mm}\mbox{and}\hspace{2mm}\mu\in(\frac{7}{3},4),$$
one has
\begin{equation*}
\sum_{i=2}^{9}\|\mathscr{N}_i\|_{\ast\ast}\leq
C_4(\mathscr{R}^{-(4-\mu-\theta_1)}+\mathscr{R}^{-\frac{n-\mu+2}{2}(p-2)})(\|\phi\|_{\ast}^{2}+\|\phi\|_{\ast}^{3}+\|\phi\|_{\ast}^{p}+\|\phi\|_{\ast}^{p+1}+\|\phi\|_{\ast}^{2p+1}).
\end{equation*}
Here $C_i$ are the strictly positive constant which depends only on $n$, $\kappa$ and $\mu$. Combining the estimates of $\|\mathscr{N}_{1}\|_{\ast\ast}$, $\|\mathscr{N}_{2}\|_{\ast\ast}$, $\cdots$, and $\|\mathscr{N}_{9}\|_{\ast\ast}$ yields the conclusion.
\end{proof}
\subsection{$C^0$ estimates of solutions} First of all, we have the following.
\begin{lem}\label{wwc102}
There exists a constant $\zeta_0=\zeta_0(n,\kappa,\mu)$, depending on $n$, $\kappa$ and $\mu$, such that
\begin{equation*}
\|\hbar\|_{\ast\ast}\leq\zeta_0.
\end{equation*}
\end{lem}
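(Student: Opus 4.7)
The plan is to observe that Lemma \ref{wwc102} is essentially a repackaging of Lemma \ref{estimate1} already established in the excerpt. Indeed, by definition \eqref{u-1},
$$\hbar(x) = \Big(|x|^{-\mu}\ast\sigma^{p}\Big)\sigma^{p-1}-\sum_{i=1}^{\kappa}\Big(|x|^{-\mu}\ast W_{i}^{p}\Big)W_{i}^{p-1},$$
which is precisely the quantity whose $\|\cdot\|_{\ast\ast}$-norm is controlled in \eqref{eq3.7}. Hence setting $\zeta_0(n,\kappa,\mu) := C(n,\kappa,\mu)$, where $C(n,\kappa,\mu)$ is the constant supplied by Lemma \ref{estimate1}, immediately yields $\|\hbar\|_{\ast\ast}\leq \zeta_0$.

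For completeness, I would briefly recall the mechanism underlying this bound. The key reduction is the two-bubble estimate \eqref{ZUI-2}, which allows one to control the full interaction in the $\kappa$-bubble case by a sum over pairs. For each pair $(W_i, W_j)$ one splits into bubble tower and bubble cluster configurations (Cases 1 and 2 in the proof of Lemma \ref{estimate1}), and within each of these one localizes to the core of $W_i$, the core of $W_j$, and the transition regions in between. In each region one applies the elementary inequalities \eqref{a0b0}--\eqref{a0b1} for the algebraic part and the convolution estimates from Lemmas \ref{B4}--\ref{B4-1} for the Riesz potential part, which together reconstruct pointwise control by $t_{i,l}$, $\hat t_{i,l}$, or $\tilde t_{i,l}$, depending on whether one is in the regime $0<\mu<\tfrac{n+\mu-2}{2}$, $\tfrac{n+\mu-2}{2}\leq \mu<4$, or $\mu=4$. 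Summing up, one recovers exactly the bound \eqref{zv5-0}, that is, $|\hbar(x)|\lesssim T_j(x)$ for the relevant $j\in\{1,2,3\}$, which by definition \eqref{h} means $\|\hbar\|_{\ast\ast}\leq \zeta_0$.

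Since all of this work has already been carried out in the proof of Lemma \ref{estimate1}, the actual proof of Lemma \ref{wwc102} reduces to one sentence: comparing the definition of $\hbar$ in \eqref{u-1} with the left-hand side of \eqref{eq3.7} and invoking Lemma \ref{estimate1} to extract $\zeta_0$. There is no real obstacle here; the lemma is simply stated separately to make it convenient to cite in the Contraction Mapping argument for \eqref{AA} that follows, where one needs a clean a priori bound on the inhomogeneous term $\hbar$ paired with the Lipschitz estimate for $\mathscr{N}(\phi)$ established in Lemma \ref{ww101}.
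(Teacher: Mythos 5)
Your proposal is correct and matches the paper exactly: the paper's own proof of this lemma is the one-line observation that it follows from the estimate \eqref{eq3.7} of Lemma \ref{estimate1}, since $\hbar$ as defined in \eqref{u-1} is precisely the quantity bounded there, so one takes $\zeta_0=C(n,\kappa,\mu)$. Your additional recap of the mechanism behind Lemma \ref{estimate1} is accurate but not needed for this lemma.
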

\begin{proof}
It follows from the estimate \eqref{eq3.7} of Lemma \ref{estimate1}.
\end{proof}
\begin{lem}\label{ww102}
Assume that $n\geq6-\mu$, $\mu\in(0,n)$ and $0<\mu\leq4$.
Then there exist $\rho_0 > 0$ and a family of scalars $(c_{a}^{i})$ \eqref{c1} which solve \eqref{AA}
such that for $\delta$ is small enough, there holds
\begin{equation}\label{pp1}
|\rho_{0}|\leq
\left\lbrace
\begin{aligned}
& CS_{1}(x),\hspace{4mm}n=5\hspace{2mm}\mbox{and}\hspace{2mm}\mu\in[1,3),\mbox{or}\hspace{2mm}n=6\hspace{2mm}\mbox{and}\hspace{2mm}\mu\in(0,4),\mbox{or}\hspace{2mm}n=7\hspace{2mm}\mbox{and}\hspace{2mm}\mu\in(\frac{7}{3},4),\\
& CS_{2}(x),\hspace{4mm}n=4\hspace{2mm}\mbox{and}\hspace{2mm}\mu\in[2,4),\mbox{or}\hspace{2mm}n=5\hspace{2mm}\mbox{and}\hspace{2mm}\mu\in[3,4).
\end{aligned}
\right.
\end{equation}
\end{lem}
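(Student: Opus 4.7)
The plan is to recast \eqref{AA} as a fixed-point equation in the weighted Banach space $X_\ast := \{\phi\in L^\infty(\mathbb{R}^n): \|\phi\|_\ast < \infty\}$ and apply the Contraction Mapping Theorem on a small ball. Let $\mathcal{L}_\delta$ be the linear solution operator supplied by Lemma \ref{ww10}: for every $g$ with $\|g\|_{\ast\ast}<\infty$, $\mathcal{L}_\delta(g)$ is the unique solution $\phi$ of \eqref{c1} with right-hand side $g$, together with admissible scalars $c_a^i$, obeying $\|\mathcal{L}_\delta(g)\|_\ast\leq C\|g\|_{\ast\ast}$ and $|c_a^i|\leq C\delta\|g\|_{\ast\ast}$. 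The equation \eqref{AA} (with its orthogonality side conditions) is then equivalent to the fixed-point identity
\[
\phi = \mathcal{A}(\phi) := -\mathcal{L}_\delta\bigl(\hbar + \mathscr{N}(\phi)\bigr),
\]
and a fixed point $\rho_0$ of $\mathcal{A}$ in the ball $\mathcal{B}_\eta := \{\phi\in X_\ast : \|\phi\|_\ast\leq \eta\}$ will serve as the desired first approximation, since $\|\rho_0\|_\ast\leq\eta$ translates immediately into the pointwise bound $|\rho_0(x)|\leq \eta\, S_j(x)$ asserted in \eqref{pp1}.

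To verify the invariance $\mathcal{A}(\mathcal{B}_\eta)\subseteq \mathcal{B}_\eta$, combine Lemma \ref{ww10}, Lemma \ref{wwc102} and Lemma \ref{ww101}: for any $\phi\in\mathcal{B}_\eta$,
\[
\|\mathcal{A}(\phi)\|_\ast \leq C\bigl(\|\hbar\|_{\ast\ast}+\|\mathscr{N}(\phi)\|_{\ast\ast}\bigr)\leq C\zeta_0 + C\,\omega(\delta)\,\eta^{p-1},
\]
where $\omega(\delta)\to 0$ as $\delta\to 0$ is the negative power of $\mathscr{R}$ furnished by Lemma \ref{ww101} in each of the admissible parameter regimes (namely $\mathscr{R}^{-(p-2)(n-\mu+\epsilon_0)}$ or $\mathscr{R}^{-(4-\mu-\theta_1)}+\mathscr{R}^{-\frac{n-\mu+2}{2}(p-2)}$). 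Fixing $\eta := 2C\zeta_0$ and then choosing $\delta$ sufficiently small so that $C\omega(\delta)\eta^{p-1}\leq \eta/2$, the inclusion follows. The resulting constant $\eta$ depends only on $n$, $\kappa$ and $\mu$, which is exactly the form of the bound \eqref{pp1}.

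To obtain contraction, the crucial step is to prove a Lipschitz estimate of the form
\[
\|\mathscr{N}(\phi_1)-\mathscr{N}(\phi_2)\|_{\ast\ast}\leq \omega(\delta)\bigl(\|\phi_1\|_\ast+\|\phi_2\|_\ast\bigr)^{p-2}\|\phi_1-\phi_2\|_\ast
\]
on $\mathcal{B}_\eta\times \mathcal{B}_\eta$. I would obtain it by writing $\mathscr{N}(\phi_1)-\mathscr{N}(\phi_2)$ as a sum of convolution-multilinear terms in which one factor is $\phi_1-\phi_2$ and the remaining factors are $\sigma$, $\phi_1$ or $\phi_2$, then repeating verbatim the weighted convolution estimates of Lemma \ref{B4} and Lemma \ref{B4-1} that drive the proof of Lemma \ref{ww101}, with this new factor in place of one power of $\phi$. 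Combined with the $\mathcal{L}_\delta$-bound, this gives $\|\mathcal{A}(\phi_1)-\mathcal{A}(\phi_2)\|_\ast\leq \tfrac12\|\phi_1-\phi_2\|_\ast$ for $\delta$ small, so the Contraction Mapping Theorem produces the unique fixed point $\rho_0\in\mathcal{B}_\eta$ and the accompanying scalars $c_a^i$.

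The main obstacle is the verification of the Lipschitz bound above uniformly in the admissible ranges $n\geq 6-\mu$, $\mu\in(0,4]$ satisfying $(\sharp)$, because each of the nine pieces $\mathscr{N}_1,\ldots,\mathscr{N}_9$ involves a Riesz convolution of a product of $\sigma$ and $\phi$, so controlling the differences in the $\|\cdot\|_{\ast\ast}$-norm requires re-deriving all of the integral inequalities of Section 2 with one $\phi$-factor replaced by $\phi_1-\phi_2$. Near the borderline regimes $\mu=4$ and $\mu=(n+\mu-2)/2$, the weight exponents in $S_j$ and $T_j$ lie exactly at the threshold where $\mathscr{J}_\mu^{(i)}$ is barely integrable (as highlighted in Remark \ref{re1.4} and Remark \ref{stxqi-1}), so the small losses encoded in the parameters $\epsilon_0$, $\theta_1$, $\theta_2$ and $\Theta$ must be tracked carefully to guarantee that $\omega(\delta)$ is a strictly negative power of $\mathscr{R}$ and therefore truly $o(1)$ as $\delta\to 0$.
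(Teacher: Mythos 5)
Your proposal is correct and follows essentially the same route as the paper: the paper also rewrites \eqref{AA} as $\phi=\mathcal{A}(\phi)=-\mathcal{L}_{\delta}(\mathscr{N}(\phi))-\mathcal{L}_{\delta}(\hbar)$, proves invariance of the ball $\{\|\phi\|_{\ast}\leq L_0\zeta_0+1\}$ via Lemmas \ref{ww10}, \ref{wwc102} and \ref{ww101}, and establishes the contraction through term-by-term Lipschitz estimates on $\mathscr{N}_1,\dots,\mathscr{N}_9$ in the $\|\cdot\|_{\ast\ast}$-norm with the small prefactor $\varpi_0$ (your $\omega(\delta)$), before invoking the Contraction Mapping Theorem and reading off \eqref{pp1} from $\|\rho_0\|_{\ast}\leq C$. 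Your schematic Lipschitz constant $(\|\phi_1\|_{\ast}+\|\phi_2\|_{\ast})^{p-2}$ is an oversimplification of the paper's polynomial in the norms (degrees ranging up to $2(p-1)$), but on a ball of $O(1)$ radius this is immaterial since the smallness comes entirely from $\omega(\delta)$.
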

\begin{proof}
Observe that, \eqref{AA} is equivalent to
\begin{equation}\label{c19}
\phi=\mathcal{A}(\phi)=:-\mathcal{L}_{\delta}(\mathscr{N}(\phi))-\mathcal{L}_{\delta}(\hbar),
\end{equation}
where $\mathcal{L}_\delta$ is defined in Lemma \ref{ww10}.
If $n=5$ and $\mu\in[1,3)$, or $n=6$ and $\mu\in(0,4)$, or $n=7$ and $\mu\in(\frac{7}{3},4)$, clearly,
\begin{equation}\label{nnn-1}
\|\mathscr{N}(\phi)\|_{\ast\ast}\leq
\nu_0(\mathscr{R}^{-(4-\mu-\theta_1)}+\mathscr{R}^{-\frac{n-\mu+2}{2}(p-2)})\|\phi\|_{\ast}^{p-1}:=\varpi_0\|\phi\|_{\ast}^{p-1}.
\end{equation}
By \eqref{nnn-1}, observe that we may choose $L_0>0$ sufficiently large from the beginning to have
$$\|\mathcal{L}_\delta(\mathscr{R})\|_{\ast}\leq L_0\|\mathscr{R}\|_{\ast\ast}.$$
Setting
$$\aligned
\mathcal{E}=\Big\{w:w\in C(\mathbb{R}^n)\cap \mathcal{D}^{1,2}(\mathbb{R}^n),\|w\|_{\ast}\leq \zeta_0L_0+1\Big\}.
\endaligned$$
We will prove that $\mathcal{A}$ is a contraction map from $\mathcal{E}$ to $\mathcal{E}$.

Choosing $\delta>0$ small enough such that $\mathscr{R}$ large, we have that
$$L_0\varpi_0\big(L_0\zeta_0+1\big)^{p-1}\leq1.$$
Then, from Lemmas \ref{ww101} and \ref{wwc102} we get
$$
\|\mathcal{A}\|_{\ast}\leq L_0\varpi_0\big(L_0\zeta_0+1\big)^{p-1}+L_0\zeta_0\leq
L_0\zeta_0+1.
$$
Hence, $\mathcal{A}$ maps $\mathcal{E}$ to $\mathcal{E}$.
On the other hand, taking $\phi_1$ and $\phi_2$ in $\mathcal{E}$, we see that
$$
\|\mathcal{A}(\phi_{1})-\mathcal{A}(\phi_{2})\|_{\ast}
=\|\mathcal{L}_{\delta}(\mathscr{N}(\phi_{1}))-\mathcal{L}_{\delta}(\mathscr{N}(\phi_{2}))\|_{\ast}\leq C\|\mathscr{N}(\phi_{1})-\mathscr{N}(\phi_{2})\|_{\ast\ast}.
$$
The point is thus to estimate each term in the right hand side of the $|\mathscr{N}(\phi_{1})-\mathscr{N}(\phi_{2})|$ decomposition. Recalling \eqref{nnn1},
by the convexity of the functions $f(z)=|z|^{p-1}$, and so
\begin{equation}\label{nn1}\aligned
|\mathscr{N}_1(\phi_{1})-\mathscr{N}_1(\phi_{2})|
\leq C\big(|x|^{-\mu}\ast\sigma^{p}\big)\big(|\phi_1|^{p-2}+|\phi_2|^{p-2}\big)|\phi_1-\phi_2|.
\endaligned
\end{equation}
Similarly, direct computations yield
$$\aligned
\sum_{i=2}^{3}|\mathscr{N}_i(\phi_{1})-\mathscr{N}_i(\phi_{2})|&\leq\big(|x|^{-\mu}\ast|\sigma^{p-1}\phi_1|\big)\big[|\sigma|^{p-2}+|\phi_1|^{p-2}+|\phi_2|^{p-2}\big]|\phi_1-\phi_2|\\&+
\big(|x|^{-\mu}\ast|\sigma|^{p-1}|\phi_1-\phi_2|\big)(|\sigma|^{p-2}|\phi_2|+|\phi_2|^{p-1}\big),
\endaligned$$
$$\aligned
\sum_{i=4}^{6}|\mathscr{N}_i(\phi_{1})-\mathscr{N}_i(\phi_{2})|&\leq C\big(|x|^{-\mu}\ast(|\sigma|^{p-2}|\phi_1+\phi_2\big|\big|\phi_1-\phi_2|)\big)\big(\sigma^{p-1}+\sigma^{p-2}\phi_2+|\phi_2|^{p-1}\big)\\&
+C\big(|x|^{-\mu}\ast(\sigma^{p-2}\phi_1^2)\big)\big(|\sigma|^{p-2}|\phi_1-\phi_2|+\big(|\phi_1|^{p-2}+|\phi_2|^{p-2}\big)|\phi_1-\phi_2|\big).
\endaligned$$
As in \eqref{nn1}, the convexity of the functions $f(z)=|z|^{p}$ yields
$$\aligned
\sum_{i=7}^{9}|\mathscr{N}_i(\phi_{1})-\mathscr{N}_i(\phi_{2})|&\leq C\big(|x|^{-\mu}\ast((|\phi_1|^{p-1}+|\phi_2|^{p-1})|\phi_1-\phi_2|)\big)\big(|\sigma|^{p-1}+|\sigma|^{p-2}|\phi_2|+|\phi_2|^{p-1}\big)\\&+
\big(|x|^{-\mu}\ast|\phi_1|^{p}\big)\big(|\sigma|^{p-2}|\phi_1-\phi_2|+(|\phi_1|^{p-2}+|\phi_2|^{p-2})|\phi_1-\phi_2|\big).
\endaligned$$
Putting these estimates together and similarly to the arguments in Lemma \ref{ww101}, we get
$$\aligned
&\|\mathscr{N}(\phi_{1})-\mathscr{N}(\phi_{2})\|_{\ast\ast}\\&\leq
C\varpi_0\Big(\big(\|\phi_{1}\|_{\ast}^{p-2}+\|\phi_{2}\|_{\ast}^{p-2} \big) +\big(\|\phi_{1}\|_{\ast}^{p-1}+\|\phi_{1}\|_{\ast}\|\phi_{2}\|_{\ast}^{p-2}+
\|\phi_{2}\|_{\ast}^{p-1}\big)\Big)\|\phi_{1}-\phi_{2}\|_{\ast}\\&
+C\varpi_0\Big(\|\phi_{1}\|_{\ast}+\|\phi_{2}\|_{\ast}+\|\phi_{1}+\phi_{2}\|_{\ast} +\big(\|\phi_{1}+\phi_{2}\|_{\ast}\|\phi_{2}\|_{\ast}+\|\phi_{1}\|_{\ast}^2\big)\Big)\|\phi_{1}-\phi_{2}\|_{\ast}\\&
+C\varpi_0\Big(\|\phi_{1}+\phi_{2}\|_{\ast}\|\phi_{2}\|_{\ast}^{p-1}+
\|\phi_{1}\|_{\ast}^{p}+\|\phi_{2}\|_{\ast}^{p}+\|\phi_{1}\|_{\ast}^2\|\phi_{2}\|_{\ast}^{p-2}
+\|\phi_{1}\|_{\ast}^{p-1}\|\phi_{2}\|_{\ast}\Big)\|\phi_{1}-\phi_{2}\|_{\ast}\\&
+C\varpi_0\Big(\|\phi_{1}\|_{\ast}^{p-1}\|\phi_{2}\|_{\ast}^{p-1}+
\|\phi_{1}\|_{\ast}^{2(p-1)}+\|\phi_{2}\|_{\ast}^{2(p-1)}+\|\phi_{1}\|_{\ast}^{p}\|\phi_{2}\|_{\ast}^{p-2}
\Big)\|\phi_{1}-\phi_{2}\|_{\ast}.
\endaligned$$
Therefore, we have that
\begin{equation}\label{A-E-E}
\begin{split}
\|\mathcal{A}(\phi_{1})-\mathcal{A}(\phi_{2})\|_{\ast}\leq C\|\mathscr{N}(\phi_{1})-\mathscr{N}(\phi_{2})\big\|_{\ast\ast}\leq \frac{1}{2}\|\phi_{1}-\phi_{2}\|_{\ast},
\end{split}
\end{equation}
provide that $\delta>0$ is small enough such that $\varpi_0\ll1$,
which means that $\mathcal{A}$ is a contraction mapping from $\mathcal{E}$ into itself. In similar way, \eqref{A-E-E} also hold in dimension region $n=4\hspace{2mm}\mbox{and}\hspace{2mm}\mu\in[2,4),\mbox{or}\hspace{2mm}n=5\hspace{2mm}\mbox{and}\hspace{2mm}\mu\in(3,4]$ and so we skip it.

Now by the contraction mapping theorem, there exists a unique $\rho_0\in \mathcal{E}$ such that \eqref{c19} holds true. Moreover, by Lemmas \ref{ww10} and \ref{wwc102}, we deduce
$$\|\rho_0\|_{\ast}\leq\|\mathcal{L}_{\delta}(\mathscr{N}(\rho_0))\|_{\ast}-\|\mathcal{L}_{\delta}(\mathscr{R})\|_{\ast}\leq C,$$
and the conclusion follows.
\end{proof}
\subsection{Choices of the parameters.}\
\newline
In this section, we shall list all the constraints of the constants $n$, $\mu$ which are sufficient for the reduction argument scheme to work.

First, we indicate all the parameters used in different norms (see Remark \ref{stxqi}).

In order to get the desired estimates for the convolution terms, by the computations in section 2 and Lemma \ref{ww101}, we require the parameters $n$ and $\mu$ satisfying the following restrictions
\begin{equation*}
(\ast\ast)\hspace{2mm}\left\lbrace
\begin{aligned}
& n=4\hspace{3mm}\mbox{and}\hspace{3mm}2\leq\mu<4,\\
& n=5\hspace{3mm}\mbox{and}\hspace{3mm}1\leq\mu<4,\\
& n=6\hspace{3mm}\mbox{and}\hspace{3mm}0<\mu<4,\\
& n=7\hspace{3mm}\mbox{and}\hspace{3mm}\frac{7}{3}<\mu<4.
\end{aligned}
\right.
\end{equation*}
To apply Lemma \ref{B4} in estimates of $\|\mathscr{N}\|_{\ast\ast}$, by the computations we need the parameters $\theta_1$, $\theta_2$ and $\theta_3$ satisfying the restrictions
\begin{equation*}
\left\lbrace
\begin{aligned}
&0<\theta_1<4-\mu,\hspace{4mm}\hspace{4mm}\hspace{5mm}\hspace{2mm}n=6\hspace{2mm}\mbox{and}\hspace{2mm}0<\mu<4,\\
&0<\theta_2\leq\frac{9-4\mu}{5},\hspace{4mm}\hspace{4mm}\hspace{2mm}\hspace{2mm}n=7\hspace{2mm}\mbox{and}\hspace{2mm}\frac{7}{3}<\mu<4,\\
&0<\theta_3<\frac{\mu(4-\mu)}{8},\hspace{4mm}\hspace{3mm}\hspace{2mm}n=6\hspace{2mm}\mbox{and}\hspace{2mm}0<\mu<4.
\end{aligned}
\right.
\end{equation*}
In particular, we observe in $(\ast\ast)$ the aforementioned cancellation of $\mu=4$. That is the technical reason why the approach of this section does
not work for $n=7$ and $0<\mu\leq\frac{7}{3}$ or $n>7$ and $0<\mu\leq4$ and why we assume $(\sharp)$ in Theorem \ref{Figalli} (see also the paragraph above Remark \ref{re1.4}).

\section{Some key estimates of the approximation function}
In this section, we shall establish $L^2$ estimates for the $\nabla\rho$.
\subsection{Energy estimate of the first approximation}

\begin{lem}\label{p00}
Assume that $n\geq6-\mu$, $\mu\in(0,n)$ and $0<\mu\leq4$ satisfying $(\sharp)$. For $\delta$ is small enough, we have the following estimate holds true:
\begin{equation*}
\big\|\nabla\rho_0\big\|_{L^2}\lesssim\tau_{n,\mu}(\mathscr{Q}),
\end{equation*}
where  $\tau_{n,\mu}$ is the piece-wise function as defined in \eqref{7-h-0-0}.
\end{lem}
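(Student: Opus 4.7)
\textbf{Proof plan for Lemma~\ref{p00}.} The strategy is the classical energy argument, testing the equation satisfied by $\rho_0$ against $\rho_0$ itself, and converting the right hand side into a product of weighted norms that have already been estimated in Lemma~\ref{cll-0-0}. Recall from \eqref{c1} that
\begin{equation*}
\Delta\rho_0+\Phi_{n,\mu}[\sigma,\rho_0]=\hbar+\sum_{i=1}^{\kappa}\sum_{a=1}^{n+1}c_a^i\,\Phi_{n,\mu}[W_i,\Xi_i^a],
\end{equation*}
together with the orthogonality conditions $\int\Phi_{n,\mu}[W_i,\Xi_i^a]\rho_0=0$. Multiplying the equation by $\rho_0$, integrating by parts, and using the orthogonality to annihilate the Lagrange multiplier contributions will yield the identity
\begin{equation*}
\|\nabla\rho_0\|_{L^2}^{2}-\int\Phi_{n,\mu}[\sigma,\rho_0]\,\rho_0\,dx=-\int\hbar\,\rho_0\,dx.
\end{equation*}

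The first step is the coercivity of the quadratic form $Q[\phi]:=\|\nabla\phi\|_{L^2}^{2}-\int\Phi_{n,\mu}[\sigma,\phi]\,\phi\,dx$ on the codimension-$(n+1)\kappa$ subspace defined by the orthogonality relations in \eqref{c1}. For a single bubble this is exactly the nondegeneracy statement of Proposition~\ref{prondgr}. For a sum of weakly-interacting bubbles, a standard blow-up/contradiction argument — if coercivity failed along a sequence with $\delta_m\to 0$, the rescaled minimizers would converge to a non-trivial bounded solution of the linearized equation around $W[0,1]$ satisfying the orthogonality, which Proposition~\ref{prondgr} rules out — shows that there exists $c_0=c_0(n,\mu,\kappa)>0$ such that $Q[\rho_0]\geq c_0\|\nabla\rho_0\|_{L^2}^{2}$ whenever $\mathscr{Q}\leq\delta$ with $\delta$ sufficiently small. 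Combining this with the displayed identity yields $\|\nabla\rho_0\|_{L^2}^{2}\lesssim\bigl|\int\hbar\,\rho_0\bigr|$.

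The second step estimates the right hand side using the pointwise control provided by Lemma~\ref{ww102} and the $\|\cdot\|_{\ast\ast}$-bound for $\hbar$ from Lemma~\ref{estimate1}. Indeed, $|\rho_0(x)|\leq CS_j(x)$ and $|\hbar(x)|\leq C\,\|\hbar\|_{\ast\ast}T_j(x)$ with $\|\hbar\|_{\ast\ast}\lesssim 1$, whence
\begin{equation*}
\Big|\int\hbar\,\rho_0\,dx\Big|\lesssim\int S_j(x)\,T_j(x)\,dx\leq\|S_j\|_{L^{2^{\ast}}}\|T_j\|_{L^{(2^{\ast})'}}
\end{equation*}
by H\"older's inequality. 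Applying Lemma~\ref{cll-0-0} in the two parameter ranges yields, after the conversion $\mathscr{Q}\approx\mathscr{R}^{-(n-2)}$, the bound $\|\nabla\rho_0\|_{L^2}^{2}\lesssim\mathscr{R}^{-(n+2)}\approx\mathscr{Q}^{2^{\ast}-1}$ for $0<\mu<(n+\mu-2)/2$, and $\|\nabla\rho_0\|_{L^2}^{2}\lesssim\mathscr{R}^{-(2n-4-\mu)-(n-\mu+(n-2)/2)}\lesssim\mathscr{R}^{-2(2n-4-\mu)}\approx\mathscr{Q}^{2(2-\mu/(n-2))}$ for $(n+\mu-2)/2\leq\mu<4$. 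Taking square roots and comparing with \eqref{7-h-0-0} gives exactly $\|\nabla\rho_0\|_{L^2}\lesssim\tau_{n,\mu}(\mathscr{Q})$.

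The main obstacle in carrying out this plan is the coercivity estimate in the second paragraph: although the statement is morally the analogue of the scalar Sobolev case treated in \cite{DSW21,FG20}, the presence of the Hartree convolution makes the blow-up analysis more delicate, because one must control both the local and the nonlocal pieces of $\Phi_{n,\mu}[\sigma,\phi]\phi$ when passing to the limit. Once coercivity is in hand, the remaining work is a careful bookkeeping exercise in weighted H\"older inequalities that is precisely what Lemma~\ref{cll-0-0} was set up to handle.
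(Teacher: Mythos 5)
There is a genuine gap, and it sits exactly at the step you single out as the main obstacle: the claimed coercivity of $Q[\phi]=\|\nabla\phi\|_{L^2}^2-\int\Phi_{n,\mu}[\sigma,\phi]\phi$ on the subspace cut out by the orthogonality conditions of \eqref{c1}. Those conditions only impose orthogonality to the derivative directions $\Xi_i^a$, $a=1,\dots,n+1$, not to the bubbles $W_i$ themselves. But $W_i$ satisfies $\int\Phi_{n,\mu}[W_i,\Xi_i^a]W_i=0$ (by oddness for $a\leq n$ and scale invariance of $\int(|x|^{-\mu}\ast W^p)W^p$ for $a=n+1$), so up to $O(\mathscr{Q})$ errors $W_i$ lies in your admissible subspace, while $Q[W_i]=\int|\nabla W_i|^2-(2p-1)\int(|x|^{-\mu}\ast W_i^{p})W_i^{p}=-(2p-2)\int(|x|^{-\mu}\ast W_i^{p})W_i^{p}<0$ by the Euler--Lagrange equation. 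Hence no constant $c_0>0$ with $Q\geq c_0\|\nabla\cdot\|_{L^2}^2$ can exist on that subspace, and the blow-up argument you invoke (which would prove coercivity for \emph{all} functions in the subspace) cannot go through; nondegeneracy (Proposition \ref{prondgr}) only controls the kernel, not the negative direction spanned by $W$. This is precisely why the paper's spectral-gap inequality (Lemma \ref{estimate1-0}) is stated under the \emph{extra} orthogonality $\int\nabla\rho\nabla W_i=0$, which is imposed later only on $\rho_2$ via the decomposition \eqref{c1-001}, never on $\rho_0$.

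The paper's proof of Lemma \ref{p00} avoids coercivity altogether: it tests the \emph{nonlinear} projected equation \eqref{AA} (note your starting equation \eqref{c1} also drops the term $\int\mathscr{N}(\rho_0)\rho_0$, which, while higher order, must at least be mentioned) against $\rho_0$, and then estimates the whole interaction term $\mathcal{P}_1$ directly. The genuinely quadratic pieces $\mathcal{P}_{1,1}=\int(|x|^{-\mu}\ast\sigma^{p})\sigma^{p-2}\rho_0^2$ and $\mathcal{P}_{1,2}=\int(|x|^{-\mu}\ast\sigma^{p-1}\rho_0)\sigma^{p-1}\rho_0$, which your plan hoped to absorb by the spectral gap, are instead bounded by $\|S_j\|_{L^{2^\ast}}^2\lesssim(\tau_{n,\mu}(\mathscr{Q}))^2$ using the pointwise weighted estimate $|\rho_0|\lesssim S_j$ from Lemma \ref{ww102} together with HLS/H\"older and Lemma \ref{cll-0-0}; the remaining terms are cubic or higher in $\|\nabla\rho_0\|_{L^2}$ and are absorbed by smallness. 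Your second step (the estimate of $\int\hbar\rho_0$ by $\|S_j\|_{L^{2^\ast}}\|T_j\|_{L^{(2^\ast)'}}$ and the resulting exponents) agrees with the paper and is correct; since you are already willing to use the pointwise bound of Lemma \ref{ww102} there, the repair is to use it on $\int\Phi_{n,\mu}[\sigma,\rho_0]\rho_0$ (and on the nonlinear remainder) as well, which is exactly the paper's route, rather than relying on a coercivity statement that fails on the subspace you work in.
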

\begin{proof}
We begin from the equation \eqref{AA}. Multiplying this equation by $\rho_0$
and integrating by parts, we obtain
\begin{equation}\label{0AA}
\begin{split}	\int&|\nabla\rho_0|^2+\underbrace{\int\Big[|x|^{-\mu}\ast(\sigma+\rho_0)^{p}\Big](\sigma+\rho_0)^{p-1}\rho_0-\int\Big(|x|^{-\mu}\ast\sigma^{p}\Big)\sigma^{p-1}\rho_0}\limits_{:=\mathcal{P}_1}+\int\hbar\rho_0\\&=\int\sum_{i=1}^{\kappa}\sum_{a=1}^{n+1}c_{a}^{i}\Phi_{n,\mu}[W_{i},\Xi^{a}_i]\rho_0.
\end{split}
\end{equation}
We denote in what follows
$$\mathcal{P}_{1,1}:=\int\Big(|x|^{-\mu}\ast\sigma^{p}\Big)\sigma^{p-2}\rho_0^2, \hspace{2mm}\mathcal{P}_{1,2}:=\int\Big(|x|^{-\mu}\ast\sigma^{p-1}\rho_0\Big)\sigma^{p-1}\rho_0.$$
Combining the elementary inequalities
 $$
 \Big(\sum\limits_{i=1}^{n}a_i\Big)^t\leq C\sum\limits_{i=1}^{n}a_i^t\hspace{2mm}\hspace{2mm}
 \mbox{for all}\hspace{2mm}a_i\geq0\hspace{2mm}\mbox{and all}\hspace{2mm}t>0,$$
and Hardy-Littlewood Sobolev inequality, Sobolev embedding theorem give that
\begin{equation}\label{2AA}
\begin{split}
\mathcal{P}_1&%\lesssim\mathcal{P}_{1,1}+\mathcal{P}_{1,2}
%\\&\hspace{2mm}+\int\Big(|x|^{-\mu}\ast\sigma^{p-1}\rho_0\Big)\Big[\sigma^{p-2}\rho_0^2+\rho_0^{p}\Big]+
%\int\Big[|x|^{-\mu}\ast\Big(\sigma^{p-2}\rho_0^2+\rho_0^{p}\Big)\Big]\Big[\sigma^{p-1}\rho_0+\sigma^{p-2}\rho_0^2+\rho_0^{p}\Big]\\&
\lesssim\mathcal{P}_{1,1}+\mathcal{P}_{1,2}+
\sum\limits_{i=1}^{\kappa}\big\|W_i\big\|_{L^{2^\ast}}^{\frac{n-2\mu+6}{2n}}\big\|\rho_0\big\|_{L^{2^\ast}}^{3}+\sum\limits_{i=1}^{\kappa}\big\|W_i\big\|_{L^{2^\ast}}^{\frac{n-\mu+2}{2n}}\big\|\rho_0\big\|_{L^{2^\ast}}^{p+1}\\&+\sum\limits_{i=1}^{\kappa}\big\|W_i\big\|_{L^{2^\ast}}^{\frac{n-2\mu+6}{2n}}\big\|\rho_0\big\|_{L^{2^\ast}}^{3}+\sum\limits_{i=1}^{\kappa}\big\|W_i\big\|_{L^{2^\ast}}^{\frac{8-2\mu}{2n}}\big\|\rho_0\big\|_{L^{2^\ast}}^{4}
+\sum\limits_{i=1}^{\kappa}\big\|W_i\big\|_{L^{2^\ast}}^{\frac{4-\mu}{2n}}\big\|\rho_0\big\|_{L^{2^\ast}}^{p+2}\\&+\sum\limits_{i=1}^{\kappa}\big\|W_i\big\|_{L^{2^\ast}}^{\frac{n-\mu+2}{2n}}\big\|\rho_0\big\|_{L^{2^\ast}}^{p+1}
+\big\|\rho_0\big\|_{L^{2^\ast}}^{2p}
+\sum\limits_{i=1}^{\kappa}\big\|W_i\big\|_{L^{2^\ast}}^{\frac{2n-\mu}{2n}}\big\|\rho_0\big\|_{L^{2^\ast}}^{p}.
\end{split}
\end{equation}
The only remaining terms is $\mathcal{P}_{1,1}$ and $\mathcal{P}_{1,2}$. Due to Lemma \ref{ww102}, we similar compute and get
\begin{equation*}
\begin{split}
\mathcal{P}_{1,1}\lesssim
\sum\limits_{i=1}^{\kappa}\int\Big(|x|^{-\mu}\ast W_i^{p}\Big)\Big(\sum\limits_{i=1}^{\kappa}W_i^{p-2}\Big)S_j^2(x)
%&\lesssim \sum\limits_{i=1}^{\kappa}\big\|W_i^{p-2}Z^2\big\|_{L^r}\\&
\lesssim \sum\limits_{i=1}^{\kappa}\big\|W_i\big\|_{L^{2^{\ast}}}^{\frac{4-\mu}{n-2}}\big\|S_j\big\|_{L^{2^{\ast}}}^2,\hspace{2mm}j=1,2,
\end{split}
\end{equation*}
and
\begin{equation*}
\begin{split}
\mathcal{P}_{1,2}
\lesssim
\sum\limits_{i=1}^{\kappa}\Big\|W_i^{p-1}S_j\Big\|_{L^{r}}^2\lesssim\sum\limits_{i=1}^{\kappa}\big\|W_i\big\|_{L^{2^{\ast}}}^{\frac{2(n-\mu+2)}{n-2}}\big\|S_j\big\|_{L^{2^{\ast}}}^2,\hspace{2mm}j=1,2\hspace{2mm}\mbox{and}\hspace{2mm}r=\frac{2n}{2n-\mu}.
\end{split}
\end{equation*}
Consequently, we get from Lemma \ref{cll-0} and $\mathscr{R}^{2-n}\approx \mathscr{Q}$ that the following estimates hold true:
\begin{itemize}
\item[$\bullet$]
If $n=4$ and $\mu\in[2,4)$, or $n=5$ and $\mu\in[3,4)$, we get that
\begin{equation}\label{3AA-0}
\begin{split}
\mathcal{P}_{1,1}&\lesssim\big\|S_2\big\|_{L^{2^{\ast}}}^2\lesssim(\tau_{n,\mu}(\mathscr{Q}))^2,\hspace{2mm}\mathcal{P}_{1,2}\lesssim\big\|S_2\big\|_{L^{2^{\ast}}}^2\lesssim(\tau_{n,\mu}(\mathscr{Q}))^2.
\end{split}
\end{equation}
\item[$\bullet$]
If $n=5$ and $\mu\in[1,3)$, or $n=6$ and $\mu\in(0,4)$, or $n=7$ and $\mu\in(\frac{7}{3},4)$, we get that
\begin{equation}\label{4AA-3}
\begin{split}
\mathcal{P}_{1,1}&\lesssim\big\|S_1\big\|_{L^{2^{\ast}}}^{2}\lesssim(\tau_{n,\mu}(\mathscr{Q}))^2,\hspace{2mm}\mathcal{P}_{1,2}\lesssim\big\|S_1\big\|_{L^{2^{\ast}}}^{2}\lesssim(\tau_{n,\mu}(\mathscr{Q}))^2.
\end{split}
\end{equation}
\end{itemize}
Putting \eqref{2AA} and \eqref{4AA-3} together it follows that
\begin{equation}\label{4AA-3-10}
\begin{split}
\mathcal{P}_1&\lesssim\big\|\nabla\rho_0\big\|_{L^{2}}^{3}+\big\|\nabla\rho_0\big\|_{L^{2}}^{4}+\big\|\nabla\rho_0\big\|_{L^{2}}^{p}\\&
\hspace{2mm}+\big\|\nabla\rho_0\big\|_{L^{2}}^{p+1}+\big\|\nabla\rho_0\big\|_{L^{2}}^{p+2}+\big\|\nabla\rho_0\big\|_{L^{2}}^{2p}+(\tau_{n,\mu}(\mathscr{Q}))^2\hspace{4mm}\mbox{for every}\hspace{2mm}0<\mu<4.
\end{split}
\end{equation}

Using the Lemmas \ref{estimate1} and \ref{ww102} to handle the remainder term in \eqref{0AA}, so that we get from Lemma \ref{cll-0} and $\mathscr{R}^{2-n}\approx \mathscr{Q}$ the following estimates hold.
\begin{itemize}
\item[$\bullet$]
If $n=4$ and $\mu\in[2,4)$, or $n=5$ and $\mu\in[3,4)$, we get that
\begin{equation*}
\begin{split}
\int\hbar\rho_0&\lesssim\int T_2(x)S_2(x)\leq\big\|T_2\big\|_{L^{(2^{\ast})^{\prime}}}\big\|S_2\big\|_{L^{2^{\ast}}}\lesssim(\tau_{n,\mu}(\mathscr{Q}))^2.
\end{split}
\end{equation*}
\item[$\bullet$]
If $n=5$ and $\mu\in[1,3)$, or $n=6$ and $\mu\in(0,4)$, or $n=7$ and $\mu\in(\frac{7}{3},4)$, we get that
\begin{equation*}
\begin{split}
\int\hbar\rho_0&\lesssim\int T_1(x)S_1(x)\leq\big\|T_1\big\|_{L^{(2^{\ast})^{\prime}}}\big\|S_1\big\|_{L^{2^{\ast}}}\lesssim(\tau_{n,\mu}(\mathscr{Q}))^2.
\end{split}
\end{equation*}
\end{itemize}
Summing, we get that
\begin{equation*}
\begin{split}
\int\hbar\rho_0\lesssim(\tau_{n,\mu}(\mathscr{Q}))^2\hspace{4mm}\mbox{for every}\hspace{2mm}0<\mu<4.
\end{split}
\end{equation*}
Hence, the result easily follows.
\end{proof}
\subsection{Decomposition of the error function}\
\newline
First we need to take a decomposition of $\rho$. Recall that $\rho$ and satisfy \eqref{u-0} and \eqref{AA}, respectively.
Then $\rho_1=\rho-\rho_0$ solves
\begin{equation}\label{c1-000}
	\left\{\begin{array}{l}
		\displaystyle \Delta \rho_1+\big(|x|^{-\mu}\ast(\sigma+\rho_0+\rho_1)^{p}\big)(\sigma+\rho_0+\rho_1)^{p-1}
-\Big(|x^{-\mu}\ast(\sigma+\rho_0)^{p}\Big)(\sigma+\rho_0)^{p-1}
		\\
		\displaystyle+\sum_{i=1}^{\kappa}\sum_{a=1}^{n+1}c_{a}^{i}\Phi_{n,\mu}[W_{i},\Xi^{a}_i]+\hat{f}=0,\\
		\displaystyle \int\Phi_{n,\mu}[W_{i},\Xi^{a}_i]\rho_1=0,\hspace{2mm}i=1,\cdots, \kappa; ~a=1,\cdots,n+1.
	\end{array}
	\right.
\end{equation}
And there are appropriate constants $\gamma^i$ such that the following decomposition hold.
\begin{equation}\label{c1-001}
\rho_1=\sum_{i=1}^{\kappa}\gamma^iW_i+\rho_2\hspace{2mm}{with}\hspace{2mm}\gamma^i=\int\nabla\rho_1\nabla W_i
\end{equation}
such that for all $i=1,\cdots, \kappa; ~a=1,\cdots,n+1$
$$\int\nabla\rho_2\nabla W_i=\int\nabla\rho_2\nabla \Xi_{i}^{a}=0.$$
Introducing the operator
\begin{equation*}
L[u]:=-\Delta u-\big(|x|^{-\mu} \ast W_i^{p}\big)W_i^{p-2}u,\hspace{2mm}
R[u]:=\big(|x|^{-\mu} \ast \big(W_i^{p-1}u\big)\big)W_i^{p-1}
 +\big(|x|^{-\mu} \ast W_i^{p}\big)W_i^{p-2}u,
\end{equation*}
then the eigenvalue problem is written as
\begin{eqnarray}\label{defanndg}
L[u]=\lambda_0R[u],\quad u\in \mathcal{D}^{1,2}(\mathbb{R}^n),
\end{eqnarray}
In view of non-degeneracy result, we know that the functions $W_i$, $\partial_{\lambda}W_i$ and $\partial_{\xi_i}W_i$ are eigenfunctions of \eqref{defanndg}, and
for any $1\leq i\leq\kappa$ we note that $\rho$ satisfies the orthogonality conditions
\begin{equation*}
\int\nabla\rho\nabla W_i=0,\quad\int\nabla\rho\nabla \partial_{\lambda}W_i=0,\quad\int\nabla\rho\nabla\partial_{\xi_i}W_i=0
\end{equation*}
which is equivalent to
\begin{equation}\label{EP1}
\int\big(|x|^{-\mu} \ast \big(W_i^{p-1}\rho\big)\big)W_i^{p}=0,\hspace{2mm}
\Phi_{n,\mu}[W_{i},\rho]\partial_{\lambda}W_{i}=0,
\hspace{2mm}
\Phi_{n,\mu}[W_{i},\rho]\partial_{\xi_i}W_i=0
\end{equation}
for any $1\leq i\leq n$.

\begin{lem}\label{estimate1-0}
Let $n\geq3$ and $\kappa\in\mathbb{N}$. There exists a positive constant $\delta=\delta(n,\kappa)>0$ such that if $\sigma=\sum_{i=1}^{\kappa}W_i$ is a linear combination of $\delta$-interacting Talenti bubbles and $\rho\in D^{1,2}(\mathbb{R}^n)$ satisfies the orthogonal conditions \eqref{EP1}.
Then we have that
\begin{equation*}
\begin{split}
(p-1)\sum\limits_{i=1}^ {\kappa}\int\big(|x|^{-\mu}\ast W_i^{p}\big)\sigma^{p-2}\rho^2
+p\sum\limits_{i=1}^{\kappa}\int\big(|x|^{-\mu}\ast\big(\sigma^{p-1}\rho\big)\big)W_i^{p-1}\rho
\leq \tilde{c}\int|\nabla\rho|^2,
\end{split}
\end{equation*}
where $\tilde{c}$ is a constant strictly less than $1$ which depends only on $n$, $\kappa$ and $\mu$.
\end{lem}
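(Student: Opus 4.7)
The plan is to prove Lemma \ref{estimate1-0} by a contradiction-compactness argument that reduces the multi-bubble coercivity to the single-bubble spectral gap provided by the non-degeneracy result (Proposition \ref{prondgr}). Suppose the statement fails: then there exist sequences $\delta_m \to 0$, $\delta_m$-interacting families $\sigma_m = \sum_{i=1}^{\kappa} W_i^{(m)}$, and functions $\rho_m \in D^{1,2}(\mathbb{R}^n)$ satisfying the orthogonality conditions \eqref{EP1} with $\|\nabla \rho_m\|_{L^2}=1$, such that the quadratic form on the left-hand side is at least $1 - 1/m$. For each $i\in I$, I would introduce the rescaled error
$$\tilde{\rho}_m^{(i)}(y) := (\lambda_i^{(m)})^{-(n-2)/2}\rho_m\big(y/\lambda_i^{(m)}+\xi_i^{(m)}\big),$$
which preserves the $D^{1,2}$-norm. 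Up to a subsequence, $\tilde{\rho}_m^{(i)} \rightharpoonup \rho_\infty^{(i)}$ weakly in $D^{1,2}(\mathbb{R}^n)$ and a.e.

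The next step is to pass the orthogonality conditions \eqref{EP1} to the limit and to localize each integral near the corresponding bubble. Using the $\delta_m$-interaction and the standard estimates on $W_j^{(m)}/W_i^{(m)}$ analogous to Lemma \ref{cll-11-2}, the contributions of $W_j^{(m)}$ $(j\neq i)$ to the orthogonality integrals around $W_i^{(m)}$ vanish in the limit, so that $\rho_\infty^{(i)}$ is orthogonal (in the appropriate weighted sense) to $W[0,1]$, $\partial_{\lambda}W$ and $\partial_{\xi^a}W$ at $(\xi,\lambda)=(0,1)$. To localize the left-hand side, I would decompose $\mathbb{R}^n = \bigcup_i \Omega_i^{(m)}$ into regions where $W_i^{(m)}$ dominates, plus a remainder set whose measure and HLS-contribution vanish in the limit (using bubble interaction estimates such as Lemma \ref{FPU1}, Lemma \ref{FPU2}, Lemma \ref{FPU3}). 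In each $\Omega_i^{(m)}$ one has $\sigma_m^{p-2} = (W_i^{(m)})^{p-2}(1+o(1))$ and $\sigma_m^{p-1} = (W_i^{(m)})^{p-1}(1+o(1))$, and the nonlocal convolution $|x|^{-\mu}\ast (\sigma_m^{p-1}\rho_m)$ restricted and rescaled converges weakly to $|x|^{-\mu}\ast(W^{p-1}\rho_\infty^{(i)})$.

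After rescaling, the quadratic form reduces, in the limit, to the sum over $i$ of the single-bubble quadratic forms at $\rho_\infty^{(i)}$. By the non-degeneracy result (Proposition \ref{prondgr}) together with the Hilbert-space spectral theorem applied to the generalized eigenvalue problem $L[u]=\lambda R[u]$ in \eqref{defanndg}, the orthogonality of $\rho_\infty^{(i)}$ to $W$ and $\partial W$ places it in the subspace on which the Rayleigh quotient is bounded above by $\lambda_{n+3}^{-1} =: \tilde{c}_0 < 1$; that is,
$$(p-1)\!\int\!(|x|^{-\mu}\ast W^p)W^{p-2}(\rho_\infty^{(i)})^2 + p\!\int\!(|x|^{-\mu}\ast(W^{p-1}\rho_\infty^{(i)}))W^{p-1}\rho_\infty^{(i)} \leq \tilde{c}_0 \int|\nabla\rho_\infty^{(i)}|^2.$$
Since weak convergence yields $\sum_i\int|\nabla\rho_\infty^{(i)}|^2 \leq \liminf_m \|\nabla\rho_m\|^2_{L^2} = 1$, summing and taking the limit produces $1 \leq \tilde{c}_0 < 1$, a contradiction.

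The main obstacle will be Step~4, i.e., the localization of the non-local Hartree cross-terms. Unlike the purely local Sobolev case where a partition-of-unity trick suffices, here the convolution $|x|^{-\mu}\ast(\sigma_m^{p-1}\rho_m)$ couples distant regions, so I expect to need a careful splitting $\sigma_m^{p-1}\rho_m = (W_i^{(m)})^{p-1}\rho_m + \text{remainder}$ combined with the HLS inequality and the interaction bounds (Lemmas \ref{FPU1}--\ref{FPU3}) to show that the remainder contributes $o(1)$ in the relevant $L^{(2^*)'}$-type norm. The $(p-2)$-power issue for $p<2$ (i.e., $\mu$ close to $n-2$) also requires care and will be handled using the elementary inequalities \eqref{a0b0}--\eqref{a0b1} together with the pointwise comparisons on $\Omega_i^{(m)}$.
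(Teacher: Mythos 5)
First, a point of comparison: the paper does not actually prove Lemma \ref{estimate1-0}; its ``proof'' is a citation to \cite{p-y-z24}, where the multi-bubble spectral coercivity for the Hartree linearization is established. Your compactness--localization--spectral-gap scheme is the standard route for such statements (it is essentially the argument used in that reference and, in the local case, by Figalli--Glaudo), and most of the outline is sound: the rescaled weak limits, the passage of the orthogonality conditions \eqref{EP1} to the limit, the reduction of the quadratic form to a sum of single-bubble forms, and the control of the long-range Hartree cross terms by HLS plus the interaction integrals of Lemmas \ref{FPU1}--\ref{FPU3} are the right ingredients; note also that the case where all weak limits vanish causes no trouble, since the weighted forms are compact relative to the Dirichlet energy, so the limit of the quadratic form would then be $0$, again contradicting the normalization.

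The genuine gap is in the single-bubble step. You assert that Proposition \ref{prondgr} (non-degeneracy) together with the spectral theorem for the generalized eigenvalue problem \eqref{defanndg} forces the Rayleigh quotient of $v\mapsto (p-1)\int(|x|^{-\mu}\ast W^{p})W^{p-2}v^{2}+p\int\big(|x|^{-\mu}\ast(W^{p-1}v)\big)W^{p-1}v$ against $\int|\nabla v|^{2}$ to be at most some $\tilde c_{0}<1$ on the orthogonal complement of $\mathrm{span}\{W,\partial_{\lambda}W,\partial_{\xi^{a}}W\}$. Non-degeneracy only says that the eigenvalue $1$ of this weighted problem has eigenspace exactly the kernel; it does not say that $1$, together with the value $2p-1$ attained at $v=W$, sits at the top of the spectrum. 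If there existed an eigenfunction with eigenvalue in the interval $(1,2p-1)$, it would not be removed by the $n+2$ orthogonality conditions per bubble in \eqref{EP1}, your bound $\tilde c_{0}<1$ would fail, and indeed the lemma itself would be false. Excluding such eigenvalues (i.e., establishing that the spectrum below $2p-1$ and above the claimed gap consists only of the kernel eigenvalue $1$) is precisely the nontrivial spectral input, obtained via the spherical-harmonic/Funk--Hecke analysis of the linearized Hartree operator as in \cite{GMYZ22,XLi} and exploited in \cite{p-y-z24}; it must be invoked or proved, and cannot be deduced from Proposition \ref{prondgr} plus abstract Hilbert-space theory. With that spectral ordering in hand, the remainder of your plan, including the admittedly technical localization of the nonlocal cross terms, goes through along the lines you describe.
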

\begin{proof}
The proof of can be found in \cite{p-y-z24}.
\end{proof}
\begin{lem}\label{Ni-1-3}
 We have that
  $$\big\|\nabla\rho_2\big\|_{L^2}\lesssim\sum_{i=1}^{\kappa}|\gamma^i|+\big\|\hat{f}\big\|_{(D^{1,2}(\mathbb{R}^n))^{-1}}.$$
 \end{lem}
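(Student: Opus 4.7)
\medskip

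\noindent\textbf{Proof plan for Lemma \ref{Ni-1-3}.} The plan is to test equation \eqref{c1-000} against $\rho_2$ and turn the orthogonality conditions together with the coercivity Lemma \ref{estimate1-0} into an absorption estimate for $\|\nabla\rho_2\|_{L^2}^2$. First I would multiply \eqref{c1-000} by $\rho_2$ and integrate by parts: using $\int\nabla\rho_2\nabla W_i=0$ and the decomposition $\rho_1=\sum_i\gamma^iW_i+\rho_2$, the left-hand side becomes $\int|\nabla\rho_2|^2$. The Lagrange multiplier contribution $\sum_{i,a}c_a^i\int\Phi_{n,\mu}[W_i,\Xi_i^a]\rho_2$ vanishes: indeed, since $\Xi_i^a$ solves the linearized equation at $W_i$, integration by parts gives $\int\Phi_{n,\mu}[W_i,\Xi_i^a]\rho_2=-\int\Delta\Xi_i^a\,\rho_2=\int\nabla\Xi_i^a\nabla\rho_2=0$ by the second orthogonality condition defining $\rho_2$.

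Next I would split the nonlinear difference in \eqref{c1-000} as
\[
\bigl(|x|^{-\mu}\ast(\sigma+\rho_0+\rho_1)^{p}\bigr)(\sigma+\rho_0+\rho_1)^{p-1}-\bigl(|x|^{-\mu}\ast(\sigma+\rho_0)^{p}\bigr)(\sigma+\rho_0)^{p-1}=\Phi_{n,\mu}[\sigma+\rho_0,\rho_1]+\widetilde{\mathscr{N}}(\rho_0,\rho_1),
\]
where $\widetilde{\mathscr{N}}$ collects the quadratic-and-higher terms in $\rho_1$. I would then further decompose the linear piece as
\[
\Phi_{n,\mu}[\sigma+\rho_0,\rho_1]=\Phi_{n,\mu}[\sigma,\rho_2]+\Phi_{n,\mu}\Bigl[\sigma,\sum_{i}\gamma^iW_i\Bigr]+\Bigl(\Phi_{n,\mu}[\sigma+\rho_0,\rho_1]-\Phi_{n,\mu}[\sigma,\rho_1]\Bigr).
\]
The first term is the coercive one: because $\rho_2$ satisfies the orthogonality relations \eqref{EP1}, Lemma \ref{estimate1-0} gives $\int\Phi_{n,\mu}[\sigma,\rho_2]\rho_2\leq\tilde{c}\int|\nabla\rho_2|^2$ with $\tilde{c}<1$, producing a factor $(1-\tilde{c})\|\nabla\rho_2\|_{L^2}^2$ on the left after rearrangement.

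The remaining pieces I would bound by the Hardy--Littlewood--Sobolev and Sobolev embeddings combined with Young's inequality. The cross term $\int\Phi_{n,\mu}[\sigma,\sum_i\gamma^iW_i]\rho_2$ is controlled by $\sum_i|\gamma^i|\cdot\|\nabla\rho_2\|_{L^2}$ since $W_i\in D^{1,2}(\mathbb{R}^n)$ uniformly. The perturbation $\Phi_{n,\mu}[\sigma+\rho_0,\rho_1]-\Phi_{n,\mu}[\sigma,\rho_1]$ and the higher-order term $\widetilde{\mathscr{N}}(\rho_0,\rho_1)$ carry at least one extra factor of $\rho_0$ or $\rho_1$, so by Lemma \ref{p00} and the already-established smallness of $\|\nabla\rho_1\|_{L^2}$ they are bounded by $o(1)\bigl(\sum_i|\gamma^i|+\|\nabla\rho_2\|_{L^2}\bigr)\|\nabla\rho_2\|_{L^2}$ and can be absorbed into the left-hand side for $\delta$ small. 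Finally, the source term is handled by duality: $\int\hat{f}\rho_2\leq\|\hat{f}\|_{(D^{1,2}(\mathbb{R}^n))^{-1}}\|\nabla\rho_2\|_{L^2}$.

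Combining all of the above yields
\[
(1-\tilde{c}-o(1))\,\|\nabla\rho_2\|_{L^2}^2\;\lesssim\;\Bigl(\sum_{i=1}^{\kappa}|\gamma^i|+\|\hat{f}\|_{(D^{1,2}(\mathbb{R}^n))^{-1}}\Bigr)\|\nabla\rho_2\|_{L^2},
\]
from which the claim follows after dividing by $\|\nabla\rho_2\|_{L^2}$. The main technical obstacle will be the careful bookkeeping of the Hartree-type cross terms of the form $(|x|^{-\mu}\ast(\sigma^{p-1}\rho_0))\sigma^{p-2}\rho_1\rho_2$ and their variants, which involve four different functions and must be estimated by splitting via H\"older so that HLS applies with the correct exponents; the required smallness is then delivered by the bound $\|\nabla\rho_0\|_{L^2}\lesssim\tau_{n,\mu}(\mathscr{Q})$ from Lemma \ref{p00} and the $\delta$-interaction hypothesis, exactly as in the proof of Lemma \ref{ww101}.
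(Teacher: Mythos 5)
Your proposal is correct and follows essentially the same route as the paper: test \eqref{c1-000} against $\rho_2$ (the multiplier term dying by orthogonality), isolate the part linear in $\rho_1$, use $\rho_1=\sum_i\gamma^iW_i+\rho_2$ to extract the coercive quadratic form handled by Lemma \ref{estimate1-0}, bound the cross terms by $\bigl(\sum_i|\gamma^i|\bigr)\|\nabla\rho_2\|_{L^2}$ and the source by duality, and absorb the higher-order and $\rho_0$-perturbation terms for $\delta$ small. The only cosmetic difference is that you apply the coercivity lemma at $\sigma$ and treat the shift by $\rho_0$ as a separate perturbation, whereas the paper applies it at $\sigma+\rho_0$ with correction terms in powers of $\|\nabla\rho_0\|_{L^2}$; the estimates are the same either way.
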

\begin{proof}
Taking $\rho_2$ as test function in \eqref{c1-000}, and exploiting the orthogonality condition yields
$$
\int|\nabla\rho_2|^2=\int\Big(\big(|x|^{-\mu}\ast(\sigma+\rho_0+\rho_1)^{p}\big)(\sigma+\rho_0+\rho_1)^{p-1}
-\big(|x^{-\mu}\ast(\sigma+\rho_0)^{p}\big)(\sigma+\rho_0)^{p-1}\Big)\rho_2+\int |\hat{f}\rho_2|.
$$
Thence
\begin{equation}\label{p-0}
\begin{split}
\int|\nabla\rho_2|^2&\lesssim\int\Phi_{n,\mu}[\sigma+\rho_0,\rho_1]\rho_2 +\int|\hat{f}\rho_2|+a_1+a_2+a_3+a_4,
\end{split}
\end{equation}
where we have:
\begin{equation*}
\begin{split}
&a_1:=\int\big(|x|^{-\mu}\ast(\sigma+\rho_0)^{p-1}\rho_1\big)\big((\sigma+\rho_0)^{p-2}\rho_1\rho_2+\rho_1^{p-1}\rho_2\big),\\&
a_2:=\int\big(|x^{-\mu}\ast(\sigma+\rho_0)^{p-2}\rho_0^2\big)\big((\sigma+\rho_0)^{p-1}+(\sigma+\rho_0)^{p-2}\rho_1+\rho_1^{p-1}\big)\rho_2,\\&
a_3:=\int\big(|x^{-\mu}\ast\rho_1^{p}\big)\big((\sigma+\rho_0)^{p-1}+(\sigma+\rho_0)^{p-2}\rho_1+\rho_1^{p-1}\big)\rho_2,\\&
a_4:=\int\big(|x^{-\mu}\ast(\sigma+\rho_0)^{p}\big)\rho_1^{p-1}\rho_2.
\end{split}
\end{equation*}
Using \eqref{c1-001}, we obtain
\begin{equation*}
\begin{split}
\int\Phi_{n,\mu}[\sigma+\rho_0,\rho_1]\rho_2\leq
\Upsilon_0\int\Phi_{n,\mu}[\sigma+\rho_0,W_i]\rho_2+\int\Phi_{n,\mu}[\sigma+\rho_0,\rho_2]\rho_2,
\end{split}
\end{equation*}
where $
\Upsilon_0:=\sum_{i=1}^{\kappa}|\gamma^i|.$
By H\"{o}lder's inequality and Sobolev's inequality imply
\begin{equation*}
\begin{split}
&\int\Phi_{n,\mu}[\sigma+\rho_0,W_i]\rho_2\\&\lesssim\big\|\big|\sigma+\rho_0\big|^{p-1}W_i\big\|_{L^{r}}\big\|\sigma+\rho_0\big\|_{L^{2^{\ast}}}^{p-1}\big\|\rho_2\big\|_{L^{2^{\ast}}}+\big\|\sigma+\rho_0\big\|_{L^{2^{\ast}}}^{p}\big\||\sigma+\rho_0|^{p-2}W_i\big\|_{L^{r}}\big\|\rho_2\big\|_{L^{2^{\ast}}}^2\\&
\lesssim\big\|\nabla\rho_2\big\|_{L^{2}}.
\end{split}
\end{equation*}
An analogous argument tell us that
\begin{equation*}
\begin{split}
a_1&\lesssim\big\|\sigma+\rho_0\big\|_{L^{2^{\ast}}}^{p-1}\big\|\rho_1\big\|_{L^{2^{\ast}}}\big(\big\|\sigma+\rho_0\big\|_{L^{2^{\ast}}}^{p-2}\big\|\rho_1\big\|_{L^{2^{\ast}}}\big\|\rho_2\big\|_{L^{2^{\ast}}}+
\big\|\rho_1\big\|_{L^{2^{\ast}}}^{p-1}\big\|\rho_2\big\|_{L^{2^{\ast}}}\big)\\&
\lesssim\big(\big(\Upsilon_0+\|\nabla\rho_2\|_{L^{2}}\big)^{2}+\big(\Upsilon_0+\|\nabla\rho_2\|_{L^{2}}\big)^{p}\big)\big\|\nabla\rho_2\big\|_{L^{2}},
\end{split}
\end{equation*}
\begin{equation*}
\begin{split}
a_2&\leq C\big\|\sigma+\rho_0\big\|_{L^{2^{\ast}}}^{p-2}\big\|\nabla\rho_0\big\|_{L^{2}}^2\big(\big\|\sigma+\rho_0\big\|_{L^{2^{\ast}}}^{p-1}+
\big\|\sigma+\rho_0\big\|_{L^{2^{\ast}}}^{p-2}\big\|\rho_1\big\|_{L^{2^{\ast}}}+\big\|\rho_1\big\|_{L^{2^{\ast}}}^{p-1}\big)\big\|\nabla\rho_2\big\|_{L^{2}}\\&
\leq\big(C_0+\widehat{C}_1\big(\Upsilon_0+\|\nabla\rho_2\|_{L^{2}}\big)+\widetilde{C}_2\big(\Upsilon_0+\|\nabla\rho_2\|_{L^{2}}\big)^{p-1}\big)\big\|\nabla\rho_0\big\|_{L^{2}}^2\big\|\nabla\rho_2\big\|_{L^{2}},
\end{split}
\end{equation*}
\begin{equation*}
\begin{split}
a_3\lesssim\big[\big(\Upsilon_0+\|\nabla\rho_2\|_{L^{2}}\big)^{p}\big(1+\Upsilon_0+\|\nabla\rho_2\|_{L^{2}}
+\big(\Upsilon_0+\|\nabla\rho_2\|_{L^{2}}\big)^{p-1}\big]\big\|\nabla\rho_2\big\|_{L^{2}},
\end{split}
\end{equation*}
and
\begin{equation*}
\begin{split}
a_4\lesssim\big(\Upsilon_0+\|\nabla\rho_2\|_{L^{2}}\big)^{p-1}\big\|\nabla\rho_2\big\|_{L^{2}}\hspace{2mm}\mbox{and}\hspace{2mm}\int |\hat{f}\rho_2|\lesssim\big\|\hat{f}\big\|_{(\mathcal{D}^{1,2}(\mathbb{R}^n))^{-1}}\big\|\nabla\rho_2\big\|_{L^{2}}.
\end{split}
\end{equation*}
By Lemma \ref{estimate1-0}, there exists number $\tilde{c}<1$ which depends only on $n$ and $\kappa$, such that
\begin{equation*}
\begin{split}
&\int\Phi_{n,\mu}[\sigma+\rho_0,\rho_2]\rho_2
%\\&\leq p\int\big(|x|^{-\mu}\ast\sigma^{p-1}\rho_2\big)\sigma^{p-1}\rho_2
%+(p-1)\int\big(|x|^{-\mu}\ast\sigma^{p}\big)\sigma^{p-2}\rho_2^2+\int\big(|x|^{-\mu}\ast\sigma^{p}\big)\rho_0^{p-2}\rho_2^2
%\\&+C\int\big(|x|^{-\mu}\ast\sigma^{p-1}\rho_2\big)\rho_0^{p-1}\rho_2
%+C\int\big(|x|^{-\mu}\ast\rho_0^{p}\big)\big(\sigma+\rho_0\big)^{p-2}\rho_2^2\\&
\leq\big(\tilde{c}+C\big\|\nabla\rho_0\big\|_{L^{2}}^{p-2}+C\big\|\nabla\rho_0\big\|_{L^{2}}^{p-1}+C\big\|\nabla\rho_0\big\|_{L^{2}}^{p}\big)\big\|\nabla\rho_2\big\|_{L^{2}}^2.
\end{split}
\end{equation*}
Combining this estimate, we get
\begin{equation*}
\begin{split}
&\int\Phi_{n,\mu}[\sigma+\rho_0,\rho_1]\rho_2
\leq\big(\tilde{c}+C\big\|\nabla\rho_0\big\|_{L^{2}}^\frac{4-\mu}{2n}+C\big\|\nabla\rho_0\big\|_{L^{2}}^\frac{n-\mu+2}{2n}+C\big\|\nabla\rho_0\big\|_{L^{2}}^{p}\big)\big\|\nabla\rho_2\big\|_{L^{2}}^2
+C\Upsilon_0\big\|\nabla\rho_2\big\|_{L^{2}}.
\end{split}
\end{equation*}
Note that the estimate of $a_2$, we can assume that $\|\nabla\rho_0\|_{L^2}\ll1$ by Lemma \ref{p00} and choosing $\delta$ small enough such that $\|\nabla\rho_2\|_{L^2}<1$ and $\Upsilon_0<1$, and such that
$$\alpha_2\leq C\Upsilon_0\big\|\nabla\rho_2\big\|_{L^{2}}+\widetilde{C}_2\big(\Upsilon_0+\|\nabla\rho_2\|_{L^{2}}\big)^{p-1}\big\|\nabla\rho_2\big\|_{L^{2}}+\widehat{C}_1\big\|\nabla\rho_0\big\|_{L^{2}}^2\big\|\nabla\rho_2\big\|_{L^{2}}^2.$$
In conclusion, recalling \eqref{p-0}, we deduce
\begin{equation*}
\begin{split}
&\|\nabla\rho_2\|_{L^2}^2\lesssim\Upsilon_0\big\|\nabla\rho_2\big\|_{L^{2}}+\big[\big(\Upsilon_0+\|\nabla\rho_2\|_{L^{2}}\big)^{2}+\big(\Upsilon_0+\|\nabla\rho_2\|_{L^{2}}\big)^{p-1}\big]\big\|\nabla\rho_2\big\|_{L^{2}}\\&
+\big[\big(\Upsilon_0+\|\nabla\rho_2\|_{L^{2}}\big)^{p}\big(1+\Upsilon_0+\|\nabla\rho_2\|_{L^{2}}+\big(\Upsilon_0+\|\nabla\rho_2\|_{L^{2}}\big)^{p-1}\big)+\big\|\hat{f}\big\|_{(D^{1,2}(\mathbb{R}^n))^{-1}}\big]\big\|\nabla\rho_2\big\|_{L^{2}},
\end{split}
\end{equation*}
since we can assume that $\|\nabla\rho_0\|_{L^2}\ll1$. Then by choosing $\delta$ small enough such that $\|\nabla\rho_2\|_{L^2}<1$ and $\Upsilon_0<1$, we are able to conclude that desired estimate
\begin{equation*}
\begin{split}
\|\nabla\rho_2\|_{L^2}&\lesssim \Upsilon_0+\big\|\hat{f}\big\|_{(\mathcal{D}^{1,2}(\mathbb{R}^n))^{-1}}.
\end{split}
\end{equation*}
The conclusion follows.
\end{proof}
\begin{lem}\label{rr-1}
If $\delta$ is small we have that:\\
$(i)$ If $n=4$ and $\mu\in[2,4)$, or $n=5$ and $\mu\in[3,4)$,
$$|\gamma^i|\lesssim\big\|\hat{f}\big\|_{(\mathcal{D}^{1,2}(\mathbb{R}^n))^{-1}}
+\mathscr{Q}^{3-\frac{\mu}{n-2}},\hspace{2mm}j=1,\cdots, \kappa.$$
$(ii)$ If $n=5$ and $\mu\in[1,3)$, or $n=6$ and $\mu\in(0,4)$, or $n=7$ and $\mu\in(\frac{7}{3},4)$,
\begin{equation*}	
|\gamma^i|\lesssim\big\|\hat{f}\big\|_{(\mathcal{D}^{1,2}(\mathbb{R}^n))^{-1}}
+\mathscr{Q}^{\min\{2,\frac{n-\mu}{n-2}+1\}},\hspace{2mm}j=1,\cdots, \kappa.
\end{equation*}
\end{lem}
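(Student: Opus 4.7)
The plan is to obtain each $\gamma^m$ by testing the equation \eqref{c1-000} against the bubble $W_m$ itself (this is the natural companion to the choice $\Xi^{n+1}_m$ used in Lemma \ref{QQ-1}, where $W_m$ is not in the kernel of the linearized Hartree operator about $W_m$, so it singles out the coefficient $\gamma^m$ with a nonzero numerical multiplier).

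First I would multiply \eqref{c1-000} by $W_m$ and integrate. After one integration by parts, using $-\Delta W_m=(|x|^{-\mu}\ast W_m^p)W_m^{p-1}$ and the symmetry of the convolution, the linear-in-$\rho_1$ part of the identity takes the form
\[
\int\rho_1\Big[-(|x|^{-\mu}\ast W_m^p)W_m^{p-1}+\Phi_{n,\mu}[\sigma+\rho_0,W_m]\Big].
\]
Localizing near $\xi_m$ so that $\sigma+\rho_0\approx W_m$, this bracket is, up to an error that decays in $\mathscr{Q}$, equal to $(2p-2)(|x|^{-\mu}\ast W_m^p)W_m^{p-1}$. Substituting the decomposition $\rho_1=\gamma^m W_m+\sum_{i\ne m}\gamma^i W_i+\rho_2$ and using Lemma \ref{p1-00} together with the identity $\int(|x|^{-\mu}\ast W_m^p)W_m^p=\widetilde{\alpha}_{n,\mu}\|W\|_{L^{2^\ast}}^{2^\ast}$, I would isolate the diagonal contribution $(2p-2)\widetilde{\alpha}_{n,\mu}\|W\|_{L^{2^\ast}}^{2^\ast}\gamma^m$.

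Next I would estimate the five remaining contributions: (a) $|\int\hat f\,W_m|\le C\|\hat f\|_{(D^{1,2})^{-1}}$; (b) for $i\ne m$, $|\int\Phi_{n,\mu}[W_i,\Xi^a_i]W_m|\lesssim Q_{im}\le\mathscr{Q}$ by Lemma \ref{wwc101}, while for $i=m$ the integral vanishes thanks to $\int W_m^{2^\ast-1}\Xi^a_m=0$; (c) for $i\ne m$, $\int(|x|^{-\mu}\ast W_m^p)W_m^{p-1}W_i=\widetilde{\alpha}_{n,\mu}\int W_m^{2^\ast-1}W_i\approx Q_{im}\lesssim\mathscr{Q}$ by Lemma \ref{FPU1}, giving an off-diagonal $O(\mathscr{Q})\sum_{i\ne m}|\gamma^i|$; (d) the $\rho_2$ contribution is bounded by $C\|\nabla\rho_2\|_{L^2}$ via HLS and Sobolev, and then by Lemma \ref{Ni-1-3} by $C(\sum_i|\gamma^i|+\|\hat f\|_{(D^{1,2})^{-1}})$; (e) the higher-order term $\int\tilde{\mathscr{N}}(\rho_1)\,W_m$ is controlled by Lemma \ref{p00} combined with HLS and Sobolev, yielding a $o(1)$-multiple of $|\gamma^i|$ and $\|\hat f\|$. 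Inserting Lemma \ref{cll}'s bound on $|c_a^i|$ (together with $\|\hbar\|_{\ast\ast}\le C$ and $\|\phi\|_\ast\le C$ from Lemmas \ref{estimate1} and \ref{ww102}) into the coefficient term of (b) produces the extra factor of $\mathscr{Q}$ that upgrades the Lemma \ref{cll} exponent to the one claimed here: in case $(i)$, $\mathscr{Q}\cdot\mathscr{Q}^{2-\mu/(n-2)}=\mathscr{Q}^{3-\mu/(n-2)}$, and in case $(ii)$, $\mathscr{Q}\cdot\mathscr{Q}^{\min\{1,(n-\mu)/(n-2)\}}=\mathscr{Q}^{\min\{2,(n-\mu)/(n-2)+1\}}$.

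Finally I would collect these into a linear system $M\vec\gamma=\vec b$ for $\vec\gamma=(\gamma^1,\dots,\gamma^\kappa)$, whose diagonal entries are bounded below by a positive constant independent of $m$ and whose off-diagonal entries are $O(\mathscr{Q})$. Since $\mathscr{Q}$ is small (Lemma \ref{QQ-1}), $M$ is diagonally dominant, hence invertible with bounded inverse, so $|\gamma^m|\le C|\vec b|_\infty\lesssim\|\hat f\|_{(D^{1,2})^{-1}}+\mathscr{Q}^{\alpha}$ with the appropriate $\alpha$. The hardest step is step (b)--(c): the Hartree nonlocality means the ``near $\xi_m$'' localization is not exact and one has to control far-field contributions from $W_i$, $i\ne m$, through the convolution estimates of Section 2 (Lemmas \ref{B4}--\ref{B4-1}) together with the interaction integrals of Lemma \ref{FPU1}. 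Because $W_m$ decays slower than its scaling derivative $\lambda_m\partial_{\lambda_m}W_m$, the available range of $(n,\mu)$ for which all convolutions converge and the error terms remain genuinely smaller than the main term is exactly that given by the assumption $(\sharp)$.
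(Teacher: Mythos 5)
Your proposal is correct and takes essentially the same route as the paper's proof: multiply \eqref{c1-000} by the bubble $W_j$, linearize the Hartree term around $W_j$ so that the diagonal contribution isolates $\gamma^j$ with a fixed nonzero multiplier, bound the off-diagonal couplings by $Q_{ij}\lesssim\mathscr{Q}$ (Lemma \ref{FPU1}), control $\rho_2$ through Lemma \ref{Ni-1-3} and the $\rho_0$/higher-order errors through Lemma \ref{p00} and HLS--Sobolev, estimate the coefficient terms by the Lemma \ref{cll} bound on $|c_a^i|$ times the $O(\mathscr{Q})$ interaction integral (which is precisely where the exponent upgrade to $\mathscr{Q}^{3-\frac{\mu}{n-2}}$, resp. $\mathscr{Q}^{\min\{2,\frac{n-\mu}{n-2}+1\}}$, comes from), and close via the diagonally dominant linear system for $\delta$ small --- this is exactly the chain \eqref{rr-2}--\eqref{rr-16}, and your explicit remark that the diagonal term $\int\Phi_{n,\mu}[W_m,\Xi^a_m]W_m$ vanishes is a point the paper leaves implicit but needs. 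The only (immaterial) inaccuracy is the closing heuristic that $W_m$ decays slower than $\lambda_m\partial_{\lambda_m}W_m$: the scaling derivative decays at the same rate $|x|^{2-n}$ as $W_m$ (only the translational derivatives $\Xi^a_m$, $a\le n$, decay faster), but this aside plays no role in the argument.
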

\begin{proof}
Multiplying \eqref{c1-000} by $W_j$ and integrating we have
\begin{equation}\label{rr-2}
\begin{split}	\int\nabla\rho_1\nabla W_j&=\mathcal{F}_{n,\mu}(x)
+\int\sum_{i=1}^{\kappa}\sum_{a=1}^{n+1}c_{a}^{i}\Phi_{n,\mu}[W_{i},\Xi^{a}_i]W_j+\int \hat{f}W_j,
\end{split}
\end{equation}
where we denote in what follows
$$
\mathcal{F}_{n,\mu}(x):=\int\big[\big(|x|^{-\mu}\ast(\sigma+\rho_0+\rho_1)^{p}\big)(\sigma+\rho_0+\rho_1)^{p-1}-\big(|x|^{-\mu}\ast(\sigma+\rho_0)^{p}\big)(\sigma+\rho_0)^{p-1}\big]W_j.
$$
Let us prove some estimates which are need to compute $\mathcal{F}_{n,\mu}$. Combining the elementary inequalities
\begin{equation*}
\left\lbrace
\begin{aligned}
&(\sigma+\rho_0)^{p}-W_j^{p}\lesssim\sum\limits_{1\leq i\neq j\leq\kappa}W_i^{p}+\sum\limits_{1\leq i\neq j\leq\kappa}W_i^{p-1}W_j+\sum\limits_{i}\big(W_i^{p-1}\rho_0+W_i^{p-2}\rho_0^2\big)+|\rho_0|^{p},\\&
(\sigma+\rho_0)^{p-1}-W_j^{p-1}\lesssim\sum\limits_{1\leq i\neq j\leq\kappa}W_i^{p-1}+\sum\limits_{1\leq i\neq j\leq\kappa}W_i^{p-2}W_j+\sum\limits_{i}W_i^{p-2}\rho_0+|\rho_0|^{p-1},\\&
(\sigma+\rho_0)^{p-2}-W_j^{p-2}\lesssim\sum\limits_{1\leq i\neq j\leq\kappa}W_i^{p-2}+|\rho_0|^{p-2},
\end{aligned}
\right.
\end{equation*}
then we have
\begin{equation}\label{rr-3}
\begin{split}
b_1:&=\int\big[|x|^{-\mu}\ast\big((\sigma+\rho_0)^{p-1}-W_j^{p-1}\big)\rho_1\big](\sigma+\rho_0+\rho_1)^{p-1}W_j\\&
\leq\int\big[|x|^{-\mu}\ast\big(\sum\limits_{i\neq j}\big(W_i^{p-1}+W_i^{p-2}W_j\big)+\sum\limits_{i}W_i^{p-2}\rho_0+|\rho_0|^{p-1}\big)\rho_1\big](\sigma+\rho_0+\rho_1)^{p-1}W_j,
\end{split}
\end{equation}
\begin{equation}\label{rr-4}
\begin{split}
b_2:&=\int\big(|x|^{-\mu}\ast W_j^{p-1}\rho_1\big)\big((\sigma+\rho_0+\rho_1)^{p-1}-W_j^{p-1}\big)W_j\\&
\leq\int\big(|x|^{-\mu}\ast W_j^{p-1}\rho_1\big)\big((\sigma+\rho_0)^{p-2}|\rho_1|+|\rho_1|^{p-1}\big)W_j,
\end{split}
\end{equation}
and
\begin{equation}\label{rr-5}
\begin{split}
b_3:&=\int\big(|x|^{-\mu}\ast W_j^{p-1}\rho_1\big)\big((\sigma+\rho_0)^{p-1}-W_j^{p-1}\big)W_j\\&
\leq\int\big(|x|^{-\mu}\ast W_j^{p-1}\rho_1\big)\big(\sum\limits_{1\leq i\neq j\leq\kappa}W_i^{p-1}+\sum\limits_{1\leq i\neq j\leq\kappa}W_i^{p-2}W_j+\sum\limits_{i}W_i^{p-2}\rho_0+|\rho_0|^{p-1}\big)W_j.
\end{split}
\end{equation}
Similarly, we denote
\begin{equation}\label{rr-6}
\begin{split}
b_4:&=\int\big(|x|^{-\mu}\ast(\sigma+\rho_0)^{p}\big)\big((\sigma+\rho_0)^{p-2}-W_j^{p-2}\big)\rho_1W_j\\&
\lesssim\int\big(|x|^{-\mu}\ast(\sigma+\rho_0)^{p}\big)\big(\sum\limits_{1\leq i\neq j\leq\kappa}W_i^{p-2}+|\rho_0|^{p-2}\big)\rho_1W_j,
\end{split}
\end{equation}
and
\begin{equation}\label{rr-7}
\begin{split}
b_5:&=\int\big(|x|^{-\mu}\ast\big((\sigma+\rho_0)^{p}-W_j^{p}\big)\big)W_j^{p-1}\rho_1\\&
\lesssim\int\big(|x|^{-\mu}\ast\big(\sum\limits_{1\leq i\neq j\leq\kappa}\big(W_i^{p}+W_i^{p-1}W_j\big)+\sum\limits_{i}\big(W_i^{p-1}\rho_0+W_i^{p-2}\rho_0^2\big)+|\rho_0|^{p}\big)\big)W_j^{p-1}\rho_1.
\end{split}
\end{equation}
And we write
\begin{equation}\label{i-6}
\begin{split}
b_6:&=\int\big(|x|^{-\mu}\ast\big((\sigma+\rho_0)^{p-2}|\rho_1|^2+|\rho_1|^{p}\big)\big)(\sigma+\rho_0+\rho_1)^{p-1}W_j+\int\big(|x|^{-\mu}\ast(\sigma+\rho_0)^{p}\big)\rho_1^{p-1}W_j.
\end{split}
\end{equation}
Thus, combining \eqref{rr-3}, \eqref{rr-4}, \eqref{rr-5}, \eqref{rr-6}, \eqref{rr-7} and \eqref{i-6} entails that
\begin{equation}\label{rr-8}
\begin{split}
\mathcal{F}_{n,\mu}(x)\lesssim \int\big(|x|^{-\mu}\ast W_j^{p-1}\rho_1\big)W_j^{p}+\int\big(|x|^{-\mu}\ast W_j^{p}\big)W_j^{p-1}\rho_1+b_1+b_2+b_3+b_4+b_5+b_6.
\end{split}
\end{equation}
Using HLS inequality, H\"{o}lder's inequality and Sobolev's inequality, we establish now the following key estimates:
\begin{equation*}
\begin{split}
b_1&\lesssim\int\big(|x|^{-\mu}\ast\sum\limits_{i\neq j}\big(W_i^{p-1}+W_i^{p-2}W_j\big)\rho_1\big)\big(\sum\limits_{i\neq j}W_i^{p-1}W_j+W_j^{p}+|\rho_0|^{p-1}W_j+|\rho_1|^{p-1}W_j\big)\\&
\hspace{2mm}+\int\big(|x|^{-\mu}\ast\big(\sum\limits_{i}W_i^{p-2}\rho_0+|\rho_0|^{p-1}\big)\rho_1\big)\big(\sum\limits_{i }W_i^{p-1}W_j+|\rho_0|^{p-1}W_j+|\rho_1|^{p-1}W_j\big)\\&
\lesssim\sum\limits_{1\leq i\neq j\leq\kappa}\big\|W_i\big\|_{L^{2^{\ast}}}^{p-1}\big\|W_i^{p-1}W_j\big\|_{L^{r}}\big\|\rho_1\big\|_{L^{2^{\ast}}}+
\sum\limits_{1\leq i\neq j\leq\kappa}\big\|W_j^{2^{\ast}-p}W_i^{p-1}\big\|_{L^{(2^{\ast})^{\prime}}}\big\|\rho_1\big\|_{L^{2^{\ast}}}\\&
\hspace{2mm}+\sum\limits_{1\leq i\neq j\leq\kappa}\big\|W_i\big\|_{L^{2^{\ast}}}^{p-1}\big\|W_j\big\|_{L^{2^{\ast}}}^{p-1}\big\|\nabla\rho_0\big\|_{L^{2}}^{{p-1}}\big\|\rho_1\big\|_{L^{2^{\ast}}}+\sum\limits_{1\leq i\neq j\leq\kappa}\big\|W_i\big\|_{L^{2^{\ast}}}^{p-1}\big\|W_j\big\|_{L^{2^{\ast}}}\big\|\rho_1\big\|_{L^{2^{\ast}}}^{p}\\&
\hspace{2mm}+\big(\sum\limits_{i}\big\|W_i\big\|_{L^{2^{\ast}}}^{p-2}\big\|\nabla\rho_0\big\|_{L^{2}}+\big\|\nabla\rho_0\big\|_{L^{2}}^{p-1}\big)\big\|\rho_1\big\|_{L^{2^{\ast}}}
\big(\sum\limits_{i}\big\|W_i^{p-1}W_j\big\|_{L^{r}}+\big\|\nabla\rho_0\big\|_{L^{2}}^{p-1}\big\|W_j\big\|_{L^{2^{\ast}}}\big).
\\&\hspace{2mm}+\big(\sum\limits_{i}\big\|W_i\big\|_{L^{2^{\ast}}}^{p-2}\big\|\nabla\rho_0\big\|_{L^{2}}+\big\|\nabla\rho_0\big\|_{L^{2}}^{p-1}\big)
\big\|\rho_1\big\|_{L^{2^{\ast}}}^{p}\big\|W_j\big\|_{L^{2^{\ast}}}.
\end{split}
\end{equation*}
\begin{equation*}
\begin{split}
b_2\lesssim\sum\limits_{i}\big\|W_j\big\|_{L^{2^{\ast}}}^{p}\big\|W_i\big\|_{L^{2^{\ast}}}^{p-2}\big\|\rho_1\big\|_{L^{2^{\ast}}}^{2}+\big\|W_j\big\|_{L^{2^{\ast}}}^{p}\big\|\nabla\rho_0\big\|_{L^{2}}^{p-2}
\big\|\rho_1\big\|_{L^{2^{\ast}}}^{2}+\big\|W_j\big\|_{L^{2^{\ast}}}^{p}\big\|\rho_1\big\|_{L^{2^{\ast}}}^{p},
\end{split}
\end{equation*}
\begin{equation*}
\begin{split}
b_3&\lesssim\big\|W_j\big\|_{L^{2^{\ast}}}^{p-1}\big(\sum\limits_{1\leq i\neq j\leq\kappa}\big\|W_i^{p-1}W_j\big\|_{L^{r}}+\sum\limits_{1\leq i\neq j\leq\kappa}\big\|W_i^{p-2}W_j^2\big\|_{L^{r}}\big)\big\|\rho_1\big\|_{L^{2^{\ast}}}\\&
\hspace{2mm}+\sum\limits_{i}\big\|W_j\big\|_{L^{2^{\ast}}}^{p-1}\big\|W_i^{p-2}W_j\big\|_{L^{\frac{2n}{n-\mu+2}}}\big\|\nabla\rho_0\big\|_{L^{2}}
\big\|\rho_1\big\|_{L^{2^{\ast}}}+\big\|W_j\big\|_{L^{2^{\ast}}}^{p-1}\big\|\nabla\rho_0\big\|_{L^{2}}^{p-1}\big\|\rho_1\big\|_{L^{2^{\ast}}},
\end{split}
\end{equation*}
\begin{equation*}
\begin{split}
b_4\lesssim\big\|\sigma+\rho_0\big\|_{L^{2^{\ast}}}^{p}\sum\limits_{1\leq i\neq j\leq\kappa}\big\|W_i^{p-2}W_j\big\|_{L^{\frac{2n}{n-\mu+2}}}\big\|\rho_1\big\|_{L^{2^{\ast}}}+\big\|\sigma+\rho_0\big\|_{L^{2^{\ast}}}^{p}\big\|\nabla\rho_0\big\|_{L^{2}}^{p-2}
\big\|W_j\big\|_{L^{2^{\ast}}}\big\|\rho_1\big\|_{L^{2^{\ast}}},
\end{split}
\end{equation*}
\begin{equation*}
\begin{split}
b_5&\lesssim\sum\limits_{1\leq i\neq j\leq\kappa}\big\|W_i^{2^{\ast}-p}W_j^{p-1}\big\|_{L^{(2^{\ast})^{\prime}}}\big\|\rho_1\big\|_{L^{2^{\ast}}}+\sum\limits_{1\leq i\neq j\leq\kappa}\big\|W_i^{p-1}W_j\big\|_{L^{r}}\big\|W_j\big\|_{L^{2^{\ast}}}^{p-1}\big\|\rho_1\big\|_{L^{2^{\ast}}}\\&
+\sum\limits_{i}\big(\big\|W_i\big\|_{L^{2^{\ast}}}^{p-1}\big\|\nabla\rho_0\big\|_{L^{2}}+\big\|W_i\big\|_{L^{2^{\ast}}}^{p-2}\big\|\nabla\rho_0\big\|_{L^{2}}^2\big)
\big\|W_j\big\|_{L^{2^{\ast}}}^{p-1}\big\|\rho_1\big\|_{L^{2^{\ast}}}+\big\|\nabla\rho_0\big\|_{L^{2}}^{p}\big\|W_j\big\|_{L^{2^{\ast}}}^{p-1}\big\|\rho_1\big\|_{L^{2^{\ast}}}.
\end{split}
\end{equation*}
and
\begin{equation*}
\begin{split}
b_6&\lesssim\big(\big\|\sigma+\rho_0\big\|_{L^{2^{\ast}}}^{p-2}\big\|\rho_1\big\|_{L^{2^{\ast}}}^2+\big\|\rho_1\big\|_{L^{2^{\ast}}}^{p}\big)\big\|\sigma+\rho_0+\rho_1\big\|_{L^{2^{\ast}}}^{p-1}\big\|W_j\big\|_{L^{2^{\ast}}}.
\end{split}
\end{equation*}
Note that the above estimates, by Lemma \ref{FPU1} we evaluate
\begin{equation*}
\begin{split}
&\big\|W_i^{2^{\ast}-p}W_j^{p-1}\big\|_{L^{(2^{\ast})^{\prime}}}\approx
\begin{cases}
\mathscr{Q}^{\min((2^{\ast}-p)(2^{\ast})^{\prime},(p-1)(2^{\ast})^{\prime})},\hspace{2mm}\hspace{6mm}\quad\hspace{2mm}\mu\neq\frac{n+2}{2},\\
\mathscr{Q}^{\frac{n}{n-2}}\log(\frac{1}{\mathscr{Q}}),\hspace{2mm}\hspace{10mm}\hspace{10mm}\hspace{4mm}\hspace{6mm}\hspace{2mm}\mu=\frac{n+2}{2},
\end{cases}
\end{split}
\end{equation*}
\begin{equation*}
\begin{split}
\big\|W_i^{p-1}W_j\big\|_{L^{r}}\approx
\begin{cases}
\mathscr{Q}^{r},\hspace{2mm}\mbox{with}\hspace{2mm}r=\frac{2n}{2n-\mu},\hspace{6mm}\hspace{4mm}\hspace{6mm}\hspace{6mm}\quad\hspace{2mm}\mu<4,\\
\mathscr{Q}^{\frac{n}{n-2}}\log(\frac{1}{\mathscr{Q}}),\hspace{6mm}\hspace{6mm}\hspace{6mm}\hspace{6mm}\hspace{6mm}\hspace{8mm}\hspace{2mm}\mu=4,
\end{cases}
\end{split}
\end{equation*}
and
\begin{equation*}
\begin{split}
\big\|W_i^{p-2}W_j^2\big\|_{L^{r}}\approx \mathscr{Q}^{(p-2)r},\hspace{4mm}\big\|W_i^{p-2}W_j\big\|_{L^{\frac{2n}{n-\mu+2}}}\approx\begin{cases}
\mathscr{Q}^{\frac{2n}{n-\mu+2}},\hspace{6mm}\hspace{4mm}\quad\hspace{2mm}n>6-\mu,\\
\mathscr{Q}^{\frac{n}{n-2}}\log(\frac{1}{\mathscr{Q}}),\hspace{6mm}\hspace{2mm}n=6-\mu.
\end{cases}
\end{split}
\end{equation*}
Recalling the estimates $b_1-b_6$, we are able to conclude that
\begin{equation}\label{rr-9}
\begin{split}
b_1\lesssim o(1)\big(\Upsilon_0+\big\|\hat{f}\big\|_{(D^{1,2}(\mathbb{R}^n))^{-1}}\big)+\big(\Upsilon_0^{p}+\big\|\hat{f}\big\|_{(D^{1,2}(\mathbb{R}^n))^{-1}}^{p}\big),\hspace{4mm}\mbox{for}\hspace{2mm}i\neq j,
\end{split}
\end{equation}
\begin{equation}\label{rr-10}
\begin{split}
b_2\lesssim o(1)\big(\Upsilon_0^2+\big\|\hat{f}\big\|_{(D^{1,2}(\mathbb{R}^n))^{-1}}^2\big)+\Upsilon_0^{2}+\big\|\hat{f}\big\|_{(D^{1,2}(\mathbb{R}^n))^{-1}}^{2},
\end{split}
\end{equation}
\begin{equation}\label{rr-11}
\begin{split}
b_3\lesssim o(1)\big(\Upsilon_0+\big\|\hat{f}\big\|_{(D^{1,2}(\mathbb{R}^n))^{-1}}\big),\hspace{4mm}\mbox{for}\hspace{2mm}i\neq j,
\end{split}
\end{equation}
\begin{equation}\label{rr-12}
\begin{split}
b_4\lesssim o(1)\Big(\Upsilon_0+\big\|\hat{f}\big\|_{(D^{1,2}(\mathbb{R}^n))^{-1}}\Big),\hspace{4mm}\mbox{for}\hspace{2mm}i\neq j,
\end{split}
\end{equation}
\begin{equation}\label{rr-13}
\begin{split}
b_5\lesssim o(1)\big(\Upsilon_0^2+\big\|\hat{f}\big\|_{(D^{1,2}(\mathbb{R}^n))^{-1}}^2\Big),\hspace{4mm}\mbox{for}\hspace{2mm}i\neq j,
\end{split}
\end{equation}
and
\begin{equation}\label{rr-13-0}
\begin{split}
b_6&\lesssim o(1)\big(\Upsilon_0^2+\big\|\hat{f}\big\|_{D^{1,2}(\mathbb{R}^n))^{-1}}^2\Big)+\Upsilon_0^2+\big\|\hat{f}\big\|_{(D^{1,2}(\mathbb{R}^n))^{-1}}^2,
\end{split}
\end{equation}
 where we have used Lemma \ref{Ni-1-3}. Summarizing, by plugging \eqref{rr-8} and \eqref{rr-9}-\eqref{rr-13-0} in \eqref{rr-2},
we get that
\begin{equation}\label{rr-14}
\begin{split}	
-\int&\big(|x|^{-\mu}\ast W_j^{p-1}\rho_1\big)W_j^{p}\lesssim o(1)\Upsilon_0+\big\|\hat{f}\big\|_{(D^{1,2}(\mathbb{R}^n))^{-1}}
+\int\sum_{i=1}^{\kappa}\sum_{a=1}^{n+1}c_{a}^{i}\Phi_{n,\mu}[W_{i},\Xi^{a}_i]W_j.
\end{split}
\end{equation}
Observe that for $i=j$, it holds
$$
 \Delta \Xi_i^a+p\Big(|x|^{-\mu}\ast W_{i}^{p-1}\Xi_i^a\Big)W_{i}^{p-1}+(p-1)\Big(|x|^{-\mu}\ast W_{i}^{p}\Big)W_{i}^{p-2}\Xi_i^a=0.
$$
By Lemma \ref{FPU1}, the integral estimates are controlled by
\begin{equation*}
\begin{split}
&\int\big(|x|^{-\mu}\ast W_{i}^{p}\big)W_{i}^{p-2}|\Xi_{i}^{a}|W_j\lesssim\int W_{i}^{p-1}W_j\lesssim \Big(\mathscr{Q}\hspace{2mm}\mbox{if}\hspace{2mm}\mu\in(0,4);\hspace{2mm}\mathscr{Q}^{\frac{n}{n-2}}\log{\frac{1}{\mathscr{Q}}}\hspace{2mm}\mbox{if}\hspace{2mm}\mu=4\Big),\hspace{2mm}i\neq j, \\& \int\big(|x|^{-\mu}\ast (W_{i}^{p-1}|\Xi_{i}^{a}|)\big)W_{i}^{p-1}W_j\lesssim \int W_{i}^{p-1}W_j\lesssim \Big(\mathscr{Q}\hspace{2mm}\mbox{if}\hspace{2mm}\mu\in(0,4);\hspace{1mm}\mathscr{Q}^{\frac{n}{n-2}}\log{\frac{1}{\mathscr{Q}}}\hspace{1mm}\mbox{if}\hspace{1mm}\mu=4\Big),\hspace{1mm}i\neq j.
\end{split}
\end{equation*}
Furthermore, the coefficients $c_{a}^i$ are controlled by
$$|c_a^j|\lesssim\mathscr{Q}^{2-\frac{\mu}{n-2}}, \hspace{2mm}n=4\hspace{2mm}\mbox{and}\hspace{2mm}\mu\in[2,4),\mbox{or}\hspace{2mm}n=5\hspace{2mm}\mbox{and}\hspace{2mm}\mu\in(3,4),
$$
$$|c_a^j|\lesssim\mathscr{Q}^{\min\{1,\frac{n-\mu}{n-2}\}},\hspace{1mm}n=5\hspace{2mm}\mbox{and}\hspace{2mm}\mu\in[1,3),\mbox{or}\hspace{2mm}n=6\hspace{2mm}\mbox{and}\hspace{2mm}\mu\in(0,4),\mbox{or}\hspace{2mm}n=7\hspace{2mm}\mbox{and}\hspace{1mm}\mu\in(\frac{7}{3},4),$$
by Lemma \ref{estimate1} and Lemma \ref{cll}.
Therefore, for all $i=1,\cdots, \kappa; ~a=1,\cdots,n+1$, we get from
Lemma \ref{cll} and Lemma \ref{FPU1}, and decompose of $\rho_1$ along with the orthogonality condition for $\rho_2$ that the following estimates hold.\\
$\bullet$
If $n=4$ and $\mu\in[2,4)$, or $n=5$ and $\mu\in[3,4)$, we have
\begin{equation}\label{rr-15}
\begin{split}	
\gamma^j+\sum_{i\neq j}^{\kappa}\gamma^iO(Q_{ij})\lesssim o(1)\Upsilon_0+\big\|\hat{f}\big\|_{(D^{1,2}(\mathbb{R}^n))^{-1}}
+\mathscr{Q}^{3-\frac{\mu}{n-2}}.
\end{split}
\end{equation}
$\bullet$
If $n=5$ and $\mu\in[1,3)$, or $n=6$ and $\mu\in(0,4)$, or $n=7$ and $\mu\in(\frac{7}{3},4)$, we get that
\begin{equation}\label{rr-16}
\begin{split}	
\gamma^j+\sum_{i\neq j}^{\kappa}\gamma^iO(Q_{ij})\lesssim o(1)\Upsilon_0+\big\|\hat{f}\big\|_{(D^{1,2}(\mathbb{R}^n))^{-1}}
+\mathscr{Q}^{\min\{2,\frac{n-\mu}{n-2}+1\}}.
\end{split}
\end{equation}
Hence, then we eventually obtain with \eqref{rr-15} and \eqref{rr-16} desired thesis as $\delta$ is small,
which concludes the proof of this Lemma.
\end{proof}
As a byproduct of Lemma \ref{Ni-1-3} and Lemma \ref{rr-1}, we easily get the following estimate hold.
\begin{lem}\label{rr-1-00}
If $\delta$ is small we have:\\
$(i).$ If $n=4$ and $\mu\in[2,4)$, or $n=5$ and $\mu\in[3,4)$,
$$\big\|\nabla\rho_1\big\|_{L^2}\lesssim\big\|\hat{f}\big\|_{(D^{1,2}(\mathbb{R}^n))^{-1}}
+\mathscr{Q}^{3-\frac{\mu}{n-2}}.$$
$(ii).$ If $n=5$ and $\mu\in[1,3)$, or $n=6$ and $\mu\in(0,4)$, or $n=7$ and $\mu\in(\frac{7}{3},4)$,
\begin{equation*}	
\big\|\nabla\rho_1\big\|_{L^2}\lesssim\big\|\hat{f}\big\|_{(D^{1,2}(\mathbb{R}^n))^{-1}}
+\mathscr{Q}^{\min\{2,\frac{n-\mu}{n-2}+1\}}.
\end{equation*}
\end{lem}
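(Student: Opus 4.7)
The plan is to derive this estimate directly as a corollary of the decomposition \eqref{c1-001} together with the two preceding Lemmas \ref{Ni-1-3} and \ref{rr-1}, without any additional analysis. The key observation is that every ingredient needed has already been established in the immediately preceding steps.

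First I would start from the decomposition
\begin{equation*}
\rho_1 = \sum_{i=1}^{\kappa} \gamma^i W_i + \rho_2,
\qquad \int \nabla\rho_2\,\nabla W_i = \int \nabla\rho_2\,\nabla \Xi_i^{a} = 0,
\end{equation*}
and simply use the triangle inequality
\begin{equation*}
\|\nabla\rho_1\|_{L^2} \;\leq\; \sum_{i=1}^{\kappa} |\gamma^i|\, \|\nabla W_i\|_{L^2} + \|\nabla\rho_2\|_{L^2}.
\end{equation*}
Since $\|\nabla W_i\|_{L^2}$ equals a universal dimensional constant (independent of the parameters $\xi_i,\lambda_i$ by scale invariance), this reduces the problem to estimating the two quantities $\sum_i |\gamma^i|$ and $\|\nabla\rho_2\|_{L^2}$ separately.

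Next, I would insert the a-priori bound from Lemma \ref{Ni-1-3},
\begin{equation*}
\|\nabla\rho_2\|_{L^2} \;\lesssim\; \sum_{i=1}^{\kappa}|\gamma^i| + \|\hat{f}\|_{(D^{1,2}(\mathbb{R}^n))^{-1}},
\end{equation*}
to absorb the $\rho_2$ contribution into the $\gamma^i$ and source terms. This gives
\begin{equation*}
\|\nabla\rho_1\|_{L^2} \;\lesssim\; \sum_{i=1}^{\kappa}|\gamma^i| + \|\hat{f}\|_{(D^{1,2}(\mathbb{R}^n))^{-1}}.
\end{equation*}

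Finally, I would invoke Lemma \ref{rr-1} in each of the two parameter regimes to bound $\sum_i |\gamma^i|$: in case (i), by $\|\hat{f}\|_{(D^{1,2}(\mathbb{R}^n))^{-1}} + \mathscr{Q}^{3-\mu/(n-2)}$, and in case (ii), by $\|\hat{f}\|_{(D^{1,2}(\mathbb{R}^n))^{-1}} + \mathscr{Q}^{\min\{2,(n-\mu)/(n-2)+1\}}$. Substituting these into the previous inequality yields the two claimed bounds. There is no genuine obstacle here since the heavy lifting (the orthogonality-based energy estimate of Lemma \ref{Ni-1-3} and the coefficient bounds of Lemma \ref{rr-1}, which themselves rely on Lemma \ref{estimate1-0}, Lemma \ref{FPU1}, and the interaction estimates in Section 2) has already been carried out; the only mild bookkeeping point is to note that the smallness of $\delta$ (hence of $\mathscr{Q}$) is precisely what is needed so that the implicit $o(1)\,\Upsilon_0$ term appearing inside Lemma \ref{rr-1} can be absorbed into the left-hand side.
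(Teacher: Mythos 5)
Your proposal is correct and coincides with the paper's own argument: the paper states Lemma \ref{rr-1-00} as an immediate byproduct of the decomposition \eqref{c1-001} together with Lemma \ref{Ni-1-3} and Lemma \ref{rr-1}, exactly the triangle-inequality-plus-substitution route you describe. Nothing further is needed.
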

\begin{lem}\label{Ni-1}
If $\delta$ is small, we have that:\\
$(i).$ If $n=4$ and $\mu\in[2,4)$, or $n=5$ and $\mu\in[3,4)$,
\begin{equation*}
 \big|\int\Phi_{n,\mu}[\sigma,\rho]\Xi_{m}^{n+1}\big|
\approx
\left\lbrace
\begin{aligned}
&
o(\mathscr{Q})+\big\|\hat{f}\big\|_{(D^{1,2}(\mathbb{R}^n))^{-1}},\hspace{2mm}n=4\hspace{2mm} \mbox{and}\hspace{2mm}\mu\in[2,3), \hspace{1mm}\mbox{or}\hspace{2mm}n=5\hspace{2mm}\mbox{and}\hspace{2mm}\mu\in[3,4),\\&
o(\mathscr{Q}^{2-\frac{\mu}{n-2}})+\big\|\hat{f}\big\|_{(D^{1,2}(\mathbb{R}^n))^{-1}}, \hspace{2mm}n=4\hspace{2mm}\mbox{and}\hspace{2mm}\mu\in(3,4).
\end{aligned}
\right.
\end{equation*}
$(ii).$ If $n=5$ and $\mu\in[1,3)$, or $n=6$ and $\mu\in(0,4)$, or $n=7$ and $\mu\in(\frac{7}{3},4)$,
\begin{equation*}
 \big|\int\Phi_{n,\mu}[\sigma,\rho]\Xi_{m}^{n+1}\big|
\approx o(\mathscr{Q})+\big\|\hat{f}\big\|_{(D^{1,2}(\mathbb{R}^n))^{-1}}.
\end{equation*}
 \end{lem}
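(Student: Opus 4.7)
The plan is to decompose $\rho = \rho_0 + \rho_1$ and to split off the diagonal piece by writing $\sigma = W_m + \sigma_m$ with $\sigma_m := \sum_{i \neq m} W_i$. Since $\Xi_m^{n+1}$ satisfies the linearized Hartree equation at $W_m$, the symmetry of the Hartree bilinear form gives $\int\Phi_{n,\mu}[W_m,\rho]\Xi_m^{n+1} = \int\Phi_{n,\mu}[W_m,\Xi_m^{n+1}]\rho$. Both $\rho_0$ (by the orthogonality constraint in \eqref{coefficients}) and $\rho_1$ (by the constraint in \eqref{c1-000}) satisfy $\int\Phi_{n,\mu}[W_m,\Xi_m^{n+1}]\rho_i = 0$, so the diagonal part vanishes entirely and we are reduced to controlling the cross remainder
\begin{equation*}
\mathfrak{I} := \int\bigl(\Phi_{n,\mu}[\sigma,\rho]-\Phi_{n,\mu}[W_m,\rho]\bigr)\Xi_m^{n+1},
\end{equation*}
which by the elementary inequalities \eqref{a0b0}-\eqref{a0b1} and the pointwise bound $|\Xi_m^{n+1}|\lesssim W_m$ expands into a finite sum of terms each carrying at least one factor of $\sigma_m$, either inside the Riesz convolution or in the external product.

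Next I would handle the contributions of $\rho_0$ and $\rho_1$ to $\mathfrak{I}$ separately. For the $\rho_1$-part, I would invoke $\|\nabla\rho_1\|_{L^2}\lesssim\|\hat f\|_{(D^{1,2}(\mathbb{R}^n))^{-1}} + \mathscr{Q}^{\alpha(n,\mu)}$ from Lemma \ref{rr-1-00}, and combine H\"older's inequality, HLS, Sobolev embedding, and the two- and three-bubble integral bounds of Lemmas \ref{FPU1} and \ref{FPU3}; each cross term then produces an explicit factor $Q_{ij}^\beta$ which, paired with $\|\nabla\rho_1\|_{L^2}$, yields the $\|\hat f\|_{(D^{1,2}(\mathbb{R}^n))^{-1}}$ contribution plus a remainder of order $o(\tau_{n,\mu}(\mathscr{Q}))$. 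For the $\rho_0$-part, I would exploit the pointwise bound $|\rho_0|\lesssim S_j$ from Lemma \ref{ww102} together with the weighted convolution estimates of Lemmas \ref{cll-2}-\ref{cll-3} and the norm controls $\|S_j\|_{L^{2^\ast}}$, $\|T_j\|_{L^{(2^\ast)'}}$ of Lemma \ref{cll-0-0}. A typical cross term is then bounded by $\|S_j\|_{L^{2^\ast}}\|T_j\|_{L^{(2^\ast)'}}$ times an interaction factor $\int W_i^{p-1}W_m$ (controlled by Lemma \ref{FPU1}), which is $o(\mathscr{Q})$ in case (ii) and either $o(\mathscr{Q})$ or $o(\mathscr{Q}^{2-\mu/(n-2)})$ in case (i), matching the claimed orders.

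The main obstacle is obtaining a strict little-$o$ loss (rather than merely $O$) in the delicate sub-case $n=4$, $\mu\in(3,4)$, where $\tau_{n,\mu}(\mathscr{Q})=\mathscr{Q}^{2-\mu/(n-2)}$ can be larger than $\mathscr{Q}$, so the previously obtained single-interaction factor $Q_{ij}\approx \mathscr{Q}$ is not sufficient on its own. To handle this, I would carefully combine Lemma \ref{FPU2-1} (yielding $\int W_i^{2^\ast-1}\Xi_m^{n+1}\approx Q_{im}$) with the coefficient estimate $|c_b^j|\lesssim \mathscr{Q}^{2-\mu/(n-2)}\|\hbar\|_{\ast\ast} + \mathscr{Q}^{2(2-\mu/(n-2))}\|\phi\|_\ast$ from Lemma \ref{cll}(i) and the improved bubble interaction gains in Lemmas \ref{FPU1}-\ref{FPU3} to squeeze out the extra power $\mathscr{Q}^{1-\mu/(n-2)}$ needed to convert $O(\mathscr{Q}^{2-\mu/(n-2)})$ into $o(\mathscr{Q}^{2-\mu/(n-2)})$. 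Once this accounting is complete, summing the resulting bounds over all cross terms produces the two claimed inequalities of Lemma \ref{Ni-1}, in the spirit of the proofs of Lemmas \ref{p2} and \ref{rr-1}.
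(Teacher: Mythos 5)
Your proposal is correct and follows essentially the paper's route: split $\rho=\rho_0+\rho_1$, bound the $\rho_0$-contribution through the orthogonality conditions and $\int S_jT_j\leq\|S_j\|_{L^{2^{\ast}}}\|T_j\|_{L^{(2^{\ast})^{\prime}}}$ exactly as in the proof of Lemma \ref{cll}, and bound the $\rho_1$-contribution by H\"older/Sobolev together with Lemma \ref{rr-1-00}; your extra step of cancelling the diagonal $W_m$-part of $\Phi_{n,\mu}[\sigma,\rho]$ via the symmetry of the Hartree bilinear form is valid but not needed, since the paper estimates $\int\Phi_{n,\mu}[\sigma,\rho_1]\Xi_m^{n+1}$ crudely by $\|\nabla\rho_1\|_{L^2}$. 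One remark: the difficulty you raise for $n=4$, $\mu\in(3,4)$ is illusory, because there $2-\frac{\mu}{n-2}<1$, so $\mathscr{Q}=o(\mathscr{Q}^{2-\frac{\mu}{n-2}})$ as $\mathscr{Q}\to0$, and the bounds $\mathscr{Q}^{2(2-\frac{\mu}{n-2})}$ (from the $\rho_0$-part) and $\mathscr{Q}^{3-\frac{\mu}{n-2}}$ (from Lemma \ref{rr-1-00}) are already $o(\mathscr{Q}^{2-\frac{\mu}{n-2}})$; your comparison of the exponents is reversed, and no additional gain from Lemma \ref{FPU2-1} or the coefficient estimates of Lemma \ref{cll} is required.
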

\begin{proof}
An easy computation shows that
\begin{equation}\label{pp-1}
\begin{split}
\int\Phi_{n,\mu}[\sigma,\rho]\Xi_{m}^{n+1}
=\int\Phi_{n,\mu}[\sigma,\rho_{0}]\Xi_{m}^{n+1}+\int\Phi_{n,\mu}[\sigma,\rho_{1}]\Xi_{m}^{n+1}
=:\tilde{h}_1+\tilde{h}_2.
\end{split}
\end{equation}
Let us estimate each integral in \eqref{pp-1}. Similar to the argument of \eqref{zb1}-\eqref{eqa-2-3}, we get that
$$|\tilde{h}_1|=o(\mathscr{Q}^{2-\frac{\mu}{n-2}}),\hspace{2mm}n=4\hspace{2mm}\mbox{and}\hspace{2mm} \mu\in[2,4),\hspace{2mm}\mbox{or}\hspace{2mm}n=5\hspace{2mm}\mbox{and}\hspace{2mm}\mu\in[3,4),$$
\begin{equation*}
|\tilde{h}_1|=o(\mathscr{Q}),\hspace{2mm}n=5\hspace{2mm}\mbox{and}\hspace{2mm}\mu\in[1,3), \hspace{2mm}\mbox{or}\hspace{2mm}n=6\hspace{2mm}\mbox{and}\hspace{2mm}\mu\in(0,4), \hspace{2mm}\mbox{or}\hspace{2mm}n=7\hspace{2mm}\mbox{and}\hspace{2mm}\mu\in(\frac{7}{3},4),
\end{equation*}
by the orthogonality condition. By H\"{o}lder's, Sobolev's inequalities and Lemma \ref{rr-1-00}, we obtain
\begin{equation*}
\begin{split}
|\tilde{h}_2|&\lesssim\|\sigma\|_{L^{2^{\ast}}}^{p-1}\|\sigma^{p-1}\Xi_{m}^{n+1}\|_{L^{r}}\|\nabla\rho_1\|_{L^{2}}+
\|\sigma\|_{L^{2^{\ast}}}^{p}\|\sigma^{p-2}\Xi_{m}^{n+1}\|_{L^{r}}\|\nabla\rho_1\|_{L^{2}}\\&\lesssim
\|\hat{f}\|_{(D^{1,2}(\mathbb{R}^n))^{-1}}+o(\mathscr{Q}).
\end{split}
\end{equation*}
Hence the result easily follows.
\end{proof}
 \begin{lem}\label{Ni-2}
If $\delta$ is small we have
\begin{equation*}
\int \big|\mathscr{N}_i(\phi)\Xi_{m}^{n+1}\big|=o(\mathscr{Q})+\big\|\hat{f}\big\|_{(D^{1,2}(\mathbb{R}^n))^{-1}},\hspace{2mm}i=1,\dots,9.
\end{equation*}
 \end{lem}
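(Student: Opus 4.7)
\emph{Proof proposal for Lemma \ref{Ni-2}.} The plan is to run the same decomposition $\rho=\rho_{0}+\rho_{1}$ that was already used in Lemma \ref{Ni-1}, and to reduce every piece of each $\mathscr{N}_{i}(\rho)$ to one of two model integrals: either a weighted integral against the pointwise barrier $S_{j}(x)$ (controlled by Lemma \ref{ww102}) or an $L^{r}$/$L^{2^{\ast}}$ expression on which one can use H\"older's inequality together with Sobolev embedding and the $L^{2}$ bound from Lemma \ref{rr-1-00}. Concretely, for the piece where one or more factors of $\rho$ are replaced by $\rho_{0}$ I use $|\rho_{0}|\lesssim S_{j}$ and the conjugate estimate $\|T_{j}\|_{L^{(2^{\ast})'}}\lesssim \mathscr{R}^{-\alpha}$ from Lemma \ref{cll-0-0}; for the piece where the factors are $\rho_{1}$ I use $\|\nabla\rho_{1}\|_{L^{2}}\lesssim \|\hat f\|_{(D^{1,2})^{-1}}+\mathscr{Q}^{\min\{2,\frac{n-\mu}{n-2}+1\}}$ (or the analogous bound in the second dimensional regime). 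The bubble-derivative factor is estimated by $|\Xi_{m}^{n+1}|\lesssim W_{m}$, which is admissible both in the pointwise weighted norm (since $W_{m}\leq \mathscr{R}^{n-2}s_{m,1}$ on the core region and similar on the outer region) and in $L^{2^{\ast}}$.

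The first step is to replace the full cross convolution by the nine summands listed in the paper and to group them into three classes according to their $\rho$-homogeneity: quadratic ($\mathscr{N}_{1},\mathscr{N}_{2},\mathscr{N}_{3}$), cubic ($\mathscr{N}_{4},\mathscr{N}_{5},\mathscr{N}_{6}$), and of order $\geq p$ ($\mathscr{N}_{7},\mathscr{N}_{8},\mathscr{N}_{9}$). For the quadratic class, after expanding $\rho=\rho_{0}+\rho_{1}$, the worst contribution is of the form $\int(|x|^{-\mu}\ast(\sigma^{p-1}\rho_{0}))\sigma^{p-2}\rho_{0}\,\Xi_{m}^{n+1}$, which by Hardy--Littlewood--Sobolev is bounded by $\|\sigma^{p-1}S_{j}\|_{L^{r}}\|\sigma^{p-2}W_{m}S_{j}\|_{L^{r}}\lesssim \|S_{j}\|_{L^{2^{\ast}}}^{2}$; by Lemma \ref{cll-0-0} this is $\lesssim \mathscr{R}^{-(n+2)}=o(\mathscr{Q})$ in the first regime and $\lesssim \mathscr{R}^{-2(2n-4-\mu)}$ in the second, which is again $o(\mathscr{Q})$ when $\mu<n-2$, respectively absorbed into $o(\mathscr{Q}^{2-\mu/(n-2)})$. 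The mixed piece $\rho_{0}\rho_{1}$ and the pure $\rho_{1}\rho_{1}$ piece are handled by Sobolev embedding and Lemma \ref{rr-1-00}, each yielding $\lesssim \|\nabla\rho_{1}\|_{L^{2}}^{2}+\|\nabla\rho_{1}\|_{L^{2}}\|S_{j}\|_{L^{2^{\ast}}}\lesssim \|\hat f\|_{(D^{1,2})^{-1}}+o(\mathscr{Q})$.

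For the cubic and higher classes, the same expansion produces integrals of HLS type with at least three $\rho$-factors; each such factor costs either one power of $\|S_{j}\|_{L^{2^{\ast}}}$ (which is already small) or one power of $\|\nabla\rho_{1}\|_{L^{2}}$ (which is $o(1)$ by Lemma \ref{rr-1-00}). Consequently, after bounding the convolution kernel by HLS, every term of this type is $o(1)\cdot(\|S_{j}\|_{L^{2^{\ast}}}^{2}+\|\nabla\rho_{1}\|_{L^{2}}^{2})$, which is again $o(\mathscr{Q})+\|\hat f\|_{(D^{1,2})^{-1}}$. The estimates for $\mathscr{N}_{7},\mathscr{N}_{8},\mathscr{N}_{9}$ are softer, since the presence of $\rho^{p}$ inside the convolution gives an extra power of smallness.

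The main obstacle, as in Lemma \ref{rr-1}, is the term $\int(|x|^{-\mu}\ast(\sigma^{p-1}\rho_{0}))\sigma^{p-2}\rho_{0}\Xi_{m}^{n+1}$ in the low-dimensional regime $n=4$, $\mu\in(3,4)$: there the only available gain on the core region is $\|S_{2}\|_{L^{2^{\ast}}}\lesssim \mathscr{R}^{-(2n-4-\mu)}\approx \mathscr{Q}^{2-\mu/(n-2)}$, which is exactly the target rate and hence leaves no room for cancellation. In this regime the correct statement is the weaker one $o(\mathscr{Q}^{2-\mu/(n-2)})+\|\hat f\|_{(D^{1,2})^{-1}}$, matching the right-hand side of Lemma \ref{Ni-1} in the same range, so the conclusion of Lemma \ref{Ni-2} must be read with this understanding. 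The verification that each of $\mathscr{N}_{1},\ldots,\mathscr{N}_{9}$ fits into the framework above is otherwise routine, and combining all nine estimates yields the claim.
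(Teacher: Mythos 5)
Your overall route --- decompose $\rho=\rho_{0}+\rho_{1}$, control the $\rho_{0}$-parts through the pointwise bound $|\rho_{0}|\lesssim S_{j}$ together with $\|S_{j}\|_{L^{2^{\ast}}}$ from Lemma \ref{cll-0-0}, control the $\rho_{1}$-parts through Lemma \ref{rr-1-00}, and reduce everything to HLS/H\"older/Sobolev --- is exactly the route of the paper, whose proof of Lemma \ref{Ni-2} is a one-line appeal to H\"older, Sobolev, Lemma \ref{p00} and Lemma \ref{rr-1-00}; your closing caveat that in the regime $n=4$, $\mu\in(3,4)$ the natural output is $o(\mathscr{Q}^{2-\mu/(n-2)})$ rather than $o(\mathscr{Q})$ is consistent with how the lemma is actually used (compare Lemma \ref{Ni-1} and the second case in the proof of Lemma \ref{QQ-1}).

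There is, however, one concrete soft spot. You classify $\mathscr{N}_{1}=\mathcal{I}_{\mu}\{\sigma^{p}\}\phi^{p-1}$ as ``quadratic'' and assert that the worst member of that class is the $\mathscr{N}_{2}$-type term bounded by $\|S_{j}\|_{L^{2^{\ast}}}^{2}$. But the $\rho$-homogeneity of $\mathscr{N}_{1}$ is $p-1=\frac{n-\mu+2}{n-2}\in(1,2)$, so the crude H\"older/Sobolev bound only yields $\|\nabla\rho_{0}\|_{L^{2}}^{p-1}\approx\mathscr{Q}^{\frac{(n+2)(n-\mu+2)}{2(n-2)^{2}}}$ in the first regime (and $\mathscr{Q}^{(2-\frac{\mu}{n-2})(p-1)}$ in the second), and these exponents drop to $1$ or below inside the admissible range --- for instance $n=7$ with $\mu\geq\frac{31}{9}$, or $n=4$ with $\mu$ close to $3$ --- so your recipe as written does not give $o(\mathscr{Q})$ there, and this is precisely the term that must be $o(\mathscr{Q})$ for the induction in Lemma \ref{QQ-1} to close. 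The term is still harmless, but it needs a finer estimate than plain H\"older: write $\mathcal{I}_{\mu}\{W_{i}^{p}\}=\widetilde{\alpha}_{n,\mu}W_{i}^{2^{\ast}-p}$ by Lemma \ref{p1-00} and integrate $W_{i}^{2^{\ast}-p}W_{m}S_{j}^{p-1}$ directly against the explicit weights (splitting the diagonal $i=m$ contribution, the inner region where $s_{m,1}$ lives, and the outer region), which produces a bound of order $\mathscr{Q}^{p-1}$ plus smaller terms; this is $o(\mathscr{Q})$ exactly because $p-1>1$ whenever $\mu<4$. With that replacement (and the same remark applied to the $\rho_{0}$-part of any summand of total $\rho$-homogeneity below $2$), your argument delivers the claimed estimate and coincides with the paper's intended proof.
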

\begin{proof}
The conclusion follows by using H\"{o}lder's inequality and Sobolev's inequality, Lemma \ref{p00} and  Lemma \ref{rr-1-00}.
\end{proof}

\appendix
\section{Technical Lemmata}
In this appendix we give some crucial estimates that have been used in the previous sections.
\begin{lem}\label{p1}
We have that
\begin{equation}\label{pmr3}
\Big|\big|\sum\limits_{i=1}^{\kappa}W_i\big|^{p}
-\sum\limits_{i=1}^{\kappa}\big|W_i\big|^{p}\Big|\lesssim \sum\limits_{1\leq i\neq j\leq\kappa}\big|W_i\big|^{p-1}W_j+\sum\limits_{1\leq i\neq j\leq\kappa}\big|W_i\big|^{p-2}W_j^2,
\end{equation}
and
\begin{equation}\label{pmr4}
\Big|\big|\sum\limits_{i=1}^{\kappa}W_i\big|^{p-2}\sum\limits_{i=1}^{\kappa}W_i
-\sum\limits_{i=1}^{\kappa}\big|W_i\big|^{p-2}W_i\Big|\lesssim \sum\limits_{1\leq i\neq j\leq\kappa}\big|W_i\big|^{p-2}W_j,
\end{equation}
\end{lem}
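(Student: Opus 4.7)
The proof plan is as follows. Since every bubble $W_i$ is a positive function, the absolute values on the left-hand sides can be dropped, and both estimates reduce to pointwise elementary inequalities for positive real numbers with exponent $p=\frac{2n-\mu}{n-2}\in[2,2^{\ast})$ (for $\mu\in(0,4]$). In other words, the structure of the bubbles plays no role; I only need purely algebraic bounds for $\kappa$-tuples of nonnegative numbers $a_1,\dots,a_\kappa$.

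I would start by establishing the two-term versions. For \eqref{pmr3}, the claim reduces to showing that for all $a,b\geq0$ and $p\geq 2$,
\[
(a+b)^{p}-a^{p}-b^{p}\;\leq\;C\bigl(a^{p-1}b+a^{p-2}b^{2}+ab^{p-1}+a^{2}b^{p-2}\bigr).
\]
By symmetry it suffices to treat $b\leq a$; factoring $a^p$ and setting $t=b/a\in[0,1]$ turns the inequality into $(1+t)^p-1-t^p\leq C(t+t^2)$ on $[0,1]$. The function $f(t)=(1+t)^p-1-t^p$ is of class $C^1$ on $[0,1]$ with $f(0)=0$ and $f'(0)=p$, so Taylor's theorem with remainder produces $f(t)\leq C(t+t^2)$, which gives the claim. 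For \eqref{pmr4}, positivity of the $W_i$ lets me rewrite the left-hand side as $(\sum_i W_i)^{p-1}-\sum_i W_i^{p-1}$, and since $p-1=\frac{n-\mu+2}{n-2}\geq 1$ for $\mu\leq 4$, the same Taylor argument applied to $(1+t)^{p-1}-1-t^{p-1}$ on $[0,1]$ yields $(a+b)^{p-1}-a^{p-1}-b^{p-1}\leq C(a^{p-2}b+ab^{p-2})$.

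To pass from two to $\kappa$ terms I would do a finite induction on $\kappa$. Writing $\sum_{i=1}^{\kappa}W_i=W_\kappa+\sum_{i=1}^{\kappa-1}W_i$, applying the two-term inequality with $a=\sum_{i<\kappa}W_i$ and $b=W_\kappa$, and then using the induction hypothesis on $(\sum_{i<\kappa}W_i)^p-\sum_{i<\kappa}W_i^p$, produces exactly the cross-sums $\sum_{i\neq j}W_i^{p-1}W_j+\sum_{i\neq j}W_i^{p-2}W_j^{2}$ (the first-order Taylor term generates $W_i^{p-1}W_j$ in both index orderings, and the second-order term generates $W_i^{p-2}W_j^{2}$). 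The implicit constants depend only on $p$, $\mu$, and $\kappa$, which is harmless since $\kappa$ is fixed throughout the paper.

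The only mildly delicate point is book-keeping of the symmetric cross terms: in the regime $b\leq a$ the natural remainder term is $a^{p-1}b$, while the reverse ordering contributes $ab^{p-1}$, and both are absorbed by the unordered sum $\sum_{i\neq j}W_i^{p-1}W_j$ on the right-hand side of \eqref{pmr3}; similarly $\sum_{i\neq j}W_i^{p-2}W_j^{2}$ captures the second-order contribution regardless of which bubble dominates. I expect no genuine obstacle here, as the whole statement is a pointwise estimate that follows from elementary one-variable calculus; the main care is to keep the constant independent of $x$, which is automatic since we work pointwise.
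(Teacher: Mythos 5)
Your proof is correct: positivity of the bubbles reduces both estimates to the elementary pointwise inequalities $(a+b)^{p}-a^{p}-b^{p}\lesssim a^{p-1}b+a^{p-2}b^{2}+ab^{p-1}+a^{2}b^{p-2}$ and $(a+b)^{p-1}-a^{p-1}-b^{p-1}\lesssim a^{p-2}b+ab^{p-2}$ (valid since $p\geq2$, i.e.\ $\mu\leq4$), and your normalization $t=b/a\in[0,1]$ plus finite induction on $\kappa$ handles the book-keeping correctly. This is exactly the "simple computations" the paper invokes without detail, so your argument matches the intended proof.
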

\begin{proof}
The estimates \eqref{pmr3} and \eqref{pmr4} are derived by simple computations.
\end{proof}
\begin{lem}\label{p1-00}
We have that
\begin{equation*}
|x|^{-\mu}\ast W_i^{p}
=\int_{\mathbb{R}^n}\frac{W_i^{p}(y)}{|x-y|^{\mu}}dy
=\widetilde{\alpha}_{n,\mu}W_i^{2^{\ast}-p}(x),
\end{equation*}
where  $\widetilde{\alpha}_{n,\mu}=I(\gamma)S^{\frac{(n-\mu)(2-n)}{4(n-\mu+2)}}C_{n,\mu}^{\frac{2-n}{2(n-\mu+2)}}[n(n-2)]^{\frac{n-2}{4}}.$
\end{lem}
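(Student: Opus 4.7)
The plan is to exploit two facts: the scaling structure of $W_i = W[\xi_i,\lambda_i]$ together with the translation/dilation invariance of the Riesz convolution $|x|^{-\mu}\ast(\cdot)$, and the classical HLS extremal identity, which says that $(1+|y|^2)^{-(2n-\mu)/2}$ is a Lieb extremal for the Hardy--Littlewood--Sobolev inequality in its diagonal form $r=t=2n/(2n-\mu)$. Up to the normalizing constant $\alpha_{n,\mu}$, $W_i^{p}$ is precisely this extremal, translated by $\xi_i$ and dilated by $\lambda_i$, and this is what forces the output of the Riesz convolution to again be a power of a bubble.

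First I would write $W_i(x) = \alpha_{n,\mu}\,\lambda_i^{(n-2)/2}\, u(\lambda_i(x-\xi_i))$ with $u(z):=(1+|z|^2)^{-(n-2)/2}$. Since the critical exponent satisfies $(n-2)p/2 = (2n-\mu)/2$ and $(n-2)(2^{*}-p)/2 = \mu/2$, the change of variables $z = \lambda_i(y-\xi_i)$ in the convolution yields
$$
(|x|^{-\mu}\ast W_i^{p})(x)
\;=\;\alpha_{n,\mu}^{p}\,\lambda_i^{\mu/2}\,\bigl(|\cdot|^{-\mu}\ast u^{p}\bigr)\bigl(\lambda_i(x-\xi_i)\bigr),
$$
while on the other hand $W_i^{2^{*}-p}(x) = \alpha_{n,\mu}^{2^{*}-p}\,\lambda_i^{\mu/2}\,(1+\lambda_i^2|x-\xi_i|^2)^{-\mu/2}$. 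Thus it suffices to establish the single scalar identity $(|z|^{-\mu}\ast u^{p})(w) = C_{\star}(1+|w|^2)^{-\mu/2}$ for one explicit constant $C_{\star}=C_{\star}(n,\mu)$, reducing the lemma to the reference bubble.

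The identity at the reference bubble is the classical fact that $u^{p}(z)=(1+|z|^2)^{-(2n-\mu)/2}$ is the Lieb/Aubin--Talenti extremal of the diagonal HLS inequality, whose Euler--Lagrange equation is exactly $|z|^{-\mu}\ast u^{p} = C_{\star}\,u^{2^{*}-p}$; equivalently, $W[0,1]$ is the classified positive radial extremal of the nonlocal Sobolev inequality and solves the critical Hartree equation up to a scalar multiple, as recorded in the introduction from \cite{GY18,DY19,GHPS19}. Substituting back into the scaling formula gives
$$
(|x|^{-\mu}\ast W_i^{p})(x) \;=\; \alpha_{n,\mu}^{\,2p-2^{*}}\,C_{\star}\,W_i^{2^{*}-p}(x),
$$
which is the claimed identity with $\widetilde{\alpha}_{n,\mu} = \alpha_{n,\mu}^{\,2p-2^{*}}\,C_{\star}$. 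Evaluating at $w=0$ identifies $C_{\star}$ with the explicit Beta--Gamma integral $I(\gamma):=\int_{\mathbb{R}^n}|z|^{-\mu}(1+|z|^2)^{-(2n-\mu)/2}\,dz$, and inserting the definition of $\alpha_{n,\mu}$ from the introduction (using $2p-2^{*} = 2(n-\mu)/(n-2)$ and collecting exponents) produces precisely the stated form involving $S$, $C_{n,\mu}$ and $[n(n-2)]^{(n-2)/4}$. The only non-routine ingredient is the extremal identity in the second step; everything else is a change of variables and a bookkeeping computation for the constant.
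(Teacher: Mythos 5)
Your argument is correct, but it takes a genuinely different route from the paper. The paper disposes of this lemma in one line, citing the known Fourier transforms of the Riesz and Bessel potential kernels (the reference \cite{DHQWF}), i.e.\ a direct computation of the Riesz potential of $(1+|x|^2)^{-(2n-\mu)/2}$. You instead reduce by translation/dilation to the reference profile $u(z)=(1+|z|^2)^{-(n-2)/2}$ (the exponent bookkeeping $(n-2)p/2=(2n-\mu)/2$, $(n-2)(2^{*}-p)/2=\mu/2$ and the scaling factor $\lambda_i^{\mu/2}$ are all right), and then invoke the Euler--Lagrange identity for Lieb's diagonal HLS extremal, which indeed reads $|z|^{-\mu}\ast u^{p}=C_{\star}(1+|w|^2)^{-\mu/2}$ with $C_{\star}=\int_{\mathbb{R}^n}|z|^{-\mu}(1+|z|^2)^{-(2n-\mu)/2}\,dz$ obtained by evaluating at the origin. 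Since Lieb's classification is already recorded in the paper's introduction, there is no circularity, and your route is self-contained and more elementary, at the price of not producing the explicit Gamma-function form of the constant that the Fourier computation gives directly. Two small caveats: your parenthetical ``equivalently, $W[0,1]$ solves the critical Hartree equation'' does not by itself yield the Riesz-potential identity --- you also need that the profile satisfies the local equation $-\Delta u=c\,u^{2^{*}-1}$, which is true for the Aubin--Talenti function but should be said; and your final claim that the constant ``produces precisely the stated form'' cannot really be checked, since in your normalization the constant is $\alpha_{n,\mu}^{2(n-\mu)/(n-2)}C_{\star}$ while the paper writes $\widetilde{\alpha}_{n,\mu}=I(\gamma)\,\alpha_{n,\mu}$ with $I(\gamma)$ never defined in the paper. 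This is harmless, because only positivity and explicitness of $\widetilde{\alpha}_{n,\mu}$ are used later, but you should not assert exact agreement with an expression you cannot verify.
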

\begin{proof}
The conclusion follow by the Fourier transforms of the kernels of Riesz and Bessel potentials (cf. \cite{DHQWF}).
\end{proof}

In order to study the higher order term $\mathscr{N}$ for $\phi$, we further define the following functions
\begin{equation*}
\begin{split}
&\mathscr{N}_1(\sigma,\phi):=\mathcal{I}_{\mu}\{\sigma^{p}\}\phi^{p-1},
\hspace{2mm}\mathscr{N}_2(\sigma,\phi):=\mathcal{I}_{\mu}\{\sigma^{p-1}\phi\}\sigma^{p-2}\phi,
\hspace{2mm}\mathscr{N}_3(\sigma,\phi):=\mathcal{I}_{\mu}\{\sigma^{p-1}\phi\}\phi^{p-1},\\&
\mathscr{N}_4(\sigma,\phi):=\mathcal{I}_{\mu}\{\sigma^{p-2}\phi^2\}\sigma^{p-1},
\hspace{2mm}\mathscr{N}_5(\sigma,\phi):=\mathcal{I}_{\mu}\{\sigma^{p-2}\phi^2\}\sigma^{p-2}\phi,\hspace{2mm}
\mathscr{N}_6(\sigma,\phi):=\mathcal{I}_{\mu}\{\sigma^{p-2}\phi^2\}\phi^{p-1},\\&
\mathscr{N}_7(\sigma,\phi):=\mathcal{I}_{\mu}\{\phi^{p}\}\sigma^{p-1},\hspace{2mm}
\mathscr{N}_8(\sigma,\phi):=\mathcal{I}_{\mu}\{\phi^{p}\}\sigma^{p-2}\phi,\hspace{2mm}
\mathscr{N}_9(\sigma,\phi):=\mathcal{I}_{\mu}\{\phi^{p}\}\phi^{p-1}.
\end{split}
\end{equation*}
By using the some elementary inequalities, we have the following estimates.
 \begin{lem}\label{nnn1}
The decomposition holds that
\begin{equation}\label{nnn1-0}
\mathscr{N}(\sigma,\phi)\lesssim \mathscr{N}_1(\sigma,\phi)+\mathscr{N}_2(\sigma,\phi)+\cdots+\mathscr{N}_9(\sigma,\phi).
\end{equation}
\end{lem}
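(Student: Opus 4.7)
The plan is to prove this decomposition through a Taylor-type algebraic manipulation followed by elementary pointwise inequalities, adapting the higher-order analysis to the nonlocal Riesz-potential structure. Concretely, I would first establish the algebraic identity
\begin{equation*}
\mathscr{N}(\rho)=\mathcal{I}_{\mu}\{A\}\,\sigma^{p-1}+p\,\mathcal{I}_{\mu}\{\sigma^{p-1}\rho\}\,B_{1}+\mathcal{I}_{\mu}\{A\}\,B_{1}+\mathcal{I}_{\mu}\{\sigma^{p}\}\,B,
\end{equation*}
where $A:=(\sigma+\rho)^{p}-\sigma^{p}-p\,\sigma^{p-1}\rho$, $B_{1}:=(\sigma+\rho)^{p-1}-\sigma^{p-1}$, and $B:=B_{1}-(p-1)\,\sigma^{p-2}\rho$. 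This is verified by direct expansion: the contributions $\mathcal{I}_{\mu}\{\sigma^{p}\}\sigma^{p-1}$, $p\,\mathcal{I}_{\mu}\{\sigma^{p-1}\rho\}\sigma^{p-1}$, and $(p-1)\mathcal{I}_{\mu}\{\sigma^{p}\}\sigma^{p-2}\rho$ cancel precisely against the three linear pieces subtracted in the definition \eqref{u-2} of $\mathscr{N}$.

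Second, I would invoke the standard pointwise inequalities, valid for $p\geq 2$ and $a,b\geq 0$,
\begin{equation*}
|A|\lesssim \sigma^{p-2}\rho^{2}+|\rho|^{p},\qquad |B_{1}|\lesssim \sigma^{p-2}|\rho|+|\rho|^{p-1},
\end{equation*}
which follow from the mean-value theorem applied to $x\mapsto x^{p}$ and $x\mapsto x^{p-1}$ together with the convexity inequality $(a+b)^{\alpha}\leq 2^{\alpha-1}(a^{\alpha}+b^{\alpha})$ for $\alpha\geq 1$. Plugging these into the first three summands of the identity and distributing the products (using that $\mathcal{I}_{\mu}$ preserves positivity) yields the clean matchings
\begin{equation*}
\mathcal{I}_{\mu}\{A\}\sigma^{p-1}\lesssim \mathscr{N}_{4}+\mathscr{N}_{7},\quad \mathcal{I}_{\mu}\{\sigma^{p-1}\rho\}B_{1}\lesssim \mathscr{N}_{2}+\mathscr{N}_{3},\quad \mathcal{I}_{\mu}\{A\}B_{1}\lesssim \mathscr{N}_{5}+\mathscr{N}_{6}+\mathscr{N}_{8}+\mathscr{N}_{9}.
\end{equation*}

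The hard part will be the fourth term $\mathcal{I}_{\mu}\{\sigma^{p}\}B$: a naive triangle-inequality bound gives only $|B|\lesssim\sigma^{p-2}|\rho|+|\rho|^{p-1}$, which reintroduces the unwanted quantity $\mathcal{I}_{\mu}\{\sigma^{p}\}\sigma^{p-2}\rho$, precisely the linear correction already cancelled in Step~1 and not present in the list $\mathscr{N}_{1},\ldots,\mathscr{N}_{9}$. To obtain a sharper bound I would use the integral representation
\begin{equation*}
B=(p-1)\int_{0}^{1}\big[(\sigma+s\rho)^{p-2}-\sigma^{p-2}\big]\rho\,ds,
\end{equation*}
and exploit the observation that, under the dimensional restrictions $(\sharp)$ of Theorem~\ref{Figalli}, one has $p=(2n-\mu)/(n-2)\in[2,3]$ throughout, so $p-2\in[0,1]$ and the map $x\mapsto x^{p-2}$ is subadditive on $[0,\infty)$. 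Therefore $(\sigma+s\rho)^{p-2}-\sigma^{p-2}\leq(s|\rho|)^{p-2}\leq|\rho|^{p-2}$ for $s\in[0,1]$, giving the sharp remainder estimate $|B|\lesssim|\rho|^{p-1}$ with no residual $\sigma$-dependence. Hence $\mathcal{I}_{\mu}\{\sigma^{p}\}B\lesssim\mathcal{I}_{\mu}\{\sigma^{p}\}|\rho|^{p-1}=\mathscr{N}_{1}$, and summing the four contributions completes the proof with a constant depending only on $n$, $\mu$, and $\kappa$.
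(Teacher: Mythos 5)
Your proposal is correct, and it in fact supplies more detail than the paper, which states Lemma \ref{nnn1} in the appendix without proof (the text only remarks that it follows ``by using some elementary inequalities''). Your algebraic identity $\mathscr{N}(\rho)=\mathcal{I}_{\mu}\{A\}\sigma^{p-1}+p\,\mathcal{I}_{\mu}\{\sigma^{p-1}\rho\}B_{1}+\mathcal{I}_{\mu}\{A\}B_{1}+\mathcal{I}_{\mu}\{\sigma^{p}\}B$ checks out against the definition \eqref{u-2}, the elementary bounds on $A$ and $B_1$ are standard for $p\geq2$, and the resulting matchings with $\mathscr{N}_2,\ldots,\mathscr{N}_9$ are exactly right. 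You also correctly identified the only genuinely delicate point: a crude bound on $B$ would reintroduce the linear term $\mathcal{I}_{\mu}\{\sigma^{p}\}\sigma^{p-2}\rho$, which is not in the list, and your integral representation combined with the subadditivity of $x\mapsto x^{p-2}$ for $p-2\in[0,1]$ yields $|B|\lesssim|\rho|^{p-1}$, so this term is absorbed into $\mathscr{N}_1$ — this is evidently why $\mathscr{N}_1$ is defined as $\mathcal{I}_{\mu}\{\sigma^{p}\}\phi^{p-1}$ rather than with $\sigma^{p-1}$.

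Two minor remarks. First, you do not need the full restriction $(\sharp)$: the inequality $2\leq p\leq 3$ is equivalent to the standing hypotheses $\mu\leq4$ and $n\geq6-\mu$ under which the decomposition is actually invoked (Lemma \ref{ww101}), so your argument covers the whole range used in the paper. Second, since $u=\sigma+\rho$ need not be nonnegative and $p-2$ is generally non-integer, the powers should be read as $|\sigma+s\rho|^{p-2}$ etc., and the estimate you use is $\bigl||a|^{\alpha}-|b|^{\alpha}\bigr|\leq|a-b|^{\alpha}$ for $\alpha\in[0,1]$; with all $\mathscr{N}_i$ understood with $|\phi|$ (as the paper implicitly does), everything goes through unchanged.
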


The following is devoted to computations of quantities.
The series of tedious integral estimates is similar to the proof in \cite{DSW21}, but the appearance of nonlocal nonlinearity which makes many estimates significantly different from the integral in \cite{DSW21} and exponents have been modified by the parameter $\mu$ and the definition of $T_1$ and $T_2$. Therefore, we need to reevaluate the different terms with from those in \cite{DSW21}.

\begin{lem}\label{4B4-0}
	For $n\geq6-\mu$, $\mu\in(0,n)$ with $0<\mu<4$, and $1\ll\mathscr{R}\leq\mathscr{R}_{ij}/2$, we have the following estimates hold:
\begin{itemize}
\item[$(1)$]
If $n=4$ and $\mu\in[2,4)$, or $n=5$ and $\mu\in[3,4)$, we have that
\begin{equation}\label{5B4}
\int \hat{t}_{i,1}w_{i,1}=\int_{|z_i|\leq\mathscr{R}} \frac{\lambda_i^{\frac{n+2}{2}}\mathscr{R}^{4+\mu-2n}}{\tau( z_i)^{4}}\frac{\lambda_i^{\frac{n-2}{2}}}{\tau(z_i)^{n-2}}
\lesssim\frac{1}{\mathscr{R}^{2n-4-\mu}},
\end{equation}
\begin{equation}\label{6B4}
\int \hat{t}_{i,2}w_{i,2}=\int_{|z_i|\geq\mathscr{R}}
\frac{\lambda_i^{\frac{n+2}{2}}\mathscr{R}^{\mu-n-\epsilon_0}}{\tau( z_i)^{n-\epsilon_0}}\frac{\lambda_i^{\frac{n-2}{2}}}{\tau(z_i)^{n-2}}
\lesssim\frac{1}{\mathscr{R}^{2n-\mu-2}},
\end{equation}
\begin{equation}\label{7B4}
\int \hat{t}_{i,1}w_{j,1}=\int_{|z_i|\leq\mathscr{R}} \frac{\lambda_i^{\frac{n+2}{2}}\mathscr{R}^{4+\mu-2n}}{\tau( z_i)^{4}}\frac{\lambda_j^{\frac{n-2}{2}}}{\tau(z_j)^{n-2}}
\lesssim\frac{1}{\mathscr{R}^{2n-2-\mu}}\log\mathscr{R},
\end{equation}
\begin{equation}\label{8B4}
\int \hat{t}_{i,2}w_{j,2}=\int_{|z_i|\geq\mathscr{R}}
\frac{\lambda_i^{\frac{n+2}{2}}\mathscr{R}^{\mu-n-\epsilon_0}}{\tau( z_i)^{n-\epsilon_0}}\frac{\lambda_j^{\frac{n-2}{2}}}{\tau( z_j)^{n-2}}
\lesssim\frac{1}{\mathscr{R}^{2n-2-\mu}}\log\mathscr{R}.
\end{equation}
\item[$(2)$] If $n=5$ and $\mu\in[1,3)$, or $n=6$ and $\mu\in(0,4)$, or  $n=7$ and $\mu\in(\frac{7}{3},4)$,
\begin{equation}\label{5B4-0}
\int t_{i,1}w_{i,1}=\int_{|z_i|\leq\mathscr{R}} \frac{\lambda_i^{\frac{n+2}{2}}\mathscr{R}^{2-n}}{\tau( z_i)^{4}}\frac{\lambda_i^{\frac{n-2}{2}}}{\tau( z_i)^{n-2}}
\lesssim\frac{1}{\mathscr{R}^{n-2}},
\end{equation}
\begin{equation}\label{6B4-0}
\int t_{i,2}w_{i,2}=\int_{|z_i|\geq \mathscr{R}}
\frac{\lambda_i^{\frac{n+2}{2}}\mathscr{R}^{\frac{\mu-2-n}{2}}}{\tau( z_i)^{\frac{n+\mu+2}{2}}}\frac{\lambda_i^{\frac{n-2}{2}}}{\tau( z_i)^{n-2}}
\lesssim\frac{1}{\mathscr{R}^{n}},
\end{equation}
\begin{equation}\label{7B4-0}
\int t_{i,1}w_{j,1}=\int_{|z_i|\leq\mathscr{R}} \frac{\lambda_i^{\frac{n+2}{2}}\mathscr{R}^{2-n}}{\tau( z_i)^{4}}\frac{\lambda_j^{\frac{n-2}{2}}}{\tau( z_j)^{n-2}}
\lesssim\frac{1}{\mathscr{R}^{n}},
\end{equation}
\begin{equation}\label{8-B-4}
\int t_{i,2}w_{j,2}=\int_{|z_i|\geq \mathscr{R}}
\frac{\lambda_i^{\frac{n+2}{2}}\mathscr{R}^{\frac{\mu-2-n}{2}}}{\tau( z_i)^{\frac{n+\mu+2}{2}}}\frac{\lambda_j^{\frac{n-2}{2}}}{\tau( z_j)^{n-2}}
\lesssim\frac{1}{\mathscr{R}^{\min\{\frac{(n+\mu+2)}{2},n-\mu\}}}.
\end{equation}
\end{itemize}
\end{lem}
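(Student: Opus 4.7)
The eight estimates separate naturally into four single-bubble integrals (\eqref{5B4}, \eqref{6B4}, \eqref{5B4-0}, \eqref{6B4-0}) and four two-bubble integrals (\eqref{7B4}, \eqref{8B4}, \eqref{7B4-0}, \eqref{8-B-4}). The plan is to dispatch the single-bubble ones by a scaling change of variables, and then reduce the two-bubble ones to the single-bubble situation by invoking Lemma~\ref{B3-0} to re-express the product $\tau(z_i)^{-\alpha_1}\tau(z_j)^{-\beta_1}$ in a form in which one bubble dominates.

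For the single-bubble integrals, I would set $y=z_i=\lambda_i(x-\xi_i)$, so that $dx=\lambda_i^{-n}\,dy$ and the $\lambda_i$ powers in the integrand cancel exactly with the Jacobian. Each remaining integral has the form $\int_{|y|\lessgtr\mathscr{R}}\tau(y)^{-\gamma}\,dy$ for an explicit $\gamma$: for \eqref{5B4} one has $\gamma=n+2$, giving an $O(1)$ integral multiplied by the prefactor $\mathscr{R}^{4+\mu-2n}$; for \eqref{6B4}, $\gamma=2n-2-\epsilon_0>n$, yielding $\mathscr{R}^{n-\gamma}=\mathscr{R}^{-n+2+\epsilon_0}$, which combined with the prefactor $\mathscr{R}^{\mu-n-\epsilon_0}$ gives $\mathscr{R}^{\mu-2n+2}$. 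The estimates \eqref{5B4-0} and \eqref{6B4-0} are analogous, with $\gamma=n+2$ and $\gamma=(3n+\mu-2)/2$ respectively; in both cases the assumption $n\geq 6-\mu$ ensures that $\gamma>n$ in the outer region so that the tail integral is controlled.

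For the two-bubble integrals, since we have assumed $\lambda_i\leq\lambda_j$ and $\mathscr{R}_{ij}\geq 2\mathscr{R}$, I would apply Lemma~\ref{B3-0} with $k=i$ (and appropriate $\alpha_1,\beta_1,\gamma_1$) to split $\mathbb{R}^n$ according to whether $|z_j|\leq\tfrac12\sqrt{\lambda_j/\lambda_i}\,\mathscr{R}_{kj}$ (the core of $W_i$) or $|z_j|\geq\tfrac12\sqrt{\lambda_j/\lambda_i}\,\mathscr{R}_{kj}$ (the core of $W_j$). In each subregion the two-bubble integrand becomes a single-bubble one (times an explicit power of $\mathscr{R}_{ij}$ or $\lambda_j/\lambda_i$), and the previous scaling argument applies. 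For \eqref{7B4} and \eqref{8B4} the resulting tail exponent coincides exactly with the ambient dimension $n$, so that the integral is logarithmically divergent in $\mathscr{R}$, producing the factor $\log\mathscr{R}$. For \eqref{8-B-4} the two candidate decay rates produced by the two subregions are $\mathscr{R}^{-(n+\mu+2)/2}$ and $\mathscr{R}^{-(n-\mu)}$, whose minimum recovers the claimed bound.

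The calculations are elementary and parallel those in Lemma~\ref{B4} and in \cite[Lemma~2.3]{DSW21}; the only non-routine bookkeeping is verifying, case by case, which exponent is critical so as to correctly produce the logarithmic factor in \eqref{7B4}--\eqref{8B4} and the minimum of the two exponents in \eqref{8-B-4}. The main obstacle is therefore not any single difficult estimate but rather the careful matching of all the exponents appearing in Definition~\ref{st-11} with the output of Lemma~\ref{B3-0} under the parameter constraint $(\sharp)$, so that the correct decay is retrieved in each of the eight cases.
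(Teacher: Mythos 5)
Your treatment of the four single-bubble integrals is correct and matches what the paper dismisses as ``direct computation'': after the change of variables $y=z_i$ the $\lambda_i$-powers cancel against the Jacobian, and the exponents $\gamma=n+2$, $2n-2-\epsilon_0$, $n+2$, $(3n+\mu-2)/2$ you list give exactly the stated rates. The toolkit you propose for the cross terms (rescaling plus Lemma~\ref{B3-0} with a judicious partial transfer $\gamma_1$ of decay) is also the same one the paper uses.

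However, there is a genuine gap in the two-bubble part: you write ``since we have assumed $\lambda_i\leq\lambda_j$,'' but the lemma contains no such assumption and cannot be reduced to it, because the integrand is not symmetric in $i$ and $j$ (the weights $t_{i,\cdot}$, $\hat t_{i,\cdot}$ sit on the $i$-th bubble while $w_{j,\cdot}\approx W_j$ sits on the $j$-th, and in the application in Lemma~\ref{cll} the sum runs over all $i$ against a fixed $W_j$, so both orderings occur). This is not cosmetic: the paper's own proof of \eqref{8-B-4} splits into the two cases $\lambda_i\geq\lambda_j$ (three subregions, yielding $\mathscr{R}^{-(n+\mu+2)/2}$) and $\lambda_i\leq\lambda_j$ (two subregions, yielding $\mathscr{R}^{-(n-\mu)}$), and the $\min$ in \eqref{8-B-4} records precisely that one must take the worse of these two ordering-dependent rates. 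Your claim that ``the two candidate decay rates produced by the two subregions are $\mathscr{R}^{-(n+\mu+2)/2}$ and $\mathscr{R}^{-(n-\mu)}$'' within the single case $\lambda_i\leq\lambda_j$ is therefore a misattribution: in that case both subregions give (at worst) $\mathscr{R}^{-(n-\mu)}$, and the rate $(n+\mu+2)/2$ — which for part of the admissible range, e.g.\ $n=6$ with $\mu$ small, is the \emph{smaller} exponent and hence the one that actually limits \eqref{8-B-4} — only appears in the untreated case $\lambda_i\geq\lambda_j$ (where one also needs $\lambda_i/\lambda_j\leq\mathscr{R}_{ij}^2$). As written, your argument proves \eqref{8-B-4}, \eqref{8B4}, \eqref{7B4}, \eqref{7B4-0} only for one ordering of the concentration parameters. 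A related, smaller point: in the tail regions the ``single-bubble integral'' left after applying Lemma~\ref{B3-0} need not converge (e.g.\ $\int_{|z_i|\geq\mathscr{R}}\tau(z_i)^{-(n+\mu+2)/2}$ diverges whenever $\mu+2<n$), so the choice of $\gamma_1$ strictly below the critical value is essential and should be made explicit rather than subsumed in ``appropriate $\alpha_1,\beta_1,\gamma_1$.''
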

\begin{proof}
The proof of \eqref{5B4}, \eqref{6B4}, \eqref{7B4} and \eqref{6B4-0} are derived by a direct computation. \eqref{5B4-0} and \eqref{7B4-0} can be found in \cite{DSW21} except minor modifications. We only give the proof of \eqref{8-B-4}. In a similar way we prove that \eqref{8B4} except minor modifications. We then consider separately the following two case.\\
$(i)$ For $\lambda_{i}\geq\lambda_j$.
In this case we set  $\{|z_i|\geq\mathscr{R}\}=\tilde{\Omega}_{1}\cup\tilde{\Omega}_{2}\cup\tilde{\Omega}_{3}$ the sets are given by
\begin{equation*}
\begin{split}
\tilde{\Omega}_{1}&=\{\mathscr{R}\leq|z_i|\leq\frac{\sqrt{\lambda_i/\lambda_j}\mathscr{R}_{ij}}{2}\}, \hspace{2mm}\tilde{\Omega}_{2}=\{|z_i|\geq\frac{\sqrt{\lambda_i/\lambda_j}\mathscr{R}_{ij}}{2}, |z_j|\leq\frac{\sqrt{\lambda_j/\lambda_i}\mathscr{R}_{ij}}{2}\}, \\& \hspace{2mm}\tilde{\Omega}_{3}=\tilde{\Omega}_{2}=\{|z_i|\geq\frac{\sqrt{\lambda_i/\lambda_j}\mathscr{R}_{ij}}{2}, |z_j|\geq\frac{\sqrt{\lambda_j/\lambda_i}\mathscr{R}_{ij}}{2}\}.
\end{split}
\end{equation*}
Due to Lemma \eqref{B3-0} and the fact that $\lambda_i/\lambda_{j}\leq\mathscr{R}_{ij}^{2}$, we have
\begin{equation*}
\begin{split}
&\int t_{i,2}w_{j,2}=\int_{|z_i|\geq \mathscr{R}}
\frac{\lambda_i^{\frac{n+2}{2}}\mathscr{R}^{\frac{\mu-2-n}{2}}}{\tau( z_i)^{\frac{n+\mu+2}{2}}}\frac{\lambda_j^{\frac{n-2}{2}}}{\tau( z_j)^{n-2}}
\\&\lesssim\mathscr{R}^{\frac{\mu-2-n}{2}}\Big[\frac{1}{\mathscr{R}_{ij}^{n-2}}\int_{\tilde{\Omega}_{1}}\frac{dz_i}{\tau( z_i)^{\frac{n+\mu+2}{2}}}+\mathscr{R}_{ij}^{-\frac{(n+\mu+2)}{2}}(\frac{\lambda_i}{\lambda_j})^{\frac{n-\mu+2}{4}}\int_{\tilde{\Omega}_{2}}\frac{dz_j}{\tau( z_j)^{n-2}}+(\frac{\lambda_i}{\lambda_j})^{-\frac{\mu}{2}}\int_{\bar{\Omega}_{3}}\frac{dz_j}{\tau( z_j)^{\frac{n+\mu+2}{2}+n-2}}\Big]
\\&\lesssim\mathscr{R}^{\frac{\mu-2-n}{2}}\Big[\mathscr{R}_{ij}^{2-\frac{(n+\mu+2)}{2}}(\frac{\lambda_i}{\lambda_j})^{\frac{n-\mu-2}{4}}
+\mathscr{R}_{ij}^{-\frac{(n+\mu-2)}{2}}(\frac{\lambda_i}{\lambda_j})^{\frac{n-\mu-2}{4}}
+\mathscr{R}_{ij}^{\frac{(n-\mu+2)}{2}-n}(\frac{\lambda_i}{\lambda_j})^{\frac{n-\mu-2}{4}}\Big]\lesssim\mathscr{R}^{-\frac{(n+\mu+2)}{2}}.
\end{split}
\end{equation*}
$(ii)$ For $\lambda_{i}\leq\lambda_j$, In this case we set the sets $\bar{\Omega}_{1}$ and $\bar{\Omega}_{2}$ are given by
\begin{equation*}
\begin{split}
\bar{\Omega}_{1}&=\{|z_i|\geq\mathscr{R},\hspace{2mm}|z_j|\leq\frac{\sqrt{\lambda_j/\lambda_i}\mathscr{R}_{ij}}{2}\}, \hspace{2mm}\bar{\Omega}_{2}=\{|z_i|\geq\mathscr{R}, \hspace{2mm} |z_j|>\frac{\sqrt{\lambda_j/\lambda_i}\mathscr{R}_{ij}}{2}\}.
\end{split}
\end{equation*}
Using Lemma \eqref{B3-0}, we obtain
\begin{equation*}
\begin{split}
&\int t_{i,2}w_{j,2}=\int_{|z_i|\geq \mathscr{R}}
\frac{\lambda_i^{\frac{n+2}{2}}\mathscr{R}^{\frac{\mu-2-n}{2}}}{\tau( z_i)^{\frac{n+\mu+2}{2}}}\frac{\lambda_j^{\frac{n-2}{2}}}{\tau( z_j)^{n-2}}\\&
\lesssim\mathscr{R}^{\frac{\mu-2-n}{2}}\Big[\mathscr{R}_{ij}^{-\frac{(n+\mu+2)}{2}}(\frac{\lambda_i}{\lambda_j})^{\frac{n-\mu+2}{4}}\int_{\bar{\Omega}_{1}}\frac{dz_j}{\tau( z_j)^{n-2}}+(\frac{\lambda_i}{\lambda_j})^{-\frac{2-n}{2}}\int_{\bar{\Omega}_{2}}\frac{dz_i}{\tau( z_i)^{\frac{n+\mu+2}{2}+n-2}}\Big]
\\&\lesssim\mathscr{R}^{\frac{\mu-2-n}{2}}\mathscr{R}_{ij}^{2-\frac{(n-\mu+2)}{2}}
+\mathscr{R}^{-n}\lesssim\mathscr{R}^{-(n-\mu)}.
\end{split}
\end{equation*}

Putting these estimates together, we get that \eqref{8-B-4}.
\end{proof}


\begin{thebibliography}{99}

\bibitem{T-Aubin}
T. Aubin, Problemes isoperimetriques et espaces de Sobolev. {\em J. Differ. Geometry}, {\bf 11}(4), 573-598, 1976.

\bibitem{B-W-W}
T. Bartsch, T. Weth and M. Willem, A Sobolev inequality with remainder term and critical equations on domains with topology for the polyharmonic operator. {\em Calc. Var. Partial Differential Equations}, {\bf 18} (2003), 253-268.

\bibitem{Aryan}
S. Aryan. Stability of Hardy-Littlewood-Sobolev inequality under bubbling.  {\em Calc. Var. Partial Differential Equations}, 62 (2023), Article No. 23, 42 pp.

\bibitem{Bahri-1989}
A. Bahri, Critical Points at Infinity in some Variational Problems.
{\em Pitman Research Notes in Mathematics Series},
{\bf 182}, Longman 1989.

\bibitem{BE91}
G. Bianchi, H. Egnell. A note on the Sobolev inequality. {\em J. Funct. Anal.}, {\bf 100}(1), 18-24, 1991.

\bibitem{BE198585}
H. Br\'{e}zis, E. Lieb. Sobolev inequalities with remainder terms. {\em J. Funct. Anal.}, {\bf 62}, 73-86, 1985.

\bibitem{CGS89}
L. Caffarelli, B. Gidas and J. Spruck. Asymptotic symmetry and local behavior of semilinear elliptic equations with critical Sobolev growth. {\em Comm. Pure Appl. Math.}, {\bf 42}(3), 271-297, 1989.

\bibitem{Ca17}
E. Carlen. Duality and stability for functional inequalities. {\em Ann. Fac. Sci. Toulouse Math.}, (6) {\bf 26}, no. 2, 319-350, 2017.

\bibitem{C-K-L-24}
H. Chen, S. Kim and J. Wei, Sharp quantitative stability estimates for critical points of fractional Sobolev inequalities, preprint, {\em \url{arXiv:2408.07775}} [math.AP].

\bibitem{C-L-T}
L. Chen, G. Lu and H. Tang, Stability of Hardy-Littlewood-Sobolev inequalities with explicit lower bounds. {\em Adv. Math.}, {\bf 450}, Paper No. 109778, 28 pp. 2024.


\bibitem{CFW13}
S. Chen, R. Frank, and T. Weth. Remainder terms in the fractional Sobolev inequality. {\em Indiana Univ. Math. J.}, {\bf 62}(4), 1381--1397, 2013.

\bibitem{Chen-ou}
W. Chen, C. Li, and O. Biao, Classification of solutions for an integral equation. {\em Commun. Pure Appl. Math.}, {\bf 59}(2006), 330-343.

\bibitem{C-E-S}
O. Chodosh, M. Engelstein, and L. Spolaor, The Riemannian quantitative isoperimetric inequality. {\em J. Eur. Math. Soc.}, {\bf 25} (2023), 1711-1741.

\bibitem{C-F-M-P}
A. Cianchi, N. Fusco, F. Maggi, and A. Pratelli, The sharp Sobolev inequality in quantitative form. {\em J. Eur. Math. Soc.}, {\bf 11}, 1105–1139, 2009.

\bibitem{CFM18}
G. Ciraolo, A. Figalli and F. Maggi. A quantitative analysis of metrics on $\mathbb{R}^N$ with almost constant positive scalar curvature, with applications to fast diffusion flows. {\em Int. Math. Res. Not.}, {\bf 2017}, 6780-6797, 2018.

\bibitem{DHQWF}
W. Dai, J. Huang, Y. Qin,  B. Wang, and Y. Fang, Regularity and classification of solutions to static Hartree equations involving fractional Laplacians. {\em Discrete Contin. Dyn. Syst.}, {\bf 39}, 1389-1403 (2019).

\bibitem{MUSSO1}
M. del Pino, J. Dolbeault, and M. Musso. ''Bubble-tower'' radial solutions in the slightly supercritical Brezis-Nirenberg problem. {\em J. Differential Equations}, {\bf 193}(2): 280-306, 2003.


\bibitem{Manuel}
M. del Pino, P. Felmer, and M. Musso, Two-bubble solutions
in the super-critical Bahri-Coron's problem. {\em Calc. Var. Partial Differential
Equations}, {\bf 16}(2):113–145, 2003.

\bibitem{MUSSO3}
M. del Pino, M. Musso, F. Pacard and A. Pistoia. Large energy entire solutions for the Yamabe equation. {\em J. Differential Equations}, {\bf 251}(9): 2568-2597, 2011.

\bibitem{DeMuS-pp}
M. del Pino, M. Musso, F. Pacard and A. Pistoia. Torus action on $S^n$
and sign changing solutions for conformally invariant equations. {\em Ann. Sc. Norm. Super. Pisa Cl. Sci.}, pages 209-237, March 2013.

\bibitem{DSW21}
B. Deng, L. Sun and J. Wei. Sharp quantitative estimates of Struwe's Decomposition. {\em to appear in Duke Math. J.}, \url{arXiv:2103.15360} [math.AP].

\bibitem{DSB213}
S. Deng, X, Tian, M. Yang and S. Zhao, Remainder terms of a nonlocal Sobolev inequality, {\em Math. Nachr.}, (2023), 1-16.

\bibitem{T-Konig23}
N. de Nitti and T. Konig, Stability with explicit constants of the critical points of the fractional Sobolev inequality and applications to fast diffusion, {\em J. Funct. Anal.} {\bf 285}, Article No. 110093, 30 pp. 2023.

\bibitem{Dolbeault-1}
J. Dolbeault. Sobolev and Hardy-Littlewood-Sobolev inequalities: duality and fast diffusion.  {\em Math. Res. Lett}., {\bf 18}(6), 1037--1050, 2011.

\bibitem{D-E-F-F-L}
J. Dolbeault, M. J. Esteban, A. Figalli, R. L. Frank, and M. Loss, Sharp stability for Sobolev and log-Sobolev inequalities, with optimal dimensional dependence, preprint, {\em \url{arXiv:2209.08651}}.

\bibitem{DE22}
J. Dolbeault, M. Esteban. Hardy-Littlewood-Sobolev and related inequalities: stability.  {\em The physics and mathematics of Elliott Lieb the 90th anniversary}. Vol. I, 247--268, EMS Press, Berlin, 2022.

\bibitem{Druet}
O. Druet, From one bubble to several bubbles: the low-dimensional case.
{\em J. Differential Geom.}, {\bf 63} (2003), 399-473.

\bibitem{DY19}
L. Du and M. Yang, Uniqueness and nondegeneracy of solutions for a critical nonlocal equation. {\em Discrete Contin. Dyn. Syst. A.}, {\bf 39}(10), 5847-5866, 2019.

\bibitem{E-N-S22}
M. Engelstein, R. Neumayer, and L. Spolaor, Quantitative stability for minimizing Yamabe metrics. {\em Trans. Amer. Math. Soc. Ser. B}, {\bf 9} (2022), 395-414.


\bibitem{F-F-P08}
N. Fusco, F. Maggi and A. Pratelli, The sharp quantitative isoperimetric inequality. {\em Ann. of Math.}, {\bf 168} (2008), no. 3, 941–980.

\bibitem{FG20}
A. Figalli and F. Glaudo, On the sharp stability of critical points of the Sobolev inequality. {\em Arch. Ration. Mech. Anal.} {\bf 237}(1), 201-258, 2020.

\bibitem{F-M-P10}
A. Figalli, F. Maggi and A. Pratelli, A mass transportation approach to quantitative isoperimetric inequalities. {\em Invent. Math.}, {\bf 182} (2010), no. 1, 167-211.

\bibitem{F-N-18}
A. Figalli and R. Neumayer, Gradient stability for the Sobolev inequality: the case $p\geq2$, {\em J. Eur. Math. Soc.}, {\bf 21}, 319-354, 2018.

\bibitem{F-Z-22}
A. Figalli and Y. R.-Y. Zhang, Sharp gradient stability for the Sobolev inequality,  {\em Duke Math. J.}, {\bf 171}, 2407-2459, 2022.

\bibitem{F22}
R. L. Frank, Degenerate stability of some Sobolev inequalities. {\em Ann. Inst. H. Poincar\'{e} Anal. Non Lin\'{e}aire.}, {\bf 39} (2022), 1459-1484.

\bibitem{GMYZ22}
F. Gao, V. Moroz, M. Yang and S. Zhao. Construction of infinitely many solutions for a critical Choquard equation via local Poho\v{z}aev identities. {\em  Calc. Var. Partial Differ. Equ.}, {\bf 61}, 222, 2022.

\bibitem{GY18}
F. Gao  and M.  Yang. The Brezis-Nirenberg type critical problem for nonlinear Choquard equation. {\em Sci. China Math.}, {\bf 61}, 1219-1242, 2018.

\bibitem{G-W10}
F. Gazzola and T. Weth, Remainder terms in a higher order Sobolev inequality. {\em Arch. Math.}, (Basel) {\bf 95} (2010), 381-388.

\bibitem{GNN79}
 B. Gidas, W. Ni and L.  Nirenberg. Symmetry and related properties via the maximum principle. {\em Commun. Math. Phy.}, {\bf 68}(3), 209-243, 1979.

\bibitem{GHPS19}
L. Guo, T. Hu, S. Peng and W. Shuai. Existence and uniqueness of solutions for Choquard equation involving Hardy-Littlewood-Sobolev critical exponent. {\em Calc. Var. Partial Dif.}, {\bf 58}(4), Paper No. 128, 34 pp, 2019.


 \bibitem{H-L-1928}
G. Hardy and J. Littlewood. Some properties of fractional integral. I. {\em Math. Z.}, {\bf 27}(1), 565--606, 1928.

 \bibitem{T-K-23}
T, K\"{o}nig, On the sharp constant in the Bianchi-Egnell stability inequality.
{\em Bull. Lond. Math. Soc.}, {\bf 55}, 2070-2075, 2023.

\bibitem{T-K-24}
T, K\"{o}nig, Stability for the Sobolev inequality: existence of a minimizer.
{\em J. Eur. Math. Soc. \url{arXiv:2211.14185v3} [Math. AP]}  (to appear).

\bibitem{XLi}
X.  Li, C.  Liu, X. Tang and G.  Xu. Nondegeneracy of positive bubble solutions for generalized energy-critical Hartree equations. {\em  Preprint. \url{arXiv:2304.04139} [math.AP]}.


\bibitem{Lieb-Loss}
E. Lieb and M. Loss, Analysis. Second edn., {\em Graduate Studies in Mathematics 14, American Mathematical Society, Providence, RI,}, 2001.

\bibitem{Lieb83}
E. Lieb, Sharp constants in the Hardy-Littlewood-Sobolev and related inequalities.  {\em Ann. of Math.} {\bf 118}, 349-374, 1983.

\bibitem{L}
K. Liu, Q. Zhang, and W. Zou,
On the stability of critical points of the Hardy-Littlewood-Sobolev inequality.
{\em  Preprint. \url{arXiv:2306.15862v2} [math.AP]}

\bibitem{L-W-99}
G. Lu and J. Wei, On a Sobolev inequality with remainder terms. {\em Proc. Amer. Math. Soc.}, {\bf 128}, 75-84, 2000.


\bibitem{Mercuri}
C. Mercuri, V. Moroz, and J. Van Schaftingen, Groundstates and radial solutions to nonlinear Schr\"{o}dinger-Poisson-Slater equations at the critical frequency. {\em  Calc. Var. Partial Differential Equations}, (2016) 55:146.

\bibitem{MUSSO2}
M. Musso and A. Pistoia, Tower of bubbles for almost critical problems in
general domains. {\em J. Math. Pures Appl.}, (9), {\bf 93}(1):1–40, 2010.

\bibitem{P-P-15}
G. Palatucci and A. Pisante, A global compactness type result for Palais-Smale sequences in fractional Sobolev spaces.  {\em Nonlinear Anal.}, {\bf 117}(2015), 1-7.

\bibitem{Parker}
T. H. Parker, Bubble tree convergence for harmonic maps, {\em J. Differential Geom.}, {\bf44} (1996), 595-633.

\bibitem{p-y-z24}
P. Piccion, M. Yang, and S. Zhao. Quantitative profile
decomposition and stability for a nonlocal Sobolev inequality, {\em J. Differential Equations}, 417 (2025), 64-104.

\bibitem{G-Tian}
J. Qing and G. Tian, Bubbling of the heat flows for harmonic maps from surfaces. {\em Comm. Pure Appl. Math.}, {\bf 50} (1997), 295-310.

\bibitem{S1963}
S. Sobolev. On a theorem of functional analysis. Translated by J. R. Brown.  \emph{Transl., Ser. 2, Am. Math. Soc.}, \textbf{34}(1963), 39--68.

\bibitem{Struwe-1984}
M. Struwe. A global compactness result for elliptic boundary value problems involving limiting nonlinearities.
\emph{Math. Z.}, \textbf{187}(1984), 511--517.

\bibitem{Ta76}
G. Talenti, Best constant in Sobolev inequality. {\em Ann. Mat. Pura Appl.}, {\bf 110}, 353-372, 1976.

\bibitem{Z}
 Y. Zhang, Y. Zhou, and W. Zou,
On the stability of fractional Sobolev trace inequality and corresponding profile decomposition. {\em  Preprint. \url{arXiv:2312.01766} [math.AP]}

\end{thebibliography}
\end{document}